\begin{document}

\title[The conformally covariant geodesic metric on simple CLE carpets]{Existence and uniqueness of the conformally covariant geodesic metric on simple conformal loop ensemble carpets}
\author{Jason Miller and Yi Tian}
\date{\today}

\begin{abstract}
We prove that for each $\kappa \in (8/3, 4)$ there exists a geodesic metric on the carpet of a $\CLE_\kappa$ which is canonical in the sense that it is characterized by a certain list of axioms. Our metric can be constructed explicitly as the scaling limit of \emph{Minkowski first passage percolation} (MFPP), i.e., the metric obtained by taking the infimum of the Lebesgue measure of the $\varepsilon$-neighborhood of all paths connecting each pair of points. Earlier work by the first co-author \cite{TightSimCLE} showed that MFPP admits nontrivial subsequential limits. The present paper shows that this subsequential limit is unique and is characterized by our list of axioms. We conjecture that our metric describes the scaling limit of the chemical distance metric for discrete loop models that converge to $\CLE_\kappa$ for $\kappa \in (8/3, 4)$ in the scaling limit, e.g., the critical Ising model for $\kappa=3$. Our argument is inspired by recent works of Gwynne and Miller \cite{ExUniLQG} and Ding and Gwynne \cite{UniCriSupercriLQGMet} on the uniqueness of Liouville quantum gravity metrics. 
\end{abstract}

\maketitle

\tableofcontents

\section{Introduction}

\subsection{Overview}
\label{subsec:overview}

\begin{figure}[ht!]
\includegraphics[width=0.49\textwidth]{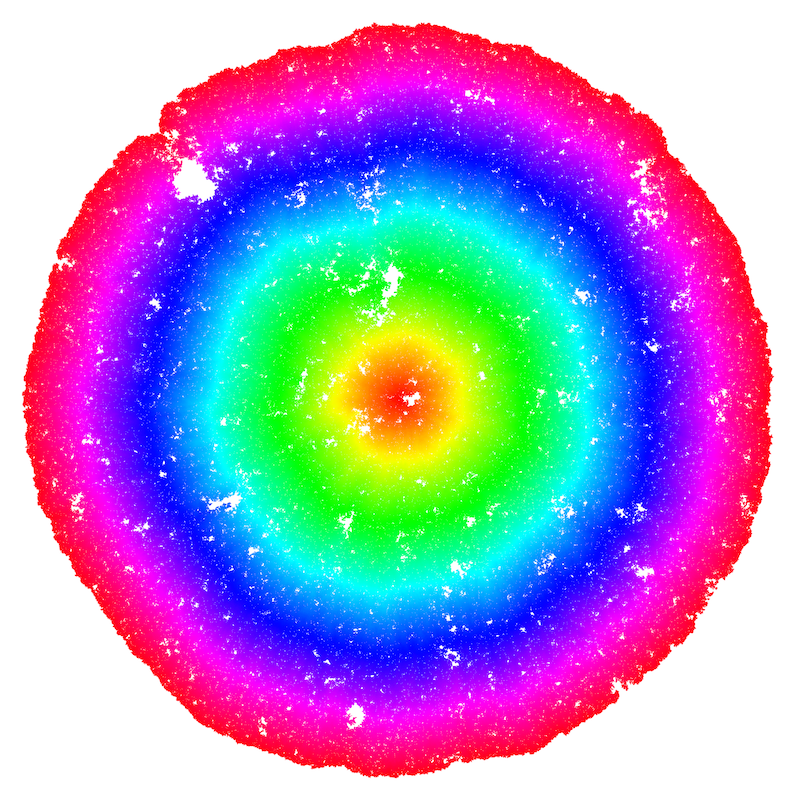} \includegraphics[width=0.49\textwidth]{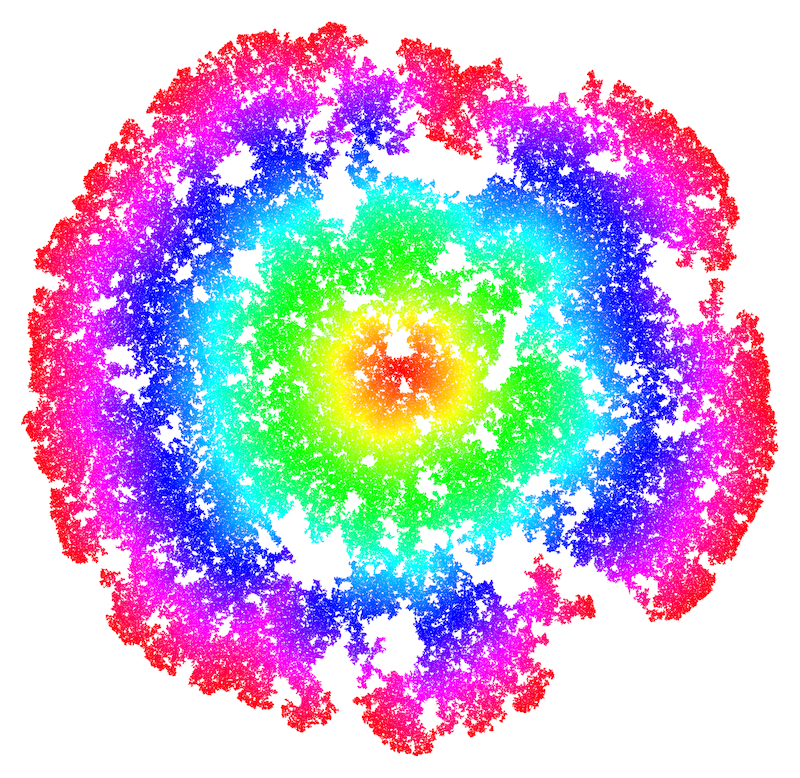}

\includegraphics[width=0.49\textwidth]{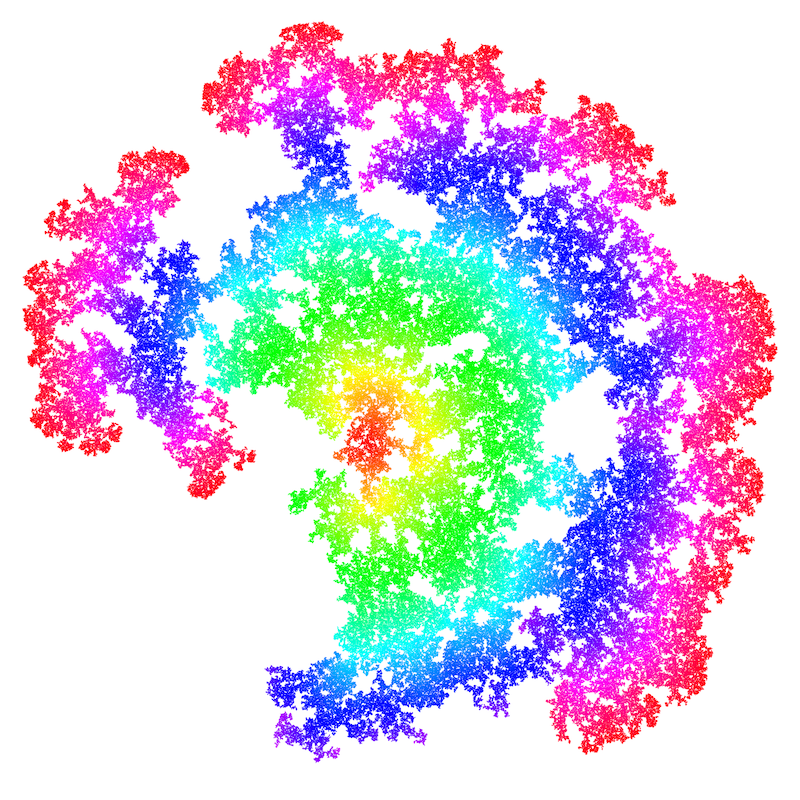} \includegraphics[width=0.49\textwidth]{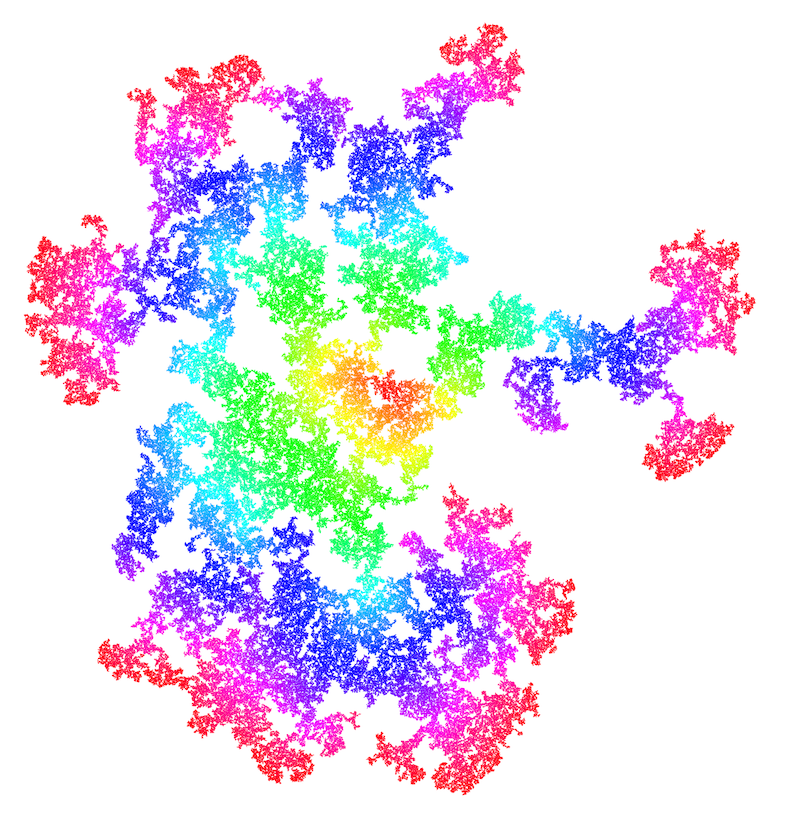}

\caption{Simulation of a metric ball associated with the $\CLE_\kappa$ metric with $\kappa \approx 2.72$ (top left), $\kappa=3$ (top right), $\kappa=10/3$ (bottom left), and $\kappa \approx 3.99$ (bottom right).  Different colors represent the distance of points in the ball to the center.  The $\kappa$ value associated with the top left simulation is close to the critical value $\kappa=8/3$ where the CLE carpet is the entire domain and the $\kappa$ value associated with the bottom right simulation is close to the critical value $\kappa=4$ above which the loops of a CLE intersect themselves, each other, and the domain boundary and the results of the present paper do not apply.}
\end{figure}

The \emph{conformal loop ensembles} ($\CLE_\kappa$, $\kappa \in (8/3,8)$) make up the canonical conformally invariant family of probability measures on countable families of loops in a simply connected domain in $\CC$ which do not cross each other or themselves \cite{TreeCLE,CLE}. They are the loop-analogs of Schramm's $\SLE_\kappa$ ($\kappa > 0$) curves \cite{s2000sle}, and, just like $\SLE_\kappa$ has either been conjectured or shown to describe the scaling limit of a single interface in a number of discrete models from statistical mechanics defined on a planar lattice \cite{s2001percolation,lsw2004lerw,ss2009dgff,s2010ising}, $\CLE_\kappa$ has been conjectured or shown to describe the joint scaling limit of all of the interfaces \cite{ScalLimCriIsingCLE3,cn2006cle,ks2019fkising,lsw2004lerw}.  In the setting of random planar lattices (i.e., random planar maps), several similar results have been proved \cite{s2016hc,kmsw2019bipolar,gm2021percolation,gm2021saw,gkmw2018active,lsw2017schnyder} using the framework from \cite{s2016zipper,dms2021mating}. Each loop in a $\CLE_\kappa$ looks locally like an $\SLE_\kappa$, so their phases coincide. In particular, for $\kappa \in (8/3,4]$ the loops are simple and do not intersect each other or the domain boundary while for $\kappa \in (4,8)$ the loops are self-intersecting and intersect each other and the domain boundary \cite{rs2005basic}.  For $\kappa \in (8/3,4]$ (resp.~ $\kappa \in (4,8)$), the set of points not surrounded by a loop is referred to as its \emph{carpet} (resp.~\emph{gasket}). The reason for the distinction in this terminology is that the $\CLE_\kappa$ carpet is a topological Sierpinski carpet, while the $\CLE_\kappa$ gasket can be thought of as a random analog of the Sierpinski gasket.

The purpose of the present paper is to complete the program of showing that one can associate with a $\CLE_\kappa$ carpet (i.e., focusing on $\kappa \in (8/3,4)$) a canonical geodesic metric.  This is a nontrivial problem since the $\CLE_\kappa$ carpet does not contain any nontrivial rectifiable paths.  The first step in this program was carried out in \cite{TightSimCLE} in which the tightness and nontriviality of the subsequential limits associated with a particular approximation scheme (Minkowski first passage percolation, MFPP) to the geodesic metric were established.  The contribution of this work is that the subsequential limits exist as true limits, and that the limit is canonical in the sense that it is singled out by a simple list of axioms.  Our metric will provide a new tool for studying analysis questions on the $\CLE_\kappa$ carpet, beyond its Hausdorff dimension \cite{ConfRadCLE,SoupCarFracDim} and canonical conformally covariant measure \cite{SimCLELQG,ExUniCoCoVolCLE}.  We also expect that our metric describes the scaling limit of the chemical distance for lattice models which converge to $\CLE_\kappa$.

Our overall approach mirrors the program used to construct the Liouville quantum gravity (LQG) metric \cite{TightLFPP,ExUniLQG,UniCriSupercriLQGMet}. Specifically, we will: (1) propose a concise set of axioms that any candidate limit must satisfy; (2) prove that all subsequential limits of MFPP obey these axioms; and (3) show that these axioms determine the metric uniquely. Precise statements appear in \Cref{ss:11}.

\subsection{Setup}\label{ss:18}

Throughout the remainder of the present paper, fix $\kappa \in (8/3, 4)$. In the context of $\CLE_\kappa$, one can refer to either its \emph{nested} or \emph{non-nested} variant.  The latter consists of a countable collection of loops which do not cross themselves or each other in a simply connected domain $U$ where no loop surrounds another.  In the case of the former, the outermost loops have the law of a non-nested $\CLE_\kappa$ and the law more generally is characterized by the fact that for each loop $\SCL$ the conditional law of the outermost loops in the component $V$ surrounded by $\SCL$ given $\SCL$ and all of the loops not surrounded by $\SCL$ is that of non-nested $\CLE_\kappa$ in $V$.  By considering a nested $\CLE_\kappa$ in $U$ and then taking an appropriate limit as $U$ increases to $\CC$, it is possible to define the so-called whole-plane nested $\CLE_\kappa$ which turns out to be invariant under translation, scaling, rotation, and inversion \cite{CLERiemSph} (see also \cite{CoInCLERiemSph} for a proof of the inversion symmetry for $\kappa \in (4,8)$).  For the remainder of the paper, we work in this setting. Once the metric is constructed here, the corresponding metrics for $\CLE_\kappa$ on general simply connected domains follow directly by restriction and/or local absolute continuity \cite{CoCoGeoCLE}.

Let $\Gamma$ be a whole-plane nested $\CLE_\kappa$.  For $\SCL \in \Gamma$, we shall write $X(\SCL) \subset \CC$ for the closed subset of points that are surrounded by $\SCL$ but by no other loop of $\Gamma$ inside $\SCL$. We observe that there is a natural mapping $\wp \colon \Gamma \to \ZZ/2\ZZ$ which is well-defined up to composition with an automorphism of $\ZZ/2\ZZ$ such that the following is true: For each pair of distinct loops $\SCL_1, \SCL_2 \in \Gamma$ such that $\SCL_2$ is surrounded by $\SCL_1$ but by no other loop of $\Gamma$ in between $\SCL_1$ and $\SCL_2$, we have $\wp(\SCL_1) \neq \wp(\SCL_2)$. Conditionally on $\Gamma$, let us sample a uniform choice among the two possible representatives of $\wp$. We shall write 
\begin{equation*}
    \Upsilon \defeq \bigcup_{\SCL \in \Gamma: \wp(\SCL) = 0} X(\SCL) \quad \text{and} \quad \Upsilon^\dag \defeq \bigcup_{\SCL \in \Gamma: \wp(\SCL) = 1} X(\SCL). 
\end{equation*}
We observe that $\Upsilon$ and $\Upsilon^\dag$ have the same law, and that $\Gamma$ is almost surely determined by $\Upsilon$. For an open subset $U \subset \CC$ and points $x, y \in U \cap \Upsilon$, we shall write $x \xleftrightarrow{U \cap \Upsilon} y$ (or, when there is no danger of confusion, $x \xleftrightarrow U y$) if $x$ and $y$ lie in the same connected component of $U \cap \Upsilon$. The use of $\Upsilon$ instead of $\coprod_{\SCL \in \Gamma} X(\SCL)$ has the advantage that it can be naturally embedded into $\CC$ as a subset, and that for any $x, y \in \Upsilon$, it is possible to perform a ``local rewiring'' of $\Gamma$ so that $x \leftrightarrow y$. (One may imagine $\Upsilon$ as corresponding to the collection of the plus clusters of a critical Ising model on $\ZZ^2$. Thus, given two vertices with plus spins, it is possible to flip the spins at some other vertices so that these two vertices are contained in the same cluster.)

Suppose that we are given mappings $D_{X(\SCL)} \colon X(\SCL) \times X(\SCL) \to \RR$ for all $\SCL \in \Gamma$ with $\wp(\SCL) = 0$. Then for each $x, y \in \Upsilon$, we shall write 
\begin{equation}\label{eq:057}
    D_\Upsilon(x, y) \defeq 
    \begin{cases}
        D_{X(\SCL)}(x, y) & \text{if } x, y \in X(\SCL) \text{ for some } \SCL \in \Gamma \text{ with } \wp(\SCL) = 0; \\
        \infty & \text{otherwise}. 
    \end{cases}
\end{equation}

\emph{Throughout the remainder of the present paper, let $\Gamma$, $\wp$, and $\Upsilon$ be as above, unless otherwise specified.}

Let us now recall some preliminary definitions concerning metric spaces. Let $(X, d)$ be a metric space. For a continuous path $P \colon [0, 1] \to X$, we shall refer to
\begin{equation*}
    \len(P; d) \defeq \sup_{0 = t_0 < t_1 < \cdots < t_n = 1} \sum_{j = 1}^n d(P(t_{j-1}), P(t_j))
\end{equation*}
as the \emph{$d$-length} of $P$, where the supremum is over all partitions of $[0, 1]$. We shall refer to $d$ as a \emph{geodesic metric} if for each $x, y \in X$, there exists a continuous path $P$ in $X$ connecting $x$ and $y$ such that $\len(P; d) = d(x, y)$. Let $Y \subset X$ be a subset. Then we shall refer to
\begin{equation*}
    d(x, y; Y) \defeq \inf_{\substack{P \subset Y\\P \colon x \to y}} \len(P; d), \quad \forall x, y \in Y,
\end{equation*}
where $P$ ranges over all continuous paths in $Y$ connecting $x$ and $y$ (with the convention that $\inf\emptyset = \infty$), as the \emph{internal metric} on $Y$. We shall refer to $d$ as a \emph{length metric} if $d(\bullet, \bullet) = d(\bullet, \bullet; X)$. 

Let $n \in \NN$. We shall write $\mathscr{Haus}_n$ for the metric space consisting of nonempty compact subsets of $\RR^n$, and equipped with the Hausdorff distance. We shall write $\mathscr{HausUni}_n$ for the metric space consisting of pairs $(X, f)$, where $X \subset \RR^n$ is a nonempty compact subset and $f \colon X \to \RR$ is a continuous function, and equipped with the metric given by
\begin{multline*}
    d_{\mathscr{HausUni}_n}((X, f), (Y, g)) \defeq \inf\{\varepsilon > 0 : \forall x \in X, \ \exists y \in Y \text{ s.t.~} \lVert x - y\rVert \le \varepsilon \text{ and } \lvert f(x) - g(y)\rvert \le \varepsilon\} \\
    \vee \inf\{\varepsilon > 0 : \forall y \in Y, \ \exists x \in X \text{ s.t.~} \lVert x - y\rVert \le \varepsilon \text{ and } \lvert f(x) - g(y)\rvert \le \varepsilon\}, \\
    \forall (X, f), (Y, g) \in \mathscr{HausUni}_n. 
\end{multline*}
One verifies easily that $\mathscr{HausUni}_n$ is a separable (but not complete) metric space. However, we recall from \cite[Theorem~5.1]{ConvProbMeas} that Prokhorov's theorem holds for all (not necessarily separable or complete) metric spaces. The following lemma follows immediately from a similar argument to the argument applied in the proof of the Arzel\`a-Ascoli theorem (cf.~\cite{TightSimCLE}).

\begin{lemma}
    Let $F \subset \mathscr{HausUni}_n$ be a subset. Suppose that the following conditions are satisfied:
    \begin{enumerate}
        \item For each $\varepsilon > 0$, there exists $\delta > 0$ such that for each $(X, f) \in F$ and $x, y \in X$ with $\lVert x - y\rVert < \delta$, we have $\lvert f(x) - f(y)\rvert < \varepsilon$. 
        \item There exists $C > 0$ such that $X \subset B_C(0)$ and $\sup_{x \in X} \lvert f(x)\rvert \le C$ for all $(X, f) \in F$. 
    \end{enumerate}
    Then the closure of $F$ in $\mathscr{HausUni}_n$ is compact.
\end{lemma}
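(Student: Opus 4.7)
The plan is to emulate the classical Arzel\`a--Ascoli argument, with the complication that the functions $f$ live on varying domains $X$. I would argue by sequential compactness: given any sequence $\{(X_k, f_k)\}_{k \in \NN} \subset F$, I would extract a subsequence that converges in $\mathscr{HausUni}_n$.

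The first step is to find a limit for the domains. Since $X_k \subset B_C(0)$ for every $k$, Blaschke's selection theorem supplies a subsequence (which I still denote $\{(X_k, f_k)\}$) such that $X_k$ converges to some nonempty compact set $X \subset \overline{B_C(0)}$ in Hausdorff distance. To handle the varying domains for the functions, I would extend each $f_k$ to a continuous function $\tilde f_k \colon \RR^n \to \RR$ sharing a common modulus of continuity. Concretely, let $\omega$ be a concave, nondecreasing modulus of continuity witnessing hypothesis (1), so that $\lvert f_k(x) - f_k(y)\rvert \le \omega(\lVert x - y\rVert)$ for all $x, y \in X_k$ and all $k$, with $\omega(0^+) = 0$; then set
\begin{equation*}
    \tilde f_k(x) \defeq \inf_{y \in X_k}\bigl(f_k(y) + \omega(\lVert x - y\rVert)\bigr).
\end{equation*}
A standard McShane--Whitney computation shows that $\tilde f_k$ agrees with $f_k$ on $X_k$ and satisfies $\lvert \tilde f_k(x) - \tilde f_k(x')\rvert \le \omega(\lVert x - x'\rVert)$ on all of $\RR^n$; combined with hypothesis (2), this forces $\{\tilde f_k\}$ to be uniformly bounded on any fixed compact set.

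In particular, $\{\tilde f_k\}$ is equicontinuous and uniformly bounded on the closed ball of radius $C + 1$ around the origin, so the classical Arzel\`a--Ascoli theorem yields a further subsequence along which $\tilde f_k \to \tilde f$ uniformly there, with $\tilde f$ continuous and obeying the same modulus $\omega$. Setting $f \defeq \tilde f|_X$, I would verify $(X_k, f_k) \to (X, f)$ in $\mathscr{HausUni}_n$ directly: given $\varepsilon > 0$, for all large $k$ the Hausdorff distance between $X_k$ and $X$ is at most $\varepsilon$ and $\lvert \tilde f_k - \tilde f\rvert \le \varepsilon$ uniformly on the ball of radius $C+1$. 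For any $x \in X_k$, choose $y \in X$ with $\lVert x - y\rVert \le \varepsilon$; then
\begin{equation*}
    \lvert f_k(x) - f(y)\rvert \le \lvert \tilde f_k(x) - \tilde f(x)\rvert + \lvert \tilde f(x) - \tilde f(y)\rvert \le \varepsilon + \omega(\varepsilon),
\end{equation*}
and the analogous bound holds when one starts from $y \in X$. Since $\varepsilon + \omega(\varepsilon) \to 0$ as $\varepsilon \to 0$, this gives the desired convergence.

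The only real obstacle is conceptual: each $f_k$ is defined on a different set $X_k$, so the classical Arzel\`a--Ascoli theorem cannot be applied verbatim to the family $\{f_k\}$. The McShane--Whitney extension resolves this by moving everything to the common ambient space $\RR^n$ without disturbing the equicontinuity, after which the standard selection argument runs unchanged.
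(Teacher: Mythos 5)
Your argument is correct, and it is essentially the approach the paper intends: the paper gives no proof beyond asserting that the lemma "follows from a similar argument to the proof of the Arzel\`a--Ascoli theorem," and your combination of Blaschke selection for the domains with a McShane--Whitney extension (using a concave majorant of the modulus from hypothesis (1)) is a clean, standard way to carry that out. The only step left implicit is the routine passage from "every sequence in $F$ has a subsequence converging in $\mathscr{HausUni}_n$" to compactness of the closure (approximate a sequence in $\overline F$ by elements of $F$ and diagonalize), which is worth a sentence since the paper notes $\mathscr{HausUni}_n$ is not complete, but your construction does exhibit the limit as a genuine element of $\mathscr{HausUni}_n$, so nothing is missing in substance.
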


Let $X$ be a random variable taking values in $\mathscr{Haus}_2$. Then we shall say that the law of $X$ is \emph{absolutely continuous with respect to the sum of the laws of $X(\SCL)$ for $\SCL \in \Gamma$} if
\begin{multline}\label{eq:004}
    ``\BP\lbrack X \in A\rbrack > 0'' \ \Rightarrow \ ``\BP\lbrack\text{there exists } \SCL \in \Gamma \text{ such that } X(\SCL) \in A\rbrack > 0'', \\
    \forall \text{ Borel subset } A \subset \mathscr{Haus}_2. 
\end{multline}

Let us now recall the precise definition of the \vocab{Minkowski first passage percolation (MFPP)} approximation procedure. Let $\mathop{\mathrm{Leb}}(\bullet)$ denote the Lebesgue measure. For $\varepsilon > 0$, the \emph{$\varepsilon$-MFPP metric} on $X$ is given by
\begin{equation*}
    D_X^\varepsilon(x, y) \defeq \inf_{P \colon x \to y} \mathop{\mathrm{Leb}}(B_\varepsilon(P)), \quad \forall x, y \in X, 
\end{equation*}
where $P$ ranges over all continuous paths in $X$ connecting $x$ and $y$. To extract a nontrivial limit from the metrics $D_{X}^\varepsilon$, we need to renormalize. For $\varepsilon > 0$, we shall write $\ka_\varepsilon$ for the median of the random variable 
\begin{equation}\label{eq:263}
    \sup_{x, y \in X(\SCL_1)} D_{X(\SCL_1)}^\varepsilon(x, y), 
\end{equation}
where $\SCL_1$ denotes the largest loop of $\Gamma$ contained in $B_1(0)$ and surrounding the origin.

The existence of subsequential limits of $\{(X \times X, \ka_\varepsilon^{-1}D_{X}^\varepsilon)\}_{\varepsilon > 0}$ follows essentially from \cite{TightSimCLE}. The main challenge in the present paper is to prove uniqueness of the subsequential limit. We adopt the framework used in the uniqueness proofs for the Liouville quantum gravity metric \cite{ExUniLQG,UniCriSupercriLQGMet}: we first formulate a list of axioms for the geodesic $\CLE_\kappa$ carpet metric; then we show that every subsequential limit of $\{(X \times X, \ka_\varepsilon^{-1}D_{X}^\varepsilon)\}_{\varepsilon > 0}$ satisfies these axioms; and finally we prove that at most one metric can satisfy them. Consequently, a unique geodesic $\CLE_\kappa$ carpet metric exists.

\subsection{Main results}
\label{ss:11}

\begin{definition}\label{307}
    We shall refer to as a \vocab{(strong) geodesic $\CLE_\kappa$ carpet metric} a measurable mapping 
    \begin{equation*}
        D \colon \mathscr{Haus}_2 \to \mathscr{HausUni}_4 \colon X \mapsto (X \times X, D_X)
    \end{equation*}
    such that whenever \eqref{eq:004} holds, the following conditions are satisfied:
    \begin{enumerate}[label=(\Roman*), ref=\Roman*]
        \item\label{307A} {\bfseries (Geodesic metric)} $D_X$ is almost surely a geodesic metric on $X$ inducing the Euclidean topology.
        \item\label{307B} {\bfseries (Locality)} For each deterministic open subset $U \subset \CC$, the internal metric $D_X(\bullet, \bullet; U \cap X)$ is almost surely determined by $U \cap X$. 
        \item\label{307C} {\bfseries (Translation invariance and scale covariance)} There is a deterministic constant $\theta \in \RR$ (which we shall refer to as the \vocab{(chemical) distance exponent}) such that for each deterministic $z \in \CC$ and $r > 0$, 
        \begin{equation*}
            D_{rX + z}(r\bullet + z, r\bullet + z) = r^\theta D_X(\bullet, \bullet) \quad \text{almost surely}. 
        \end{equation*}
        \item\label{307D} {\bfseries (Tightness)} There is a deterministic constant $\alpha_{\mathrm{KC}} > 0$ such that the following is true: Let $K \subset \CC$ be a deterministic compact subset. Then it holds with superpolynomially high probability as $A \to \infty$ that
        \begin{equation*}
            D_\Upsilon(x, y) \le A \left\lvert x - y\right\rvert^{\alpha_{\mathrm{KC}}}, \quad \forall x, y \in K \cap \Upsilon \text{ with } x \leftrightarrow y. 
        \end{equation*}
        (Here, we note that $D_\Upsilon$ is almost surely well-defined.)
        \item\label{307E} {\bfseries (Monotonicity)} Let $X_1, X_2 \in \mathscr{Haus}_2$ with $X_1 \subset X_2$. Then $D_{X_1}(x, y) \ge D_{X_2}(x, y)$ for all $x, y \in X_1$. 
    \end{enumerate}
\end{definition}
(The usage of the notation $\alpha_{\mathrm{KC}}$ above is that the exponent is derived in \cite{TightSimCLE} using an argument of the same flavor as in the proof of the Kolmogorov-Centsov theorem.)

In the remainder of the present subsection, let $X$ be a random variable taking values in $\mathscr{Haus}_2$ that satisfies \eqref{eq:004}.

Our main results establish the existence and uniqueness of the strong geodesic $\CLE_\kappa$ carpet metric. 

\begin{theorem}[Convergence of MFPP]\label{305}
    For each sequence of positive real numbers $\varepsilon$'s tending to zero, there exists a strong geodesic $\CLE_\kappa$ carpet metric $D$ and a subsequence along which $(X \times X, \ka_\varepsilon^{-1} D_X^\varepsilon)$ converges in probability to $(X \times X, D_X)$ with respect to $\mathscr{HausUni}_4$.
\end{theorem}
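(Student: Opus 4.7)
The plan is to combine the tightness established in \cite{TightSimCLE} with Prokhorov's theorem, verify the axioms of \Cref{307} for any subsequential limit, and finally upgrade distributional convergence along a subsequence to convergence in probability using the fact that every approximant $\ka_\varepsilon^{-1} D_X^\varepsilon$ is a measurable functional of $X$ alone.

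First I would invoke tightness of $\{(X \times X, \ka_\varepsilon^{-1} D_X^\varepsilon)\}_{\varepsilon > 0}$ in $\mathscr{HausUni}_4$ from \cite{TightSimCLE}. Combined with Prokhorov's theorem in the form valid on possibly non-separable metric spaces (\cite[Theorem~5.1]{ConvProbMeas}, cited in the excerpt), this gives, for any sequence $\varepsilon_n \to 0$, a further subsequence along which the joint law of $(X, \ka_{\varepsilon_n}^{-1} D_X^{\varepsilon_n})$ converges. By diagonalization I would also arrange that $\ka_{r\varepsilon_n}/\ka_{\varepsilon_n}$ converges for every positive rational $r$. A Skorokhod representation then produces a probability space on which the convergence of the metrics is almost sure; I denote the almost-sure limit of the second coordinate by $D^*$.

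Next I would verify axioms \ref{307A}--\ref{307E} for $D^*$. Monotonicity \ref{307E} is immediate from the corresponding pointwise inequality for MFPP ($X_1 \subset X_2$ admits more paths in $X_2$) and passes to the limit by lower semicontinuity. Translation invariance is inherited from whole-plane $\CLE_\kappa$, while scale covariance \ref{307C} follows from the deterministic identity $D_{rX}^{r\varepsilon}(r\cdot, r\cdot) = r^2 D_X^\varepsilon(\cdot, \cdot)$ combined with the convergence of $\ka_{r\varepsilon}/\ka_\varepsilon$ secured above, producing a well-defined exponent $\theta$ in the limit. The KC bound \ref{307D} is inherited because \cite{TightSimCLE} establishes the analogous bound at the MFPP level. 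The geodesic property and inducement of the Euclidean topology \ref{307A} follow because $D^*$ is a length metric on a compact set (and any compact length metric space is automatically geodesic), while a two-sided comparison with the Euclidean metric (upper bound from \ref{307D}, lower bound from a tube-area estimate for MFPP) yields the topology claim. The delicate point is locality \ref{307B}: the MFPP metric is strictly nonlocal at any $\varepsilon > 0$, since the $\varepsilon$-tube around a path near $\partial U$ sees points outside $U$; one must show that the contribution of such boundary-grazing excursions vanishes in the limit, via a careful decomposition of paths into ``interior'' and ``boundary'' pieces combined with modulus-of-continuity estimates from \ref{307D}.

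Finally, I would upgrade distributional convergence to convergence in probability by observing that $D^*$ is in fact $\sigma(X)$-measurable: it is the almost-sure limit of $\ka_{\varepsilon_{n_k}}^{-1} D_X^{\varepsilon_{n_k}}$, each of which is a deterministic measurable functional of $X$. This lets me define the mapping $D \colon \mathscr{Haus}_2 \to \mathscr{HausUni}_4$ required in \Cref{307} by $D_X = D^*$. Since $\ka_\varepsilon^{-1} D_X^\varepsilon$ and $D_X$ are then both measurable functions of the same $X$ and their joint law converges along the subsequence, a standard argument (joint distributional convergence to a function of the first coordinate upgrades to in-probability convergence, e.g., via the continuous mapping theorem applied to the distance map on $\mathscr{HausUni}_4$) yields the in-probability statement. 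The hardest step is axiom \ref{307B}; verifying the remaining axioms is essentially routine once the right framework for passing to the limit is in place.
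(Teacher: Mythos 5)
Your outline of tightness plus Prokhorov plus axiom verification is the right starting framework, but the step where you derive scale covariance (Axiom~\eqref{307C}) is exactly where the argument breaks, and it is the central difficulty of the whole paper rather than a routine verification. The scaling identity gives $\ka_{\varepsilon_n}^{-1}D^{\varepsilon_n}_{rX}(r\cdot,r\cdot) = \frac{r^2\ka_{\varepsilon_n/r}}{\ka_{\varepsilon_n}}\,\ka_{\varepsilon_n/r}^{-1}D^{\varepsilon_n/r}_X$, so even after arranging that $r^2\ka_{\varepsilon_n/r}/\ka_{\varepsilon_n}\to\kc_r$, the right-hand side involves the approximants along the sequence $\{\varepsilon_n/r\}$, which is \emph{not} the subsequence along which you extracted your limit. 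A priori it converges to a different subsequential limit, so all you can conclude is that the law of $\kc_r^{-1}D_{rX}(r\cdot,r\cdot)$ lies in the closure of the family of laws of the approximants (this is precisely \Cref{327}), not that it equals the law of $D_X$, and certainly not that the two agree almost surely. This is why the paper introduces the \emph{weak} metric of \Cref{010}, in which scale covariance is replaced by the two-sided bound \eqref{eq:340} on the constants $\kc_r$, and recovers the strong form only \emph{after} the uniqueness theorem (\Cref{000}) is proved: in \Cref{312} one applies uniqueness to the rescaled metric $D^{(\lambda)}$ to get $D^{(\lambda)}=M_\lambda D$ for a deterministic constant, and multiplicativity of $\lambda\mapsto M_\lambda$ then forces $M_\lambda=\lambda^\theta$. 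Your proof of \Cref{305} therefore cannot avoid routing through the uniqueness result, which occupies the bulk of the paper.

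A second genuine gap is your measurability step. After a Skorokhod representation, the $n$-th approximant is a deterministic function of the $n$-th coupled copy $\widetilde X_n$ of the carpet, and $\widetilde X_n$ merely converges to $\widetilde X$; it does not equal it. The a.s.\ limit of deterministic functions of the $\widetilde X_n$ need not be a measurable function of $\widetilde X$, so $\sigma(X)$-measurability of the limit is not automatic, and without it the upgrade to convergence in probability fails. The paper obtains measurability through the local-metric machinery of Section~\ref{s:05}: one first shows that two conditionally independent subsequential limits given $\Upsilon$ are a.s.\ bi-Lipschitz equivalent with a deterministic constant (\Cref{045}), and then an Efron--Stein argument over a fine grid (\Cref{006}) shows that the conditional variance of $D_\Upsilon(x,y)$ given $\Upsilon$ vanishes, so the limit is a.s.\ determined by the carpet. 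That argument in turn requires verifying the conditional-independence form of locality (Axiom~\eqref{048B} of \Cref{048}) for subsequential limits, which is done via the multichordal CLE Markov property and a total-variation continuity lemma (\Cref{274,278}) rather than via the boundary-tube decomposition you sketch. Your instinct that locality is the delicate axiom is correct, but the resolution is structural (conditional independence surviving weak limits), and it is also the key input for measurability, not just for Axiom~\eqref{307B}.
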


\begin{theorem}[Uniqueness]\label{320}
    Let $D$ and $\widetilde D$ be two strong geodesic $\CLE_\kappa$ carpet metrics with the same distance exponent $\theta$. Then there is a deterministic constant $M > 0$ such that $\widetilde D_X = MD_X$ almost surely.
\end{theorem}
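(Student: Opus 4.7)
The plan is to follow the strategy of the Liouville quantum gravity metric uniqueness proofs in \cite{ExUniLQG,UniCriSupercriLQGMet}, with the scale covariance (Axiom \ref{307C}) playing the role of Weyl scaling and the nested self-similarity of whole-plane $\CLE_\kappa$ replacing the stationarity and conformal symmetry of the Gaussian free field. Throughout, let $\SCL_1$ denote the largest loop of $\Gamma$ contained in $B_1(0)$ and surrounding the origin.

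First I would introduce the random variables
\begin{equation*}
    c_- \defeq \inf_{\substack{x, y \in X(\SCL_1) \\ x \neq y}} \frac{\widetilde D_{X(\SCL_1)}(x,y)}{D_{X(\SCL_1)}(x,y)}, \qquad c_+ \defeq \sup_{\substack{x, y \in X(\SCL_1) \\ x \neq y}} \frac{\widetilde D_{X(\SCL_1)}(x,y)}{D_{X(\SCL_1)}(x,y)},
\end{equation*}
and establish that $0 < c_- \leq c_+ < \infty$ almost surely. Finiteness of $c_+$ would follow by applying Axiom \ref{307D} to $\widetilde D$ (giving $\widetilde D_{X(\SCL_1)}(x,y) \leq A |x-y|^{\alpha_{\mathrm{KC}}}$ for some random constant $A$) together with a matching reverse bound of the form $D_{X(\SCL_1)}(x,y) \geq a |x-y|^{\alpha}$ for some $\alpha > 0$, which I would extract from monotonicity, scale covariance, and the fact that $D$ induces the Euclidean topology (using that any short path must cross a uniformly positive number of nested loops at some scale). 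Positivity of $c_-$ is symmetric. Locality (Axiom \ref{307B}), translation invariance and scale covariance (Axiom \ref{307C}), together with the conditional iid structure of the carpets of loops nested inside $\SCL_1$ in whole-plane $\CLE_\kappa$, then show via a standard tail zero-one argument that $c_\pm$ are almost surely equal to deterministic constants.

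The main step, which is also the main obstacle, is to show $c_- = c_+$. I would argue by contradiction: suppose $c_- < c_+$ and pick $\delta > 0$ with $c_- + \delta < c_+$. By definition of $c_-$, the ``improvement event'' that $\widetilde D_{X(\SCL)}(x,y) \leq (c_- + \delta) D_{X(\SCL)}(x,y)$ for all $x, y \in X(\SCL)$ has positive probability; by scale covariance the analogous event has positive probability at every scale. Using the nested Markovian structure of whole-plane $\CLE_\kappa$, one then finds that with high probability many disjoint ``good'' loops at every scale carry the improvement event on their carpets. A multi-scale percolation-style argument modeled on \cite[Sections~4--5]{ExUniLQG} should then show that any $D$-geodesic between two macroscopically separated points of $X$ must, with high probability, accumulate a positive fraction of its $D$-length within such good carpets. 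Replacing the corresponding geodesic segments by their $\widetilde D$-counterparts via locality and monotonicity yields a bound of the form $\widetilde D_X \leq (c_+ - \eta) D_X$ for some universal $\eta > 0$, contradicting the definition of $c_+$. Setting $M = c_- = c_+$ then completes the proof.

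The hard part is the multi-scale percolation estimate: one must show that any $D$-geodesic cannot avoid spending a $D$-substantial portion of its length inside good carpets. This will require (i) quantitative regularity of $D$-geodesics --- specifically, that they cannot concentrate too much $D$-length within a small Euclidean region, which I would derive from Axioms \ref{307D} and \ref{307E} --- and (ii) a multi-scale independence statement for $\CLE_\kappa$ permitting simultaneously good events at many scales, for which I would rely on the fine control of $\CLE_\kappa$ loop configurations developed in \cite{TightSimCLE}. Combining these via an inductive ``bi-Lipschitz averaging'' scheme \`a la \cite{ExUniLQG} constitutes the main technical content.
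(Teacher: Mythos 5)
Your overall scheme --- define optimal bi-Lipschitz ratios, show they are deterministic, positive and finite, then derive a contradiction from strict inequality by forcing geodesics through ``improved'' regions --- is the skeleton of the paper's argument (which first reduces \Cref{320} to the weak-axiom uniqueness statement \Cref{000} via \Cref{319,321}, and then runs precisely such a contradiction in \Cref{s:03,s:00,s:02}). However, two of your steps have genuine gaps. First, finiteness of $c_+$ does not follow from combining the upper H\"older bound of Axiom~\eqref{307D} for $\widetilde D$ with a reverse bound $D(x,y)\ge a\lvert x-y\rvert^{\alpha}$: any admissible upper exponent satisfies $\alpha_{\mathrm{KC}}\le\theta$ while any admissible lower exponent satisfies $\alpha\ge\theta$ (cf.\ \Cref{268} and the remark after \Cref{019}), so the resulting ratio bound $(A/a)\lvert x-y\rvert^{\alpha_{\mathrm{KC}}-\alpha}$ diverges as $\lvert x-y\rvert\to0$. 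The correct argument (\Cref{045}) is a chaining argument along a $D$-geodesic, using independence across scales (\Cref{263}) to find, around every point, a scale at which the $D$-crossing of an annulus is bounded below and the $\widetilde D$-diameter of the ball is bounded above. Relatedly, the sup and inf of the ratio over the carpet of a single loop are not tail-measurable, so the asserted ``standard zero-one argument'' does not apply as stated; the paper instead works with the deterministic constants $M_\ast,M^\ast$ of \eqref{eq:321}--\eqref{eq:322} and devotes all of \Cref{s:03} to localizing the associated events to definite scales.

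Second, and more seriously, the step in which you force a $D$-geodesic to spend a positive fraction of its length in good regions is exactly where the LQG argument does \emph{not} transfer. In \cite{ExUniLQG} the geodesic is attracted into a good region by subtracting a large bump function from the field and invoking Weyl scaling; scale covariance (Axiom~\eqref{307C}) is a global symmetry and supplies no such local perturbation tool, so ``modeled on \cite{ExUniLQG}'' leaves the central step unproved. The paper's substitute is a resampling construction (\Cref{s:00}): one links many shortcut-carrying half-annuli by long narrow tubes, resamples the CLE inside the tubes via the Brownian loop-soup and multichordal-SLE machinery so that with uniformly positive conditional probability the rewired arcs of $\Gamma$ form a channel, and then exploits the purely topological fact that a carpet geodesic cannot cross a CLE loop --- together with Axiom~\eqref{010E} (monotonicity) --- to force any geodesic entering the channel to traverse it and hence pass within controlled distance of a shortcut (\Cref{096}); a symmetry/counting argument between the resampled and original configurations (\Cref{s:02}) then closes the contradiction. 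Without an idea playing this role, the final inequality $\widetilde D_X\le(c_+-\eta)D_X$ in your last paragraph cannot be obtained.
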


Moreover, we prove that the distance exponent $\theta$ is uniquely determined by $\kappa$.

\begin{theorem}[Uniqueness of the distance exponent]\label{319}
    There is a unique $\theta = \theta(\kappa)$ for which the strong geodesic $\CLE_\kappa$ carpet metric with distance exponent $\theta$ exists.
\end{theorem}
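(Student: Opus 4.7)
The plan is to argue by contradiction: assume there exist two strong geodesic $\CLE_\kappa$ carpet metrics $D$ and $\widetilde D$ with distinct exponents $\theta \ne \widetilde\theta$. By Theorem~\ref{305}, there is at least one metric $D^\star$ obtained as a subsequential limit of the normalized MFPP $\ka_\varepsilon^{-1}D_X^\varepsilon$, which is itself a strong geodesic $\CLE_\kappa$ carpet metric with some distance exponent $\theta^\star$; its value is prescribed by the asymptotic scaling of $\ka_\varepsilon$, since $D_{rX}^{r\varepsilon}(rx,ry) = r^2 D_X^\varepsilon(x,y)$ together with Axiom~(\ref{307C}) forces $\ka_{r\varepsilon}/\ka_\varepsilon \to r^{2-\theta^\star}$. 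If I can show that any strong geodesic $\CLE_\kappa$ carpet metric has exponent equal to $\theta^\star$, then applying this to both $D$ and $\widetilde D$ yields $\theta = \widetilde\theta = \theta^\star$ and proves Theorem~\ref{319}.

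The core step is to compare an arbitrary $D$ against $D^\star$ without presupposing that their exponents agree. I would use locality (\ref{307B}) to restrict attention to the internal metric on $X(\SCL_1)$, combine the Kolmogorov--Chentsov tightness (\ref{307D}) with the monotonicity axiom (\ref{307E}) to produce H\"older-in-Euclidean control on $D$ uniformly over the relevant carpet, and match this against the corresponding prelimit MFPP bounds from \cite{TightSimCLE}. Taking $\varepsilon \to 0$ along a subsequence for which $\ka_\varepsilon^{-1}D_X^\varepsilon \to D^\star$, I would aim to extract a two-sided bi-Lipschitz comparison $c D^\star \le D \le C D^\star$ on $X(\SCL_1)$, holding on a positive-probability event with deterministic constants $0 < c \le C < \infty$. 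Axiom~(\ref{307C}) then transfers this comparison to every scale $r > 0$ with the \emph{same} constants $c, C$ (applied to rescaled CLE configurations drawn from the same law as the original by whole-plane scale invariance). But the same rescaling multiplies the ratio $D/D^\star$ by $r^{\theta - \theta^\star}$, so scale-independence of the constants is compatible only with $\theta = \theta^\star$.

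The main obstacle is obtaining the two-sided comparison with $D^\star$ from the axioms on $D$ alone. Axiom~(\ref{307D}) gives only an upper bound in terms of Euclidean distance, so producing a matching lower bound $D \ge c\, D^\star$ on a positive-probability event is the delicate part: one must rule out the possibility that $D$ is anomalously small along some geodesic where $D^\star$ is not. I expect this to require the geodesic axiom (\ref{307A}) to extract $D$-geodesics inside $X(\SCL_1)$, the local rewiring structure of $\Upsilon$ recalled in \Cref{ss:18} to guarantee enough topologically nontrivial crossings, and a multi-scale argument in the spirit of the Liouville quantum gravity uniqueness proofs of Gwynne--Miller \cite{ExUniLQG} and Ding--Gwynne \cite{UniCriSupercriLQGMet} to promote pointwise comparisons to a bi-Lipschitz sandwich with scale-independent constants. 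This is where I expect the bulk of the technical work to lie.
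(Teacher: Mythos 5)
There is a genuine gap, and it sits exactly where you place ``the bulk of the technical work.'' Your plan is to first establish a two-sided comparison $c\,D^\star \le D \le C\,D^\star$ with deterministic, scale-independent constants on a positive-probability event, and then to derive $\theta = \theta^\star$ by noting that rescaling multiplies the ratio $D/D^\star$ by $r^{\theta-\theta^\star}$. The transfer step misuses scale invariance: since $X(\SCL_r) \overset{d}{=} rX(\SCL_1)$ and both metrics are deterministic measurable functions of the carpet, the event $\{c \le D/D^\star \le C \text{ on } X(\SCL_r)\}$ has the same probability as the event $\{r^{\theta^\star-\theta}c \le D/D^\star \le r^{\theta^\star-\theta}C \text{ on } X(\SCL_1)\}$ --- the constants rescale along with the configuration, and there is no second relation forcing the probability of the \emph{fixed-constant} event to be the same at every scale. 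A single application of scale covariance therefore yields no contradiction; at the level of one-scale distributional identities, any pair $(\theta,\theta^\star)$ is consistent. Worse, the two-sided comparison with scale-independent deterministic constants valid at all scales is essentially equivalent to the conclusion you are trying to prove (it is the bi-Lipschitz equivalence of \Cref{045} and \Cref{000}, which is only proved for metrics with the \emph{same} scaling constants, and whose mechanism --- matching an annulus-crossing lower bound for one metric against a ball-diameter upper bound for the other at the same scale --- is precisely what degenerates when the exponents differ). It cannot be established first and then contradicted.

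What actually closes the argument, and what the paper does, is a \emph{one-sided} comparison whose constant degenerates across scales; no reference metric $D^\star$ and no lower bound $D \ge c\,D^\star$ are needed. Assume $\theta < \widetilde\theta$, using \Cref{321,312} only to guarantee that the larger exponent may be taken positive. Scale covariance plus tightness give, at every scale $r$, the annulus-crossing lower bound $D_\Upsilon(\partial B_{r/2}(z)\cap\Upsilon, \partial B_r(z)\cap\Upsilon) \ge a r^{\theta}$ and the diameter upper bound $\widetilde D_\Upsilon \le A r^{\widetilde\theta}$ on $B_r(z)\cap\Upsilon$, simultaneously for a net of $z$ and for some $r \in [\varepsilon^2,\varepsilon]$ with high probability (via \Cref{263,214} and a union bound). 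Chopping a $D_\Upsilon$-geodesic from $x$ to $y$ into pieces at scales $r_j \in [\varepsilon^2,\varepsilon]$ and summing yields $\widetilde D_\Upsilon(x,y) \le (A/a)\varepsilon^{\widetilde\theta-\theta}D_\Upsilon(x,y) + A\varepsilon^{\widetilde\theta}$; letting $\varepsilon \to 0$ forces $\widetilde D_\Upsilon(x,y) = 0$, contradicting that $\widetilde D$ is a metric. In other words, the contradiction is extracted from the failure of the upper-bound direction to produce a nondegenerate constant, not from a tension between two independently established bounds; your diagnosis that the matching lower bound is the delicate missing piece points the argument in the wrong direction.
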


Finally, we consider the Hausdorff dimension of the carpet and geodesics. We recall that for a metric space $(X, d)$,
\begin{equation*}
    \inf\left\{\alpha \ge 0 : \inf\left\{\sum_{j = 1}^\infty \diam(U_j; d)^\alpha : X \subset \bigcup_{j = 1}^\infty U_j\right\} = 0\right\}
\end{equation*}
is called the \emph{Hausdorff dimension} of $(X, d)$. 

\begin{theorem}[Hausdorff dimension]\label{302}
    Let $D$ be a strong geodesic $\CLE_\kappa$ carpet metric with distance exponent $\theta$. Then:
    \begin{enumerate}
        \item\label{302A} $\theta\dim(X; D_X) = \dim(X)$ almost surely, where $\dim(\bullet; D_X)$ (resp.~$\dim(\bullet)$) denotes the Hausdorff dimension with respect to $D_X$ (resp.~the Euclidean metric). (Recall from \cite{ConfRadCLE,SoupCarFracDim} that $\dim(X) = 1 + 2/\kappa + 3\kappa/32$ almost surely.)
        \item\label{302B} Almost surely, $\dim(P) = \theta$ for all (nontrivial) $D_X$-geodesics $P$. In particular, it follows that $1 < \theta \le 1 + 2/\kappa + 3\kappa/32 < 2$. (In fact, by \cite{HughesThesis}, we have a stronger upper bound that $\theta < 1 + \kappa/8$.)
    \end{enumerate}
\end{theorem}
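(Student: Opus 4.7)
The plan is to derive both parts of \Cref{302} from a two-sided Hölder comparison between $D_X$ and the Euclidean metric on the carpet, together with qualitative properties of the carpet itself.

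\emph{Step 1 (two-sided Hölder).} Axiom (\ref{307D}) already gives the upper Hölder bound $D_\Upsilon(x,y) \le A|x-y|^{\alpha_{\mathrm{KC}}}$. Using scale covariance (\ref{307C}), translation invariance, and locality (\ref{307B}), combined with a multi-scale union bound of the type used in the LQG uniqueness arguments (cf.~\cite{ExUniLQG,UniCriSupercriLQGMet}), I would upgrade this to two near-sharp estimates: for each $\eta > 0$ and each deterministic compact $K \subset \CC$, it holds with superpolynomially high probability as $A \to \infty$ that
\begin{equation*}
    A^{-1}|x - y|^{\theta + \eta} \le D_\Upsilon(x, y) \le A|x-y|^{\theta - \eta}, \quad \forall x, y \in K \cap \Upsilon \text{ with } x \leftrightarrow y.
\end{equation*}
The upper bound sharpens axiom (\ref{307D}) from exponent $\alpha_{\mathrm{KC}}$ to $\theta - \eta$; the lower bound is the substantial new ingredient.

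\emph{Step 2 (dimensions).} Assuming Step 1, I will deduce both dimension statements by a standard Hausdorff-content transport argument. Each Euclidean ball of radius $r$ has $D_X$-diameter $\le r^{\theta - \eta}$ and each $D_X$-ball of radius $\delta$ has Euclidean diameter $\le \delta^{1/(\theta+\eta)}$ with high probability; sending $\eta \to 0$ yields Part (\ref{302A}), i.e., $\dim(X; D_X) = \dim(X)/\theta$ almost surely. For Part (\ref{302B}), parameterizing a non-trivial $D_X$-geodesic $P$ by $D_X$-arclength gives $\dim(P; D_X) = 1$, and the same transport argument then yields $\dim(P) = \theta$ in the Euclidean metric. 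The upper bound $\theta \le 1 + 2/\kappa + 3\kappa/32$ follows from Part (\ref{302A}) and the inequality $\dim(X; D_X) \ge 1$ (which holds since $D_X$ is a geodesic metric on a connected set of more than one point). The strict lower bound $\theta > 1$ follows from the absence of non-trivial rectifiable paths in the $\CLE_\kappa$ carpet noted in \Cref{subsec:overview}: a continuous path lying in the carpet must have Euclidean Hausdorff dimension strictly greater than $1$.

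\emph{Main obstacle.} The hard part will be the lower bound on $D_\Upsilon$ in Step 1 — equivalently, concentration of the $D_X$-diameter of a Euclidean ball around $r^\theta$. The forward Hölder bound is essentially axiomatic, but the reverse requires ruling out the collapse of $D_X$-distances via a multi-scale iteration that uses scale covariance (\ref{307C}) to reduce to a unit-scale estimate and locality (\ref{307B}) to decouple distant regions, propagating moment bounds across scales in the manner of the LQG uniqueness proofs.
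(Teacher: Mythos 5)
Your Step 2 transport arguments are fine in one direction, but the whole plan hinges on the uniform two-sided H\"older estimate of Step 1, and the upper half of that estimate --- $D_\Upsilon(x,y) \le A\lvert x-y\rvert^{\theta-\eta}$ \emph{for all} pairs $x,y$ in a compact set --- is not available and should not be expected to hold. The carpet has exceptional ``pinch points'' (governed by the four-arm exponent $\alpha_{\mathrm{4A}}$) near which the internal distance between Euclidean-close points is atypically large; this is precisely why Axiom~(\ref{307D}) only yields some exponent $\alpha_{\mathrm{KC}}$ and why the sharpened version in the remark after \Cref{019} only reaches exponents $\chi < (1-2/\alpha_{\mathrm{4A}})\theta$, which is strictly below $\theta$. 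By contrast, the lower H\"older bound $D_\Upsilon(x,y) \ge \lvert x-y\rvert^{\theta+\eta}$ \emph{is} uniform (\Cref{268}, \Cref{310}), so your transport argument correctly gives $\dim(X;D_X) \ge \dim(X)/\theta$ and $\dim(P) \le \theta$. But the two statements that need the uniform upper bound --- $\theta\dim(X;D_X) \le \dim(X)$ and $\dim(P) \ge \theta$ --- cannot be obtained this way, and your proposed multi-scale iteration would at best recover the $(1-2/\alpha_{\mathrm{4A}})\theta$ exponent, which is lossy.

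The paper replaces the uniform bound by averaged or single-good-scale substitutes. For $\theta\dim(X;D_X) \le \dim(X)$ (\Cref{315}), it uses that $r^{-\theta}\sup\{D_X(x,y): x,y\in B_r(z)\cap X\}$ has finite moments of all positive orders (from Axioms~(\ref{307C}) and (\ref{307D})) together with the one-arm estimate $\BP\lbrack B_r(z)\cap X\neq\emptyset\rbrack \le r^{2-\dim(X)-o(1)}$, and runs a first-moment computation over a Euclidean cover; the exceptional balls are controlled in expectation rather than uniformly. For $\dim(P)\ge\theta$ (\Cref{269}), it uses the mass distribution principle: by \Cref{263,214}, around every point one finds at least one good scale $r\in[\varepsilon^{1+\nu},\varepsilon]$ at which both the $D_X$-diameter of $B_r(z)\cap X$ is $\lesssim r^\theta$ and the number of crossing components of $A_{r/2,r}(z)\cap X$ is bounded, and this single good scale per location suffices to bound $\mathop{\mathrm{Leb}}(V)\le C\diam(P(V))^{\vartheta}$ for the constant-speed parameterization. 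You would need to restructure your argument along these lines; as written, Step 1 contains a claim that is false and Step 2 inherits the gap in both of the ``hard'' directions.
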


\subsection{A weaker list of axioms}

It is straightforward to verify that any subsequential limit of $\{(X \times X, \ka_\varepsilon^{-1}D_{X}^\varepsilon)\}_{\varepsilon > 0}$ satisfies Axioms~\eqref{307A}, \eqref{307B}, \eqref{307D}, and \eqref{307E} of \Cref{307}, as well as the translation-invariance portion of Axiom~\eqref{307C}. However, the scale-covariance portion of Axiom~\eqref{307C} is much harder to verify. To address this, we replace it with a weaker condition that is still sufficient to prove uniqueness.

\begin{definition}\label{010}
   We shall refer to as a \emph{weak geodesic $\CLE_\kappa$ carpet metric} a measurable mapping 
    \begin{equation*}
        D \colon \mathscr{Haus}_2 \to \mathscr{HausUni}_4 \colon X \mapsto (X \times X, D_X)
    \end{equation*}
    such that whenever \eqref{eq:004} holds, the following conditions are satisfied:
    \begin{enumerate}[label=(\Roman*), ref=\Roman*]
        \item\label{010A} {\bfseries (Geodesic metric)} $D_X$ is almost surely a geodesic metric on $X$ inducing the Euclidean topology.
        \item\label{010B} {\bfseries (Locality)} For each deterministic open subset $U \subset \CC$, the internal metric $D_X(\bullet, \bullet; U \cap X)$ is almost surely determined by $U \cap X$. 
        \item\label{010C} {\bfseries (Translation invariance)} For each deterministic point $z \in \CC$, we have $D_{X + z}(\bullet + z, \bullet + z) = D_X(\bullet, \bullet)$ almost surely.
        \item\label{010D} {\bfseries (Tightness across scales)} There is a family of deterministic constants $\{\kc_r\}_{r > 0}$ satisfying the following conditions:
        \begin{enumerate}[label=(\roman*), ref=\roman*]
            \item {\bfseries (Upper bound)} There is a deterministic constant $\alpha_{\mathrm{KC}} > 0$ such that the following is true: Let $K \subset \CC$ be a deterministic compact subset. Then for each $r > 0$, it holds with superpolynomially high probability as $A \to \infty$, at a rate which is uniform in $r$, that
            \begin{equation}\label{eq:058}
                \kc_r^{-1} D_\Upsilon(x, y) \le A \left\lvert\frac{x - y}r\right\rvert^{\alpha_{\mathrm{KC}}}, \quad \forall x, y \in rK \cap \Upsilon \text{ with } x \leftrightarrow y. 
            \end{equation}
            \item {\bfseries (Lower bound)} Let $K_1, K_2 \subset \CC$ be deterministic and disjoint compact subsets. Then the laws of the random variables
            \begin{equation*}
                \kc_r/D_\Upsilon(rK_1 \cap \Upsilon, rK_2 \cap \Upsilon)
            \end{equation*}
            for $r > 0$ are tight. 
            \item There is a deterministic constant $\kK > 0$ such that 
            \begin{equation}\label{eq:340}
                \varepsilon^2 \le \kc_{\varepsilon r}/\kc_r \le \kK\varepsilon, \quad \forall r > 0, \ \forall \varepsilon \in (0, 1). 
            \end{equation}
        \end{enumerate}
        \item\label{010E} {\bfseries (Monotonicity)} Let $X_1, X_2 \in \mathscr{Haus}_2$ with $X_1 \subset X_2$. Then $D_{X_1}(x, y) \ge D_{X_2}(x, y)$ for all $x, y \in X_1$. 
    \end{enumerate}
\end{definition}

\begin{theorem}\label{321}
    For each sequence of positive real numbers $\varepsilon$'s tending to zero, there exists a weak geodesic $\CLE_\kappa$ carpet metric $D$ and a subsequence along which $(X \times X, \ka_\varepsilon^{-1} D_X^\varepsilon)$ converges in probability to $(X \times X, D_X)$ with respect to $\mathscr{HausUni}_4$.
\end{theorem}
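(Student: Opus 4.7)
The plan is to use the tightness established in \cite{TightSimCLE} to extract convergent subsequential limits of $(X \times X, \ka_\varepsilon^{-1} D_X^\varepsilon)$ in $\mathscr{HausUni}_4$, and then to verify that any such limit is a weak geodesic $\CLE_\kappa$ carpet metric. Along the given sequence $\varepsilon_n \to 0$ I would extract a subsequence along which convergence in distribution holds, then pass to a coupling on which convergence is almost sure via Skorokhod's representation theorem, and denote the limit by $(X \times X, D_X)$.

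Several axioms of \Cref{010} pass to the limit by soft arguments. Axiom~\eqref{010C} (translation invariance) holds since $D_X^\varepsilon$ is translation-invariant by inspection and the law of $\Upsilon$ is translation-invariant. Axiom~\eqref{010E} (monotonicity) is immediate, because $X_1 \subset X_2$ forces every continuous path in $X_1$ to be a continuous path in $X_2$, so $D_{X_1}^\varepsilon \ge D_{X_2}^\varepsilon$, and the inequality is preserved in the limit. Axiom~\eqref{010B} (locality) passes to the limit because $D_X^\varepsilon(\cdot, \cdot; U \cap X)$ depends only on $U \cap X$ by construction, and this structure survives the $\mathscr{HausUni}_4$-convergence for open $U$. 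Axiom~\eqref{010A} (geodesic metric inducing the Euclidean topology) follows from the length-metric structure of MFPP together with the a priori H\"older bound from \cite{TightSimCLE}: the upper bound identifies the Euclidean topology, and an Arzel\`a-Ascoli argument applied to near-minimizers of the approximants produces genuine geodesics in the limit.

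The main technical obstacle is Axiom~\eqref{010D}, tightness across scales. The pointwise scaling identity $D_{rX}^\varepsilon(rx, ry) = r^2 D_X^{\varepsilon/r}(x, y)$, combined with the scale invariance of whole-plane nested $\CLE_\kappa$, relates MFPP at different Euclidean scales via rescaling the neighborhood parameter. Along a further diagonal extraction ensuring convergence of the ratio $\ka_{\varepsilon_n/r}/\ka_{\varepsilon_n}$ for a countable dense set of scales $r$, I would take
\begin{equation*}
    \kc_r \defeq \lim_{n \to \infty} \frac{r^2 \ka_{\varepsilon_n/r}}{\ka_{\varepsilon_n}}.
\end{equation*}
The uniform-in-$r$ H\"older upper bound (i) then follows from the scale-$1$ Kolmogorov-Centsov estimate of \cite{TightSimCLE} transported to scale $r$ via the scaling identity, and the lower bound (ii) follows from tightness of inverse distances between disjoint compact sets at scale $1$ combined with scale invariance. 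The quantitative ratio bound (iii) is the most delicate: the lower bound $\varepsilon^2 \le \kc_{\varepsilon r}/\kc_r$ reflects that $\ka_\delta$ is monotone nondecreasing in $\delta$ combined with the $r^2$ prefactor, while the upper bound $\kc_{\varepsilon r}/\kc_r \le \kK\varepsilon$ requires a submultiplicativity-type argument in the spirit of \cite{TightLFPP}, comparing MFPP across nested disks of geometrically decreasing radii to conclude that $\ka_\delta$ cannot grow superlinearly when $\delta$ is increased by a factor of $1/\varepsilon$. Verifying this bound uniformly in $r$ is the technical heart of the proof.
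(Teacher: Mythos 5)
There is a genuine gap at the center of your proposal: you do not explain how to obtain convergence \emph{in probability}, nor how to establish Axiom~\eqref{010B} (locality), and the two issues are linked. A Skorokhod coupling of a subsequence converging in distribution does not produce convergence in probability of $(X \times X, \ka_\varepsilon^{-1}D_X^\varepsilon)$ jointly with $X$ on the original space; for that one must show that the limiting metric is almost surely determined by $X$. Likewise, your claim that locality ``survives the $\mathscr{HausUni}_4$-convergence'' is unjustified: almost-sure measurability statements (and even conditional independence) are not in general preserved under weak limits. The paper resolves both points with the local-metric machinery of \Cref{s:05}: one first shows that any subsequential limit satisfies the weaker conditional-independence property of \Cref{048}, Axiom~\eqref{048B} (this is \Cref{274}, and it is itself nontrivial --- it requires the total-variation continuity of the law of the restricted multichordal CLE, \Cref{260}, together with the criterion \Cref{278} for conditional independence to pass to weak limits); then \Cref{006} (via the bi-Lipschitz equivalence of conditionally independent local metrics, \Cref{045}, and an Efron--Stein variance argument over a randomly shifted grid) shows that any such local metric is almost surely determined by $\Upsilon$. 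This measurability is exactly what upgrades convergence in law to convergence in probability and yields Axiom~\eqref{010B}. Your proposal contains no substitute for this step.

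Two smaller remarks. First, you misidentify the technical heart of Axiom~\eqref{010D}(iii): the bound $\kc_{\varepsilon r}/\kc_r \le \kK\varepsilon$ does not need a submultiplicativity argument in the spirit of \cite{TightLFPP}; it follows from the elementary geometric fact that $\mathop{\mathrm{Leb}}(B_{\varepsilon r}(K)) \ge \kk\varepsilon\mathop{\mathrm{Leb}}(B_r(K))$ for connected compact $K$ (\Cref{334}), which gives $\ka_{\varepsilon r}/\ka_r \ge \kk\varepsilon$ pointwise and hence the upper bound on $\kc_{\varepsilon r}/\kc_r$ after inserting the $r^2$ prefactor as in \Cref{326}; your definition of $\kc_r$ and the lower bound $\varepsilon^2 \le \kc_{\varepsilon r}/\kc_r$ do match the paper. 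Second, invoking ``the tightness established in \cite{TightSimCLE}'' glosses over the fact that those estimates are for a non-nested $\CLE_\kappa$ in a disk, whereas the axioms concern the whole-plane nested ensemble; transferring the Kolmogorov--Centsov and net estimates uniformly over all loops requires the LQG-disk moment bounds and the Hölder control of the uniformizing maps carried out in \Cref{ss:00}.
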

 
\begin{theorem}\label{000}
    Let $D$ and $\widetilde D$ be two weak geodesic $\CLE_\kappa$ carpet metrics with the same distance exponent $\theta$. Then there is a deterministic constant $M > 0$ such that $\widetilde D_X = MD_X$ almost surely.
\end{theorem}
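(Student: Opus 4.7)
The plan is to adapt the uniqueness strategy for the weak LQG metric from \cite{UniCriSupercriLQGMet} to the CLE carpet setting. Both $D$ and $\widetilde D$ come with their own tightness normalizations $\kc_r$ and $\widetilde\kc_r$ via Axiom~\eqref{010D}. The first step is a reduction: after multiplying $\widetilde D$ by a suitable deterministic constant $\lambda > 0$, one may assume $\widetilde\kc_r / \kc_r$ is uniformly bounded above and below in $r$. This follows by combining the upper and lower bounds in Axiom~\eqref{010D}: the $\widetilde D_\Upsilon$- and $D_\Upsilon$-diameters of the carpet inside a typical $\CLE_\kappa$ loop of size $\asymp r$ are both tight when normalized by $\widetilde\kc_r$ and $\kc_r$ respectively, so their ratio is forced to be tight. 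Extracting a subsequential limit $\lambda$ and rescaling $\widetilde D$, one obtains tight bi-Lipschitz equivalence: there exist random $0 < c \le C < \infty$ with $c D_\Upsilon \le \widetilde D_\Upsilon \le C D_\Upsilon$ on every component of $\Upsilon$ meeting a fixed compact set, with $c, C$ tight across scales.

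Next, fix a deterministic compact set $K \subset \CC$ and define the optimal multiplicative constants
\begin{equation*}
    C^*_K := \inf\{C > 0 : \widetilde D_\Upsilon(x, y) \le C D_\Upsilon(x, y) \text{ almost surely for all } x, y \in K \cap \Upsilon \text{ with } x \xleftrightarrow{K \cap \Upsilon} y\},
\end{equation*}
and analogously $c_{*, K}$. By translation invariance (Axiom~\eqref{010C}), ergodicity of whole-plane $\CLE_\kappa$ under translations \cite{CLERiemSph}, and measurability of both metrics with respect to $\Gamma$ and independent auxiliary randomness, these are deterministic constants with $0 < c_{*, K} \le C^*_K < \infty$. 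The locality axiom (Axiom~\eqref{010B}) together with monotonicity (Axiom~\eqref{010E}) implies that they do not depend on $K$, so write them as $c_*, C^*$. The theorem reduces to proving $c_* = C^*$: this forces $\widetilde D_\Upsilon = C^* D_\Upsilon$ on the carpet, and hence $\widetilde D_X = C^* D_X$ for every $X$ satisfying \eqref{eq:004}.

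To show $c_* = C^*$, suppose for contradiction $c_* < C^*$. By the definition of $c_*$, there is a positive-probability event on which one can find $x, y \in \Upsilon$ with a $\widetilde D$-shortcut: $\widetilde D_\Upsilon(x, y) \le (c_* + \delta) D_\Upsilon(x, y)$ for some small $\delta > 0$. The key is to exploit the nested structure of $\CLE_\kappa$: the carpet inside each loop $\SCL \in \Gamma$ is conditionally independent of the carpet outside $\SCL$ given $\SCL$, so instances of the shortcut event occur nearly independently at many distinct annular scales. Applying the locality axiom to restrict the two metrics to nested annular regions and concatenating the independent shortcut segments along a path produces, with positive probability, a globally improved bound $\widetilde D_\Upsilon \le (C^* - \delta') D_\Upsilon$ for some $\delta' > 0$, contradicting the definition of $C^*$. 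A symmetric argument rules out strict inequality at $c_*$.

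The hardest step is the multi-scale concatenation above. Because the weak metric lacks exact scale covariance (available only for the strong metric of Theorem~\ref{320}), the concatenation must be executed via the tightness bounds in Axiom~\eqref{010D}(iii), carefully tracking how $\kc_r$ evolves across scales so that aggregated local improvements yield a genuine global constant-factor improvement. The polynomial bounds $\varepsilon^2 \le \kc_{\varepsilon r}/\kc_r \le \kK\varepsilon$ are essential for this quantitative control. Executing the concatenation in full will require a careful confluence-of-geodesics analysis together with a renewal-type argument exploiting the conditional independence of the interiors and exteriors of nested $\CLE_\kappa$ loops, which together play the role of the Markov property of the Gaussian free field in the LQG uniqueness arguments of \cite{ExUniLQG,UniCriSupercriLQGMet}.
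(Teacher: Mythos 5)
Your high-level skeleton --- reduce to comparable normalizations, establish a bi-Lipschitz comparison, define optimal multiplicative constants, and derive a contradiction from their strict inequality via shortcuts occurring nearly independently at many scales --- matches the paper's strategy (cf.\ \Cref{045} and the constants $M_\ast$, $M^\ast$ of \eqref{eq:321}--\eqref{eq:322}). Moreover, the concatenation-along-a-geodesic argument you sketch is essentially how the paper proves one half of the needed input, namely that $D_\Upsilon$-geodesics crossing annuli at many scales must contain segments on which $\widetilde D_\Upsilon \ge M D_\Upsilon$ for $M$ close to $M^\ast$ (\Cref{004}, proved by contrapositive exactly as you describe).

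The genuine gap is in the other half of the contradiction. To improve the global upper bound from $M^\ast$ to $M^\ast-\delta'$ you must show that the $D_\Upsilon$-geodesic between two given points passes close to many segments on which $\widetilde D_\Upsilon \le (M_\ast+\delta)D_\Upsilon$. Those segments are efficient for $\widetilde D$, not for $D$: they lie along $\widetilde D_\Upsilon$-geodesics, and a $D_\Upsilon$-geodesic has no reason to visit them. Independence across annuli only gives that such shortcuts exist \emph{somewhere} in each annulus; ``concatenating the independent shortcut segments along a path'' does not produce a competitor of controlled $D_\Upsilon$-length unless you can bound the cost of detouring to each shortcut, and that is precisely the missing mechanism. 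You propose to supply it via confluence of geodesics, but confluence is not established for these metrics in this paper (it is deferred to \cite{ConfGeoCLE}) and is not used in the uniqueness proof. The paper's actual device (\Cref{s:00}--\Cref{s:02}) is specific to CLE and has no analogue in your outline: one resamples the loop ensemble inside a deterministic chain of thin tubes so that, after rewiring, connected arcs of $\Gamma$ form a topological corridor ($\SCL_r$, $\gamma_r$, $\gamma_r^\prime$) which any geodesic entering $B_r(0)$ is forced to traverse and which is threaded through the shortcuts (\Cref{096}); the contradiction is then extracted by a counting argument comparing the events $\kF_Z^{\rr}(K_1,K_2)$ and $\kF_Z^{\rr}(K_1,K_2)^\vee$, which have equal probability by the symmetry of the resampling (\eqref{eq:214}, \Cref{201}, \Cref{204}). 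This replaces the Weyl-scaling ``bump function'' trick of the LQG uniqueness proofs; since the carpet metric has no Weyl scaling, some such substitute is unavoidable, and nothing in your proposal provides one. A smaller issue: Axiom~(\ref{010D}) alone does not yield a deterministic bi-Lipschitz bound $M^{-1}D_\Upsilon \le \widetilde D_\Upsilon \le MD_\Upsilon$; the paper needs the full iteration of \Cref{045}, which uses the conditional-independence (locality) structure of both metrics simultaneously, not just tightness of diameters.
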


Let us now explain why \Cref{319,321,000} are sufficient to establish our main results, \Cref{305,320}. 

\begin{lemma}\label{312}
    Every weak geodesic $\CLE_\kappa$ carpet metric is a strong geodesic $\CLE_\kappa$ carpet metric. 
\end{lemma}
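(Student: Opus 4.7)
The plan is to establish the scale-covariance portion of Axiom~\eqref{307C} for an arbitrary weak geodesic $\CLE_\kappa$ carpet metric $D$, since all other strong axioms coincide with, or are implied by, their counterparts in \Cref{010}. Concretely, I aim to show that there exists a deterministic $\theta \in \RR$ with
\begin{equation*}
    D_{rX}(r\bullet, r\bullet) = r^\theta D_X(\bullet, \bullet) \quad \text{a.s.\ for every deterministic } r > 0.
\end{equation*}

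The first step is to introduce, for each $r > 0$, the rescaled functional $D^{(r)}_X(\bullet, \bullet) \defeq D_{rX}(r\bullet, r\bullet)$, and to verify that $D^{(r)}$ is itself a weak geodesic $\CLE_\kappa$ carpet metric with tightness constants $\kc^{(r)}_s \defeq \kc_{rs}$. Axioms~\eqref{010A}, \eqref{010B}, \eqref{010C}, and \eqref{010E} for $D^{(r)}$ transfer routinely using the scale invariance in law of $\Upsilon$ and the corresponding axioms for $D$. For Axiom~\eqref{010D}, the change of variable $x \mapsto rx$ turns~\eqref{eq:058} for $D$ at scale $rs$ into the same bound for $D^{(r)}$ at scale $s$; the lower-bound condition is analogous; and~\eqref{eq:340} is preserved because $\kc^{(r)}_{\varepsilon s}/\kc^{(r)}_s = \kc_{r\varepsilon s}/\kc_{rs}$.

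The second step is to apply \Cref{000} to $D$ and $D^{(r)}$, yielding a deterministic $M(r) > 0$ such that $D^{(r)}_X = M(r) D_X$ almost surely. Iterating at scales $r_1$ and $r_2$ and using the identity $D^{(r_1 r_2)}_X(\bullet, \bullet) = D^{(r_1)}_{r_2 X}(r_2 \bullet, r_2 \bullet)$ yields the multiplicative relation $M(r_1 r_2) = M(r_1) M(r_2)$. Since $M$ is measurable (as $D$ is a measurable functional), the standard classification of measurable multiplicative functions on $(0, \infty)$ forces $M(r) = r^\theta$ for a unique $\theta$. The bounds $1 \le \theta \le 2$ then follow by combining the resulting relation $\kc_r \asymp r^\theta$ (a consequence of the lower-bound condition~(ii) applied across scales) with the two-sided bound~\eqref{eq:340}.

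The principal obstacle will be the application of \Cref{000} in the second step, which requires $D$ and $D^{(r)}$ to share the same distance exponent. I expect the exponent for a weak metric to be definable intrinsically via the growth rate of the tightness constants $\kc_s$, and hence to be preserved under the transformation $\kc \mapsto \kc^{(r)}$ because $\kc^{(r)}_s = \kc_{rs}$ has the same growth behavior in $s$ up to multiplicative constants. Once this compatibility is unpacked, the remainder of the argument is a bookkeeping exercise driven by the semigroup structure of scaling together with the Cauchy-type functional equation satisfied by $M$.
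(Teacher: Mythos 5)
Your proposal follows essentially the same route as the paper: rescale to define $D^{(r)}_X(\bullet,\bullet) = D_{rX}(r\bullet,r\bullet)$, check that the rescaled object is again a weak geodesic $\CLE_\kappa$ carpet metric, invoke \Cref{000} to produce a deterministic $M(r)$ with $D^{(r)}_X = M(r)D_X$, and use multiplicativity together with measurability to conclude $M(r)=r^\theta$. The outline and the supporting computations are correct.

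The one point worth addressing is the step you yourself flag as the ``principal obstacle''. You equip $D^{(r)}$ with the new scaling constants $\kc^{(r)}_s = \kc_{rs}$ and then hope that \Cref{000} still applies because the ``distance exponent'' should be an intrinsic growth rate preserved under $\kc \mapsto \kc^{(r)}$; as written this is left as an expectation rather than an argument, and a weak metric does not come equipped with an exponent at all --- the hypothesis of \Cref{000} actually used in its proof is that the two metrics satisfy Axiom~(\ref{010D}) with the \emph{same} family $\{\kc_s\}_{s>0}$. The clean resolution, which is what the paper does, is to observe that $D^{(r)}$ satisfies Axiom~(\ref{010D}) with the \emph{original} constants: writing
\begin{equation*}
    \kc_s^{-1} D_{sX}^{(r)}(s\bullet, s\bullet) = \frac{\kc_{sr}}{\kc_s}\,\kc_{sr}^{-1} D_{srX}(sr\bullet, sr\bullet),
\end{equation*}
the prefactor $\kc_{sr}/\kc_s$ is bounded above and below by positive constants depending only on $r$ by \eqref{eq:340}, and the upper bound, the lower bound, and \eqref{eq:340} itself are all insensitive to multiplying $\{\kc_s\}$ by such bounded factors. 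With that observation --- which is precisely the ``same growth behavior up to multiplicative constants'' remark you already make --- your obstacle disappears and \Cref{000} applies verbatim to the pair $(D, D^{(r)})$. The remaining steps (the Cauchy functional equation for $M$ and its measurability) are exactly as in the paper; your closing claim that $1 \le \theta \le 2$ is not needed for this lemma, since \Cref{307} only requires $\theta \in \RR$.
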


\begin{proof}[Proof of \Cref{312} assuming \Cref{000}]
    For $\lambda > 0$, write $D_{X}^{(\lambda)}(\bullet, \bullet) \defeq D_{\lambda X}(\lambda\bullet, \lambda\bullet)$. We \emph{claim} that $D^{(\lambda)}$ is a weak geodesic $\CLE_\kappa$ carpet metric with the same scaling constants as $D$. Indeed, it is clear that $D^{(\lambda)}$ satisfies \Cref{010}, Axioms~\eqref{010A}, \eqref{010B}, \eqref{010C}, and \eqref{010E}. Moreover, since 
    \begin{equation*}
        \kc_r^{-1} D_{rX}^{(\lambda)}(r\bullet, r\bullet) = \frac{\kc_{r\lambda}}{\kc_r} \kc_{r\lambda}^{-1} D_{r\lambda X}(r\lambda\bullet, r\lambda\bullet)
    \quad \text{and} \quad 
    \begin{cases}
            \lambda^2 \le \kc_{r\lambda}/\kc_r \le \kK\lambda & \text{if } \lambda \in (0, 1); \\
            \kK^{-1}\lambda \le \kc_{r\lambda}/\kc_r \le \lambda^2 & \text{if } \lambda \ge 1,
    \end{cases}
    \end{equation*}
    it follows that $D^{(\lambda)}$ satisfies \Cref{010}, Axiom~\eqref{010D}. Thus, by \Cref{000}, there exists a deterministic constant $M_\lambda > 0$ such that $D_{X}^{(\lambda)} = M_\lambda D_{X}$ almost surely. For $\lambda_1, \lambda_2 > 0$, 
    \begin{equation*}
        M_{\lambda_1\lambda_2}D_{X}(\bullet, \bullet) = D_{X}^{(\lambda_1\lambda_2)}(\bullet, \bullet) = D_{\lambda_2X}^{(\lambda_1)}(\lambda_2\bullet, \lambda_2\bullet) = M_{\lambda_1} D_{\lambda_2X}(\lambda_2\bullet, \lambda_2\bullet) = M_{\lambda_1}M_{\lambda_2}D_{X}(\bullet, \bullet), 
    \end{equation*}
    which implies that $M_{\lambda_1}M_{\lambda_2} = M_{\lambda_1\lambda_2}$. Since the mapping $X \mapsto (X \times X, D_{X})$ is measurable, it follows that the assignment $\lambda \mapsto M_\lambda$ is measurable. Thus, we conclude that there exists $\theta \in \RR$ such that $M_\lambda = \lambda^\theta$ for all $\lambda > 0$, i.e., for each $\lambda > 0$, we have $D_{\lambda X}(\lambda\bullet, \lambda\bullet) = \lambda^\theta D_{X}(\bullet, \bullet)$ almost surely. This completes the proof. 
\end{proof}

\begin{proof}[Proof of \Cref{305,320} assuming \Cref{319,321,000}]
    \Cref{305} follows immediately from \Cref{321,312}. By \Cref{305}, there exists a strong geodesic $\CLE_\kappa$ carpet metric with distance exponent $\theta$. By \Cref{319}, this $\theta$ lies between one and two and is the only possible distance exponent for a strong geodesic $\CLE_\kappa$ carpet metric. Thus, we conclude that every strong geodesic $\CLE_\kappa$ carpet metric satisfies \eqref{eq:340} with $\{r^\theta\}_{r > 0}$ in place of $\{\kc_r\}_{r > 0}$, hence is a weak geodesic $\CLE_\kappa$ carpet metric. Thus, \Cref{320} follows immediately from \Cref{000}. 
\end{proof}

\subsection{Related works}

\subsubsection*{Conformal covariance}

In our future paper \cite{CoCoGeoCLE}, we will prove that, for each simply connected domain $U \subset \CC$, there exists a measurable mapping 
\begin{equation*}
    D^U \colon \mathscr{Haus}_2 \to \mathscr{HausUni}_4 \colon X \mapsto (X \times X, D_X^U)
\end{equation*}
satisfying the following conditions:
\begin{enumerate}[label=(\Roman*), ref=\Roman*]
    \item {\bfseries (Length metric)} Let $\Gamma$ be a non-nested $\CLE_\kappa$ in $U$. Write $X$ for the carpet of $\Gamma$. Then $D_X^U$ is almost surely a length metric on $X \setminus \partial U$ inducing the Euclidean topology. 
    \item {\bfseries (Conformal covariance)} Let $\phi \colon U \mapsto \phi(U)$ be a deterministic conformal mapping. Then, almost surely, 
    \begin{equation*}
        D_{\phi(X)}^{\phi(U)}(\phi(x), \phi(y)) = \inf_{\substack{P \subset X \setminus \partial U\\P \colon x \to y}} \int_0^{\len(P; D_X^U)} \left\lvert\phi^\prime(P(t))\right\rvert^\theta \, \rd t, \quad \forall x, y \in X \setminus \partial U, 
    \end{equation*}
    where $P$ ranges over all continuous paths in $X \setminus \partial U$ from $x$ to $y$ parameterized by $D_X^U$-length.
    \item {\bfseries (Locality)} Let $V \subset U$ be a deterministic simply connected subdomain. Then 
    \begin{equation*}
        D_X^U(\bullet, \bullet; V^\star \cap X) = D_{V^\star \cap X}^{V^\star}(\bullet, \bullet) \quad \text{almost surely}, 
    \end{equation*}
    where $V^\star \defeq V \setminus \overline{\bigcup_{\SCL \in \Gamma : \SCL \not\subset V} \mathop{\mathrm{int}}(\SCL)}$. 
\end{enumerate}

\subsubsection*{Confluence of geodesics}

Confluence of geodesics means that if you take two shortest paths that start at different points but head to the same destination, they usually join together before they get there. This does not happen for usual smooth Riemannian manifolds. It was proved for the Brownian map (equivalently, for $\sqrt{8/3}$-LQG) in \cite{GeoBMap}. The subcritical LQG metric also has this property \cite{ConfLQG}, and that fact was crucial for the uniqueness result in \cite{ExUniLQG}. Confluence has likewise been shown for the critical and supercritical LQG metrics \cite{RegConfSuperLQG} and for the directed landscape \cite{27NetDiLand}. In our forthcoming paper \cite{ConfGeoCLE}, we will show that geodesics in the geodesic $\CLE_\kappa$ carpet metric also satisfy the confluence property.

\subsubsection*{The LQG metric on CLE carpets}

Set $\gamma \defeq \sqrt\kappa$. By a similar argument to the argument applied in \cite{TightSimCLE}, the present paper, and \cite{CoCoGeoCLE}, one may prove that, for each simply connected domain $U \subset \CC$, there exists a measurable mapping 
\begin{equation*}
    D^U \colon \SD^\prime(U) \times \mathscr{Haus}_2 \to \mathscr{HausUni}_4 \colon (\Phi, X) \mapsto (X \times X, D_{\Phi,X}^U)
\end{equation*}
(where $\SD^\prime(U)$ denotes the space of Schwartz distributions on $U$) satisfying the following conditions:
\begin{enumerate}[label=(\Roman*), ref=\Roman*]
    \item {\bfseries (Length metric)} Let $\Phi$ be a Gaussian free field on $U$. Let $\Gamma$ be an independent non-nested $\CLE_\kappa$ in $U$. Write $X$ for the carpet of $\Gamma$. Then $D_{\Phi,X}^U$ is almost surely a length metric on $X \setminus \partial U$ inducing the Euclidean topology.
    \item {\bfseries (Weyl scaling)} There is a deterministic constant $\zeta = \zeta(\kappa) \in \RR$ such that the following is true: Let $f \colon U \to \RR$ be a random continuous function that is independent of $X$. Then, almost surely, 
    \begin{equation}\label{eq:005}
        D_{\Phi + f, X}^U(x, y) = \inf_{\substack{P \subset X \setminus \partial U\\P \colon x \to y}} \int_0^{\len(P; D_{\Phi,X}^U)} \re^{\zeta f(P(t))} \, \rd t, \quad \forall x, y \in X \setminus \partial U, 
    \end{equation}
    where $P$ ranges over all continuous paths in $X \setminus \partial U$ from $x$ to $y$ parameterized by $D_{\Phi,X}^U$-length.
    \item {\bfseries (Conformal covariance)} Let $\phi \colon U \to \phi(U)$ be a deterministic conformal mapping. Then 
    \begin{equation*}
        D_{\phi_\ast(\Phi), \phi(X)}^{\phi(U)}(\phi(\bullet), \phi(\bullet)) = D_{\Phi,X}^U(\bullet, \bullet) \quad \text{almost surely}, 
    \end{equation*}
    where $\phi_\ast(\Phi) \defeq \Phi \circ \phi^{-1} + Q\log(\lvert(\phi^{-1})^\prime\rvert)$. 
    \item {\bfseries (Locality)} Let $V \subset U$ be a deterministic simply connected subdomain. Then 
    \begin{equation*}
        D_{\Phi,X}^U(\bullet, \bullet; V^\star \cap X) = D_{\Phi|_{V^\star},V^\star \cap X}^{V^\star}(\bullet, \bullet) \quad \text{almost surely}, 
    \end{equation*}
    where $V^\star \defeq V \setminus \overline{\bigcup_{\SCL \in \Gamma : \SCL \not\subset V} \mathop{\mathrm{int}}(\SCL)}$. 
\end{enumerate}

We expect that the above metric describes the scaling limit of the chemical distance for statistical mechanics models on random planar maps.

\subsubsection*{CLE(4) carpets and non-simple CLE gaskets}

We expect that the the analogs of our main results still hold for the critical value $\kappa = 4$. However, in this case, additional challenges need to be addressed since the tightness argument given in \cite{TightSimCLE} does not apply and since the four-arm exponent for $\CLE_4$ is equal to $2$.  When $\kappa \in (4, 8)$, the analogs of our main results are also true \cite{TightNonsimCLE,ExUniCoCoGeoMetNonsimCLEGas}. However, in this case, the proofs are completely different due to the fact that the loops of a $\CLE_\kappa$ in this case intersect each other.  The construction of the canonical metric for $\CLE_8$ (i.e., space-filling $\SLE_8$) and how it arises as the scaling limit of the chemical distance on the uniform spanning tree has been established in \cite{SRW2DUST,SLEMatTreeEuc}.  In this case, the metric space has the topology of the a tree where the branches are given by the $\SLE_2$ branches of a $\CLE_8$ so to define a metric one only needs to assign a length to these branches.  This length is supplied by the natural parameterization of $\SLE_2$ \cite{ls2011natural,lr2015natural} and the convergence of loop-erased random walk to $\SLE_2$ with respect to the natural parameterization \cite{lv2021natural} plays a key role in \cite{SRW2DUST}.

\subsubsection*{Other random fractal metrics} There are also several other papers that prove tightness and/or uniqueness for different random fractal metrics. Examples include the critical long-range percolation metric \cite{UniCriLRPMet} and exponential metrics built from higher-dimensional log-correlated Gaussian fields \cite{TightExpMetLogCorGFF}. We also note the \emph{directed landscape}, a random directed metric on $\RR^2$ connected to the KPZ universality class \cite{DiLand}.

\subsection{Open problems}

\subsubsection*{The Distance Exponent}

\begin{conjecture}\label{282}
    The assignment $(8/3, 4) \mapsto \RR \colon \kappa \mapsto \theta(\kappa)$ is continuous and strictly increasing. Moreover, $\theta(\kappa) \to 1$ as $\kappa \to (8/3)^+$. 
\end{conjecture}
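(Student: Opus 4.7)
The conjecture has three parts, and I would attack them in order of increasing difficulty: (a) the limit $\theta(\kappa) \to 1$ as $\kappa \to (8/3)^+$, (b) continuity of $\kappa \mapsto \theta(\kappa)$, and (c) strict monotonicity.

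For (a), the key observation is that $\dim(X) = 1 + 2/\kappa + 3\kappa/32 \to 2$ as $\kappa \to (8/3)^+$, reflecting the fact that in the Brownian loop soup description of \cite{TreeCLE} the soup intensity $c(\kappa) = (3\kappa - 8)(6 - \kappa)/(2\kappa)$ vanishes, so loops become small and sparse. The plan is to construct explicit near-Euclidean paths: given $x, y \in \Upsilon$, take the straight Euclidean segment, and whenever it crosses a loop $\mathcal{L}$, detour around it along $\partial X(\mathcal{L})$. Using the distribution of the conformal radius of the loop surrounding a fixed point from \cite{ConfRadCLE}, together with an explicit tracking of $\kappa$-dependence in the tail constants, one should bound the Lebesgue measure of the $\varepsilon$-tube around this path. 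After renormalization by $\ka_\varepsilon$ this yields an upper bound of the form $D_\Upsilon(x, y) \lesssim \lvert x - y\rvert^{1 + o_\kappa(1)}$, which combined with the lower bound $\theta \geq 1$ from \Cref{302}\eqref{302B} gives $\theta(\kappa) \to 1$.

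For (b), I would exploit the Brownian loop soup coupling to control the $\kappa$-dependence of the carpet. For each fixed $\varepsilon > 0$, the normalized $\varepsilon$-MFPP metric depends continuously on $\kappa$ in law, because the loop soup, hence the carpet, depends continuously on $\kappa$ in Hausdorff distance on compact sets and MFPP is a continuous functional of the underlying set. The main challenge is to make this continuity uniform as $\varepsilon \to 0$. Granted such uniformity, for any sequence $\kappa_n \to \kappa$ the tightness of \Cref{321} combined with the uniqueness of \Cref{000} forces any subsequential limit of the corresponding metrics to coincide with the canonical weak geodesic $\CLE_\kappa$ carpet metric, so that continuity of $\theta(\kappa_n) \to \theta(\kappa)$ drops out of the scaling relation in Axiom~\eqref{307C}. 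The uniform-in-$\varepsilon$ continuity is the technical bottleneck of this step and would likely require revisiting the tightness estimates of \cite{TightSimCLE} with explicit $\kappa$-dependent bounds.

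Strict monotonicity in (c) is the main obstacle. The same loop-soup coupling, combined with Axiom~\eqref{307E}, gives only the weak inequality $\theta(\kappa) \leq \theta(\kappa')$ for $\kappa < \kappa'$: increasing $\kappa$ is equivalent to adding loops, which can only increase MFPP distances. Upgrading this to a strict inequality requires showing that the added loops create macroscopic obstructions to geodesics that are not absorbed into the scale factor $\ka_\varepsilon$. A promising route is to identify $\theta(\kappa)$ variationally, for instance via the asymptotics of the expected diameter in \eqref{eq:263}, and then to use the $\SLE_\kappa$/imaginary geometry description of individual loops to track the strict $\kappa$-dependence of the leading order. Alternatively, once the LQG-on-carpet metric sketched after \Cref{010} is constructed, one may attempt a derivative-in-$\kappa$ argument in the spirit of \cite{UniCriSupercriLQGMet} using the Weyl scaling relation \eqref{eq:005}. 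Either way, I expect this step to require a genuinely new ingredient beyond the axiomatic machinery developed in the present paper.
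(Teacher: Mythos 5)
The statement you are attempting to prove is Conjecture~\ref{282}, which the paper explicitly leaves \emph{open}: no proof is given, the authors merely remark that they ``expect that \Cref{282} is tractable'' by analogy with \cite{DimLQG}. So there is no argument in the paper to check yours against. Your proposal is accordingly a research sketch rather than a proof, and you correctly flag that part (c) likely requires a new idea. But even as a sketch, several of the steps have gaps that are more than technical.

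For (a), your detour path hugging $\partial X(\SCL)$ is not a small perturbation of a Euclidean segment: the boundary of a $\CLE_\kappa$ loop has Hausdorff dimension $1 + \kappa/8 > 1$, so the Euclidean length of each detour is infinite, and the Lebesgue measure of its $\varepsilon$-tube has no obvious $\lvert x - y\rvert$-scaling that would feed into the exponent. More structurally, producing an upper bound on $D_\Upsilon^\varepsilon$ along one candidate path does not control $\ka_\varepsilon^{-1} D_\Upsilon^\varepsilon$ unless you simultaneously lower-bound $\ka_\varepsilon$; since $\ka_\varepsilon$ is itself the median diameter of the same family of $\varepsilon$-MFPP metrics, you would be bounding the numerator and denominator of a ratio by essentially the same quantity, which is circular as written.

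For (b), the premise that ``MFPP is a continuous functional of the underlying set'' in Hausdorff distance is false: MFPP infimizes over paths \emph{inside} the set, and Hausdorff convergence can open or close narrow channels discontinuously, which is precisely why \Cref{s:05} and the locality machinery are needed to show the limit metric is measurable with respect to $\Upsilon$ at all. Hausdorff continuity of the $\CLE_\kappa$ carpet in $\kappa$ is also not established; the loop-soup coupling gives monotonicity of cluster structure as intensity varies, not continuity of the carpet as a closed set.

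For (c), the weak inequality does not follow directly from Axiom~\eqref{307E}. That axiom compares $D_{X_1}$ and $D_{X_2}$ for nested inputs to a single measurable map $D$; you need to compare two different maps $D^\kappa$ and $D^{\kappa'}$, each with its own normalizing sequence $\ka_\varepsilon^\kappa$, $\ka_\varepsilon^{\kappa'}$. In the monotone Brownian loop-soup coupling the carpet does shrink as intensity increases, and un-normalized MFPP distances go up, but you then normalize by the median diameter in the \emph{smaller} carpet, which also goes up. One has to control this ratio of medians before concluding $\theta(\kappa)\le\theta(\kappa')$, and strictness requires more still. Your assessment that this is the genuinely hard part, and that it sits outside the axiomatic machinery of the paper, is consistent with the authors' own view.
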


As mentioned earlier, there are similarities between the geodesic $\CLE_\kappa$ carpet metric and the LQG metric. We suspect that computing the exact value of $\theta(\kappa)$ is highly nontrivial, in analogy with the difficulty of computing the exact value of the LQG dimension $d_\gamma$. However, we expect that \Cref{282} is tractable (cf.~\cite{DimLQG}). Let us also mention that based on Monte Carlo simulations, \cite{Deng:2009vn} predicts that $\theta(\kappa) = (9\kappa + 8)(\kappa + 8)/(128\kappa)$. 

\subsubsection*{Geodesics}

Let $X$ be the carpet of a non-nested $\CLE_\kappa$. Given $X$, let $x$ and $y$ be independent samples from the canonical conformally covariant measure $\mu_X$ (renormalized to be a probability measure) (cf.~\cite{ExUniCoCoVolCLE}). 

\begin{conjecture}\label{107}
    There is almost surely a unique $D_X$-geodesic connecting $x$ and $y$.
\end{conjecture}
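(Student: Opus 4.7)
The strategy is to follow the template developed for uniqueness of geodesics between typical points in the Brownian map and in Liouville quantum gravity (see, e.g., \cite{GeoBMap}), with the confluence-of-geodesics result established in the forthcoming paper \cite{ConfGeoCLE} playing the role of the key metric-geometric input. The high-level idea is to first fix one endpoint $x$, describe the $D_X$-geodesics from $x$ as a tree-like structure $\mathcal T_x$ whose non-uniqueness set forms a small ``cut locus'' in $X$, show that this cut locus is null for the canonical conformally covariant measure $\mu_X$ of \cite{ExUniCoCoVolCLE}, and then integrate out $x$ via Fubini.

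In more detail, the plan is as follows. Using confluence from \cite{ConfGeoCLE} together with Axioms~\eqref{307A} and \eqref{307B} of \Cref{307}, one first argues that for each fixed $x \in X$, the family of $D_X$-geodesics emanating from $x$ almost surely organizes into an $\mathbb R$-tree $\mathcal T_x$: for every $r > 0$ there exists $s \in (0, r)$ such that every geodesic from $x$ to a point outside the $D_X$-ball of radius $r$ about $x$ passes through a common point at $D_X$-distance $s$ from $x$, and iterating dyadically produces a canonical hierarchical decomposition of $\mathcal T_x$. The points $y \in X$ with more than one $D_X$-geodesic to $x$ are exactly the points whose fiber in $\mathcal T_x$ has cardinality at least two; call this cut locus $N_x \subset X$. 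Next, combining the scale covariance of Axiom~\eqref{307C} with the Markov property of $\CLE_\kappa$, one sets up a multi-scale argument: the event that a point $y$ is a branch point at scale $r$ forces two distinct geodesics from $x$ to separate for the first time at that scale, an event whose probability vanishes as $r \to 0$ by confluence. A Borel-Cantelli style summation over dyadic scales, combined with the conformal covariance and absolute continuity properties of $\mu_X$ with respect to the natural Euclidean Minkowski content on $X$, should give $\mu_X(N_x) = 0$ almost surely. Fubini applied to the joint law of $(x, y, \Gamma)$ then yields that for $\mu_X$-almost every $y$ the geodesic from $x$ to $y$ is unique, and symmetrizing in $x$ and $y$ gives the conjectured statement.

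The main obstacle will be the quantitative step showing that branch points of $\mathcal T_x$ are rare at every scale. In LQG this step leverages explicit imaginary-geometry descriptions of geodesics, which are not currently available for the $\CLE_\kappa$ carpet metric. One will likely have to proceed more indirectly, combining confluence with the locality, monotonicity, and tightness axioms (Axioms~\eqref{307B}, \eqref{307E}, and \eqref{307D}) to produce a quantitative ``two-arm'' estimate: the probability that there exist two $D_X$-geodesics from $x$ which separate within $D_X$-distance $r$ and remain at bounded $D_X$-distance apart for time of order one should decay polynomially in $r$. A secondary difficulty is transferring such a Euclidean-scale estimate to the $\mu_X$-scale, for which one will need precise interior absolute continuity between $\mu_X$ and a scaled Minkowski content on the carpet; this step will presumably rely on the characterization of $\mu_X$ from \cite{ExUniCoCoVolCLE} together with \Cref{302}.
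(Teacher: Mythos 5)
This statement is not proved in the paper: it is stated as \Cref{107} in the ``Open problems'' section, and the authors offer no argument for it. So there is no ``paper proof'' to compare against, and your text should be judged as a standalone attempt. As such, it is a strategy outline rather than a proof: the two steps you yourself flag as the ``main obstacle'' and the ``secondary difficulty'' --- the quantitative estimate that two $D_X$-geodesics from $x$ separating at scale $r$ is an event of probability $o(1)$ (summable over dyadic scales), and the comparison between $\mu_X$ and Minkowski content needed to convert that into $\mu_X(N_x)=0$ --- are precisely the content of the conjecture, and neither is supplied. In addition, your main input, confluence of geodesics, is only announced in the forthcoming \cite{ConfGeoCLE}; even granting it, confluence alone does not rule out a pair of geodesics from $x$ to $y$ that coalesce near $x$ and near $y$ but bound a nontrivial ``bubble'' in between, which is exactly the configuration the quantitative two-arm estimate must exclude.

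More fundamentally, the paper itself points to why the LQG/Brownian-map template you invoke does not transfer directly. In LQG, the tie-breaking step (uniqueness of geodesics, and the related statement that $D(x,y)\neq a$ a.s.) exploits the ability to perturb the field by a smooth function with absolutely continuous effect on the law, via Weyl scaling. The remark following \Cref{108} states explicitly that for CLE ``the randomness is inherently discrete,'' so no such perturbation is available, and even the weaker \Cref{108} is ``far from obvious.'' Your proposal does not address this obstruction: the step where one shows that two distinct geodesic arms between the same pair of points have exactly equal $D_X$-length only on a null event is the crux, and the locality, monotonicity, and tightness axioms you list do not by themselves produce the required anti-concentration. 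To make progress one would need a genuinely new mechanism for breaking ties --- for instance a resampling argument in the spirit of \Cref{s:00}--\Cref{s:02}, where local rewirings of the loop ensemble strictly change one candidate geodesic length but not the other --- and that is not sketched here.
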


\Cref{107} is related to the following. 

\begin{conjecture}\label{108}
    For each deterministic $a \ge 0$, we have $D_X(x, y) \neq a$ almost surely. 
\end{conjecture}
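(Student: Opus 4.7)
The goal is to show that the law of $D_X(x,y)$ has no atoms at any deterministic $a \ge 0$. The case $a = 0$ is immediate since $\mu_X$ is almost surely nonatomic (so $x \neq y$) and $D_X$ is a metric. For $a > 0$, the plan is a conditioning/resampling argument that leverages the domain Markov property of nested $\CLE_\kappa$ together with Axioms~\eqref{307A}, \eqref{307B}, and~\eqref{307C}.

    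First, reduce via local absolute continuity and translation invariance to a whole-plane setting in which $x \sim \mu_\Upsilon|_{K_1}$ and $y \sim \mu_\Upsilon|_{K_2}$ are sampled independently and renormalized, for two deterministic disjoint compacta $K_1, K_2 \subset \CC$. A $D_\Upsilon$-geodesic $P$ from $x$ to $y$ exists almost surely by Axiom~\eqref{307A}. Using the density of nested CLE loops at every scale, one finds, with positive probability, a loop $\SCL \in \Gamma$ whose interior $V \defeq \mathop{\mathrm{int}}(\SCL)$ is disjoint from $K_1 \cup K_2$ and is crossed by $P$, with $P$ entering/exiting at some $a, b \in \SCL$. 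Condition on $\SCL$ and on all of $\Gamma$ outside $V$. By the domain Markov property of nested CLE, the nested CLE inside $V$ is an independent copy in $V$, and by Axiom~\eqref{307B} one has the decomposition
    \begin{equation*}
        D_X(x,y) = D_X(x, a) + D_\Upsilon(a, b; V \cap \Upsilon) + D_X(b, y),
    \end{equation*}
    in which the middle summand $W \defeq D_\Upsilon(a, b; V \cap \Upsilon)$ depends only on the interior CLE, while the other two summands depend only on the exterior and on the pair $(a, b)$. Conditionally on the exterior and on $(a, b)$, the total distance is a deterministic shift of $W$, so it suffices to show that the conditional law of $W$ has no atoms.

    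The core step is then to verify atomlessness of the internal distance $W$ between two prescribed boundary points of a CLE carpet in a deterministic simply connected domain. My plan is to iterate the above once more: inside $V$, identify a further nested loop $\SCL' \subset V$ whose interior is crossed by the interior geodesic of $W$, and again condition on the CLE outside $\mathop{\mathrm{int}}(\SCL')$ and resample the interior by an independent nested CLE in $\mathop{\mathrm{int}}(\SCL')$. I would then combine this resampling with scale covariance (Axiom~\eqref{307C}) applied to the deepest sub-cell to couple a one-parameter family of admissible interior CLEs whose contribution to $W$ moves continuously in the scaling parameter $\lambda$ through the exponent $\lambda^\theta$, so that the conditional law of $W$ cannot assign positive mass to any single value.

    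The principal obstacle is the rigorous implementation of this last step. Axiom~\eqref{307C} describes a global rescaling of the carpet, not the selective rescaling of a conditionally sampled sub-cell embedded in a fixed exterior; it is not automatic that the induced internal distance depends continuously on $\lambda$ in a useful distributional sense while the surrounding CLE is held fixed. I expect that the confluence of geodesics, forthcoming in \cite{ConfGeoCLE}, will be the essential input: confluence would allow the geodesic to be pinned inside a narrow tube around $\mathop{\mathrm{int}}(\SCL')$, so that the independent resampling and rescaling of the interior affects $W$ in a controlled, continuous way without perturbing the geodesic structure in the complement of the tube. Once this continuous family is in hand, averaging over the exterior gives \Cref{108}.
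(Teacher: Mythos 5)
This statement is stated in the paper explicitly as a \emph{conjecture} and the paper offers no proof of it; the authors even remark that, unlike the LQG analogue (which can be handled by perturbing the field by a smooth function, an operation under which the law of the field is mutually absolutely continuous), the inherently discrete randomness of CLE makes this conjecture ``far from obvious.'' So there is no argument in the paper to compare against, and your proposal should be judged on its own terms.

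The core of your proposal contains a fundamental geometric error. You propose to find a $\CLE_\kappa$ loop $\SCL \in \Gamma$ whose interior $V = \mathop{\mathrm{int}}(\SCL)$ is ``crossed'' by the $D_X$-geodesic $P$ from $x$ to $y$, and then decompose $D_X(x,y) = D_X(x,a) + D_\Upsilon(a, b; V \cap \Upsilon) + D_X(b,y)$. But in the setting of \Cref{108} the metric space is the carpet $X$ of a non-nested $\CLE_\kappa$, and by definition the carpet consists exactly of those points \emph{not} surrounded by any loop. A continuous path in $X$ can therefore never enter the interior of a loop of $\Gamma$ (it can touch $\SCL$ at best). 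There is no ``middle summand'' $W$ traversing $V$: resampling the nested CLE inside $V$ does not change $X$, and $D_X$ is unaffected. The decomposition is vacuous, and the resampling mechanism you are relying on provides no independent randomness that feeds into $D_X(x,y)$ at all.

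Beyond that, you yourself flag the step that is really at issue: one cannot selectively rescale an embedded sub-cell by a factor $\lambda$ while holding the surrounding carpet fixed, because Axiom~\eqref{307C} is a global scale covariance and such a local rescaling is not a measure-preserving (or even absolutely continuous) operation on the CLE. This is precisely the obstruction the paper points out in its remark following \Cref{108}: there is no CLE analogue of adding a smooth function to the GFF, so there is no one-parameter continuous family of admissible configurations whose densities compare well against each other. Invoking confluence of geodesics does not, on its own, repair this: confluence constrains where geodesics meet, but does not manufacture continuously varying randomness out of an inherently discrete loop ensemble. As the argument stands, the gap is not a technical loose end but the central difficulty that makes this statement a conjecture rather than a theorem in the paper.
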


We remark that the analogue of \Cref{108} for the LQG metric is relatively easy to prove, since one can perturb the Gaussian free field by adding a smooth function without significantly altering its probability density. In contrast, in the case of CLE, the randomness is inherently discrete, which makes the conjecture far from obvious (see also \cite{UniDimGeoCriLRPMet}).

\begin{problem}\label{109}
    Does the $D_{X}$-geodesic connecting $x$ and $y$ almost surely touch the $\CLE_\kappa$ loops? 
\end{problem} 

\subsubsection*{Stable Carpets}

Recent works of Le Gall and Miermont \cite{ScalLimRPMLargeFace} and Curien, Miermont, and Riera \cite{StabCarGas} proved that, as the number of vertices tends to infinity, the rescaled Boltzmann planar maps of index $\alpha \in (1, 2)$ converge to an explicit metric measure space $(\SCS_\alpha, D_\alpha, \mu_\alpha)$, which is called the \emph{$\alpha$-stable carpet/gasket}, with respect to the Gromov-Hausdorff-Prokhorov topology. Boltzmann planar maps arise naturally. For instance, the carpet (resp.~gasket) of a random planar map decorated by a critical Ising model (resp.~a critical percolation model) is a Boltzmann planar map of index $11/6$ (resp.~$7/6$). This links the stable carpets/gaskets and the LQG metric on CLE carpets/gaskets. More precisely: 

\begin{conjecture}
    Set $\alpha \defeq 1/2 + 4/\kappa$. Let $(\SCD, \Phi, 0, \infty)$ be a $\gamma$-LQG disk. Let $\Gamma$ be an independent non-nested $\CLE_\kappa$ in $\SCD$. Write $X$ for the carpet of $\Gamma$. Then, given $\mu_{\Phi,X}(X) = 1$ (cf.~\cite{SimCLELQG}), the metric measure space $(X, D_{\Phi,X}, \mu_{\Phi,X})$ is conditionally a disk variant of the $\alpha$-stable carpet. 
\end{conjecture}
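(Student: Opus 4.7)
The plan is to identify both sides as scaling limits of the chemical distance on a common family of decorated Boltzmann planar maps. For $\alpha = 1/2 + 4/\kappa \in (3/2, 2)$, the Le~Gall-Miermont and Curien-Miermont-Riera results \cite{ScalLimRPMLargeFace,StabCarGas} already give the Gromov-Hausdorff-Prokhorov convergence of the rescaled Boltzmann planar maps of index $\alpha$ to the $\alpha$-stable carpet $(\SCS_\alpha, D_\alpha, \mu_\alpha)$, and a disk-boundary variant is obtainable by conditioning on the perimeter. The conjecture is thereby reduced to identifying the scaling limit of the same discrete chemical distance, when the planar map is viewed as a CLE$_\kappa$-decorated $\gamma$-LQG disk, with the LQG-weighted CLE carpet metric measure space $(X, D_{\Phi,X}, \mu_{\Phi,X})$.

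First I would develop an axiomatic characterization of the $\gamma$-LQG metric on the CLE carpet in the spirit of \Cref{307}: the list must combine (i) the locality, monotonicity, and tightness-across-scales structure inherited from the present paper, (ii) the Weyl scaling relation \eqref{eq:005} from the forthcoming construction of $D_{\Phi,X}^U$, and (iii) conformal covariance from the CLE/LQG coupling together with the KPZ matching $\gamma = \sqrt\kappa$. A uniqueness theorem analogous to \Cref{320} but in the LQG-weighted setting — proved by the same bi-Lipschitz comparison scheme, with the constant scaling factor $M$ from \Cref{000} absorbed into the constant in front of $\re^{\zeta f}$ via the Weyl scaling axiom — then pins down any such metric up to a deterministic multiplicative constant.

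Next I would verify that any subsequential Gromov-Hausdorff-Prokhorov limit of the rescaled chemical distance on the decorated Boltzmann planar map satisfies this axiom list when transported to the continuum via the mating-of-trees / peeling embedding, which identifies the Ising/FK/$O(n)$-decorated random planar map as a $\CLE_\kappa$-decorated $\gamma$-LQG disk with matching indices $\alpha = 1/2 + 4/\kappa$ on the Boltzmann side. Combined with the known convergence to $(\SCS_\alpha, D_\alpha, \mu_\alpha)$ on the discrete side, this forces the two continuum objects to coincide as random metric measure spaces, with the ambiguous multiplicative constant absorbed into the normalization of $D_\alpha$ and $\mu_\alpha$.

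The principal obstacle is the chemical-distance convergence step: the tightness machinery of \cite{TightSimCLE} is tailored to MFPP on the continuum CLE carpet, not to graph distances on discrete decorated maps, and one would need discrete analogs of the percolation/exploration estimates used there, together with a quantitative RSW-type input for the relevant statistical mechanics model. A secondary obstacle is that the LQG-weighted metric $D_{\Phi,X}$ itself is only announced in the excerpt (via \cite{CoCoGeoCLE}) rather than fully constructed, so the whole program is conditional on carrying out that construction in parallel. Once these two inputs are in place, the axiomatic uniqueness above upgrades subsequential convergence to full convergence and delivers the conjectured identification.
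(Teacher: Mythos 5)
The statement you are attempting to prove is labeled as a \emph{conjecture} in the paper and appears in the ``Open problems'' subsection; the paper provides no proof of it, only a heuristic motivation (the scaling-exponent relation ``dimension $\times$ metric exponent $=$ measure exponent'' sketched around \Cref{001}). There is therefore no argument in the paper to compare your attempt against.

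What you have written is, in fact, not a proof but a research program, and you correctly say so yourself: you explicitly flag that the chemical-distance convergence on decorated Boltzmann maps is unavailable, that the tightness machinery of \cite{TightSimCLE} does not transfer to discrete graph distances, and that the LQG-weighted metric $D_{\Phi,X}$ is only announced rather than constructed. Each of these is not a ``secondary obstacle'' but a missing theorem of comparable or greater difficulty than the present paper. Until they are supplied, every step of your outline is conditional, so what you have is an honest hypothesis about a route to the conjecture rather than evidence for it. Two further cautionary points: (i) the axiomatic-uniqueness step in the LQG-weighted setting is not merely a ``bi-Lipschitz comparison scheme with the constant absorbed via Weyl scaling''---the uniqueness proof in this paper occupies \Cref{s:03,s:00,s:02} and rests on delicate shortcut-counting arguments against the CLE loop structure, and it is not obvious how those interact with an additional GFF coordinate; and (ii) your invocation of a ``mating-of-trees / peeling embedding'' to transport the discrete limit to the continuum is precisely the currently-missing link between decorated Boltzmann maps and CLE-decorated LQG metric measure spaces (as opposed to measured surfaces), so it cannot be used as a black box. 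In short, the outline is plausible as a conjecture-level road map and broadly consistent with the intuition the paper itself offers, but it does not constitute a proof and should not be presented as one.
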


\begin{conjecture}\label{001}
    Recall from \eqref{eq:005} the Weyl scaling exponent $\zeta$. We have $\zeta = \gamma/4$. 
\end{conjecture}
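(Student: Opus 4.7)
The plan is to reduce Conjecture~\ref{001} to a universal Hausdorff dimension statement and then establish that statement via convergence of discrete models. The first step is to apply Weyl scaling \eqref{eq:005} with $f\equiv c$ a constant, which yields $D^U_{\Phi+c,X} = e^{\zeta c}D^U_{\Phi,X}$. In parallel, the $\gamma$-LQG-weighted canonical volume measure on the carpet---obtained by combining the canonical conformally covariant CLE carpet measure of \cite{SimCLELQG,ExUniCoCoVolCLE} with Gaussian multiplicative chaos---satisfies $\mu_{\Phi+c,X} = e^{\gamma c}\mu_{\Phi,X}$. If the random metric measure space $(X, D^U_{\Phi,X}, \mu_{\Phi,X})$ has almost-sure Hausdorff dimension $d$, then preserving the ball-volume relation $\mu(B(r))\asymp r^d$ under the transformation $\Phi \mapsto \Phi + c$ forces $\gamma = \zeta d$, hence $\zeta = \gamma/d$. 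Therefore Conjecture~\ref{001} is equivalent to the statement that this Hausdorff dimension equals $4$ almost surely for every $\kappa \in (8/3, 4)$.

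The second step is to prove the universal dimension is $4$, which I would approach via scaling-limit convergence of decorated random planar maps. For $\alpha = 1/2 + 4/\kappa \in (3/2, 2)$, the Boltzmann map of index $\alpha$---arising, for example, from the critical Ising model on a random planar map when $\kappa = 3$---is expected to converge, in the Gromov--Hausdorff--Prokhorov sense after rescaling distances by $n^{1/4}$ and volumes by $n$, to the LQG-CLE carpet equipped with $D^U_{\Phi,X}$ and $\mu_{\Phi,X}$. The quarter-power distance scaling on the discrete side would then transfer directly into Hausdorff dimension $4$ on the continuum side, uniformly in $\kappa$. A more intrinsic alternative is to use the mating-of-trees representation of LQG--$\CLE_\kappa$ in terms of a pair of $4/\kappa$-stable L\'evy excursions, and extract the dimension from fluctuation estimates for those processes. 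Consistency at the boundary $\kappa \to (8/3)^+$ is a useful check: in that limit $\gamma \to \sqrt{8/3}$, and $\zeta = \gamma/4 = 1/\sqrt{6}$ matches the known LQG distance-scaling exponent of the Brownian map.

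The main obstacle is establishing this universal quarter-power chemical-distance scaling: it is known for the undecorated Brownian-map case ($\alpha = 2$), but for $\alpha \in (3/2, 2)$ the required distance bounds on decorated stable maps are not available, and the additional correlations introduced by the statistical-mechanics decoration further complicate the analysis. A secondary prerequisite is the construction and uniqueness of $D^U_{\Phi,X}$ itself, which the authors indicate can be carried out by running the MFPP framework of the present paper with GFF-weighted approximations---substantial but, in principle, parallel to the present work.
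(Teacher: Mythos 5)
The statement you are trying to prove is a conjecture, and the paper does not prove it; it only gives a scaling heuristic immediately after the conjecture statement. You should therefore compare your heuristic to theirs, and here your reasoning contains two compensating errors. First, the carpet measure does \emph{not} satisfy $\mu_{\Phi + c, X} = \re^{\gamma c}\mu_{\Phi, X}$: that is the scaling of the full $\gamma$-LQG area measure. The canonical $\gamma$-LQG carpet measure of \cite{SimCLELQG} lives on a set that, on an LQG disk, has total mass scaling like $\ell^\alpha$ in the boundary length $\ell$, where $\alpha = 1/2 + 4/\kappa$; since adding $c$ to $\Phi$ scales boundary length by $\re^{(\gamma/2)c}$, the correct scaling is $\mu_{\Phi + c, X} = \re^{(\gamma\alpha/2)c}\mu_{\Phi, X}$. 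This coincides with your claim only at $\alpha = 2$, i.e., at $\kappa = 8/3$. Second, and for the same reason, the target dimension is not $4$: the Hausdorff dimension of the $\alpha$-stable carpet/gasket \cite{ScalLimRPMLargeFace,StabCarGas} is $2\alpha = 1 + 8/\kappa \in (3, 4)$ for $\kappa \in (8/3, 4)$, and $4$ only at the Brownian-map endpoint. Your reduction of the conjecture to ``the metric measure space has dimension $4$ for every $\kappa$'' is therefore an incorrect reformulation, and if the dimension really were $4$ the $\alpha$-stable structure would break.

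The two mistakes cancel in the arithmetic --- $\gamma/d$ with $\gamma$ in the numerator and $d = 4$ in the denominator gives $\gamma/4$, and the paper's $(\gamma\alpha/2)/(2\alpha)$ also gives $\gamma/4$ --- which is why your final answer agrees with the conjectured value, but this is a coincidence of the algebra, not evidence that the argument is sound at each $\kappa$. The same issue infects your proposed discrete-convergence route: for $\alpha$-stable Boltzmann maps with $\alpha < 2$ the correct distance renormalization is $n^{1/(2\alpha)}$, not $n^{1/4}$, so there is no universal quarter-power chemical distance scaling to transfer. If you want a sound heuristic, use exactly the relation in the paper: adding $(2/\gamma)\log C$ to $\Phi$ scales $\mu_{\Phi,X}$ by $C^\alpha$ and $D_{\Phi,X}$ by $C^{2\zeta/\gamma}$, and the identity ``$\dim \times$ metric exponent $=$ measure exponent'' with $\dim = 2\alpha$ gives $(2\alpha)(2\zeta/\gamma) = \alpha$, hence $\zeta = \gamma/4$ with both $\alpha$'s canceling. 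Your $\kappa \to (8/3)^+$ consistency check is fine, but it is precisely the one regime where your two erroneous identities become true, so it cannot detect the flaw.
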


The intuitive reason for \Cref{001} is as follows: We may scale the measure $\mu_{\Phi,X}$ by a factor of $C^\alpha > 0$ by adding $(2/\gamma)\log(C)$ to $\Phi$. This results in scaling the metric $D_{\Phi,X}$ by $C^{2\zeta/\gamma}$. The Hausdorff dimension of the $\alpha$-stable carpet is almost surely equal to $2\alpha$. Since
\begin{equation*}
    \parbox{.85\linewidth}{``the dimension times the scaling exponent of the metric is equal to the scaling exponent of the measure'',}
\end{equation*}
we expect that $(2\alpha) \cdot (2\zeta/\gamma) = \alpha$. 

\subsection{Organization of the paper}

The remainder of the paper is structured as follows. In \Cref{s:04}, we establish notational conventions and review the necessary background material. This section also contains several quantitative estimates that will be used repeatedly throughout the paper. In \Cref{s:05}, we define the concept of local metrics on CLE carpets, prove that any two such metrics are bi-Lipschitz equivalent, and show that the local metric is almost surely determined by the carpet itself. \Cref{s:01} is devoted to proving the existence of a weak geodesic $\CLE_\kappa$ carpet metric (\Cref{321}). In \Cref{s:06}, we establish that the distance exponent $\theta$ is uniquely determined by $\kappa$ (\Cref{319}) and compute the Hausdorff dimensions of both the carpet and its geodesics (\Cref{302}). Finally, \Cref{s:03}, \Cref{s:00}, and \Cref{s:02} contain the proof of the uniqueness of weak geodesic $\CLE_\kappa$ carpet metrics (\Cref{000}).

\subsection*{Acknowledgements}

J.M.~received support from ERC consolidator grant ARPF (Horizon Europe UKRI G120614).  Y.T.~was supported by Cambridge International Scholarship from Cambridge Trust. 

\section{Preliminaries}\label{s:04}

\subsection{Notation and conventions}\label{ss:12}

\subsubsection*{Numbers}

The notation $\CC$ (resp.~$\RR$; $\QQ$) will be used to denote the field of complex (resp.~real; rational) numbers. The notation $\ZZ$ (resp.~$\NN$) will be used to denote the set of integers (resp.~positive integers). For $a < b$, we shall write $[a, b]_\ZZ \defeq [a, b] \cap \ZZ$. 

\subsubsection*{Geometry}

The notation $\HH$ (resp.~$\DD$) will be used to denote the upper half-plane (resp.~the open unit disk). For $z \in \CC$ and $0 < s < t$, we shall write $B_t(z)$ for the open Euclidean ball of radius $t$ centered at $z$; we shall write $A_{s,t}(z)$ for the open Euclidean annulus of inner radius $s$ and outer radius $t$ centered at $z$. We shall refer to as a \emph{rectangle} (resp.~\emph{square}) an open subset of the form $(a, a + s) \times (b, b + t) \subset \CC$ (resp.~$(a, a + t) \times (b, b + t) \subset \CC$), where $a, b \in \RR$ and $s, t > 0$. 

Let $(X, d)$ be a metric space. Then we shall write $\diam(X; d) \defeq \sup_{x, y \in X} d(x, y)$. For $x \in X$ and $r > 0$, we shall write $B_r(x; d) \defeq \{y \in X : d(x, y) < r\}$ and $\overline B_r(x; d) \defeq \{y \in X : d(x, y) \le r\}$. 

We shall refer to as a \emph{conformal mapping} a holomorphic bijection between two Riemann surfaces. We shall refer to two Riemann surfaces as \emph{conformally equivalent} if they are biholomorphic. We shall refer to as a \emph{domain} a connected open subset of $\CC$. We shall refer to as a \emph{simply connected domain} an open subset of $\CC$ that is conformally equivalent to $\DD$. We shall refer to as a \emph{doubly connected domain} an open subset of $\CC$ that is conformally equivalent to $A_{s,t}(0)$ for some $0 < s < t$. 

We shall refer to as a \emph{dyadic square} a square with side length $2^n$ and vertices in $(2^n\ZZ)^2$ for some $n \in \ZZ$. We shall write $\mathscr{DySq}$ for the collection of dyadic squares. We shall refer to a domain $U \subset \CC$ as \emph{dyadic} if there exists a finite collection $\CS$ of dyadic squares such that $U$ is the interior of $\bigcup_{S \in \CS} \overline S$. We shall write $\mathscr{DyDo}$ for the collection of dyadic domains. 

We shall refer to as a \emph{path} a continuous mapping from $[0, 1]$ to $\CC$. We shall refer to as a \emph{parameterized loop} a continuous mapping from $\RR/\ZZ$ to $\CC$. The group of orientation-preserving homeomorphisms of $\RR/\ZZ$ acts on the space of parameterized loops via pre-composition. We shall refer to as a \emph{loop} an orbit of this action. We shall write $\mathscr{Loop}$ for the metric space consisting of loops in $\CC$, and equipped with the metric given by
\begin{equation*}
    d_\mathscr{Loop}(\SCL_1, \SCL_2) \defeq \inf_{P_1, P_2} \sup_{t \in \RR/\ZZ} \left\lvert P_1(t) - P_2(t)\right\rvert, \quad \forall \SCL_1, \SCL_2 \in \mathscr{Loop}, 
\end{equation*}
where $P_1$ (resp.~$P_2$) ranges over all parameterizations of $\SCL_1$ (resp.~$\SCL_2$). One verifies immediately that $\mathscr{Loop}$ is a separable and complete metric space. 

Let $U \subset \CC$ be an open subset. Then we shall refer to as a \emph{$U$-excursion} a path $P \colon [0, 1] \to \overline U$ such that $P(0), P(1) \in \partial U$ and $P((0, 1)) \subset U$. Let $K \subset U$ be a compact subset. Then we shall refer to as a \emph{$(U, K)$-excursion} a $U$-excursion that intersects $K$. Let $P \colon [0, 1] \to \CC$ be a path. Then we shall refer to as a \emph{$U$-excursion of $P$} times $0 \le s < t \le 1$ such that $P|_{[s, t]}$ forms a $U$-excursion. We shall refer to as a \emph{$(U, K)$-excursion of $P$} times $0 \le s < \sigma < \tau < t \le 1$ such that $P|_{[s, t]}$ forms a $U$-excursion, $\sigma$ is the first time after $s$ at which $P$ hits $K$, and $\tau$ is the last time before $t$ at which $P$ hits $K$. Let $\SCL \subset \CC$ be a loop. Then we shall refer to as a \emph{complementary $(U, K)$-excursion of $\SCL$} a connected arc of $\SCL$ that does not overlap with any $(U, K)$-excursion of $\SCL$ and is not contained in any larger connected arc of $\SCL$ with such property.

\subsubsection*{Probability}

Let $\{E_t\}_{t > 0}$ be a family of events. Then:
\begin{itemize}
    \item We shall say that $E_t$ occurs with \emph{polynomially high probability} as $t \to 0$ (resp.~$t \to \infty$) if there exists $\alpha > 0$ and $C > 0$ such that 
    \begin{equation}\label{eq:006}
        \BP\lbrack E_t\rbrack \ge 1 - Ct^\alpha \quad \text{(resp.~} \BP\lbrack E_t\rbrack \ge 1 - Ct^{-\alpha}\text{)}, \quad \forall t > 0. 
    \end{equation}
    \item We shall say that $E_t$ occurs with \emph{superpolynomially high probability} as $t \to 0$ (resp.~$t \to \infty$) if for each $\alpha > 0$, there exists $C = C(\alpha) > 0$ such that \eqref{eq:006} holds.
\end{itemize}

\subsection{Simple Conformal Loop Ensembles}\label{ss:01}

In the present subsection, we review the constructions and properties of simple conformal loop ensembles.

\begin{definition}
    For each simply connected domain $U \subset \CC$, a \emph{non-nested simple conformal loop ensemble (CLE)} in $U$ is a random collection $\Gamma_U$ of non-nested disjoint simple loops in $U$ satisfying the following conditions:
    \begin{itemize}
        \item {\bfseries (Conformal invariance)} For each conformal mapping $\phi \colon U \to \phi(U)$, the collection $\phi(\Gamma_U)$ is a non-nested CLE in $\phi(U)$. 
        \item {\bfseries (Restriction)} Let $V \subset U$ be a simply connected subdomain. Write 
        \begin{equation*}
            V^\star \defeq V \setminus \overline{\bigcup_{\SCL \in \Gamma_U : \SCL \not\subset V} \mathop{\mathrm{int}}(\SCL)}.
        \end{equation*}
        Then the restrictions of $\Gamma_U$ to the connected components of $V^\star$ are conditionally independent non-nested CLEs given $V^\star$. 
    \end{itemize}
\end{definition}

We review two different constructions of simple CLE, one using Brownian loop-soup \cite{CLE}, the other using boundary conformal loop ensembles (BCLE) \cite{CLEPerc}, both of which will be relevant for this work.

\subsubsection*{Brownian loop-soup}

The \emph{Brownian loop measure} $\mu^{\mathrm{loop}}$ is a $\sigma$-finite measure on the metric space $\mathscr{Loop}$ which is uniquely (up to a multiplicative constant) characterized by the following property \cite{BLS,CoInMeasSAL}: For each conformal mapping $\phi \colon U \to \phi(U)$, if we write $\mu_U^{\mathrm{loop}}$ for the restriction of $\mu^{\mathrm{loop}}$ on $\{\SCL \in \mathscr{Loop} : \SCL \subset U\}$, then $\phi_\ast(\mu_U^{\mathrm{loop}}) = \mu_{\phi(U)}^{\mathrm{loop}}$.

Let $c \in (0, 1]$. A \emph{Brownian loop-soup} of intensity $c$ in $U$ is a Poisson point process with intensity measure given by $c\mu_U^{\mathrm{loop}}$. Let $\Xi_U$ be a Brownian loop-soup of intensity $c$ in $U$. Write
\begin{equation*}
    \kappa(c) \defeq \frac{13 - c - \sqrt{(1 - c)(25 - c)}}3 \in (8/3, 4]. 
\end{equation*}
Write $\Gamma_U$ for the collection of the outer boundaries of the outermost clusters of the Brownian loops of $\Xi_U$. Then $\Gamma_U$ is called a \emph{non-nested CLE$_\kappa$} in $U$.

The following is proved in \cite{CLE}. 

\begin{theorem}
    Each non-nested $\CLE_\kappa$ for $\kappa \in (8/3, 4]$ is a non-nested simple CLE. Conversely, each non-nested simple CLE is a non-nested $\CLE_\kappa$ for some $\kappa \in (8/3, 4]$.
\end{theorem}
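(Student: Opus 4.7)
The plan is to handle the two directions separately, following the strategy of Sheffield-Werner \cite{CLE}. The forward direction (the Brownian loop-soup construction satisfies the axioms) is relatively direct and uses standard properties of Poisson point processes combined with the conformal invariance of $\mu^{\mathrm{loop}}$. The converse direction is the substantive content and requires identifying SLE-type curves inside the ensemble.

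For the forward direction, fix $c \in (0,1]$, let $\Xi_U$ be a Brownian loop-soup of intensity $c$ in $U$, and let $\Gamma_U$ be the collection of outer boundaries of its outermost clusters. Conformal invariance of $\Gamma_U$ follows immediately from the defining conformal invariance of $\mu^{\mathrm{loop}}$: if $\phi \colon U \to \phi(U)$ is conformal, then $\phi_\ast(\Xi_U)$ is a Brownian loop-soup of intensity $c$ in $\phi(U)$, and because $\phi$ is a homeomorphism it preserves both the clustering relation and the notion of outermost. For the restriction axiom, given a simply connected $V \subset U$, I would split $\Xi_U = \Xi_V \sqcup \Xi'$, where $\Xi_V$ (resp.~$\Xi'$) consists of the loops contained in (resp.~not contained in) $V$. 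By Poisson thinning, $\Xi_V$ and $\Xi'$ are independent, and $\Xi_V$ is a Brownian loop-soup of intensity $c$ in $V$. The set $V^\star$ is measurable with respect to $\Xi'$ together with those loops of $\Xi_V$ belonging to outermost clusters that exit $V$ by being clustered with loops in $\Xi'$. Conditionally on this data, for each connected component $W$ of $V^\star$, the loops of $\Xi_V$ contained in $W$ form a Brownian loop-soup of intensity $c$ in $W$, and their outermost cluster boundaries coincide with the loops of $\Gamma_U$ contained in $W$; hence the restriction axiom holds.

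For the converse direction, start with a non-nested simple CLE $\Gamma_U$ in a simply connected $U$. The plan is to explore a single loop of $\Gamma_U$ and identify the exploration as a variant of SLE. Fix a boundary prime end $x \in \partial U$ and use the restriction axiom on subdomains shrinking toward $x$ to define a ``discovery'' process that traces the concatenation of arcs of $\CLE$ loops touched as one peels the domain inward from $x$. Conformal invariance and the restriction axiom together give this exploration the domain Markov property relative to the conformal coordinate, so Schramm's characterization (together with its extension to processes with force points at the boundary) forces it to be an $\SLE_\kappa(\kappa-6)$ process for some $\kappa$. Simplicity of the CLE loops constrains $\kappa \le 4$, and non-degeneracy of the loops (they carry macroscopic outer boundaries) constrains $\kappa > 8/3$. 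Finally, one identifies the intensity $c \in (0,1]$ from $\kappa$ via the relation $\kappa = \kappa(c)$ and builds a coupling between $\Gamma_U$ and the outermost-cluster boundaries of a Brownian loop-soup of intensity $c$; the coupling is underwritten by the fact, due to \cite{CoInMeasSAL}, that the outer boundary of a single Brownian loop-soup cluster is $\SLE_{\kappa(c)}$-like, so that the two constructions produce ensembles with the same local exploration dynamics and are therefore equal in law.

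The main obstacle is the converse direction, and within it, the clean identification of the exploration as an $\SLE_\kappa(\kappa-6)$ process. The difficulty is threefold: (i) showing that the exploration is well-defined as a continuous curve, which requires ruling out pathological behavior when the exploration first touches a macroscopic $\CLE$ loop; (ii) extracting a genuine domain Markov property for the exploration from the restriction axiom for $\Gamma_U$, which is a statement about the full ensemble rather than a single curve; and (iii) matching the resulting $\SLE_\kappa(\kappa-6)$-driven ensemble to the loop-soup construction, which requires a delicate analysis of how Brownian loop-soup cluster outer boundaries decompose into arcs under the same exploration dynamics. By contrast, once the exploration is identified, the range $\kappa \in (8/3, 4]$ and the resulting identification $\kappa = \kappa(c)$ are forced by elementary geometric constraints on simple loops and by the loop-soup phase transition.
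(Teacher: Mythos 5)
The paper does not prove this statement at all: it is quoted verbatim from Sheffield--Werner \cite{CLE}, and the only ``proof'' given is that citation. So the relevant comparison is between your outline and the argument of \cite{CLE}. Your outline does correctly reproduce the architecture of that argument --- loop-soup construction plus Poissonian restriction for the forward direction, exploration process identified with $\SLE_\kappa(\kappa-6)$ plus a matching of exploration dynamics for the converse --- but it is a plan rather than a proof, and the steps you defer are precisely the content of the theorem.

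Concretely, three of your steps carry essentially all of the difficulty and are asserted rather than argued. First, in the converse direction, the existence of the exploration as a continuous curve is not a technicality: Sheffield--Werner construct it as a limit of discrete ``cut-out'' explorations and a large part of their paper is devoted to showing this limit exists, is continuous, and is target-invariant; your item (i) names the issue but offers no mechanism for resolving it. Second, the appeal to ``Schramm's characterization'' is not directly available: that characterization applies to chordal curves satisfying conformal invariance and the domain Markov property with respect to their own past, whereas your exploration has a force point and its Markov property must be extracted from the restriction axiom of the \emph{ensemble}; upgrading one to the other is the two-point/one-point pinning analysis in \cite{CLE}, not a black box. Third, the final assertion that ``the two constructions produce ensembles with the same local exploration dynamics and are therefore equal in law'' presupposes a uniqueness statement --- that the law of a simple CLE is determined by the law of its boundary exploration --- which is itself a theorem requiring proof. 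In the forward direction there is also a smaller gap: conditioning on which loops of $\Xi_V$ belong to clusters that exit $V$ is a nontrivial conditioning on $\Xi_V$, so the claim that the remaining loops form a loop soup in $V^\star$ needs the iterative cluster-discovery argument, not just Poisson thinning. For the purposes of this paper, the correct move is the one the authors make: cite \cite{CLE} and move on.
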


\subsubsection*{CLE percolation}

Let $\kappa \in (2, 4]$; $\rho \in (-2, \kappa - 4)$; $\Gamma$ a branching $\SLE_\kappa(\rho; \kappa - 6 - \rho)$ process in $\HH$ starting from $0$ and targeting all other boundary points. If we equip $\Gamma$ with the orientation from $0$ towards other boundary points, then we shall refer to $\Gamma$ as a \emph{BCLE$_\kappa^\circlearrowright(\rho)$}. Moreover, we shall refer to the clockwise (resp.~counterclockwise) loops formed from $\Gamma$ as the \emph{true loops} (resp.~\emph{false loops}) of the $\BCLE_\kappa^\circlearrowright(\rho)$. (Note that any point $z \in \HH \setminus \Gamma$ is surrounded by either a true loop or a false loop.) We shall refer to a $\BCLE_\kappa^\circlearrowright(\kappa - 6 - \rho)$ as a \emph{BCLE$_\kappa^\circlearrowleft(\rho)$}; to the true loops (resp.~false loops) of $\BCLE_\kappa^\circlearrowright(\kappa - 6 - \rho)$ as the \emph{false loops} (resp.~\emph{true loops}) of $\BCLE_\kappa^\circlearrowleft(\rho)$.

Let $\kappa^\prime \in (4, 8)$; $\rho^\prime \in [\kappa^\prime / 2 - 4, \kappa^\prime / 2 - 2]$; $\Gamma^\prime$ a branching $\SLE_{\kappa^\prime}(\rho^\prime; \kappa^\prime - 6 - \rho^\prime)$ process in $\HH$ starting from $0$ and targeting all other boundary points. If we equip $\Gamma^\prime$ with the orientation from $0$ towards other boundary points, then we shall refer to $\Gamma^\prime$ as a \emph{BCLE$_{\kappa^\prime}^\circlearrowright(\rho^\prime)$}. Moreover, we shall refer to the clockwise (resp.~counterclockwise) loops formed from $\Gamma^\prime$ as the \emph{true loops} (resp.~\emph{false loops}) of the $\BCLE_{\kappa^\prime}^\circlearrowright(\rho^\prime)$. (Note that any point $z \in \HH \setminus \Gamma^\prime$ is surrounded by either a true loop or a false loop.) We shall refer to a $\BCLE_{\kappa^\prime}^\circlearrowright(\kappa^\prime - 6 - \rho^\prime)$ as a \emph{BCLE$_{\kappa^\prime}^\circlearrowleft(\rho^\prime)$}; to the true loops (resp.~false loops) of $\BCLE_{\kappa^\prime}^\circlearrowright(\kappa^\prime - 6 - \rho^\prime)$ as the \emph{false loops} (resp.~\emph{true loops}) of $\BCLE_{\kappa^\prime}^\circlearrowleft(\rho^\prime)$.

Note that by "forgetting the orientations", a $\BCLE_\kappa^\circlearrowright(\rho)$ and a $\BCLE_\kappa^\circlearrowleft(\rho)$ have the same law, which we shall simply denote by \emph{BCLE$_\kappa(\rho)$}. In a similar vein, we shall use the notation \emph{BCLE$_{\kappa^\prime}(\rho^\prime)$}. 

Let $\kappa \in (2, 4]$; $\rho \in (-2, \kappa - 4)$. Then:
\begin{enumerate}
    \item The law of a $\BCLE_\kappa^\circlearrowright(\rho)$ is invariant under any conformal automorphism of $\HH$. 
    \item The image of a $\BCLE_\kappa^\circlearrowright(\rho)$ under any anticonformal automorphism of $\HH$ is a $\BCLE_\kappa^\circlearrowleft(\rho)$. 
    \item The orientation-reversal of a $\BCLE_\kappa^\circlearrowright(\rho)$ is a $\BCLE_\kappa^\circlearrowleft(\rho)$. 
    \item The collection of loops of a $\BCLE_\kappa(\rho)$ is locally finite. 
\end{enumerate}

Let $\kappa^\prime \in (4, 8)$; $\rho^\prime \in [\kappa^\prime / 2 - 4, \kappa^\prime / 2 - 2]$. Then:
\begin{enumerate}
    \item The law of a $\BCLE_{\kappa^\prime}^\circlearrowright(\rho^\prime)$ is invariant under any conformal automorphism of $\HH$. 
    \item The image of a $\BCLE_{\kappa^\prime}^\circlearrowright(\rho^\prime)$ under any anticonformal automorphism of $\HH$ is a $\BCLE_{\kappa^\prime}^\circlearrowleft(\rho^\prime)$. 
    \item The orientation-reversal of a $\BCLE_{\kappa^\prime}^\circlearrowright(\rho^\prime)$ is a $\BCLE_{\kappa^\prime}^\circlearrowleft(\rho^\prime)$. 
    \item The collection of loops of a $\BCLE_{\kappa^\prime}(\rho^\prime)$ is locally finite. 
\end{enumerate}

Let $\kappa \in (8/3, 4)$; $\beta \in [-1, 1]$. Then we shall refer to as the \emph{trunk} of an $\SLE_\kappa^\beta(\kappa - 6)$ the locus of its force point.

The following follows from \cite{CLEPerc,SimCLELQG}.

\begin{theorem}
    Let 
    \begin{equation*}
        \kappa \in (8/3, 4); \quad \kappa^\prime \defeq 16/\kappa; \quad \rho^\prime \in [\kappa^\prime - 6, 0]; \quad \rho_\rR \defeq -(\kappa / 4)(\rho^\prime + 2); \quad \rho_\rL \defeq \kappa / 2 - 4 - \rho_\rR. 
    \end{equation*}
    Let $\Gamma^\prime$ be a $\BCLE_{\kappa^\prime}^\circlearrowright(\rho^\prime)$ in $\HH$. Inside of each connected component of each true (resp.~false) loop of $\Gamma^\prime$, we sample an independent $\BCLE_\kappa^\circlearrowleft(\rho_\rR)$ (resp.~$\BCLE_\kappa^\circlearrowright(\rho_\rL)$), and we write $\Gamma$ for the collection of all the true loops of all these simple BCLEs. Write $\eta^\prime$ for the loop from $0$ to $0$ that traces the domain boundary counterclockwise except that each time we encounter a true loop of $\Gamma^\prime$ for the first time, we trace the entire loop before continuing. Write $\eta$ for the loop from $0$ to $0$ that traces $\eta^\prime$ except that each time we encounter a true loop of $\Gamma$ for the first time, we trace the entire loop before continuing. For each $x \in \partial \HH$, write $\eta_x^\prime$ (resp.~$\eta_x$) for $\eta^\prime$ (resp.~$\eta$) parameterized by the half-plane capacity seen from $x$. Let $\beta = \beta(\kappa^\prime, \rho^\prime) \in [-1, 1]$ be so that
    \begin{equation*}
        \frac{1 - \beta}2 = \frac{\sin(\pi\rho^\prime / 2)}{\sin(\pi\rho^\prime / 2) - \sin(\pi(\kappa^\prime - \rho^\prime) / 2)}. 
    \end{equation*}
    Then for each $x \in \partial \HH$, $\eta_x$ has the law of an $\SLE_\kappa^\beta(\kappa - 6)$ targeting $x$ and $\eta_x^\prime$ is the trunk of $\eta_x$; moreover, $\eta_x$ is equal to the path that traces $\eta_x^\prime$ except that each time we encounter a true loop of $\Gamma$ for the first time, we trace the entire loop before continuing. In particular, we conclude that an $\SLE_\kappa^\beta(\kappa - 6)$ is almost surely generated by a continuous curve, and its trunk has the law of an $\SLE_{\kappa^\prime}(\rho^\prime; \kappa^\prime - 6 - \rho^\prime)$.
\end{theorem}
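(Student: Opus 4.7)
The plan is to follow the imaginary-geometry / CLE percolation framework underlying the cited works \cite{CLEPerc,SimCLELQG}. Both sides of the equality admit a common branching-plus-detours description, and the task is to match the parameters.

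First, $\eta_x^\prime$ has the law of $\SLE_{\kappa'}(\rho'; \kappa'-6-\rho')$ targeting $x$ essentially by construction: the branches of a $\BCLE_{\kappa'}^\circlearrowright(\rho')$ are by definition $\SLE_{\kappa'}(\rho'; \kappa'-6-\rho')$ processes, and $\eta^\prime$ is precisely the loop that these branches trace out. The heart of the matter is therefore the identification of $\eta_x$ with an $\SLE_\kappa^\beta(\kappa-6)$. The key input from imaginary geometry is that $\SLE_\kappa^\beta(\kappa-6)$ has a well-defined trunk of the form $\SLE_{\kappa'}(\tilde\rho'; \kappa'-6-\tilde\rho')$ with $\tilde\rho' \in [\kappa'-6, 0]$ a function of $\beta$, and that the connected components cut off by its trunk carry conditionally independent simple BCLEs whose true loops are exactly the loops the process detours along. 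The recursive construction of $\eta$ realises the same branching structure by fiat: $\eta^\prime$ plays the role of the trunk, nested BCLEs of the prescribed type are sampled in the complementary components, and $\eta$ detours along their true loops in the order in which they are first encountered. Matching the parameter pair $(\rho', \beta)$ via the stated sine identity and invoking the conformal Markov property at the stopping times when complete loop detours are finished then identifies the two processes in law through a standard uniqueness argument for conformally invariant Markovian explorations. From the target-independence of branching $\SLE$ one gets the statement simultaneously for all target points $x \in \partial\HH$.

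The main obstacle is establishing the explicit formula
\begin{equation*}
    \frac{1-\beta}{2} = \frac{\sin(\pi\rho'/2)}{\sin(\pi\rho'/2) - \sin(\pi(\kappa'-\rho')/2)}.
\end{equation*}
This reduces to a one-point boundary-hitting computation for $\SLE_{\kappa'}(\rho'; \kappa'-6-\rho')$, namely the probability that a given boundary point is surrounded by a true (clockwise) rather than a false (counterclockwise) loop of the $\BCLE_{\kappa'}^\circlearrowright(\rho')$. The relevant distribution is a Beta law, as follows from the Loewner evolution of a $\kappa'$-SLE with two force points at $0^-$ and $0^+$; taking the appropriate ratio of its tails (and applying the reflection formula for the Beta function) yields the sine expression above. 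Once the parameter matching is in hand, the remaining assertions — that $\eta_x^\prime$ is the trunk of $\eta_x$, and that $\eta_x$ is recovered from $\eta_x^\prime$ by tracing each first-encountered true loop of $\Gamma$ — are built into both constructions.
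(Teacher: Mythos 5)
The paper does not prove this theorem at all: it is stated in the preliminaries with the single line ``The following follows from \cite{CLEPerc,SimCLELQG}'', so there is no internal proof to compare against. Judged on its own terms, your high-level framework (branching plus detours, identification of the trunk, parameter matching) is the right skeleton and matches the structure of the cited works, and the first step --- that $\eta_x^\prime$ is an $\SLE_{\kappa^\prime}(\rho^\prime;\kappa^\prime-6-\rho^\prime)$ by construction of BCLE --- is fine.

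The genuine gap is your treatment of the sine identity. You assert that it ``reduces to a one-point boundary-hitting computation for $\SLE_{\kappa^\prime}(\rho^\prime;\kappa^\prime-6-\rho^\prime)$'' whose answer is a Beta law. This conflates two different quantities: the probability that a fixed boundary point is surrounded by a true rather than a false loop of the $\BCLE_{\kappa^\prime}^\circlearrowright(\rho^\prime)$ is a harmonic-measure-type quantity, whereas $\beta$ measures the relative \emph{intensity} with which $\CLE_\kappa$ loops attach to the left versus the right side of the trunk, and making sense of ``relative intensity along the trunk'' requires a reference local time on the trunk. This is exactly why the correspondence $\beta\leftrightarrow\rho^\prime$ was not settled in \cite{CLEPerc} (which only establishes that the trunk is an $\SLE_{\kappa^\prime}(\rho^\prime;\kappa^\prime-6-\rho^\prime)$ for \emph{some} $\rho^\prime$ depending monotonically on $\beta$) and had to wait for \cite{SimCLELQG}, where the trunk is parameterized by generalized quantum length and the intensities of the quantum disks cut off on either side are computed explicitly --- precisely the content of the theorem stated immediately after this one in the paper, with the ratios $-(1-\beta)\cos(4\pi/\kappa):-(1+\beta)\cos(4\pi/\kappa):1:1$. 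Your sketch also compresses the main technical content of \cite{CLEPerc} --- that the trunk is generated by a continuous curve and is Markovian with only two force points --- into ``a standard uniqueness argument for conformally invariant Markovian explorations'', which understates what actually has to be proved. So the proposal is not wrong in outline, but the step you identify as the ``main obstacle'' is dispatched by an argument that does not work as stated.
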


\begin{corollary}
    Consider the following algorithm:
    \begin{itemize}
        \item One samples a $\BCLE_{\kappa^\prime}(0)$. Then, inside each connected component of each true loop of this $\BCLE_{\kappa^\prime}(0)$, one samples an independent $\BCLE_\kappa(-\kappa/2)$. 
        \item[\vdots]
        \item Inside each connected component of each false loop of the $\BCLE_{\kappa^\prime}(0)$'s and $\BCLE_\kappa(-\kappa/2)$'s of the previous step, one samples an independent $\BCLE_{\kappa^\prime}(0)$. Then, inside each connected component of each true loop of these $\BCLE_{\kappa^\prime}(0)$'s, one samples an independent $\BCLE_\kappa(-\kappa/2)$. 
        \item[\vdots]
    \end{itemize}
    Then the collection of all the true loops of all the $\BCLE_\kappa(-\kappa/2)$'s has the law of a non-nested $\CLE_\kappa$. (Here, we note that the collection of all the outermost true loops of all the $\BCLE_{\kappa^\prime}(0)$'s has the law of a non-nested $\CLE_{\kappa^\prime}$.)
\end{corollary}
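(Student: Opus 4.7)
The plan is to apply the theorem above with $\rho' = 0$ and then to iterate, using the restriction property of $\CLE_\kappa$. Setting $\rho' = 0$ in the displayed formulas yields $\rho_\rR = -(\kappa/4)(\rho' + 2) = -\kappa/2$ and $\rho_\rL = \kappa/2 - 4 - \rho_\rR = \kappa - 4$, while $\sin(\pi \rho'/2) = 0$ forces $\beta(\kappa', 0) = 1$ via the defining formula; note in particular that $\rho_\rL = \kappa - 4$ sits at the boundary of the admissible range $(-2, \kappa - 4)$ for $\BCLE_\kappa(\cdot)$. The theorem then asserts that sampling a $\BCLE_{\kappa'}^{\circlearrowright}(0)$ in $\HH$ together with an independent $\BCLE_\kappa^{\circlearrowleft}(-\kappa/2)$ inside each of its true loops produces a branching family $\{\eta_x\}_{x \in \partial \HH}$ of paths, each an $\SLE_\kappa^{\beta = 1}(\kappa - 6)$ targeting $x$. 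By the CLE percolation theory of \cite{CLEPerc, SimCLELQG}, this branching family is the exploration tree visiting precisely the outermost $\CLE_\kappa$ loops that touch the trunk, and the true loops of the $\BCLE_\kappa^{\circlearrowleft}(-\kappa/2)$'s are exactly those $\CLE_\kappa$ loops.

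To recover the remaining $\CLE_\kappa$ loops, which sit in the interiors of the false loops of the outer $\BCLE_{\kappa'}^{\circlearrowright}(0)$ and of the $\BCLE_\kappa^{\circlearrowleft}(-\kappa/2)$'s, I would invoke the restriction (conformal Markov) property of $\CLE_\kappa$. Inside any such false loop $V$ (a simply connected subdomain of $\HH$), the $\CLE_\kappa$ loops contained in $V$ are, conditional on everything already discovered, distributed as a fresh non-nested $\CLE_\kappa$ in $V$. Applying the first paragraph inside $V$---that is, sampling a fresh $\BCLE_{\kappa'}^{\circlearrowright}(0)$ in $V$ and a $\BCLE_\kappa^{\circlearrowleft}(-\kappa/2)$ inside each of its true loops---then discovers the outermost loops of this fresh $\CLE_\kappa$ in $V$. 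This is exactly the step prescribed by the corollary's iteration.

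Iterating in this way produces, at each depth, a random collection of discovered $\CLE_\kappa$ loops. To conclude that the union over all depths is the full non-nested $\CLE_\kappa$ in $\HH$, I would argue that the maximum diameter of the still-unresolved false loops at depth $n$ tends to zero almost surely as $n \to \infty$, which follows from the local finiteness of each $\BCLE$ together with a standard scale-invariance argument (at each depth a uniformly positive fraction of each remaining false loop is carved off by the newly sampled true loops). The main obstacle is the identification in the first paragraph: that the true loops of the $\BCLE_\kappa^{\circlearrowleft}(-\kappa/2)$'s are exactly the outermost $\CLE_\kappa$ loops meeting the trunk. This rests on the $\SLE_\kappa^{\beta = 1}(\kappa - 6)$ branching-tree description of $\CLE_\kappa$ from \cite{CLEPerc, SimCLELQG}, which is the principal input from that theory; once it is in hand, the rest of the argument is a formal recursion.
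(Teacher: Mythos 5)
The paper states this corollary without proof, but your derivation is essentially the intended one: specialize the preceding theorem to $\rho^\prime = 0$ (your computations $\rho_\rR = -\kappa/2$, $\rho_\rL = \kappa - 4$, $\beta = 1$ are correct), identify $\eta_x$ as the $\SLE_\kappa^{\beta=1}(\kappa-6)$ exploration tree of a $\CLE_\kappa$ via \cite{CLEPerc,SimCLELQG}, and iterate in the undiscovered components, concluding by local finiteness. Two small points you should make explicit. First, you flag that $\rho_\rL = \kappa - 4$ sits at the boundary of the admissible range but do not draw the conclusion: $\BCLE_\kappa^{\circlearrowright}(\kappa-4)$ is the degenerate BCLE whose only loop is the whole component boundary, so it contributes no true loops and leaves the false components of the $\BCLE_{\kappa^\prime}(0)$ untouched; this observation is what reconciles the theorem (which formally samples a $\BCLE_\kappa^{\circlearrowright}(\rho_\rL)$ in every false component) with the corollary's algorithm (which defers those components to the next iteration). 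Second, the iteration step needs the Markov property of the exploration-tree construction --- that conditionally on the trunk and the discovered loops, the undiscovered loops in each complementary component form independent $\CLE_\kappa$'s in those \emph{random} components --- rather than the restriction axiom for deterministic subdomains that you cite; this is exactly what the CLE-percolation machinery supplies, so the argument goes through, but the attribution should be corrected.
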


\subsubsection*{Arm events}

Finally, we collect several results concerning arm events for CLE.

We shall write 
\begin{equation}\label{eq:016}
    \alpha_{\mathrm{4A}} \defeq \frac{(12 - \kappa)(4 + \kappa)}{8\kappa} > 2. 
\end{equation}
for the \emph{four-arm exponent} for $\CLE_\kappa$.

\begin{lemma}\label{264}
    Let $U$ be either $\CC$ or a simply connected domain. Let $\Gamma$ be a nested $\CLE_\kappa$ in $U$. Then:
    \begin{enumerate}
        \item\label{264A} For each $\alpha \in (0, \alpha_{\mathrm{4A}})$, there exists $C = C(\alpha) > 0$ such that the following is true: Let $z \in U$ and $0 < s < t < \dist(z, \partial U)$. Then it holds with probability at least $1 - C(s/t)^\alpha$ that there are at most two connected arcs of $\Gamma$ in $A_{s,t}(z)$ connecting its inner and outer boundaries. 
        \item\label{264B} For each $\alpha \in (0, \alpha_{\mathrm{4A}})$, there exists $\lambda_\ast = \lambda_\ast(\alpha) \in (0, 1)$ such that for each $0 < \lambda_1 < \lambda_2 < 1$ with $\lambda_1/\lambda_2 \le \lambda_\ast$, there exists $b = b(\alpha, \lambda_1, \lambda_2) \in (0, 1)$ and $C = C(\alpha, \lambda_1, \lambda_2) > 0$ such that the following is true: Let $z \in U$. Let $\{r_j\}_{j \in \NN}$ be a decreasing sequence of positive real numbers such that $r_1 \le \dist(z, \partial U)$ and $r_{j + 1}/r_j \le \lambda_1$ for all $j \in \NN$. Then for each $n \in \NN$, it holds with probability at least $1 - C(\lambda_1/\lambda_2)^{\alpha n}$ that there are at least $bn$ values of $j \in [1, n]_\ZZ$ for which there are at most two connected arcs of $\Gamma$ in $A_{\lambda_1r_j,\lambda_2r_j}(z)$ connecting its inner and outer boundaries.
    \end{enumerate}
\end{lemma}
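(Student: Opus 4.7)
For Part~\eqref{264A}, the plan is to reduce directly to the standard four-arm exponent estimate. If there are three or more disjoint connected arcs of $\Gamma$ crossing the annulus $A_{s,t}(z)$ from the inner to the outer boundary, then, because the loops of $\Gamma$ are disjoint simple loops, one can extract a four-arm configuration: the three crossing arcs together with a fourth ``dual'' arc through one of the complementary regions (one can always insert a path in the carpet between two neighbouring crossings, giving the fourth arm). Thus the event in question is contained in a four-arm event in $A_{s,t}(z)$, and the bound $\mathbb{P}[\cdot] \le C(\alpha)(s/t)^\alpha$ for $\alpha < \alpha_{\mathrm{4A}}$ follows from the known four-arm exponent estimate for $\CLE_\kappa$ recalled in \eqref{eq:016}.

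For Part~\eqref{264B}, I plan to upgrade Part~\eqref{264A} to a quantitative concentration statement along the nested sequence of annuli $A_j \defeq A_{\lambda_1 r_j, \lambda_2 r_j}(z)$. First, note that since $\lambda_1 < \lambda_2 < 1$ and $r_{j+1}/r_j \le \lambda_1$, one checks $\lambda_2 r_{j+1} \le \lambda_1 r_j$, so the annuli $A_j$ are pairwise disjoint and nested. Let $X_j$ denote the indicator that $A_j$ is crossed by at least three disjoint arcs of $\Gamma$, and let $\mathcal F_j$ denote the $\sigma$-algebra generated by the loops of $\Gamma$ that are not entirely contained in $B_{\lambda_2 r_{j+1}}(z)$ (equivalently, by the restriction of $\Gamma$ to the complement of that disk in the CLE-locality sense). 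By the restriction property of CLE, conditionally on $\mathcal F_j$ the configuration of $\Gamma$ inside each connected component of $B_{\lambda_2 r_{j+1}}(z) \setminus (\text{revealed loops})$ is an independent (non-nested) $\CLE_\kappa$ in that component. Applying Part~\eqref{264A} in each such component—combined with a comparison with whole-plane $\CLE_\kappa$ to obtain a bound that is uniform over the possible shapes of these components—gives $\mathbb{P}[X_{j+1} = 1 \mid \mathcal F_j] \le C(\lambda_1/\lambda_2)^\alpha$ almost surely.

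With this conditional bound, the random variables $\{X_j\}$ are stochastically dominated by i.i.d.\ Bernoullis with parameter $p \defeq C(\lambda_1/\lambda_2)^\alpha$. Choosing $\lambda_\ast \in (0,1)$ small enough ensures $p < 1/2$, and a Chernoff bound on $\sum_{j=1}^n X_j$ then yields that with probability at least $1 - e^{-c n}$ (for some $c = c(\alpha,\lambda_1,\lambda_2) > 0$) fewer than $(1-b)n$ of the $A_j$ are bad, hence at least $bn$ of them satisfy the ``at most two crossings'' property. After adjusting constants, the resulting bound takes the form $1 - C(\lambda_1/\lambda_2)^{\alpha n}$ as required (possibly after replacing $\alpha$ by a slightly smaller value in an intermediate step).

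The main obstacle is the uniform conditional estimate needed in the middle paragraph. The difficulty is that, after conditioning on $\mathcal F_j$, the relevant component of $B_{\lambda_2 r_{j+1}}(z)$ can have an irregular boundary, and the four-arm estimate must hold inside this component with a constant not depending on its shape. This is handled via the conformal invariance/restriction property of CLE together with a standard comparison to the whole-plane four-arm estimate, using the fact that any such component contains a disk of comparable size around $z$ on the complementary side—so three crossing arcs in $A_{j+1}$ still force a four-arm configuration whose probability is bounded by $(\lambda_1/\lambda_2)^\alpha$ up to multiplicative constants. Once this uniform bound is in hand, the concentration argument is routine.
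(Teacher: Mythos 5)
The paper does not prove this lemma internally; it cites \cite[Appendix~C]{TightNonsimCLE}. Your outline---reducing part~\eqref{264A} to the four-arm estimate by a topological argument, and upgrading to part~\eqref{264B} by combining the CLE Markov/restriction property with a Chernoff-type bound over the disjoint annuli---is the standard route and is almost certainly what the cited appendix carries out, so the approach is sound.

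Two small points worth being more precise about. First, for simple $\CLE_\kappa$ every loop crosses the annulus $A_{s,t}(z)$ from inner to outer boundary an \emph{even} number of times (a simple closed curve alternates between entries and exits), so the complement of ``at most two crossing arcs'' is already ``at least four,'' not ``at least three.'' This is worth using explicitly: with four loop arcs in hand, the reduction to an alternating four-arm configuration (loop--carpet--loop--carpet) is cleaner than with three loop arcs plus one inserted carpet arm, where the arm types do not alternate and the pivotal-exponent bound does not apply directly. Second, the uniformity of the conditional estimate in part~\eqref{264B} over the shapes of the unexplored components after conditioning is exactly the content of the paper's \Cref{263,273}; you correctly identify this as the technical crux, and in practice one would invoke one of those lemmas (or the argument underlying them) rather than reprove it from scratch.
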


\begin{proof}
    See \cite[Appendix~C]{TightNonsimCLE}.
\end{proof}

\begin{lemma}\label{328}
    Let $\Gamma$ be a whole-plane nested $\CLE_\kappa$. Let $z \in \CC$ and $0 < s < t$. Then it holds with superpolynomially high probability as $N \to \infty$ that there are at most $2N$ connected arcs of $\Gamma$ in $A_{s,t}(z)$ connecting its inner and outer boundaries. 
\end{lemma}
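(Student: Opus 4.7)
By the translation invariance of $\Gamma$, we may take $z = 0$. By the scale invariance of whole-plane nested $\CLE_\kappa$, the law of the crossing count $\mathcal{N}$ of $\Gamma$ in $A_{s,t}(0)$ depends only on $t/s$, which we now fix. The plan is to show $\BE[\mathcal{N}^p] < \infty$ for every $p \ge 1$; Markov's inequality then yields $\BP[\mathcal{N} > 2N] \le \BE[\mathcal{N}^p]/(2N)^p$, which is super-polynomial in $N$.

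To bound $\BE[\mathcal{N}^p]$, it suffices to bound the probability that $\Gamma$ contains $p$ pairwise disjoint crossing arcs of $A_{s,t}(0)$. The case $p = 2$ reduces to $\BP[\mathcal{N} \ge 3]$, bounded by part~(1) of Lemma~\ref{264} as $C(s/t)^\alpha$ for any $\alpha < \alpha_{\mathrm{4A}}$. For general $p$, the event is a ``$(2p)$-arm event'' at the scale of the annulus. I would bound it by combining part~(2) of Lemma~\ref{264} with a quasi-multiplicative decoupling across well-separated scales: iterating the four-arm bound across $\Theta(p)$ nested sub-annuli (applying the conformal Markov property of $\CLE_\kappa$ to decouple arm events at scales separated by a large factor) yields $\BP[\mathcal{N} \ge p] \le C\rho^p$ for some $\rho \in (0,1)$ uniform in $p$, which suffices for finiteness of every moment. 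Note that each disjoint arc must also cross every concentric sub-annulus, so under the event $\{\mathcal{N}\ge p\}$ each of the sub-annuli appearing in Lemma~\ref{264}(2) would simultaneously contain $\ge p \ge 3$ arcs; this will provide the mechanism by which ``many scales good'' fails.

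The main obstacle lies in the quasi-multiplicative decoupling. Lemma~\ref{264} as stated gives only single-scale bounds together with the ``many scales simultaneously have few arcs'' statement in part~(2), and the number of disjoint scales one can fit inside $A_{s,t}(0)$ is bounded by a constant depending on $t/s$ alone, hence cannot by itself produce $p$-dependent decay. To genuinely multiply the four-arm probability across $\Theta(p)$ near-independent events, one must supplement the radial decoupling with either an angular decomposition of $A_{s,t}(0)$ using rotation invariance, or an exploitation of the hierarchical nesting structure of whole-plane nested $\CLE_\kappa$ (passing to loops surrounding $0$ at increasing nesting depth and invoking the restriction property stated in Section~\ref{ss:01}). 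Establishing near-independence of arm events at well-separated scales and/or angular positions in the whole-plane CLE setting is the technical heart of the argument, and parallels analogous bootstraps in the literature for percolation and SLE.
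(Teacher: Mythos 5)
Your approach is fundamentally different from the paper's, and as you yourself recognize, it contains a genuine gap that you do not fill. You want the $2N$-arm probability to decay like $\rho^{2N}$ by bootstrapping the four-arm bound across $\Theta(N)$ near-independent sub-events, and you correctly observe that radial decoupling fails because the annulus $A_{s,t}(z)$ is fixed and supports only $O_{t/s}(1)$ well-separated concentric sub-annuli. The alternatives you float (angular decomposition, nesting depth) do not obviously repair this: if $2N$ crossing arcs happen to concentrate in a single angular sector, angular decoupling gives you nothing, while if you split into $\Theta(N)$ angular sectors each of which one arc must enter, you are only facing two-arm (not four-arm) events per sector, and moreover the events at nearby sectors at the \emph{same} scale are far from independent. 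So neither route gets you $\BP[\mathcal N \ge 2N] \le C\rho^{2N}$ without substantial further input. (You also do not actually need exponential decay; superpolynomial suffices, but even that is not delivered by the ideas you outline.)

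The paper's proof avoids all of this by passing to the space-filling $\SLE_{\kappa'}$ with $\kappa' = 16/\kappa$ coupled to $\Gamma$ so that it traces the $\CLE_\kappa$ loops. Each crossing arc of $\Gamma$ in $A_{s,t}(z)$ forces the space-filling curve $\eta'$ to make a crossing of the annulus, and these crossings occur on disjoint time intervals, hence occupy disjoint regions of the plane. A quantitative non-degeneracy estimate for space-filling SLE (from \cite{ASKPZSLEBM}) says that, with superpolynomially high probability as $\varepsilon \to 0$, every excursion of $\eta'$ inside $B_t(z)$ of Euclidean diameter $\ge \varepsilon^{1/2}$ contains a Euclidean ball of radius $\ge \varepsilon$. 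On this event, having $2N$ crossings means $2N$ disjoint balls of radius $\varepsilon$ are packed into $A_{s,t}(z)$, so $2N \le \mathop{\mathrm{Leb}}(A_{s,t}(z))/(\pi\varepsilon^2)$; choosing $\varepsilon \asymp N^{-1/2}$ turns the superpolynomial-in-$1/\varepsilon$ good probability into a superpolynomial-in-$N$ bound. This is a pure volume argument, completely sidestepping multi-arm quasi-multiplicativity, and it is the correct mechanism here: what kills $2N$ crossings of a fixed annulus is not independence across scales but the fact that a space-filling curve cannot make many thin crossings without creating many small, disjoint, yet macroscopic-area regions.
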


\begin{proof}
    Write $\kappa^\prime \defeq 16/\kappa$. Let $\eta^\prime$ be a whole-plane space-filling $\SLE_{\kappa^\prime}$ curve from $\infty$ to $\infty$. Recall from \cite{CLEPerc} that it is possible to couple $\eta^\prime$ with $\Gamma$ so that it traces the loops of $\Gamma$. We observe that, if there are at least $2N$ connected arcs of $\Gamma$ in $A_{s,t}(z)$ connecting its inner and outer boundaries, then $\eta^\prime$ makes at least $2N$ crossings between the inner and outer boundaries of $A_{s,t}(z)$. By \cite[Proposition~3.4 and Remark~3.9]{ASKPZSLEBM}, it holds with superpolynomially high probability as $\varepsilon \to 0$ that for each $a < b$ with $\eta^\prime([a, b]) \subset B_t(z)$ and $\diam(\eta^\prime([a, b])) \ge \varepsilon^{1/2}$, the set $\eta^\prime([a, b])$ contains a ball of radius at least $\varepsilon$. Suppose that this event occurs and $\varepsilon^{1/2} \le t - s$. Then $\eta^\prime$ makes at most $\mathop{\mathrm{Leb}}(A_{s,t}(z)) / (\pi\varepsilon^2)$ crossings between the inner and outer boundaries of $A_{s,t}(z)$. This completes the proof. 
\end{proof}

\subsection{Liouville quantum gravity}\label{ss:13}

Let $\gamma \in (0, 2)$ and $Q \defeq 2/\gamma + \gamma/2$. Then a \emph{$\gamma$-Liouville quantum gravity surface} ($\gamma$-LQG surface) is an equivalence class of pairs $(U, \Phi)$, where $U \subset \CC$ is a simply connected domain, $\Phi$ is an instance of the Gaussian free field (GFF) on $U$, and $(U_1, \Phi_1)$ and $(U_2, \Phi_2)$ are equivalent if there exists a conformal mapping $\phi \colon U_1 \to U_2$ such that 
\begin{equation}\label{eq:059}
    \Phi_1 = \Phi_2 \circ \phi + Q\log(\lvert\phi^\prime\rvert).
\end{equation}
More generally, we can consider a $\gamma$-LQG surface with marked points $x_1, \ldots, x_n \in U$. Two $\gamma$-LQG surfaces with marked points are considered to be equivalent if the fields are related as in \eqref{eq:059} where the conformal map $\phi$ takes the marked points for one surface to the marked points for the other.

The volume form (which is formally given by ``$\re^{\gamma\Phi}\rd x\rd y$'') was constructed in \cite{LQGKPZ} (see also the references therein). If $\Phi$ is (some form of) the GFF with Neumann boundary conditions, then one can also make sense of a boundary length measure.

The recent works \cite{TightLFPP,ExUniLQG} constructed the \emph{$\gamma$-LQG metric} $D_\Phi$ on the $\gamma$-LQG surface $(U, \Phi)$ which is formally given by ``$\re^{\gamma\Phi}(\rd x^2 + \rd y^2)$''. Moreover, two equivalent $\gamma$-LQG surfaces are isometric. The $\gamma$-LQG metric is conjectured to be the scaling limit of the graph distance on random planar maps with respect to the Gromov-Hausdorff topology.

The $\gamma$-LQG half-plane and $\gamma$-LQG disk are $\gamma$-LQG surfaces with two boundary marked points. They are in some sense the most natural infinite-volume and finite-volume $\gamma$-LQG surfaces, respectively, due to the following properties:
\begin{itemize}
    \item Let $(\HH, \Phi, 0, \infty)$ be a $\gamma$-LQG half-plane. Then for each deterministic $r > 0$ and $x \in \RR$ so that the boundary length measure of $[0,x]$ is equal to $r$ we have that $(\HH, \Phi, x, \infty)$ is again a $\gamma$-LQG half-plane. 
    \item Let $(\HH, \Phi, 0, \infty)$ be a $\gamma$-LQG disk. Given $\Phi$, let $x$ and $y$ be independent samples from the $\gamma$-LQG length measure on $\partial\HH$ associated with $\Phi$ (renormalized to be a probability measure). Then $(\HH, \Phi, x, y)$ is again a $\gamma$-LQG disk. 
\end{itemize}

The following are proved in \cite{SimCLELQG}.

When $\kappa = \gamma^2 \in (0, 4)$, we say that an $\SLE_\kappa$ (or a $\CLE_\kappa$) and a $\gamma$-LQG surface have the same ``central charge'', in the sense that an independent $\SLE_\kappa$ (or $\CLE_\kappa$) on a $\gamma$-LQG surface has a well-defined $\gamma$-LQG length and cuts the $\gamma$-LQG surface into independent $\gamma$-LQG surfaces \cite{CoWeld,dms2021mating,SimCLELQG}. 

\begin{theorem}
    Let $\gamma \in (\sqrt{8/3}, 2)$; $\kappa \defeq \gamma^2$; $\beta \in [-1, 1]$; $\SCH = (\HH, \Phi, 0, \infty)$ a $\gamma$-LQG half-plane; $\eta$ an independent $\SLE_\kappa^\beta(\kappa - 6)$ on $\SCH$ from $0$ to $\infty$ that is parameterized by the generalized $\gamma$-LQG length of its trunk $\eta^\prime$. Write $\Gamma_{\rL,+}$ (resp.~$\Gamma_{\rR,+}$) for the collection of pairs $(\SCS, t)$, where $\SCS$ is the $\gamma$-LQG surface parameterized by a connected component of $\HH \setminus \eta$ that is encircled by a $\CLE_\kappa$ loop that lies to the left (resp.~right) of $\eta^\prime$ and is first visited by $\eta^\prime$ at time $t$, equipped with a marked point $\eta^\prime(t)$. Write $\Gamma_{\rL,-}$ (resp.~$\Gamma_{\rR,-}$) for the collection of pairs $(\SCS, t)$, where $\SCS$ is the $\gamma$-LQG surface parameterized by a connected component of $\HH \setminus \eta$ that is disconnected from $\infty$ by $\eta^\prime$ at time $t$ and lies to the left (resp.~right) of $\eta^\prime$, equipped with a marked point $\eta^\prime(t)$. Then there are deterministic constants $a_{\rL,+}, a_{\rR,+}, a_{\rL,-}, a_{\rR,-} > 0$ with
    \begin{equation*}
        a_{\rL,+} : a_{\rR,+} : a_{\rL,-} : a_{\rR,-} = -(1 - \beta)\cos(4\pi/\kappa) : -(1 + \beta)\cos(4\pi/\kappa) : 1 : 1.
    \end{equation*}
    such that $\Gamma_{\rL,+}$ (resp.~$\Gamma_{\rR,+}$; $\Gamma_{\rL,-}$; $\Gamma_{\rR,-}$) is a Poisson point process with intensity measure given by the product of the Lebesgue measure on $\RR_{\ge 0}$ and $a_{\rL,+}$ (resp.~$a_{\rR,+}$; $a_{\rL,-}$; $a_{\rR,-}$) times the one-pointed $\gamma$-LQG disk measure. Moreover, $\Gamma_{\rL,+}$, $\Gamma_{\rR,+}$, $\Gamma_{\rL,-}$, $\Gamma_{\rR,-}$ are independent.
\end{theorem}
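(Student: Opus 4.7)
The plan is to exploit the BCLE construction of $\CLE_\kappa$ recalled above, in which $\eta$ arises as follows: it traces its trunk $\eta'$, an $\SLE_{\kappa'}(\rho'; \kappa' - 6 - \rho')$, detouring to trace each true loop of the nested $\BCLE_\kappa(-\kappa/2)$'s whenever $\eta'$ first encounters one. This yields a natural four-way partition of the complementary components of $\eta$ in $\HH$ that matches the statement: $\Gamma_{\rL,-},\Gamma_{\rR,-}$ are the pockets pinched off by the trunk itself on the left and right, while $\Gamma_{\rL,+},\Gamma_{\rR,+}$ are the pockets encircled by true $\BCLE_\kappa$ loops on the left and right, respectively.

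\emph{Step 1 (Welding along the trunk).} First apply the conformal welding / mating-of-trees theorem for $\SLE_{\kappa'}(\rho_1;\rho_2)$-type curves on a $\gamma$-LQG half-plane (cf.~\cite{CoWeld,dms2021mating,SimCLELQG}) to the coupling of $\eta'$ with $\SCH$. Parameterizing $\eta'$ by the quantum length of its trunk, one obtains that the pockets $\eta'$ disconnects from $\infty$ on the left (resp.~right) form a Poisson point process of one-pointed $\gamma$-LQG disks against this time parameter, and that the two sides are independent. This produces $\Gamma_{\rL,-}$ and $\Gamma_{\rR,-}$ with constant intensities, which we may normalize as $a_{\rL,-}=a_{\rR,-}=1$ since the same trunk clock drives the boundary length process on each side.

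\emph{Step 2 (Welding the $\BCLE_\kappa$'s inside each complementary component).} Conditional on $\eta'$, each complementary component of $\eta'$ is an independent $\gamma$-LQG surface by Step 1, and by construction carries an independent $\BCLE_\kappa^\circlearrowleft(\rho_\rR)$ (right) or $\BCLE_\kappa^\circlearrowright(\rho_\rL)$ (left). Each such BCLE is driven by a branching $\SLE_\kappa(\rho;\kappa-6-\rho)$ with $\rho\in\{\rho_\rR,\rho_\rL\}$, so applying the SLE/LQG welding theorem once more---this time for $\kappa\in(8/3,4)$ on a quantum disk---the true loops of each BCLE, listed in the order $\eta$ visits them, form Poisson point processes of one-pointed $\gamma$-LQG disks against the quantum boundary length of the trunk of that component. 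Reparameterizing globally by the generalized $\gamma$-LQG length of $\eta'$ then produces $\Gamma_{\rL,+}$ and $\Gamma_{\rR,+}$ as Poisson point processes with constant intensities $a_{\rL,+}$ and $a_{\rR,+}$. Independence of the four families follows from Poisson thinning, the conditional independence of the two sides given $\eta'$ in Step 1, and the disjointness of BCLE-loop pockets from trunk pockets.

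\emph{Step 3 (Identifying the constants).} The ratio $a_{\rL,+}:a_{\rR,+}$ is the ratio of true-loop intensities for $\BCLE_\kappa(\rho_\rL)$ versus $\BCLE_\kappa(\rho_\rR)$ per unit quantum boundary length, a known trigonometric function of $\rho_\rL/\kappa$ and $\rho_\rR/\kappa$. Substituting $\rho_\rR=-(\kappa/4)(\rho'+2)$, using $\rho_\rL+\rho_\rR=\kappa/2-4$, and invoking the defining identity $(1-\beta)/2=\sin(\pi\rho'/2)/(\sin(\pi\rho'/2)-\sin(\pi(\kappa'-\rho')/2))$, this simplifies via product-to-sum formulas to $-(1-\beta)\cos(4\pi/\kappa):-(1+\beta)\cos(4\pi/\kappa)$ (both positive since $\cos(4\pi/\kappa)<0$ on the given range). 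The ratio $a_{\rL,+}:a_{\rL,-}$ comes from an analogous comparison of true-loop intensity against the trunk-pocket intensity from Step 1. The main obstacle is precisely this Step 3: Steps 1 and 2 are routine applications of mating-of-trees machinery, but extracting the exact prefactors requires (a) carefully tracking the quantum boundary length acquired by $\eta$ as it detours around a true $\BCLE_\kappa$ loop versus advancing along the trunk, and (b) matching sums and differences of sines to the definition of $\beta$; this trigonometric reconciliation is the analytic heart of \cite{SimCLELQG}.
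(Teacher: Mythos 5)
First, note that the paper does not actually prove this statement: it is quoted from \cite{SimCLELQG} (the sentence immediately preceding it reads ``The following are proved in \cite{SimCLELQG}''), so there is no internal proof to compare against, only the citation. Your sketch does follow the broad architecture of that reference (the BCLE construction of the $\SLE_\kappa^\beta(\kappa-6)$ exploration, conformal welding along the trunk, and a trigonometric identification of the constants), but as a proof it has genuine gaps, because the steps you label ``routine applications of mating-of-trees machinery'' are in fact the main theorems being established there.

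Concretely: (i) the normalization $a_{\rL,-}=a_{\rR,-}$, i.e.\ the ``$1:1$'' part of the ratio, is justified only by the remark that ``the same trunk clock drives the boundary length process on each side.'' That reasoning proves too much: the same clock also drives the loop counting on both sides, and yet $a_{\rL,+}\neq a_{\rR,+}$ when $\beta\neq 0$. The equality of the downward-jump intensities of the two boundary-length processes is a nontrivial output of the cutting theorem for $\SLE_{\kappa^\prime}(\rho^\prime;\kappa^\prime-6-\rho^\prime)$ with $\kappa^\prime\in(4,8)$, which involves generalized quantum length, forested boundaries, and a weight-matching condition for the half-plane; none of this is checked. (ii) Step 2 misidentifies which loops contribute to $\Gamma_{\rL,+}$ and $\Gamma_{\rR,+}$: only the true $\BCLE_\kappa$ loops that are first visited by the trunk $\eta^\prime$ are traced by $\eta$, and within a single complementary component of $\eta^\prime$ these are indexed by the global times at which $\eta^\prime$ runs along that component's boundary, not by an internal boundary-length clock of that component. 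Reconciling the local structure inside each pocket with the single global parameterization by the generalized LQG length of $\eta^\prime$, and showing that the result is Poisson with constant intensity in $t$, is precisely the technical core of \cite{SimCLELQG} rather than a corollary of the simple-SLE welding theorem. (iii) Independence of the four families does not follow from ``conditional independence of the two sides given $\eta^\prime$ plus Poisson thinning'': conditional independence given $\eta^\prime$ does not yield unconditional joint independence, and the $+$ and $-$ families on a given side are a priori coupled through the trunk. The skeleton is the right one, but each of the three steps needs the actual content of the cited results supplied before this constitutes a proof.
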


\begin{corollary}
    Let $\gamma \in (\sqrt{8/3}, 2)$; $\kappa \defeq \gamma^2$; $\beta \in [-1, 1]$; $\SCH = (\HH, \Phi, 0, \infty)$ a $\gamma$-LQG half-plane; $\eta$ an independent $\SLE_\kappa^\beta(\kappa - 6)$ on $\SCH$ from $0$ to $\infty$ that is parameterized by the generalized $\gamma$-LQG length of its trunk. For each $t \ge 0$, write $\SCH_t$ for the $\gamma$-LQG surface parameterized by the unbounded connected component of $\HH \setminus \eta([0, t])$, equipped with marked points $\eta(t)$ and $\infty$; $L_t^\SCH$ (resp.~$R_t^\SCH$) for the $\gamma$-LQG length of the left (resp.~right) boundary of $\SCH_t$ minus the $\gamma$-LQG length of the left (resp.~right) boundary of $\SCH$. Then $L^\SCH$ and $R^\SCH$ are independent $4/\kappa$-stable L\'evy processes.
\end{corollary}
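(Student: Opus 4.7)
My plan is to read off the L\'evy-process structure of $L^\SCH$ and $R^\SCH$ directly from the Poisson point process decomposition supplied by the preceding theorem. The first step is to identify $L_t^\SCH$ with the (signed) sum over the pinch-off events captured by $\Gamma_{\rL,+} \cup \Gamma_{\rL,-}$. The left boundary length of $\SCH_t$ can change only when a connected component of $\HH \setminus \eta([0,t])$ gets disconnected from $\infty$ on the left side of $\eta^\prime$, in which case $L^\SCH$ drops by the $\gamma$-LQG boundary length of that component. Since $\eta$ is parameterized by the generalized $\gamma$-LQG length of its trunk $\eta^\prime$, every tracing of a $\CLE_\kappa$ loop consumes zero time under this parameterization, so each CLE-loop pinch-off (corresponding to an element of $\Gamma_{\rL,+}$) is an instantaneous jump of $L^\SCH$; the pinch-offs produced by the trunk alone (the elements of $\Gamma_{\rL,-}$) are also instantaneous jumps, by the standard mating-of-trees analysis for $\SLE_{\kappa^\prime}(\rho^\prime;\kappa^\prime-6-\rho^\prime)$ on a quantum wedge parameterized by quantum length of the trace. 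Combining these observations yields
\begin{equation*}
    L_t^\SCH = -\sum_{(\SCS, s) \in \Gamma_{\rL,+} \cup \Gamma_{\rL,-},\, s \le t} \ell(\partial\SCS),
\end{equation*}
where $\ell(\partial\SCS)$ is the $\gamma$-LQG boundary length of $\SCS$ and the sum is understood in the compensated sense since $4/\kappa > 1$; and analogously for $R^\SCH$.

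Once this identification is in hand, the conclusion follows essentially by inspection. By the theorem, $\Gamma_{\rL,+}$ and $\Gamma_{\rL,-}$ are independent Poisson point processes with intensity measures $a_{\rL,+}$ and $a_{\rL,-}$ times the product of Lebesgue measure on $\RR_{\ge 0}$ and the one-pointed $\gamma$-LQG disk measure, respectively. Hence $\Gamma_{\rL,+} \cup \Gamma_{\rL,-}$ is again a Poisson point process, with intensity the product of Lebesgue measure and $(a_{\rL,+}+a_{\rL,-})$ times the one-pointed disk measure; consequently $L^\SCH$ is a spectrally negative L\'evy process whose L\'evy measure is the push-forward of the one-pointed $\gamma$-LQG disk measure under the boundary-length map. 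By the scaling property of the $\gamma$-LQG disk (adding $(2/\gamma)\log C$ to the field scales the boundary length by $C$ and the total mass of the one-pointed disk measure by $C^{-4/\gamma^2}$), this push-forward is proportional to $\rd\ell / \ell^{1+4/\kappa}$ on $\RR_{>0}$. That is exactly the L\'evy measure of a spectrally negative $4/\kappa$-stable L\'evy process, so $L^\SCH$ is such a process. The same argument applied to $\Gamma_{\rR,+} \cup \Gamma_{\rR,-}$ handles $R^\SCH$, and the asserted independence of $(\Gamma_{\rL,+},\Gamma_{\rL,-})$ from $(\Gamma_{\rR,+},\Gamma_{\rR,-})$ immediately gives the independence of $L^\SCH$ and $R^\SCH$.

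The main obstacle is the first step, and in particular ruling out any continuous contribution to $L^\SCH$ coming from the motion of the trunk between pinch-offs. A priori, the non-simple $\SLE_{\kappa^\prime}$-type curve $\eta^\prime$ could be creating or destroying boundary length continuously in quantum-length time, which would invalidate the purely-jump formula above. I expect this to reduce to the purely-jumping nature of the boundary-length process associated with $\SLE_{\kappa^\prime}(\rho^\prime;\kappa^\prime-6-\rho^\prime)$ on a quantum wedge parameterized by quantum length of the trace, which is the standard mating-of-trees statement for non-simple $\SLE_{\kappa^\prime}$ with $\kappa^\prime \in (4,8)$. Once this continuity-exclusion is in hand, the remaining steps are routine properties of Poisson sums and of the $\gamma$-LQG disk boundary-length distribution.
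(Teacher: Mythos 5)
The paper gives no argument for this corollary --- both it and the preceding theorem are quoted from \cite{SimCLELQG} --- so I am comparing your proposal against the derivation that the juxtaposition of the two statements is clearly meant to suggest. Your overall strategy is exactly that intended route: read off the L\'evy structure from the Poisson point process decomposition, identify the L\'evy measure as the pushforward of the one-pointed quantum disk measure under the boundary-length map, and get the index $4/\kappa$ from the LQG scaling relation (adding $(2/\gamma)\log C$ scales boundary length by $C$ and the mass of the disk measure by $C^{-4/\gamma^2}$, giving $\ell^{-1-4/\kappa}\,\rd\ell$). These steps, and the deduction of the independence of $L^\SCH$ and $R^\SCH$ from the independence of the four point processes, are fine.

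There is, however, a genuine error in your identification of the jumps. You write $L_t^\SCH = -\sum_{(\SCS,s)\in\Gamma_{\rL,+}\cup\Gamma_{\rL,-},\,s\le t}\ell(\partial\SCS)$ and conclude that $L^\SCH$ is spectrally negative; in fact the two families contribute jumps of \emph{opposite} signs. A point of $\Gamma_{\rL,-}$ is a component pinched off by the trunk: its boundary is an arc of the left boundary of $\SCH_{t^-}$, which is removed, so $L^\SCH$ jumps \emph{down} by $\nu_\Phi(\partial\SCS)$. A point of $\Gamma_{\rL,+}$ is the interior of a $\CLE_\kappa$ loop sitting in the interior of the unbounded component; when $\eta$ traces the loop (in zero trunk-time), the outer side of the loop becomes newly exposed left boundary of $\SCH_t$ and no comparable length is removed, so $L^\SCH$ jumps \emph{up} by $\nu_\Phi(\partial\SCS)$. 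This is also why the theorem records the ratio $a_{\rL,+}:a_{\rL,-} = -(1-\beta)\cos(4\pi/\kappa):1$ rather than merging the two intensities into one constant: they enter with opposite signs, and their ratio fixes the skewness of the resulting stable process. The correct formula is the compensated sum of $+\nu_\Phi(\partial\SCS)$ over $\Gamma_{\rL,+}$ and $-\nu_\Phi(\partial\SCS)$ over $\Gamma_{\rL,-}$. Your final conclusion survives, since a sum of two independent totally asymmetric $4/\kappa$-stable processes is again a $4/\kappa$-stable L\'evy process, but as written your proof asserts a false formula and a false spectral-negativity claim, and this step must be repaired. Your closing remark is well taken: since $4/\kappa\in(1,2)$, the continuous growth of boundary length produced by the trunk between disconnection events is exactly the compensator of the downward jumps, so no separate continuous martingale part needs to be excluded once the compensated-sum identity is established.
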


\begin{theorem}
    Let $\gamma \in (\sqrt{8/3}, 2)$; $\kappa \defeq \gamma^2$; $\SCD = (\HH, \Phi)$ a $\gamma$-LQG disk of fixed boundary length; $\Gamma_\SCD$ an independent $\CLE_\kappa$ on $\SCD$. Then the $\gamma$-LQG surfaces encircled by the loops of $\Gamma_\SCD$ are conditionally independent $\gamma$-LQG disks given their boundary lengths.
\end{theorem}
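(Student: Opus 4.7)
The plan is to reduce the disk statement to the immediately preceding half-plane theorem via the branching $\SLE_\kappa^\beta(\kappa - 6)$ exploration of the $\CLE_\kappa$, together with the quantum Markov property of the $\gamma$-LQG disk. The output is a joint Poisson point process description of the encircled surfaces which, once proved along one branch, propagates through the entire branching tree.

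First I would run a branching $\SLE_\kappa^\beta(\kappa - 6)$ exploration on $\SCD$, rooted at a boundary point and targeting all other boundary points, as in the BCLE construction of $\CLE_\kappa$ recalled above. By that construction, every loop of $\Gamma_\SCD$ is revealed (i.e., is the outer boundary of a surface encircled by a true loop of a nested BCLE$_\kappa(-\kappa/2)$) along some branch of this branching tree. Each branch is a chordal $\SLE_\kappa^\beta(\kappa - 6)$ between two boundary points, whose trunk is an $\SLE_{\kappa'}(\rho'; \kappa' - 6 - \rho')$.

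Second I would analyze a single branch by direct appeal to the previous theorem. Near its starting boundary point, the coupling of the branch with $\SCD$ is locally absolutely continuous with respect to the coupling of an $\SLE_\kappa^\beta(\kappa - 6)$ with an independent $\gamma$-LQG half-plane; this is the standard local-absolute-continuity between the $\gamma$-LQG disk and half-plane in a neighborhood of a typical boundary point (cf.~\cite{SimCLELQG}). Combined with the Poisson point process description supplied by the half-plane theorem, this yields: along a single branch, the surfaces encircled by the visited loops of $\Gamma_\SCD$ are, conditionally on their boundary lengths, independent $\gamma$-LQG disks of those boundary lengths. Alternatively, one can argue directly on the disk using the fact that the left/right boundary-length processes are $\kappa/4$-stable L\'evy excursions, whose disintegration over jump sizes gives the same result.

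Third I would promote the one-branch statement to the full tree by iterating the quantum Markov property at branching times. At each branch-off time, conditional on all the $\gamma$-LQG mass and boundary-length data already explored (including each encircled loop together with its boundary length), the remaining unexplored region is itself a disjoint union of $\gamma$-LQG disks of known boundary lengths, independent of the cut-off surfaces. Applying the single-branch result recursively along the tree and using that the branching exploration almost surely reveals every loop of $\Gamma_\SCD$, one obtains the claimed joint conditional independence.

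The hardest step is the inductive passage from one branch to the whole branching tree: one must verify that at every branch-off the conformally welded structure of the encircled surfaces and the unexplored remainder is exactly that of a $\gamma$-LQG disk, with no residual correlation introduced by the exploration order. This is an instance of the conformal welding / mating-of-trees framework of Duplantier-Miller-Sheffield; the technical heart is a careful bookkeeping of $\gamma$-LQG boundary-length contributions along the exploration tree, combined with the observation that conditioning on the tree of boundary lengths fully determines the conditional law of each encircled surface to be an independent $\gamma$-LQG disk.
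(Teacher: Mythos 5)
The paper does not in fact prove this theorem: it is stated in \Cref{ss:13} as a background fact with the preface ``The following are proved in \cite{SimCLELQG},'' so there is no in-paper proof to compare against. That said, your outline is the standard one used in \cite{SimCLELQG} and the mating-of-trees literature: explore by a branching $\SLE_\kappa^\beta(\kappa-6)$ whose trunk is an $\SLE_{\kappa'}(\rho';\kappa'-6-\rho')$, use the Poisson point process description along a single branch (the preceding half-plane theorem) to see that the encircled surfaces along that branch are conditionally independent LQG disks given their boundary lengths, and then iterate via the quantum Markov property at branch points.

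Two concrete issues. First, a small but real error: the left/right boundary-length processes are $4/\kappa$-stable, not $\kappa/4$-stable, as spelled out in the corollary immediately preceding the theorem in question; with $\kappa\in(8/3,4)$ one has $4/\kappa\in(1,3/2)$, whereas $\kappa/4\in(2/3,1)$ would be an entirely different process. Second, the first route you give for step two (``locally absolutely continuous with respect to the half-plane coupling near a typical boundary point'') is not strong enough on its own: local mutual absolute continuity in a boundary neighbourhood does not let you transport the Poisson point process structure over the \emph{entire} branch, since the branch explores a macroscopic region. The robust way to carry out step two is exactly your alternative: express the LQG disk as a conditioned half-plane (or equivalently work directly with the $4/\kappa$-stable L\'evy excursions of the left/right boundary-length processes), then observe that the conditioning on total boundary length / quantum area factors through the jump sizes (i.e., through the boundary lengths of the encircled loops), so the conditional law of each encircled surface given all boundary lengths is still an independent LQG disk. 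Once that conditional independence is established along one branch, your step three (iterating the Markov property at branching times, and using that the branching exploration a.s.\ visits every loop) is correct.
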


\subsection{Multichordal SLE}\label{ss:02}

In the present subsection, we review the theory of multichordal SLE \cite{IG2,ConnProbCLE,NonsimSLERange,MultiSLE}. 

Let $U \subset \CC$ be a deterministic simply connected domain. Let $x_1, \ldots, x_{2N}$ be deterministic and distinct points of $\partial U$. A \emph{link pattern} on $(U; x_1, \ldots, x_{2N})$ is a partition $\{\{a_1, b_1\}, \ldots, \{a_N, b_N\}\}$ of $\{x_1, \ldots, x_{2N}\}$ such that there exist $N$ disjoint simple paths in $\overline U$ connecting $a_j$ and $b_j$, respectively. Let $\alpha = \{\{a_1, b_1\}, \ldots, \{a_N, b_N\}\}$ be a deterministic link pattern on $(U; x_1, \ldots, x_{2N})$. A \emph{multichordal} $\SLE_\kappa$ in $(U; x_1, \ldots, x_{2N})$ with \emph{interior} link pattern $\alpha$ is a random collection $\{\eta_1, \ldots, \eta_N\}$ of disjoint simple paths in $\overline U$ such that for each $j \in [1, N]_\ZZ$, given $\{\eta_k : k \neq j\}$, the path $\eta_j$ is conditionally a chordal $\SLE_\kappa$ in $U_j$ connecting $a_j$ and $b_j$, where $U_j$ denotes the connected component of $U \setminus \bigcup_{k : k \neq j} \eta_k$ having $a_j$ and $b_j$ on its boundary. 

\begin{theorem}
    The law of a multichordal $\SLE_\kappa$ in $(U; x_1, \ldots, x_{2N})$ with interior link pattern $\alpha$ exists and is unique. Moreover, if $\{\eta_1, \ldots, \eta_N\}$ is a multichordal $\SLE_\kappa$ in $(U; x_1, \ldots, x_{2N})$ with interior link pattern $\alpha$, then the following conditions are satisfied:
    \begin{enumerate}
        \item Let $\phi \colon U \to \phi(U)$ be a deterministic conformal mapping. Then $\{\phi(\eta_1), \ldots, \phi(\eta_N)\}$ is a multichordal $\SLE_\kappa$ in $(\phi(U); \phi(x_1), \ldots, \phi(x_{2N}))$ with interior link pattern $\phi(\alpha)$. 
        \item Let $j \in [1, N]_\ZZ$. Let $\tau$ be a stopping time for $\eta_j$. Then, given $\eta_j|_{[0, \tau]}$, the collection $\{\eta_j|_{[\tau, 1]}\} \cup \{\eta_k : k \neq j\}$ is conditionally a multichordal $\SLE_\kappa$ in $(U \setminus \eta([0, \tau]); \{x_1, \ldots, x_{2N}, \eta(\tau)\} \setminus \{\eta(0)\})$ with interior link pattern obtained from $\alpha$ by replacing $\eta(0)$ with $\eta(\tau)$. 
    \end{enumerate}
\end{theorem}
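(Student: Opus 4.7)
The plan is to handle existence and uniqueness together via a resampling/Gibbs framework. The defining condition is that, conditionally on the other $N-1$ chords, $\eta_j$ is a chordal $\SLE_\kappa$ in its complementary component $U_j$; so any multichordal $\SLE_\kappa$ is automatically a stationary distribution for the resampling Markov chain that picks $j \in [1,N]_\ZZ$ uniformly and resamples $\eta_j$ from its conditional law. It therefore suffices to construct one stationary distribution realizing the link pattern $\alpha$ (existence) and to show the chain has at most one stationary measure on the subspace of $N$-tuples of disjoint simple chords realizing $\alpha$ (uniqueness).

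For existence, I would produce the candidate law in one of two equivalent ways. The first is the pure partition function approach of Peltola--Wu: one establishes existence of a positive smooth function $Z_\alpha(x_1,\ldots,x_{2N})$ solving the second-order BPZ/null-vector PDE system with the cusp asymptotics prescribed by $\alpha$, and then drives $N$ Loewner chains simultaneously with drift $\kappa\,\partial_{x_j}\log Z_\alpha$ on the $j$-th driver. Girsanov and iterated application of the chordal $\SLE_\kappa$ domain Markov property then show the resulting $N$-tuple has the correct conditional marginals. The second is a Radon--Nikodym construction: sample $N$ independent chordal $\SLE_\kappa$'s sequentially in updated complementary components, then reweight by an explicit factor built from the Brownian loop measure $\mu^{\mathrm{loop}}$ of loops that touch multiple chords. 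The form of this factor is fixed by the requirement that each single-chord conditional be chordal $\SLE_\kappa$, and this can be verified using the conformal restriction identity for chordal $\SLE_\kappa$.

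For uniqueness, I would prove ergodicity of the resampling chain on $\{\text{link pattern} = \alpha\}$. Given two multichordal samples, one runs synchronized resampling chains from each starting configuration and uses absolute continuity of chordal $\SLE_\kappa$ between nearby domains to successively couple single resampled chords so that they agree with positive probability; after $O(N)$ successful steps the two configurations coalesce, and stationarity of both laws then forces the two laws to coincide. Conformal invariance follows at once because the resampling chain is conformally covariant and the unique stationary law must therefore be conformally covariant. The stopping-time Markov property in part (2) is immediate from the chordal $\SLE_\kappa$ domain Markov property applied to $\eta_j$ inside $U_j$ together with the conditional structure of the other chords, which is preserved under conditioning on $\eta_j|_{[0,\tau]}$. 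The main obstacle is the quantitative coupling step in the ergodicity argument: one needs a uniform absolute continuity estimate for chordal $\SLE_\kappa$ in two simply connected domains that differ only near $\partial U$, obtained via local commutation relations and imaginary geometry, and this is the technical heart of the constructions in \cite{MultiSLE,ConnProbCLE}.
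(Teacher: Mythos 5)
The paper does not prove this statement: it is recalled as known background in \Cref{ss:02}, with the proof deferred to the cited literature \cite{IG2,ConnProbCLE,NonsimSLERange,MultiSLE}. Your outline is essentially the standard argument from those references --- the Gibbs/resampling characterization, existence via either the pure partition functions of Peltola--Wu or the Brownian-loop-measure Radon--Nikodym reweighting, and uniqueness via coalescence of the resampling Markov chain as in Beffara--Peltola--Wu --- so there is nothing to compare against within the paper itself. The one place your sketch understates the difficulty is the uniqueness step: ``coupling single resampled chords so that they agree with positive probability'' is not literally achievable for mutually absolutely continuous but non-identical continuous laws in a single step, and the actual argument in the literature requires a quantitative contraction/coupling estimate for the resampling kernel (uniform over configurations), which is the technical heart of that proof rather than a routine consequence of absolute continuity between nearby domains.
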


\begin{lemma}\label{271}
    Let $\{\eta_1, \ldots, \eta_N\}$ be a multichordal $\SLE_\kappa$ in $(U; x_1, \ldots, x_{2N})$ with interior link pattern $\alpha$. Let $\gamma_1, \ldots, \gamma_N$ be disjoint simple paths in $\overline U$ inducing the link pattern $\alpha$. Then for each $\varepsilon > 0$, it holds with positive probability that $\eta_j \subset B_\varepsilon(\gamma_j)$ for all $j \in [1, N]_\ZZ$. 
\end{lemma}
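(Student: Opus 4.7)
The plan is to proceed by induction on $N$. For $N = 1$, the statement reduces to the standard support property of chordal $\SLE_\kappa$ with $\kappa \in (8/3, 4)$: for any simple path $\gamma$ from $a$ to $b$ in $\overline U$ and any $\varepsilon > 0$, a chordal $\SLE_\kappa$ from $a$ to $b$ in $U$ has positive probability of staying inside $B_\varepsilon(\gamma)$. I would prove this via the Loewner chain: the path $\gamma$ corresponds to a continuous driving function $W^\gamma$ on $[0, T]$, and by the Cameron-Martin theorem a scaled Brownian motion has positive probability of lying in any $L^\infty$-neighborhood of $W^\gamma$ on $[0, T]$; continuous dependence of the Loewner flow on its driving function then translates this into closeness of the resulting curves.

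For the inductive step, fix $N \geq 2$, assume the claim for fewer than $N$ strands, and choose $\varepsilon^\prime \in (0, \varepsilon)$ small enough that the neighborhoods $B_{\varepsilon^\prime}(\gamma_j)$ have pairwise disjoint closures and are disjoint from each boundary point $x_k$ that is not an endpoint of $\gamma_j$. I would first show that $\BP[\eta_1 \subset B_{\varepsilon^\prime}(\gamma_1)] > 0$. The marginal law of $\eta_1$ is absolutely continuous with respect to the chordal $\SLE_\kappa$ from $a_1$ to $b_1$ in $U$, with Radon-Nikodym derivative given (up to normalization) by a ratio of multichordal partition functions. On the compact event $\{\eta_1 \subset B_{\varepsilon^\prime}(\gamma_1)\}$ this density is continuous and strictly positive, hence bounded above and below by deterministic constants, so the base case applied to chordal $\SLE_\kappa$ transfers to the multichordal law.

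Next, given $\eta_1$, the stopping-time property from the preceding theorem (specialised to the terminal time of $\eta_1$) identifies the conditional law of $(\eta_2, \ldots, \eta_N)$ as a multichordal $\SLE_\kappa$ on the connected components of $U \setminus \eta_1$ containing the remaining marked points, with the link pattern obtained from $\alpha$ by deleting the chord $\{a_1, b_1\}$. On the event $\{\eta_1 \subset B_{\varepsilon^\prime}(\gamma_1)\}$ the curves $\gamma_2, \ldots, \gamma_N$ still lie in these components and induce the restricted link pattern there, so the inductive hypothesis applies componentwise and yields positive conditional probability that $\eta_j \subset B_{\varepsilon^\prime}(\gamma_j) \subset B_\varepsilon(\gamma_j)$ for $j \geq 2$. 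Integrating over the law of $\eta_1$ restricted to the positive-probability event of the previous paragraph closes the induction.

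The principal obstacle will be the marginal absolute continuity step: one must verify that the density of $\eta_1$ under the multichordal law, relative to chordal $\SLE_\kappa$, is uniformly bounded above and below on the event $\{\eta_1 \subset B_{\varepsilon^\prime}(\gamma_1)\}$. I would handle this either by invoking the partition-function construction of multichordal SLE together with standard regularity estimates for the relevant partition functions on configurations bounded away from the boundary marked points, or, alternatively, by a resampling-Markov-chain argument that realises the multichordal law as the unique invariant distribution and uses the chordal support property at each resampling step to produce the required configuration with positive probability without an explicit density calculation.
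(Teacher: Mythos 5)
Your overall skeleton (induction on $N$, base case via the Loewner/Cameron--Martin support property of chordal $\SLE_\kappa$, then an inductive step through the conditional law of the remaining strands given one of them) matches the paper's, but the step you yourself flag as the principal obstacle is a genuine gap, and the paper's proof is organized precisely to avoid it. The difficulty with your claim that $\BP\lbrack \eta_1 \subset B_{\varepsilon'}(\gamma_1)\rbrack > 0$ is that the definition of multichordal $\SLE_\kappa$ used here only supplies the \emph{conditional} law of $\eta_1$ given $\{\eta_k : k \neq 1\}$ (chordal $\SLE_\kappa$ in the component $U_1$), not its marginal law. You cannot get positivity by conditioning on the other strands and invoking the $N=1$ case, because on a positive-probability set of configurations the other curves cross $B_{\varepsilon'}(\gamma_1)$ and disconnect $a_1$ from $b_1$ inside it, so the conditional probability is $0$ there; and the asserted absolute continuity of the marginal with respect to chordal $\SLE_\kappa$, with a density bounded above and below on the event, is exactly the nontrivial input you would need to import from the partition-function construction and then justify with additional regularity estimates. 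As written, this step is a placeholder rather than a proof.

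The paper circumvents the marginal law entirely by a three-stage resampling argument using only conditional laws. One first chooses an index $j$ for which one of the two boundary arcs $(a_j, b_j)_{\partial U}^\circlearrowleft$ or $(a_j, b_j)_{\partial U}^\circlearrowright$ contains no other marked point; such a $j$ always exists for a valid link pattern. Since the curves are boundary-avoiding for $\kappa \in (8/3,4)$, that empty boundary arc lies on $\partial U_j$ for \emph{every} configuration of the other strands, so the $N=1$ case applied to the conditional law of $\eta_j$ gives an almost surely positive conditional probability that $\eta_j$ stays in an $\varepsilon$-neighborhood of that arc; taking expectations yields unconditional positive probability without any marginal density. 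With $\eta_j$ parked against the boundary and away from all $B_\varepsilon(\gamma_k)$, the remaining $N-1$ strands form a multichordal $\SLE_\kappa$ in the complementary component and the induction hypothesis places them in their target neighborhoods; finally, conditioning on those $N-1$ strands and applying the $N=1$ case once more moves $\eta_j$ into $B_\varepsilon(\gamma_j)$. If you want to keep your structure, the cleanest fix is to replace your marginal-law step with this ``park on an empty boundary arc first'' device; your alternative suggestion of a resampling Markov chain, if carried out, would essentially reduce to the same argument.
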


\begin{proof}
    We apply induction on $N$. The case where $N = 1$ follows immediately from \cite[Proposition~3.4]{IG1}. Suppose that $N \ge 2$. Write $\alpha = \{\{a_1, b_1\}, \ldots, \{a_N, b_N\}\}$ such that for each $j \in [1, N]_\ZZ$, $\gamma_j$ is a simple path connecting $a_j$ and $b_j$. By possibly decreasing $\varepsilon$, we may assume without loss of generality that $B_\varepsilon(\gamma_j) \cap B_\varepsilon(\gamma_k) = \emptyset$ for all distinct $j, k \in [1, N]_\ZZ$. One verifies immediately that there exists $j \in [1, N]_\ZZ$ such that either $(a_j, b_j)_{\partial U}^\circlearrowleft \cap \{x_1, \ldots, x_{2N}\} = \emptyset$ or $(a_j, b_j)_{\partial U}^\circlearrowright \cap \{x_1, \ldots, x_{2N}\} = \emptyset$. We may assume without loss of generality that the former is true. Again, by possibly decreasing $\varepsilon$, we may assume without loss of generality that $B_\varepsilon(\gamma_k) \cap B_\varepsilon((a_j, b_j)_{\partial U}^\circlearrowleft) = \emptyset$ for all $k \in [1, N]_\ZZ$ with $k \neq j$. First, it follows from the case where $N = 1$ that it holds with positive probability that $\eta_j \subset B_\varepsilon((a_j, b_j)_{\partial U}^\circlearrowleft)$. Then, by the induction hypothesis, it holds with positive probability that $\eta_j \subset B_\varepsilon((a_j, b_j)_{\partial U}^\circlearrowleft)$ and $\eta_k \subset B_\varepsilon(\gamma_k)$ for all $k \in [1, N]_\ZZ$ with $k \neq j$. Finally, it follows again from the case where $N = 1$ that it holds with positive probability that $\eta_k \subset B_\varepsilon(\gamma_k)$ for all $k \in [1, N]_\ZZ$. This completes the proof. 
\end{proof}

Write
\begin{equation*}
    \begin{dcases}
        Z(x) \defeq x^{2/\kappa}(1 - x)^{1 - 6/\kappa}{}_2F_1(4/\kappa, 1 - 4/\kappa, 8/\kappa; x); \\
        H(x) \defeq \frac{Z(x)}{Z(x) - 2\cos(4\pi/\kappa)Z(1 - x)}, 
    \end{dcases}
    \quad \forall x \in (0, 1),
\end{equation*}
where ${}_2F_1$ denotes the hypergeometric function. Choose $x \in (0, 1)$ so that $(U; x_1, x_2, x_3, x_4)$ is conformally equivalent to $(\HH; \infty, 0, 1 - x, 1)$. A \emph{bichordal $\SLE_\kappa$} in $(U; x_1, x_2, x_3, x_4)$ with \emph{exterior} link pattern $\{\{x_1, x_2\}, \{x_3, x_4\}\}$ is obtained by first sampling $\alpha$ with 
\begin{equation*}
    \BP\lbrack\alpha = \{\{x_1, x_4\}, \{x_2, x_3\}\}\rbrack = H(x) \quad \text{and} \quad \BP\lbrack\alpha = \{\{x_1, x_2\}, \{x_3, x_4\}\}\rbrack = 1 - H(x), 
\end{equation*}
and then sampling a bichordal $\SLE_\kappa$ in $(U; x_1, x_2, x_3, x_4)$ with interior link pattern $\alpha$. Let $\beta$ be a link pattern on $(U; x_1, \ldots, x_{2N})$. A \emph{multichordal $\SLE_\kappa$} in $(U; x_1, \ldots, x_{2N})$ with \emph{exterior} link pattern $\beta$ is a random collection $\{\eta_1, \ldots, \eta_N\}$ of disjoint simple paths in $\overline U$ such that for each pair of distinct $j, k \in [1, N]_\ZZ$, on the event that $\eta_j$ and $\eta_k$ are contained in the same connected component $U_{j,k} \subset U \setminus \bigcup_{l : l \neq j, k}$, given $\{\eta_l : l \neq j, k\}$, the pair $\{\eta_j, \eta_k\}$ is conditionally a multichordal $\SLE_\kappa$ in $(U_{j,k}; \eta_j(0), \eta_j(1), \eta_k(0), \eta_k(1))$ with the exterior link pattern induced by $\{\eta_l : l \neq j, k\}$ and $\beta$. 

\begin{theorem}
    The law of a multichordal $\SLE_\kappa$ in $(U; x_1, \ldots, x_{2N})$ with exterior link pattern $\beta$ exists and is unique. Moreover, if $\{\eta_1, \ldots, \eta_N\}$ is a multichordal $\SLE_\kappa$ in $(U; x_1, \ldots, x_{2N})$ with exterior link pattern $\beta$, then the following conditions are satisfied:
    \begin{enumerate}
        \item Let $\phi \colon U \to \phi(U)$ be a deterministic conformal mapping. Then $\{\phi(\eta_1), \ldots, \phi(\eta_N)\}$ is a multichordal $\SLE_\kappa$ in $(\phi(U); \phi(x_1), \ldots, \phi(x_{2N}))$ with exterior link pattern $\phi(\beta)$. 
        \item Let $j \in [1, N]_\ZZ$. Let $\tau$ be a stopping time for $\eta_j$. Then, given $\eta_j|_{[0, \tau]}$, the collection $\{\eta_j|_{[\tau, 1]}\} \cup \{\eta_k : k \neq j\}$ is conditionally a multichordal $\SLE_\kappa$ in $(U \setminus \eta([0, \tau]); \{x_1, \ldots, x_{2N}, \eta(\tau)\} \setminus \{\eta(0)\})$ with exterior link pattern obtained from $\beta$ by replacing $\eta(0)$ with $\eta(\tau)$. 
    \end{enumerate}
\end{theorem}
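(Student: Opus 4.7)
The plan is to reduce existence and uniqueness for exterior link patterns to the corresponding statements for interior link patterns (established immediately before) together with the bichordal case constructed via the hypergeometric function $H$. For uniqueness, suppose $\{\eta_1, \ldots, \eta_N\}$ is a multichordal $\SLE_\kappa$ with exterior link pattern $\beta$, and let $\alpha$ denote its (random) interior link pattern. Since the conditional law given $\alpha$ is determined by the previous theorem, it suffices to identify the distribution of $\alpha$. For each pair of distinct $j, k \in [1, N]_\ZZ$, the definition forces that given $\{\eta_l : l \neq j, k\}$, the conditional law of $\{\eta_j, \eta_k\}$ is a bichordal $\SLE_\kappa$ in the appropriate connected component with a prescribed exterior link pattern. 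By the explicit formula involving $H$, this pins down the conditional probability that $\{\eta_j, \eta_k\}$ switches between its two possible interior link patterns relative to the other curves. I would then argue by induction on $N$, with base case $N = 2$ given directly above the theorem, that these local conditional constraints (together with the requirement that $\alpha$ collapses to $\beta$) uniquely determine the joint distribution of $\alpha$.

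For existence, I would construct the law directly as a convex combination. Let $\mu_\alpha$ denote the (unique) law of multichordal $\SLE_\kappa$ in $(U; x_1, \ldots, x_{2N})$ with interior link pattern $\alpha$, and assign to each interior link pattern $\alpha$ collapsing to $\beta$ a weight $w_\alpha \defeq Z_\alpha / \sum_{\alpha' \to \beta} Z_{\alpha'}$, where $Z_\alpha$ is the standard SLE partition function for interior link pattern $\alpha$; the mixture $\sum_\alpha w_\alpha \mu_\alpha$ is then the candidate law. Verifying that it satisfies the bichordal consistency in the definition reduces, upon conditioning on all curves outside a given pair $\{j, k\}$, to the $N = 2$ computation involving $H$ and ${}_2F_1$. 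Conformal covariance and the Markov property are inherited: the weights $w_\alpha$ depend only on the conformal equivalence class of $(U; x_1, \ldots, x_{2N})$, so conformal covariance follows from the corresponding property of each $\mu_\alpha$; and after conditioning on $\eta_j|_{[0, \tau]}$, the curves $\{\eta_j|_{[\tau, 1]}\} \cup \{\eta_k : k \neq j\}$ again satisfy the bichordal consistency in the slit domain, so by uniqueness they form a multichordal $\SLE_\kappa$ with the appropriate exterior link pattern.

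The main obstacle is to verify the compatibility of the weights $w_\alpha$ with the bichordal consistency condition across all pairs $(j, k)$ simultaneously; equivalently, to verify the appropriate cascade relation for the partition functions $Z_\alpha$ under restriction to the complement of all but two curves. In the $N = 2$ case this is precisely the identity used to define $H$; for general $N$ it follows from standard partition function identities in the multichordal $\SLE_\kappa$ literature referenced above the theorem, and I would invoke these rather than re-derive them.
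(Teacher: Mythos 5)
The paper does not actually prove this theorem. It appears in \Cref{ss:02}, which begins with ``In the present subsection, we review the theory of multichordal SLE \cite{IG2,ConnProbCLE,NonsimSLERange,MultiSLE},'' and the two theorems on exterior link patterns (as well as \Cref{012,260}) are stated without proof as cited background. So there is no argument in the paper to compare yours against; the intended justification is simply a reference to the multichordal SLE literature.

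That said, your sketch does reflect the standard approach one finds in that literature: express the exterior-pattern law as a mixture of the (already established) interior-pattern laws with weights proportional to the SLE pure partition functions $Z_\alpha$, and note that bichordal consistency forces these weights via the hypergeometric-function identity packaged in $H$. Two cautions are worth registering. First, your uniqueness step — that the system of bichordal conditional specifications together with the collapse condition $\alpha \to \beta$ determines the law of $\alpha$ — is asserted ``by induction on $N$'' without describing the inductive step; this is precisely the Gibbs-uniqueness content that the cited references (e.g.\ \cite{MultiSLE}) work hardest to establish, and it does not follow just from the bichordal base case. Second, for existence, the ``compatibility of the weights $w_\alpha$ across all pairs $(j,k)$'' is the cascade relation for partition functions, which you defer entirely to the literature; that is acceptable here since the paper itself does the same, but it should be flagged as the load-bearing input rather than a routine verification. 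With those caveats made explicit, your proposal is a fair reconstruction of the proof the paper is implicitly invoking.
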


\begin{lemma}\label{012}
    Let $\{\eta_1, \ldots, \eta_N\}$ be a multichordal $\SLE_\kappa$ in $(U; x_1, \ldots, x_{2N})$ with exterior link pattern $\beta$. Then $\BP\lbrack\text{the link pattern induced by } \{\eta_1, \ldots, \eta_N\} \text{ is equal to } \alpha\rbrack > 0$. Moreover, the mapping 
    \begin{equation*}
        (U; x_1, \ldots, x_{2N}) \mapsto \BP\lbrack\text{the link pattern induced by } \{\eta_1, \ldots, \eta_N\} \text{ is equal to } \alpha\rbrack
    \end{equation*}
    is continuous with respect to the Carath\'eodory topology for simply connected domains, together with the uniform topology for boundary points. 
\end{lemma}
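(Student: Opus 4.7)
We proceed by induction on $N$, reducing to the bichordal ($N=2$) case, which is handled by the explicit formula in the definition of multichordal $\SLE_\kappa$ with exterior link pattern. For $N = 1$ the statement is trivial. For $N = 2$, the two compatible interior link patterns have probabilities $H(x)$ and $1 - H(x)$ where $x \in (0, 1)$ is the cross-ratio of the four boundary points (after conformal uniformization onto $(\HH; \infty, 0, 1 - x, 1)$). Both probabilities lie in $(0, 1)$: indeed $Z(x), Z(1 - x) > 0$ on $(0, 1)$, and $\cos(4\pi/\kappa) < 0$ for $\kappa \in (8/3, 4)$, so $Z(x) - 2\cos(4\pi/\kappa)Z(1 - x) > Z(x) > 0$. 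Continuity of the probability in the boundary data follows from the continuity of $H$ together with the continuity of the cross-ratio under Carathéodory convergence of domains and uniform convergence of boundary points.

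\emph{Inductive step.} Fix $N \ge 3$ and an interior link pattern $\alpha$ compatible with $\beta$. Choose indices $j \neq k$ such that $\eta_j, \eta_k$ can belong to a common connected component $U_{j, k}$ of $U \setminus \bigcup_{l \neq j, k} \eta_l$ under the target interior link pattern $\alpha$. The recursive definition of multichordal $\SLE_\kappa$ with exterior link pattern says that, on the event that $\eta_j$ and $\eta_k$ share such a component, the conditional law of $(\eta_j, \eta_k)$ given $\{\eta_l\}_{l \neq j, k}$ is a bichordal $\SLE_\kappa$ in $U_{j, k}$. The tower property then yields
\begin{equation*}
    \BP\bigl[\text{interior link pattern} = \alpha\bigr] = \BE\bigl[\mathbf{1}_{E_\alpha^{j, k}} \cdot p^{j, k}\bigr],
\end{equation*}
where $E_\alpha^{j, k}$ is the event that $\eta_j, \eta_k$ share a common component and that $\{\eta_l\}_{l \neq j, k}$ induces the appropriate restricted link pattern, and $p^{j, k} \in \{H(x_{j, k}), 1 - H(x_{j, k})\}$ (chosen according to $\alpha$), with $x_{j, k}$ the cross-ratio of the four endpoints in $U_{j, k}$.

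\emph{Positivity and continuity.} For positivity, observe that by the inductive hypothesis applied to the multichordal $\SLE_\kappa$ formed by the remaining $N - 2$ curves (the exterior link pattern on them is the restriction of $\beta$), we have $\BP[E_\alpha^{j, k}] > 0$; on this event $p^{j, k} > 0$ by the $N = 2$ case, so $\BP[\alpha] > 0$. The combinatorial fact that $(j, k)$ can indeed be chosen so that the restricted inductive hypothesis covers $\alpha$ follows from the connectivity of the "flip graph" of interior link patterns compatible with $\beta$ under bichordal swaps: any compatible $\alpha$ is reachable from any other compatible interior link pattern by swapping pairs of curves that are in a common component, each of which is a bichordal rearrangement. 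For continuity, both factors in the integrand are continuous in the boundary data: $p^{j, k}$ via continuity of $H$ and of the cross-ratio, and the law of $\{\eta_l\}_{l \neq j, k}$ (hence the probability of $E_\alpha^{j, k}$) via the inductive hypothesis, together with standard continuity of chordal $\SLE_\kappa$ in its marked points. A dominated-convergence argument then passes continuity through the expectation.

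\emph{Main obstacle.} The principal difficulty is the continuity claim in the inductive step: although the bichordal factor $H(x_{j, k})$ depends continuously on the conditioning curves, the random conditioning domain $U_{j, k}$ varies with the ambient boundary data, and one must track continuity of the joint law of $\{\eta_l\}_{l \neq j, k}$ together with the cross-ratio $x_{j, k}$ across Carathéodory perturbations. This requires combining the inductive continuity with standard continuous-dependence results for $\SLE_\kappa$ driving processes on their boundary marked points, and handling the event $E_\alpha^{j, k}$ carefully (in particular showing its boundary has measure zero under the limiting law, so that indicator convergence does not fail). A secondary obstacle is verifying the flip-graph connectivity used in the positivity argument, which is a combinatorial statement about non-crossing partitions but needs a short explicit check.
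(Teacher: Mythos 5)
The paper does not actually prove \Cref{012}: it is stated as part of the background review of multichordal $\SLE_\kappa$ and is implicitly attributed to the cited literature (\cite{IG2,ConnProbCLE,NonsimSLERange,MultiSLE}), where it follows from the representation $\BP\lbrack\alpha\rbrack = Z_\alpha/\sum_{\alpha^\prime} Z_{\alpha^\prime}$ in terms of positive, continuous pure partition functions. So the comparison here is between your proposed argument and that standard route, and your argument as written has a genuine gap.

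The gap is in the inductive step. You write $\BP\lbrack\alpha\rbrack = \BE\lbrack\mathbf 1_{E_\alpha^{j,k}}\, p^{j,k}\rbrack$ and then claim $\BP\lbrack E_\alpha^{j,k}\rbrack > 0$ ``by the inductive hypothesis applied to the multichordal $\SLE_\kappa$ formed by the remaining $N-2$ curves.'' But the marginal law of $\{\eta_l\}_{l \neq j,k}$ is \emph{not} a multichordal $\SLE_\kappa$ with $N-2$ curves in $(U;\,\{x_i\}\setminus\{\text{four endpoints}\})$: the defining property of the exterior-link-pattern law only describes the \emph{conditional} law of a pair given the rest, and the remaining curves still feel the presence of $\eta_j,\eta_k$. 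So neither the positivity of $\BP\lbrack E_\alpha^{j,k}\rbrack$ nor its continuity in the boundary data is covered by the inductive hypothesis, and the same objection applies to the dominated-convergence step for continuity. A single application of the tower property changes the induced pattern by at most one bichordal swap, so the ``flip-graph connectivity'' you invoke cannot be absorbed into one conditioning; the standard way to make your idea rigorous is a multi-step Gibbs-resampling argument: some pattern $\alpha_0$ occurs with positive probability (there are finitely many), the law is invariant under resampling a co-component pair as a bichordal $\SLE_\kappa$, each such resampling realizes either admissible pairing of the four endpoints with conditional probability in $\{H, 1-H\} \subset (0,1)$, and the flip graph of non-crossing pairings under swaps of non-separated links is connected; iterating along a path from $\alpha_0$ to $\alpha$ gives $\BP\lbrack\alpha\rbrack > 0$. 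That is a different logical structure from what you wrote. Two smaller points: your choice of $j,k$ must be such that the links $\{a_j,b_j\}$ and $\{a_k,b_k\}$ of $\alpha$ are not separated by the remaining links of $\alpha$ (otherwise the four endpoints cannot lie in a common component), which you gesture at but should state; and the positivity $Z(x) > 0$ is not termwise obvious from the hypergeometric series (for $\kappa \in (8/3,4)$ one has $1 - 4/\kappa < 0$, so all coefficients beyond the constant term are negative) and needs a citation or a separate argument. The $N=2$ computation that $H(x) \in (0,1)$, granting $Z > 0$, is correct since $\cos(4\pi/\kappa) < 0$ on this range of $\kappa$.
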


Let $\{\eta_1, \ldots, \eta_N\}$ be a multichordal $\SLE_\kappa$ in $(U; x_1, \ldots, x_{2N})$ with exterior link pattern $\beta$. Given $\{\eta_1, \ldots, \eta_N\}$, let $\Gamma$ be conditionally independent nested $\CLE_\kappa$'s in each connected component of $U \setminus \bigcup_j \eta_j$. Then $\{\eta_1, \ldots, \eta_N\} \cup \Gamma$ has the law of a \emph{multichordal $\CLE_\kappa$} in $(U; x_1, \ldots, x_{2N})$ with exterior link pattern $\beta$. 

\begin{theorem}\label{252}
    Let $U$ be either $\CC$ or a deterministic simply connected domain. Let $V \subset U$ be a deterministic simply connected subdomain. Let $K \subset V$ be a deterministic connected and compact subset. Let $\Gamma$ be a nested $\CLE_\kappa$ in $U$. Write $V^\star$ for the connected component containing $K$ of the open subset obtained by removing from $V$ the closure of the union of the $(U \setminus K, U \setminus V)$-excursions of $\Gamma$ and all the other loops of $\Gamma$ that intersect with $U \setminus V$. Write $X \defeq \partial V^\star \cap K$ for the collection of the endpoints of the $(U \setminus K, U \setminus V)$-excursions of $\Gamma$. Write $\beta$ for the link pattern on $(V^\star; X)$ induced by the $(U \setminus K, U \setminus V)$-excursions of $\Gamma$. Then, given $(V^\star; X)$ and $\beta$, the collection of the complementary $(U \setminus K, U \setminus V)$-excursions of $\Gamma$, together with the collection of the loops of $\Gamma$ that are contained in $V^\star$, is conditionally a multichordal $\CLE_\kappa$ in $(V^\star; X)$ with exterior link pattern $\beta$. 
\end{theorem}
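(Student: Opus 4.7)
The plan is to verify the two structural features that characterize a multichordal $\CLE_\kappa$ in $(V^\star; X)$ with exterior link pattern $\beta$: first, that the chord configuration (in our setting, the complementary $(U \setminus K, U \setminus V)$-excursions of $\Gamma$ inside $V^\star$) has the law of a multichordal $\SLE_\kappa$ with exterior link pattern $\beta$; second, that conditionally on the chords, the remaining loops (in our setting, the loops of $\Gamma$ contained in $V^\star$) form independent nested $\CLE_\kappa$'s in each complementary component. I would verify both statements conditionally on $(V^\star, X, \beta)$.

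For the second item, I would argue via the restriction property built into the definition of simple CLE. After conditioning on the entire collection $\Gamma^\prime$ of loops of $\Gamma$ that intersect $\overline{U \setminus V}$, which in particular determines $V^\star$, $X$, $\beta$, and all the chords, the loops of $\Gamma$ disjoint from $\overline{U \setminus V}$ form, in each connected component of $V^\star$ minus the chords, conditionally independent nested $\CLE_\kappa$'s. This is a direct consequence of the restriction axiom for non-nested simple CLE combined with the iterative definition of nested CLE, applied at each level of nesting inside $V^\star$.

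For the first item, I would invoke the uniqueness of multichordal $\SLE_\kappa$: by that uniqueness it suffices to verify the resampling property that, conditional on all chords but one, the remaining chord has the law of a chordal $\SLE_\kappa$ in the component of $V^\star$ (minus the other chords) containing it. To establish this, I plan to use a target-independent exploration of $\Gamma$ based on the BCLE/branching $\SLE_\kappa(\kappa - 6)$ construction reviewed in \Cref{ss:01}: the exploration discovers the loops of $\Gamma$ intersecting $\overline{U \setminus V}$ one at a time, and at each stage the conformal Markov property of SLE identifies the undiscovered piece of the loop currently being traced as a chordal $\SLE_\kappa$ in the appropriate complementary domain. Reading off the result after the exploration is complete yields the desired resampling property for the chord configuration, and hence the multichordal $\SLE_\kappa$ law; the exterior link pattern comes out to be $\beta$ by construction, since $\beta$ is literally the pairing of points of $X$ induced by the discovered excursions going outside $V^\star$.

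The main obstacle will be that infinitely many loops of $\Gamma$ may intersect $\overline{U \setminus V}$, with arbitrarily small loops accumulating along $\partial V$. The arm-event estimates in \Cref{264,328} show that loops of macroscopic size are locally finite, so my plan is to first establish the theorem in a truncated setting where only loops of diameter at least $\delta > 0$ are treated as chord-generating (the remaining loops being absorbed into the background), since finitely many such loops can be exhausted by a finite iteration of the BCLE exploration above; then I would let $\delta \to 0$. The passage to the limit uses continuity of multichordal $\SLE_\kappa$ probabilities in the Carath\'eodory topology (\Cref{012}), together with a tightness argument driven by the four-arm exponent bound (\Cref{264}) to show that chords of small diameter contribute negligibly to the combinatorial and geometric data of the configuration.
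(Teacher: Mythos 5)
First, a point of reference: the paper does not prove \Cref{252} at all. It appears in the preliminaries (\Cref{ss:02}) as part of a literature review and is imported from the cited works on multichordal SLE and CLE connection probabilities; there is no in-paper argument to compare yours against. So your proposal has to be judged on its own merits, and there it has a genuine gap.

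The gap is in your treatment of the chord configuration. A multichordal $\SLE_\kappa$ with \emph{exterior} link pattern $\beta$ is not characterized by the single-chord resampling property you propose to verify. Single-chord resampling (each $\eta_j$ is conditionally a chordal $\SLE_\kappa$ given the others) is the characterization of multichordal $\SLE_\kappa$ with a fixed \emph{interior} link pattern; it says nothing about the law of the interior link pattern itself, i.e., about which endpoints of $X$ get paired with which. The entire content of the exterior-link-pattern object is precisely this law: by the paper's definition it is pinned down by requiring that each \emph{pair} of chords, given the rest, is a bichordal $\SLE_\kappa$ whose random interior pairing is sampled with the explicit hypergeometric weight $H(x)$. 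Your argument, even if carried out, would only show that conditionally on the realized interior link pattern $\alpha$ the chords form a multichordal $\SLE_\kappa$ with interior link pattern $\alpha$; it would not identify the conditional law of $\alpha$ given $(V^\star; X, \beta)$, which is the nontrivial part of the theorem (this is exactly the "connection probabilities" problem of one of the cited references). Relatedly, the conditional law you extract from the branching $\SLE_\kappa(\kappa-6)$ exploration is a law given the exploration's past filtration, not a law given \emph{all other chords}; passing from the former to the resampling property requires an additional commutation/reversibility argument that you do not supply.

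Two smaller remarks. Your worry about infinitely many chord-generating loops is misplaced: every loop making a $(U \setminus K, U \setminus V)$-excursion has Euclidean diameter at least $\dist(K, U \setminus V) > 0$, so there are almost surely only finitely many chords and $X$ is finite; the infinitely many small loops near $\partial V$ are removed wholesale in forming $V^\star$ and never produce marked points, so the $\delta$-truncation is not needed for the combinatorial structure (though some care is still needed to define the exploration of all loops meeting $U \setminus V$). Your argument for the loop part (conditionally independent nested $\CLE_\kappa$'s in the complementary components given the chords) is essentially right in spirit, but note that the restriction axiom as stated applies to a deterministic subdomain, whereas here the conditioning is on the randomly discovered set $\Gamma'$; upgrading to this conditional statement is standard via the loop-soup or exploration-tree Markov property, but it should be said rather than attributed directly to the axiom.
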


\begin{lemma}\label{260}
    Let $U$ be a deterministic simply connected domain. Let $V \Subset U$ be a deterministic simply connected subdomain. Let $K \subset V$ be a deterministic connected and compact subset. Let $\Gamma$ be a multichordal $\CLE_\kappa$ in $(U; x_1, \ldots, x_{2N})$ with exterior link pattern $\beta$. Write $\Gamma(V, K)$ for the union of the collection of the $(V, K)$-excursions of $\Gamma$ and the collection of the loops of $\Gamma$ that are contained in $V$ and intersect with $K$. Then the mapping 
    \begin{equation*}
        (U; x_1, \ldots, x_{2N}) \mapsto (\text{the law of } \Gamma(V, K))
    \end{equation*}
    is continuous with respect to the Carath\'eodory topology for simply connected domains, together with the uniform topology for boundary points, and the total variation distance for probability measures. 
\end{lemma}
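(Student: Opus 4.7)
The plan is to couple multichordal $\CLE_\kappa$'s on nearby boundary data and to show that the restrictions to $\Gamma(V,K)$ agree in total variation up to error tending to zero. Fix a sequence $(U_n; x_1^n, \ldots, x_{2N}^n) \to (U; x_1, \ldots, x_{2N})$ in the product of the Carath\'eodory topology and the uniform topology on the boundary points. I would first normalize by a conformal map $\phi_n \colon U_n \to U$ fixing a chosen interior point and tangent direction; by Carath\'eodory kernel convergence together with the uniform convergence of the marked boundary points, $\phi_n$ tends to the identity uniformly on compact subsets of $U$ and $\phi_n(x_j^n) \to x_j$ for each $j$. By conformal invariance, pushing forward reduces the problem to analyzing how the law of $\Gamma(V, K)$ depends on the marked boundary points within the common domain $U$, for which $V \Subset U$ is fixed.

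Next I would apply \Cref{012} to reduce to a fixed interior link pattern $\alpha$: the probability assigned to each $\alpha$ is continuous, so by the triangle inequality it suffices to prove continuity of the conditional law of $\Gamma(V, K)$ given $\alpha$. Conditionally on $\alpha$, the multichordal $\SLE_\kappa$ can be sampled iteratively as $N$ chordal $\SLE_\kappa$'s in successive complementary domains, with each complementary component carrying an independent nested $\CLE_\kappa$. The object $\Gamma(V, K)$ is then a measurable functional of these chords and loops that depends only on their behavior in a neighborhood of $K$ inside $V$.

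For each chordal $\SLE_\kappa$, continuity of the law under perturbation of the marked boundary points, restricted to the event that the curve is contained in a compact subdomain $V^\prime$ with $V \Subset V^\prime \Subset U$, holds in total variation; this follows either from the Loewner-equation description of the Radon-Nikodym derivative in terms of a ratio of partition functions, or equivalently from the Brownian loop-measure representation (cf.~\cite{BLS}). An analogous statement for the nested $\CLE_\kappa$'s follows directly from the Brownian loop-soup construction, since the loop measure on loops contained in $V^\prime$ depends continuously on the domain data. Iterating over the finitely many chords and over the complementary components whose loops contribute to $\Gamma(V, K)$ gives the desired total variation continuity.

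The main obstacle is ruling out pathological configurations in which a chord or a large relevant loop approaches $\partial U$ closely enough to spoil the local absolute continuity estimate. I would address this by a preliminary truncation: using \Cref{264} together with standard chordal $\SLE_\kappa$ tail estimates, for every $\delta > 0$ there exists a compact subdomain $V \Subset V^\prime \Subset U$ such that with probability at least $1 - \delta$ all $N$ chords and all loops contributing to $\Gamma(V, K)$ are contained in $V^\prime$. The local absolute continuity argument then yields total variation convergence on this event, while the complementary $\delta$-event contributes at most $\delta$ to the total variation distance; letting $\delta \to 0$ completes the proof.
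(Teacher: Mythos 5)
The paper does not actually prove \Cref{260}: it is stated in the preliminaries as part of the review of multichordal SLE and is attributed to the literature cited at the top of that subsection (\cite{IG2,ConnProbCLE,NonsimSLERange,MultiSLE}), so there is no in-paper argument to compare against. Judged on its own terms, your overall strategy --- normalize by a conformal map, use \Cref{012} to reduce to a fixed interior link pattern, sample the chords iteratively with independent nested $\CLE_\kappa$'s in the complementary components, and conclude by local absolute continuity --- is the right shape of argument.

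However, there is a concrete gap in the truncation step. You claim that for every $\delta > 0$ there is a compact subdomain $V \Subset V^\prime \Subset U$ such that with probability at least $1 - \delta$ \emph{all $N$ chords} are contained in $V^\prime$. This is impossible: each chord $\eta_j$ has its endpoints $\eta_j(0), \eta_j(1)$ on $\partial U$, so the event that $\eta_j \subset V^\prime$ has probability zero. Consequently the mechanism you propose for the chords --- total variation continuity of the law of the full curve restricted to the event of containment in a compact subdomain --- cannot be applied to them. What actually enters $\Gamma(V, K)$ from the chords is only their $(V, K)$-excursions, and the correct statement to prove is total variation continuity of the law of the \emph{restriction} of the whole configuration to $\overline V$ (or a slightly larger compactly contained set), of which $\Gamma(V,K)$ is a measurable function. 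Establishing that requires comparing the laws of the excursion configurations into $V$ for nearby boundary data, e.g.\ via the conformal restriction Radon--Nikodym derivative expressed through the Brownian loop measure of loops linking $V$ with the symmetric difference of the domains; the point is that this derivative tends to $1$ because the domains and marked points only change near $\partial U$, away from $\overline V$. Your argument gestures at this ("the ratio of partition functions", "the Brownian loop-measure representation") but applies it to the wrong object (the full chord rather than its restriction near $K$), and the iterative step also needs care since the complementary domains in which the later chords are sampled are themselves random and reach $\partial U$. As written, the proof does not go through for the chordal part of $\Gamma(V,K)$; the loop part (loops contained in $V$) is fine.
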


\subsection{Independence for CLE}

A key tool in the present paper is the fact that the restrictions of a nested $\CLE_\kappa$ to disjoint concentric annuli or disjoint balls are nearly independent \cite{MultiSLE}. 

\begin{lemma}\label{263}
    For each $\lambda \in (0, 1)$, $\alpha > 0$, and $b \in (0, 1)$, there exists $p = p(\lambda, \alpha, b) \in (0, 1)$ and $C = C(\lambda, \alpha, b) > 0$ such that the following is true: 
    \begin{equation}\label{eq:261}
        \parbox{.85\linewidth}{Let $U$ be either $\CC$ or a simply connected domain. Let $\Gamma$ be a nested $\CLE_\kappa$ in $U$. Let $z \in U$. Let $\{r_j\}_{j \in \NN}$ be a decreasing sequence of positive real numbers such that $r_1 \le \dist(z, \partial U)$ and $r_{j + 1}/r_j \le \lambda$ for all $j \in \NN$. Let $\{E_j\}_{j \in \NN}$ be a sequence of events such that 
        \begin{equation*}
            E_j \in \sigma\left(A_{r_{j + 1},r_j}(z) \cap \Gamma\right) \quad \text{and} \quad \BP\lbrack E_j\rbrack \ge p, \quad \forall j \in \NN, 
        \end{equation*}
        where $A_{r_{j + 1},r_j}(z) \cap \Gamma \defeq \{A_{r_{j + 1},r_j}(z) \cap \SCL : \SCL \in \Gamma\}$. Then 
        \begin{equation*}
            \BP\lbrack\#\{j \in [1, n]_\ZZ : E_j \text{ occurs}\} \ge b n\rbrack \ge 1 - C\re^{-\alpha n}, \quad \forall n \in \NN. 
        \end{equation*}}
    \end{equation}
\end{lemma}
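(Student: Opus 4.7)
The strategy is to combine a ``near-independence across disjoint concentric annuli'' estimate for nested $\CLE_\kappa$, derived from the multichordal $\CLE_\kappa$ description of conditional laws (\Cref{252}, \Cref{260}) together with a compactness argument, with a Chernoff-type concentration inequality. First, we explore $\Gamma$ from outside in, working with the filtration $\mathcal{F}_j \defeq \sigma(\Gamma \cap (\CC \setminus B_{r_{j+1}}(z)))$; note that $E_1, \ldots, E_j$ are $\mathcal{F}_j$-measurable since each annulus $A_{r_{k+1}, r_k}(z)$ for $k \le j$ lies in $\CC \setminus B_{r_{j+1}}(z)$. By \Cref{252} applied with $V = B_{r_{j+1}}(z)$ and $K = \overline{B_{r_{j+2}}(z)}$, the conditional law given $\mathcal{F}_j$ of the complementary $(\CC \setminus K, \CC \setminus V)$-excursions together with the loops of $\Gamma$ contained in the pocket $V_j^\star \ni K$ is that of a multichordal $\CLE_\kappa$ in $V_j^\star$ with an exterior link pattern $\beta_j$ determined by $\mathcal{F}_j$.

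\textbf{Near-independence on a good event.} Fix $\alpha_0 \in (0, \alpha_{\mathrm{4A}})$ and $\lambda_2 \in (\lambda, 1)$ with $\lambda/\lambda_2 \le \lambda_\ast(\alpha_0)$ small enough that the exponential decay rate provided by part~(ii) of \Cref{264} exceeds $\alpha$. Define the good event $G_j \in \mathcal{F}_j$ to be the event that the annulus $A_{\lambda r_j, \lambda_2 r_j}(z)$ is crossed by at most two connected arcs of $\Gamma$. Part~(ii) of \Cref{264} then yields $\#\{j \in [1, n]_\ZZ : G_j\} \ge b' n$ with probability at least $1 - C \re^{-\alpha n}$, where $b' \in (0, 1)$ can be chosen arbitrarily close to $1$ by taking $\lambda/\lambda_2$ small. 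The central analytic input is a uniform near-independence estimate: there exists $C_1 = C_1(\lambda) > 0$ such that, on $G_j$,
\begin{equation*}
    \BP\lbrack F \mid \mathcal{F}_j\rbrack \le C_1 \BP\lbrack F\rbrack, \quad \forall F \in \sigma(A_{r_{j+2}, r_{j+1}}(z) \cap \Gamma).
\end{equation*}
To establish this, one applies \Cref{252} both to the conditional law (directly) and to the marginal law (via a further outer exploration), so that both sides of the bound are expressed in terms of multichordal $\CLE_\kappa$ restrictions. The continuity statement of \Cref{260} in total variation, together with a compactness argument for the relevant pockets and link patterns on $G_j$ (after rescaling by $r_{j+1}$), then yields the uniform constant $C_1$.

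\textbf{Chernoff bound and main obstacle.} Taking $F = E_{j+1}^c$ in the above gives $\BP\lbrack E_{j+1} \mid \mathcal{F}_j\rbrack \ge 1 - C_1(1 - p)$ on $G_j$. Choose $p = p(\lambda, \alpha, b)$ close enough to $1$ so that $q \defeq 1 - C_1(1-p)$ satisfies $b' q \ge b$. A stochastic domination of $\mathbf{1}_{E_{j+1}} \mathbf{1}_{G_j}$, conditionally on $\mathcal{F}_j$, by $\mathbf{1}_{G_j}$ times an independent $\mathrm{Bernoulli}(q)$ random variable, combined with the lower bound $\#\{j : G_j\} \ge b' n$ from the previous step and Chernoff's inequality, yields the desired conclusion $\#\{j \in [1, n]_\ZZ : E_j \text{ occurs}\} \ge bn$ with probability at least $1 - C \re^{-\alpha n}$. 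The main obstacle is the uniform near-independence estimate: writing the marginal law of $\Gamma$ restricted to $A_{r_{j+2}, r_{j+1}}(z)$ as an appropriate mixture of multichordal $\CLE_\kappa$ laws, and carefully verifying the precompactness---in the Carath\'eodory topology together with the uniform topology on boundary points---of the configurations $(V_j^\star, \beta_j)$ allowed by $G_j$ after rescaling by $r_{j+1}$, so that \Cref{260} applies uniformly and produces the constant $C_1$.
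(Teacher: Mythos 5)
The paper does not actually prove \Cref{263}; it is imported from \cite{MultiSLE} (see the sentence opening the subsection), so there is no in-paper argument to compare against and I am assessing your proof on its own terms. Your skeleton --- explore $\Gamma$ from the outside in, describe the law of the unexplored region via \Cref{252}, restrict to scales where the pocket has bounded complexity via \Cref{264}, get uniformity from compactness and \Cref{260}, and finish with a Chernoff bound --- is the natural route. However, the central estimate as you state it is false. You claim that on $G_j$ one has $\BP\lbrack F \mid \mathcal{F}_j\rbrack \le C_1\BP\lbrack F\rbrack$ for \emph{every} $F \in \sigma(A_{r_{j+2},r_{j+1}}(z) \cap \Gamma)$, with $\mathcal{F}_j = \sigma(\Gamma \cap (\CC \setminus B_{r_{j+1}}(z)))$. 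These two $\sigma$-algebras share the circle $\partial B_{r_{j+1}}(z)$: the set of points at which loops meet this circle is determined both by $\Gamma \cap (\CC \setminus B_{r_{j+1}}(z))$ and by the closures of the arcs $A_{r_{j+2},r_{j+1}}(z) \cap \SCL$. Taking $F$ to be the event that this trace meets a very short arc $I \subset \partial B_{r_{j+1}}(z)$ makes $\BP\lbrack F\rbrack$ arbitrarily small while $\BP\lbrack F \mid \mathcal{F}_j\rbrack = 1$ on a positive-probability ($\mathcal{F}_j$-measurable) event, so no uniform constant $C_1$ can exist. Any decoupling of this type needs a buffer annulus between the conditioning region and the target annulus; the standard fix is to split the indices into odd and even classes (within each class consecutive annuli are separated by an intermediate annulus of modulus at least $\log(1/\lambda)$), push the filtration one scale further out so that it still contains the previously revealed events of that class, run the martingale/Chernoff argument for each class separately, and combine.

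Even granting a buffer, the passage from \Cref{260} to the multiplicative bound is not justified. \Cref{260} gives continuity of the conditional law in \emph{total variation}; combined with precompactness of the rescaled pockets this yields, for any $\varepsilon > 0$, a finite net of reference configurations and only \emph{additive} comparisons $\BP\lbrack F \mid \theta\rbrack \le \BP\lbrack F \mid \theta_i\rbrack + \varepsilon$. Upgrading this to $\BP\lbrack F \mid \theta\rbrack \le C_1 \BP\lbrack F\rbrack$ additionally requires that the (random, $j$-dependent) law of the exploration data put uniformly positive mass on each total-variation ball of the net --- a Radon--Nikodym-type input which is exactly what the near-independence results of \cite{MultiSLE} supply and which your compactness argument does not produce. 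Two further parameter problems: \Cref{264}, \eqref{264B} yields \emph{some} density $b = b(\alpha, \lambda_1, \lambda_2) \in (0,1)$ of good scales, not one that can be taken arbitrarily close to $1$, whereas your bookkeeping needs $b'q \ge b$ for $b$ arbitrary in $(0,1)$; and since $\lambda_2 < 1$ forces $\lambda/\lambda_2 > \lambda$, the failure probability $C(\lambda/\lambda_2)^{\alpha_0 n}$ from \Cref{264}, \eqref{264B} is never smaller than $C\lambda^{\alpha_0 n}$ with $\alpha_0 < \alpha_{\mathrm{4A}}$, which does not beat $\re^{-\alpha n}$ for large $\alpha$ or for $\lambda$ close to $1$ (one must first pass to a sparser subsequence of scales). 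None of these obstacles is insurmountable, but as written the proof does not go through.
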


\begin{lemma}\label{273}
    For each $\lambda \in (0, 1)$ and $p \in (0, 1)$, there exists $\alpha = \alpha(\lambda, p) > 0$, $b = b(\lambda, p) \in (0, 1)$, and $C = C(\lambda, p) > 0$ such that \eqref{eq:261} is true. 
\end{lemma}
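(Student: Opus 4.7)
The plan is to deduce \Cref{273} from \Cref{263} via a block-amplification argument. The key point is that \Cref{263} already yields the desired conclusion once the per-annulus success probability exceeds a threshold $p(\lambda, \alpha, b) \in (0, 1)$ that may be close to $1$. To handle arbitrary $p \in (0, 1)$, the idea is to group consecutive annuli into large blocks and replace the original events by super-events whose probabilities can be driven arbitrarily close to $1$ as the block size grows, using the approximate independence of $\CLE_\kappa$ restricted to disjoint sub-annuli.

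Fix $\lambda \in (0, 1)$ and $p \in (0, 1)$, and let $m \in \NN$ be an integer to be chosen later. Partition the annuli $A_{r_{j + 1}, r_j}(z)$ into blocks of $m$ consecutive indices, so that the $k$-th block sits inside the super-annulus $A_{r_{km + 1}, r_{(k - 1)m + 1}}(z)$, which satisfies $r_{km + 1}/r_{(k - 1)m + 1} \le \lambda^m$. Pick $q \in (0, p)$ and define
\begin{equation*}
    F_k \defeq \left\{\#\{j \in ((k - 1)m, km]_\ZZ : E_j \text{ occurs}\} \ge qm\right\} \in \sigma\left(A_{r_{km + 1}, r_{(k - 1)m + 1}}(z) \cap \Gamma\right).
\end{equation*}
The crucial step will be to show that $\BP\lbrack F_k\rbrack \ge 1 - \re^{-cm}$ for some $c = c(\lambda, p, q) > 0$, uniformly in $k$. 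To this end, I will designate half of the annuli in a block as \emph{active} carriers of the events $E_j$ and interleave them with \emph{buffer} sub-annuli. By \Cref{264}, with overwhelming probability at most two arms of $\Gamma$ cross each buffer; conditionally on these crossings, \Cref{252} identifies the restrictions of $\Gamma$ to the active sub-annuli as independent multichordal $\CLE_\kappa$'s in the corresponding complementary regions, and \Cref{260} ensures that the laws of these multichordal CLEs vary continuously in the conditioning data. Consequently, the indicators $\{1_{E_j}\}_j$ in a block become approximately independent with marginal $\BP\lbrack E_j\rbrack \ge p > q$, and a Chernoff/Hoeffding-type bound on $\sum_j 1_{E_j}$ produces the desired exponential-in-$m$ estimate on $\BP\lbrack F_k\rbrack$.

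Given this, fix target parameters $\alpha^\ast > 0$ and $b^\ast \in (0, 1)$ and let $p^\ast \defeq p(\lambda^m, \alpha^\ast, b^\ast)$ be the threshold of \Cref{263} applied to super-annuli of aspect ratio $\lambda^m$. Choose $m$ large enough that $1 - \re^{-cm} \ge p^\ast$. Then the family $\{F_k\}_{k \in \NN}$ fits the hypotheses of \Cref{263}, yielding
\begin{equation*}
    \BP\left\lbrack\#\{k \in [1, \lfloor n/m\rfloor]_\ZZ : F_k \text{ occurs}\} \ge b^\ast \lfloor n/m\rfloor\right\rbrack \ge 1 - C\re^{-\alpha^\ast \lfloor n/m\rfloor}, \quad \forall n \in \NN.
\end{equation*}
On the event in question, each occurring $F_k$ contributes at least $qm$ successes among the $\{E_j\}$, giving at least $b^\ast q n - O(m)$ successes among the first $n$ events. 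This establishes \Cref{273} with, say, $\alpha \defeq \alpha^\ast/m$ and $b \defeq b^\ast q/2$, absorbing lower-order corrections into the constant $C$.

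The hard part is the concentration estimate $\BP\lbrack F_k\rbrack \ge 1 - \re^{-cm}$: \Cref{263} on its own does not supply it, because its hypothesis requires a per-event probability threshold not guaranteed by an arbitrary $p \in (0, 1)$. Turning the conditional decoupling of \Cref{252}, its continuity \Cref{260}, and the rarity of many-arm events (\Cref{264,328}) into a genuine Hoeffding bound uniform in $m$ requires a carefully nested conditioning argument in which each indicator $1_{E_j}$ is coupled to an independent Bernoulli variable with total-variation error that is summable over the sub-annuli in a block. This quantitative decoupling across a growing number of scales is where the real work of the proof will lie.
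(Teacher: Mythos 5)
Your reduction has a genuine gap at its crux, and you have in fact flagged it yourself: the per-block estimate $\BP\lbrack F_k\rbrack \ge 1 - \re^{-cm}$ is not proved, and the tools you cite do not yield it. \Cref{252} and \Cref{260} give a \emph{qualitative} structure — given the crossing data of the buffer annuli, the configuration in an active sub-annulus is a multichordal $\CLE_\kappa$ whose law varies \emph{continuously} in the conditioning data — whereas a Hoeffding bound for $\sum_j \mathbf 1_{E_j}$ requires a \emph{uniform quantitative} input: either an almost-sure lower bound $\BP\lbrack E_j \mid \mathcal F_{j-1}\rbrack \ge p' > 0$ on the conditional success probability given the configuration at the previous scales, or at least a covariance decay $\lvert\mathrm{Cov}(\mathbf 1_{E_i}, \mathbf 1_{E_j})\rvert \le \rho(\lvert i - j\rvert)$ with $\rho \to 0$ uniformly over all admissible events. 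Continuity does not give this: the conditional probability of $E_j$ given the outside is a random variable with mean $\ge p$ but no a priori lower bound, and the conditioning data (link patterns, marked points, the shape of $V^\star$) ranges over a non-compact set, so no uniform total-variation estimate follows. Moreover, the quantitative target you state — a coupling of each $\mathbf 1_{E_j}$ to an independent Bernoulli with total-variation error ``summable over the sub-annuli in a block'' — is the wrong one: if the sum of the errors is of order one, the coupling fails somewhere with probability of order one, which destroys any $\re^{-cm}$ bound. The missing ingredient (and the content of the reference the paper cites for this subsection) is a two-sided Radon--Nikodym estimate showing that, off an event of small probability, the conditional law of $\Gamma$ restricted to one annulus given its restriction to the complement is mutually absolutely continuous with the unconditional law with controlled derivative; this converts $\BP\lbrack E_j\rbrack \ge p$ into a deterministic conditional lower bound $p'(p, \lambda) > 0$, after which the success count stochastically dominates a $\mathrm{Binomial}(n, p')$ and \Cref{273} follows directly, with no blocking needed.

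Two remarks on the architecture, assuming the per-block estimate were supplied. First, the scheme is over-engineered relative to what it uses: you only ever invoke $\BP\lbrack F_k\rbrack \ge p^\ast$ for a fixed threshold, so it would suffice to show $\BP\lbrack F_k\rbrack \to 1$ as $m \to \infty$, i.e., a weak law of large numbers for the block success count; by Chebyshev this needs only $\Var\bigl(\sum_j \mathbf 1_{E_j}\bigr) = o(m^2)$, i.e., Ces\`aro decay of covariances — a much weaker (though here still unproven) input than exponential concentration. Second, setting $p^\ast \defeq p(\lambda^m, \alpha^\ast, b^\ast)$ and then choosing $m$ to beat $p^\ast$ is circular as written, since the threshold depends on $m$; this is harmless only because the hypothesis of \Cref{263} is monotone in the aspect ratio (super-annuli with ratio at most $\lambda^m \le \lambda$ satisfy the hypothesis at ratio $\lambda$), so you should instead take $p^\ast \defeq p(\lambda, \alpha^\ast, b^\ast)$, which is independent of $m$.
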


\begin{lemma}\label{265}
    For each $t > 0$ and $p \in (0, 1)$, there exists $\alpha = \alpha(t, p) > 0$ and $C = C(t, p) > 0$ such that the following is true: Let $U$ be either $\CC$ or a simply connected domain. Let $\Gamma$ be a nested $\CLE_\kappa$ in $U$. Let $r > 0$. Let $z_1, \ldots, z_n \in U$ be a sequence of points such that
    \begin{equation*}
        \dist(z_j, \partial U) \ge (1 + t)r \quad \text{and} \quad \left\lvert z_j - z_k\right\rvert \ge (2 + t)r, \quad \forall \text{ distinct } j, k \in [1, n]_\ZZ.
    \end{equation*}
    Let $E_1, \ldots, E_n$ be a sequence of events such that 
    \begin{equation*}
        E_j \in \sigma(B_r(z_j) \cap \Gamma) \quad \text{and} \quad \BP\lbrack E_j\rbrack \ge p, \quad \forall j \in [1, n]_\ZZ. 
    \end{equation*}
    Then 
    \begin{equation*}
        \BP\!\left\lbrack\bigcup_{j = 1}^n E_j\right\rbrack \ge 1 - C\re^{-\alpha n}. 
    \end{equation*}
\end{lemma}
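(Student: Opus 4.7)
The plan is to reduce the lemma to an almost-sure conditional lower bound of the form $\BP[E_j \mid \mathcal F_{j-1}] \ge p^\prime$ for some deterministic $p^\prime = p^\prime(p, t) > 0$, where $\mathcal F_{j-1} \defeq \sigma(B_r(z_1) \cap \Gamma, \ldots, B_r(z_{j-1}) \cap \Gamma)$ is the information revealed by the first $j - 1$ balls. Since each $E_k$ is $\mathcal F_k$-measurable, iterated application of the tower property then gives
\begin{equation*}
    \BP\!\left\lbrack\bigcap_{k = 1}^n E_k^c\right\rbrack \le (1 - p^\prime)^n,
\end{equation*}
which is the conclusion of the lemma with $\alpha = -\log(1 - p^\prime)$ and $C = 1$.

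To establish the conditional lower bound, set $V_j \defeq B_{r(1 + t/4)}(z_j)$. The hypothesis $\lvert z_j - z_k\rvert \ge (2 + t)r$ implies that the enlarged balls $V_j$ are pairwise at distance at least $tr/2$, that each lies at distance at least $3tr/4$ from $\partial U$, and that $\bigcup_{k < j} B_r(z_k) \subset U \setminus V_j$. Applying \Cref{252} with $V = V_j$ and $K = \overline{B_r(z_j)}$, conditional on the loops of $\Gamma$ intersecting $U \setminus V_j$ together with the excursion endpoints on $\partial V_j^\star$, the complementary excursions and the loops of $\Gamma$ contained in $V_j^\star$ form a multichordal $\CLE_\kappa$. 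Since $\mathcal F_{j-1}$ is contained in the $\sigma$-algebra generated by this exterior data, it suffices to produce a uniform positive lower bound on the conditional probability of $E_j$ given the exterior data. To do so, split according to the number $M_j$ of connected arcs of $\Gamma$ in the annulus $A_{r, r(1+t/4)}(z_j)$ that join its inner and outer boundaries. Choose $N = N(p, t)$ so large that \Cref{328} gives $\BP[M_j > N] \le p/4$. On the tame event $\{M_j \le N\}$, the residual domain $V_j^\star$ and the boundary data on $\partial V_j^\star \cap \overline{B_r(z_j)}$ take values in a compact family of configurations, on which \Cref{260} gives continuity of the multichordal $\CLE_\kappa$ law in the Carath\'eodory and uniform topologies, and \Cref{271} gives positivity on each open set of interior link patterns. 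A standard compactness argument then produces a uniform positive lower bound on the conditional probability that $B_r(z_j) \cap \Gamma$ falls in any given set of positive probability under a reference non-nested $\CLE_\kappa$ law in $V_j$, and applying this to $E_j$ together with the hypothesis $\BP[E_j] \ge p$ yields the desired $p^\prime$.

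The main obstacle will be converting the qualitative continuity of \Cref{260} and positivity of \Cref{271} into a uniform quantitative Radon--Nikodym lower bound, tying the conditional multichordal $\CLE_\kappa$ law on $V_j^\star$ to the fixed reference $\CLE_\kappa$ law on $V_j$ in such a way that the hypothesis $\BP[E_j] \ge p$ can genuinely be invoked. A further technical point is that $\BP[M_j > N \mid \mathcal F_{j-1}]$ is not literally bounded by $\BP[M_j > N]$, because loops can simultaneously intersect some $B_r(z_k)$ for $k < j$ and the annulus $A_{r, r(1 + t/4)}(z_j)$; to handle this I would enlarge the wild event to include the presence of any loop of $\Gamma$ of diameter at least $tr/2$ meeting $V_j$, whose unconditional probability can be controlled by standard Brownian loop measure bounds independently of the conditioning.
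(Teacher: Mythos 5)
Your reduction to an almost-sure conditional lower bound $\BP[E_j \mid \mathcal{F}_{j-1}] \ge p'$ via the tower property, and the invocation of \Cref{252}'s Markov decomposition with $V = V_j$ and $K = \overline{B_r(z_j)}$, are the right ideas, and the observation that $\mathcal{F}_{j-1}$ is coarser than the Markov exterior data (since $\bigcup_{k<j} B_r(z_k)$ lies outside $\overline{V_j}$ by the separation hypothesis) is correct. The paper states this lemma without proof, citing \cite{MultiSLE}, so there is no internal argument to compare against; the review below assesses the proposal on its own terms.

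The step you yourself flag as "the main obstacle" is a genuine gap, not a routine technicality, and the ingredients you invoke do not close it. To conclude $\BP[E_j \mid \text{exterior data} = \xi] \ge p'$ for every tame $\xi$ from the hypothesis $\BP[E_j] \ge p$, you need the conditional law $\mu_\xi$ of $B_r(z_j) \cap \Gamma$ to dominate some fixed reference measure under which $E_j$ is non-small, with a constant uniform over the tame set — a lower Radon--Nikodym bound. \Cref{260} gives total-variation continuity of $\xi \mapsto \mu_\xi$ on a compact family, and \Cref{271} gives strict positivity of specific configuration events; neither yields $\mu_\xi(A) \ge c\,\BP[A]$ for all Borel $A$ and all tame $\xi$. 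Total-variation closeness of $\mu_{\xi_1}$ and $\mu_{\xi_2}$ does not give a multiplicative comparison $\mu_{\xi_1}(A) \ge c\,\mu_{\xi_2}(A)$, and pointwise positivity is not a uniform density bound. There is also a reference-measure mismatch you do not address: you propose to bound conditional probabilities against "a reference non-nested $\CLE_\kappa$ law in $V_j$," but the hypothesis $\BP[E_j] \ge p$ is with respect to the nested $\CLE_\kappa$ law on $U$, which is the mixture $\int \mu_\xi\, d\mathcal{L}(\xi)$; a large average gives no floor on each $\mu_\xi(E_j)$ without exactly the cross-absolute-continuity you would need to supply. Finally, the issue you note about $\BP[\text{wild}\mid\mathcal{F}_{j-1}]$ is real, and the proposed fix via Brownian loop measure bounds would itself require an approximate independence of the wild event from $\mathcal{F}_{j-1}$ — an instance of the very near-independence the lemma is meant to establish. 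As written, the key quantitative uniformity at the heart of the lemma is asserted but not proved.
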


\begin{remark}\label{266}
    Let $\Gamma$ be a whole-plane nested $\CLE_\kappa$. Let $\Upsilon$ and $\Upsilon^\dag$ be as in \Cref{ss:18}. Let $U \subset \CC$ be an open subset. Since $\Upsilon$ and $\Upsilon^\dag$ have the same law, it follows that for each event $E \in \sigma(U \cap \Upsilon)$, we have 
    \begin{gather*}
        \{\Upsilon \in E\} \cap \{\Upsilon^\dag \in E\} \in \sigma(U \cap \Gamma); \quad \BP\lbrack\{\Upsilon \in E\} \cap \{\Upsilon^\dag \in E\}\rbrack \ge 2\BP\lbrack\Upsilon \in E\rbrack - 1; \\
        \{\Upsilon \in E\} \cup \{\Upsilon^\dag \in E\} \in \sigma(U \cap \Gamma); \quad \BP\lbrack\{\Upsilon \in E\} \cup \{\Upsilon^\dag \in E\}\rbrack \le 2\BP\lbrack\Upsilon \in E\rbrack. 
    \end{gather*}
    Thus, we conclude that \Cref{263,273,265} also hold with 
    \begin{equation*}
        ``E_j \in \sigma(A_{r_{j + 1},\lambda r_j}(z) \cap \Upsilon)'' \quad \text{and} \quad ``E_j \in \sigma(B_r(z_j) \cap \Upsilon)''
    \end{equation*}
    in place of
    \begin{equation*}
        ``E_j \in \sigma(A_{r_{j + 1},\lambda r_j}(z) \cap \Gamma)'' \quad \text{and} \quad ``E_j \in \sigma(B_r(z_j) \cap \Gamma)'',
    \end{equation*}
    respectively. 
\end{remark}

\subsection{Lower bounds}

Let $D$ be a weak geodesic $\CLE_\kappa$ carpet metric. In the present subsection, we prove several lower bounds for $D_\Upsilon$. First, we prove that the $D_\Upsilon$-distance between two disjoint compact subsets has finite moments of all negative orders (\Cref{309}). Next, we prove that the identity mapping from $(\Upsilon, D_\Upsilon)$ to $\Upsilon$, equipped with the Euclidean metric, is locally H\"older continuous (\Cref{314}).  

\begin{lemma}\label{309}
    Let $K_1, K_2 \subset \CC$ be deterministic and disjoint compact subsets. Then for each $\rr > 0$, it holds with superpolynomially high probability as $\varepsilon \to 0$, at a rate which is uniform in $\rr$, that 
    \begin{equation*}
        D_\Upsilon(\rr K_1 \cap \Upsilon, \rr K_2 \cap \Upsilon) \ge \varepsilon \kc_{\rr}. 
    \end{equation*}
\end{lemma}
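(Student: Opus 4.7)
The plan is a multiscale argument that upgrades the qualitative tightness of Axiom~\eqref{010D}(ii) to a super-polynomial rate in $\varepsilon$ using the near-independence of CLE configurations on disjoint concentric annuli (\Cref{263} combined with \Cref{266}). Tightness alone gives only $\BP\lbrack D_\Upsilon(rK_1 \cap \Upsilon, rK_2 \cap \Upsilon) < \kc_r/M\rbrack \le \delta(M)$ with $\delta(M) \to 0$ without a rate, so decay in $\varepsilon$ must come from aggregating many nearly-independent annular crossings across $\asymp \log(1/\varepsilon)$ scales. Before setting this up I would reduce to a ``single base point'' situation via the H\"older upper bound of Axiom~\eqref{010D}(i): covering $rK_1$ by $\lesssim \eta^{-2}$ Euclidean balls of radius $\eta r$ (with $\eta$ a small power of $\varepsilon$ to be chosen), a polynomial union bound reduces the problem to showing, for each deterministic center $x$ with $\dist(x, rK_2) \ge \tau r$, that $D_\Upsilon(B_{\eta r}(x) \cap \Upsilon, rK_2 \cap \Upsilon) \ge \varepsilon \kc_r$ with super-polynomial probability.

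For such a fixed base point $x$, I would consider the concentric annuli $A_j \defeq A_{r\lambda^{j+1}\tau/2, r\lambda^j\tau/2}(x)$ for $j = 0, 1, \ldots, n$, with $n \asymp \log(1/\eta)$ and $\lambda \in (0, 1)$ small. Every path from $B_{\eta r}(x) \cap \Upsilon$ to $rK_2 \cap \Upsilon$ must traverse each $A_j$. I would define $E_j$ to be the event that the internal $D_\Upsilon$-distance across $A_j$ in $A_j \cap \Upsilon$ is at least $c\kc_{r\lambda^j\tau}/M$. Since internal distance dominates unrestricted distance, tightness (Axiom~\eqref{010D}(ii)) applied at scale $r\lambda^j\tau$, combined with translation invariance (Axiom~\eqref{010C}), gives $\BP\lbrack E_j\rbrack \ge p(M)$ with $p(M) \to 1$ as $M \to \infty$, uniformly in $r, j$; by locality (Axiom~\eqref{010B}), $E_j \in \sigma(A_j \cap \Upsilon)$. \Cref{263} via \Cref{266} then yields that, for any $b < 1$ and $\alpha > 0$, provided $M$ is large, at least $bn$ of the $E_j$ hold with probability $\ge 1 - Ce^{-\alpha n}$. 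On this event, the sum of internal crossings is bounded below by $c'\kc_r \eta^{2(1-b)}$ via the lower bound on $\kc$ from \eqref{eq:340} (worst case when the successful annuli are the innermost); choosing $\eta \asymp \varepsilon^{1/(2(1-b))}$ produces a lower bound of $\varepsilon \kc_r$ with failure probability $\asymp \varepsilon^{\alpha/(2(1-b)\log(1/\lambda))}$.

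Summing over the $\lesssim \varepsilon^{-1/(1-b)}$ covering balls and absorbing the super-polynomially small H\"older error, the total failure probability is $\lesssim \varepsilon^{\alpha/(2(1-b)\log(1/\lambda)) - 1/(1-b)}$. Since \Cref{263} permits taking $\alpha$ arbitrarily large (with the corresponding threshold for $p(M)$ supplied by tightness), this exponent can exceed any prescribed $\alpha' > 0$, giving the required super-polynomial decay uniform in $r$. The main technical obstacle will be balancing the polynomial cost of the Euclidean covering against the exponential gain from the multiscale independence; this is possible precisely because tightness allows $p(M)$ to be pushed arbitrarily close to $1$, which in turn permits $\alpha$ in \Cref{263} to be taken as large as needed.
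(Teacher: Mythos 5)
Your proposal is correct and follows essentially the same strategy as the paper's proof. Both arguments proceed by (i) using Axiom~\eqref{010D} to make the single-annulus crossing event $D_\Upsilon(\partial B_{r/2}(z)\cap\Upsilon,\partial B_r(z)\cap\Upsilon)\ge a\kc_r$ hold with probability $p$ close to $1$; (ii) applying \Cref{263} (through \Cref{266}) over a geometric sequence of $\asymp\log(1/\varepsilon)$ nested annuli to obtain exponential-in-$\log(1/\varepsilon)$ concentration; (iii) applying a polynomial union bound over a finite family of centers; (iv) invoking \eqref{eq:340} to convert the small-scale crossing cost into a cost at scale $\rr$; and (v) closing the loop by noting that the parameter $\alpha$ in \Cref{263} can be taken arbitrarily large because tightness supplies the required probability threshold. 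The only differences are cosmetic bookkeeping: the paper places its grid of centers throughout all of $\rr U$ with spacing $\asymp\varepsilon^2\rr$ and only needs a \emph{single} good scale in $[\varepsilon^2\rr,\varepsilon\rr]$ per center, obtaining $D_\Upsilon\gtrsim\varepsilon^4\kc_\rr$ and then reparametrizing $\varepsilon$; you cover $\rr K_1$ and ask for a $b$-fraction of good annuli before reparametrizing, which is slightly more elaborate but yields the same conclusion. One small inaccuracy: your appeal to the H\"older upper bound of Axiom~\eqref{010D}(i) in the covering-reduction step is unnecessary — that reduction follows simply from $D_\Upsilon(x,\rr K_2)\ge D_\Upsilon(B_{\eta\rr}(\xi),\rr K_2)$ whenever $x\in B_{\eta\rr}(\xi)$, so no continuity estimate is needed there — but this is a harmless slip and not a gap.
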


\begin{proof}
    Fix a deterministic bounded open subset $U \subset \CC$ and $\delta > 0$ such that 
    \begin{equation*}
        K_1, K_2 \subset U, \quad \dist(K_1, K_2) \ge \delta, \quad \dist(K_1, \partial U) \ge \delta. 
    \end{equation*}
    Fix $\rr > 0$ and $\alpha > 0$. By \Cref{010}, Axioms \eqref{010C} and \eqref{010D} (translation invariance and tightness across scales), for each $p \in (0, 1)$, there exists $a = a(p) > 0$ such that
    \begin{equation*}
        \BP\lbrack D_\Upsilon(\partial B_{r/2}(z) \cap \Upsilon, \partial B_r(z) \cap \Upsilon) \ge a\kc_r\rbrack \ge p, \quad \forall z \in \CC, \ \forall r > 0. 
    \end{equation*}
    Thus, by \Cref{263} and a union bound, there exists a sufficiently small $a > 0$ such that it holds with probability at least $1 - O(\varepsilon^\alpha)$ as $\varepsilon \to 0$, at a rate which is uniform in $\rr$, that for each $z \in \left(\frac1{100}\varepsilon^2\rr\ZZ\right)^2 \cap \rr U$, there exists $r \in [\varepsilon^2\rr, \varepsilon\rr]$ such that
    \begin{equation}\label{eq:300}
        D_\Upsilon(\partial B_{r/2}(z) \cap \Upsilon, \partial B_r(z) \cap \Upsilon) \ge a\kc_r. 
    \end{equation}
    Henceforth assume that $\varepsilon \le \delta/100$ and the above event occurs. Then each path in $\Upsilon$ connecting $\rr K_1 \cap \Upsilon$ and $\rr K_2 \cap \Upsilon$ must cross between the inner and outer boundaries of $A_{r/2,r}(z)$ for some $z \in \left(\frac1{100}\varepsilon^2\rr\ZZ\right)^2 \cap \rr U$ and $r \in [\varepsilon^2\rr, \varepsilon\rr]$ for which \eqref{eq:300} is true. Combining this with \eqref{eq:340}, we obtain that
    \begin{equation*}
        D_\Upsilon(\rr K_1 \cap \Upsilon, \rr K_2 \cap \Upsilon) \ge \inf\left\{a\kc_r : r \in [\varepsilon^2\rr, \varepsilon\rr]\right\} \ge a\varepsilon^4\kc_{\rr}. 
    \end{equation*}
    Since $\alpha$ is arbitrary, this completes the proof. 
\end{proof}

\begin{lemma}\label{314}
    Let $\chi > 2$. Let $K \subset \CC$ be a deterministic compact subset. Then for each $\rr > 0$, it holds with probability tending to one as $\varepsilon \to 0$, at a rate which is uniform in $\rr$, that
    \begin{equation*}
        \kc_{\rr}^{-1}D_\Upsilon(x, y) \ge \left\lvert\frac{x - y}{\rr}\right\rvert^\chi, \quad \forall x, y \in \rr K \cap \Upsilon \text{ with } \lvert x - y\rvert \le \varepsilon\rr. 
    \end{equation*}
\end{lemma}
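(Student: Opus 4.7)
I will combine a dyadic union bound with a scale-by-scale application of \Cref{309}. The main geometric input is that if $x, y \in \rr K \cap \Upsilon$ lie in a common carpet component with $r \defeq \lvert x - y\rvert$, then any continuous path in $\Upsilon$ joining them must cross every annulus $A_{\rho_1, \rho_2}(z)$ with $\lvert x - z\rvert < \rho_1 < \rho_2 < \lvert y - z\rvert$, so $D_\Upsilon(x, y)$ is at least the $D_\Upsilon$-crossing distance of such an annulus. Covering $\rr K$ at each dyadic scale $r_k \defeq 2^{-k}\rr$ by a grid of size $O(2^{2k})$ will reduce matters to a uniform lower bound on the crossing distance of many small annuli, which is exactly the content of \Cref{309}. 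The lower bound $\kc_r / \kc_\rr \ge (r/\rr)^2$ from \eqref{eq:340} will convert the crossing estimate from $\kc_{r_k}$-scale into $\kc_\rr$-scale, and the hypothesis $\chi > 2$ will produce a polynomial gain at each scale that absorbs the entropy cost $2^{2k}$ of the grid.

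\textbf{Pointwise reduction.} Fix $k_0 \in \NN$ with $2^{-k_0} \le \varepsilon$, and for each $k \ge k_0$ let $G_k \subset \rr K$ be a grid of mesh $r_k/32$, so $\#G_k \le C_K 2^{2k}$ with $C_K$ depending only on $K$. Set
\begin{equation*}
    E_{z, k} \defeq \left\{D_\Upsilon(\partial B_{r_k/16}(z) \cap \Upsilon, \partial B_{r_k/8}(z) \cap \Upsilon) \ge (r_k/\rr)^\chi \kc_\rr\right\}.
\end{equation*}
Suppose $\bigcap_{k \ge k_0} \bigcap_{z \in G_k} E_{z, k}$ holds, and let $x, y \in \rr K \cap \Upsilon$ with $r \defeq \lvert x - y\rvert \le \varepsilon\rr$ (we may assume $x$ and $y$ lie in a common carpet component, else $D_\Upsilon(x, y) = \infty$ and the bound is vacuous). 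Choose $k$ with $r_k/2 \le r \le r_k$ and $z \in G_k$ with $\lvert x - z\rvert \le r_k/32$. Then $x \in B_{r_k/16}(z)$ while $\lvert y - z\rvert \ge r - r_k/32 \ge 15 r_k/32 > r_k/8$, so any continuous path in $\Upsilon$ from $x$ to $y$ crosses both $\partial B_{r_k/16}(z)$ and $\partial B_{r_k/8}(z)$, whence
\begin{equation*}
    D_\Upsilon(x, y) \ge (r_k/\rr)^\chi \kc_\rr \ge (r/\rr)^\chi \kc_\rr,
\end{equation*}
which is the desired bound.

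\textbf{Probability estimate.} Applying \Cref{309} with $K_1 \defeq \partial B_{1/2}(0)$, $K_2 \defeq \partial B_1(0)$, and scale parameter $r_k/8$, together with translation invariance (\Cref{010}, Axiom \eqref{010C}), gives, for each $N > 0$, a constant $C_N > 0$ independent of $\rr, z, k$ such that
\begin{equation*}
    \BP\!\left\lbrack D_\Upsilon(\partial B_{r_k/16}(z) \cap \Upsilon, \partial B_{r_k/8}(z) \cap \Upsilon) \ge \eta \kc_{r_k/8}\right\rbrack \ge 1 - C_N \eta^N
\end{equation*}
for every $\eta \in (0, 1]$. Using $\kc_{r_k/8} \ge (r_k/(8\rr))^2 \kc_\rr = \frac{1}{64}(r_k/\rr)^2 \kc_\rr$ from \eqref{eq:340}, and taking $\eta \defeq 64 (r_k/\rr)^{\chi - 2}$ (which lies in $(0, 1]$ once $k$ is sufficiently large in terms of $\chi$), gives $\BP[E_{z, k}^c] \le C_N' 2^{-k N(\chi - 2)}$. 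Choosing $N$ with $N(\chi - 2) > 2$ and summing,
\begin{equation*}
    \BP\!\left\lbrack \bigcup_{k \ge k_0} \bigcup_{z \in G_k} E_{z, k}^c\right\rbrack \le \sum_{k \ge k_0} C_K 2^{2k} \cdot C_N' 2^{-k N(\chi - 2)} \le C'' 2^{-k_0 (N(\chi - 2) - 2)},
\end{equation*}
which tends to $0$ as $\varepsilon \to 0$, uniformly in $\rr$. This will complete the proof.

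\textbf{Main obstacle.} The argument is a routine dyadic union bound; no step is substantially difficult once \Cref{309} and \eqref{eq:340} are available. The only point that requires care is the bookkeeping of exponents: the gain $(r_k/\rr)^{\chi - 2}$ from $\chi > 2$, the loss $(r_k/\rr)^2$ incurred in converting $\kc_{r_k}$ to $\kc_\rr$, and the entropy cost $(r_k/\rr)^{-2}$ of the scale-$r_k$ grid together yield a net power $(r_k/\rr)^{N(\chi - 2) - 2}$, which is summable in $k$ precisely because $\chi > 2$.
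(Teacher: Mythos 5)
Your proof is correct and follows essentially the same route as the paper: the paper factors the grid/dyadic-scale union bound (Lemma~\ref{313}, built from Lemma~\ref{309} plus translation invariance) into a separate statement giving annulus-crossing lower bounds of the form $\varepsilon^{\alpha}\kc_{\varepsilon\rr}$ with $\alpha \in (0, \chi - 2)$, and then concludes exactly as you do via $\kc_{\varepsilon\rr} \ge \varepsilon^2\kc_{\rr}$ from \eqref{eq:340}, whereas you inline that union bound. The only cosmetic point is an off-by-one in the scale index (the $k$ with $r_k/2 \le \lvert x - y\rvert \le r_k$ may equal $k_0 - 1$), fixed by starting the union at $k_0 - 1$ or taking $2^{-k_0} \le \varepsilon/2$.
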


\begin{lemma}\label{313}
    Let $\alpha > 0$. Let $K \subset \CC$ be a deterministic compact subset. Then for each $\rr > 0$, it holds with probability tending to one as $\varepsilon_\ast \to 0$, at a rate which is uniform in $\rr$, that
    \begin{equation}\label{eq:311}
        D_\Upsilon(\partial B_{\varepsilon\rr/2}(z) \cap \Upsilon, \partial B_{\varepsilon\rr}(z) \cap \Upsilon) \ge \varepsilon^{\alpha}\kc_{\varepsilon\rr}, \quad \forall z \in \rr K, \ \forall \varepsilon \in (0, \varepsilon_\ast]. 
    \end{equation}
\end{lemma}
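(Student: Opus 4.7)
The plan is to reduce \Cref{313} to \Cref{309} by means of a dyadic union bound over a geometric sequence of scales and a fine grid of centers. Fix $\alpha > 0$ and choose an auxiliary exponent $\alpha' \in (0, \alpha)$. Let $s_k \defeq 2^{-k/2}\rr$ for integers $k \ge 0$ (the ratio $\sqrt 2 < 2$ will be crucial geometrically), and for each $k$ fix an $(s_k/100)$-net $Z_k$ of $\rr K$, whose cardinality satisfies $\#Z_k \le C_K 2^k$ for a constant $C_K$ depending only on $K$. Note that $\#Z_k$ is independent of $\rr$ because the grid spacing is itself proportional to $\rr$. Apply \Cref{309} with the disjoint compact sets $K_1 \defeq \overline{B_{0.51}(0)}$ and $K_2 \defeq \partial B_{0.6}(0)$ at the rescaling $s_k$, together with the translation invariance of $D_\Upsilon$ (\Cref{010}, Axiom~\eqref{010C}). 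This yields, for every $N > 0$, a constant $C_N$ uniform in $k, z_0, \rr$ such that for all $k \ge 0$ and $z_0 \in Z_k$,
\begin{equation*}
    \BP\!\left[D_\Upsilon\!\left(\overline{B_{0.51 s_k}(z_0)} \cap \Upsilon, \partial B_{0.6 s_k}(z_0) \cap \Upsilon\right) \ge 2^{-k\alpha'/2}\kc_{s_k}\right] \ge 1 - C_N 2^{-kN\alpha'/2}.
\end{equation*}
Choosing $N > 2/\alpha'$ and summing over $k \ge k_\ast \defeq \lceil 2\log_2(1/\varepsilon_\ast) \rceil$ and $z_0 \in Z_k$, the total failure probability is bounded by $C_K C_N \sum_{k \ge k_\ast} 2^{k(1 - N\alpha'/2)} = O(\varepsilon_\ast^{N\alpha' - 2})$, which tends to $0$ uniformly in $\rr$.

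On the intersection event, for any $\varepsilon \in (0, \varepsilon_\ast]$ and $z \in \rr K$, I pick $k$ with $\varepsilon\rr \in (s_{k+1}, s_k] = (s_k/\sqrt 2, s_k]$ and $z_0 \in Z_k$ with $|z - z_0| \le s_k/100$. The triangle inequality (via $1/2 + 1/100 \le 0.51$ and $1/\sqrt 2 - 1/100 \ge 0.6$) then gives
\begin{equation*}
    \partial B_{\varepsilon\rr/2}(z) \subset \overline{B_{0.51 s_k}(z_0)} \quad\text{and}\quad \partial B_{\varepsilon\rr}(z) \cap \overline{B_{0.6 s_k}(z_0)} = \emptyset.
\end{equation*}
Consequently, any continuous path from $\partial B_{\varepsilon\rr/2}(z) \cap \Upsilon$ to $\partial B_{\varepsilon\rr}(z) \cap \Upsilon$ in $\Upsilon$ must contain a subpath from $\overline{B_{0.51 s_k}(z_0)} \cap \Upsilon$ to $\partial B_{0.6 s_k}(z_0) \cap \Upsilon$, so the event forces
\begin{equation*}
    D_\Upsilon(\partial B_{\varepsilon\rr/2}(z) \cap \Upsilon, \partial B_{\varepsilon\rr}(z) \cap \Upsilon) \ge 2^{-k\alpha'/2}\kc_{s_k}.
\end{equation*}

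To convert this to the form required by \eqref{eq:311}, I use that $2^{-k\alpha'/2} = (s_k/\rr)^{\alpha'} \ge \varepsilon^{\alpha'}$ (since $s_k \ge \varepsilon\rr$), and that \eqref{eq:340} applied with base scale $s_k$ and ratio $\varepsilon\rr/s_k \in [1/\sqrt 2, 1]$ yields $\kc_{s_k} \ge \kc_{\varepsilon\rr}/\kK$. The lower bound becomes $\varepsilon^{\alpha'}\kc_{\varepsilon\rr}/\kK$, and choosing $\varepsilon_\ast$ small enough that $\kK\varepsilon_\ast^{\alpha - \alpha'} \le 1$ upgrades this to $\ge \varepsilon^\alpha \kc_{\varepsilon\rr}$ for all $\varepsilon \le \varepsilon_\ast$, as required. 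The main obstacle is the geometric matching step: a naive dyadic ladder with ratio $2$ would collapse the reference annulus, letting $\partial B_{\varepsilon\rr/2}(z)$ and $\partial B_{\varepsilon\rr}(z)$ both lie in the same $z_0$-centered ball and making the crossing argument vacuous. Using any ratio in $(1, 2)$ (here $\sqrt 2$) together with a spatial grid slightly finer than the scale resolves this, and uniformity in $\rr$ is preserved because $\#Z_k$ depends only on $K$ and the per-point failure probability depends on the dimensionless scale parameter $k$ rather than on $\rr$ itself.
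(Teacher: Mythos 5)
Your proposal is correct and follows essentially the same route as the paper's proof: reduce to \Cref{309} via translation invariance, take a union bound over a geometric ladder of scales and a spatial net whose cardinality is controlled uniformly in $\rr$ (with the superpolynomial decay from \Cref{309} absorbing the polynomially many events), and then sandwich the target annulus $A_{\varepsilon\rr/2,\varepsilon\rr}(z)$ around a reference annulus from the net using \eqref{eq:340} to compare $\kc_{s_k}$ with $\kc_{\varepsilon\rr}$. The only differences are cosmetic choices of the reference annulus ($A_{0.51,0.6}$ versus the paper's $A_{5/8,7/8}$) and the scale ratio ($\sqrt 2$ versus $99/100$), both of which satisfy the needed nesting inequalities.
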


\begin{proof}
    By \Cref{309}, for each $r > 0$, it holds with superpolynomially high probability as $\varepsilon \to 0$, at a rate which is uniform in $r$, that 
    \begin{equation*}
        D_\Upsilon(\partial B_{5r/8}(0) \cap \Upsilon, \partial B_{7r/8}(0) \cap \Upsilon) \ge \varepsilon^\alpha\kc_r. 
    \end{equation*}
    Thus, by \Cref{010}, Axiom~\eqref{010C} (translation invariance) and a union bound, for each $\rr > 0$, it holds with probability tending to one as $\varepsilon_\ast \to 0$, at a rate which is uniform in $\rr$, that 
    \begin{multline}\label{eq:312}
        D_\Upsilon(\partial B_{5\varepsilon\rr/8}(z) \cap \Upsilon, \partial B_{7\varepsilon\rr/8}(z) \cap \Upsilon) \ge \varepsilon^\alpha\kc_{\varepsilon\rr}, \\
        \forall \varepsilon \in (0, \varepsilon_\ast] \cap \left\{\left(\frac{99}{100}\right)^k\right\}_{k \in \ZZ}, \ \forall z \in  \left(\frac1{100}\varepsilon\rr\ZZ\right)^2 \cap B_{\rr}(\rr K). 
    \end{multline}
    Now, \eqref{eq:311} follows immediately from \eqref{eq:312}, together with the fact that for each $z \in \rr K$ and $\varepsilon \in (0, \varepsilon_\ast]$, there exists $\varepsilon^\prime \in (0, \varepsilon_\ast] \cap \left\{(99/100)^k\right\}_{k \in \ZZ}$ and $z^\prime \in  \left(\frac1{100}\varepsilon^\prime\rr\ZZ\right)^2 \cap B_{\rr}(\rr K)$ such that $B_{\varepsilon\rr/2}(z) \subset B_{5\varepsilon^\prime\rr/8}(z^\prime) \subset B_{7\varepsilon^\prime\rr/8}(z^\prime) \subset B_{\varepsilon\rr}(z)$. 
\end{proof}

\begin{proof}[Proof of \Cref{314}]
    Fix $\alpha \in (0, \chi - 2)$. Then, by \Cref{313}, for each $\rr > 0$, it holds with probability tending to one as $\varepsilon_\ast \to 0$ that
    \begin{equation}\label{eq:313}
        D_\Upsilon(\partial B_{\varepsilon\rr/2}(z) \cap \Upsilon, \partial B_{\varepsilon\rr}(z) \cap \Upsilon) \ge \varepsilon^{\alpha}\kc_{\varepsilon\rr}, \quad \forall z \in \rr K, \ \forall \varepsilon \in (0, \varepsilon_\ast]. 
    \end{equation}
    Henceforth assume that \eqref{eq:313} holds. Let $x, y \in \rr K \cap \Upsilon$ with $\lvert x - y\rvert \le \varepsilon_\ast\rr$. Combining \eqref{eq:340} and \eqref{eq:313}, we obtain that
    \begin{equation*}
        D_\Upsilon(x, y) \ge D_\Upsilon(\partial B_{\lvert x - y\rvert/2}(x) \cap \Upsilon, \partial B_{\lvert x - y\rvert}(x) \cap \Upsilon) \ge \left\lvert\frac{x - y}{\rr}\right\rvert^\alpha \kc_{\lvert x - y\rvert} \ge \left\lvert\frac{x - y}{\rr}\right\rvert^{2 + \alpha} \kc_{\rr}. 
    \end{equation*}
    Since $\alpha + 2 < \chi$, this completes the proof. 
\end{proof}

\begin{remark}\label{268}
    By a similar argument to the argument applied in the proof of \Cref{314}, the following is true: Let $D$ be a strong geodesic $\CLE_\kappa$ carpet metric with distance exponent $\theta$. Let $\chi > \theta$. Let $K \subset \CC$ be a deterministic compact subset. Then it holds with probability tending to one as $\varepsilon \to 0$ that
    \begin{equation*}
         D_\Upsilon(x, y) \ge \lvert x - y\rvert^\chi, \quad \forall x, y \in K \cap \Upsilon \text{ with } \lvert x - y\rvert \le \varepsilon. 
    \end{equation*}
\end{remark}

\subsection{Upper bounds}

Let $D$ be a weak geodesic $\CLE_\kappa$ carpet metric. In the present subsection, we prove several upper bounds for $D_\Upsilon$. First, we prove that the diameter with respect to the internal metric of $D_\Upsilon$ has finite moments of positive orders (\Cref{229}). Next, we prove a result which roughly speaking states the existence of a ball whose $D_\Upsilon$-diameter is not too large, among a sequence of concentric ones (\Cref{214}). Finally, we prove that the identity mapping from $\Upsilon$, equipped with the Euclidean metric, to $(\Upsilon, D_\Upsilon)$ is locally H\"older continuous (\Cref{019}). (This is analogous to \eqref{eq:058} but concerning the internal metric of $D_\Upsilon$.)

\begin{lemma}\label{229}
    Let $V \Subset U \subset \CC$ be deterministic bounded open subsets. Then for each $\rr > 0$, it holds with superpolynomially high probability as $A \to \infty$, at a rate which is uniform in $\rr$, that
    \begin{equation*}
        \sup\left\{D_\Upsilon(x, y; \rr U \cap \Upsilon) : x, y \in \rr V \cap \Upsilon \text{ with } x \xleftrightarrow{\rr V} y\right\} \le A\kc_{\rr}.
    \end{equation*}
\end{lemma}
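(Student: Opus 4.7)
My plan is to prove the bound by chaining local $D_\Upsilon$-distance estimates coming from \eqref{eq:058} (the upper bound of axiom~\eqref{010D}) along a finite Euclidean cover of $\rr V$, using \Cref{314} to force the realizing geodesics to stay inside $\rr U$. First I fix a parameter $\delta \in (0, \dist(V, \partial U)/4)$ and cover $\overline{\rr V}$ by finitely many balls $\{B_{\delta\rr}(z_i)\}_{i=1}^N$ with $z_i \in \rr V$ and $N$ depending only on $V$ and $\delta$, so that each doubled ball $B_{2\delta\rr}(z_i) \subset \rr U$. Applying \eqref{eq:058} at scale $\rr$ with a compact $K$ containing $\overline V$ yields, for any $q > 0$, that with probability $\ge 1 - CB^{-q}$ (uniform in $\rr$),
\begin{equation*}
D_\Upsilon(p, p') \le B \left|\tfrac{p - p'}{\rr}\right|^{\alpha_{\mathrm{KC}}} \kc_\rr, \quad \forall p, p' \in \rr K \cap \Upsilon \text{ with } p \leftrightarrow p';
\end{equation*}
in particular, $D_\Upsilon(p, p') \le B(2\delta)^{\alpha_{\mathrm{KC}}} \kc_\rr$ whenever $p, p'$ lie in a common cover ball.

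To keep the $D_\Upsilon$-geodesic between such nearby points inside the doubled ball, I use \Cref{314} with some $\chi > 2$; by the mutual consistency of the upper and lower H\"older bounds at small scales, we necessarily have $\alpha_{\mathrm{KC}} \le 2 < \chi$. On the good event of \Cref{314}, a $D_\Upsilon$-geodesic of length $L \kc_\rr$ starting in $B_{\delta\rr}(z_i)$ remains within Euclidean distance $L^{1/\chi}\rr$ of its starting point, so choosing $\delta \ge (2^{\alpha_{\mathrm{KC}}} B)^{1/(\chi - \alpha_{\mathrm{KC}})}$ confines such a geodesic to $B_{2\delta\rr}(z_i) \subset \rr U$. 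Now for $x, y \in \rr V \cap \Upsilon$ with $x \xleftrightarrow{\rr V} y$, I take a topological path from $x$ to $y$ in $\rr V \cap \Upsilon$, partition it into at most $N$ consecutive arcs each inside some cover ball, and concatenate the confined $D_\Upsilon$-geodesics between consecutive endpoints into an admissible path in $\rr U \cap \Upsilon$. This yields $D_\Upsilon(x, y; \rr U \cap \Upsilon) \le N \cdot B(2\delta)^{\alpha_{\mathrm{KC}}} \kc_\rr \le C_V B \delta^{\alpha_{\mathrm{KC}} - 2} \kc_\rr$, which after optimizing $\delta$ becomes $\le C_V B^{(\chi - 2)/(\chi - \alpha_{\mathrm{KC}})} \kc_\rr$.

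Finally, given $A > 0$, I choose $B \asymp A^{(\chi - \alpha_{\mathrm{KC}})/(\chi - 2)}$ so that this last bound equals $A\kc_\rr$; since the exponent $(\chi - 2)/(\chi - \alpha_{\mathrm{KC}}) \in (0, 1)$, the failure probability $CB^{-q}$ decays super-polynomially in $A$, as required. The main obstacle is the constraint $\delta \le \dist(V, \partial U)/4$, which caps the admissible $B$ (and hence the range of $A$ for which a single-scale chaining works directly). I anticipate that very large $A$ is handled by iterating the same chaining along a short chain of nested intermediate sets $V \Subset V_1 \Subset \cdots \Subset V_m \Subset U$, invoking the local bound across each annular region and summing, in the same spirit as the way \Cref{313} (and thence \Cref{314}) is derived from \Cref{309} earlier in the excerpt.
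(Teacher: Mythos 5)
There is a genuine gap in your confinement step. You want each local $D_\Upsilon$-geodesic between points $p, p'$ in a common cover ball $B_{\delta\rr}(z_i)$ to stay in $B_{2\delta\rr}(z_i)$, and for this you need the upper bound $B(2\delta)^{\alpha_{\mathrm{KC}}}\kc_{\rr}$ on $D_\Upsilon(p,p')$ to be smaller than the cost $\approx \delta^\chi\kc_{\rr}$ (from \Cref{314}) of travelling Euclidean distance $\delta\rr$. This forces $B \le 2^{-\alpha_{\mathrm{KC}}}\delta^{\chi-\alpha_{\mathrm{KC}}}$; since, as you observe yourself, $\alpha_{\mathrm{KC}} \le 2 < \chi$, the right-hand side is $< 1$ for every $\delta \in (0,1)$, so your condition ``$\delta \ge (2^{\alpha_{\mathrm{KC}}}B)^{1/(\chi-\alpha_{\mathrm{KC}})}$'' is unsatisfiable for any $B \ge 1$. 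The point is that the exit cost $\delta^\chi$ is always much smaller than the point-to-point distance bound $\delta^{\alpha_{\mathrm{KC}}}$ at the same Euclidean scale, so $D_\Upsilon$-balls are not comparable to Euclidean balls, and a geodesic of $D_\Upsilon$-length $\asymp \delta^{\alpha_{\mathrm{KC}}}\kc_{\rr}$ may a priori wander a Euclidean distance $\asymp \delta^{\alpha_{\mathrm{KC}}/\chi}\rr \gg \delta\rr$. Your proposed remedy (iterating over nested intermediate domains $V \Subset V_1 \Subset \cdots \Subset U$) does not address this, because the failure occurs at the microscopic scale $\delta$, not in the macroscopic separation between $V$ and $U$. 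A secondary problem is that \Cref{314} only holds with probability tending to one, not superpolynomially high, so any argument that needs its good event at a scale tied to $A$ cannot by itself deliver a superpolynomial rate in $A$.

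The paper's proof confines the local geodesics to the macroscopic set $\rr U$ rather than to doubled cover balls: by \Cref{309}, $D_\Upsilon(\rr V\cap\Upsilon, (\CC\setminus\rr U)\cap\Upsilon) \ge \varepsilon^{\alpha_{\mathrm{KC}}/2}\kc_{\rr}$ with superpolynomially high probability as $\varepsilon\to 0$, while Axiom~\eqref{010D} applied with $A = \varepsilon^{-\alpha_{\mathrm{KC}}/2}$ gives $D_\Upsilon(z_{j-1},z_j)\le \varepsilon^{\alpha_{\mathrm{KC}}/2}\kc_{\rr}$ for chain points at Euclidean separation at most $\varepsilon\rr$; hence a path realizing $D_\Upsilon(z_{j-1},z_j)$ cannot afford to leave $\rr U$, and $D_\Upsilon(z_{j-1},z_j;\rr U\cap\Upsilon)=D_\Upsilon(z_{j-1},z_j)$. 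The chain itself is supplied by the net estimate of \Cref{050} with at most $\varepsilon^{-\alpha_{\mathrm{NET}}}$ links (a net is also the clean way to bound the number of links, rather than ``partitioning the path'' into arcs, since a path may revisit a cover ball), giving the total bound $\varepsilon^{\alpha_{\mathrm{KC}}/2 - \alpha_{\mathrm{NET}}}\kc_{\rr}$; setting $A = \varepsilon^{\alpha_{\mathrm{KC}}/2-\alpha_{\mathrm{NET}}}$ converts the superpolynomial decay in $\varepsilon$ into superpolynomial decay in $A$. If you replace your ball-confinement via \Cref{314} by this set-confinement via \Cref{309}, your chaining scheme becomes essentially the paper's argument.
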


\begin{proof}
    Let $\alpha_{\mathrm{NET}}$ be as in \Cref{014}. By \Cref{050} (there is no circular reasoning), it holds with superpolynomially high probability as $\varepsilon_\ast \to 0$ that for each $\varepsilon \in (0, \varepsilon_\ast]$ and $x, y \in \rr V \cap \Upsilon$ with $x \xleftrightarrow{\rr V} y$, there exist points $z_1, \ldots, z_{n - 1} \in \Upsilon$ with $n \le \lfloor\varepsilon^{-\alpha_{\mathrm{NET}}}\rfloor$ such that 
    \begin{equation}\label{eq:255}
        z_{j - 1} \xleftrightarrow{\rr V} z_j \quad \text{and} \quad \lvert z_{j - 1} - z_j\rvert \le \varepsilon\rr, \quad \forall j \in [1, n]_\ZZ,
    \end{equation}
    where $z_0 \defeq x$ and $z_n \defeq y$. Write $E_1^{\rr}(\varepsilon)$ for this event. 

    By \Cref{010}, Axiom~\eqref{010D} (tightness across scales), it holds with superpolynomially high probability as $\varepsilon \to 0$, at a rate which is uniform in $\rr$, that
    \begin{equation}\label{eq:256}
        \sup\left\{\kc_{\rr}^{-1}D_\Upsilon(x, y)\left\lvert\frac{x - y}{\rr}\right\rvert^{-\alpha_{\mathrm{KC}}} : x, y \in \rr V \cap \Upsilon \text{ with } x \leftrightarrow y\right\} \le \varepsilon^{-\alpha_{\mathrm{KC}}/2}. 
    \end{equation}
    Write $E_2^{\rr}(\varepsilon)$ for this event. 

    By \Cref{309}, it holds with superpolynomially high probability as $\varepsilon \to 0$, at a rate which is uniform in $\rr$, that
    \begin{equation}\label{eq:257}
        D_\Upsilon(\rr V \cap \Upsilon, (\CC \setminus \rr U) \cap \Upsilon) \ge \varepsilon^{\alpha_{\mathrm{KC}}/2} \kc_{\rr}. 
    \end{equation}
    Write $E_3^{\rr}(\varepsilon)$ for this event. 

    Henceforth assume that $E_1^{\rr}(\varepsilon) \cap E_2^{\rr}(\varepsilon) \cap E_3^{\rr}(\varepsilon)$ occurs. Let $x, y \in \rr V \cap \Upsilon$ such that $x \xleftrightarrow{\rr V} y$. Let $x = z_0, z_1, \ldots, z_n = y$ be as in the definition of $E_1^{\rr}(\varepsilon)$. Combining \eqref{eq:255} and \eqref{eq:256}, we obtain that
    \begin{equation*}
        D_\Upsilon(z_{j - 1}, z_j) \le \varepsilon^{\alpha_{\mathrm{KC}}/2}\kc_{\rr}, \quad \forall j \in [1, n]_\ZZ. 
    \end{equation*}
    This, together with \eqref{eq:257}, implies that
    \begin{equation*}
        D_\Upsilon(z_{j - 1}, z_j; \rr U \cap \Upsilon) = D_\Upsilon(z_{j - 1}, z_j) \le \varepsilon^{\alpha_{\mathrm{KC}}/2}\kc_{\rr}, \quad \forall j \in [1, n]_\ZZ. 
    \end{equation*}
    Thus, we conclude that
    \begin{equation*}
        D_\Upsilon(x, y; \rr U \cap \Upsilon) \le \sum_{j = 1}^n D_\Upsilon(z_{j - 1}, z_j; \rr U \cap \Upsilon) \le n\varepsilon^{\alpha_{\mathrm{KC}}/2}\kc_{\rr} \le \varepsilon^{\alpha_{\mathrm{KC}}/2 - \alpha_{\mathrm{NET}}} \kc_{\rr}. 
    \end{equation*}
    This completes the proof. 
\end{proof}

\begin{lemma}\label{214}
    For each $\lambda \in (0, 1)$, $\alpha > 0$, and $b \in (0, 1)$, there exists $A = A(\lambda, \alpha, b) > 0$ and $C = C(\lambda, \alpha, b) > 0$ such that the following is true: Let $z \in \CC$. Let $\{r_j\}_{j \in \NN}$ be a decreasing sequence of positive real numbers such that $r_{j + 1}/r_j \le \lambda$ for all $j \in \NN$. Then for each $n \in \NN$, it holds with probability at least $1 - C\re^{-\alpha n}$ that there are at least $bn$ values of $j \in [1, n]_\ZZ$ for which we have
    \begin{equation*}
        D_\Upsilon(x, y; B_{2r_j}(z) \cap \Upsilon) \le A\kc_{r_j}, \quad \forall x, y \in B_{r_j}(z) \cap \Upsilon \text{ with } x \xleftrightarrow{B_{r_j}(z)} y. 
    \end{equation*}
\end{lemma}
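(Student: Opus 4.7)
The strategy is to invoke the annular independence result \Cref{273} (combined with \Cref{266}) with an annular surrogate of the good scale event, and then upgrade from annular control to the internal-ball control in the conclusion via a chaining argument across concentric scales. For each $j$, I would define
\begin{equation*}
\tilde E_j \defeq \bigl\{D_\Upsilon(u, v; A_{r_{j+1}, \lambda r_j}(z) \cap \Upsilon) \le A\kc_{r_j}\ \text{for all connected pairs}\ u, v\ \text{in a suitable inner sub-annulus}\bigr\},
\end{equation*}
which is measurable with respect to $A_{r_{j+1}, \lambda r_j}(z) \cap \Upsilon$ by Axiom~\eqref{010B} (locality). Applying \Cref{229} to a pair $V \Subset U$ of annuli of fixed ratios (rescaled by $r_j$ and translated by $z$), combined with Axiom~\eqref{010C} (translation invariance) and the upper bound $\kc_{\varepsilon r} \le \kK\varepsilon\kc_r$ from Axiom~\eqref{010D}(iii), yields $\BP[\tilde E_j] \ge p(A)$ uniformly in $j, z$, and the sequence, with $p(A) \to 1$ as $A \to \infty$. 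After thinning $\{r_j\}$ by a constant factor so the annular supports become pairwise disjoint, \Cref{273} via \Cref{266} produces constants $\alpha_0, C_0 > 0$ and $b'(A) \in (0, 1)$ with $b'(A) \to 1$ as $A \to \infty$, such that at least $b'(A)n$ of $\tilde E_1, \ldots, \tilde E_n$ hold outside an event of probability $\le C_0 e^{-\alpha_0 n}$.

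To upgrade from annular to ball events, fix a large integer $K = K(\lambda, \alpha, b, \kK)$ and call $j \in [1, n - K]_\ZZ$ \emph{good} if $\tilde E_j, \ldots, \tilde E_{j+K}$ all hold, together with an additional inner-ball event at scale $r_{j+K+1}$ supplied by \Cref{229}, absorbed into the counting by making $K$ large enough that its failure probability is negligible. For any good $j$ and any pair $x, y \in B_{r_j}(z) \cap \Upsilon$ connected in $B_{r_j}(z) \cap \Upsilon$, I decompose a connecting path into excursions across the annuli at scales $k \in [j, j+K]_\ZZ$ and a residual portion lying inside $B_{r_{j+K+1}}(z)$; each annular excursion at scale $k$ contributes at most $A\kc_{r_k} \le A\kK\lambda^{k-j}\kc_{r_j}$ by $\tilde E_k$ and Axiom~\eqref{010D}(iii), and the residual is handled either by the inner-ball event or by shortcutting the inner segment through the outer annulus (using $\tilde E_{j+K}$). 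Summing the resulting geometric series bounds the total $D_\Upsilon$-length within $B_{2r_j}(z) \cap \Upsilon$ by $A''\kc_{r_j}$, with $A''$ depending only on $A, \lambda, \kK$. A standard counting argument shows that if $b'(A) \ge 1 - (1 - b)/(K + 1)$, then at least $bn$ of the $j$'s are good, concluding the proof.

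The main obstacle is the chaining step: one must rigorously decompose arbitrary connecting paths in $\Upsilon \cap B_{r_j}(z)$ into annular excursions and control the residual portion lying deep inside $B_{r_{j+K+1}}(z)$. The decay $\kc_{r_k} \le \kK\lambda^{k-j}\kc_{r_j}$ from Axiom~\eqref{010D}(iii) is what makes the geometric series summable and yields a total bound that is independent of $K$. The inner-ball residual, which is not directly amenable to annular independence (the relevant balls for distinct $j$ are concentric, not disjoint), is the most delicate point and requires either an ad hoc routing argument through the outer annulus or a separate high-probability bound obtained by a single application of \Cref{229} at a suitably deep scale.
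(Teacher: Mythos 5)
There are two genuine gaps, both located at the chaining step you yourself flag as the ``main obstacle.''

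First, the crossing multiplicity. When you decompose a connecting path into annular excursions and assert that ``each annular excursion at scale $k$ contributes at most $A\kc_{r_k}$,'' the total contribution at scale $k$ is (number of excursions at scale $k$) times $A\kc_{r_k}$, and nothing in your setup bounds that number: $x$ and $y$ may be joined in $B_{r_j}(z)\cap\Upsilon$ only through a set that is forced to cross a given annulus via several distinct crossing components of $\Upsilon$, which reconnect only at deeper scales, and this can recur at every scale. Your ``geometric series'' is therefore not a geometric series. The paper's proof controls exactly this by building into each scale-$k$ event a bound of order $A\lambda^{(j-k)/4}$ on the number of connected arcs of $\Gamma$ crossing the annulus (via \Cref{328}), performing a surgery on the path so that each crossing component is visited essentially once per boundary circle, and then summing (component count) times (internal diameter), i.e.\ $A\lambda^{(j-k)/4}\cdot A\lambda^{(k-j)/2}\kc_{r_j}$, which is summable in $k$. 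Without a crossing-count input of this kind your argument does not close.

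Second, the inner residual. You truncate at a finite depth $K$ and propose to control the leftover ball $B_{r_{j+K+1}}(z)$ by ``a single application of \Cref{229}.'' For a fixed constant $A$ that event fails with some fixed probability $\epsilon(A)>0$; the relevant balls for distinct $j$ are nested rather than disjoint, so no independence is available, and a union or Markov bound over the $n$ indices yields a failure probability of order $n\epsilon(A)$ or a constant --- not $C\re^{-\alpha n}$. The exponential concentration in the statement can only come from events supported on disjoint regions, which is precisely why the paper does not truncate: it extends the annulus events all the way down (the events $F_{j,k}$ for all $k\ge j$, with failure probability at most $C_q'' A^{-q}\lambda^{(k-j)q/4}$ decaying in the depth $k-j$), so that $\bigcap_{k\ge j}F_{j,k}$ still has probability close to one for large $A$, and the entire two-parameter family of annulus-measurable events can be fed into the concentration argument of \cite[Proposition~3.1]{ExUniCoCoGeoMetNonsimCLEGas}. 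As written, your plan cannot reach the required $1-C\re^{-\alpha n}$ bound.
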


\begin{figure}[ht!]
    \centering
    \includegraphics[width=\linewidth]{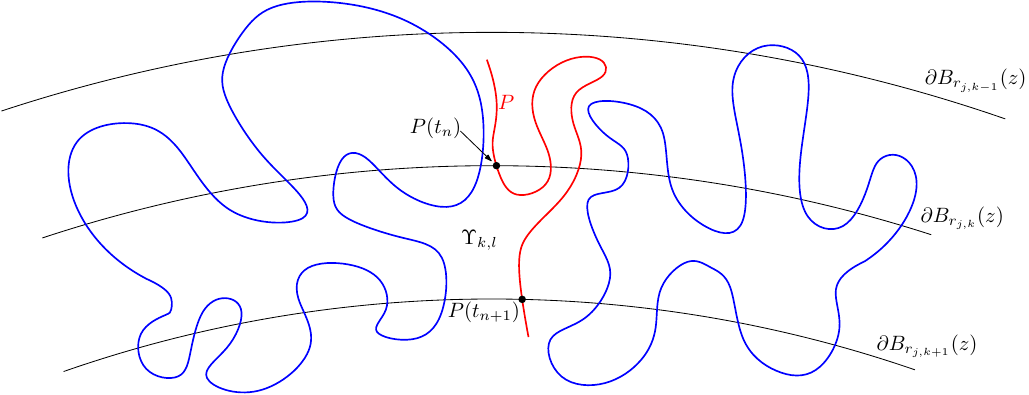}
    \caption{Illustration of the proof of \Cref{214}. The red path $P$ is contained in $B_{r_j}(z) \cap \Upsilon$. The times $\{t_n : n \in [1, N]_\ZZ\}$ are defined inductively so that for each $n \in [1, N - 1]_\ZZ$, there exists $k \in \NN_{> j}$ such that $P(t_n) \in \partial B_{r_{j,k}}(z)$ and $t_{n + 1}$ is the first time after $t_n$ at which $P$ hits $\partial A_{r_{j,k - 1},r_{j,k + 1}}(z)$. The two blue loops are two loops of $\Gamma$ that cross between $\partial B_{r_{j,k}}(z)$ and $\partial B_{r_{j,k + 1}}(z)$. The set $\Upsilon_{k,l}$ is the connected component of $\overline{A_{r_{j,k},r_{j,k + 1}}(z)} \cap \Upsilon$ that lies between the two blue loops. We may assume without loss of generality that for each $\Upsilon_{k,l}$, there exists at most one $n \in [1, N]_\ZZ$ such that $P(t_n) \in \partial B_{r_{j,k}}(z) \cap \Upsilon_{k,l}$, and at most one $n \in [1, N]_\ZZ$ such that $P(t_n) \in \partial B_{r_{j,k + 1}}(z) \cap \Upsilon_{k,l}$. Then, since for each $k \in \NN_{\ge j}$, we are able to bound the number of connected components of $\overline{A_{r_{j,k},r_{j,k + 1}}(z)} \cap \Upsilon$, and the $D_\Upsilon$-distance between any points that lie on the same such connected component, we are able to bound the $D_\Upsilon$-distance between $P(0)$ and $P(1)$.}
    \label{fig:diameter}
\end{figure}

\begin{proof}
    See \Cref{fig:diameter} for an illustration of the proof. By possibly increasing $\lambda$, we may assume without loss of generality that $\lambda \in [3/4, 1)$. Fix $A > 0$ to be chosen later. For $j, k \in \NN$ with $j \le k$, write $r_{j,k} \defeq \lambda^{k - j}r_j$ and $F_{j,k}$ for the event that the following are true: 
    \begin{enumerate}
        \item\label{it:305} We have
        \begin{multline*}
            D_\Upsilon(x, y; A_{r_{j,k + 3},r_{j,k - 2}}(z) \cap \Upsilon) \le A \lambda^{(k - j)/2} \kc_{r_j}, \\
            \forall x, y \in \overline{A_{r_{j,k + 2},r_{j,k - 1}}(z)} \cap \Upsilon \text{ with } x \xleftrightarrow{A_{r_{j,k + 2},r_{j,k - 1}}(z)} y
        \end{multline*}
        with the convention that $r_{j,j - 1} \defeq \lambda^{-1}r_j$ and $r_{j,j - 2} \defeq \lambda^{-2}r_j$.
        \item\label{it:306} There are at most $2A\lambda^{(j - k)/4}$ connected arcs of $\Gamma$ in $A_{r_{j,k + 1},r_{j,k}}(z)$ connecting its inner and outer boundaries. 
    \end{enumerate}
    By \eqref{eq:340} and \Cref{229}, for each $q > 0$, there exists $C_q > 0$ such that for each $j, k \in \NN$ with $j \le k$, condition~\eqref{it:305} holds with probability at least
    \begin{equation*}
        1 - \frac{C_q}{A^q \lambda^{(k - j)q/2}(\kc_{r_j}/\kc_{r_{j,k}})^q} \ge 1 - C_p A^{-q} \kK^q \lambda^{(k - j)q/2}. 
    \end{equation*}
    By \Cref{328} and the scale invariance of the law of $\Gamma$, for each $q > 0$, there exists $C_q^\prime > 0$ such that for each $j, k \in \NN$ with $j \le k$, condition~\eqref{it:306} holds with probability at least $1 - C_q^\prime A^{-q} \lambda^{(k - j)q/4}$. Thus, we conclude that for each $q > 0$, there exists $C_q^\pprime > 0$ such that
    \begin{equation*}
        \BP\lbrack F_{j,k}\rbrack \ge 1 - C_q^\pprime A^{-q} \lambda^{(k - j)q/4}, \quad \forall j, k \in \NN \text{ with } j \le k. 
    \end{equation*}
    Thus, it follows from a similar argument to the argument applied in the proof of \cite[Proposition~3.1]{ExUniCoCoGeoMetNonsimCLEGas} that there exists $A = A(\lambda, \alpha, b) > 0$ and $C = C(\lambda, \alpha, b) > 0$ such that 
    \begin{equation*}
        \BP\!\left\lbrack\#\left\{j \in [1, n]_\ZZ : \bigcap_{k \in \NN : k \ge j} F_{j,k} \text{ occurs}\right\} \ge bn\right\rbrack \ge 1 - C\re^{-\alpha n}, \quad \forall n \in \NN.  
    \end{equation*}

    Henceforth assume that the event $\bigcap_{k \in \NN : k \ge j} F_{j,k}$ occurs. We \emph{claim} that
    \begin{equation*}
        D_\Upsilon(x, y; B_{r_{j,j - 2}}(z) \cap \Upsilon) \le \left(\frac{2A^2}{1 - \lambda^{1/4}} + A\right) \kc_{r_j}, \quad \forall x, y \in B_{r_j}(z) \cap \Upsilon \text{ with } x \xleftrightarrow{B_{r_j}(z)} y.
    \end{equation*}
    Fix $x, y \in B_{r_j}(z) \cap \Upsilon$ and a path $P \colon [0, 1] \to B_{r_j}(z) \cap \Upsilon$ from $x$ to $y$. For each $k \in \NN_{\ge j}$, by condition~\eqref{it:306}, there are at most $A\lambda^{(j - k)/4}$ connected components of $\overline{A_{r_{j,k + 1},r_{j,k}}(z)} \cap \Upsilon$ connecting its inner and outer boundaries; write $\Upsilon_{k,1}, \Upsilon_{k,2}, \ldots$ for these connected components. Set $t_0 \defeq 0$. If $P \cap \partial B_{r_{j,k}}(z) \neq \emptyset$ for some $k \in \NN_{> j}$, then set $t_1$ to be the first time at which $P$ hits $\partial B_{r_{j,k}}(z)$ for some $k \in \NN_{> j}$; otherwise set $t_1 \defeq 1$. Then, inductively, for each $n \in \NN$, if $t_n \neq 1$, $P(t_n) \in \partial B_{r_{j,k}}(z)$, and $P|_{[t_n, 1]} \cap \partial A_{r_{j,k + 1},r_{j,k - 1}}(z) \neq \emptyset$, then set $t_{n + 1}$ to be the first time after $t_n$ at which $P$ hits $\partial A_{r_{j,k + 1},r_{j,k - 1}}(z)$; otherwise set $t_{n + 1} \defeq 1$. Write $N \defeq \inf\{n \in \NN : t_n = 1\} - 1$. By definition, for each $n \in [1, N]_\ZZ$, we have $P(t_n) \in \partial A_{r_{j,k + 1},r_{j,k}}(z) \cap \Upsilon_{k,l}$ for some $k \in \NN_{\ge j}$ and $l \in [1, A\lambda^{(j - k)/4}]_\ZZ$. Suppose that there exist $a, b \in [1, N]_\ZZ$ with $a < b$, $k \in \NN_{\ge j}$, and $l \in [1, A\lambda^{(j - k)/4}]_\ZZ$ such that either both $P(t_a)$ and $P(t_b)$ lie in $\partial B_{r_{j,k}}(z) \cap \Upsilon_{k,l}$ or both $P(t_a)$ and $P(t_b)$ lie in $\partial B_{r_{j,k + 1}}(z) \cap \Upsilon_{k,l}$. In both cases, $P(t_a)$ and $P(t_b)$ lie in the same connected component of $A_{r_{j,k + 1},r_{j,k}}(z) \cap \Upsilon$, write $P^\prime$ for the concatenation of $P|_{[0, t_a]}$, a path in $A_{r_{j,k + 1},r_{j,k}}(z) \cap \Upsilon$ from $P(t_a)$ to $P(t_b)$, and $P|_{[t_b, 1]}$. If we define $\{t_n^\prime\}_{n \in \NN}$ and $N^\prime$ in the same manner as $\{t_n\}_{n \in \NN}$ and $N$, resp.~but with $P^\prime$ in place of $P$, then one verifies immediately that
    \begin{equation*}
        P^\prime(t_n^\prime) = 
        \begin{cases}
            P(t_n) & \text{if } n \le a; \\
            P(t_{n + b - a}) & \text{otherwise}, 
        \end{cases}
    \end{equation*}
    and $N^\prime = N + a - b < N$. Thus, by performing this operation finitely many times, we may assume without loss of generality that such $a$ and $b$ do not exist. On the other hand, for each $n \in [1, N + 1]_\ZZ$, by definition, $P(t_{n - 1})$ and $P(t_n)$ lie in the same connected component of $A_{r_{j,k + 2},r_{j,k - 1}}(z) \cap \Upsilon$ for some $k \in \NN_{\ge j}$, in which case we have
    \begin{equation*}
        D_\Upsilon(P(t_{n - 1}), P(t_n); B_{r_{j,j - 2}}(z) \cap \Upsilon) \le D_\Upsilon(P(t_{n - 1}), P(t_n); A_{r_{j,k + 3},r_{j,k - 2}}(z) \cap \Upsilon) \le A \lambda^{(k - j)/2} \kc_{r_j}
    \end{equation*}
    by condition~\eqref{it:305}. Thus, 
    \begin{align*}
        D_\Upsilon(x, y; B_{r_{j,j - 1}}(z) \cap \Upsilon) &\le \sum_{n = 1}^N D_\Upsilon(P(t_{n - 1}), P(t_n); B_{r_{j,j - 1}}(z) \cap \Upsilon) \\
        &+ D_\Upsilon(P(t_N), P(1); B_{r_{j,j - 1}}(z) \cap \Upsilon) \\
        &\le \sum_{k = j}^\infty 2 A\lambda^{(j - k)/4} A \lambda^{(k - j)/2} \kc_{r_j} + A \kc_{r_j} \\
        &= \left(\frac{2A^2}{1 - \lambda^{1/4}} + A\right) \kc_{r_j}. 
    \end{align*}
    This completes the proof. 
\end{proof}

\begin{lemma}\label{019}
    Let $\alpha_{\mathrm{4A}}$ be as in \eqref{eq:016}. Let $\chi \in (0, 1 - 2/\alpha_{\mathrm{4A}})$. Let $U \subset \CC$ be a deterministic open subset. Let $K \subset U$ be a deterministic compact subset. Then for each $\rr > 0$, it holds with probability tending to one as $\varepsilon \to 0$, at a rate which is uniform in $\rr$, that the following is true: Let $x, y \in \rr K \cap \Upsilon$ with $\lvert x - y\rvert \le \varepsilon\rr$. Then either $x \xleftrightarrow{\rr U} y$ or there exists a loop of $\Gamma$ contained in $\rr U$ and disconnecting $x$ and $y$. Moreover, in the former case, we have
    \begin{equation*}
        \kc_{\rr}^{-1}D_\Upsilon(x, y; \rr U \cap \Upsilon) \le \left\lvert\frac{x - y}{\rr}\right\rvert^\chi. 
    \end{equation*}
\end{lemma}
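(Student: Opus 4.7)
The plan is to combine \Cref{214} (internal diameter bounds) with the four-arm estimate \Cref{264} (transferred to $\Upsilon$ via \Cref{266}), assembling a single event of probability tending to one on which both the dichotomy and the distance bound hold simultaneously for every admissible pair $(x, y)$.

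First I would fix $\alpha \in (2, \alpha_{\mathrm{4A}})$ with $\chi < 1 - 2/\alpha$, a small $\rho > 0$, and a dyadic ratio $\lambda \in (0,1)$. Set $T \defeq \tfrac12 \dist(K, \partial U)$. For each mesh point $z \in (\rho \rr \ZZ)^2 \cap B_\rr(\rr K)$ and for dyadic radii $r_j = \lambda^j \cdot T \rr$ with $j$ ranging over a window of length $n \asymp \log(1/\varepsilon)$, I would apply \Cref{264}\eqref{264B} and \Cref{214} and combine them via pigeonhole: at least a $b$-fraction of indices $j$ are simultaneously \emph{arc-good} (at most two arcs of $\Gamma$ cross $A_{r_j/2, r_j}(z)$) and \emph{diameter-good} (the internal diameter of $B_{r_j}(z) \cap \Upsilon$ in $B_{2 r_j}(z) \cap \Upsilon$ is bounded by $A \kc_{r_j}$), with failure probability exponentially small in $n$. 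A union bound over the $O(\rho^{-2})$ mesh points yields a single event $\mathsf{G}(\varepsilon)$ of probability tending to one as $\varepsilon \to 0$, uniformly in $\rr$ by translation invariance (\Cref{010}, Axiom~\eqref{010C}).

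Next, on $\mathsf{G}(\varepsilon)$, for any $x, y \in \rr K \cap \Upsilon$ with $r \defeq \lvert x - y \rvert \le \varepsilon \rr$, pick a mesh point $z$ within $\rho\rr$ of the midpoint of $xy$ (so that $x, y \in B_{r + \rho\rr}(z) \subset B_{r_j/2}(z)$ for any $r_j \ge 2r + 2\rho\rr$) and a good scale $r_j$ in the window $[2r + 2\rho\rr, \,(r/\rr)^{1 - 2/\alpha}\rr]$, which is nonempty and contains $\asymp \log(1/\varepsilon)/\alpha$ dyadic scales once $\varepsilon$ is small. The topological claim is that the at-most-two-arcs condition at $r_j$, combined with the same condition at larger dyadic scales up to $T\rr$, forces any loop $\SCL \in \Gamma$ that separates $x$ from $y$ to lie in $B_{T\rr}(z) \subset \rr U$: such a $\SCL$ must surround $B_{r_j/2}(z)$, so if $\diam(\SCL) > r_j$ then $\SCL$ contributes two arcs to $A_{r_j/2, r_j}(z)$ and the at-most-two-arcs condition at every larger scale up to $T\rr$ rules out a second spanning loop, while forcing the chain of nested crossings to terminate inside $B_{T\rr}(z)$. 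Thus either a $\rr U$-contained separating loop exists, or no loop of $\Gamma$ separates $x$ and $y$, and in the latter case $x, y$ lie in the same connected component of $B_{r_j}(z) \cap \Upsilon$. This is the dichotomy.

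Finally, when $x \xleftrightarrow{\rr U} y$ the diameter-good property at $r_j$ gives
$$
D_\Upsilon(x, y; \rr U \cap \Upsilon) \le D_\Upsilon(x, y; B_{2 r_j}(z) \cap \Upsilon) \le A \kc_{r_j},
$$
and \eqref{eq:340} yields $\kc_{r_j} \le \kK (r_j/\rr)\kc_\rr \le A\kK (r/\rr)^{1 - 2/\alpha} \kc_\rr \le (r/\rr)^\chi \kc_\rr$ for $\varepsilon$ small enough, using $\chi < 1 - 2/\alpha$. The main obstacle I foresee is the topological step in the third paragraph: making precise the tower-of-scales argument that the at-most-two-arcs condition over the full range $[2r, T\rr]$ genuinely constrains separating loops to have diameter at most $T\rr$, and checking that the combined event can be expressed via a $\sigma$-algebra in which \Cref{266} applies so the probability bounds remain valid on $\Upsilon$. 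The rest is standard union-bound and scaling bookkeeping.
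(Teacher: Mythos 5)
Your overall strategy---invoke \Cref{264} for arc control, \Cref{214} for internal-diameter control, and union-bound over a mesh---is the same as the paper's, but two steps as written do not hold up.

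\emph{The mesh.} Fixing $\rho$ independently of $\varepsilon$ breaks the exponent arithmetic: since any admissible scale must satisfy $r_j \ge 2r + 2\rho\rr \ge 2\rho\rr$, the window $[2r + 2\rho\rr,\ (r/\rr)^{1 - 2/\alpha}\rr]$ is empty as soon as $(r/\rr)^{1 - 2/\alpha} < 2\rho$, i.e.~for all sufficiently small $r = \lvert x - y\rvert$; and even when nonempty, the pigeonhole only guarantees good scales somewhere in the full tower $[1, n]$, not inside the $(x,y)$-dependent subwindow you need. In the paper the mesh side is $\varepsilon$-dependent (of order $\varepsilon\rr$ for the arc condition and $\varepsilon^\nu\rr$ for the diameter condition), each event is shown to hold with polynomially high probability for each fixed dyadic $\varepsilon$, and the Borel--Cantelli lemma is then used to obtain a single event over all dyadic $\varepsilon \le \varepsilon_\ast$; one finally matches the pair $(x, y)$ to the dyadic $\varepsilon$ with $\varepsilon\rr \le 100\lvert x - y\rvert$. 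Without the $\varepsilon$-dependent mesh and the Borel--Cantelli step, the distance bound $A\kc_{r_j}$ stays bounded below by a constant multiple of $\kc_{\rr}$ and cannot reach $(\lvert x - y\rvert/\rr)^\chi\kc_{\rr}$.

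\emph{The topology.} Your tower-of-scales argument does not have the at-most-two-arcs condition at every larger scale (only at a $b$-fraction of them), and a loop of $\Gamma$ separating $x$ from $y$ inside a ball need not surround any disk, so the nested-crossings picture is not available. The paper's version is a single-annulus argument: apply \Cref{264}\eqref{264A} to $A_{\varepsilon\rr,\varepsilon^\nu\rr}(z)$ with $\nu$ chosen so that $\alpha_{\mathrm{4A}}(1 - \nu) > 2$ (this is exactly what makes the $\varepsilon^{-2}$-term union bound close). If $x, y \in B_{\varepsilon\rr}(z) \cap \Upsilon$ are neither connected in $B_{\varepsilon^\nu\rr}(z) \cap \Upsilon$ nor separated by a loop of $\Gamma$ contained in $B_{\varepsilon^\nu\rr}(z)$, then, using that both points lie in the carpet $\Upsilon$, the separation must be effected by at least two distinct $B_{\varepsilon^\nu\rr}(z)$-excursions of $\Gamma$, which produce at least four arcs across $A_{\varepsilon\rr,\varepsilon^\nu\rr}(z)$ and contradict the arc event. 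No tower is needed for the dichotomy; the $D_\Upsilon$-bound then follows from \Cref{214} applied at an intermediate scale $\varepsilon^\mu\rr$ with $\chi < \mu < \nu$ (giving $\le A\varepsilon^\mu\kc_{\rr}$), which, together with $\varepsilon\rr \le 100\lvert x - y\rvert$, beats $(\lvert x - y\rvert/\rr)^\chi\kc_\rr$ because $\mu > \chi$.
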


\begin{proof}
    Fix $\chi < \mu < \nu < 1$ such that $\alpha_{\mathrm{4A}}(1 - \nu) > 2$. 
    
    By possibly shrinking $U$, we may assume without loss of generality that it is bounded. Then, by \Cref{264}, \eqref{264A} and a union bound, for each $\rr > 0$, it holds with polynomially high probability as $\varepsilon \to 0$, at a rate which is uniform in $\rr$, that for each $z \in \left(\frac1{100}\varepsilon\rr\ZZ\right)^2 \cap \rr U$, there are at most two connected arcs of $\Gamma$ in $A_{\varepsilon\rr,\varepsilon^\nu\rr}(z)$ connecting its inner and outer boundaries. Thus, by the Borel-Cantelli lemma, it holds with probability tending to one as $\varepsilon_\ast \to 0$, at a rate which is uniform in $\rr$, that the above event occurs for all $\varepsilon \in (0, \varepsilon_\ast] \cap \{2^k\}_{k \in \ZZ}$. Henceforth assume that this holds for a sufficiently small $\varepsilon_\ast \in (0, 1)$ (how small does not depend on $\rr$). 
    
    Let $x, y \in \rr K \cap \Upsilon$ with $\lvert x - y\rvert \le \varepsilon_\ast\rr/100$. Choose $\varepsilon \in (0, \varepsilon_\ast] \cap \{2^k\}_{k \in \ZZ}$ and $z \in \left(\frac1{100}\varepsilon\rr\ZZ\right)^2 \cap \rr U$ such that $x, y \in B_{\varepsilon\rr}(z)$ and $\varepsilon\rr \le 100\lvert x - y\rvert$. We \emph{claim} that either $x \xleftrightarrow{B_{\varepsilon^\nu\rr}(z)} y$ or there exists a loop of $\Gamma$ contained in $B_{\varepsilon^\nu\rr}(z)$ and disconnecting $x$ and $y$. Suppose by way of contradiction that the \emph{claim} is not true. Since both $x$ and $y$ lie in $\Upsilon$, there must exist at least two $B_{\varepsilon^\nu\rr}(z)$-excursions of $\Gamma$ disconnecting $x$ and $y$ in $B_{\varepsilon^\nu\rr}(z)$. However, this implies that there are at least four connected arcs of $\Gamma$ in $A_{\varepsilon\rr,\varepsilon^\nu\rr}(z)$ connecting its inner and outer boundaries, a contradiction. This completes the proof of the \emph{claim}. 

    By \Cref{214} and a union bound, there exists a sufficiently large $A > 0$ such that for each $\rr > 0$, it holds with polynomially high probability as $\varepsilon \to 0$, at a rate which is uniform in $\rr$, that 
    \begin{multline}\label{eq:314}
        D_\Upsilon(x, y; B_{\varepsilon^\mu\rr}(z) \cap \Upsilon) \le A\varepsilon^\mu\kc_{\rr}, \\
        \forall z \in \left(\frac1{100}\varepsilon^\nu\rr\ZZ\right)^2 \cap \rr U, \ \forall x, y \in B_{\varepsilon^\nu\rr}(z) \cap \Upsilon \text{ with } x \xleftrightarrow{B_{\varepsilon^\nu\rr}(z)} y. 
    \end{multline}
    Thus, by the Borel-Cantelli lemma, it holds with probability tending to one as $\varepsilon_\ast \to 0$, at a rate which is uniform in $\rr$, that \eqref{eq:314} is true for all $\varepsilon \in (0, \varepsilon_\ast] \cap \{2^k\}_{k \in \ZZ}$. Henceforth assume that this holds for a sufficiently small $\varepsilon_\ast \in (0, 1)$ (how small does not depend on $\rr$). Combining the \emph{claim} of the preceding paragraph and \eqref{eq:314}, we obtain that either there exists a loop of $\Gamma$ contained in $B_{\varepsilon^\nu\rr}(z)$ and disconnecting $x$ and $y$ or 
    \begin{equation*}
        D_\Upsilon(x, y; \rr U \cap \Upsilon) \le D_\Upsilon(x, y; B_{\varepsilon^\mu\rr}(z) \cap \Upsilon) \le A\varepsilon^\mu\kc_{\rr} \le A \cdot 100^\mu \cdot \left\lvert\frac{x - y}{\rr}\right\rvert^\mu \kc_{\rr}. 
    \end{equation*}
    This completes the proof. 
\end{proof}

\begin{remark}
    By a similar argument to the argument applied in the proof of \Cref{019}, the following is true: Let $D$ be a strong geodesic $\CLE_\kappa$ carpet metric with distance exponent $\theta$. Let $\chi \in (0, (1 - 2/\alpha_{\mathrm{4A}})\theta)$. Let $U \subset \CC$ be a deterministic open subset. Let $K \subset U$ be a deterministic compact subset. Then it holds with probability tending to one as $\varepsilon \to 0$ that
    \begin{equation*}
        D_\Upsilon(x, y; U \cap \Upsilon) \le \lvert x - y\rvert^\chi, \quad \forall x, y \in K \cap \Upsilon \text{ with } x \xleftrightarrow{U} y \text{ and } \lvert x - y\rvert \le \varepsilon. 
    \end{equation*}
\end{remark}

\subsection{Estimates for the scaling constants}

In the present subsection, we prove a basic estimate for the scaling constants $\{\kc_r\}_{r > 0}$ (\Cref{280}) which will be useful in \Cref{s:00}. 

\begin{lemma}\label{280}
    For each $\rr > 0$, we have $\kc_{\varepsilon\rr}/(\varepsilon\kc_{\rr}) \to 0$ as $\varepsilon \to 0$, at a rate which is uniform in $\rr$. 
\end{lemma}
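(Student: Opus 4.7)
The plan is to argue by contradiction, exploiting the fact (noted in the introduction) that the $\CLE_\kappa$ carpet $\Upsilon$ contains no non-trivial Euclidean-rectifiable paths. Suppose for contradiction that there exist $\delta > 0$ and sequences $\varepsilon_n \to 0$, $\rr_n > 0$ with $\kc_{\varepsilon_n \rr_n} \ge \delta \varepsilon_n \kc_{\rr_n}$ for all $n$.

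The idea is to rescale and pass to a subsequential limit. Setting $D^{(n)}_X(\bullet, \bullet) \defeq \kc_{\rr_n}^{-1} D_{\rr_n X}(\rr_n \bullet, \rr_n \bullet)$, each $D^{(n)}$ is a weak geodesic $\CLE_\kappa$ carpet metric whose scaling constants $\kc^{(n)}_r = \kc_{r \rr_n}/\kc_{\rr_n}$ satisfy $\kc^{(n)}_1 = 1$ and $\kc^{(n)}_{\varepsilon_n} \ge \delta \varepsilon_n$. By the H\"older continuity of \Cref{019} together with the diameter bound of \Cref{229}---both of which are uniform in $\rr$---the sequence is precompact in an appropriate topology, and one extracts a subsequential limit $D^\ast$ on a fixed copy of $\Upsilon$ (using the scale-invariance of the law of $\Upsilon$). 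The limit $D^\ast$ retains the relevant axioms of \Cref{010}, and its scaling constants $\kc^\ast_\bullet$ satisfy $\kc^\ast_{\varepsilon_n} \ge \delta \varepsilon_n$ along the extracted subsequence.

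Next, I would select a $D^\ast$-geodesic $P^\ast$ between two points of $\Upsilon$ at Euclidean distance $\asymp 1$ and of $D^\ast$-length $\asymp 1$, which exists by the geodesic axiom combined with \Cref{229} and the lower tightness. For each large $n$, subdividing $P^\ast$ by $D^\ast$-arc length into at most $\lceil 1/(\delta\varepsilon_n)\rceil$ equal pieces yields a cover of $P^\ast$ by $D^\ast$-balls of radius $\kc^\ast_{\varepsilon_n}$. The lower tightness in \Cref{010}, Axiom~\eqref{010D}, applied to the compact pair $\{0\}$ and $\partial B_1(0)$, combined with translation invariance \Cref{010}, Axiom~\eqref{010C}, and the near-independence of restrictions of CLE to disjoint regions (\Cref{263},~\Cref{265},~\Cref{266}), shows that each such $D^\ast$-ball has Euclidean diameter $\lesssim \varepsilon_n$ with probability tending to one. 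The sum of Euclidean diameters of the cover is then $\lesssim 1/\delta$, uniformly in $n$. By the monotonicity of the one-dimensional Hausdorff pre-measure in the scale parameter, and since $\varepsilon_n \to 0$, this forces the one-dimensional Hausdorff measure of $P^\ast$ to be at most $C/\delta$---i.e., $P^\ast$ is Euclidean-rectifiable. But $P^\ast$ is a non-trivial continuous path in $\Upsilon$, which contains no such paths, a contradiction.

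The main obstacles are twofold. First, one must verify that the subsequential limit $D^\ast$ really does inherit enough of the axioms of \Cref{010} to support the geodesic construction and the lower tightness bound at the stipulated scales; this parallels the subsequential limit analysis that will be carried out in the proof of \Cref{321}. Second, the union bound that controls the Euclidean diameter of all the $D^\ast$-balls along the random geodesic $P^\ast$ must avoid the polynomial loss $\varepsilon_n^\alpha$ that a naive application of \Cref{313} would incur, since such a loss would spoil the telescoping in the total-diameter estimate. Tightening the probabilistic bound via the CLE near-independence across scales is the delicate technical core of the argument.
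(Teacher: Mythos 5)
Your high-level intuition matches the paper's: if $\kc_{\varepsilon\rr}$ failed to decay superlinearly in $\varepsilon$, one could cross the carpet for a total $D_\Upsilon$-cost of $O(\kc_\rr)$ along a path of Euclidean length $O(\rr)$, which is impossible. However, your proposal has a genuine gap precisely at the point you flag as ``the delicate technical core,'' and I do not believe the tool you suggest (CLE near-independence across scales) can close it.

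The problem is your claim that, on a fixed random geodesic $P^\ast$, each of the $\lceil 1/(\delta\varepsilon_n)\rceil$ pieces of $D^\ast$-arclength $\kc^\ast_{\varepsilon_n}$ has Euclidean diameter $\lesssim\varepsilon_n$ with probability tending to one. To make this work you must dominate the Euclidean diameter of a $D^\ast$-ball of radius $\kc^\ast_{\varepsilon_n}$ at every one of $\asymp 1/\varepsilon_n$ overlapping locations. The lower tightness in \Cref{010}, Axiom~(\ref{010D}) only gives a crossing lower bound of the form $D_\Upsilon(\partial B_{\varepsilon r/2}(z), \partial B_{\varepsilon r}(z)) \ge a\kc_{\varepsilon r}$ with probability bounded away from $0$, not with high probability, and a union bound over $\asymp 1/\varepsilon_n$ points therefore fails. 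Upgrading to high probability via \Cref{263} costs a polynomial, which is exactly what \Cref{313} records ($\varepsilon^\alpha\kc_{\varepsilon r}$ instead of $c\kc_{\varepsilon r}$); this loss destroys the linear telescoping in your total-diameter bound. CLE near-independence does not buy you out of this: the best the paper itself obtains from \Cref{263} in closely related places (e.g., the proof of \Cref{309}) already carries a polynomial penalty in $\varepsilon$. Also, the subsequential-limit extraction you propose is a heavy and unnecessary detour: to make $D^\ast$ usable you would essentially have to reprove the tightness/axiom-preservation content of \Cref{ss:00}--\ref{ss:04}, and even then the law of $D^\ast$ lives on an independently sampled copy of $\Upsilon$, so identifying it with a metric on a fixed carpet requires care you did not supply.

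The paper's proof is more direct and keys on a different idea. \Cref{324} shows (via Chayes--Chayes--Durrett-type fractal percolation theory, after coupling $\Gamma$ with a Brownian loop-soup) that any continuous path in $S_\rr\cap\Upsilon_\rr$ joining the two sides of $S_\rr$ must hit at least $\delta\,2^{\xi n}$ dyadic cells of $\CS_\rr^n$, where $\xi>1$ is a \emph{deterministic} exponent. Because $\xi$ is strictly larger than $1$, each such cell contributes an annulus crossing costing $\ge a\kc_{2^{-(1+\nu)n}\rr}$ (with the scale range $[\varepsilon^{1+\nu}\rr,\varepsilon\rr]$ chosen so \Cref{263} can furnish a good radius), and the total $D_\Upsilon$-cost $\gtrsim 2^{\xi n}\kc_{2^{-(1+\nu)n}\rr}$ then beats the $O(\kc_\rr)$ cap from \Cref{281} once $\nu<\xi-1$. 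The exponent gap $\xi-1>0$ is exactly the slack that absorbs the polynomial loss incurred in the union bound at step \eqref{eq:269} — it plays the role that your proposed ``tightening'' cannot. Without this ingredient (or an analogous quantitative lower bound on how many small cells a carpet path must visit), your approach stalls where you already sense it will.
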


\begin{lemma}\label{324}
    There exists a deterministic constant $\xi > 1$ such that the following is true: Let $\rr > 0$. Write $S_{\rr}$ for the open square of side length $\rr$ centered at $0$. For $n \in \NN$, write $\CS_{\rr}^n$ for the collection of open squares $S \subset S_{\rr}$ that have side length $2^{-n}\rr$ and vertices in $(2^{-n}\rr\ZZ)^2$. Write $\SCL_{\rr}$ for the largest loop of $\Gamma$ contained in $B_{4\rr}(0)$ and surrounding $0$, and $\Upsilon_{\rr} \subset \CC$ for the closed subset of points that are surrounded by $\SCL_{\rr}$ but by no other loop of $\Gamma$ inside $\SCL_{\rr}$. Write $F_{\rr}$ for the event that $\SCL_{\rr}$ surrounds $B_{3\rr}(0)$. Then, on the event $F_{\rr}$, there almost surely exists $\delta > 0$ such that 
    \begin{equation*}
        \inf_P \#\{S \in \CS_{\rr}^n : S \cap P \neq \emptyset\} \ge \delta 2^{\xi n}, \quad \forall n \in \NN, 
    \end{equation*}
    where $P$ ranges over all continuous paths in $S_{\rr} \cap \Upsilon_{\rr}$ that crosses between the left and right sides of $S_{\rr}$ (with the convention that $\inf\emptyset = \infty$). 
\end{lemma}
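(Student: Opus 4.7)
My plan is to prove the statement via a multi-scale detour argument. The underlying idea is that at a positive density of dyadic scales, the $\CLE_\kappa$ almost surely contains loop-shaped obstacles inside each dyadic cell that force any crossing in $\Upsilon_\rr$ to make detours, and these detours compound multiplicatively across scales to yield a box-counting exponent $\xi > 1$.

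First, for each dyadic square $S \subset B_{2\rr}(0)$ of side $r$ with $r \le \rr/100$, I define a good event $G_S$ on $\Gamma$: the existence of two perpendicular strip-shaped obstacles inside $S$. More precisely, writing $c_S$ for the center of $S$, the event $G_S$ holds if there exist loops $\SCL_V', \SCL_H' \in \Gamma$ both contained in $S$ such that the interior of $\SCL_V'$ contains the vertical strip $[c_S^x - r/100, c_S^x + r/100] \times [c_S^y - 2r/5, c_S^y + 2r/5]$ and the interior of $\SCL_H'$ contains the analogous horizontal strip. By the conformal invariance and locality of $\CLE_\kappa$, together with the uniformly positive probability of loops surrounding prescribed compact sets while lying in prescribed open sets, one has $\BP[G_S \mid F_\rr, \SCL_\rr] \ge p_0 > 0$ uniformly in $S$ and $r$. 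Moreover, on $F_\rr$ we have $S \subset B_{3\rr}(0) \subset \mathrm{int}(\SCL_\rr)$, so both loops lie inside $\SCL_\rr$ and their interiors are disjoint from $\Upsilon_\rr = X(\SCL_\rr)$.

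Second, fix a large integer $k$ and set $K \defeq 2^k$. On $G_S$, any continuous arc of a path in $\Upsilon_\rr$ joining two opposite sides of $S$ must detour around the perpendicular obstacle, forcing the arc to visit at least $(1+\eta) K$ sub-squares of $S$ of side $r/K$, for some geometric constant $\eta > 0$ depending on $k$ (this follows from the fact that the Euclidean length of the detour is at least $(1+\eta)r$, and each sub-square has side $r/K$). Using \Cref{266} (multi-scale independence for $\Upsilon$) in combination with \Cref{263} applied to dyadic annuli around each reference point, plus a union bound over a $2^{-nk}\rr$-net of reference points $z \in S_\rr$, I show that with superpolynomially high probability in $n$, for every such $z$ at least a fraction $b > 0$ of the events $\{G_{S^{(jk)}(z)}\}_{j = 1}^n$, where $S^{(jk)}(z)$ denotes the dyadic square of side $2^{-jk}\rr$ containing $z$, occur. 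Borel--Cantelli then upgrades this to an almost-sure statement on $F_\rr$ for all sufficiently large $n$.

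Finally, I iterate: for any left-right crossing $P$ of $S_\rr$ lying in $\Upsilon_\rr$ and any sub-square at scale $2^{-nk}\rr$ visited by $P$, the restrictions of $P$ to each dyadic ancestor cell pick up a factor $(1+\eta)$ at each good ancestor scale beyond the naive factor $K$. Aggregating over the $n$ ancestor scales gives
\[
\#\{S \in \CS_\rr^{nk} : S \cap P \ne \emptyset\} \ge \delta \cdot K^n \cdot (1+\eta)^{bn} = \delta \cdot 2^{\xi nk},
\]
with $\xi \defeq 1 + (b\log_2(1+\eta))/k > 1$, which proves the claim with a random but positive $\delta$ on $F_\rr$.

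The main obstacle in this plan is the detour-count argument: a single convex obstacle inside $S$ admits corner-to-corner shortcuts that avoid the detour and so give no multiplicative gain. Using two perpendicular strip-shaped obstacles fixes this by obstructing every orientation of side-to-opposite-side crossing. A secondary subtlety is ensuring uniformity in the crossing path $P$: paths whose restrictions to $S$ enter and exit near the corners of $S$ do not need to detour around a middle obstacle. To handle this, one should either (i) refine the single-scale good event so that even corner crossings are blocked, or (ii) combine the good events across successive scales in a way that a constant fraction of the dyadic ancestors of any visited sub-square see a genuine detour, and then the multiplicative iteration still delivers $\xi > 1$.
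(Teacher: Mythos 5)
Your route is genuinely different from the paper's. The paper runs no detour argument at all: it couples the non-nested $\CLE_\kappa$ inside $\SCL_{\rr}$ with a Brownian loop-soup, declares a dyadic square $S$ ``closed'' if a single Brownian loop surrounds $S$ and is contained in the union of $S$ with its eight neighbours, observes that the resulting random set $\overline{S_{\rr}}\setminus\bigcup_{S\,\text{closed}}S$ is a fractal percolation (with deterministic retention probability) containing $S_{\rr}\cap\Upsilon_{\rr}$, and then simply cites the known theorem that every left-right crossing of a fractal percolation limit set meets at least $\delta 2^{\xi n}$ squares of side $2^{-n}$ for some $\xi>1$. All of the combinatorial difficulty is thus outsourced to that theorem; your proposal attempts to reprove it from scratch in the CLE setting.

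The gap in your proposal is exactly the step that theorem handles, and you flag it yourself without resolving it. Your single-scale claim (an arc joining two opposite sides of $S$ must visit at least $(1+\eta)K$ sub-squares) applies only to arcs that genuinely cross $S$ from a side to the opposite side; in the iteration, the restriction of $P$ to a visited square at scale $2^{-jk}\rr$ typically enters and exits through the same side or clips a corner, and then no gain is extracted from that square. Consequently the claimed aggregation $\#\{S\in\CS_{\rr}^{nk}: S\cap P\neq\emptyset\}\ge \delta K^n(1+\eta)^{bn}$ does not follow from the per-scale statements: the number of visited squares at scale $(j+1)k$ is not bounded below by $(1+\eta)K$ times the number at scale $jk$, and your density-of-good-scales statement concerns the ancestor chains of the points of a fixed net, whereas what is needed is a statement about the path-dependent family of squares that $P$ actually crosses nontrivially. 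Options (i) and (ii) at the end of your write-up are precisely where the real work lies (this is the content of the Chayes--Chayes--Durrett/Orzechowski-type analysis of fractal percolation crossings), so as written the proof is incomplete. A secondary issue: the events $G_{S^{(jk)}(z)}$ for varying $j$ are measurable with respect to nested squares rather than disjoint concentric annuli, so \Cref{263} does not apply to them as stated; you would need to re-express the obstacle events so that they are determined by $\Gamma$ restricted to disjoint annuli around $z$.
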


\begin{proof}
    By the scale invariance of the law of $\Gamma$, we may assume without loss of generality that $\rr = 1$. Write $\Gamma_1$ for the collection of loops of $\Gamma$ that are surrounded by $\SCL_1$ but by no other loop of $\Gamma$ inside $\SCL_1$. Then, given $\SCL_1$, $\Gamma_1$ is conditionally a non-nested $\CLE_\kappa$ inside $\SCL_1$. Given $\SCL_1$, let $\Xi_1$ be a Brownian loop-soup of intensity $c(\kappa)$ inside $\SCL_1$. Suppose that $\Gamma_1$ and $\Xi_1$ are coupled so that $\Gamma_1$ is given by the outer boundaries of the outermost clusters of the Brownian loops of $\Xi_1$ (cf.~\cite{CLE}). For $n \in \NN$ and $S \in \CS_1^n$, we set $X(S) \defeq 0$ if $S$ is surrounded by a Brownian loop of $\Xi_1$ that is contained in the union of its eight neighbors, and $X(S) \defeq 1$ otherwise. Then, given $\SCL_1$ and the event $F_1$, the random variables $X(S)$ for $n \in \NN$ and $S \in \CS_1^n$ are conditionally independent Bernoulli random variables. Thus, given $\SCL_1$ and the event $F_1$, 
    \begin{equation*}
        \SCC \defeq \overline{S_1} \setminus \bigcup_{S : X(S) = 0} S
    \end{equation*}
    is conditionally a fractal percolation (with retention probability given by a deterministic constant depending only on $\kappa$) such that $S_1 \cap \Upsilon_1 \subset \SCC$ almost surely. Thus, \Cref{324} follows immediately from \cite[(2.32)]{DimFracPerc}. 
\end{proof}

\begin{lemma}\label{281}
    In the notation of \Cref{324}, there exists $\pp \in (0, 1)$ and $A > 0$ such that the following is true: Let $\rr > 0$. Write $E_{\rr}$ for the event that $F_{\rr}$ occurs and there exists a continuous path $P \subset S_{\rr} \cap \Upsilon_{\rr}$ that crosses between the left and right sides of $S_{\rr}$ such that $\len(P; D_\Upsilon) \le A\kc_{\rr}$. Then $\BP\lbrack E_{\rr}\rbrack \ge \pp$. 
\end{lemma}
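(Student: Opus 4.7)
My plan is to combine a topological RSW-type crossing estimate for the CLE carpet with the $D_\Upsilon$-diameter bound from \Cref{229}. I will fix once and for all a small $\eta \in (0, 1/4)$ and set $V_\eta := (-1/2 + \eta, 1/2 - \eta)^2 \Subset S_1$. First I will establish, with positive probability uniform in $\rr$, a left-right crossing of the smaller rectangle $\rr V_\eta$ inside $\rr V_\eta \cap \Upsilon_{\rr}$; then I will use \Cref{229} to replace this a priori long crossing by one of bounded $D_\Upsilon$-length staying in $S_{\rr} \cap \Upsilon_{\rr}$; and finally I will extend near the boundary to obtain a crossing between the actual left and right sides of $S_{\rr}$.

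For the first step, let $G_{\rr}$ denote the event that $F_{\rr}$ holds, $\wp(\SCL_{\rr}) = 0$, and $\rr V_\eta \cap \Upsilon_{\rr}$ contains a continuous left-right crossing of $\rr V_\eta$. By the scale invariance of the law of $(\Gamma, \wp)$, $\BP[G_{\rr}]$ is independent of $\rr$; call it $q$. To see that $q > 0$, I argue as follows: conditional on $F_{\rr}$ and $\SCL_{\rr}$, the loops of $\Gamma$ lying strictly inside $\SCL_{\rr}$ form a non-nested $\CLE_\kappa$ whose carpet is precisely $\Upsilon_{\rr}$, so by an RSW-type crossing estimate for CLE carpets (derivable via the Brownian loop-soup / fractal percolation representation used in the proof of \Cref{324}), $\rr V_\eta \cap \Upsilon_{\rr}$ contains a left-right crossing of $\rr V_\eta$ with positive probability.

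For the second step, I apply \Cref{229} with $V := V_\eta$ and $U := S_1$. This gives $A > 0$ such that, uniformly in $\rr > 0$, with probability at least $1 - q/2$ one has
\begin{equation*}
    D_\Upsilon(x, y; S_{\rr} \cap \Upsilon) \le A \kc_{\rr} \quad \text{for all } x, y \in \rr V_\eta \cap \Upsilon \text{ with } x \xleftrightarrow{\rr V_\eta} y.
\end{equation*}
On the intersection with $G_{\rr}$ (probability at least $q/2$), I pick the crossing $\gamma \subset \rr V_\eta \cap \Upsilon_{\rr}$ and let $x_L, x_R$ be its endpoints. By the displayed bound, there is a path $P \subset S_{\rr} \cap \Upsilon$ from $x_L$ to $x_R$ with $\len(P; D_\Upsilon) \le A\kc_{\rr} + 1$. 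Finiteness of $\len(P; D_\Upsilon)$ forces $P$ to stay in a single $X(\SCL)$ for some $\SCL \in \Gamma$ with $\wp(\SCL) = 0$; since $x_L \in \Upsilon_{\rr} = X(\SCL_{\rr})$, this $\SCL$ must be $\SCL_{\rr}$, so $P \subset S_{\rr} \cap \Upsilon_{\rr}$.

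The endpoints of $P$ lie at Euclidean distance $\eta \rr$ from the sides of $S_{\rr}$, so I would then run a short boundary-extension argument (a further application of \Cref{229} combined with an RSW-type estimate inside thin vertical strips adjacent to the left and right sides of $S_{\rr}$) to extend $P$ within $S_{\rr} \cap \Upsilon_{\rr}$ to a genuine crossing between the left and right sides of $S_{\rr}$, at the cost of inflating $A$ by a constant. Setting $\pp := q/2$ then completes the proof. The main obstacle will be the RSW-type crossing estimate in the first step, i.e., that uniformly in $\rr$ the carpet of a non-nested $\CLE_\kappa$ contains a topological crossing of a fixed sub-rectangle with probability bounded below; once this input is granted, everything else is a direct combination of \Cref{229} with the observation that any finite-$D_\Upsilon$-length path must lie in a single $X(\SCL)$.
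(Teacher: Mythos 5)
The paper's own proof of this lemma is a single sentence: it asserts the statement ``follows immediately'' from \Cref{010}, Axiom~(\ref{010D}) (tightness across scales). Your skeleton --- a uniformly-positive-probability topological crossing combined with the internal-metric diameter bound of \Cref{229}, plus the observation that a path of finite $D_\Upsilon$-length must stay inside a single $X(\SCL)$ and hence inside $\Upsilon_{\rr}$ --- is a reasonable unpacking of what the authors presumably have in mind. However, two of your steps have genuine gaps.

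First, the crossing input. You propose to derive the existence, with probability bounded below uniformly in $\rr$, of a left-right crossing of $\rr V_\eta$ inside $\rr V_\eta \cap \Upsilon_{\rr}$ from ``the Brownian loop-soup / fractal percolation representation used in the proof of \Cref{324}.'' That comparison goes the wrong way: \Cref{324} shows $S_{\rr} \cap \Upsilon_{\rr} \subset \SCC$ for a fractal percolation $\SCC$, which is used to lower-bound the number of dyadic squares that any crossing must touch; containment in $\SCC$ gives no information about whether the carpet contains a crossing at all. A workable elementary substitute is to exhibit, with positive probability (uniform in $\rr$ by scale invariance, and jointly with $F_{\rr}$), a single loop of the non-nested $\CLE_\kappa$ inside $\SCL_{\rr}$ whose trace contains an arc crossing the rectangle; the trace lies in $\Upsilon_{\rr}$. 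Note, though, that such an arc cannot itself serve as the path $P$ of the lemma, since the $D_\Upsilon$-length of a loop arc is not controlled (cf.\ \Cref{302}, where geodesics have dimension $\theta < 1 + \kappa/8$, the dimension of the loops); this is precisely why the reduction to \Cref{229} is needed.

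Second, the boundary extension. Taking $U = S_1$ in \Cref{229} confines the replacement path to $S_{\rr}$ but its endpoints sit at Euclidean distance $\eta\rr$ from the sides of $S_{\rr}$, so you obtain a crossing of the narrower rectangle $(-(1/2-\eta)\rr,(1/2-\eta)\rr) \times (-\rr/2,\rr/2)$ rather than of $S_{\rr}$. Enlarging $U$ so that the endpoints can lie on $\partial S_{\rr}$ allows the connecting path to escape through the top or bottom of $S_{\rr}$, so \Cref{229} alone does not keep it inside the square. The ``short boundary-extension argument'' you defer is therefore the genuinely delicate step: you must either produce endpoints on the left and right sides of $S_{\rr}$ joined by a bounded-$D_\Upsilon$-length path that stays in $S_{\rr}$, or observe that the application in \Cref{280} only needs a crossing of a fixed sub-rectangle and adjust the statement accordingly. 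As written, the proposal does not yet establish the lemma.
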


\begin{proof}
    This follows immediately from \Cref{010}, Axiom~\eqref{010D} (tightness across scales). 
\end{proof}

\begin{proof}[Proof of \Cref{280}]
    In the notation of \Cref{324,281}, fix $\nu \in (0, \xi - 1)$. By \Cref{010}, Axioms~\eqref{010C} and \eqref{010D} (translation invariance and tightness across scales), for each $p \in (0, 1)$, there exists $a = a(p) > 0$ such that 
    \begin{equation*}
        \BP\lbrack D_\Upsilon(\partial B_{r/2}(z) \cap \Upsilon, \partial B_r(z) \cap \Upsilon) \ge a\kc_r\rbrack \ge p, \quad \forall z \in \CC, \ \forall r > 0. 
    \end{equation*}
    Thus, by \Cref{263} and a union bound, there exists a sufficiently small $a > 0$ such that it holds with polynomially high probability as $\varepsilon \to 0$, at a rate which is uniform in $\rr$, that for each $z \in \left(\frac1{100}\varepsilon^{1 + \nu}\rr\ZZ\right)^2 \cap S_{\rr}$, there exists $r \in [\varepsilon^{1 + \nu}\rr, \varepsilon\rr]$ such that
    \begin{equation}\label{eq:269}
        D_\Upsilon(\partial B_{r/2}(z) \cap \Upsilon, \partial B_r(z) \cap \Upsilon) \ge a\kc_r. 
    \end{equation}
    Thus, by the Borel-Cantelli lemma, almost surely, there exists $\varepsilon_\ast \in (0, 1)$ such that the above event occurs for all $\varepsilon \in (0, \varepsilon_\ast] \cap \{2^k\}_{k \in \ZZ}$. 

    Henceforth assume that $E_{\rr}$ occurs. Then there exists a path $P \subset S_{\rr} \cap \Upsilon_{\rr}$ that crosses between the left and right sides of $S_{\rr}$ such that $\len(P; D_\Upsilon) \le A\kc_{\rr}$. On the other hand, by \Cref{324}, there exists $\delta > 0$ such that 
    \begin{equation*}
        \#\{S \in \CS_{\rr}^n : S \cap P \neq \emptyset\} \ge \delta 2^{\xi n}, \quad \forall n \in \NN. 
    \end{equation*}
    By the scale invariance of the law of $\Gamma$, we may choose $\delta$ so that its law does not depend on $\rr$. Write 
    \begin{align*}
        \CS_{\rr,0}^n &\defeq \left\{S \in \CS_{\rr}^n : (\text{the bottom-left vertex of } S) \in (0, 0) + (2^{-n + 1}\rr\ZZ)^2\right\}; \\
        \CS_{\rr,1}^n &\defeq \left\{S \in \CS_{\rr}^n : (\text{the bottom-left vertex of } S) \in (2^{-n}\rr, 0) + (2^{-n + 1}\rr\ZZ)^2\right\}; \\
        \CS_{\rr,2}^n &\defeq \left\{S \in \CS_{\rr}^n : (\text{the bottom-left vertex of } S) \in (0, 2^{-n}\rr) + (2^{-n + 1}\rr\ZZ)^2\right\}; \\
        \CS_{\rr,3}^n &\defeq \left\{S \in \CS_{\rr}^n : (\text{the bottom-left vertex of } S) \in (2^{-n}\rr, 2^{-n}\rr) + (2^{-n + 1}\rr\ZZ)^2\right\}. 
    \end{align*}
    Then $\CS_{\rr}^n = \CS_{\rr,0}^n \coprod \CS_{\rr,1}^n \coprod \CS_{\rr,2}^n \coprod \CS_{\rr,3}^n$. By the pigeonhole principle, for each $n \in \NN$, there exists $\bullet \in \{0, 1, 2, 3\}$ such that 
    \begin{equation}\label{eq:270}
        \#\{S \in \CS_{\rr,\bullet}^n : S \cap P \neq \emptyset\} \ge (\delta/4) 2^{\xi n}. 
    \end{equation}
    Note that $\dist(S_1, S_2) \ge 2^{-n}\rr$ for all distinct $S_1, S_2 \in \CS_{\rr,\bullet}^n$. Setting $\varepsilon \defeq 2^{-n - 2}$ and combining \eqref{eq:269} and \eqref{eq:270}, we obtain that 
    \begin{equation*}
        A\kc_{\rr} \ge \len(P; D_\Upsilon) \ge (\delta/4) 2^{\xi n} a \kc_{2^{-(1 + \nu)(n + 2)}\rr}
    \end{equation*}
    for all sufficiently large $n \in \NN$. This completes the proof. 
\end{proof}

\subsection{Estimates within thin annuli}

In the present subsection, we prove some estimates concerning the intersection of an annulus with $\Upsilon$ as the width of the annulus goes to zero. The results of the present subsection will be used in \Cref{s:00,s:02} to control the amount of time that a $D_\Upsilon$-geodesic can spend in a thin annulus.

\begin{lemma}\label{322}
    Let $\nu \in (0, 1)$. Let $U \subset \CC$ be a deterministic bounded simply connected domain with smooth boundary. Then it holds with superpolynomially high probability as $\varepsilon \to 0$ that every connected component of $(B_\varepsilon(U) \setminus U) \cap \Upsilon$ has Euclidean diameter at most $\varepsilon^\nu$. 
\end{lemma}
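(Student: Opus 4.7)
The plan is a covering argument combined with a topological ``blocking'' mechanism based on the fact that loops $\SCL \in \Gamma$ with $\wp(\SCL) = 1$ lie in $\Upsilon^\dag$ and hence cannot be crossed by continuous paths in $\Upsilon$.

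First I would cover $\partial U$ by $O(\varepsilon^{-\nu})$ balls $B_{r_0}(z_i)$ with $r_0 \defeq \varepsilon^\nu/100$ and $z_i \in \partial U$, so that by a union bound it suffices to show that the event $E_i$ that some connected component of $(B_\varepsilon(U) \setminus U) \cap \Upsilon$ meets $B_{r_0}(z_i)$ and has diameter at least $\varepsilon^\nu/2$ has superpolynomially small probability, uniformly in $i$. For $\varepsilon$ much smaller than the reach of the smooth curve $\partial U$, the set $(B_\varepsilon(U) \setminus U) \cap A_{2r_0, \varepsilon^\nu/2}(z_i)$ is a union of two nearly rectangular slivers of width $\varepsilon$ and length $\Theta(\varepsilon^\nu)$ running along $\partial U$ on either side of $z_i$, and on $E_i$ there must exist a continuous path in $\Upsilon$ crossing one of these slivers from its inner end to its outer end.

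Next I would tile the length of one such sliver by $K = \lfloor c\varepsilon^{\nu-1}\rfloor$ pairwise well-separated $\varepsilon$-squares $Q_1,\dots,Q_K$ with concentric enlargements $Q_j' \defeq B_{2\varepsilon}(c_j)$ chosen so that the $Q_j'$ themselves are $\varepsilon$-separated, and define $F_j$ to be the event that $\Gamma$ contains at least two loops both contained in $Q_j'$ and both surrounding $Q_j$. Then $F_j \in \sigma(\Gamma \cap Q_j')$, and the scale and translation invariance of whole-plane nested $\CLE_\kappa$ give a universal constant $p = p(\kappa) > 0$ with $\BP\lbrack F_j\rbrack \ge p$; applying \Cref{265} to the $F_j$'s then yields $\BP\lbrack \bigcap_j F_j^c\rbrack \le C\exp(-\alpha K)$, which is superpolynomially small in $\varepsilon$ because $\nu - 1 < 0$.

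To see that the occurrence of $F_j$ rules out a crossing of the sliver through $Q_j$, let $\SCL_1 \subsetneq \SCL_2$ be the innermost and outermost elements of the finite family $\mathcal{N}_j \defeq \{\SCL \in \Gamma : \SCL \subset Q_j', \ \SCL \text{ surrounds } Q_j\}$. If a loop $\SCL^\ast \in \Gamma$ surrounding $Q_j$ satisfied $\SCL_1 \subsetneq \SCL^\ast \subsetneq \SCL_2$, then $\SCL^\ast \subset \mathop{\mathrm{int}}(\SCL_2) \subset Q_j'$ would force $\SCL^\ast \in \mathcal{N}_j$; passing to consecutive elements of $\mathcal{N}_j$, I may thus assume $\SCL_1, \SCL_2$ are consecutive in the full nested chain of loops of $\Gamma$ surrounding $Q_j$, so that $\wp(\SCL_1) \neq \wp(\SCL_2)$. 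Choosing $\SCL_j^\ast \in \{\SCL_1, \SCL_2\}$ with $\wp(\SCL_j^\ast) = 1$, we have $\SCL_j^\ast \subset X(\SCL_j^\ast) \subset \Upsilon^\dag$, so $\SCL_j^\ast$ is almost surely disjoint from $\Upsilon$; since $\SCL_j^\ast \subset Q_j'$ surrounds $Q_j$, no continuous path in $\Upsilon$ starting outside $Q_j'$ can enter $Q_j$. Union bounding over the $O(\varepsilon^{-\nu})$ balls and the two slivers per ball then completes the proof. The main obstacle is to design $F_j$ so as to be both local (so that \Cref{265} applies) and sufficient to certify the existence of a $\wp = 1$ loop, which is a priori a nonlocal property; this is reconciled by the observation that two loops of $\Gamma$ surrounding $Q_j$ and contained in $Q_j'$ are automatically consecutive in the full nested chain around $Q_j$.
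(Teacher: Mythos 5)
Your overall skeleton coincides with the paper's: a covering of $\partial U$ at scale $\varepsilon^\nu$, a union bound, and for each patch an array of $\Theta(\varepsilon^{\nu-1})$ well-separated local events to which \Cref{265} applies — this is exactly the structure of the paper's proof, where the local event $E_r(z)$ is simply the existence of a loop of $\Gamma$ in $A_{r,2r}(z)$ disconnecting its inner and outer boundaries. However, your blocking mechanism contains a genuine error of understanding. You assert that, choosing $\SCL_j^\ast$ with $\wp(\SCL_j^\ast) = 1$, one has ``$\SCL_j^\ast \subset X(\SCL_j^\ast) \subset \Upsilon^\dag$, so $\SCL_j^\ast$ is almost surely disjoint from $\Upsilon$.'' This implication is false: $\Upsilon$ and $\Upsilon^\dag$ are \emph{not} disjoint subsets of $\CC$. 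Each $X(\SCL)$ is a closed set whose boundary consists of $\SCL$ itself together with the immediate children of $\SCL$, so every loop $\SCL' \in \Gamma$ lies simultaneously in $X(\SCL')$ and in $X(\mathrm{parent}(\SCL'))$, which have opposite parities; hence $\SCL' \subset \Upsilon \cap \Upsilon^\dag$ for \emph{every} $\SCL' \in \Gamma$, regardless of $\wp(\SCL')$.

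The conclusion you want — that no path in $\Upsilon$ starting outside $Q_j'$ can enter $Q_j$ — is nonetheless true, but for a different and simpler reason which makes your entire two-loop/parity mechanism unnecessary. Each connected component of $\Upsilon$ is a single set $X(\SCL)$ with $\wp(\SCL) = 0$, and the closed set $X(\SCL)$ lies entirely on one side of \emph{any} loop of $\Gamma$: it is contained in $\mathop{\mathrm{int}}(\SCL^\ast)$ when $\SCL^\ast$ is $\SCL$ itself or an ancestor, and is disjoint from $\mathop{\mathrm{int}}(\SCL^\ast)$ when $\SCL^\ast$ is a child, a deeper descendant, or an unrelated loop. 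A path lying in one $X(\SCL)$ therefore cannot cross any loop of $\Gamma$, and in particular a single loop of $\Gamma$ surrounding $Q_j$ and contained in $Q_j'$ already blocks the path. Thus the manoeuvre of insisting on \emph{two} nested loops in $Q_j'$ so as to locate a $\wp = 1$ one — and the accompanying discussion of consecutiveness in the nested chain — add nothing and stem from the misconception above. The paper's event $E_r(z)$, requiring a single crossing loop of $\Gamma$, is both correct and weaker (hence has larger probability), and its blocking consequence follows directly from the observation just made, without any reference to $\wp$.
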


\begin{proof}
    For $z \in \CC$ and $r > 0$, write $E_r(z)$ for the event that there exists a loop of $\Gamma$ in $A_{r,2r}(z)$ disconnecting the inner and outer boundaries of $A_{r,2r}(z)$. By the scaling and translation invariance of the law of $\Gamma$, there exists $p \in (0, 1)$ such that
    \begin{equation*}
        \BP\lbrack E_r(z)\rbrack = p, \quad \forall z \in \CC, \ \forall r > 0. 
    \end{equation*}
    Since $\partial U$ is smooth, by the tubular neighborhood theorem (cf.~\cite[Theorem~6.24]{GTM218}), there exists $\varepsilon_\ast > 0$ such that the mapping $\partial U \times [0, \varepsilon_\ast) \to \CC \colon (z, \varepsilon) \mapsto z + \varepsilon n_z$ is a diffeomorphism --- where $n_z$ denotes the unit normal vector to $\partial U$ at $z$. In particular, for each $z \in \partial U$ and $\varepsilon \in (0, \varepsilon_\ast)$, we have $\partial B_\varepsilon(z) \cap \partial B_\varepsilon(U) \neq \emptyset$. Since $\partial U$ is smooth, there exists a constant $A > 0$ such that for each $\varepsilon > 0$, if we choose $z_1, \ldots, z_n \in \partial U$ in counterclockwise order such that $\len([z_j, z_{j + 1}]_{\partial U}^\circlearrowleft) = A\varepsilon$ for all $j \in [1, n - 1]_\ZZ$ and $A\varepsilon \le \len([z_n, z_1]_{\partial U}^\circlearrowleft) < 2A\varepsilon$, then $\lvert z_j - z_k\rvert \ge 5\varepsilon$ for all distinct $j, k \in [1, n]_\ZZ$. Thus, by \Cref{265}, there exists $\alpha > 0$ and $C > 0$ such that for each connected arc $I \subset \partial U$ of Euclidean length $\varepsilon^\nu/4$, we have
    \begin{equation}\label{eq:345}
        \BP\lbrack\text{there exists } j \in [1, n]_\ZZ \text{ such that } z_j \in I \text{ and } E_\varepsilon(z_j) \text{ occurs}\rbrack \ge 1 - C\exp(-\alpha\varepsilon^{\nu - 1}). 
    \end{equation}
    Since there exist $\lceil4\varepsilon^{-\nu}\len(\partial U)\rceil$ connected arcs of $\partial U$ of Euclidean length $\varepsilon^\nu/4$ such that every connected arc of $\partial U$ of Euclidean length $\varepsilon^\nu/2$ contains at least one of them, by \eqref{eq:345} and a union bound, it holds with superpolynomially high probability as $\varepsilon \to 0$ that for each connected arc $I \subset \partial U$ of Euclidean length $\varepsilon^\nu/2$, there exists $j \in [1, n]_\ZZ$ such that $z_j \in I$ and $E_\varepsilon(z_j)$ occurs. Since $\partial B_\varepsilon(z) \cap \partial B_\varepsilon(U) \neq \emptyset$ by the above discussion, it follows that the loop described in the event $E_\varepsilon(z_j)$ must cross between the inner and outer boundaries of $B_\varepsilon(U) \setminus U$. This completes the proof. 
\end{proof}

\begin{lemma}\label{323}
    There exists a deterministic constant $\zeta \in (0, 1)$ such that the following is true: Let $U \subset \CC$ be a bounded simply connected domain with smooth boundary. Let $\nu \in (0, 1)$ and $\delta > 0$. Then it holds with probability tending to one as $\varepsilon \to 0$ that there exists a subset $Z \subset \left(\frac1{100}\varepsilon^\nu\ZZ\right)^2$ with $\#Z \le \varepsilon^{-\zeta\nu}$ such that for each connected component $X \subset \Upsilon$ that has Euclidean diameter at least $\delta$ and each connected component $\SCC$ of $(B_\varepsilon(U) \setminus U) \cap X$, there exists $z \in Z$ such that $\SCC \subset B_{\varepsilon^\nu}(z)$. 
\end{lemma}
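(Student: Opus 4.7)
The plan is to combine the diameter bound of \Cref{322} with a $\CLE_\kappa$ one-arm estimate. Intuitively, \Cref{322} says each component $\SCC$ of $(B_\varepsilon(U) \setminus U) \cap \Upsilon$ is tiny and can be covered by a single ball of radius $\varepsilon^\nu$ centered at a nearby grid point; the restriction to components lying in a macroscopic cluster $X$ (with $\diam(X) \ge \delta$) translates, via the nested structure of $\CLE_\kappa$, into the constraint that the covering ball cannot be surrounded by any loop of $\Gamma$ of diameter less than $\delta$, which is a rare event.

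First I would apply \Cref{322} with some exponent $\nu^\prime \in (\nu, 1)$, e.g.\ $\nu^\prime \defeq (1 + \nu)/2$. On the resulting superpolynomially likely event, every such component $\SCC$ satisfies $\diam(\SCC) \le \varepsilon^{\nu^\prime} \le \varepsilon^\nu/100$ for $\varepsilon$ sufficiently small. For each relevant $\SCC$ (belonging to a cluster $X$ of $\Upsilon$ with $\diam(X) \ge \delta$), fix any canonical $p_\SCC \in \SCC$ and let $z_\SCC$ be the nearest point of $\left(\frac1{100}\varepsilon^\nu\ZZ\right)^2$ to $p_\SCC$; then $\SCC \subset B_{\varepsilon^\nu}(z_\SCC)$. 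Define $Z$ as the collection of these $z_\SCC$'s. Every $z \in Z$ lies within distance $\varepsilon + \varepsilon^\nu/100 \le \varepsilon^\nu$ of $\partial U$, so by the smoothness of $\partial U$ the number of candidate grid points is at most $C_U \varepsilon^{-\nu}$ for a constant $C_U > 0$ depending only on $\mathrm{length}(\partial U)$.

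The heart of the argument is the pointwise bound $\BP\lbrack z \in Z\rbrack \le C\varepsilon^{\alpha\nu}$, uniform in $z$, for some $\alpha = \alpha(\kappa) > 0$. For this I would use the following containment: if $\SCL^\prime \in \Gamma$ is a loop with $\diam(\SCL^\prime) < \delta$ surrounding $B_{\varepsilon^\nu}(z)$, then for every $p \in B_{\varepsilon^\nu}(z) \cap \Upsilon$, the unique $\SCL \in \Gamma$ with $p \in X(\SCL)$ either equals $\SCL^\prime$ or is strictly contained inside $\SCL^\prime$, so $\diam(X(\SCL)) = \diam(\SCL) \le \diam(\SCL^\prime) < \delta$. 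Since for $\kappa < 4$ the loops of $\Gamma$ are pairwise disjoint simple curves, the connected components of $\Upsilon$ are exactly the individual sets $X(\SCL)$ with $\wp(\SCL) = 0$, so this forces the cluster through $p$ to have diameter less than $\delta$, contradicting $z \in Z$. Hence $\{z \in Z\}$ is contained in the event that no loop of $\Gamma$ of diameter less than $\delta$ surrounds $B_{\varepsilon^\nu}(z)$.

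This last event is estimated using \Cref{263} applied with $r_j \defeq 2^{-j}\delta/4$ and $E_j \defeq \{\text{some } \SCL \in \Gamma \text{ contained in } A_{r_{j + 1}, r_j}(z) \text{ surrounds } z\}$, for $j = 0, 1, \dots, N - 1$ where $N$ is the largest integer with $r_N \ge \varepsilon^\nu$. The scale and translation invariance of whole-plane $\CLE_\kappa$ give $\BP\lbrack E_j\rbrack = p_0$ for a constant $p_0 = p_0(\kappa) > 0$, and the occurrence of any $E_j$ produces a loop of diameter at most $2r_j \le \delta/2 < \delta$ surrounding $B_{r_{j + 1}}(z) \supset B_{\varepsilon^\nu}(z)$. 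Since $N \asymp \nu\log(1/\varepsilon)$, \Cref{263} yields the pointwise bound above for some $\alpha = \alpha(\kappa) > 0$. Summing over the $O(\varepsilon^{-\nu})$ candidates and applying Markov's inequality with $\zeta \defeq 1 - \alpha/2 \in (0, 1)$ then yields $\BP\lbrack \#Z \ge \varepsilon^{-\zeta\nu}\rbrack \le C_U \varepsilon^{\alpha\nu/2} \to 0$. The only delicate step is the containment argument in the third paragraph, which crucially uses that the components of $\Upsilon$ coincide with individual sets $X(\SCL)$ so that the diameter of a cluster is controlled by a single surrounding loop; all other steps are routine.
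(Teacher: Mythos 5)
Your proof is correct and follows essentially the same route as the paper's: cover $B_\varepsilon(U)\setminus U$ by $O(\varepsilon^{-\nu})$ grid balls, show via \Cref{322} that each relevant component fits in one such ball, observe that a ball contributing to $Z$ forces the absence of any surrounding $\Gamma$-loop of diameter $< \delta$ across $\asymp\nu\log(1/\varepsilon)$ dyadic scales, bound this by $C\varepsilon^{\alpha\nu}$ using independence across concentric annuli, and finish with a first-moment/Markov bound. One small bookkeeping point: since $\BP[E_j]$ is a fixed $\kappa$-dependent constant that need not exceed the threshold $p$ produced by \Cref{263}, the citation should really be to \Cref{273} (which takes an arbitrary $p\in(0,1)$ as input and returns $\alpha$ and $b$), or else you should widen the annuli to push the one-arm probability above the threshold; this is exactly the choice made in the paper's proof.
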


\begin{proof}
    Since the boundary $\partial U$ is bounded and smooth, for each $\varepsilon > 0$, there exists a deterministic subset $S \subset \left(\frac1{100}\varepsilon^\nu\ZZ\right)^2$ with $\#S = O(\varepsilon^{-\nu})$ as $\varepsilon \to 0$ such that
    \begin{equation}\label{eq:344}
        B_\varepsilon(U) \setminus U \subset \bigcup_{z \in S} B_{\varepsilon^\nu/2}(z). 
    \end{equation}
    For $z \in \CC$ and $r > 0$, write $E_r(z)$ for the event that there exists a loop of $\Gamma$ in $A_{r,2r}(z)$ disconnecting the inner and outer boundaries of $A_{r,2r}(z)$. By the scaling and translation invariance of the law of $\Gamma$, there exists $p \in (0, 1)$ such that
    \begin{equation*}
        \BP\lbrack E_r(z)\rbrack = p, \quad \forall z \in \CC, \ \forall r > 0. 
    \end{equation*}
    Thus, by \Cref{273}, there exists $\alpha > 0$ and $C > 0$ such that
    \begin{multline}\label{eq:343}
        \BP\!\left\lbrack\text{there exists } r \in (\varepsilon^\nu, \delta/2) \cap \{2^k\}_{k \in \ZZ} \text{ such that } E_r(z) \text{ occurs}\right\rbrack \ge 1 - C\varepsilon^{\alpha\nu}, \\
        \forall \varepsilon > 0, \ \forall z \in \CC. 
    \end{multline}
    Write 
    \begin{equation*}
        Z \defeq \left\{z \in S : \text{there does not exist } r \in (\varepsilon^\nu, \delta/2) \cap \{2^k\}_{k \in \ZZ} \text{ such that } E_r(z) \text{ occurs}\right\}. 
    \end{equation*}
    Then, by \eqref{eq:343}, we have $\BE\lbrack\#Z\rbrack \le C\varepsilon^{\alpha\nu}\#S = O(\varepsilon^{(\alpha - 1)\nu})$ as $\varepsilon \to 0$. (In particular, we must have $\alpha \in (0, 1)$.) Choose $\zeta \in (1 - \alpha, 1)$. Then it follows from Markov's inequality that $\BP\lbrack\#Z \le \varepsilon^{-\zeta\nu}\rbrack \to 1$ as $\varepsilon \to 0$. On the other hand, by \Cref{322}, it holds with probability tending to one as $\varepsilon \to 0$ that every connected component of $(B_\varepsilon(U) \setminus U) \cap \Upsilon$ has Euclidean diameter at most $\varepsilon^\nu/2$. Henceforth assume that $\#Z \le \varepsilon^{-\zeta\nu}$ and every connected component of $(B_\varepsilon(U) \setminus U) \cap \Upsilon$ has Euclidean diameter at most $\varepsilon^\nu/2$. It suffices to verify that for each connected component $X \subset \Upsilon$ that has Euclidean diameter at least $\delta$ and each connected component of $\SCC \subset (B_\varepsilon(U) \setminus U) \cap X$, there exists $z \in Z$ such that $\SCC \subset B_{\varepsilon^\nu}(z)$. By \eqref{eq:344}, we may choose $z \in S$ such that $\dist(\SCC, z) \le \varepsilon^\nu/2$. Since the Euclidean diameter of $\SCC$ is at most $\varepsilon^\nu/2$, it follows that $\SCC \subset B_{\varepsilon^\nu}(z)$. Since $\SCC$ is contained in a connected component of $\Upsilon$ that has Euclidean diameter at least $\delta$, it follows that $E_r(z)$ cannot occur for all $r \in (\varepsilon^\nu, \delta/2)$, hence that $z \in Z$. This completes the proof. 
\end{proof}

\section{Local metrics}\label{s:05}

In the present section, we consider local metrics, which are, a priori, even weaker metrics than the weak geodesic $\CLE_\kappa$ carpet metric. We first prove that any two conditionally independent local metrics are bi-Lipschitz equivalent (\Cref{045}). We then prove that any local metric is almost surely determined by the $\CLE_\kappa$ carpet (\Cref{006}). \Cref{006} will be applied in \Cref{s:01} to prove that any subsequential limit is a weak geodesic $\CLE_\kappa$ carpet metric. The argument of the present section is similar to the argument of \cite{LocMetGFF}.

\begin{definition}\label{048}
    Let $(\Upsilon, D_\Upsilon)$ be a coupling between $\Upsilon$ and a metric $D_\Upsilon$ on $\Upsilon$. Then we shall refer to $D_\Upsilon$ as a \emph{local} metric for $\Upsilon$ if Axioms~\eqref{010A}, \eqref{010C}, \eqref{010D}, \eqref{010E} of \Cref{010}, together with the following condition, are satisfied:
    \begin{enumerate}[label=(\Roman*'), ref=\Roman*']
        \setcounter{enumi}1
        \item\label{048B} Let $U \subset \CC$ be a deterministic open subset. Then $D_\Upsilon(\bullet, \bullet; U \cap \Upsilon)$ is conditionally independent of the pair $\left((\CC \setminus \overline U) \cap \Upsilon, D_\Upsilon(\bullet, \bullet; (\CC \setminus \overline U) \cap \Upsilon)\right)$ given $U \cap \Upsilon$.
    \end{enumerate}
\end{definition}

\subsection{Bi-Lipschitz equivalence}

\begin{proposition}\label{045}
    Let $D_\Upsilon$ and $\widetilde D_\Upsilon$ be local metrics for $\Upsilon$ with the same scaling constants $\{\kc_r\}_{r > 0}$. Suppose that $D_\Upsilon$ and $\widetilde D_\Upsilon$ are conditionally independent given $\Upsilon$. Then there is a deterministic constant $M \ge 1$ such that $M^{-1}D_\Upsilon \le \widetilde D_\Upsilon \le MD_\Upsilon$ almost surely. 
\end{proposition}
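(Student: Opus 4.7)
The plan is to adapt the bi-Lipschitz equivalence argument of Gwynne and Miller for local LQG metrics to the CLE carpet setting. I define the random optimal Lipschitz constants
$$c^+ := \inf\{c \ge 0 : \widetilde D_\Upsilon(x,y) \le c\, D_\Upsilon(x,y) \text{ for all } x, y \in \Upsilon \text{ with } x \leftrightarrow y\}$$
and $c^- := \sup\{c \ge 0 : \widetilde D_\Upsilon \ge c\, D_\Upsilon\}$ analogously, so that $c^- D_\Upsilon \le \widetilde D_\Upsilon \le c^+ D_\Upsilon$ almost surely. I then need to show (a) $0 < c^- \le c^+ < \infty$ a.s., (b) both $c^\pm$ are deterministic, and (c) $c^- = c^+$.

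For (a), the hypothesis that $D_\Upsilon$ and $\widetilde D_\Upsilon$ share the same scaling constants $\{\kc_r\}_{r > 0}$ is essential: the Hölder upper bound of \Cref{019} applied to $\widetilde D_\Upsilon$ combined with the Hölder lower bound of \Cref{314} applied to $D_\Upsilon$ pins the ratio $\widetilde D_\Upsilon / D_\Upsilon$ into a bounded range uniformly over any deterministic compact region, with a union bound over a net of scales yielding a.s.~finiteness of $c^+$ and positivity of $c^-$. For (b), translation invariance of both metrics (Axiom~\eqref{010C}) together with translation invariance of $\Upsilon$ and the spatial mixing of whole-plane $\CLE_\kappa$ (a consequence of the Brownian loop-soup description) make the events $\{c^\pm \le c\}$ invariant under a measure-preserving, ergodic family of translations, and hence of probability $0$ or $1$ by the Kolmogorov-type $0$-$1$ law.

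The main obstacle is (c). Suppose for contradiction that $c^- < c^+$, fix a small $\eta > 0$, and let $E_r(z)$ be the event that some pair of $\Upsilon$-connected points inside $B_r(z)$ realizes the ratio $\widetilde D_\Upsilon / D_\Upsilon \le c^- + \eta$. By (b), translation invariance, and the common scaling constants, $\BP[E_r(z)] \ge p(\eta) > 0$ uniformly in $z, r$. The two hypotheses become crucial here: $(D_\Upsilon, \widetilde D_\Upsilon)$ is conditionally independent given $\Upsilon$, and each metric is locally conditionally independent across disjoint regions by Axiom~\eqref{048B}. This lets me apply \Cref{263} (together with \Cref{266} to pass from $\Gamma$ to $\Upsilon$) to a cascade of disjoint annuli around each point: with overwhelming probability many of these annuli host efficient sub-balls on which $E_{r_j}(z_j)$ holds. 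Using \Cref{214} and \Cref{229} to reroute any $D_\Upsilon$-geodesic through such an efficient sub-ball—incurring negligible extra $D_\Upsilon$-length by tightness, while the $\widetilde D_\Upsilon$-cost through the efficient ball is at most $(c^- + \eta)$ times the $D_\Upsilon$-cost—I obtain $\widetilde D_\Upsilon(x,y) \le (c^- + O(\eta)) D_\Upsilon(x,y)$ for a class of pairs $x, y$ rich enough to determine $c^+$. Sending $\eta \to 0$ forces $c^+ \le c^-$, the desired contradiction.

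The hardest technical step will be making this rerouting precise: one must prove that $D_\Upsilon$-geodesics reliably come close to an efficient region at some scale without making the detour costly on either side. I expect to carry this out by chaining the annular independence of \Cref{263}/\Cref{266} at a geometric sequence of scales with the internal diameter bounds of \Cref{214} and \Cref{229}, together with a confluence-type argument showing that every $D_\Upsilon$-geodesic between two points sufficiently far apart enters one of the efficient sub-balls. A careful choice of the scale ratios (sufficiently small) should make all probabilistic and metric error terms summable.
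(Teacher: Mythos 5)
Your plan attempts to prove a substantially stronger statement than what \Cref{045} asks for, and step (a) contains a genuine gap that would stop the argument before it starts.

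First, the statement only requires the \emph{existence} of a deterministic $M$ with $M^{-1}D_\Upsilon \le \widetilde D_\Upsilon \le M D_\Upsilon$ a.s.; it does not require that the optimal Lipschitz constants $c^-$ and $c^+$ coincide. Proving $c^- = c^+$ is essentially the content of \Cref{000} --- the main uniqueness theorem --- which the paper establishes through the elaborate machinery of Sections~\ref{s:03}--\ref{s:02}. Your step (c) is therefore an unnecessary detour; you only need the bi-Lipschitz bound, which is what the chaining ideas you describe in (c) already give without any $c^- = c^+$ argument.

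Second and more importantly, step (a) is flawed. You propose to bound the ratio $\widetilde D_\Upsilon/D_\Upsilon$ by combining the H\"older upper bound of \Cref{019} (exponent $\chi_1 \in (0, 1 - 2/\alpha_{\mathrm{4A}}) \subset (0,1)$) for $\widetilde D_\Upsilon$ with the H\"older lower bound of \Cref{314} (exponent $\chi_2 > 2$) for $D_\Upsilon$. Since $\chi_1 < \chi_2$, this pair of bounds only yields $\widetilde D_\Upsilon(x,y)/D_\Upsilon(x,y) \lesssim |x-y|^{\chi_1 - \chi_2}$, which blows up as $|x-y| \to 0$ and therefore does not bound $c^+$. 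The H\"older exponents in the two directions are necessarily mismatched; no union bound over scales will close this gap. The correct way to control the ratio is exactly the chaining argument you gesture at in step (c), but used directly and not as a secondary optimization: cover a $D_\Upsilon$-geodesic $P$ from $x$ to $y$ by balls $B_{r_j}(z_j)$ chosen (via \Cref{002} --- the version of \Cref{263} that exploits Axiom~\eqref{048B} and conditional independence of the two metrics --- together with the diameter bound \Cref{011}) so that on each ball one simultaneously has the lower bound $D_\Upsilon(\partial B_{r_j/2}(z_j)\cap\Upsilon, \partial B_{r_j}(z_j)\cap\Upsilon) \ge a\kc_{r_j}$ and the upper bound $\widetilde D_\Upsilon(u,v) \le A\kc_{r_j}$ for $u, v$ in the good ball. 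Because both bounds are of order $\kc_{r_j}$ with the \emph{same} scale factor, the chaining
\begin{equation*}
\widetilde D_\Upsilon(x,y) \le \sum_j \widetilde D_\Upsilon(P(t_{j-1}), P(t_j)) \le \sum_j A\kc_{r_j} \le \frac{A}{a}\sum_j D_\Upsilon(P(t_{j-1}), P(t_j)) + A\kK\varepsilon\kc_1
\end{equation*}
produces a deterministic bi-Lipschitz constant $A/a$ directly, after letting $\varepsilon \to 0$. This makes your step (b) (the $0$-$1$ law) unnecessary as well: the constant is deterministic by construction. You should restructure the proof around this single chaining argument rather than the three-step plan.
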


\begin{lemma}\label{002}
    Let $D_\Upsilon$ and $\widetilde D_\Upsilon$ be as in \Cref{045}. Then for each $\lambda \in (0, 1)$, $\alpha > 0$, and $b \in (0, 1)$, there exists $p = p(\lambda, \alpha, b) \in (0, 1)$ and $C = C(\lambda, \alpha, b) > 0$ such that the following is true: Let $z \in \CC$. Let $\{r_j\}_{j \in \NN}$ be a decreasing sequence of positive real numbers such that $r_{j + 1}/r_j \le \lambda$ for all $j \in \NN$. Let $\{E_j\}_{j \in \NN}$ be a sequence of events such that 
    \begin{equation*}
        E_j \in \sigma\left(A_{r_{j + 1},r_j}(z) \cap \Upsilon, \ D_\Upsilon(\bullet, \bullet; A_{r_{j + 1},r_j}(z) \cap \Upsilon), \ \widetilde D_\Upsilon(\bullet, \bullet; A_{r_{j + 1},r_j}(z) \cap \Upsilon)\right) \quad \text{and} \quad \BP\lbrack E_j\rbrack \ge p. 
    \end{equation*}
    for all $j \in \NN$. 
    Then 
    \begin{equation*}
        \BP\lbrack\#\left\{j \in [1, n]_\ZZ : E_j \text{ occurs}\right\} \ge bn\rbrack \ge 1 - C\re^{-\alpha n}, \quad \forall n \in \NN. 
    \end{equation*}
\end{lemma}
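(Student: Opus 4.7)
My plan is to reduce \Cref{002} to \Cref{273} by exploiting the fact that, conditionally on $\Upsilon$, the pair of internal metrics restricted to a deterministic disjoint collection of annuli becomes mutually independent; the near-independence of $\Upsilon$ itself across disjoint annuli then transfers to the enlarged $\sigma$-algebras appearing here. Write $A_j \defeq A_{r_{j + 1}, r_j}(z)$ and set $p_j \defeq \BP\lbrack E_j \mid \Upsilon\rbrack$.

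The first main task is to show that $p_j$ is $\sigma(\Upsilon \cap A_j)$-measurable and that, conditionally on $\Upsilon$, the events $\{E_j\}_{j = 1}^n$ are mutually independent Bernoulli trials with success probabilities $p_j$. For each fixed $j$, apply Axiom~\eqref{048B} with $U = A_j$ to each of $D_\Upsilon$ and $\widetilde D_\Upsilon$ separately, then combine with the hypothesis that $D_\Upsilon$ and $\widetilde D_\Upsilon$ are conditionally independent given $\Upsilon$. A standard manipulation with Dawid's conditional-independence axioms — most importantly, that if $X$ is conditionally independent of $(Y, Z)$ given $W$ then $X$ is conditionally independent of $Z$ given $(W, Y)$ — upgrades conditioning on $\Upsilon \cap A_j$ to conditioning on all of $\Upsilon$. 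Iterating across $j$ yields joint conditional independence of the pairs $\left(D_\Upsilon(\bullet, \bullet; A_j \cap \Upsilon), \widetilde D_\Upsilon(\bullet, \bullet; A_j \cap \Upsilon)\right)$ for $j \in [1, n]_\ZZ$ given $\Upsilon$, which is exactly the required statement.

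The remaining task is a two-layer concentration argument. Fix $\delta \in (0, 1)$ close to one, to be chosen below, and set $F_j \defeq \{p_j \ge \delta\} \in \sigma(\Upsilon \cap A_j)$. Since $\BE\lbrack p_j\rbrack = \BP\lbrack E_j\rbrack \ge p$ and $p_j \le 1$, the reverse Markov inequality gives $\BP\lbrack F_j\rbrack \ge 1 - (1 - p)/(1 - \delta)$, which can be pushed as close to one as desired by taking $p$ close enough to one (depending on $\delta$). Choose $b^\prime \in (b, 1)$ with $b^\prime\delta > b$ together with some $\alpha^\prime > \alpha$. Applying \Cref{273} to the sequence $\{F_j\}$ with parameters $\lambda, \alpha^\prime, b^\prime$ yields that, with probability $1 - O(\re^{-\alpha^\prime n})$, at least $b^\prime n$ of the $F_j$ occur. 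On this event, conditionally on $\Upsilon$, the Bernoullis $\{\mathbf{1}_{E_j} : F_j \text{ holds}\}$ are independent with parameters at least $\delta$; a Chernoff-Hoeffding bound supplies an additional $1 - \re^{-c n}$ factor guaranteeing that at least $b n$ of them equal one. Unconditioning yields the claim.

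The main obstacle is the first task, namely upgrading the conditional independence provided by Axiom~\eqref{048B} — which gives CI only given the trace $\Upsilon \cap A_j$ — to joint conditional independence across all $n$ annuli given the full field $\Upsilon$. Getting a genuine product structure requires carefully chaining Dawid's axioms with the assumed independence of $D_\Upsilon$ and $\widetilde D_\Upsilon$ given $\Upsilon$, so that the later Chernoff step has an honest product measure to apply to. Once this structural statement is secured, the remaining concentration argument is essentially routine.
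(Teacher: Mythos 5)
Your two-layer scheme (collapse to $\Upsilon$-measurable events $F_j=\{\BP[E_j\mid\Upsilon]\ge\delta\}$, apply a decorrelation lemma to the $F_j$, then Chernoff the conditionally independent Bernoullis on the surviving scales) is a genuinely different route from the paper's. The paper's proof is a one-line appeal: it says to re-run the argument of \Cref{263}, with \Cref{048}, Axiom~(\ref{048B}) supplying the extra structure needed to control the metric data inside each annulus. Yours instead invokes the $\Upsilon$-level decorrelation as a black box and handles the metric randomness entirely through conditioning on $\Upsilon$, which is more modular and arguably cleaner. Both rest on the same structural fact, namely that given $\Upsilon$ the internal metrics on disjoint annuli are mutually independent. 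A few things you should tighten. First, you cite \Cref{273} but ask it to deliver prescribed constants $\alpha'$ and $b'$; that lemma runs in the wrong direction (it takes $\lambda,p$ and outputs some $\alpha,b$). You need \Cref{263} (in its $\Upsilon$-form, via \Cref{266}), which takes $\lambda,\alpha',b'$ and hands you a threshold $p'$ to impose on $\BP[F_j]$; this is what meshes with your reverse-Markov step. Second, the "first main task" really does need to be spelled out: the leave-one-out independence you want comes from Axiom~(\ref{048B}) plus weak union, but to convert it to a full product structure you must also use the fact that for open $A_k\subset V$ and a path $P$ in $A_k$, $\len(P;D_\Upsilon)=\len(P;D_\Upsilon(\bullet,\bullet;V\cap\Upsilon))$ (so that the internal metric on $A_k$ is a deterministic functional of the internal metric on the complement of $\overline{A_j}$), and you need the analogous Dawid chain to show $(D_\Upsilon(\bullet,\bullet;A_j\cap\Upsilon),\widetilde D_\Upsilon(\bullet,\bullet;A_j\cap\Upsilon))\perp\Upsilon_{\lvert\CC\setminus\overline{A_j}}\mid A_j\cap\Upsilon$, which is what makes $p_j$ a function of $A_j\cap\Upsilon$. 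These all go through with contraction and weak union as you anticipated, but they are the substance of the proof and should not be left as a remark.
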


\begin{proof}
    This follows from \Cref{048}, Axiom~\eqref{048B}, together with the same argument as the argument applied in the proof of \Cref{263}. 
\end{proof}

\begin{lemma}\label{011}
    Let $D_\Upsilon$ and $\widetilde D_\Upsilon$ be as in \Cref{045}. Then for each $\lambda \in (0, 1)$, $\alpha > 0$, and $b \in (0, 1)$ there exists $A = A(\lambda, \alpha, b) > 0$ and $C = C(\lambda, \alpha, b) > 0$ such that the following is true: Let $z \in \CC$. Let $\{r_j\}_{j \in \NN}$ be a decreasing sequence of positive real numbers such that $r_{j + 1}/r_j \le \lambda$ for all $j \in \NN$. Then for each $n \in \NN$, it holds with probability at least $1 - C\re^{-\alpha n}$ that there are at least $bn$ values of $j \in [1, n]_\ZZ$ for which we have
    \begin{equation*}
        D_\Upsilon\left(x, y; B_{2r_j}(z) \cap \Upsilon\right) \vee \widetilde D_\Upsilon\left(x, y; B_{2r_j}(z) \cap \Upsilon\right) \le A\kc_{r_j}, \quad \forall x, y \in B_{r_j}(z) \cap \Upsilon \text{ with } x \xleftrightarrow{B_{r_j}(z)} y. 
    \end{equation*}
\end{lemma}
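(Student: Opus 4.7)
The plan is to follow the proof of Lemma~\ref{214} essentially verbatim, but carried out jointly for both $D_\Upsilon$ and $\widetilde D_\Upsilon$. The feasibility of this comes from two key observations: (i) $D_\Upsilon$ and $\widetilde D_\Upsilon$ share the same scaling constants $\{\kc_r\}_{r>0}$, so the single-metric upper bounds of Lemma~\ref{229} and the single-metric concentration of Lemma~\ref{214} apply equally to each; and (ii) Lemma~\ref{002} is a joint analog of Lemma~\ref{263} that delivers near-independence across disjoint annuli for the \emph{joint} triple $(\Upsilon, D_\Upsilon, \widetilde D_\Upsilon)$.

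First, fix $\lambda \in [3/4, 1)$ (enlarging if necessary) and let $A > 0$ be a parameter to be chosen at the end. Set $r_{j,k} \defeq \lambda^{k-j} r_j$ with the convention $r_{j,j-1} \defeq \lambda^{-1} r_j$, $r_{j,j-2} \defeq \lambda^{-2} r_j$. For $j, k \in \NN$ with $j \le k$, define $F_{j,k}$ to be the intersection of the following three events:
\begin{enumerate}
    \item[(a)] For all $x, y \in \overline{A_{r_{j,k+2}, r_{j,k-1}}(z)} \cap \Upsilon$ with $x \xleftrightarrow{A_{r_{j,k+2}, r_{j,k-1}}(z)} y$, we have $D_\Upsilon(x, y; A_{r_{j,k+3}, r_{j,k-2}}(z) \cap \Upsilon) \le A\lambda^{(k-j)/2} \kc_{r_j}$.
    \item[(b)] The same bound with $\widetilde D_\Upsilon$ in place of $D_\Upsilon$.
    \item[(c)] There are at most $2A\lambda^{(j-k)/4}$ connected arcs of $\Gamma$ in $A_{r_{j,k+1}, r_{j,k}}(z)$ connecting its inner and outer boundaries.
\end{enumerate}

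Second, I bound $\BP[F_{j,k}]$. Since $D_\Upsilon$ and $\widetilde D_\Upsilon$ have the same scaling constants, Lemma~\ref{229} applied to each separately (together with \eqref{eq:340}) gives that both (a) and (b) fail with probability at most $C_q A^{-q} \kK^q \lambda^{(k-j)q/2}$ for any $q > 0$; Lemma~\ref{328} bounds (c) similarly. A union bound produces $\BP[F_{j,k}] \ge 1 - \widetilde C_q A^{-q} \lambda^{(k-j)q/4}$ for any $q > 0$. Moreover, by \Cref{048}, Axiom~\eqref{048B} applied to both $D_\Upsilon$ and $\widetilde D_\Upsilon$ (and their conditional independence given $\Upsilon$), the event $F_{j,k}$ is measurable with respect to
\begin{equation*}
    \sigma\bigl(A_{r_{j,k+3}, r_{j,k-2}}(z) \cap \Upsilon,\ D_\Upsilon(\cdot, \cdot; A_{r_{j,k+3}, r_{j,k-2}}(z) \cap \Upsilon),\ \widetilde D_\Upsilon(\cdot, \cdot; A_{r_{j,k+3}, r_{j,k-2}}(z) \cap \Upsilon)\bigr).
\end{equation*}

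Third, set $E_j \defeq \bigcap_{k \ge j} F_{j,k}$. Choosing $q$ large and then $A$ large makes $\BP[E_j]$ arbitrarily close to $1$, uniformly in $j$ and $z$. Since the annuli associated with $F_{j,k}$ overlap across neighboring scales, in order to invoke Lemma~\ref{002} on the sequence $\{E_j\}_{j \in \NN}$ I will split the index $k$ into finitely many residue classes modulo an integer depending only on $\lambda$ so as to obtain disjoint annular $\sigma$-algebras, and then combine via the pigeonhole/double-counting argument from \cite[Proposition~3.1]{ExUniCoCoGeoMetNonsimCLEGas}. This yields a constant $C > 0$ such that $\#\{j \in [1, n]_\ZZ : E_j \text{ occurs}\} \ge bn$ with probability at least $1 - Ce^{-\alpha n}$.

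Finally, on each $E_j$ the purely deterministic chaining argument of Lemma~\ref{214} (the construction of the times $t_0, t_1, \ldots, t_N$ along a path $P \subset B_{r_j}(z) \cap \Upsilon$, the reduction using connectivity inside $\overline{A_{r_{j,k+1}, r_{j,k}}(z)} \cap \Upsilon$, and summing the bound $A\lambda^{(k-j)/2}\kc_{r_j}$ over $k$) now applies in parallel to both $D_\Upsilon$ and $\widetilde D_\Upsilon$, because conditions (a), (b), (c) hold simultaneously with the same combinatorial structure. Hence we obtain a common bound $A^\prime \kc_{r_j}$ (with $A^\prime = A^\prime(\lambda, A)$) on the internal diameters of pairs $(x, y) \in B_{r_j}(z) \cap \Upsilon$ with $x \xleftrightarrow{B_{r_j}(z)} y$ for \emph{both} metrics, proving the lemma. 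The main technical point is the third step — ensuring that the overlap between the annuli controlling $F_{j,k}$ (for $k$ varying) does not destroy the near-independence — but this is a familiar issue handled exactly as in \cite{ExUniCoCoGeoMetNonsimCLEGas} by the residue-class decomposition.
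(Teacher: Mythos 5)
Your proposal is correct and takes essentially the same approach as the paper: the paper's proof is simply the single sentence that the argument of \Cref{214} applies verbatim, and you have filled in exactly what that entails — running the $F_{j,k}$-based chaining of \Cref{214} with both metrics in parallel (using that they share $\{\kc_r\}$ so \Cref{229} applies to each), noting that the arc-count condition and the crossing times $t_n$ are metric-independent so the deterministic chaining step is common to $D_\Upsilon$ and $\widetilde D_\Upsilon$, and substituting the joint near-independence input \Cref{002} for \Cref{263} at the step deferred to \cite[Proposition~3.1]{ExUniCoCoGeoMetNonsimCLEGas}.
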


\begin{proof}
    This follows from the same argument as the argument applied in the proof of \Cref{214}. 
\end{proof}

\begin{proof}[Proof of \Cref{045}]
    By symmetry, it suffices to consider the inequality ``$\widetilde D_\Upsilon \le MD_\Upsilon$''. 

    By \Cref{010}, Axiom~\eqref{010D} (tightness across scales), for each $p \in (0, 1)$, there exists $a = a(p) > 0$ such that for each $r > 0$, it holds with probability at least $p$ that
    \begin{equation*}
        D_\Upsilon(\partial B_{r/2}(0) \cap \Upsilon, \partial B_r(0) \cap \Upsilon) \ge a\kc_r. 
    \end{equation*}
    Thus, by \Cref{010}, Axiom~\eqref{010C} (translation invariance), \Cref{002,011}, and a union bound, there exists a sufficiently small $a > 0$ and a sufficiently large $A > 0$ such that it holds with polynomially high probability as $\varepsilon \to 0$ that for each $z \in \left(\frac1{100}\varepsilon^2\ZZ\right)^2 \cap B_{1/\varepsilon}(0)$, there exists $r \in [\varepsilon^2, \varepsilon]$ such that the following are true:
    \begin{enumerate}
        \item\label{it:301} $D_\Upsilon(\partial B_{r/2}(z) \cap \Upsilon, \partial B_r(z) \cap \Upsilon) \ge a\kc_r$. 
        \item\label{it:302} We have
        \begin{equation*}
            \widetilde D_\Upsilon(x, y) \le A\kc_r, \quad \forall x, y \in B_r(z) \cap \Upsilon \text{ with } x \xleftrightarrow{B_r(z)} y. 
        \end{equation*}
    \end{enumerate}
    Thus, by the Borel-Cantelli lemma, almost surely, there exists $\varepsilon_\ast \in (0, 1)$ such that the above event occurs for all $\varepsilon \in (0, \varepsilon_\ast] \cap \{2^k\}_{k \in \ZZ}$. 

    Let $x, y \in \Upsilon$ with $x \leftrightarrow y$. Let $P \colon [0, 1] \to \Upsilon$ be a $D_\Upsilon$-geodesic from $x$ to $y$. Choose a sufficiently small $\varepsilon \in (0, \varepsilon_\ast] \cap \{2^k\}_{k \in \ZZ}$ such that $P \subset B_{1/\varepsilon}(0)$. Set $t_0 \defeq 0$ and choose $z_1 \in \left(\frac1{100}\varepsilon^2\ZZ\right)^2 \cap B_{1/\varepsilon}(0)$ and $r_1 \in [\varepsilon^2, \varepsilon]$ such that $x \in B_{\varepsilon^2/2}(z_1)$ and conditions~\eqref{it:301} and \eqref{it:302} (with $(z_1, r_1)$ in place of $(z, r)$) hold. Then, inductively, for each $j \in \NN$, if $y \notin B_{r_j}(z_j)$, then set $t_j$ to be the first time after $t_{j - 1}$ at which $P$ hits $\partial B_{r_j}(z_j)$, and choose $z_{j + 1} \in \left(\frac1{100}\varepsilon^2\ZZ\right)^2 \cap B_{1/\varepsilon}(0)$ and $r_{j + 1} \in [\varepsilon^2, \varepsilon]$ such that $P(t_j) \in B_{\varepsilon^2/2}(z_{j + 1})$ and conditions~\eqref{it:301} and \eqref{it:302} (with $(z_{j + 1}, r_{j + 1})$ in place of $(z, r)$) hold; otherwise set $t_j \defeq 1$. Write $n \defeq \inf\{j \in \NN : t_j = 1\}$. Since $P(t_{j - 1}) \in B_{r_j/2}(z_j)$ and $P(t_j) \in \partial B_{r_j}(z_j)$ for all $j \in [1, n - 1]_\ZZ$, it follows from condition~\eqref{it:301} that
    \begin{equation}\label{eq:335}
        D_\Upsilon(P(t_{j - 1}), P(t_j)) \ge a\kc_{r_j}, \quad \forall j \in [1, n - 1]_\ZZ. 
    \end{equation}
    Since $P([t_{j - 1}, t_j]) \subset B_{r_j}(z_j)$ for all $j \in [1, n]_\ZZ$, it follows from condition~\eqref{it:302} that
    \begin{equation}\label{eq:336}
        \widetilde D_\Upsilon(P(t_{j - 1}), P(t_j)) \le A\kc_{r_j}, \quad \forall j \in [1, n]_\ZZ. 
    \end{equation}
    Combining \eqref{eq:340}, \eqref{eq:335}, and \eqref{eq:336}, we obtain that
    \begin{multline*}
        \widetilde D_\Upsilon(x, y) \le \sum_{j = 1}^n \widetilde D_\Upsilon(P(t_{j - 1}), P(t_j)) \le \sum_{j = 1}^n A\kc_{r_j} \le \frac Aa \sum_{j = 1}^{n - 1} D_\Upsilon(P(t_{j - 1}), P(t_j)) + A\kc_{r_n} \\
        \le \frac Aa D_\Upsilon(x, y) + A\kK\varepsilon\kc_1. 
    \end{multline*}
    Since $\varepsilon$ may be chosen to be arbitrarily small, we conclude that $\widetilde D_\Upsilon(x, y) \le (A/a)D_\Upsilon(x, y)$. This completes the proof. 
\end{proof}

\subsection{Measurability}

\begin{proposition}\label{006}
    Let $D_\Upsilon$ be a local metric for $\Upsilon$. Then $D_\Upsilon$ is almost surely determined by $\Upsilon$. In particular, $D$ determines a weak geodesic $\CLE_\kappa$ carpet metric. 
\end{proposition}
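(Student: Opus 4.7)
I plan to follow the strategy of \cite{LocMetGFF}, adapted to the CLE setting. The first step is to reduce the measurability claim to showing that two conditionally iid versions of $D_\Upsilon$ given $\Upsilon$ agree almost surely. Realize $D_\Upsilon$ as a measurable function of $(\Upsilon, Z)$ for some auxiliary source of randomness $Z$ independent of $\Upsilon$ (e.g., $Z$ uniform on $[0,1]$, via a Borel-isomorphism). Let $Z_1, Z_2$ be iid copies of $Z$ independent of $\Upsilon$, and let $D^1_\Upsilon, D^2_\Upsilon$ be the corresponding local metrics. They are conditionally iid given $\Upsilon$, and each satisfies the axioms of a local metric with the same scaling constants $\{\kc_r\}_{r > 0}$. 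If $D^1_\Upsilon = D^2_\Upsilon$ almost surely, then $D_\Upsilon$ does not genuinely depend on $Z$ and is $\sigma(\Upsilon)$-measurable. The ``in particular'' assertion then follows: once $D_\Upsilon$ is $\sigma(\Upsilon)$-measurable, the conditional independence axiom \eqref{048B} of \Cref{048} upgrades automatically to the deterministic locality axiom \eqref{010B} of \Cref{010}, so $D$ becomes a weak geodesic $\CLE_\kappa$ carpet metric.

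Apply \Cref{045} to the conditionally iid pair $(D^1_\Upsilon, D^2_\Upsilon)$ to obtain a smallest deterministic constant $M \geq 1$ such that $M^{-1} D^1_\Upsilon \leq D^2_\Upsilon \leq M D^1_\Upsilon$ almost surely. It remains to show that $M = 1$. Suppose, for contradiction, that $M > 1$.

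To derive a contradiction, I will construct an auxiliary local metric $D^\star_\Upsilon$ with the same conditional law as $D^1_\Upsilon$ given $\Upsilon$ but whose smallest bi-Lipschitz ratio against a further conditionally iid copy is strictly less than $M$. Concretely, introduce a third conditionally iid copy $D^3_\Upsilon$ of $D_\Upsilon$ given $\Upsilon$, together with independent fair Bernoulli variables $\{\xi_n\}_{n \in \NN}$ indexed by a deterministic dyadic tiling $\{C_n\}_{n \in \NN}$ of $\CC$ and independent of $(\Upsilon, Z_1, Z_2, Z_3)$. Define $D^\star_\Upsilon$ as the length metric on $\Upsilon$ obtained by using the internal metric of $D^{1 + \xi_n}_\Upsilon$ on each $C_n \cap \Upsilon$ and patching across the tiling. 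The locality axiom \eqref{048B} ensures that $D^\star_\Upsilon$ is itself a local metric with the same scaling constants, and the symmetry of the coin flips gives $D^\star_\Upsilon \stackrel{d}{=} D^1_\Upsilon$ conditionally on $\Upsilon$. Hence \Cref{045} applied to $(D^\star_\Upsilon, D^3_\Upsilon)$ must again produce the constant $M$. On the other hand, a quantitative cancellation/variance computation across the independent cells should show that the Lipschitz ratio between $D^\star_\Upsilon$ and $D^3_\Upsilon$ is strictly smaller than $M$ whenever $M > 1$, giving the desired contradiction.

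The main obstacle is twofold. First, one must verify that the interleaved construction $D^\star_\Upsilon$ genuinely inherits all the axioms of a local metric: the patched length metric must be geodesic, translation invariant, induce the Euclidean topology, and retain the correct upper and lower tightness bounds (cf.~\eqref{eq:058} and \Cref{309}). This requires a careful application of the locality axiom \eqref{048B} to reconcile the internal metrics of $D^1_\Upsilon$ and $D^2_\Upsilon$ across cell boundaries, using the independence structure built into \Cref{048}. Second, the cancellation computation must be made quantitative enough to ensure a strict improvement over $M$. The latter uses that $M > 1$ forces a positive conditional fluctuation of $D^2_\Upsilon / D^1_\Upsilon$ away from $1$, which is damped by averaging over many independent $\xi_n$; the concrete estimate should resemble a law-of-large-numbers bound combined with the bi-Lipschitz pinning $[M^{-1}, M]$.
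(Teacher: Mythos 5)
Your overall frame (reduce to showing that two conditionally i.i.d.\ copies $D^1_\Upsilon, D^2_\Upsilon$ given $\Upsilon$ agree a.s., and invoke \Cref{045} to get a deterministic bi-Lipschitz constant $M\ge 1$) matches the paper's setup, and the ``in particular'' reduction is fine. But the route you take to force $M=1$ diverges from the paper and has a genuine gap at its crux. The constant $M$ produced by \Cref{045} is a \emph{uniform} constant: it bounds $\sup_{x\leftrightarrow y}\widetilde D_\Upsilon(x,y)/D_\Upsilon(x,y)$ over \emph{all} pairs simultaneously, including pairs at arbitrarily small Euclidean scales. Your cancellation step averages the coin flips $\xi_n$ over cells of a \emph{fixed} mesh size, so the law-of-large-numbers effect only controls the ratio $D^\star_\Upsilon(x,y)/D^3_\Upsilon(x,y)$ for pairs $x,y$ whose geodesic crosses many cells. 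For pairs inside a single cell $C_n$ the patched metric is locally just $D^{1+\xi_n}_\Upsilon$, and nothing in the averaging prevents the worst-case ratio against $D^3_\Upsilon$ from still being $M$ there. So the claimed strict improvement of the \emph{optimal} bi-Lipschitz constant does not follow from the LLN heuristic; indeed, since $(D^\star_\Upsilon, D^3_\Upsilon)$ has the same conditional joint law as $(D^1_\Upsilon,D^2_\Upsilon)$ (which is what makes your contradiction scheme logically sound), the optimal constant is \emph{exactly} $M$, and any proof that it is $<M$ must in particular prove $M=1$ --- i.e.\ the step you have deferred is the entire theorem. Relatedly, any second-moment/variance computation needs a priori integrability of $D_\Upsilon(x,y)$ given $\Upsilon$, which you do not address (the paper supplies this via \Cref{046}, showing $D_\Upsilon(x,y)$ lies within a deterministic factor of $\BE[D_\Upsilon(x,y)\mid\Upsilon]$).

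For comparison, the paper avoids the uniform-constant issue entirely by working with a \emph{fixed} pair $x\leftrightarrow y$ chosen measurably from $\Upsilon$: it decomposes $D_\Upsilon(x,y)$ over a randomly translated grid (\Cref{041,042}), notes that the cell-internal metrics are conditionally independent given $\Upsilon$ and the offset (\Cref{043}, from Axiom~\eqref{048B}), and applies the Efron--Stein inequality (\Cref{044}) to get
$\Var[D_\Upsilon(x,y)\mid\Upsilon]\lesssim \BE[(\max_S \len(P\cap S;D_\Upsilon))^2\mid\Upsilon,O]^{1/2}\BE[D_\Upsilon(x,y)^2\mid\Upsilon]^{1/2}\to 0$
as the mesh shrinks, whence the conditional variance vanishes; determination of the full metric then follows from a countable dense family plus continuity. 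Your patching construction also silently relies on the analogues of \Cref{041,042,043} (to see that swapping cell-internal metrics for independent copies preserves the conditional law and reassembles into a bona fide local metric, one needs a random grid offset so geodesics spend zero time on cell boundaries). Once those ingredients are in place, the fixed-pair concentration you would need to make your cancellation quantitative is essentially the Efron--Stein bound itself, so I would recommend running that argument directly rather than through the patched metric and the contradiction with $M$.
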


The remainder of the present subsection is devoted to the proof of \Cref{006}.

\begin{lemma}\label{046}
    There is a deterministic constant $M \ge 1$ such that the following is true: Suppose that distinct points $x, y \in \Upsilon$ are chosen in a manner that is almost surely determined by $\Upsilon$ such that $x \leftrightarrow y$ almost surely. Then 
    \begin{equation*}
        M^{-1}\BE\lbrack D_\Upsilon(x, y) \mid \Upsilon\rbrack \le D_\Upsilon(x, y) \le M\BE\lbrack D_\Upsilon(x, y) \mid \Upsilon\rbrack \quad \text{almost surely}. 
    \end{equation*}
\end{lemma}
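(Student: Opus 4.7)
The plan is to compare $D_\Upsilon$ with a conditionally independent re-sample of itself. I would first enlarge the probability space to carry a second random variable $\widetilde D_\Upsilon$ such that, given $\Upsilon$, the pair $(D_\Upsilon, \widetilde D_\Upsilon)$ consists of two conditionally independent copies with the same regular conditional distribution. Such a coupling exists by a standard regular-conditional-probability construction on $\mathscr{HausUni}_4$, which is separable.

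I would then check that $\widetilde D_\Upsilon$ is again a local metric for $\Upsilon$ with the same scaling constants $\{\kc_r\}_{r > 0}$. Since $\widetilde D_\Upsilon$ has the same conditional (hence marginal) law as $D_\Upsilon$, it inherits Axioms \eqref{010A}, \eqref{010C}, \eqref{010D}, \eqref{010E}, as well as \eqref{048B}. Combined with the conditional independence of $D_\Upsilon$ and $\widetilde D_\Upsilon$ given $\Upsilon$, this places us in the hypothesis of \Cref{045} and yields a deterministic $M \ge 1$ with
\[
M^{-1} D_\Upsilon(x, y) \le \widetilde D_\Upsilon(x, y) \le M D_\Upsilon(x, y) \quad \text{almost surely}
\]
for every $x, y \in \Upsilon$ with $x \leftrightarrow y$.

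The conclusion then follows by conditioning. Applying the above inequality to the chosen pair $(x,y)$, which is $\sigma(\Upsilon)$-measurable, and conditioning on $\sigma(\Upsilon, D_\Upsilon)$, the conditional independence and matching conditional laws yield
\[
\BE[\widetilde D_\Upsilon(x, y) \mid \Upsilon, D_\Upsilon] = \BE[\widetilde D_\Upsilon(x, y) \mid \Upsilon] = \BE[D_\Upsilon(x, y) \mid \Upsilon].
\]
Substituting into the a.s.\ bi-Lipschitz bounds gives $\BE[D_\Upsilon(x, y) \mid \Upsilon] \le M D_\Upsilon(x, y)$ and $\BE[D_\Upsilon(x, y) \mid \Upsilon] \ge M^{-1} D_\Upsilon(x, y)$; together with the a.s.\ finiteness of $\BE[D_\Upsilon(x, y) \mid \Upsilon]$ (which follows from an upper bound such as \Cref{229}), this rearranges to the stated two-sided inequality.

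The one step that requires care is the joint locality input needed by \Cref{045} via \Cref{002}: one must verify that for every deterministic open $U \subset \CC$, the pair of restricted internal metrics on $U \cap \Upsilon$ is jointly conditionally independent of the corresponding pair on $(\CC \setminus \overline U) \cap \Upsilon$, given $U \cap \Upsilon$. This follows from Axiom \eqref{048B} applied to $D_\Upsilon$ together with the conditional-independence-given-$\Upsilon$ structure of the coupling, but the bookkeeping through the regular conditional kernel is the one point that must be done carefully; everything else is routine.
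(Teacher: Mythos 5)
Your proof is correct, and it rests on the same key input as the paper's: given $\Upsilon$, draw a conditionally independent copy $\widetilde D_\Upsilon$ and invoke \Cref{045} to get a deterministic bi-Lipschitz constant $M$. Where you diverge is in the final deduction. The paper introduces the conditional essential infimum $a(\Upsilon)$ and essential supremum $A(\Upsilon)$ of $D_\Upsilon(x,y)$ given $\Upsilon$, and uses the product structure of the conditional law of $(D_\Upsilon(x,y), \widetilde D_\Upsilon(x,y))$ to force $A(\Upsilon)/a(\Upsilon) \le M$, after which both $D_\Upsilon(x,y)$ and its conditional mean lie in $[a(\Upsilon), A(\Upsilon)]$. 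You instead condition the almost sure inequality $M^{-1}D_\Upsilon(x,y) \le \widetilde D_\Upsilon(x,y) \le M D_\Upsilon(x,y)$ on $\sigma(\Upsilon, D_\Upsilon)$ and use $\BE[\widetilde D_\Upsilon(x,y) \mid \Upsilon, D_\Upsilon] = \BE[\widetilde D_\Upsilon(x,y) \mid \Upsilon] = \BE[D_\Upsilon(x,y) \mid \Upsilon]$; this is shorter and gives the stated conclusion directly (and, as you note, the a.s.\ finiteness of the conditional expectation comes out of the upper bound itself, so the appeal to \Cref{229} is not even needed). The paper's route proves the slightly stronger fact that the conditional essential range of $D_\Upsilon(x,y)$ has ratio at most $M$, which is not needed for the lemma. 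The subtlety you flag about verifying the joint locality hypothesis of \Cref{002} for the pair $(D_\Upsilon, \widetilde D_\Upsilon)$ is real but is exactly the same issue the paper's proof implicitly relies on when it applies \Cref{045}, so you are not at a disadvantage there.
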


\begin{proof}
    Given $\Upsilon$, let $\widetilde D_\Upsilon$ be a conditionally independent copy of $D_\Upsilon$. Then, by \Cref{045}, there exists a deterministic constant $M \ge 1$ such that, almost surely, 
    \begin{equation}\label{eq:020}
        M^{-1}D_\Upsilon(u, v) \le \widetilde D_\Upsilon(u, v) \le MD_\Upsilon(u, v), \quad \forall u, v \in \Upsilon \text{ with } u \leftrightarrow v. 
    \end{equation}
    Write 
    \begin{equation*}
        a(\Upsilon) \defeq \inf\{t > 0 : \BP\lbrack D_\Upsilon(x, y) \le t \mid \Upsilon\rbrack > 0\}; \quad A(\Upsilon) \defeq \sup\{t > 0 : \BP\lbrack D_\Upsilon(x, y) \ge t \mid \Upsilon\rbrack > 0\}.
    \end{equation*}
    Then $a(\Upsilon)$ and $A(\Upsilon)$ are almost surely determined by $\Upsilon$, and 
    \begin{equation*}
        D_\Upsilon(x, y), \BE\lbrack D_\Upsilon(x, y) \mid \Upsilon\rbrack \in [a(\Upsilon), A(\Upsilon)] \quad \text{almost surely}. 
    \end{equation*}
    Thus, it suffices to show that $A(\Upsilon) / a(\Upsilon) \le M$ almost surely. (In particular, $a(\Upsilon) > 0$ and $A(\Upsilon) < \infty$ almost surely.) Choose $\lambda \in (0, A(\Upsilon) / a(\Upsilon))$, in a manner that is almost surely determined by $\Upsilon$. Then there exists $a^\prime > a(\Upsilon)$ and $A^\prime < A(\Upsilon)$ such that $A^\prime / a^\prime \ge \lambda$. Then
    \begin{multline}\label{eq:337}
        \BP\!\left\lbrack\widetilde D_\Upsilon(x, y) / D_\Upsilon(x, y) \ge \lambda \ \middle\vert \ \Upsilon\right\rbrack \ge \BP\!\left\lbrack\widetilde D_\Upsilon(x, y) \ge A^\prime , \ D_\Upsilon(x, y) \le a^\prime \ \middle\vert \ \Upsilon\right\rbrack \\
        = \BP\!\left\lbrack\widetilde D_\Upsilon(x, y) \ge A^\prime \ \middle\vert \ \Upsilon\right\rbrack \BP\lbrack D_\Upsilon(x, y) \le a^\prime \mid \Upsilon\rbrack > 0. 
    \end{multline}
    Combining \eqref{eq:020} and \eqref{eq:337}, we obtain that $\lambda \le M$ almost surely. By letting $\lambda \to A(\Upsilon) / a(\Upsilon)$, we conclude that $A(\Upsilon) / a(\Upsilon) \le M$ almost surely. This completes the proof. 
\end{proof}

\begin{lemma}\label{047}
    There is a deterministic constant $M \ge 1$ such that the following is true: Suppose that $D_\Upsilon$ and $\widetilde D_\Upsilon$ have the same conditional law given $\Upsilon$. ($D_\Upsilon$ and $\widetilde D_\Upsilon$ are not necessarily conditionally independent given $\Upsilon$.) Then $M^{-1}D_\Upsilon \le \widetilde D_\Upsilon \le MD_\Upsilon$ almost surely.
\end{lemma}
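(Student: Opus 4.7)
The plan is to combine \Cref{046} with the hypothesis that $D_\Upsilon$ and $\widetilde D_\Upsilon$ have the same conditional law given $\Upsilon$ to obtain the bi-Lipschitz inequality pointwise for any measurable pair $(x, y)$, and then to upgrade this to a simultaneous statement via a countable measurable dense subset of each connected component of $\Upsilon$ together with the continuity of each metric.

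First, I would fix any measurable choice $(x, y) = (x(\Upsilon), y(\Upsilon))$ with $x \leftrightarrow y$ almost surely. Since same conditional law given $\Upsilon$ entails same joint law with $\Upsilon$, the metric $\widetilde D_\Upsilon$ is itself a local metric in the sense of \Cref{048}, and so \Cref{046} (which was obtained from \Cref{045} by coupling a local metric with a conditionally independent copy of itself) applies to both $D_\Upsilon$ and $\widetilde D_\Upsilon$ with a common constant $M_0 \ge 1$. Hence
\[
    M_0^{-1}\BE[D_\Upsilon(x, y) \mid \Upsilon] \le D_\Upsilon(x, y) \le M_0 \BE[D_\Upsilon(x, y) \mid \Upsilon] \quad \text{a.s.,}
\]
and the identical inequality holds with $\widetilde D_\Upsilon$ in place of $D_\Upsilon$. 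The same-conditional-law hypothesis gives $\BE[D_\Upsilon(x, y) \mid \Upsilon] = \BE[\widetilde D_\Upsilon(x, y) \mid \Upsilon]$, and combining the two bounds yields
\[
    M_0^{-2} D_\Upsilon(x, y) \le \widetilde D_\Upsilon(x, y) \le M_0^{2} D_\Upsilon(x, y) \quad \text{a.s.}
\]

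To pass from a single pair to all pairs, I would produce a countable measurable dense subset of every connected component of $\Upsilon$. Measurably enumerate the loops $\SCL \in \Gamma$ with $\wp(\SCL) = 0$ as $\SCL_1, \SCL_2, \ldots$ (say in decreasing order of Euclidean diameter, with a lexicographic tiebreaker), and for each $k \in \NN$ and each dyadic square $S \in \mathscr{DySq}$ measurably select one point $x_{k, S} \in X(\SCL_k) \cap S$ whenever this intersection is nonempty (via the Kuratowski--Ryll-Nardzewski measurable selection theorem). Reindexing yields a countable family $\{x_{k, j}\}_{j \in \NN}$ which is dense in $X(\SCL_k)$ and is a measurable function of $\Upsilon$. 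Applying the previous display to every pair $(x_{k, j}, x_{k, j^\prime})$ and intersecting the (countably many) full-probability events gives, outside a single null set, the bi-Lipschitz inequality simultaneously for all such pairs in every $X(\SCL_k)$.

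Finally, for arbitrary $x, y \in \Upsilon$: if $x \not\leftrightarrow y$, both distances equal $\infty$ and the inequality is vacuous; otherwise $x, y \in X(\SCL_k)$ for some $k$, and there exist sequences $x_{k, j_n} \to x$ and $x_{k, j_n^\prime} \to y$ in the Euclidean topology. By \Cref{010}, Axiom~\eqref{010A}, both $D_\Upsilon$ and $\widetilde D_\Upsilon$ induce the Euclidean topology on $X(\SCL_k)$, hence are jointly continuous there with respect to the Euclidean topology. Passing to the limit in $M_0^{-2} D_\Upsilon(x_{k, j_n}, x_{k, j_n^\prime}) \le \widetilde D_\Upsilon(x_{k, j_n}, x_{k, j_n^\prime}) \le M_0^{2} D_\Upsilon(x_{k, j_n}, x_{k, j_n^\prime})$ yields the desired bound with $M \defeq M_0^2$. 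The main subtlety is ensuring that the approximating sequences stay within the same connected component as $(x, y)$, which would fail for the naive strategy of taking closest points in $\Upsilon$ to deterministic rational centers (since nearby components $X(\SCL^\prime)$ accumulate at every carpet point); working inside each component $X(\SCL_k)$ separately via measurable selection is what resolves this.
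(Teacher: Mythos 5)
Your proposal is correct and follows essentially the same strategy as the paper: apply \Cref{046} to countably many measurably chosen pairs of points, use the same-conditional-law hypothesis to equate the conditional expectations, intersect the resulting full-probability events, and pass to the limit via the continuity of the metrics (Axiom~\eqref{010A}, inherited by local metrics through \Cref{048}). The paper's own proof simply asserts the existence of a measurably chosen dense sequence in each connected component of $\Upsilon$ without spelling out a selection scheme, whereas you carry out the measurable selection explicitly via dyadic squares and the Kuratowski--Ryll-Nardzewski theorem; this is a stylistic difference in level of detail, not a difference in method.
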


\begin{proof}
    Let $M$ be as in \Cref{046}. Label the connected components of $\Upsilon$ as $\Upsilon_1, \Upsilon_2, \ldots$, in a manner that is almost surely determined by $\Upsilon$. For each $j \in \NN$, choose a pairwise distinct sequence $x_{j,1}, x_{j,2}, \ldots \in \Upsilon_j$, in a manner that is almost surely determined by $\Upsilon$, such that $\{x_{j,k}\}_{k \in \NN}$ is almost surely dense in $\Upsilon_j$. Then it follows from \Cref{046} that, almost surely, 
    \begin{multline*}
        M^{-1}\BE\!\left\lbrack D_\Upsilon(x_{j,k}, x_{j,l}) \mid \Upsilon\right\rbrack \le D_\Upsilon(x_{j,k}, x_{j,l}) \le M\BE\!\left\lbrack D_\Upsilon(x_{j,k}, x_{j,l}) \mid \Upsilon\right\rbrack \\
        \text{and} \quad M^{-1}\BE\!\left\lbrack D_\Upsilon(x_{j,k}, x_{j,l}) \mid \Upsilon\right\rbrack \le \widetilde D_\Upsilon(x_{j,k}, x_{j,l}) \le M\BE\!\left\lbrack D_\Upsilon(x_{j,k}, x_{j,l}) \mid \Upsilon\right\rbrack, \quad \forall j, k, l \in \NN. 
    \end{multline*}
    This implies that, almost surely, 
    \begin{equation*}
        M^{-2}D_\Upsilon(x_{j,k}, x_{j,l}) \le \widetilde D_\Upsilon(x_{j,k}, x_{j,l}) \le M^2D_\Upsilon(x_{j,k}, x_{j,l}), \quad \forall j, k, l \in \NN. 
    \end{equation*}
    Thus, we conclude from the continuity of $D_\Upsilon$ and $\widetilde D_\Upsilon$ that, almost surely, 
    \begin{equation*}
        M^{-2}D_\Upsilon(x, y) \le \widetilde D_\Upsilon(x, y) \le M^2D_\Upsilon(x, y), \quad \forall x, y \in \Upsilon \text{ with } x \leftrightarrow y. 
    \end{equation*}
    This completes the proof. 
\end{proof}

For $O \in \CC / \ZZ^2$ and $\varepsilon \in (0, 1)$, we shall write 
\begin{equation*}
    \GG_\varepsilon(O) \defeq \left\{z \in \CC : \Re(z - \varepsilon O) \in \varepsilon\ZZ \text{ or } \Im(z - \varepsilon O) \in \varepsilon\ZZ\right\}, 
\end{equation*}
and $\CS_\varepsilon(O)$ for the collection of connected components of $\CC \setminus \GG_\varepsilon(O)$. 

Let $O$ be sampled uniformly from $\CC / \ZZ^2$, independently of everything else. 

\begin{lemma}\label{041}
    Let $P \colon [0, 1] \to \Upsilon$ be a path of finite $D_\Upsilon$-length, chosen in a manner that is almost surely determined by $\Upsilon$ and $D_\Upsilon$. Then for each $\varepsilon \in (0, 1)$, 
    \begin{equation}\label{eq:052}
        \len(P; D_\Upsilon) = \sum_{S \in \CS_\varepsilon(O)} \len(P \cap S; D_\Upsilon) \quad \text{almost surely}.
    \end{equation}
\end{lemma}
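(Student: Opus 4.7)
The plan is to associate to the path $P$ a canonical Borel ``length measure'' $\mu_P$ on $[0, 1]$, rewrite both sides of \eqref{eq:052} in terms of $\mu_P$, and then conclude via Fubini using the independence of the random offset $O$ from $P$. First, I will define the finite Borel measure $\mu_P$ on $[0, 1]$ by
\begin{equation*}
    \mu_P([a, b]) \defeq \len(P|_{[a, b]}; D_\Upsilon), \quad \forall 0 \le a \le b \le 1,
\end{equation*}
which is well-defined since $t \mapsto \len(P|_{[0, t]}; D_\Upsilon)$ is continuous and nondecreasing with total mass $\len(P; D_\Upsilon) < \infty$, giving rise to a Lebesgue--Stieltjes measure on $[0, 1]$. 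In particular $\mu_P$ is atomless.

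Second, interpreting $\len(P \cap S; D_\Upsilon) = \mu_P(P^{-1}(S))$ for each $S \in \CS_\varepsilon(O)$ and using that the squares $\{S\}$ are pairwise disjoint with $\bigsqcup_S S = \CC \setminus \GG_\varepsilon(O)$, I will obtain
\begin{equation*}
    \sum_{S \in \CS_\varepsilon(O)} \len(P \cap S; D_\Upsilon) = \mu_P([0, 1] \setminus P^{-1}(\GG_\varepsilon(O))) = \len(P; D_\Upsilon) - \mu_P(P^{-1}(\GG_\varepsilon(O))).
\end{equation*}
Thus \eqref{eq:052} reduces to the claim that $\mu_P(P^{-1}(\GG_\varepsilon(O))) = 0$ almost surely.

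Third, I will establish this measure-zero assertion by a Fubini argument. Since $P$ is almost surely determined by $(\Upsilon, D_\Upsilon)$ while $O$ is uniform on $\CC/\ZZ^2$ independently of $(\Upsilon, D_\Upsilon)$, $O$ is independent of the pair $(P, \mu_P)$. For each fixed $z \in \CC$, the set $\{O \in \CC/\ZZ^2 : z \in \GG_\varepsilon(O)\}$ is a union of two closed loops on the torus and hence has $2$-dimensional Lebesgue measure zero, so $\BP\lbrack z \in \GG_\varepsilon(O)\rbrack = 0$. Conditioning on $(\Upsilon, D_\Upsilon)$ and applying Fubini then yields
\begin{equation*}
    \BE\!\left\lbrack\mu_P(P^{-1}(\GG_\varepsilon(O))) \;\middle|\; \Upsilon, D_\Upsilon\right\rbrack = \int_0^1 \BP\!\lbrack P(t) \in \GG_\varepsilon(O) \mid \Upsilon, D_\Upsilon\rbrack \, \rd\mu_P(t) = 0,
\end{equation*}
so $\mu_P(P^{-1}(\GG_\varepsilon(O))) = 0$ almost surely. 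The argument is essentially a one-line Fubini calculation; the only care needed is to introduce $\mu_P$ correctly and to use that $O$ is independent of the path $P$, and I do not anticipate any substantial obstacle.
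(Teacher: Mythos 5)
Your proof is correct and follows essentially the same route as the paper's: both reduce \eqref{eq:052} to showing that the grid $\GG_\varepsilon(O)$ carries zero length of $P$, and both conclude by a Fubini argument using that $O$ is uniform and independent of $(\Upsilon, D_\Upsilon)$, so that $\BP\lbrack P(t) \in \GG_\varepsilon(O) \mid \Upsilon, D_\Upsilon\rbrack = 0$ pointwise. Your version is in fact slightly more careful than the paper's, which integrates against Lebesgue measure on $[0,1]$ rather than the length measure $\mu_P$; working with $\mu_P$ directly, as you do, is the cleaner way to pass from the null time set to the vanishing of the corresponding $D_\Upsilon$-length.
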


\begin{proof}
    Since $O$ is independent of $P$, it follows that for each deterministic $t \in [0, 1]$, we have $\BP\lbrack P(t) \in \GG_\varepsilon(O) \mid \Upsilon, \ D_\Upsilon\rbrack = 0$. This implies that the Lebesgue measure of the set $\{t \in [0, 1] : P(t) \in \GG_\varepsilon(O)\}$ is almost surely zero, which implies \eqref{eq:052}.
\end{proof}

\begin{lemma}\label{042}
    For each $\varepsilon \in (0, 1)$, the metric $D_\Upsilon$ is almost surely determined by $\Upsilon$, $O$, and the internal metrics $D_\Upsilon(\bullet, \bullet; S \cap \Upsilon)$ for $S \in \CS_\varepsilon(O)$. 
\end{lemma}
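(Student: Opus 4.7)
The plan is to exhibit $D_\Upsilon(x, y)$ as a deterministic functional of the data $(\Upsilon, O, \{D_\Upsilon(\bullet, \bullet; S \cap \Upsilon)\}_{S \in \CS_\varepsilon(O)})$. Define
\begin{equation*}
    \widetilde D_\Upsilon(x, y) \defeq \inf \sum_{i = 1}^n D_\Upsilon(z_{i - 1}, z_i; S_i \cap \Upsilon),
\end{equation*}
where the infimum is over finite chains $x = z_0, z_1, \ldots, z_n = y$ in $\Upsilon$ together with squares $S_1, \ldots, S_n \in \CS_\varepsilon(O)$ satisfying $z_{i - 1}, z_i \in S_i \cap \Upsilon$ for each $i$. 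By construction $\widetilde D_\Upsilon$ is measurable with respect to the prescribed data, so it suffices to show $\widetilde D_\Upsilon = D_\Upsilon$ almost surely. The bound $\widetilde D_\Upsilon \ge D_\Upsilon$ is immediate from $D_\Upsilon(\bullet, \bullet; S \cap \Upsilon) \ge D_\Upsilon(\bullet, \bullet)$ together with the triangle inequality.

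For the reverse inequality, I first establish the length-comparison identity: for any continuous path $\gamma : [a, b] \to \Upsilon$ with image in the open set $S \cap \Upsilon$, one has $\len(\gamma; D_\Upsilon) = \len(\gamma; D_\Upsilon(\bullet, \bullet; S \cap \Upsilon))$. The $\le$ direction follows partition-by-partition from $D_\Upsilon \le D_\Upsilon(\bullet, \bullet; S \cap \Upsilon)$; for the $\ge$ direction, each sub-path $\gamma|_{[t_i, t_{i + 1}]}$ is a valid competitor in the infimum defining the internal distance, whence $D_\Upsilon(\gamma(t_i), \gamma(t_{i + 1}); S \cap \Upsilon) \le \len(\gamma|_{[t_i, t_{i + 1}]}; D_\Upsilon)$, and summing then supremizing over partitions yields the claim. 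Next, fix a countable family $(x_j, y_j)_{j \in \NN}$ of pairs in $\Upsilon$ with $x_j \leftrightarrow y_j$, chosen $\Upsilon$-measurably so as to be almost surely dense (in particular so that the $y_j$'s avoid the grid $\GG_\varepsilon(O)$), and select a $D_\Upsilon$-geodesic $P_j$ from $x_j$ to $y_j$ in a manner almost surely determined by $(\Upsilon, D_\Upsilon)$ via a measurable selection theorem. Applying \Cref{041} to each $P_j$ and a countable union bound, almost surely $\len(P_j; D_\Upsilon) = \sum_{S \in \CS_\varepsilon(O)} \len(P_j \cap S; D_\Upsilon)$ for all $j$; consequently, the grid portion of $P_j$ contributes zero $D_\Upsilon$-length, which forces (by the triangle inequality) consecutive arcs of $P_j$ in distinct squares to meet at a single shared point.

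To build the chain realizing $\widetilde D_\Upsilon(x_j, y_j) \le D_\Upsilon(x_j, y_j)$, enumerate the arcs $\alpha_1, \alpha_2, \ldots$ of $P_j$ with $\alpha_k \subset \overline{S_{i_k} \cap \Upsilon}$ and image in $S_{i_k} \cap \Upsilon$ except at endpoints; for each $\delta > 0$, shrink each $\alpha_k$ inward by $\delta$ to obtain interior endpoints $z_k^\pm(\delta) \in S_{i_k} \cap \Upsilon$, so that the length-comparison identity gives $D_\Upsilon(z_k^-(\delta), z_k^+(\delta); S_{i_k} \cap \Upsilon) \le \len(\alpha_k; D_\Upsilon)$. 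Truncate to the first $N$ arcs and connect the truncated tail to $y_j$ via a single chain step inside a fixed square enclosing a small Euclidean neighborhood of $y_j$ (available since the tail's $D_\Upsilon$-length vanishes and $D_\Upsilon$ induces the Euclidean topology). Summing yields $\sum_i D_\Upsilon(z_{i - 1}, z_i; S_i \cap \Upsilon) \le D_\Upsilon(x_j, y_j) + o(1)$ as $N \to \infty$ and $\delta \to 0$, so $\widetilde D_\Upsilon(x_j, y_j) \le D_\Upsilon(x_j, y_j)$ almost surely for all $j$; extending by density and continuity gives $\widetilde D_\Upsilon \le D_\Upsilon$ everywhere. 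The main technical obstacle is arranging the final chain step coherently: one must verify that a Euclidean-small neighborhood of $y_j$ into which the tail of $P_j$ is eventually confined can be placed inside a single square's $\delta$-interior, requiring a coordinated choice of $\delta$ and $N$ that exploits $y_j \not\in \GG_\varepsilon(O)$ and the Euclidean continuity of $D_\Upsilon$.
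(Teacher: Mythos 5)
There is a genuine gap, and it is at the very first step: the chain functional $\widetilde D_\Upsilon$ you define is identically $+\infty$ whenever $x$ and $y$ lie in different squares of the grid. The elements of $\CS_\varepsilon(O)$ are the connected components of $\CC \setminus \GG_\varepsilon(O)$, hence pairwise disjoint \emph{open} squares. An admissible chain requires $z_{i - 1}, z_i \in S_i \cap \Upsilon$ and $z_i, z_{i + 1} \in S_{i + 1} \cap \Upsilon$, so $z_i \in S_i \cap S_{i + 1}$, which by disjointness forces $S_i = S_{i + 1}$; by induction every admissible chain is confined to a single square, and no chain from $x$ to $y$ exists when they lie in distinct squares. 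The $\delta$-shrinking does not repair this: the shrunk endpoints $z_k^+(\delta)$ and $z_{k + 1}^-(\delta)$ of two consecutive arcs of the geodesic lie in two \emph{adjacent, disjoint} squares near the grid point where the arcs meet, and there is no $S \in \CS_\varepsilon(O)$ containing both, so they cannot be joined by any chain step. To salvage the construction one would have to allow chain steps measured in the closed squares $\overline S \cap \Upsilon$, but the data of the lemma only provides the internal metrics on the open squares; recovering a suitable extension to the boundary (and checking it is determined by the given data) is an additional argument that the proposal does not supply.

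For contrast, the paper's proof never attempts to reassemble $D_\Upsilon$ from the pieces by chains, precisely to avoid stitching across $\GG_\varepsilon(O)$. It takes a conditionally independent copy $\widetilde D_\Upsilon$ of $D_\Upsilon$ given $\Upsilon$, $O$, and the internal metrics, uses \Cref{047} to get a deterministic bi-Lipschitz comparison (so that a $D_\Upsilon$-geodesic $P$ has finite $\widetilde D_\Upsilon$-length, making \Cref{041} applicable to both metrics along the same path), and then writes
\begin{equation*}
    D_\Upsilon(x, y) = \sum_{S \in \CS_\varepsilon(O)} \len(P \cap S; D_\Upsilon) = \sum_{S \in \CS_\varepsilon(O)} \len(P \cap S; \widetilde D_\Upsilon) = \len(P; \widetilde D_\Upsilon) \ge \widetilde D_\Upsilon(x, y),
\end{equation*}
concluding by symmetry. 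The equality of the internal metrics transfers the length of $P$ square by square, and \Cref{041} does the global bookkeeping, so the grid-crossing problem never arises. Your length-comparison identity and the use of \Cref{041} are fine as far as they go, but without either the conditional-independence device or a correct treatment of the square boundaries, the reverse inequality $\widetilde D_\Upsilon \le D_\Upsilon$ fails as stated.
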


\begin{proof}
    Given $\Upsilon$, $O$, and the internal metrics $D_\Upsilon(\bullet, \bullet; S \cap \Upsilon)$ for $S \in \CS_\varepsilon(O)$, let $\widetilde D_\Upsilon$ be a conditionally independent copy of $D_\Upsilon$. Thus, it suffices to show that $D_\Upsilon = \widetilde D_\Upsilon$ almost surely. By definition, almost surely, 
    \begin{equation}\label{eq:031}
        D_\Upsilon(\bullet, \bullet; S \cap \Upsilon) = \widetilde D_\Upsilon(\bullet, \bullet; S \cap \Upsilon), \quad \forall S \in \CS_\varepsilon(O). 
    \end{equation}
    By \Cref{047}, there exists a deterministic constant $M \ge 1$ such that, almost surely, 
    \begin{equation}\label{eq:030}
        M^{-1}D_\Upsilon(u, v) \le \widetilde D_\Upsilon(u, v) \le MD_\Upsilon(u, v), \quad \forall u, v \in \Upsilon \text{ with } u \leftrightarrow v. 
    \end{equation}
    Choose $x, y \in \Upsilon$ with $x \leftrightarrow y$ and a $D_\Upsilon$-geodesic $P$ from $x$ to $y$, in a manner that is almost surely determined by $\Upsilon$ and $D_\Upsilon$. By \eqref{eq:030}, the $\widetilde D_\Upsilon$-length of $P$ is almost surely finite. Thus, we conclude from \Cref{041} and \eqref{eq:031} that, almost surely,  
    \begin{equation*}
        D_\Upsilon(x, y) = \sum_{S \in \CS_\varepsilon(O)} \len(P \cap S; D_\Upsilon) = \sum_{S \in \CS_\varepsilon(O)} \len(P \cap S; \widetilde D_\Upsilon) = \len(P; \widetilde D_\Upsilon) \ge \widetilde D_\Upsilon(x, y). 
    \end{equation*}
    By symmetry, it follows that $D_\Upsilon(x, y) \le \widetilde D_\Upsilon(x, y)$ almost surely, hence that $D_\Upsilon(x, y) = \widetilde D_\Upsilon(x, y)$ almost surely. Let $\{x_{j,k}\}_{j,k \in \NN}$ be as in the proof of \Cref{047}. Then it follows that, almost surely, $D_\Upsilon(x_{j,k}, x_{j,l}) = \widetilde D_\Upsilon(x_{j,k}, x_{j,l})$ for all $j, k, l \in \NN$. Thus, we conclude from the continuity of $D_\Upsilon$ and $\widetilde D_\Upsilon$ that $D_\Upsilon = \widetilde D_\Upsilon$ almost surely. This completes the proof. 
\end{proof}

\begin{lemma}\label{043}
    For each $\varepsilon \in (0, 1)$, the internal metrics $D_\Upsilon(\bullet, \bullet; S \cap \Upsilon)$ for $S \in \CS_\varepsilon(O)$ are conditionally independent given $\Upsilon$ and $O$. 
\end{lemma}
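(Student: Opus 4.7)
The plan is to prove this lemma by iteratively applying Axiom~\eqref{048B} of \Cref{048} to finite subunions of the squares in $\CS_\varepsilon(O)$. The key geometric observation is that for any two distinct squares $S_i, S_j \in \CS_\varepsilon(O)$, the open set $S_i$ is disjoint from $\GG_\varepsilon(O) \supseteq \partial S_j$ as well as from $S_j$ itself, so $S_i \cap \overline{S_j} = \emptyset$ and hence $S_i \subset \CC \setminus \overline{S_j}$. More generally, for any finite subcollection indexed by $I \subset \NN$ and $U_I \defeq \bigcup_{i \in I} S_i$, we have $S_j \subset \CC \setminus \overline{U_I}$ for every $j \notin I$. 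Since $O$ is independent of $(\Upsilon, D_\Upsilon)$, I may work with a fixed realization of $O$ and the deterministic enumeration $\CS_\varepsilon(O) = \{S_n\}_{n \in \NN}$.

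I would first prove an auxiliary measurability fact: for open sets $V \subset U \subset \CC$, the internal metric $D_\Upsilon(\bullet, \bullet; V \cap \Upsilon)$ is a measurable function of $V \cap \Upsilon$ and $D_\Upsilon(\bullet, \bullet; U \cap \Upsilon)$. Indeed, any continuous path $P \subset V \cap \Upsilon$ is compact in the open set $V$, hence at positive Euclidean distance from $\CC \setminus U$; combined with the fact that $D_\Upsilon$ induces the Euclidean topology (Axiom~\eqref{010A}), for sufficiently fine partitions the $D_\Upsilon$-geodesic segments between consecutive points of $P$ all lie in $U$, giving $\len(P; D_\Upsilon) = \len(P; D_\Upsilon(\bullet, \bullet; U \cap \Upsilon))$ in the refinement limit. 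Taking the infimum over $P$ yields the claim; in particular, for each finite $I$, the tuple $(D_\Upsilon(\bullet, \bullet; S_i \cap \Upsilon))_{i \in I}$ is a measurable function of $(U_I \cap \Upsilon, D_\Upsilon(\bullet, \bullet; U_I \cap \Upsilon))$.

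To establish the conditional independence statement, by a standard $\pi$-system argument it suffices to show for every finite $I \subset \NN$ that the two sub-families $(D_\Upsilon(\bullet, \bullet; S_i \cap \Upsilon))_{i \in I}$ and $(D_\Upsilon(\bullet, \bullet; S_j \cap \Upsilon))_{j \notin I}$ are conditionally independent given $\Upsilon$. I would apply Axiom~\eqref{048B} with $U = U_I$ to obtain
\begin{equation*}
    D_\Upsilon(\bullet, \bullet; U_I \cap \Upsilon) \perp \bigl((\CC \setminus \overline{U_I}) \cap \Upsilon, D_\Upsilon(\bullet, \bullet; (\CC \setminus \overline{U_I}) \cap \Upsilon)\bigr) \ \Big|\ U_I \cap \Upsilon,
\end{equation*}
and then invoke the measurability fact: the first sub-family is a function of the left random variable, while the second sub-family is a function of the right pair (using $S_j \subset \CC \setminus \overline{U_I}$ for $j \notin I$). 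This immediately yields the desired conditional independence given $U_I \cap \Upsilon$; iterating over both $I$ and its complement via the standard identity ``$X \perp (Y,Z) \mid W$ implies $X \perp Z \mid (W,Y)$'' extends the conditioning to $\sigma(U_I \cap \Upsilon, (\CC \setminus \overline{U_I}) \cap \Upsilon)$.

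The main technical obstacle I anticipate is the very last step, namely upgrading the conditioning $\sigma$-algebra from $\sigma(U_I \cap \Upsilon, (\CC \setminus \overline{U_I}) \cap \Upsilon)$ to the full $\sigma(\Upsilon)$, which a priori carries additional information through the boundary trace $\partial U_I \cap \Upsilon \subset \GG_\varepsilon(O) \cap \Upsilon$. I would resolve this either by arguing that every point of $\partial U_I \cap \Upsilon$ lies in the closures of both $U_I \cap \Upsilon$ and $(\CC \setminus \overline{U_I}) \cap \Upsilon$ (so that $\partial U_I \cap \Upsilon$ is in fact determined by the pair, using that the sets $X(\SCL)$ comprising $\Upsilon$ have nonempty interior and that $\Upsilon$ has no isolated points), or by exhausting $U_I$ from inside by a sequence of open sets $U_I^{(k)}$ with $\overline{U_I^{(k)}} \subset U_I$ and $U_I^{(k)} \uparrow U_I$, applying the previous paragraph to $U_I^{(k)}$, and passing to the limit using the continuity of the internal metric under monotone exhaustion by compactly contained open sets. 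Either route, once carried out, closes the argument.
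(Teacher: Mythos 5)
Your route is the same as the paper's: the paper's entire proof of this lemma is the single sentence that it follows immediately from \Cref{048}, Axiom~\eqref{048B}, so the bookkeeping you supply --- fixing $O$ by independence, the inclusion $S_j \subset \CC \setminus \overline{U_I}$ for $j \notin I$, the measurability of the internal metrics on subsets of $U_I$ with respect to $(U_I \cap \Upsilon, D_\Upsilon(\bullet, \bullet; U_I \cap \Upsilon))$, and the weak-union reduction --- is exactly the unwritten content of that sentence, and the structure is sound.

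The one genuine flaw is in your last paragraph: the sets $X(\SCL)$ do \emph{not} have nonempty interior; each is a $\CLE_\kappa$ carpet, a closed fractal of zero Lebesgue measure, so that justification for recovering $\partial U_I \cap \Upsilon$ fails as stated (and ``no isolated points'' alone is also insufficient, since a Cantor set lying on a grid line has no isolated points). The conclusion you need is nevertheless true for a different reason: the loops of $\Gamma$ contained in $X(\SCL)$ accumulate at every point of $X(\SCL)$, and every nondegenerate arc of such a loop has Hausdorff dimension $1 + \kappa/8 > 1$, hence cannot be contained in the one-dimensional set $\GG_\varepsilon(O)$; therefore $\Upsilon = \overline{\Upsilon \setminus \GG_\varepsilon(O)}$ almost surely, so $\sigma(U_I \cap \Upsilon, (\CC \setminus \overline{U_I}) \cap \Upsilon, O) = \sigma(\Upsilon, O)$ up to null sets and the argument closes. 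I would not rely on your alternative exhaustion route: there the conditioning $\sigma$-algebras $\sigma(U_I^{(k)} \cap \Upsilon, (\CC \setminus \overline{U_I^{(k)}}) \cap \Upsilon)$ increase in $k$, and conditional independence with respect to each member of an increasing family of $\sigma$-algebras does not in general pass to the generated limit $\sigma$-algebra without further argument.
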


\begin{proof}
    This follows immediately from \Cref{048}, Axiom~\eqref{048B}. 
\end{proof}

Before proceeding, we recall the following well-known  concentration result.

\begin{lemma}[Efron-Stein]\label{044}
    Let $X_1, \ldots, X_n$ be independent random variables and $(X_1^\prime, \ldots, X_n^\prime)$ be an independent copy of $(X_1, \ldots, X_n)$. Then
    \begin{equation*}
        \Var\lbrack F(X_1, \ldots, X_n)\rbrack \le \frac12 \sum_{j = 1}^n \BE\!\left\lbrack\left(F(X_1, \ldots, X_n) - F(X_1, \ldots, X_{j - 1}, X_j^\prime, X_{j + 1}, \ldots, X_n)\right)^2\right\rbrack
    \end{equation*}
    for all measurable functionals $F$. 
\end{lemma}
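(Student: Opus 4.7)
The plan is to proceed via the standard martingale difference decomposition of the variance. Write $Z \defeq F(X_1, \ldots, X_n)$ and $Z^{(j)} \defeq F(X_1, \ldots, X_{j-1}, X_j^\prime, X_{j+1}, \ldots, X_n)$, and for $0 \le j \le n$ denote $\BE_j[\bullet] \defeq \BE[\bullet \mid X_1, \ldots, X_j]$ (with $\BE_0[\bullet] = \BE[\bullet]$). Setting $\Delta_j \defeq \BE_j[Z] - \BE_{j-1}[Z]$, we have the telescoping identity $Z - \BE[Z] = \sum_{j=1}^n \Delta_j$, and $\{\Delta_j\}$ are martingale differences with respect to the filtration $\{\sigma(X_1, \ldots, X_j)\}$. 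Consequently the cross terms vanish and
\begin{equation*}
    \Var[Z] = \sum_{j = 1}^n \BE[\Delta_j^2].
\end{equation*}

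Next, I would exploit the exchangeability of $X_j$ and $X_j^\prime$ to rewrite $\Delta_j$ in a form amenable to Jensen's inequality. Since $X_j^\prime$ is independent of $(X_1, \ldots, X_n)$ and has the same law as $X_j$, and since $X_{j+1}, \ldots, X_n$ are independent of $(X_1, \ldots, X_j, X_j^\prime)$, one checks that
\begin{equation*}
    \BE_{j-1}[Z] = \BE[Z^{(j)} \mid X_1, \ldots, X_{j-1}, X_j^\prime]
    \quad \text{and} \quad
    \BE_j[Z] = \BE[Z \mid X_1, \ldots, X_j, X_j^\prime],
\end{equation*}
so that
\begin{equation*}
    \Delta_j = \BE[Z - Z^{(j)} \mid X_1, \ldots, X_j, X_j^\prime].
\end{equation*}
Conditional Jensen then gives $\Delta_j^2 \le \BE[(Z - Z^{(j)})^2 \mid X_1, \ldots, X_j, X_j^\prime]$, and taking expectations yields $\BE[\Delta_j^2] \le \BE[(Z - Z^{(j)})^2]$.

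To recover the factor $1/2$, the key observation is that conditionally on $X^{(j)} \defeq (X_1, \ldots, X_{j-1}, X_{j+1}, \ldots, X_n)$, the pair $(Z, Z^{(j)})$ is i.i.d., so the elementary identity $\Var(Y) = \frac12 \BE[(Y - Y^\prime)^2]$ (for $Y^\prime$ an independent copy of $Y$) applied conditionally yields
\begin{equation*}
    \Var(Z \mid X^{(j)}) = \tfrac12 \BE[(Z - Z^{(j)})^2 \mid X^{(j)}].
\end{equation*}
Combined with the fact that $\BE[\Delta_j^2] = \BE[\Var(\BE_j[Z] \mid X_1, \ldots, X_{j-1})] \le \BE[\Var(Z \mid X^{(j)})]$ (which follows by applying Jensen in the form of contractivity of conditional expectation on $L^2$, using that the $\sigma$-algebra generated by $X_1, \ldots, X_{j-1}$ is contained in that generated by $X^{(j)}$), we obtain $\BE[\Delta_j^2] \le \tfrac12 \BE[(Z - Z^{(j)})^2]$. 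Summing over $j \in [1, n]_\ZZ$ gives the claim. The only step requiring care is the verification of the identities for $\Delta_j$ using the independence structure; the rest is routine.
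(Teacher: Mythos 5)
The paper records this lemma without proof (it is recalled as a well-known result), so your argument stands on its own. Its skeleton is the standard textbook proof of Efron--Stein --- the orthogonal martingale decomposition $\Var[Z] = \sum_{j=1}^n \BE[\Delta_j^2]$ followed by the conditional identity $\Var(Z \mid X^{(j)}) = \frac12\BE[(Z - Z^{(j)})^2 \mid X^{(j)}]$, which is where the factor $1/2$ comes from --- and the final chain of inequalities is correct. However, two of your intermediate claims do not hold as written. First, the identity $\BE_{j-1}[Z] = \BE[Z^{(j)} \mid X_1, \ldots, X_{j-1}, X_j^\prime]$ is false: the right-hand side equals $h(X_1, \ldots, X_{j-1}, X_j^\prime)$ with $h(x_1, \ldots, x_j) \defeq \BE[F(x_1, \ldots, x_j, X_{j+1}, \ldots, X_n)]$, which genuinely depends on $X_j^\prime$, whereas $\BE_{j-1}[Z]$ does not. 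The correct statement drops $X_j^\prime$ from the conditioning: $\BE[Z^{(j)} \mid X_1, \ldots, X_j] = \BE_{j-1}[Z]$, because integrating out $(X_j^\prime, X_{j+1}, \ldots, X_n) \overset d= (X_j, X_{j+1}, \ldots, X_n)$ reproduces $\BE_{j-1}[Z]$; hence $\Delta_j = \BE[Z - Z^{(j)} \mid X_1, \ldots, X_j]$. This slip happens to be harmless only because the entire step is redundant: it yields the bound without the factor $1/2$, which you never use afterwards.

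The more substantive gap is in the step that actually produces the $1/2$. The inequality $\BE[\Delta_j^2] \le \BE[\Var(Z \mid X^{(j)})]$ is true, but it does not follow from the inclusion $\sigma(X_1, \ldots, X_{j-1}) \subset \sigma(X^{(j)})$ together with $L^2$-contractivity. Indeed, contractivity and that inclusion only give $\BE[\Delta_j^2] \le \BE[\Var(Z \mid X_1, \ldots, X_{j-1})]$ and, separately, $\BE[\Var(Z \mid X^{(j)})] \le \BE[\Var(Z \mid X_1, \ldots, X_{j-1})]$ (conditioning on a larger $\sigma$-algebra can only decrease the expected conditional variance), so both quantities sit below the same bound without being ordered relative to each other. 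What is actually needed is the independence-based commutation identity $\BE[\,\BE[Z \mid X^{(j)}] \mid X_1, \ldots, X_j\,] = \BE_{j-1}[Z]$ (this uses that $X_{j+1}, \ldots, X_n$ are independent of $X_j$, not merely the inclusion of $\sigma$-algebras), which gives the representation $\Delta_j = \BE[\,Z - \BE[Z \mid X^{(j)}]\ \big\vert\ X_1, \ldots, X_j\,]$; conditional Jensen then yields $\BE[\Delta_j^2] \le \BE[(Z - \BE[Z \mid X^{(j)}])^2] = \BE[\Var(Z \mid X^{(j)})]$. With that substitution the proof closes.
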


\begin{proof}[Proof of \Cref{006}]
    Fix $\varepsilon > 0$. By \Cref{042}, there exists a measurable mapping $F$ such that 
    \begin{equation*}
        D_\Upsilon = F(\Upsilon, O, \{D_\Upsilon(\bullet, \bullet; S \cap \Upsilon) : S \in \CS_\varepsilon(O)\}) \quad \text{almost surely}. 
    \end{equation*}
    Given $\Upsilon$ and $O$, let $\{D_\Upsilon^\prime(\bullet, \bullet; S \cap \Upsilon) : S \in \CS_\varepsilon(O)\}$ be a conditionally independent copy of $\{D_\Upsilon(\bullet, \bullet; S \cap \Upsilon) : S \in \CS_\varepsilon(O)\}$. Given $\Upsilon$ and $O$, for each $S \in \CS_\varepsilon(O)$, write 
    \begin{equation*}
        D_\Upsilon^S \defeq F\left(\Upsilon, O, \{D_\Upsilon^\prime(\bullet, \bullet; S \cap \Upsilon)\} \cup \{D_\Upsilon(\bullet, \bullet; \widetilde S) : \widetilde S \in \CS_\varepsilon(O), \ \widetilde S \neq S\}\right). 
    \end{equation*}
    Choose $x, y \in \Upsilon$ with $x \leftrightarrow y$, in a manner that is almost surely determined by $\Upsilon$. Then it follows from \Cref{043,044} that, almost surely, 
    \begin{equation}\label{eq:029}
        \Var\lbrack D_\Upsilon(x, y) \mid \Upsilon\rbrack = \Var\lbrack D_\Upsilon(x, y) \mid \Upsilon, \ O\rbrack \le \frac12 \sum_{S \in \CS_\varepsilon(O)} \BE\!\left\lbrack\left(D_\Upsilon^S(x, y) - D_\Upsilon(x, y)\right)^2 \ \middle\vert \ \Upsilon, \ O\right\rbrack. 
    \end{equation}

    We \emph{claim} that, almost surely, 
    \begin{equation}\label{eq:028}
        \sum_{S \in \CS_\varepsilon(O)} \BE\!\left\lbrack\left(D_\Upsilon^S(x, y) - D_\Upsilon(x, y)\right)^2 \ \middle\vert \ \Upsilon, \ O\right\rbrack \to 0 \quad \text{as } \varepsilon \to 0. 
    \end{equation}
    By \Cref{047}, there exists a deterministic constant $M \ge 1$ such that, almost surely, 
    \begin{equation}\label{eq:021}
        M^{-1}D_\Upsilon(u, v) \le D_\Upsilon^S(u, v) \le MD_\Upsilon(u, v), \quad \forall S \in \CS_\varepsilon(O), \ \forall u, v \in \Upsilon \text{ with } u \leftrightarrow v. 
    \end{equation}
    Choose a $D_\Upsilon$-geodesic $P$ from $x$ to $y$, in a manner that is almost surely determined by $\Upsilon$ and $D_\Upsilon$. By \Cref{041} and \eqref{eq:021}, we have
    \begin{multline}\label{eq:022}
        D_\Upsilon^S(x, y) \le \len(P; D_\Upsilon^S) = \sum_{\widetilde S \in \CS_\varepsilon(O)} \len(P \cap \widetilde S; D_\Upsilon^S) \\
        = \sum_{\widetilde S \in \CS_\varepsilon(O) \setminus \{S\}} \len(P \cap \widetilde S; D_\Upsilon) + \len(P \cap S; D_\Upsilon^S) \le D_\Upsilon(x, y) + M\len(P \cap S; D_\Upsilon). 
    \end{multline}
    By symmetry, we have
    \begin{equation}\label{eq:023}
        \BE\!\left\lbrack\left(D_\Upsilon^S(x, y) - D_\Upsilon(x, y)\right)^2 \ \middle\vert \ \Upsilon, \ O\right\rbrack = 2\BE\!\left\lbrack\left(D_\Upsilon^S(x, y) - D_\Upsilon(x, y)\right)_+^2 \ \middle\vert \ \Upsilon, \ O\right\rbrack, 
    \end{equation}
    where $(D_\Upsilon^S(x, y) - D_\Upsilon(x, y))_+ \defeq (D_\Upsilon^S(x, y) - D_\Upsilon(x, y)) \vee 0$. Combining \eqref{eq:022} and \eqref{eq:023}, we obtain that
    \begin{align}\label{eq:024}
        &\sum_{S \in \CS_\varepsilon(O)} \BE\!\left\lbrack\left(D_\Upsilon^S(x, y) - D_\Upsilon(x, y)\right)^2 \ \middle\vert \ \Upsilon, \ O\right\rbrack \\
        &= 2\sum_{S \in \CS_\varepsilon(O)} \BE\!\left\lbrack\left(D_\Upsilon^S(x, y) - D_\Upsilon(x, y)\right)_+^2 \ \middle\vert \ \Upsilon, \ O\right\rbrack \notag \\
        &\le 2M^2 \sum_{S \in \CS_\varepsilon(O)} \BE\!\left\lbrack\len(P \cap S; D_\Upsilon)^2 \ \middle\vert \ \Upsilon, \ O\right\rbrack \notag \\
        &\le 2M^2 \BE\!\left\lbrack \left(\max_{S \in \CS_\varepsilon(O)} \len(P \cap S; D_\Upsilon)\right) \sum_{S \in \CS_\varepsilon(O)} \len(P \cap S; D_\Upsilon) \ \middle\vert \ \Upsilon, \ O\right\rbrack \notag \\
        &\le 2M^2 \BE\!\left\lbrack \left(\max_{S \in \CS_\varepsilon(O)} \len(P \cap S; D_\Upsilon)\right)^2 \ \middle\vert \ \Upsilon, \ O\right\rbrack^{1/2} \BE\!\left\lbrack D_\Upsilon(x, y)^2 \ \middle\vert \ \Upsilon\right\rbrack^{1/2}. \notag 
    \end{align}
    By \Cref{046} and possibly increasing $M$, we may assume without loss of generality that 
    \begin{equation}\label{eq:026}
        M^{-1}\BE\lbrack D_\Upsilon(x, y) \mid \Upsilon\rbrack \le D_\Upsilon(x, y) \le M\BE\lbrack D_\Upsilon(x, y) \mid \Upsilon\rbrack \quad \text{almost surely}. 
    \end{equation}
    This implies that 
    \begin{equation}\label{eq:025}
        \BE\!\left\lbrack D_\Upsilon(x, y)^2 \ \middle\vert \ \Upsilon\right\rbrack \le M^2\BE\lbrack D_\Upsilon(x, y) \mid \Upsilon\rbrack^2 \le M^4D_\Upsilon(x, y) < \infty \quad \text{almost surely}, 
    \end{equation}
    and
    \begin{equation}\label{eq:338}
        \max_{S \in \CS_\varepsilon(O)} \len(P \cap S; D_\Upsilon) \le D_\Upsilon(x, y) \le M\BE\lbrack D_\Upsilon(x, y) \mid \Upsilon\rbrack \quad \text{almost surely}. 
    \end{equation}
    On the other hand, it follows from the continuity of $D_\Upsilon$ that
    \begin{equation}\label{eq:339}
        \lim_{\varepsilon \to 0} \max_{S \in \CS_\varepsilon(O)} \len(P \cap S; D_\Upsilon) = 0 \quad \text{almost surely}. 
    \end{equation}
    Combining \eqref{eq:338} and \eqref{eq:339}, it follows from the bounded convergence theorem that, almost surely, 
    \begin{equation}\label{eq:027}
        \BE\!\left\lbrack \left(\max_{S \in \CS_\varepsilon(O)} \len(P \cap S; D_\Upsilon)\right)^2 \ \middle\vert \ \Upsilon, \ O\right\rbrack \to 0 \quad \text{as } \varepsilon \to 0. 
    \end{equation}
    Thus, we conclude from \eqref{eq:024}, \eqref{eq:025}, and \eqref{eq:027} that \eqref{eq:028} is true. This completes the proof of the \emph{claim}. 

    Finally, we conclude from \eqref{eq:029} and \eqref{eq:028} that 
    \begin{equation*}
        \Var\lbrack D_\Upsilon(x, y) \mid \Upsilon\rbrack = 0 \quad \text{almost surely}.
    \end{equation*}
    This implies that $D_\Upsilon(x, y)$ is almost surely determined by $\Upsilon$. Let $\{x_{j,k}\}_{j,k \in \NN}$ be as in the proof of \Cref{047}. Then it follows that, almost surely, $D_\Upsilon(x_{j,k}, x_{j,l})$ is determined by $\Upsilon$ for all $j, k, l \in \NN$. Thus, we conclude from the continuity of $D_\Upsilon$ that $D_\Upsilon$ is almost surely determined by $\Upsilon$. This completes the proof. 
\end{proof}

\section{Convergence of MFPP}\label{s:01}

In the present section, we aim to deduce \Cref{321} from the results of \cite{TightSimCLE}. In \Cref{ss:00}, we prove that the laws of the rescaled $\varepsilon$-MFPP metrics $(X \times X, \ka_\varepsilon^{-1}D_{X}^\varepsilon)$ are tight (where $X$ is as in \eqref{eq:004}), and that any subsequential limit satisfies Axioms~\eqref{307A} and \eqref{307D} of \Cref{307}. In \Cref{ss:03}, we prove that any subsequential limit satisfies Axiom~\eqref{010D} of \Cref{010}. In \Cref{ss:04}, we prove that any subsequential limit satisfies Axiom~\eqref{048B} of \Cref{048}. We then conclude the proof of \Cref{321} using \Cref{006}.

\subsection{Subsequential limits}\label{ss:00}

In the present subsection, we explain why the results of \cite{TightSimCLE} imply that the family $\{(X \times X, \ka_\varepsilon^{-1}D_{X}^\varepsilon)\}_{\varepsilon > 0}$ is tight, and that any subsequential limit satisfies Axioms~\eqref{307A} and \eqref{307D} of \Cref{307}. This is not entirely immediate, since \cite{TightSimCLE} deals only with non-nested $\CLE_\kappa$'s, whereas Axiom~\eqref{307D} concerns the whole-plane nested $\CLE_\kappa$. We also include the corresponding results for the internal metrics on dyadic domains, which will be needed in \Cref{ss:04}.

Recall from \Cref{ss:12} the definition of dyadic domains. For $n \in \NN$, we shall write $\mathscr{DyDo}_n$ for the collection of sequences $(W_1, \ldots, W_n) \in \mathscr{DyDo}^n$ such that $W_j \Subset W_{j + 1}$ for all $j \in [1, n - 1]_\ZZ$. For an open subset $U \subset \CC$, we shall write $\mathscr{DyDo}_n(U) \defeq \{(W_1, \ldots, W_n) \in \mathscr{DyDo}_n : W_n \Subset U\}$.

We shall write $\mathscr{HausUniEns}_4$ for the metric space consisting of discrete subsets of $\mathscr{HausUni}_4$, and equipped with the metric given by the Hausdorff distance induced by $d_{\mathscr{HausUni}_4}$.

\begin{proposition}\label{331}
    Let $\Gamma_\DD$ be a non-nested $\CLE_\kappa$ in $\DD$. Write $\SCL$ for the loop of $\Gamma_\DD$ surrounding $0$. Given $\SCL$, let $\Gamma_{\SCL}$ be a non-nested $\CLE_\kappa$ inside $\SCL$. Write $X_\SCL$ for the carpet of $\Gamma_{\SCL}$. For $\varepsilon > 0$, write $\kb_\varepsilon$ for the median of the random variable $\sup_{x, y \in \SCL} D_{X_\SCL}^\varepsilon(x, y)$. Then for every sequence of positive real numbers $\varepsilon$'s tending to zero, there exists $D_{X_\SCL}$, $\{D_{X_\SCL,W_1,W_2} : (W_1, W_2) \in \mathscr{DyDo}_2(\DD)\}$, and a subsequence $\SE$ along which we have the convergence of the joint laws
    \begin{multline*}
        \Biggl(\Bigl(X_\SCL \times X_\SCL, \kb_\varepsilon^{-1}D_{X_\SCL}^\varepsilon\Bigr), \biggl\{\Bigl((\overline{W_1} \cap X_\SCL) \times (\overline{W_1} \cap X_\SCL), \\
        \left.\kb_\varepsilon^{-1}D_{X_\SCL}^\varepsilon(\bullet, \bullet; W_2 \cap X_\SCL)\right|_{(\overline{W_1} \cap X_\SCL) \times (\overline{W_1} \cap X_\SCL)}\Bigr) : (W_1, W_2) \in \mathscr{DyDo}_2(\DD)\biggr\}\Biggr) \\
        \to \biggl(\bigl(X_\SCL \times X_\SCL, D_{X_\SCL}\bigr), \Bigl\{\bigl((\overline{W_1} \cap X_\SCL) \times (\overline{W_1} \cap X_\SCL), D_{X_\SCL,W_1,W_2}(\bullet, \bullet)\bigr) : (W_1, W_2) \in \mathscr{DyDo}_2(\DD)\Bigr\}\biggr), 
    \end{multline*}
    with respect to the metric space $\mathscr{HausUniEns}_4$. Moreover:
    \begin{enumerate}
        \item $D_{X_\SCL}$ is almost surely a geodesic metric on $X_\SCL$ inducing the Euclidean topology. 
        \item Almost surely, 
        \begin{equation*}
            D_{X_\SCL,W_1,W_2}(\bullet, \bullet) = \left.D_{X_\SCL}(\bullet, \bullet; W_2 \cap X_\SCL)\right|_{(\overline{W_1} \cap X_\SCL) \times (\overline{W_1} \cap X_\SCL)}, \quad \forall (W_1, W_2) \in \mathscr{DyDo}_2(\DD). 
        \end{equation*}
    \end{enumerate}
\end{proposition}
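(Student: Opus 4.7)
The plan is to use the tightness and subsequential limit results of \cite{TightSimCLE} for MFPP on non-nested $\CLE_\kappa$ carpets, upgrade them to joint tightness with the internal metrics indexed by the countable family $\mathscr{DyDo}_2(\DD)$ by verifying the Arzel\`a--Ascoli criterion for $\mathscr{HausUni}_4$, extract a diagonal subsequence, and then identify the limit of the internal metrics with the internal metric of the limit. Throughout, we condition on $\SCL$ and apply the restriction property of CLE: given $\SCL$, the configuration inside $\SCL$ is a non-nested $\CLE_\kappa$, to which \cite{TightSimCLE} applies (via a conformal map to $\DD$) with the normalization $\kb_\varepsilon$.

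The first step invokes \cite{TightSimCLE} to obtain tightness of $\{(X_\SCL \times X_\SCL, \kb_\varepsilon^{-1} D_{X_\SCL}^\varepsilon)\}_{\varepsilon > 0}$ in $\mathscr{HausUni}_4$, together with the fact that every subsequential limit is almost surely a geodesic metric on $X_\SCL$ inducing the Euclidean topology. The second step is to establish tightness of each family of internal metrics by verifying the Arzel\`a--Ascoli criterion for $\mathscr{HausUni}_4$ stated in the preliminaries: for fixed $(W_1, W_2) \in \mathscr{DyDo}_2(\DD)$, uniform boundedness of $\kb_\varepsilon^{-1} D_{X_\SCL}^\varepsilon(\bullet, \bullet; W_2 \cap X_\SCL)$ on $\overline{W_1} \cap X_\SCL$ follows from the MFPP H\"older-type upper bound of \cite{TightSimCLE} applied to a deterministic path in $W_2$ connecting any two points of $\overline{W_1}$; uniform equicontinuity follows from the same bound applied locally, using $W_1 \Subset W_2$ to confine short rerouting detours to $W_2$. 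Since $\mathscr{DyDo}_2(\DD)$ is countable, a diagonal extraction then produces a subsequence along which all the joint laws converge in $\mathscr{HausUniEns}_4$; denote the limits by $D_{X_\SCL}$ and $\{D_{X_\SCL, W_1, W_2}\}$.

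The final step is to identify $D_{X_\SCL, W_1, W_2}(x, y) = D_{X_\SCL}(x, y; W_2 \cap X_\SCL)$ on $\overline{W_1} \cap X_\SCL$. The inequality ``$\ge$'' follows by extracting a Euclidean subsequential limit $P \subset \overline{W_2} \cap X_\SCL$ of near-optimal $D^{\varepsilon_n}(\cdot, \cdot; W_2 \cap X_\SCL)$-paths via Arzel\`a--Ascoli and H\"older continuity, and then invoking lower semicontinuity of $D_{X_\SCL}$-length together with the fact that $D_{X_\SCL}$ induces the Euclidean topology. The inequality ``$\le$'' follows by discretizing a near-optimal $D_{X_\SCL}$-path in $W_2 \cap X_\SCL$ into short arcs whose $D_{X_\SCL}$-geodesics stay well inside $W_2$, and using the joint convergence together with the fact that, for short enough arcs, the unrestricted $\varepsilon_n$-MFPP quasi-geodesic also remains in $W_2$ (by the topological equivalence of $D_{X_\SCL}$ with the Euclidean metric), so that $D^{\varepsilon_n}(a_j, a_{j+1}; W_2 \cap X_\SCL)$ agrees asymptotically with $D^{\varepsilon_n}(a_j, a_{j+1})$.

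The main obstacle will be the boundary issue in the ``$\ge$'' direction: the limit path $P$ might only lie in $\overline{W_2} \cap X_\SCL$ rather than in $W_2 \cap X_\SCL$, which is what is required to bound $D_{X_\SCL}(\cdot, \cdot; W_2 \cap X_\SCL)$. I would resolve this by approximating from inside: consider a sequence $W_2^\prime \Subset W_2$ of dyadic subdomains with $W_2^\prime \uparrow W_2$, carry out the identification for each pair $(W_1, W_2^\prime)$, and pass to the limit using the monotonicity $D_{X_\SCL}(\cdot, \cdot; W_2^\prime \cap X_\SCL) \searrow D_{X_\SCL}(\cdot, \cdot; W_2 \cap X_\SCL)$, which follows from the H\"older continuity of $D_{X_\SCL}$ (allowing paths in $W_2$ to be rerouted inward at vanishing cost) and the fact that the dyadic structure furnishes a sufficiently rich collection of inner approximating domains.
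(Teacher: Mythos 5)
The paper does not actually prove this proposition: its entire ``proof'' is the citation ``See \cite[Theorem~1.1 and Section~7]{TightSimCLE}'', so the statement is being imported wholesale from the tightness paper rather than re-derived. Your outline is therefore not comparable to an in-paper argument, but it is a sensible reconstruction of the kind of argument one would expect to find in the cited reference: tightness of the rescaled MFPP metrics, joint tightness with the countable family of internal metrics, diagonal extraction, and identification of the limiting internal metrics with the internal metrics of the limit. The overall architecture is right, and you correctly isolate the genuinely delicate point, namely the boundary issue in the ``$\ge$'' direction of the identification.

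Two concrete caveats. First, your verification of the Arzel\`a--Ascoli criterion asserts uniform boundedness of $\kb_\varepsilon^{-1}D^\varepsilon_{X_\SCL}(\bullet,\bullet;W_2\cap X_\SCL)$ on $\overline{W_1}\cap X_\SCL$ via ``a deterministic path in $W_2$ connecting any two points of $\overline{W_1}$''; but such a path must lie in the \emph{carpet}, and $W_2\cap X_\SCL$ need not connect two given points of $\overline{W_1}\cap X_\SCL$ (they may lie in distinct connected components of $W_2\cap X_\SCL$), in which case the internal distance is $+\infty$ and the pair is not even an element of $\mathscr{HausUni}_4$ as defined. This is partly an artifact of the paper's own formulation, but your tightness argument needs to either truncate, allow extended-real values, or restrict to pairs connected within $W_2$; as written the boundedness claim is false. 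Second, your inner-approximation fix for the ``$\ge$'' direction leans on the assertion that near-optimal paths in $W_2\cap X_\SCL$ can be ``rerouted inward at vanishing cost'' by H\"older continuity; the global H\"older bound for $D_{X_\SCL}$ does not by itself produce a nearby path of the carpet inside $W_2'\subset\joinrel\subset W_2$, and what is really needed is a H\"older bound for the \emph{internal} metric on $W_2'$ (in the spirit of the paper's \Cref{019}) together with a connectivity statement. Both issues are repairable with tools available in \cite{TightSimCLE}, but they are the places where your sketch, taken literally, would break.
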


\begin{proof}
    See \cite[Theorem~1.1 and Section~7]{TightSimCLE}. 
\end{proof}

In the remainder of the present subsection, let $\{\kb_\varepsilon\}_{\varepsilon > 0}$ be as in \Cref{331}.

Set $\gamma \defeq \sqrt\kappa$ and $Q \defeq 2/\gamma + \gamma/2$. Let $\ell > 0$. Let $(\SCD, \Phi, x, y)$ be a $\gamma$-LQG disk of boundary length $\ell$ (cf.~\Cref{ss:13}). We shall write $\mu_\Phi$ (resp.~$\nu_\Phi$) for the $\gamma$-LQG area (resp.~length) measure associated with $\Phi$. Suppose that the law of $\partial\SCD$ is absolutely continuous with respect to the $\SLE_\kappa$ loop measure, i.e., 
\begin{equation*}
    ``\BP\lbrack\partial\SCD \in A\rbrack > 0'' \ \Rightarrow \ ``\BP\lbrack\text{there exists } \SCL \in \Gamma \text{ such that } \SCL \in A\rbrack > 0'', \quad \forall \text{ Borel subset } A \subset \mathscr{Loop}
\end{equation*}
(where $\Gamma$ is a whole-plane nested $\CLE_\kappa$). Given $\SCD$, let $\Gamma_\SCD$ be a nested $\CLE_\kappa$ in $\SCD$, conditionally independent of $\Phi$. We shall write $\Gamma_\SCD^\circ$ for the collection of the outermost loops of $\Gamma_\SCD$. Thus, $\Gamma_\SCD^\circ$ is conditionally a non-nested $\CLE_\kappa$ in $\SCD$ given $\SCD$. We shall write $\widehat\Gamma_\SCD \defeq \Gamma_\SCD \cup \{\partial\SCD\}$. For $\SCL \in \widehat\Gamma_\SCD$, we shall write $X(\SCL) \subset \overline\SCD$ for the closed subset consisting of points that are surrounded by $\SCL$ but by no other loop of $\Gamma_\SCD$ inside $\SCL$. To lighten notation, we shall write $X_\SCD \defeq X(\partial\SCD)$.

\begin{lemma}\label{052}
    There are deterministic constants $0 < \alpha_{\mathrm{UBD}} < 2 < \alpha_{\mathrm{LBD}}$ such that the following is true: Let $(\DD, \Phi_\DD, -\ri, \ri)$ be a $\gamma$-LQG disk of unit boundary length. Then it holds with superpolynomially high probability as $\varepsilon_\ast \to 0$ that 
    \begin{equation}\label{eq:047}
        \varepsilon^{\alpha_{\mathrm{LBD}}} \le \mu_{\Phi_\DD}(B_\varepsilon(z)) \le \varepsilon^{\alpha_{\mathrm{UBD}}}, \quad \forall z \in \DD, \ \forall \varepsilon \in (0, \dist(z, \partial\DD) \wedge \varepsilon_\ast]. 
    \end{equation}
\end{lemma}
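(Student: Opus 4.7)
The plan is to deduce the two-sided bound from standard estimates on the $\gamma$-LQG area measure applied to the Gaussian free field, combined with a dyadic chaining argument that makes the bound uniform in $z$ and $\varepsilon$.

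First I would reduce to an estimate for a free-boundary GFF on $\DD$. By the construction of the $\gamma$-LQG disk of unit boundary length recalled in \Cref{ss:13}, the field $\Phi_\DD$ can be written, on any fixed compact subset $K \Subset \DD \setminus \{-\ri, \ri\}$, as a restriction of a free-boundary GFF $h$ on $\DD$ plus a deterministic log insertion at each of $\pm\ri$ and a random additive constant $\mathfrak{c}$ with finite moments of all positive and negative orders (coming from the conditioning on $\nu_{\Phi_\DD}(\partial\DD) = 1$). Consequently, the law of $\Phi_\DD|_K$ is absolutely continuous with respect to that of such a shifted free-boundary GFF, with Radon--Nikodym derivative having all positive and negative moments, so the two-sided bound for $\mu_{\Phi_\DD}(B_\varepsilon(z))$ reduces to the corresponding bound for $\mu_h(B_\varepsilon(z))$. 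A neighborhood of $\{-\ri, \ri\}$ is treated separately, using that the deterministic log singularity there already guarantees a bound of the required form with room to spare.

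Next I would apply the multiplicative chaos representation
\begin{equation*}
\mu_h(B_\varepsilon(z)) = \varepsilon^{\gamma Q}\, e^{\gamma h_\varepsilon(z)}\, M_\varepsilon(z), \quad \dist(z, \partial\DD) \ge \varepsilon,
\end{equation*}
where $h_\varepsilon$ is the circle average (Gaussian with variance $\log(1/\varepsilon) + O(1)$) and $M_\varepsilon(z)$ is a random variable with finite positive and negative moments of all orders, uniformly in $z$ and $\varepsilon$. Combining Gaussian tail bounds for the supremum of $h_\varepsilon$ over a compact region with Chebyshev applied to the moments of $M_\varepsilon(z)$ yields, for arbitrary $\delta > 0$ and at a single dyadic scale, that
\begin{equation*}
\varepsilon^{\gamma Q + 2\gamma + \delta} \le \mu_h(B_\varepsilon(z)) \le \varepsilon^{\gamma Q - 2\gamma - \delta}, \quad \forall z \text{ with } \dist(z, \partial\DD) \ge \varepsilon,
\end{equation*}
with failure probability polynomial in $\varepsilon$ of arbitrarily high degree (by choosing the Gaussian threshold large enough). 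The algebraic identities $\gamma Q - 2\gamma = (2 - \gamma)^2/2$ and $\gamma Q + 2\gamma = (2 + \gamma)^2/2$ give $(2 - \gamma)^2/2 \in (0, 2)$ and $(2 + \gamma)^2/2 > 2$ for every $\gamma \in (\sqrt{8/3}, 2)$, so fixing $\alpha_{\mathrm{UBD}} \in (0, (2 - \gamma)^2/2)$ and $\alpha_{\mathrm{LBD}} > (2 + \gamma)^2/2$ produces the desired two-sided inequality at any single scale.

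Finally, to obtain the bound simultaneously for all $z$ and $\varepsilon \in (0, \dist(z, \partial\DD) \wedge \varepsilon_\ast]$, I would perform a standard dyadic chaining: at each scale $\varepsilon_k = 2^{-k} \le \varepsilon_\ast$, take an $O(\varepsilon_k^{-2})$-sized $(\varepsilon_k/100)$-net of $\{z \in \DD : \dist(z, \partial\DD) \ge \varepsilon_k\}$, apply the single-scale estimate to each net point (the union bound is absorbed into the polynomial decay), and extend to arbitrary $(z, \varepsilon)$ in the corresponding dyadic shell via H\"older continuity of the circle-average process. Summing the per-scale failure probabilities over $k$ with $2^{-k} \le \varepsilon_\ast$ produces the superpolynomial tail in $\varepsilon_\ast$. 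The main technical obstacle is the uniform handling of the neighborhoods of $\pm\ri$ and of the random normalizer $\mathfrak{c}$; both introduce multiplicative corrections that must be shown to be $\varepsilon^{o(1)}$ with sufficiently high probability, which is routine but requires care to preserve the rate.
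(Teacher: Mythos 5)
The structure of your argument (absolute continuity reduction, multiplicative-chaos decomposition with circle averages, and the arithmetic identities $\gamma Q - 2\gamma = (2-\gamma)^2/2$ and $\gamma Q + 2\gamma = (2+\gamma)^2/2$) is the right starting point, and the exponents you extract are the correct critical values. The paper itself simply cites external lemmas (\cite[Lemma~A.6 and Lemma~B.3]{TightSimCLE}), so a comparison of techniques is not possible; what matters is whether your outline actually yields the claimed rate.

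There is a genuine gap in the final step. With $\alpha_{\mathrm{UBD}}$ and $\alpha_{\mathrm{LBD}}$ \emph{fixed}, the per-scale failure probability at scale $\varepsilon_k = 2^{-k}$ is $\varepsilon_k^{p}$ for a fixed exponent $p = p(\alpha_{\mathrm{UBD}}, \alpha_{\mathrm{LBD}})$: the required threshold for $h_{\varepsilon_k}$ is $\lambda\log(1/\varepsilon_k)$ with $\lambda = (\gamma Q - \alpha_{\mathrm{UBD}})/\gamma$ a constant, so the Gaussian tail plus a union bound over $\varepsilon_k^{-2}$ net points gives $\varepsilon_k^{\lambda^2/2 - 2}$, not a rate you can inflate at will. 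Your parenthetical ``by choosing the Gaussian threshold large enough'' silently lets $\delta$ (hence $\alpha_{\mathrm{UBD}}$) move with the threshold, which is not allowed once the deterministic constants are fixed. Consequently, summing $\sum_{k : 2^{-k} \le \varepsilon_\ast} 2^{-kp} \asymp \varepsilon_\ast^{p}$ gives only \emph{polynomial} decay of fixed degree $p$, not the superpolynomial decay asserted in the lemma.

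To get superpolynomial decay you need a buffer exponent and a moment bound on the Hölder constant. Choose $\alpha_1$ strictly between $\alpha_{\mathrm{UBD}}$ and $(2-\gamma)^2/2$, and prove (via the dyadic/chaining argument, now at the stronger exponent $\alpha_1$) that
\begin{equation*}
Y \defeq \sup_{z \in \DD} \sup_{0 < \varepsilon \le \dist(z, \partial\DD)} \varepsilon^{-\alpha_1}\,\mu_{\Phi_\DD}(B_\varepsilon(z))
\end{equation*}
has finite moments of all positive orders; this is the genuinely non-trivial input, and it encapsulates the whole per-scale analysis with sharp constants. Then for $\varepsilon \le \varepsilon_\ast$ one has $\mu_{\Phi_\DD}(B_\varepsilon(z)) \le Y\,\varepsilon^{\alpha_1} \le Y\,\varepsilon_\ast^{\alpha_1 - \alpha_{\mathrm{UBD}}}\,\varepsilon^{\alpha_{\mathrm{UBD}}}$, so the upper estimate fails only on $\{Y > \varepsilon_\ast^{-(\alpha_1 - \alpha_{\mathrm{UBD}})}\}$, whose probability is at most $\BE[Y^N]\,\varepsilon_\ast^{N(\alpha_1 - \alpha_{\mathrm{UBD}})}$ for every $N$; this is superpolynomial in $\varepsilon_\ast$. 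The lower bound should be handled analogously with a buffer exponent $\alpha_2$ strictly between $(2+\gamma)^2/2$ and $\alpha_{\mathrm{LBD}}$ and all positive moments of $\sup_{z, \varepsilon} \varepsilon^{\alpha_2}/\mu_{\Phi_\DD}(B_\varepsilon(z))$. The absolute-continuity reduction and the separate treatment near $\pm\ri$ in your sketch are compatible with this fix, but the Radon--Nikodym derivative and the normalization must retain all positive and negative moments so the Markov step survives.
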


\begin{proof}
    See \cite[Lemma~A.6 and Lemma~B.3]{TightSimCLE}. 
\end{proof}

\begin{lemma}\label{332}
    Let $\alpha_{\mathrm{UBD}}$ and $\alpha_{\mathrm{LBD}}$ be as in \Cref{052}. Then there almost surely exists $\varepsilon_\ast \in (0, 1)$ such that 
    \begin{equation}\label{eq:349}
        \varepsilon^{\alpha_{\mathrm{LBD}}} \le \mu_\Phi(B_\varepsilon(z)) \le \varepsilon^{\alpha_{\mathrm{UBD}}}, \quad \forall z \in \SCD, \ \forall \varepsilon \in (0, \dist(z, \partial\SCD) \wedge \varepsilon_\ast]. 
    \end{equation}
\end{lemma}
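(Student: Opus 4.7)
The plan is to reduce the statement to \Cref{052} via the conformal covariance of the $\gamma$-LQG disk and the scaling of the $\gamma$-LQG area measure under shifts of the field. First I would choose a conformal uniformization $\phi \colon \DD \to \SCD$ with $\phi(-\ri) = x$ and $\phi(\ri) = y$, and set $\widetilde\Phi \defeq \Phi \circ \phi + Q\log\lvert\phi^\prime\rvert$. By the definition of a $\gamma$-LQG surface as a conformal equivalence class, $(\DD, \widetilde\Phi, -\ri, \ri)$ is again a $\gamma$-LQG disk of boundary length $\ell$, and the area measures intertwine via the coordinate-change formula $\mu_\Phi(\phi(A)) = \mu_{\widetilde\Phi}(A)$ for Borel $A \subset \DD$. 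Because the transformation $\Phi \mapsto \Phi + C$ scales the $\gamma$-LQG boundary length by $\re^{(\gamma/2)C}$ and the $\gamma$-LQG area by $\re^{\gamma C}$, we may couple $\widetilde\Phi = \Phi_\DD + (2/\gamma)\log\ell$ with $\Phi_\DD$ the unit-boundary-length disk field of \Cref{052}, so that $\mu_{\widetilde\Phi} = \ell^2 \mu_{\Phi_\DD}$.

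Second I would apply \Cref{052} to $\Phi_\DD$ to obtain an almost surely positive random $\widetilde\varepsilon_\ast$ satisfying $r^{\alpha_{\mathrm{LBD}}} \le \mu_{\Phi_\DD}(B_r(w)) \le r^{\alpha_{\mathrm{UBD}}}$ for every $w \in \DD$ and $r \in (0, \dist(w, \partial\DD) \wedge \widetilde\varepsilon_\ast]$. To transfer this back to $\SCD$, fix $z \in \SCD$, set $w \defeq \phi^{-1}(z)$, and take $\varepsilon \in (0, \dist(z, \partial\SCD)]$. The Koebe distortion and $1/4$ theorems applied to the restriction of $\phi$ to $B_{\dist(w, \partial\DD)}(w)$ yield absolute constants $c_1, c_2 > 0$ with
\begin{equation*}
    B_{c_1 \varepsilon/\lvert\phi^\prime(w)\rvert}(w) \subset \phi^{-1}(B_\varepsilon(z)) \subset B_{c_2 \varepsilon/\lvert\phi^\prime(w)\rvert}(w),
\end{equation*}
together with the two-sided comparability $\lvert\phi^\prime(w)\rvert(1 - \lvert w\rvert) \asymp \dist(z, \partial\SCD)$, which guarantees that the right-hand ball is contained in $B_{\dist(w, \partial\DD)}(w)$. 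Combining with the previous step and $\mu_\Phi(B_\varepsilon(z)) = \ell^2 \mu_{\Phi_\DD}(\phi^{-1}(B_\varepsilon(z)))$ produces two-sided bounds of the form $\ell^2 \lvert\phi^\prime(w)\rvert^{-\alpha_\bullet} \varepsilon^{\alpha_\bullet}$ on $\mu_\Phi(B_\varepsilon(z))$. Using the strict inequalities $\alpha_{\mathrm{UBD}} < 2 < \alpha_{\mathrm{LBD}}$, the multiplicative prefactors $\ell^2 c_j^{\alpha_\bullet}\lvert\phi^\prime(w)\rvert^{-\alpha_\bullet}$ can be absorbed into $\varepsilon^{\alpha_{\mathrm{LBD}}}$ (resp.\ $\varepsilon^{\alpha_{\mathrm{UBD}}}$) by further shrinking $\varepsilon_\ast$.

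The main obstacle is the non-uniformity of $\lvert\phi^\prime\rvert$ on $\DD$: it can blow up or vanish near $\partial\DD$, so the absorption step above cannot be carried out by a single global constant. The essential observation that makes the plan go through is that \Cref{052} produces a single random $\widetilde\varepsilon_\ast$ that works uniformly in $w$, while the Koebe estimate $\lvert\phi^\prime(w)\rvert(1-\lvert w\rvert) \asymp \dist(\phi(w), \partial\SCD)$ ties the pointwise behavior of $\lvert\phi^\prime\rvert$ directly to the admissible range of $\varepsilon$ imposed by $\varepsilon \le \dist(z, \partial\SCD)$. In particular, the ratio $\varepsilon/\lvert\phi^\prime(w)\rvert$ is always bounded by a universal multiple of $\dist(w, \partial\DD)$, so the invocation of \Cref{052} is legitimate for every admissible pair $(z, \varepsilon)$, and a random but almost surely positive $\varepsilon_\ast$ valid for every $z \in \SCD$ simultaneously can be extracted from $\widetilde\varepsilon_\ast$ and the continuity of $\phi$ on compact subsets of $\DD$.
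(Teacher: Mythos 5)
Your route is genuinely different from the paper's, and it has a gap at the absorption step. The paper does not uniformize $\SCD$ by $\DD$ at all: it first reduces (via the absolute continuity of the law of $\partial\SCD$ with respect to the $\SLE_\kappa$ loop measure) to the case where $\SCD$ is the region surrounded by the loop $\SCL$ of \Cref{331} inside a unit-boundary-length disk $(\DD, \Phi_\DD, -\ri, \ri)$, and then uses the fact that $(\SCD, \Phi_\DD|_\SCD, x, y)$ is conditionally a $\gamma$-LQG disk given its boundary length. With that coupling $\mu_\Phi$ is literally the restriction of $\mu_{\Phi_\DD}$ to $\SCD$, and since $\dist(z, \partial\SCD) \le \dist(z, \partial\DD)$ for $z \in \SCD \subset \DD$, the estimate \eqref{eq:349} is the verbatim restriction of \eqref{eq:047}, with the same $\varepsilon_\ast$ and the same exponents. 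No conformal map and no distortion estimates are needed.

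The gap in your argument is the claim that the prefactors $\ell^2 c_j^{\alpha_\bullet}\lvert\phi^\prime(w)\rvert^{-\alpha_\bullet}$ ``can be absorbed into $\varepsilon^{\alpha_\bullet}$ by further shrinking $\varepsilon_\ast$.'' For the upper bound, your chain gives $\mu_\Phi(B_\varepsilon(z)) \le \ell^2 (c_2\varepsilon/\lvert\phi^\prime(w)\rvert)^{\alpha_{\mathrm{UBD}}}$, and making this $\le \varepsilon^{\alpha_{\mathrm{UBD}}}$ requires $\lvert\phi^\prime(w)\rvert \ge c_2\ell^{2/\alpha_{\mathrm{UBD}}}$ --- an inequality that does not involve $\varepsilon$ at all, so shrinking $\varepsilon_\ast$ cannot produce it, and it fails near boundary points where $\lvert\phi^\prime\rvert \to 0$ (the boundary is an $\SLE_\kappa$-type curve, so $\lvert\phi^\prime\rvert$ is not bounded away from $0$ or $\infty$). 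The symmetric problem occurs for the lower bound where $\lvert\phi^\prime\rvert \to \infty$. The Koebe identity $\lvert\phi^\prime(w)\rvert(1-\lvert w\rvert) \asymp \dist(z,\partial\SCD)$ controls the \emph{ratio} $\varepsilon/\lvert\phi^\prime(w)\rvert$ (so the invocation of \Cref{052} is indeed legitimate), but it does nothing to control the prefactor $\lvert\phi^\prime(w)\rvert^{-\alpha_\bullet}$ itself. To repair this one would need quantitative two-sided bounds on $\lvert\phi^\prime\rvert$ in terms of $\dist(w,\partial\DD)$ --- which is exactly what the paper extracts from the volume bounds in the proof of \Cref{051} --- and feeding those back in necessarily degrades the exponents, so your argument would at best prove the statement with constants strictly worse than $\alpha_{\mathrm{UBD}}, \alpha_{\mathrm{LBD}}$, not the lemma as stated. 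A secondary issue: the containment $\phi^{-1}(B_\varepsilon(z)) \subset B_{c_2\varepsilon/\lvert\phi^\prime(w)\rvert}(w)$ with a uniform $c_2$ fails as $\varepsilon \uparrow \dist(z,\partial\SCD)$, since the growth-theorem constant blows up like $(1-\varepsilon/\dist(z,\partial\SCD))^{-2}$ and the preimage of a ball tangent to $\partial\SCD$ reaches $\partial\DD$, hence is not contained in any ball compactly contained in $\DD$.
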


\begin{proof}
    Let $\SCL$ be as in \Cref{331}. It suffices to consider the case where $\SCD$ is the domain surrounded by $\SCL$. Let $(\DD, \Phi_\DD, -\ri, \ri)$ be a $\gamma$-LQG disk of unit boundary length. Then \Cref{332} follows immediately from \Cref{052}, together with the fact that $(\SCD, \Phi_\DD|_\SCD, x, y)$ is conditionally a $\gamma$-LQG disk given its boundary length (cf.~\cite{SimCLELQG}). 
\end{proof}

For each $\varepsilon_\ast \in (0, 1)$, we shall write $E(\varepsilon_\ast)$ for the event that \eqref{eq:349} holds.

\begin{lemma}\label{014}
    There is a deterministic constant $\alpha_{\mathrm{NET}} > 0$ such that the following is true: Let $(\DD, \Phi_\DD, -\ri, \ri)$ be a $\gamma$-LQG disk of unit boundary length. Let $\Gamma_\DD$ be an independent non-nested $\CLE_\kappa$ in $\DD$. Write $X_\DD$ for the carpet of $\Gamma_\DD$. Given $\Phi_\DD$ and $\Gamma_\DD$, let $\{x_n\}_{n \in \NN}$ be conditionally independent samples from $\mu_{\Phi_\DD,X_\DD}$ (renormalized to be a probability measure) (cf.~\cite{SimCLELQG}). Then it holds with superpolynomially high probability as $\delta_\ast \to 0$ that
    \begin{equation*}
        X_\DD \subset \bigcup_{j \in [1, \delta^{-\alpha_{\mathrm{NET}}}]_\ZZ} B_\delta(x_j), \quad \forall \delta \in (0, \delta_\ast]. 
    \end{equation*}
\end{lemma}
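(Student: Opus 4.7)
The plan is a net-covering argument based on a deterministic Euclidean mesh, combined with a polynomial lower bound on the conformally covariant carpet measure of small balls. Fix a deterministic $(\delta/3)$-net $\{y_1, \ldots, y_M\} \subset \overline\DD$ with $M \le C\delta^{-2}$. By the triangle inequality, if for every $k \in [1, M]_\ZZ$ with $B_{\delta/3}(y_k) \cap X_\DD \neq \emptyset$ at least one of $x_1, \ldots, x_N$ lies in $B_{\delta/3}(y_k)$, where $N \defeq \lceil \delta^{-\alpha_{\mathrm{NET}}} \rceil$, then $X_\DD \subset \bigcup_{j = 1}^N B_\delta(x_j)$. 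Since $x_1, \ldots, x_N$ are conditionally i.i.d.\ given $(\Phi_\DD, \Gamma_\DD)$, the conditional probability of failure at a single $k$ is $(1 - p_k)^N$, where $p_k \defeq \mu_{\Phi_\DD, X_\DD}(B_{\delta/3}(y_k))/\mu_{\Phi_\DD, X_\DD}(X_\DD)$. Via a union bound over $k$ and over dyadic scales, the proof reduces to a polynomial lower bound on $p_k$ on an event of superpolynomial probability.

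The main ingredient is therefore a pointwise polynomial lower bound: I would establish that there exists a deterministic $\alpha_0 > 0$ such that, with superpolynomially high probability as $\delta_\ast \to 0$, $\mu_{\Phi_\DD, X_\DD}(B_\delta(z)) \ge \delta^{\alpha_0}$ for every $z \in X_\DD$ with $\dist(z, \partial\DD) \ge \delta$ and every $\delta \in (0, \delta_\ast]$. Two inputs enter here. First, \Cref{332} gives a lower bound $\mu_{\Phi_\DD}(B_\delta(z)) \ge \delta^{\alpha_{\mathrm{LBD}}}$ on the total $\gamma$-LQG mass of a small Euclidean ball, uniformly in $z$. Second, the canonical conformally covariant carpet measure constructed in \cite{SimCLELQG, ExUniCoCoVolCLE} is, up to a polynomial factor in the scale, comparable to the $\gamma$-LQG area of an appropriate Minkowski thickening of $X_\DD$. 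Quantitatively, using \Cref{264}, \eqref{264B} at the dyadic scales $\{2^{-n}\}_n$ around $z$, on a positive-density subset of scales the annulus $A_{2^{-n - 1}, 2^{-n}}(z)$ is crossed by at most two arcs of $\Gamma_\DD$, so that a definite $\gamma$-LQG fraction of $B_\delta(z)$ survives in $X_\DD$ after all loops at scales above $\delta$ are removed. Points of $X_\DD$ close to $\partial\DD$ are handled by applying the same argument to the nested $\CLE_\kappa$ inside the innermost surrounding loop of $\Gamma_\DD$, which is itself a non-nested $\CLE_\kappa$ in a smaller simply connected domain.

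Granting this, together with the fact that $\mu_{\Phi_\DD, X_\DD}(X_\DD)$ is bounded by a deterministic constant with superpolynomially high probability (a standard tail bound for the $\gamma$-LQG area with $\gamma \in (0, 2)$), we obtain $p_k \ge c\delta^{\alpha_0}$ on a global good event. Then $(1 - p_k)^N \le \exp(-cN\delta^{\alpha_0})$ is superpolynomially small in $\delta^{-1}$ provided $\alpha_{\mathrm{NET}} > \alpha_0$, and union bounds over $k \in [1, M]_\ZZ$ and over $\delta \in (0, \delta_\ast] \cap \{2^{-n}\}_{n \in \NN}$ complete the proof. The main obstacle is the pointwise polynomial lower bound on $\mu_{\Phi_\DD, X_\DD}(B_\delta(z))$: one must quantitatively rule out exceptional points of the carpet where the conformally covariant measure is superpolynomially small relative to the Euclidean scale, which requires combining the local comparison between the LQG area and the carpet measure from \cite{SimCLELQG, ExUniCoCoVolCLE} with scale-by-scale control of the nested $\CLE_\kappa$ structure near $z$.
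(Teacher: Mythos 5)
The paper does not actually prove this lemma; it is quoted verbatim from \cite[Appendix~B]{TightSimCLE}, so there is no internal argument to compare against. Your outer reduction --- a deterministic $(\delta/3)$-net, the observation that conditional i.i.d.\ sampling makes the failure probability at a net point equal to $(1-p_k)^N$, and a union bound over net points and dyadic scales --- is certainly the right skeleton and is presumably how the cited appendix proceeds (modulo a small adjustment of radii so that a ball centered at a carpet point, rather than at the net point, sits inside the covering ball).

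The problem is that essentially the entire content of the lemma sits in the ingredient you defer: a lower bound on $p_k = \mu_{\Phi_\DD,X_\DD}(B_{\delta/3}(y_k))/\mu_{\Phi_\DD,X_\DD}(X_\DD)$, polynomial in $\delta$, holding simultaneously for all relevant $y_k$ on an event of superpolynomially high probability. Three concrete issues with your sketch of that ingredient. First, the boundary treatment does not parse: for a \emph{non-nested} $\CLE_\kappa$, the carpet $X_\DD$ is precisely the set of points not surrounded by any loop of $\Gamma_\DD$, so a point of $X_\DD$ has no ``innermost surrounding loop of $\Gamma_\DD$''; moreover the carpet accumulates on $\partial\DD$, where \Cref{052} and \Cref{332} give nothing (they require $\delta \le \dist(z, \partial\DD)$), so the near-boundary balls genuinely need a separate input. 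Second, the asserted polynomial comparison between $\mu_{\Phi_\DD,X_\DD}$ and the $\gamma$-LQG area of a Minkowski thickening of $X_\DD$ --- uniformly over all points and scales, on a superpolynomially likely event --- is itself a theorem-sized claim: the carpet has zero Lebesgue measure and is independent of $\Phi_\DD$, so the carpet measure is not an LQG area, and \Cref{264}, \eqref{264B} controls the combinatorics of arm crossings but does not by itself lower-bound the mass remaining after all loops below scale $\delta$ are removed. Third, bounding numerator and denominator separately is lossy in a way that breaks the stated error rate: the total mass $\mu_{\Phi_\DD,X_\DD}(X_\DD)$ for a unit-boundary-length disk has only polynomial tails (compare \Cref{335}, \eqref{335A} for the area), so for a fixed $\alpha_{\mathrm{NET}}$ the event $\{\mu_{\Phi_\DD,X_\DD}(X_\DD) \le \delta_\ast^{-c}\}$ fails at a fixed polynomial rate and cannot be absorbed into a superpolynomial error; one must instead bound the ratio directly (which is plausible, since the ratio is invariant under shifting the zero mode of the field, but this is exactly the estimate that has to be proved). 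As written, the proposal reproduces the routine outer layer of the argument and leaves the hard estimate unestablished.
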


\begin{proof}
    See \cite[Appendix~B]{TightSimCLE}. 
\end{proof}

\begin{lemma}\label{051}
    Let $\alpha_{\mathrm{NET}}$ be as in \Cref{014}. Then there is a deterministic constant $\alpha_{\mathrm{PERIM}} > 0$ such that the following is true: Given $\Phi$ and $\Gamma_\SCD$, let $\{x_n\}_{n \in \NN}$ be conditionally independent samples from $\mu_{\Phi,X_\SCD}$ (renormalized to be a probability measure). Let $\varepsilon_\ast \in (0, 1)$. Then, on the event $E(\varepsilon_\ast)$, it holds with superpolynomially high probability as $\delta_\ast \to 0$, at a rate depending only on $\varepsilon_\ast$, that
    \begin{equation}\label{eq:049}
        X_\SCD \subset \bigcup_{j \in [1, \delta^{-\alpha_{\mathrm{NET}}}]_\ZZ} B_{\ell^{\alpha_{\mathrm{PERIM}}}\delta}(x_j), \quad \forall \delta \in (0, \delta_\ast]. 
    \end{equation}
\end{lemma}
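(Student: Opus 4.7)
The approach is to reduce \Cref{051} to \Cref{014} via LQG scaling followed by a conformal pullback.

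By the equivalence of $\gamma$-LQG surfaces, adding the constant $c \defeq (2/\gamma)\log\ell$ to the field of a unit boundary length disk yields a disk of boundary length $\ell$ in the same Euclidean embedding, leaving the domain, the $\CLE_\kappa$, and its carpet unchanged. The Weyl scaling for the conformally covariant measure $\mu_{\Phi, X}$ multiplies the measure by $\re^{\beta c}$ for a deterministic $\beta = \beta(\kappa) > 0$ (cf.~\cite{ExUniCoCoVolCLE}), so the \emph{renormalized} probability measures coincide. Let $\phi \colon \DD \to \SCD$ be a conformal isomorphism sending $\pm\ri$ to $x, y$. Pulling back via $\phi$ (so that $\Phi_\DD \defeq \Phi \circ \phi + Q\log|\phi'| - c$) turns $(\SCD, \Phi, x, y)$ into a unit boundary length $\gamma$-LQG disk $(\DD, \Phi_\DD, -\ri, \ri)$, and the pulled-back samples $\{x_j' \defeq \phi^{-1}(x_j)\}$ are conditionally iid from the renormalized $\mu_{\Phi_\DD, X_\DD}$ given $X_\DD$. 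Applying \Cref{014} to this disk yields, with superpolynomially high probability as $\delta_\ast \to 0$,
\begin{equation*}
    X_\DD \subset \bigcup_{j \in [1, \delta^{-\alpha_{\mathrm{NET}}}]_\ZZ} B_\delta(x_j'), \quad \forall \delta \in (0, \delta_\ast].
\end{equation*}

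To transfer this covering to $\SCD$ via $\phi$, we use the Koebe distortion theorem: for $w \in X_\DD$ with $\dist(w, \partial\DD) \ge 2\delta$, one has $\phi(B_\delta(w)) \subset B_{4\delta|\phi'(w)|}(\phi(w))$. Hence it suffices to prove, on the event $E(\varepsilon_\ast)$, the Euclidean distortion bound $\sup_{w \in X_\DD, \dist(w, \partial\DD) \ge 2\delta} |\phi'(w)| \le C\ell^{\alpha_{\mathrm{PERIM}}}$ for some deterministic $\alpha_{\mathrm{PERIM}} > 0$ and a universal $C$; the pushed-forward covering will then have Euclidean radius $4C\ell^{\alpha_{\mathrm{PERIM}}}\delta$, which (after absorbing constants into $\ell^{\alpha_{\mathrm{PERIM}}}$) yields the statement.

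To establish the bound on $|\phi'|$ we compare LQG areas in the two embeddings. By conformal invariance of the LQG area together with the shift by $c$, $\mu_\Phi(\phi(A)) = \ell^2 \mu_{\Phi_\DD}(A)$ for every Borel $A \subset \DD$. The Koebe quarter theorem yields $B_{|\phi'(w)|r/4}(\phi(w)) \subset \phi(B_{r/2}(w)) \subset \SCD$ whenever $r \le \dist(w, \partial\DD)$. On $E(\varepsilon_\ast)$, \eqref{eq:349} gives $\mu_\Phi(B_{|\phi'(w)|r/4}(\phi(w))) \le (|\phi'(w)|r/4)^{\alpha_{\mathrm{UBD}}}$ whenever $|\phi'(w)|r/4 \le \varepsilon_\ast$; combined with a lower bound $\mu_{\Phi_\DD}(B_{r/2}(w)) \ge r^{\alpha_{\mathrm{LBD}}}$ from a \Cref{052}-type estimate for the pulled-back unit boundary length field $\Phi_\DD$ (valid with high probability, at a rate depending only on $\varepsilon_\ast$), one obtains $|\phi'(w)|^{\alpha_{\mathrm{UBD}}} \lesssim \ell^2 r^{\alpha_{\mathrm{LBD}} - \alpha_{\mathrm{UBD}}}$. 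Taking $r$ of a fixed positive order (e.g., $r \asymp \dist(w, \partial\DD) \wedge \varepsilon_\ast$) and choosing $\alpha_{\mathrm{PERIM}} \defeq 2/\alpha_{\mathrm{UBD}}$ (with the implicit $r$-factor absorbed into a polynomial loss in $\delta$, which is tolerable since the final estimate is only required to superpolynomially high probability) yields the claimed bound on $|\phi'(w)|$.

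The main obstacle is the last step: reliably converting the LQG area upper bound on $\SCD$ (from $E(\varepsilon_\ast)$) into a Euclidean derivative bound on $\phi$, uniformly over $w \in X_\DD$ away from $\partial\DD$. The delicate points are that both the $\alpha_{\mathrm{UBD}}$-upper and $\alpha_{\mathrm{LBD}}$-lower estimates must apply at the same scale $r$, and that the exceptional events from \Cref{052}-type bounds for $\Phi_\DD$ are controlled uniformly enough (at a rate depending only on $\varepsilon_\ast$) to preserve the superpolynomially-high-probability rate promised in the statement of \Cref{051}.
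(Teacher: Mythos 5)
Your overall strategy coincides with the paper's: uniformize $(\SCD,\Phi)$ to a unit boundary length quantum disk, apply \Cref{014} there, and transfer the covering back by controlling the distortion of the uniformizing map via a comparison of LQG areas in the two embeddings (the event $E(\varepsilon_\ast)$ on $\SCD$ against the \Cref{052}-type bounds on $\DD$). The gap is in the execution of the distortion estimate. The Koebe inclusion $B_{\lvert\phi'(w)\rvert r/8}(\phi(w)) \subset \phi(B_{r/2}(w))$ gives
$\mu_\Phi\bigl(B_{\lvert\phi'(w)\rvert r/8}(\phi(w))\bigr) \le \ell^2\mu_{\Phi_\DD}(B_{r/2}(w))$,
so to extract an \emph{upper} bound on $\lvert\phi'(w)\rvert$ you must bound the left-hand side from \emph{below} (via the $\alpha_{\mathrm{LBD}}$-lower bound in $E(\varepsilon_\ast)$) and the right-hand side from \emph{above} (via the $\alpha_{\mathrm{UBD}}$-upper bound for the unit disk). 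You have paired the bounds the other way around — the $\alpha_{\mathrm{UBD}}$-upper bound on $\mu_\Phi$ with the $\alpha_{\mathrm{LBD}}$-lower bound on $\mu_{\Phi_\DD}$ — and with the inclusion above these sit on the same side of the inequality, yielding at best a \emph{lower} bound on $\lvert\phi'(w)\rvert$. (A sanity check: your conclusion $\lvert\phi'(w)\rvert \lesssim \ell^{2/\alpha_{\mathrm{UBD}}} r^{(\alpha_{\mathrm{LBD}}-\alpha_{\mathrm{UBD}})/\alpha_{\mathrm{UBD}}}$ improves as $r \to 0$, i.e.\ asserts that $\phi'$ vanishes at the boundary, which cannot be right.) The correct pairing gives $\lvert\phi'(w)\rvert \le C\ell^{2/\alpha_{\mathrm{LBD}}} r^{\alpha_{\mathrm{UBD}}/\alpha_{\mathrm{LBD}}-1}$ with $r \asymp \dist(w,\partial\DD)$, hence $\alpha_{\mathrm{PERIM}} = 2/\alpha_{\mathrm{LBD}}$ rather than $2/\alpha_{\mathrm{UBD}}$; this is exactly the computation in the paper, written there for $\phi^{-1}$ in its convention.

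Two further points in your final step need repair. First, your transfer of the covering is only performed at centers $w$ with $\dist(w,\partial\DD) \ge 2\delta$, but $X_\DD$ contains $\partial\DD$ and points arbitrarily close to it, so the pointwise distortion bound at the centers does not cover all of $X_\SCD$; moreover a $\delta$-independent bound $\sup\lvert\phi'\rvert \le C\ell^{\alpha_{\mathrm{PERIM}}}$ is not available, since the correct bound degenerates like $\dist(w,\partial\DD)^{\alpha_{\mathrm{UBD}}/\alpha_{\mathrm{LBD}}-1}$. Second, the parenthetical about absorbing the $r$-factor "into a polynomial loss in $\delta$, tolerable since the estimate is only required to superpolynomially high probability" conflates a loss in the covering \emph{radius} with a loss in \emph{probability}: a radius of order $\delta^{\alpha_{\mathrm{UBD}}/\alpha_{\mathrm{LBD}}}$ instead of $\delta$ is absorbed by reparametrizing $\delta$ and adjusting the netting exponent $\alpha_{\mathrm{NET}}$, not by the probability rate. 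The clean fix — and what the paper does — is to integrate the derivative bound over $B_{1-\delta}(0)$ for all $\delta \le \delta_\ast$ to conclude that the map $\DD \to \SCD$ extends to an $(\alpha_{\mathrm{UBD}}/\alpha_{\mathrm{LBD}})$-H\"older map on $\overline\DD$ with coefficient a constant times $\ell^{2/\alpha_{\mathrm{LBD}}}$; this handles the boundary and yields \eqref{eq:049} after the exponent adjustment.
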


\begin{proof}
    Let $\phi$ be a conformal mapping from $\SCD$ onto $\DD$. Write 
    \begin{equation*}
        \Phi_\DD \defeq \Phi \circ \phi^{-1} + Q\log(\lvert (\phi^{-1})^\prime\rvert) - (2/\gamma)\log(\ell); \quad \Gamma_\DD \defeq \phi(\Gamma_\SCD); \quad X_\DD \defeq \phi(X_\SCD). 
    \end{equation*}
    Then $(\DD, \Phi_\DD, -\ri, \ri)$ is a $\gamma$-LQG disk of unit boundary length, and $\Gamma_\DD$ is an independent non-nested $\CLE_\kappa$ in $\DD$. Write $E_\DD(\delta_\ast)$ for the event that \eqref{eq:047} holds. Then it follows from \Cref{052} that $E_\DD(\delta_\ast)$ holds with superpolynomially high probability as $\delta_\ast \to 0$. Write $F_\DD(\delta_\ast)$ for the event that 
    \begin{equation}\label{eq:048}
        X_\DD \subset \bigcup_{j \in [1, \delta^{-\alpha_{\mathrm{NET}}}]_\ZZ} B_\delta(\phi(x_j)), \quad \forall \delta \in (0, \delta_\ast]. 
    \end{equation}
    Since $\{\phi(x_j)\}_{j \in \NN}$ has the law of conditionally independent samples from $\mu_{\Phi_\DD,X_\DD}$ (renormalized to be a probability measure), it follows from \Cref{051} that $F_\DD(\delta_\ast)$ holds with superpolynomially high probability as $\delta_\ast \to 0$.

    Now, suppose that we are working on the event $E(\varepsilon_\ast) \cap E_\DD(\delta_\ast) \cap F_\DD(\delta_\ast)$ for some $\delta_\ast \in (0, \varepsilon_\ast]$. For each $\delta \in (0, \delta_\ast]$, by Koebe's quarter theorem and the definitions of $E(\varepsilon_\ast)$ and $E_\DD(\delta_\ast)$, we have
    \begin{align*}
        \delta^{\alpha_{\mathrm{UBD}}} &\ge \mu_{\Phi_\DD}(B_\delta(z)) = \ell^{-2}\mu_\Phi\left(\phi^{-1}(B_\delta(z))\right) \\
        &\ge \ell^{-2}\mu_\Phi\left(B_{\lvert(\phi^{-1})^\prime(z)\rvert\delta/4}(z)\right) \ge \ell^{-2}\left(\lvert(\phi^{-1})^\prime(z)\rvert\delta/4\right)^{\alpha_{\mathrm{LBD}}}, \quad \forall z \in B_{1 - \delta}(0). 
    \end{align*}
    This implies that 
    \begin{equation*}
        \sup_{z \in B_{1 - \delta}(0)} \left\lvert(\phi^{-1})^\prime(z)\right\rvert \le 4\ell^{2/\alpha_{\mathrm{LBD}}}  \delta^{\alpha_{\mathrm{UBD}}/\alpha_{\mathrm{LBD}} - 1}, \quad \forall \delta \in (0, \delta_\ast].
    \end{equation*}
    Since $\alpha_{\mathrm{UBD}}/\alpha_{\mathrm{LBD}} - 1 > -1$, this implies that $\phi^{-1}$ extends to an $\alpha_{\mathrm{UBD}}/\alpha_{\mathrm{LBD}}$-H\"older continuous mapping from $\overline\DD$ onto $\overline\SCD$ with H\"older coefficient equal to $\ell^{2/\alpha_{\mathrm{LBD}}}$ times some constant depending only on $\alpha_{\mathrm{UBD}}$ and $\alpha_{\mathrm{LBD}}$. Thus, by decreasing $\alpha_{\mathrm{NET}}$, \eqref{eq:049} follows immediately from \eqref{eq:048}. This completes the proof.
\end{proof}

\begin{proposition}\label{333}
    There are deterministic constants $\alpha_{\mathrm{KC}}, \alpha_{\mathrm{PERIM}} > 0$ such that the following is true: Let $\varepsilon_\ast \in (0, 1)$. Then, on the event $E(\varepsilon_\ast)$, it holds with superpolynomially high probability as $A \to 0$, at a rate depending only on $\varepsilon_\ast$, that
    \begin{equation*}
        \kb_\varepsilon^{-1} D_{X_\SCD}^\varepsilon(u, v) \le A \ell^{\alpha_{\mathrm{PERIM}}} \lvert u - v\rvert^{\alpha_{\mathrm{KC}}}, \quad \forall u, v \in X_\SCD, \ \forall \varepsilon \in (0, \varepsilon_\ast]. 
    \end{equation*}
\end{proposition}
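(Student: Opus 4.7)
The plan is to reduce the statement to the analogous H\"older bound for the MFPP metric on the unit-boundary-length $\gamma$-LQG disk in $\DD$, which is essentially established in \cite{TightSimCLE}, and then transfer the bound to $\SCD$ through the conformal map, mirroring the approach used in \Cref{051}. Fix a conformal map $\phi \colon \SCD \to \DD$ and define $\Phi_\DD \defeq \Phi \circ \phi^{-1} + Q\log(\lvert(\phi^{-1})^\prime\rvert) - (2/\gamma)\log(\ell)$, $\Gamma_\DD \defeq \phi(\Gamma_\SCD)$, and $X_\DD \defeq \phi(X_\SCD)$; then $(\DD, \Phi_\DD, -\ri, \ri)$ is a $\gamma$-LQG disk of unit boundary length, and $\Gamma_\DD$ is an independent non-nested $\CLE_\kappa$ in $\DD$. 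Let $E_\DD(\delta_\ast)$ denote the analogue of $E(\varepsilon_\ast)$ for $(\DD, \Phi_\DD)$; by \Cref{052}, $E_\DD(\delta_\ast)$ holds with superpolynomially high probability as $\delta_\ast \to 0$.

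On the unit disk, I will invoke (or reprove via the same Kolmogorov-Centsov-style chaining used in \cite{TightSimCLE}, using the dense net from \Cref{014} and tail estimates for the MFPP distance between two nearby carpet points) the following: there exists $\alpha_{\mathrm{KC}}^\prime > 0$ such that, on $E_\DD(\delta_\ast)$, with superpolynomially high probability as $A \to \infty$ at a rate depending only on $\delta_\ast$, we have
\begin{equation*}
    \kb_\varepsilon^{-1} D_{X_\DD}^\varepsilon(u^\prime, v^\prime) \le A\lvert u^\prime - v^\prime\rvert^{\alpha_{\mathrm{KC}}^\prime}, \quad \forall u^\prime, v^\prime \in X_\DD, \ \forall \varepsilon \in (0, \delta_\ast].
\end{equation*}
The chaining uses that, at each dyadic scale $\delta_k = 2^{-k}$, the net $\{x_j\}_{j \le \delta_k^{-\alpha_{\mathrm{NET}}}}$ from \Cref{014} covers $X_\DD$ by $\delta_k$-balls, together with a Markov-type tail bound on $\kb_\varepsilon^{-1}D_{X_\DD}^\varepsilon(x_j, x_k)$ for neighboring net points (which follows from basic CLE scaling and the definition of $\kb_\varepsilon$).

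The remaining step is to transfer the bound back to $\SCD$ via the conformal map. From the computation in the proof of \Cref{051}, on $E(\varepsilon_\ast) \cap E_\DD(\delta_\ast)$ one has the derivative estimate $\sup_{z \in B_{1-\delta}(0)} \lvert(\phi^{-1})^\prime(z)\rvert \le C\ell^{2/\alpha_{\mathrm{LBD}}} \delta^{\alpha_{\mathrm{UBD}}/\alpha_{\mathrm{LBD}} - 1}$ for all $\delta \in (0, \delta_\ast]$. Given $u, v \in X_\SCD$ and an approximately MFPP-optimal path $P_\DD \subset X_\DD$ from $\phi(u)$ to $\phi(v)$, I partition $P_\DD$ according to the dyadic annuli $\{z \in \DD : 1 - 2^{-k-1} \le |z| < 1 - 2^{-k}\}$, pull each piece back by $\phi^{-1}$, and estimate the Lebesgue measure of its $\varepsilon$-neighborhood in $\SCD$ by a constant times $\lvert(\phi^{-1})^\prime\rvert^2$ times the Lebesgue measure of an appropriately rescaled neighborhood of the corresponding piece of $P_\DD$ in $\DD$. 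Summing the resulting geometric series over $k$ and combining with the bound from the preceding step yields the desired estimate on $D_{X_\SCD}^\varepsilon(u, v)$ with an extra factor $\ell^{\alpha_{\mathrm{PERIM}}}$ (absorbing the H\"older and derivative constants) and a possibly smaller exponent $\alpha_{\mathrm{KC}} \in (0, \alpha_{\mathrm{KC}}^\prime)$ obtained from converting between the moduli of continuity of $\phi$ and $\phi^{-1}$ on $X_\SCD$.

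The main obstacle is precisely this transfer step: the MFPP metric is defined using Euclidean $\varepsilon$-neighborhoods and is not conformally covariant, so one must carefully track how the Lebesgue measure of $\varepsilon$-neighborhoods is distorted by the non-Lipschitz (but H\"older) conformal map $\phi^{-1}$. The scale-dependent bound on $\lvert(\phi^{-1})^\prime\rvert$ is what makes the dyadic-annulus decomposition work and is also the source of the $\ell^{\alpha_{\mathrm{PERIM}}}$ dependence; one must choose $\delta_\ast$ as a small power of $\varepsilon_\ast$ to absorb the $\varepsilon$-neighborhood distortion uniformly for $\varepsilon \in (0, \varepsilon_\ast]$.
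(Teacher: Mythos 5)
The paper's own ``proof'' of this proposition is simply a citation to \cite[Section~6]{TightSimCLE}, so a line-by-line comparison is not possible; however, your proposed route contains a gap that I do not think can be resolved in the way you indicate.

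The problem is in the transfer step. You pull back a near-optimal path $P_\DD$ for $D_{X_\DD}^\varepsilon$ to $\SCD$, decompose it along dyadic annuli at distance $\delta_k = 2^{-k}$ from $\partial\DD$, and use the derivative bound $\sup_{B_{1-\delta}(0)} \lvert (\phi^{-1})'\rvert \le 4\ell^{2/\alpha_{\mathrm{LBD}}} \delta^{\alpha_{\mathrm{UBD}}/\alpha_{\mathrm{LBD}} - 1}$ from the proof of \Cref{051}. But the exponent $\alpha_{\mathrm{UBD}}/\alpha_{\mathrm{LBD}} - 1$ is negative, so in the $k$-th annulus the Jacobian factor $\lvert(\phi^{-1})'\rvert^2$ you multiply by can grow like $2^{2k(1 - \alpha_{\mathrm{UBD}}/\alpha_{\mathrm{LBD}})}$, which is exponential in $k$. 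Even after accounting for the rescaled neighborhood width $\varepsilon/\lvert(\phi^{-1})'\rvert$ (which lowers one power of $\lvert(\phi^{-1})'\rvert$ via \Cref{334}), a factor proportional to $\lvert(\phi^{-1})'\rvert$ survives on each annulus, so the series you claim to ``sum as a geometric series'' grows, rather than decays, toward $\partial\DD$. Since $X_\DD$ contains $\partial\DD$, a near-optimal MFPP path in $X_\DD$ can in general spend a positive fraction of its MFPP cost in every dyadic annulus near the boundary, and there is no obvious reason the $\DD$-optimal path becomes near-optimal after pullback. In short: the MFPP metric is genuinely not conformally covariant, and the Koebe-derivative multifractality bound does not on its own repair that; you need to control how much an $\SCD$-optimal path concentrates near $\partial\SCD$, which your argument does not do.

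Two further details that would also need to be addressed: (i) the Euclidean $\varepsilon$-neighborhood $B_\varepsilon(P)$ in the definition of $D_{X_\SCD}^\varepsilon$ is taken in $\CC$, so it can extend outside $\SCD$ where $\phi$ is not defined, and the change of variables does not literally apply to that portion; and (ii) the rescaled radius $\varepsilon/\lvert(\phi^{-1})'\rvert$ is different on each annulus, so you are comparing $D_{X_\DD}^{\varepsilon'}$ at various $\varepsilon'$ with $D_{X_\DD}^\varepsilon$ at the fixed $\varepsilon$; this requires a monotonicity/regularity statement in $\varepsilon$ (of the flavor of \Cref{334}) applied uniformly over the annuli, which again interacts badly with the blowup. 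The structure of the paper suggests that the intended argument does not transfer $D_{X_\DD}^\varepsilon$ at all: note that the paper transfers the $\alpha_{\mathrm{NET}}$-covering (\Cref{051}) from $\DD$ to $\SCD$ via $\phi^{-1}$ (a purely geometric statement that survives H\"older continuity) and then apparently runs the chaining estimate directly on $\SCD$ using MFPP tail bounds in situ, with the $\ell^{\alpha_{\mathrm{PERIM}}}$ penalty arising from the H\"older modulus of $\phi^{-1}$ in the net rather than from distorting $\varepsilon$-neighborhoods.
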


\begin{proof}
    See \cite[Section~6]{TightSimCLE}. 
\end{proof}

\begin{lemma}\label{334}
    There is a constant $\kk > 0$ such that for each connected and compact subset $K \subset \CC$, 
    \begin{equation*}
        \mathop{\mathrm{Leb}}(B_{\varepsilon r}(K)) \ge \kk\varepsilon\mathop{\mathrm{Leb}}(B_r(K)), \quad \forall r > 0, \ \forall \varepsilon \in (0, 1). 
    \end{equation*}
    In particular, 
    \begin{equation*}
        \ka_{\varepsilon r}/\ka_r \ge \kk\varepsilon \quad \text{and} \quad \kb_{\varepsilon r}/\kb_r \ge \kk\varepsilon, \quad \forall r > 0, \ \forall \varepsilon \in (0, 1). 
    \end{equation*}
\end{lemma}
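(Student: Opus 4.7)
I would prove Lemma~\ref{334} in two stages: first establish the geometric inequality for $\mathop{\mathrm{Leb}}(B_{\varepsilon r}(K))/\mathop{\mathrm{Leb}}(B_r(K))$, then deduce the bounds on the scaling constants by applying it path-by-path inside the infimum defining MFPP.

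The geometric inequality is proved by splitting on $d := \mathop{\mathrm{diam}}(K)$ relative to $r$. When $d \ge r$, I use that $K$ is connected: the continuous function $z \mapsto |z - z_0|$ (for a fixed $z_0 \in K$) takes all intermediate values on $K$ by the intermediate value theorem, so one can extract $z_0, z_1, \ldots, z_n \in K$ with $|z_j - z_0| = 2jr$ and $n + 1 \ge \lfloor d/(2r)\rfloor + 1$. These points are pairwise at distance $\ge 2r$, so the balls $B_r(z_j)$ are disjoint and lie inside $B_r(K)$, giving the complementary lower bound $\mathop{\mathrm{Leb}}(B_r(K)) \gtrsim d r$. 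Combined with the trivial upper bound $\mathop{\mathrm{Leb}}(B_r(K)) \le \pi(d + r)^2 \lesssim d^2$ and a tubular lower bound $\mathop{\mathrm{Leb}}(B_{\varepsilon r}(K)) \gtrsim d\, \varepsilon r + \varepsilon^2 r^2$ from a path in $K$ joining the $z_j$'s, the ratio is $\gtrsim \varepsilon$ uniformly. When $d < r$, the set $K$ fits inside $B_d(z_0) \subset B_r(z_0)$, so $\mathop{\mathrm{Leb}}(B_r(K)) \asymp r^2$ and the lower bound $\mathop{\mathrm{Leb}}(B_{\varepsilon r}(K)) \ge \pi(\varepsilon r)^2$ together with a tubular contribution from the diameter $d$ again yields a ratio $\gtrsim \varepsilon$.

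For the ``in particular'' statement: Fix a realization and any pair $(x, y) \in X(\SCL_1) \times X(\SCL_1)$. For every continuous path $P \colon [0, 1] \to X(\SCL_1)$ with $P(0) = x$, $P(1) = y$, the image $P([0,1])$ is a connected compact subset of $\CC$, so the geometric inequality gives
\[
\mathop{\mathrm{Leb}}(B_{\varepsilon r}(P)) \ge \kk \varepsilon \mathop{\mathrm{Leb}}(B_r(P)) \ge \kk \varepsilon D_{X(\SCL_1)}^r(x, y).
\]
Taking the infimum over $P$ on the left yields $D_{X(\SCL_1)}^{\varepsilon r}(x, y) \ge \kk \varepsilon D_{X(\SCL_1)}^r(x, y)$. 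Since this holds pointwise in $(x, y)$, taking the supremum over $(x, y) \in X(\SCL_1) \times X(\SCL_1)$ on both sides preserves the inequality, and passing to medians (which is monotone) gives $\ka_{\varepsilon r} \ge \kk \varepsilon \ka_r$. The same argument, applied verbatim with $\Gamma_{\SCL}$ inside $\SCL$ and $X_\SCL$ in place of $X(\SCL_1)$, gives $\kb_{\varepsilon r} \ge \kk \varepsilon \kb_r$.

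The main obstacle is the precise bookkeeping in the geometric inequality, specifically obtaining a constant $\kk > 0$ that is truly uniform over all connected compact $K$. The IVT packing argument is clean in the ``thick'' regime $d \ge r$, but one has to combine two different lower-bound mechanisms (tubular around a chain of points, versus trivial from a single ball of radius $\varepsilon r$) to bridge the transition at $d \asymp r$; I expect this to be the only nontrivial step, after which the deduction for $\ka$ and $\kb$ is immediate from the definition of MFPP and monotonicity of infima, suprema, and medians.
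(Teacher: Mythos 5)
The reduction of the ``in particular'' statement to the geometric inequality is fine: applying the area bound to each candidate path, then taking the infimum over paths, the supremum over pairs of points, and medians, all of which are monotone, is exactly the right (and essentially only) way to pass from sets to the constants $\ka$ and $\kb$.

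The gap is in your proof of the geometric inequality itself, in the main regime $d := \mathop{\mathrm{diam}}(K) \ge r$. The two bounds you combine there are a lower bound $\mathop{\mathrm{Leb}}(B_{\varepsilon r}(K)) \gtrsim d\,\varepsilon r$ and an upper bound $\mathop{\mathrm{Leb}}(B_r(K)) \lesssim d^2$; their ratio is $\gtrsim \varepsilon r/d$, not $\gtrsim \varepsilon$, and this degrades without bound as $d/r \to \infty$ (for a straight segment of length $d = 100r$ your upper bound on $\mathop{\mathrm{Leb}}(B_r(K))$ overshoots the truth by a factor of $d/r$, and for a curve that densely fills a $d \times d$ square at scale $r$ the chain of points $z_j$ with $\lvert z_j - z_0\rvert = 2jr$ captures only $\sim d/r$ of the $\sim d^2/r^2$ balls needed to account for $\mathop{\mathrm{Leb}}(B_r(K))$). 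The complementary lower bound $\mathop{\mathrm{Leb}}(B_r(K)) \gtrsim dr$ that you extract from the IVT chain is a lower bound on the denominator and cannot help. The missing idea is to localize: take a \emph{maximal} $2r$-separated subset $\{z_1, \ldots, z_N\}$ of $K$ (not a radial chain). Maximality gives $B_r(K) \subset \bigcup_i B_{3r}(z_i)$, hence $\mathop{\mathrm{Leb}}(B_r(K)) \le 9\pi N r^2$; separation makes the balls $B_r(z_i)$ disjoint; and connectivity of $K$ forces the component of $K \cap \overline{B_{r/2}(z_i)}$ through $z_i$ to reach $\partial B_{r/2}(z_i)$, so (projecting onto a coordinate axis and using Fubini) its $\varepsilon r$-neighborhood, which stays inside $B_r(z_i)$, has measure $\gtrsim \varepsilon r^2$. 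Summing over $i$ gives $\mathop{\mathrm{Leb}}(B_{\varepsilon r}(K)) \gtrsim N\varepsilon r^2$ and hence the uniform constant $\kk$.

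Two smaller points. First, in the regime $d < r$ the inequality as literally stated is simply false (take $K$ a segment of length $\sqrt\varepsilon\, r$: the ratio is of order $\varepsilon^{3/2}$), so your attempt to patch that case by adding $\pi(\varepsilon r)^2$ to the numerator cannot succeed; the statement is only true under an implicit assumption of the form $\mathop{\mathrm{diam}}(K) \gtrsim r$, which is what the packing argument above uses and which holds for the paths relevant to the application (those realizing the supremum in \eqref{eq:263}). Second, a connected compact set need not be path-connected, so ``a path in $K$ joining the $z_j$'s'' is not available in general; the projection/Fubini argument sketched above avoids this.
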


\begin{proof}
    See {\cite[Lemma~5.4]{TightSimCLE}}. 
\end{proof}

\begin{lemma}\label{335}
    \begin{enumerate}
        \item\label{335A} $\BE\lbrack\mu_\Phi(\SCD)^p\rbrack < \infty$ for all $p \in (-\infty, 4/\gamma^2)$.
        \item\label{335B} $\BE\!\left\lbrack\sum_{\SCL \in \Gamma_\SCD} \nu_\Phi(\SCL)^p\right\rbrack < \infty$ for all $p \in (2, 8/\gamma^2)$.
        \item\label{335C} Let $\varepsilon_\ast \in (0, 1)$. Then 
        \begin{equation*}
            \BP\!\left\lbrack\sup_{\SCL \in \Gamma_\SCD} \nu_\Phi(\SCL) \ge A \ \middle\vert \ \mu_\Phi(\SCD) \le 1/\varepsilon_\ast\right\rbrack = O(A^{-\infty}) \quad \text{as } A \to \infty. 
        \end{equation*}
        \item\label{335D} Let $\varepsilon_\ast \in (0, 1)$. Then
        \begin{equation*}
            \BE\!\left\lbrack\sum_{\SCL \in \Gamma_\SCD} \nu_\Phi(\SCL)^p \ \middle\vert \ \mu_\Phi(\SCD) \le 1/\varepsilon_\ast\right\rbrack < \infty, \quad \forall p > 2. 
        \end{equation*}
    \end{enumerate}
\end{lemma}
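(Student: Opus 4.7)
For part~\eqref{335A}, I would use the scaling relation $\mu_\Phi(\SCD) \stackrel{d}{=} \ell^2 V_1$, where $V_1$ is the volume of a $\gamma$-LQG disk of unit boundary length (obtained by shifting the field by $(2/\gamma)\log\ell$, which multiplies boundary length by $\ell$ and area by $\ell^2$). The positive-moment bound $\BE[V_1^p] < \infty$ for $p < 4/\gamma^2$ is the standard Kahane-type moment estimate for $2$-dimensional Gaussian multiplicative chaos. All negative moments of $V_1$ are finite thanks to the superpolynomially-decaying small-ball estimate $\BP[V_1 \le \delta] = O(\delta^N)$ for every $N > 0$, a standard consequence of pointwise small-ball bounds for the $\gamma$-LQG area measure (compare with \Cref{052}).

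For part~\eqref{335B}, I would exploit the nested recursion provided by the coupling theorem at the end of \Cref{ss:13}: conditional on the boundary lengths $(\lambda_i)$ of the outermost loops of $\Gamma_\SCD$, the surfaces they enclose are conditionally independent $\gamma$-LQG disks of boundary lengths $(\lambda_i)$. Since $(\lambda_i \mid \ell) \stackrel{d}{=} \ell \cdot (\hat\lambda_i \mid \ell = 1)$ by scaling, setting $M(\ell) \defeq \BE[\sum_{\SCL \in \Gamma_\SCD} \nu_\Phi(\SCL)^p]$ and $A_p \defeq \BE[\sum_i \hat\lambda_i^p]$, and splitting the nested sum into outermost loops plus nested loops inside each outermost loop, yields the recursion
\begin{equation*}
    M(\ell) = A_p \ell^p + \BE\Bigl[\sum_i M(\lambda_i)\Bigr].
\end{equation*}
The ansatz $M(\ell) = c_p \ell^p$ gives $c_p = A_p/(1 - A_p)$, finite when $A_p < 1$. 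In the range $p \in (2, 8/\gamma^2)$, both $A_p < \infty$ and $A_p < 1$ follow from the mating-of-trees description of the outermost loop boundary lengths as excursion/jump sizes of a $4/\kappa$-stable process (as in the $\SLE_\kappa^\beta(\kappa-6)$ theorem of \Cref{ss:13}), combined with the $1$-dimensional GMC moment bound $\BE[\nu_\Phi(\SCL)^p] < \infty$ for $p < 8/\gamma^2$ (the boundary analogue of \eqref{335A}: $\nu_\Phi$ is a $(\gamma/2)$-GMC on a $1$-dimensional set, with moment threshold $2/(\gamma/2)^2 = 8/\gamma^2$).

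For part~\eqref{335C}, observe that on the event $\{\sup_\SCL \nu_\Phi(\SCL) \ge A\}$ some loop $\SCL^\ast$ has boundary length $\ell^\ast \ge A$. By the coupling theorem the surface enclosed by $\SCL^\ast$ is a conditionally independent $\gamma$-LQG disk of boundary length $\ell^\ast$, so its volume is $(\ell^\ast)^2 V^\ast \ge A^2 V^\ast$, where $V^\ast$ is an independent copy of $V_1$. On the conditioning event $\{\mu_\Phi(\SCD) \le 1/\varepsilon_\ast\}$ this forces $V^\ast \le 1/(A^2 \varepsilon_\ast)$, whose probability decays superpolynomially in $A$ by the left-tail estimate from part~\eqref{335A}. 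Combining this with a union bound over loops (their expected polynomial mass being controlled by part~\eqref{335B}) gives the desired $O(A^{-\infty})$ conditional decay.

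For part~\eqref{335D}, for $p \in (2, 8/\gamma^2)$ the claim is immediate from part~\eqref{335B} since $\BE[\,\cdot \mid F] \le \BE[\,\cdot\,]/\BP[F]$. For $p \ge 8/\gamma^2$, I would use a dyadic decomposition $\sum_\SCL \nu_\Phi(\SCL)^p \le \sum_k 2^{(k+1)p} N_k$ (with $N_k$ the number of loops of length $\ge 2^k$), bounding the small-scale contribution ($k \le 0$) via part~\eqref{335B} with some $q \in (2, 8/\gamma^2)$ and Markov, and the large-scale contribution ($k > 0$) via the argument of part~\eqref{335C}, which gives $\BP[\nu_i \ge 2^k,\, F] \le C_m 2^{-km}\BP[\nu_i \ge 2^k]$ for arbitrary $m$. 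Summing yields a convergent geometric series. The main technical obstacle throughout is in \eqref{335B}: verifying $A_p < 1$ and $A_p < \infty$ in the stated range requires careful use of the mating-of-trees machinery for simple $\CLE_\kappa$ on a quantum disk.
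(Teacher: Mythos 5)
Your treatment of parts~\eqref{335A}, \eqref{335C}, and \eqref{335D} matches the paper's in substance: \eqref{335A} is quoted from the literature, \eqref{335C} is the same ``a long loop encloses a disk of volume at least $A^2$ times an independent unit-boundary-length disk volume, which contradicts the conditioning via negative volume moments'' argument (the paper phrases it through Bayes' theorem and Markov's inequality), and \eqref{335D} is the same interpolation between \eqref{335B} and \eqref{335C}. The recursion/multiplicative-cascade structure you set up for \eqref{335B} is also the right skeleton and is exactly what the paper uses.

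The genuine gap is the one you flag yourself: the inequality $A_p<1$ for $p\in(2,8/\gamma^2)$, on which the entire cascade argument rests, is asserted to ``follow from the mating-of-trees machinery'' but never derived, and it is not clear how a direct computation with the $4/\kappa$-stable boundary-length process would produce the strict inequality $A_p<1$ (as opposed to mere finiteness) across the whole stated range. The paper closes this step without any stable-process input, by a short superadditivity argument that reduces it to part~\eqref{335A}: since $X_\SCD$ is independent of $\Phi$ and Lebesgue-null, $\mu_\Phi(X_\SCD)=0$ almost surely, so $\mu_\Phi(\SCD)=\sum_{\SCL\in\Gamma_\SCD^\circ}\mu_\Phi(\SCD(\SCL))$ and hence, by strict superadditivity of $x\mapsto x^p$ for $p>1$,
\begin{equation*}
    \mu_\Phi(\SCD)^p > \sum_{\SCL\in\Gamma_\SCD^\circ}\mu_\Phi(\SCD(\SCL))^p .
\end{equation*}
Taking expectations, using that each $\SCD(\SCL)$ is conditionally a $\gamma$-LQG disk given its boundary length so that $\BE\lbrack\mu_\Phi(\SCD(\SCL))^p\mid\nu_\Phi(\SCL)\rbrack=\nu_\Phi(\SCL)^{2p}\,\BE\lbrack\mu_\Phi(\SCD)^p\rbrack$, and dividing by the finite, positive quantity $\BE\lbrack\mu_\Phi(\SCD)^p\rbrack$ (finite precisely for $p<4/\gamma^2$ by part~\eqref{335A}), one gets $\BE\bigl\lbrack\sum_{\SCL\in\Gamma_\SCD^\circ}\nu_\Phi(\SCL)^{2p}\bigr\rbrack<1$ for all $p\in(1,4/\gamma^2)$, i.e.\ $A_q<1$ for all $q\in(2,8/\gamma^2)$. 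You should replace your appeal to mating of trees by this argument; as written, your proof of \eqref{335B} (and hence of \eqref{335C} and \eqref{335D}, which depend on it) is incomplete at its load-bearing step.
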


\begin{proof}
    Assertion~\eqref{335A} follows, e.g., from \cite{LQG-Boundary-MoT}. 
    
    Next, we consider assertion~\eqref{335B}. Since $X_\SCD$ is conditionally independent of $\Phi$ given $\SCD$ and has zero Lebesgue measure, it follows that $\mu_\Phi(X_\SCD) = 0$ almost surely. Thus, almost surely, 
    \begin{equation*}
        \mu_\Phi(\SCD) = \sum_{\SCL \in \Gamma_\SCD^\circ} \mu_\Phi(\SCD(\SCL)), 
    \end{equation*}
    where $\SCD(\SCL)$ denotes the domain surrounded by $\SCL$. This implies that, almost surely, 
    \begin{equation}\label{eq:357}
        \mu_\Phi(\SCD)^p > \sum_{\SCL \in \Gamma_\SCD^\circ} \mu_\Phi(\SCD(\SCL))^p, \quad \forall p > 1. 
    \end{equation}
    By \cite{SimCLELQG}, for each $\SCL \in \Gamma_\SCD^\circ$, the $\gamma$-LQG surface parameterized by $\SCD(\SCL)$ is conditionally a $\gamma$-LQG disk given its boundary length. Thus, it follows from assertion~\eqref{335A} that 
    \begin{equation}\label{eq:358}
        \BE\lbrack\mu_\Phi(\SCD(\SCL))^p \mid \nu_\Phi(\SCL)\rbrack = \nu_\Phi(\SCL)^{2p}\BE\lbrack\mu_\Phi(\SCD)^p\rbrack < \infty, \quad \forall \SCL \in \Gamma_\SCD^\circ, \ \forall p \in (-\infty, 4/\gamma^2). 
    \end{equation}
    Combining \eqref{eq:357} and \eqref{eq:358}, we obtain that
    \begin{equation}\label{eq:359}
        \BE\!\left\lbrack\sum_{\SCL \in \Gamma_\SCD^\circ} \nu_\Phi(\SCL)^{2p}\right\rbrack < 1, \quad \forall p \in (1, 4/\gamma^2). 
    \end{equation}
    Now, assertion~\eqref{335B} follows immediately from \eqref{eq:359}, together with the fact that the $\gamma$-LQG lengths of the loops of $\Gamma_\SCD$ form a multiplicative cascade with offspring distribution given by the law of the collection $\{\nu_\Phi(\SCL) : \SCL \in \Gamma_\SCD^\circ\}$. 

    Next, we consider assertion~\eqref{335C}. By Bayes' theorem, 
    \begin{multline}\label{eq:014}
        \BP\!\left\lbrack\sup_{\SCL \in \Gamma_\SCD} \nu_\Phi(\SCL) \ge A \ \middle\vert \ \mu_\Phi(\SCD) \le 1/\varepsilon_\ast\right\rbrack \\
        = \left.\BP\!\left\lbrack\mu_\Phi(\SCD) \le 1/\varepsilon_\ast \ \middle\vert \ \sup_{\SCL \in \Gamma_\SCD} \nu_\Phi(\SCL) \ge A\right\rbrack \BP\!\left\lbrack\sup_{\SCL \in \Gamma_\SCD} \nu_\Phi(\SCL) \ge A\right\rbrack \middle/ \BP\lbrack\mu_\Phi(\SCD) \le 1/\varepsilon_\ast\rbrack\right. \\
        \le \left.\BP\!\left\lbrack\mu_\Phi(\SCD) \le 1/\varepsilon_\ast \ \middle\vert \ \sup_{\SCL \in \Gamma_\SCD} \nu_\Phi(\SCL) \ge A\right\rbrack \middle/ \BP\lbrack\mu_\Phi(\SCD) \le 1/\varepsilon_\ast\rbrack\right.. 
    \end{multline}
    Since for each $\SCL \in \Gamma_\SCD$, the $\gamma$-LQG surface parameterized by the domain surrounded by $\SCL$ is conditionally a $\gamma$-LQG disk given its boundary length, it follows from Markov's inequality that
    \begin{equation}\label{eq:015}
        \BP\!\left\lbrack\mu_\Phi(\SCD) \le 1/\varepsilon_\ast \ \middle\vert \ \sup_{\SCL \in \Gamma_\SCD} \nu_\Phi(\SCL) \ge A\right\rbrack \le \BP\lbrack\mu_\Phi(\SCD) \le 1/(A^2\varepsilon_\ast)\rbrack \le \BE\lbrack\mu_\Phi(\SCD)^{-p}\rbrack (A^2\varepsilon_\ast)^{-p}. 
    \end{equation}
    for all $p > 0$. Combining \eqref{eq:014}, \eqref{eq:015}, and assertion~\eqref{335A}, we complete the proof of assertion~\eqref{335C}.

    Finally, assertion~\eqref{335D} follows immediately from assertions~\eqref{335B} and \eqref{335C}.
\end{proof}

\begin{lemma}\label{325}
    Let $\alpha_{\mathrm{KC}}$ be as in \Cref{333}. Then for every sequence of positive real numbers $\varepsilon$'s tending to zero, there exists $\{D_{X(\SCL)} : \SCL \in \widehat\Gamma_\SCD\}$ and a subsequence $\SE$ along which we have the convergence of the joint laws
    \begin{equation}\label{eq:013}
        \left\{\left(X(\SCL) \times X(\SCL), \kb_\varepsilon^{-1} D_{X(\SCL)}^\varepsilon\right) : \SCL \in \widehat\Gamma_\SCD\right\} \to \left\{\left(X(\SCL) \times X(\SCL), D_{X(\SCL)}\right) : \SCL \in \widehat\Gamma_\SCD\right\}, 
    \end{equation}
    with respect to the metric space $\mathscr{HausUniEns}_4$. Moreover:
    \begin{enumerate}
        \item\label{325A} Almost surely, for each $\SCL \in \widehat\Gamma_\SCD$, $D_{X(\SCL)}$ is a geodesic metric on $X(\SCL)$ inducing the Euclidean topology. 
        \item\label{325B} Given $\SCD$, it holds with superpolynomially high conditional probability as $A \to \infty$, at a rate which is uniform in the choice of the subsequential limit, that 
        \begin{equation*}
            D_{X(\SCL)}(u, v) \le A\lvert u - v\rvert^{\alpha_{\mathrm{KC}}}, \quad \forall \SCL \in \widehat\Gamma_\SCD. 
        \end{equation*}
    \end{enumerate}
\end{lemma}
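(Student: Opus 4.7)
The plan is to first prove assertion~\eqref{325B} --- the uniform H\"older bound on $D_{X(\SCL)}$ across all $\SCL \in \widehat\Gamma_\SCD$ --- then deduce joint tightness via the Arzel\`a-Ascoli-style lemma from the preliminaries, extract a subsequential limit, and verify assertion~\eqref{325A} by adapting the argument of \Cref{331} loop-by-loop.

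For assertion~\eqref{325B}, I apply \Cref{333} to each loop separately and take a union bound. For $\SCL \in \widehat\Gamma_\SCD$, let $\SCD(\SCL)$ denote the domain enclosed by $\SCL$, and set $\ell_\SCL \defeq \nu_\Phi(\SCL)$. By \cite{SimCLELQG}, $(\SCD(\SCL), \Phi|_{\SCD(\SCL)})$ is conditionally a $\gamma$-LQG disk of boundary length $\ell_\SCL$, and the immediately interior outermost loops of $\SCL$ in $\Gamma_\SCD$ form a conditionally independent non-nested $\CLE_\kappa$ inside $\SCD(\SCL)$; its carpet is exactly $X(\SCL)$. Since $\dist(z, \partial\SCD(\SCL)) \le \dist(z, \partial\SCD)$ for $z \in \SCD(\SCL)$, the event $E(\varepsilon_\ast)$ for $\SCD$ descends to the analogous event for $\SCD(\SCL)$, so \Cref{333} applies inside the sub-disk with a rate depending only on $\varepsilon_\ast$. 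Choosing the per-loop threshold $A_\SCL \defeq A \ell_\SCL^{-\alpha_{\mathrm{PERIM}}}$ converts the conclusion of \Cref{333} into
\begin{equation*}
\kb_\varepsilon^{-1} D_{X(\SCL)}^\varepsilon(u, v) \le A\lvert u - v\rvert^{\alpha_{\mathrm{KC}}}, \quad \forall u, v \in X(\SCL), \ \forall \varepsilon \in (0, \varepsilon_\ast],
\end{equation*}
with failure probability at most $C_q(\varepsilon_\ast) A^{-q} \ell_\SCL^{q \alpha_{\mathrm{PERIM}}}$ for any $q > 0$. Summing over $\SCL$ and applying Markov, the probability that this bound fails for some $\SCL \in \widehat\Gamma_\SCD$ is at most
\begin{equation*}
C_q(\varepsilon_\ast) A^{-q} \BE\!\left[\sum_{\SCL \in \widehat\Gamma_\SCD} \ell_\SCL^{q\alpha_{\mathrm{PERIM}}} \ \middle\vert \ \SCD\right],
\end{equation*}
which is $O(A^{-q})$ almost surely conditional on $\SCD$ by \Cref{335}\eqref{335D} (applied via tower-conditioning, using that $\mu_\Phi(\SCD) < \infty$ almost surely by \Cref{335}\eqref{335A}), provided $q$ is chosen so that $q\alpha_{\mathrm{PERIM}} > 2$. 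Letting $q \to \infty$ yields the claimed superpolynomial decay.

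With assertion~\eqref{325B} established, the Arzel\`a-Ascoli-style lemma from the preliminaries gives tightness of the joint laws $\{(X(\SCL) \times X(\SCL), \kb_\varepsilon^{-1}D_{X(\SCL)}^\varepsilon)\}_{\SCL}$ in $\mathscr{HausUniEns}_4$: the H\"older bound provides equicontinuity, and combined with $X(\SCL) \subset \overline\SCD$ provides uniform boundedness. By Prokhorov's theorem for general metric spaces \cite[Theorem~5.1]{ConvProbMeas}, we may extract a subsequence $\SE$ along which the joint laws converge, and Skorohod embedding promotes this to almost-sure convergence; the limiting metrics $\{D_{X(\SCL)}\}$ automatically inherit the H\"older bound of assertion~\eqref{325B}. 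For assertion~\eqref{325A}, the proof of \Cref{331} in \cite{TightSimCLE}, applied to the sub-disk $\SCD(\SCL)$ (conditionally a $\gamma$-LQG disk with an independent non-nested $\CLE_\kappa$), shows that any subsequential limit of $\kb_\varepsilon^{-1} D_{X(\SCL)}^\varepsilon$ is a geodesic metric on $X(\SCL)$ inducing the Euclidean topology; this property is preserved under the joint coupling since it holds loop-by-loop. The main obstacle is the uniformity across the infinite family of loops in assertion~\eqref{325B}: the trade-off between the per-loop factor $\ell_\SCL^{\alpha_{\mathrm{PERIM}}}$ and the moment bound on $\sum_\SCL \ell_\SCL^p$ for $p > 2$ is precisely what enables the union bound to produce superpolynomial decay.
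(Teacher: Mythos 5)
Your proposal is correct and follows essentially the same route as the paper: assertion~\eqref{325B} is obtained by applying \Cref{333} to each sub-disk $\SCD(\SCL)$ (noting that the event $E(\varepsilon_\ast)$ descends to sub-disks), rescaling the threshold by $\ell_\SCL^{-\alpha_{\mathrm{PERIM}}}$, and union-bounding via the moment estimate of \Cref{335}, \eqref{335D} with $q\alpha_{\mathrm{PERIM}} > 2$, while tightness and assertion~\eqref{325A} come from the per-loop statement of \Cref{331}. The only cosmetic difference is that the paper extracts the joint subsequential limit via Tikhonov plus Prokhorov from per-loop tightness and transfers the H\"older bound to the limit by a $\liminf$ over the subsequence, whereas you package the same facts through the Arzel\`a-Ascoli-type lemma; both are fine.
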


\begin{proof} 
    It follows immediately from \Cref{331} that for each $\SCL \in \widehat\Gamma_\SCD$, the family $\{(X(\SCL) \times X(\SCL), \kb_\varepsilon^{-1} D_{X(\SCL)}^\varepsilon)\}_{\varepsilon > 0}$ is tight. Thus, by Tikhonov's theorem and Prokhorov's theorem, the family of the joint laws 
    \begin{equation*}
        \left\{\left(X(\SCL) \times X(\SCL), \kb_\varepsilon^{-1} D_{X(\SCL)}^\varepsilon\right) : \SCL \in \widehat\Gamma_\SCD\right\}
    \end{equation*}
    for $\varepsilon > 0$ is tight. Thus, we conclude that there exists $\{D_{X(\SCL)} : \SCL \in \widehat\Gamma_\SCD\}$ and a subsequence $\SE$ along which \eqref{eq:013} holds. Moreover, assertion~\eqref{325A} follows immediately from \Cref{331}. Thus, it suffices to verify assertion~\eqref{325B}. 

    To lighten notation, we may assume that $\SCD$ is deterministic. Let $\varepsilon_\ast > 0$. Write $F(\varepsilon_\ast)$ for the event that $\mu_\Phi(\SCD) \le 1/\varepsilon_\ast$. By \Cref{333}, for each $p > 0$, there exists $C_p = C_p(\varepsilon_\ast) > 0$ such that
    \begin{equation*}
        \BP\!\left\lbrack E(\varepsilon_\ast) \cap \left\{\sup_{u, v \in X_\SCD} \frac{\kb_\varepsilon^{-1} D_{X_\SCD}^\varepsilon(u, v)}{\lvert u - v\rvert^{\alpha_{\mathrm{KC}}}} \ge A\right\}\right\rbrack \le C_p A^{-p} \ell^{\alpha_{\mathrm{PERIM}}p}, \quad \forall \varepsilon \in (0, \varepsilon_\ast], \ \forall A > 0, 
    \end{equation*}
    hence that
    \begin{multline}\label{eq:352}
        \BP\!\left\lbrack E(\varepsilon_\ast) \cap F(\varepsilon_\ast) \cap \left\{\sup_{u, v \in X_\SCD} \frac{\kb_\varepsilon^{-1} D_{X_\SCD}^\varepsilon(u, v)}{\lvert u - v\rvert^{\alpha_{\mathrm{KC}}}} \ge A\right\}\right\rbrack \le C_p A^{-p} \ell^{\alpha_{\mathrm{PERIM}}p}, \\
        \forall \varepsilon \in (0, \varepsilon_\ast], \ \forall A > 0. 
    \end{multline}
    Let $\SCL \in \widehat\Gamma_\SCD$. Write $\SCD(\SCL)$ for the simply connected domain surrounded by $\SCL$. By \cite{SimCLELQG}, the $\gamma$-LQG surface parameterized by $\SCD(\SCL)$ is conditionally a $\gamma$-LQG disk given its boundary length. Note that if $E(\varepsilon_\ast) \cap F(\varepsilon_\ast)$ occurs, then $E(\varepsilon_\ast)  \cap F(\varepsilon_\ast)$ also occurs with $\SCD(\SCL)$ and $\Phi|_{\SCD(\SCL)}$ in place of $\SCD$ and $\Phi$, respectively. Thus, we conclude from \eqref{eq:352} that
    \begin{multline}\label{eq:355}
        \BP\!\left\lbrack E(\varepsilon_\ast) \cap F(\varepsilon_\ast) \cap \left\{\sup_{u, v \in X(\SCL)} \frac{\kb_\varepsilon^{-1} D_{X(\SCL)}^\varepsilon(u, v)}{\lvert u - v\rvert^{\alpha_{\mathrm{KC}}}} \ge A\right\} \ \middle\vert \ \nu_\Phi(\SCL)\right\rbrack \le C_p A^{-p} \nu_\Phi(\SCL)^{\alpha_{\mathrm{PERIM}}p}, \\
        \forall \varepsilon \in (0, \varepsilon_\ast], \ \forall p > 0, \ \forall A > 0.
    \end{multline}
    Thus, we conclude from \eqref{eq:355} that 
    \begin{align*}
        &\BP\!\left\lbrack E(\varepsilon_\ast) \cap F(\varepsilon_\ast) \cap \bigcup_{\SCL \in \widehat\Gamma_\SCD} \left\{\sup_{u, v \in X(\SCL)} \frac{D_{X(\SCL)}(u, v)}{\lvert u - v\rvert^{\alpha_{\mathrm{KC}}}} \ge A\right\}\right\rbrack \\
        &\le \sum_{\SCL \in \widehat\Gamma_\SCD} \BP\!\left\lbrack E(\varepsilon_\ast) \cap F(\varepsilon_\ast) \cap \left\{\sup_{u, v \in X(\SCL)} \frac{D_{X(\SCL)}(u, v)}{\lvert u - v\rvert^{\alpha_{\mathrm{KC}}}} \ge A\right\}\right\rbrack \\
        &\le \sum_{\SCL \in \widehat\Gamma_\SCD} \liminf_{\SE \ni \varepsilon \to 0} \BP\!\left\lbrack E(\varepsilon_\ast) \cap F(\varepsilon_\ast) \cap \left\{\sup_{u, v \in X(\SCL)} \frac{\kb_\varepsilon^{-1} D_{X(\SCL)}^\varepsilon(u, v)}{\lvert u - v\rvert^{\alpha_{\mathrm{KC}}}} \ge A\right\}\right\rbrack \\
        &\le C_p A^{-p} \left(\ell + \BE\!\left\lbrack\sum_{\SCL \in \Gamma_\SCD} \nu_\Phi(\SCL)^{\alpha_{\mathrm{PERIM}}p} \ \middle\vert \ E(\varepsilon_\ast) \cap F(\varepsilon_\ast)\right\rbrack\right), \quad \forall p > 0, \ \forall A > 0.
    \end{align*}
    Combining this with \Cref{335}, \eqref{335D}, we obtain that
    \begin{equation}\label{eq:360}
        \BP\!\left\lbrack E(\varepsilon_\ast) \cap F(\varepsilon_\ast) \cap \bigcup_{\SCL \in \widehat\Gamma_\SCD} \left\{\sup_{u, v \in X(\SCL)} \frac{D_{X(\SCL)}(u, v)}{\lvert u - v\rvert^{\alpha_{\mathrm{KC}}}} \ge A\right\}\right\rbrack = O(A^{-\infty}) \quad \text{as } A \to \infty, 
    \end{equation}
    at a rate which is uniform in the choice of the subsequential limit. Since there almost surely exists $\varepsilon_\ast \in (0, 1)$ such that $E(\varepsilon_\ast) \cap F(\varepsilon_\ast)$ occurs (cf.~\Cref{332}), and $\Phi$ and $\Gamma_\SCD$ are conditionally independent given $\SCD$, we conclude from \eqref{eq:360} that
    \begin{equation*}
        \BP\!\left\lbrack\bigcup_{\SCL \in \widehat\Gamma_\SCD} \left\{\sup_{u, v \in X(\SCL)} \frac{D_{X(\SCL)}(u, v)}{\lvert u - v\rvert^{\alpha_{\mathrm{KC}}}} \ge A\right\}\right\rbrack = O(A^{-\infty}) \quad \text{as } A \to \infty.
    \end{equation*}
    This completes the proof. 
\end{proof}

\begin{remark}\label{050}
    It follows from \Cref{051}, together with a similar argument to the argument applied in the proof of \Cref{325}, that the following is true: Let $\alpha_{\mathrm{NET}}$ be as in \Cref{014}. Then, given $\SCD$, it holds with superpolynomially high conditional probability as $\varepsilon_\ast \to \infty$ that for each $\varepsilon \in (0, \varepsilon_\ast]$ and $\SCL \in \widehat\Gamma_\SCD$, there exists a finite subset $Z \subset X(\SCL)$ with $\#Z \le \lfloor\varepsilon^{-\alpha_{\mathrm{NET}}}\rfloor$ such that $X(\SCL) \subset \bigcup_{z \in Z} B_\varepsilon(z)$.
\end{remark}

\begin{lemma}\label{049}
    Let $\alpha_{\mathrm{KC}}$ be as in \Cref{333}. Then for every sequence of positive real numbers $\varepsilon$'s tending to zero, there exists $\{D_{X(\SCL)} : \SCL \in \Gamma\}$ and a subsequence $\SE$ along which we have the convergence of the joint laws
    \begin{equation}\label{eq:050}
        \left\{\left(X(\SCL) \times X(\SCL), \kb_\varepsilon^{-1} D_{X(\SCL)}^\varepsilon\right) : \SCL \in \Gamma\right\} \to \left\{\left(X(\SCL) \times X(\SCL), D_{X(\SCL)}\right) : \SCL \in \Gamma\right\}, 
    \end{equation}
    with respect to the metric space $\mathscr{HausUniEns}_4$. (Here, we recall from \Cref{ss:18} the definition of $X(\SCL)$ for $\SCL \in \Gamma$.) Moreover:
    \begin{enumerate}
        \item\label{049A} Almost surely, for each $\SCL \in \Gamma$, $D_{X(\SCL)}$ is a geodesic metric on $X(\SCL)$ inducing the Euclidean topology. 
        \item\label{049B} Let $K \subset \CC$ be a deterministic compact subset. Then it holds with superpolynomially high probability as $A \to \infty$, at a rate which is uniform in the choice of the subsequential limit, that 
        \begin{equation*}
            D_{X(\SCL)}(u, v) \le A\lvert u - v\rvert^{\alpha_{\mathrm{KC}}}, \quad \forall \SCL \in \Gamma, \ \forall u, v \in K \cap X(\SCL). 
        \end{equation*}
    \end{enumerate}
\end{lemma}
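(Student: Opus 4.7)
The plan is to deduce the whole-plane result from the LQG-disk statement of \Cref{325} by coupling each loop of the whole-plane nested $\CLE_\kappa$ with an independent auxiliary $\gamma$-LQG field on the domain it surrounds. Since the MFPP metric depends only on the Euclidean carpet and the normalization $\kb_\varepsilon$ is common to both settings, any quantitative bound obtained under the LQG coupling will transfer to the whole-plane setting after the field is marginalized out.

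To establish tightness and extract a joint subsequential limit, I would fix $\SCL \in \Gamma$ and, conditionally on the configuration of $\Gamma$ inside $\SCL$, sample an independent $\gamma$-LQG field on the domain bounded by $\SCL$. Because each loop of a whole-plane nested $\CLE_\kappa$ is an $\SLE_\kappa$-type loop, its law (after a suitable anchoring, e.g.~being the smallest loop surrounding a prescribed point) is absolutely continuous with respect to the $\SLE_\kappa$ loop measure, placing us in the setup of \Cref{325}. The nested $\CLE_\kappa$ inside $\SCL$ then agrees in distribution with the interior CLE in \Cref{325}, so that lemma gives tightness of $\{(X(\SCL) \times X(\SCL), \kb_\varepsilon^{-1}D_{X(\SCL)}^\varepsilon)\}_{\varepsilon > 0}$ for every fixed $\SCL$. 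Since $\Gamma$ is almost surely countable, Tikhonov's theorem together with the generalized Prokhorov theorem cited in the excerpt provides tightness of the joint law, and a diagonal extraction produces a subsequence $\SE$ along which \eqref{eq:050} holds. Assertion \eqref{049A} is inherited loop by loop from \Cref{325}\eqref{325A}.

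For the uniform H\"older bound in assertion \eqref{049B}, I would fix a deterministic compact $K \subset \CC$. The key geometric observation is that, almost surely, one can find loops $\SCL_\ast \in \Gamma$ surrounding $K$ with $\dist(\SCL_\ast, K)$ arbitrarily large; this follows from the scale invariance and local finiteness of the whole-plane nested $\CLE_\kappa$. Every loop of $\Gamma$ intersecting $K$ is then contained in the simply connected domain $\SCD_\ast$ bounded by such $\SCL_\ast$, hence lies in $\widehat\Gamma_{\SCD_\ast}$. Conditionally on $\SCL_\ast$, I would attach an independent $\gamma$-LQG field on $\SCD_\ast$ to form a $\gamma$-LQG disk and apply \Cref{325}\eqref{325B} to this disk, obtaining the desired superpolynomial probability bound on the failure of the H\"older estimate, conditionally on $\SCL_\ast$, at a rate uniform in the subsequential limit. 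Restricting to loops $\SCL \in \Gamma$ intersecting $K$ and to points $u, v \in K \cap X(\SCL) \subset X(\SCL)$, and then marginalizing over $\SCL_\ast$ and over the auxiliary field (both independent of the rate), will yield assertion \eqref{049B}.

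The main obstacle is arranging the subsequence $\SE$ obtained in the tightness step so that \Cref{325}\eqref{325B} applies simultaneously for every realization of the random enclosing loop $\SCL_\ast$. This is precisely what the uniformity-in-subsequential-limit assertion of \Cref{325}\eqref{325B} delivers: once $\SE$ is fixed, a single superpolynomial decay rate governs the H\"older estimate for every $\SCL_\ast$, so the probability bound can be integrated against the law of $\SCL_\ast$ without any further diagonal coordination.
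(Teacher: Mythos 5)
Your proposal is correct and follows essentially the same route as the paper, whose entire proof of this lemma is to apply \Cref{325} and let $\SCD \to \CC$: you exhaust the plane by large loops of $\Gamma$ (whose laws are absolutely continuous with respect to the loop measure), attach an independent auxiliary LQG field to reduce to the disk setting, and transfer the tightness and the uniform H\"older bound from \Cref{325}. Your write-up simply makes explicit the details that the paper leaves implicit.
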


\begin{proof}
    This follows immediately from \Cref{325} by letting $\SCD \to \CC$.  
\end{proof}

\begin{lemma}\label{276}
    $\ka_\varepsilon \asymp \kb_\varepsilon$ as $\varepsilon \to 0$. (Here, we recall from \eqref{eq:263} the definition of $\ka_\varepsilon$.)
\end{lemma}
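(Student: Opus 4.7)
The plan is to transfer the comparability from the $\kb_\varepsilon$-normalized subsequential convergence already established in \Cref{049}. Given any sequence of positive reals tending to zero, I would extract a subsequence $\SE$ along which $\{(X(\SCL) \times X(\SCL), \kb_\varepsilon^{-1} D_{X(\SCL)}^\varepsilon)\}_{\SCL \in \Gamma}$ converges in law in $\mathscr{HausUniEns}_4$ to $\{(X(\SCL) \times X(\SCL), D_{X(\SCL)})\}_{\SCL \in \Gamma}$, with each limit metric almost surely a geodesic metric inducing the Euclidean topology. Since $\SCL_1$ is a measurable functional of $\Gamma$, the marginal convergence at $\SCL_1$ follows (via a Skorokhod coupling), and composing with the continuous diameter functional yields
\[
Y_\varepsilon \defeq \kb_\varepsilon^{-1} \sup_{x, y \in X(\SCL_1)} D_{X(\SCL_1)}^\varepsilon(x, y) \to Y \defeq \sup_{x, y \in X(\SCL_1)} D_{X(\SCL_1)}(x, y)
\]
in law along $\SE$. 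By the definition of $\ka_\varepsilon$, the median of $Y_\varepsilon$ equals $\kb_\varepsilon^{-1}\ka_\varepsilon$.

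The crux is to verify that $Y \in (0, \infty)$ almost surely. Finiteness is immediate from the continuity of $D_{X(\SCL_1)}$ on the compact set $X(\SCL_1)$. For positivity, I would use that $D_{X(\SCL_1)}$ induces the Euclidean topology, so any two distinct points of $X(\SCL_1)$ have positive $D_{X(\SCL_1)}$-distance, together with the fact that $X(\SCL_1) \supset \SCL_1$ has positive Euclidean diameter almost surely (since $\SCL_1$ is a non-trivial loop surrounding $0$). This forces both the lower and upper extreme medians $m_-$, $m_+$ of $Y$ to lie in $(0, \infty)$, and the standard weak-convergence estimate for medians then gives
\[
0 < m_- \le \liminf_{\SE \ni \varepsilon \to 0} \mathrm{med}(Y_\varepsilon) \le \limsup_{\SE \ni \varepsilon \to 0} \mathrm{med}(Y_\varepsilon) \le m_+ < \infty.
\]

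Applying this argument to an arbitrary initial sequence $\varepsilon \to 0$ yields $\kb_\varepsilon^{-1}\ka_\varepsilon \in [c, C]$ for some universal constants $0 < c < C < \infty$ and all sufficiently small $\varepsilon$, which is precisely $\ka_\varepsilon \asymp \kb_\varepsilon$. The main (and really only) obstacle is verifying the positivity $Y > 0$ almost surely; this is exactly where the Euclidean-topology conclusion of \Cref{049}\eqref{049A} is indispensable, whereas the remaining steps are routine manipulations of weak convergence and medians.
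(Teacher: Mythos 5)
Your proposal is correct and takes essentially the same route as the paper: both arguments rest on the subsequential convergence of $\kb_\varepsilon^{-1}D_{X(\SCL_1)}^\varepsilon$ to a limit whose diameter is almost surely positive and finite, combined with the fact that $\ka_\varepsilon$ is by definition the median of $\sup_{x,y}D_{X(\SCL_1)}^\varepsilon(x,y)$. The paper phrases this as a proof by contradiction (assuming $\kb_{\varepsilon_n}/\ka_{\varepsilon_n}\to 0$ or $\infty$ and violating the median property), whereas you argue directly via convergence of medians; the difference is purely organizational.
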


\begin{proof}
    For $\varepsilon > 0$, write 
    \begin{equation*}
        X_\varepsilon \defeq \sup_{u, v \in X(\SCL_1)} D_{X(\SCL_1)}^\varepsilon(u, v),
    \end{equation*}
    where $\SCL_1$ denotes the larges loop of $\Gamma$ contained in $B_1(0)$ and surrounding the origin. By definition, 
    \begin{equation}\label{eq:264}
        \BP\!\left\lbrack\ka_\varepsilon^{-1}X_\varepsilon \ge 1\right\rbrack \ge 1/2 \quad \text{and} \quad \BP\!\left\lbrack\ka_\varepsilon^{-1}X_\varepsilon \le 1\right\rbrack \ge 1/2. 
    \end{equation}
    Suppose that there exists a sequence $\{\varepsilon_n\}_{n \in \NN}$ of positive real numbers with $\lim_{n \to \infty} \varepsilon_n = 0$ such that $\lim_{n \to \infty} \kb_{\varepsilon_n}/\ka_{\varepsilon_n} = 0$. By possibly passing to a subsequence, we may assume without loss of generality that \eqref{eq:050} holds. This implies that $\kb_\varepsilon^{-1}X_\varepsilon$ converges in law to a positive and finite random variable. Thus, 
    \begin{equation*}
        \BP\!\left\lbrack\ka_{\varepsilon_n}^{-1}X_{\varepsilon_n} \ge 1\right\rbrack = \BP\!\left\lbrack(\kb_{\varepsilon_n}/\ka_{\varepsilon_n})\kb_{\varepsilon_n}^{-1}X_{\varepsilon_n} \ge 1\right\rbrack \to 0 \quad \text{as } n \to \infty, 
    \end{equation*}
    in contradiction to \eqref{eq:264}. Similarly, if $\lim_{n \to \infty} \kb_{\varepsilon_n}/\ka_{\varepsilon_n} = \infty$, then 
    \begin{equation*}
        \BP\!\left\lbrack\ka_{\varepsilon_n}^{-1}X_{\varepsilon_n} \le 1\right\rbrack = \BP\!\left\lbrack(\kb_{\varepsilon_n}/\ka_{\varepsilon_n})\kb_{\varepsilon_n}^{-1}X_{\varepsilon_n} \le 1\right\rbrack \to 0 \quad \text{as } n \to \infty. 
    \end{equation*}
    This completes the proof.
\end{proof}

\begin{proposition}\label{275}
    Let $\alpha_{\mathrm{KC}}$ be as in \Cref{333}. Then for every sequence of positive real numbers $\varepsilon$'s tending to zero, there exists $\{D_{X(\SCL)} : \SCL \in \Gamma\}$ and a subsequence $\SE$ along which we have the convergence of the joint laws
    \begin{equation}\label{eq:051}
        \left\{\left(X(\SCL) \times X(\SCL), \ka_\varepsilon^{-1} D_{X(\SCL)}^\varepsilon\right) : \SCL \in \Gamma\right\} \to \left\{\left(X(\SCL) \times X(\SCL), D_{X(\SCL)}\right) : \SCL \in \Gamma\right\}, 
    \end{equation}
    with respect to the metric space $\mathscr{HausUniEns}_4$. (Here, we recall from \Cref{ss:18} the definition of $X(\SCL)$ for $\SCL \in \Gamma$.) Moreover:
    \begin{enumerate}
        \item Almost surely, for each $\SCL \in \Gamma$, $D_{X(\SCL)}$ is a geodesic metric on $X(\SCL)$ inducing the Euclidean topology. 
        \item Let $K \subset \CC$ be a deterministic compact subset. Then it holds with superpolynomially high probability as $A \to \infty$, at a rate which is uniform in the choice of the subsequential limit, that 
        \begin{equation*}
            D_{X(\SCL)}(u, v) \le A\lvert u - v\rvert^{\alpha_{\mathrm{KC}}}, \quad \forall \SCL \in \Gamma, \ \forall u, v \in K \cap X(\SCL). 
        \end{equation*}
    \end{enumerate}
\end{proposition}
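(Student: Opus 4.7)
The plan is to deduce \Cref{275} directly from \Cref{049,276}. The key observation is that the normalizing constants $\ka_\varepsilon$ and $\kb_\varepsilon$ differ only by a bounded multiplicative factor which is bounded away from zero, so any subsequential limit obtained with one normalization can be converted to a subsequential limit with the other normalization by rescaling by a deterministic positive constant.

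Given an arbitrary sequence of positive reals tending to zero, I would first apply \Cref{049} to extract a subsequence $\SE_0$ along which we have the convergence of the joint laws
\begin{equation*}
    \left\{\left(X(\SCL) \times X(\SCL), \kb_\varepsilon^{-1} D_{X(\SCL)}^\varepsilon\right) : \SCL \in \Gamma\right\} \to \left\{\left(X(\SCL) \times X(\SCL), \widetilde D_{X(\SCL)}\right) : \SCL \in \Gamma\right\}
\end{equation*}
in $\mathscr{HausUniEns}_4$. Next, by \Cref{276}, the ratios $\{\ka_\varepsilon/\kb_\varepsilon\}_{\varepsilon > 0}$ are bounded above and away from zero as $\varepsilon \to 0$, so I can pass to a further subsequence $\SE \subset \SE_0$ along which $\ka_\varepsilon/\kb_\varepsilon \to c$ for some deterministic $c \in (0, \infty)$. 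Setting $D_{X(\SCL)} \defeq c^{-1} \widetilde D_{X(\SCL)}$ for every $\SCL \in \Gamma$, the identity
\begin{equation*}
    \ka_\varepsilon^{-1} D_{X(\SCL)}^\varepsilon = \frac{\kb_\varepsilon}{\ka_\varepsilon} \cdot \kb_\varepsilon^{-1} D_{X(\SCL)}^\varepsilon,
\end{equation*}
together with the continuity of simultaneous rescaling by a deterministic constant on $\mathscr{HausUniEns}_4$ and an application of Slutsky's theorem, shows that along $\SE$ the joint laws in \eqref{eq:051} converge to $\{(X(\SCL) \times X(\SCL), D_{X(\SCL)}) : \SCL \in \Gamma\}$.

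Finally, the two listed properties of the limit are inherited from the corresponding assertions in \Cref{049}: rescaling a geodesic metric by a positive constant still yields a geodesic metric inducing the Euclidean topology, and the superpolynomially high probability H\"older upper bound with exponent $\alpha_{\mathrm{KC}}$ remains valid after replacing the prefactor $A$ by $c^{-1}A$ (noting that ``superpolynomially high probability'' is insensitive to absorbing a fixed constant into $A$). There is really no obstacle here --- all the substantive content of the Proposition has been packaged into \Cref{049,276}, and the argument above is just the routine bookkeeping needed to combine them.
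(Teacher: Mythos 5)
Your proposal is correct and is essentially the paper's own argument: the paper deduces \Cref{275} in one line from the $\kb_\varepsilon$-normalized convergence result and the comparability $\ka_\varepsilon \asymp \kb_\varepsilon$ of \Cref{276}, and your extraction of a further subsequence along which $\ka_\varepsilon/\kb_\varepsilon$ converges, followed by rescaling the limit by the resulting constant, is exactly the bookkeeping that "follows immediately" is standing in for.
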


\begin{proof}
    This follows immediately from \Cref{325,276}. 
\end{proof}

\begin{lemma}\label{277}
    For every sequence of positive real numbers $\varepsilon$'s tending to zero, there exists 
    \begin{equation*}
        \left\{\left(D_{X(\SCL)}, D_{X(\SCL),W_1,W_2}\right) : \SCL \in \Gamma, \ (W_1, W_2) \in \mathscr{DyDo}_2\right\}
    \end{equation*}
    and a subsequence $\SE$ along which we have the convergence of the joint laws
    \begin{multline*}
        \Biggl(\biggl\{\Bigl(X(\SCL) \times X(\SCL), \ka_\varepsilon^{-1} D_{X(\SCL)}^\varepsilon\Bigr) : \SCL \in \Gamma\biggr\}, \biggl\{\Bigl((\overline{W_1} \cap X(\SCL)) \times (\overline{W_1} \cap X(\SCL)), \\
        \ka_\varepsilon^{-1}D_{X(\SCL)}^\varepsilon(\bullet, \bullet; W_2 \cap X(\SCL))|_{(\overline{W_1} \cap X(\SCL)) \times (\overline{W_1} \cap X(\SCL))}\Bigr) : \SCL \in \Gamma, \ (W_1, W_2) \in \mathscr{DyDo}_2\biggr\}\Biggr) \\
        \to \biggl(\Bigl\{\bigl(X(\SCL) \times X(\SCL), D_{X(\SCL)}\bigr) : \SCL \in \Gamma\Bigr\}, \Bigl\{\bigl((\overline{W_1} \cap X(\SCL)) \times (\overline{W_1} \cap X(\SCL)), \\
        D_{X(\SCL),W_1,W_2}(\bullet, \bullet)\bigr) : \SCL \in \Gamma, \ (W_1, W_2) \in \mathscr{DyDo}_2\Bigr\}\biggr), 
    \end{multline*}
    with respect to the metric space $\mathscr{HausUniEns}_4$. Moreover, almost surely, 
    \begin{multline*}
        D_{X(\SCL),W_1,W_2}(\bullet, \bullet) = D_{X(\SCL)}(\bullet, \bullet; W_2 \cap X(\SCL))|_{(\overline{W_1} \cap X(\SCL)) \times (\overline{W_1} \cap X(\SCL))}, \\
        \forall \SCL \in \Gamma, \ \forall (W_1, W_2) \in \mathscr{DyDo}_2. 
    \end{multline*}
\end{lemma}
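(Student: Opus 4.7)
The plan is to bootstrap from \Cref{331} (which supplies the analogous statement for the carpet $X_\SCL$ associated with a single inner loop inside a $\gamma$-LQG disk) together with \Cref{275,276} and the LQG/CLE independence structure recalled in \Cref{ss:13}. At a high level, tightness of the enlarged ensemble follows by adding a countable coordinate to what is already tight in \Cref{275}, and the identification of the subsequential limit with the internal metric of $D_{X(\SCL)}$ reduces, loop by loop, to the corresponding identification in \Cref{331}.

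First I would set up tightness. Fix a deterministic $\gamma$-LQG disk $(\SCD,\Phi,x,y)$ of boundary length $\ell$ carrying an independent nested $\CLE_\kappa$ $\Gamma_\SCD$, as in the proof of \Cref{325}. For each $\SCL \in \widehat\Gamma_\SCD$ the $\gamma$-LQG surface parameterized by $\SCD(\SCL)$ is, conditionally on $\nu_\Phi(\SCL)$, a $\gamma$-LQG disk, and the interior $\CLE_\kappa$ inside $\SCL$ is conditionally independent. Consequently \Cref{331} applies to each $\SCL$ separately and yields, along a subsequence $\SE_\SCL$, joint convergence of the pair consisting of $(X(\SCL)\times X(\SCL),\kb_\varepsilon^{-1}D_{X(\SCL)}^\varepsilon)$ and the family of restricted internal metrics indexed by $\mathscr{DyDo}_2(\SCD(\SCL))$. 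Since $\mathscr{DyDo}_2$ is countable and since any $(W_1,W_2)\in\mathscr{DyDo}_2$ either satisfies $W_2\Subset\SCD(\SCL)$ or else contains a point outside $\SCD(\SCL)$ (in which case the relevant internal metric is eventually $\infty$ on its domain of definition once $\varepsilon$ is small), the full index set $\mathscr{DyDo}_2$ can be accommodated by a diagonal extraction. Combining this with Tikhonov's theorem over the countable family $\widehat\Gamma_\SCD$ and Prokhorov's theorem (using \Cref{333} and the tail bound \Cref{335}\eqref{335D} to get a uniform superpolynomial bound on moduli, exactly as in the proof of \Cref{325}, assertion~\eqref{325B}), one obtains joint tightness of the enlarged family in $\mathscr{HausUniEns}_4$, together with the identification $D_{X(\SCL),W_1,W_2}(\bullet,\bullet) = D_{X(\SCL)}(\bullet,\bullet;W_2\cap X(\SCL))|_{(\overline{W_1}\cap X(\SCL))^2}$ almost surely, since the second assertion of \Cref{331} gives this identification for every $\SCL$ individually and there are only countably many $\SCL$ and $(W_1,W_2)$. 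Letting $\SCD\uparrow\CC$ as in \Cref{049} transfers the statement to whole-plane nested $\CLE_\kappa$.

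Finally, I would convert the normalization from $\kb_\varepsilon$ to $\ka_\varepsilon$ using \Cref{276}, which yields $\ka_\varepsilon \asymp \kb_\varepsilon$ so that any subsequential limit for one normalization is, up to a bounded factor, a subsequential limit for the other. Passing to a further subsequence if necessary and recording that the identification of the limit is stable under this bounded rescaling (both sides of the identity scale in the same way), we obtain the conclusion of \Cref{277}. A given sequence $\varepsilon\to 0$ is handled by the usual diagonal argument: first pass to a subsequence along which \Cref{275} holds, then further to one along which the tightness of the internal-metric coordinates (established above) gives joint convergence.

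The main obstacle is ensuring that the identity $D_{X(\SCL),W_1,W_2} = D_{X(\SCL)}(\bullet,\bullet;W_2\cap X(\SCL))|_{(\overline{W_1}\cap X(\SCL))^2}$ survives the limit simultaneously for \emph{all} $\SCL\in\Gamma$ and all $(W_1,W_2)\in\mathscr{DyDo}_2$. For a fixed $\SCL$ this is \Cref{331}, but naively the null event on which the identity fails could depend on $\SCL$, and the countable union over $\Gamma$ must still be null. This is handled by using the uniform superpolynomial upper bound of \Cref{333}, together with the summability provided by \Cref{335}\eqref{335D}, to show that the exceptional event has probability zero uniformly over $\SCL$; this is exactly the uniformity that was already extracted in the proof of \Cref{325}\eqref{325B} and \Cref{049}\eqref{049B}. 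The remaining verification, that the subsequential limit is indeed the internal metric rather than merely a lower bound for it, is inherited from the corresponding statement in \Cref{331}.
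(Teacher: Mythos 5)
Your proposal is correct and takes essentially the same approach as the paper. The paper's own proof is the one-line citation ``This follows immediately from \Cref{331,276}''; what you have written is precisely the unpacking of that claim, routing through the tightness machinery of \Cref{325,049,275} (i.e.\ \Cref{333}, \Cref{335}, Tikhonov/Prokhorov, and the $\SCD \uparrow \CC$ limit) and the normalization conversion $\ka_\varepsilon \asymp \kb_\varepsilon$ of \Cref{276}. One small clarification to your last paragraph: the fact that the identification $D_{X(\SCL),W_1,W_2} = D_{X(\SCL)}(\bullet,\bullet;W_2\cap X(\SCL))|_{(\overline{W_1}\cap X(\SCL))^2}$ holds almost surely for all $\SCL$ and $(W_1,W_2)$ simultaneously needs only the countability of $\Gamma$ and $\mathscr{DyDo}_2$ (a countable union of null sets is null); the uniform moment bounds from \Cref{333} and \Cref{335}\eqref{335D} are what you need for \emph{tightness} of the joint family, not for the nullity of the exceptional set.
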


\begin{proof}
    This follows immediately from \Cref{331,276}. 
\end{proof}

\subsection{Tightness across scales}\label{ss:03}

In the present subsection, we prove that any subsequential limit satisfies Axiom~\eqref{010D} of \Cref{010}. 

\begin{lemma}\label{326}
    Let $\kk$ be as in \Cref{334}. Let $\SE$ be a sequence of positive real numbers $\varepsilon$'s tending to zero along which \eqref{eq:051} holds. Then the limits
    \begin{equation*}
        \kc_r = \kc_r(\SE) \defeq \lim_{\SE \ni \varepsilon \to 0} \frac{r^2\ka_{\varepsilon/r}}{\ka_\varepsilon}, \quad \forall r > 0
    \end{equation*}
    exist and satisfy \eqref{eq:340} with $\kK \defeq \kk^{-1}$.
\end{lemma}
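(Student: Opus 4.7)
The plan is to identify $\kc_r^{(\varepsilon)} \defeq r^2\ka_{\varepsilon/r}/\ka_\varepsilon$ as the median of a random variable that converges in law along $\SE$, derive a prelimit version of \eqref{eq:340}, and conclude via subsequential stability of medians. For $r > 0$, let $\SCL_r$ denote the largest loop of $\Gamma$ contained in $B_r(0)$ and surrounding the origin; by the scale-invariance of the whole-plane nested $\CLE_\kappa$, $X(\SCL_r) \overset{d}{=} rX(\SCL_1)$. An elementary change of variables in the definition of MFPP yields the deterministic scaling identity $D_{rY}^\varepsilon(rx, ry) = r^2 D_Y^{\varepsilon/r}(x, y)$ for every compact $Y \subset \CC$ (using $B_\varepsilon(rP) = rB_{\varepsilon/r}(P)$ and the fact that planar Lebesgue measure has scaling degree two). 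Combining,
\begin{equation*}
    \sup_{u, v \in X(\SCL_r)} D_{X(\SCL_r)}^\varepsilon(u, v) \overset{d}{=} r^2 \sup_{u, v \in X(\SCL_1)} D_{X(\SCL_1)}^{\varepsilon/r}(u, v),
\end{equation*}
whose median equals $r^2\ka_{\varepsilon/r}$ by the definition \eqref{eq:263} of $\ka$. Hence $\kc_r^{(\varepsilon)}$ is exactly the median of $\ka_\varepsilon^{-1}\sup_{u, v \in X(\SCL_r)} D_{X(\SCL_r)}^\varepsilon(u, v)$.

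Next I record the prelimit version of \eqref{eq:340}. The monotonicity $\varepsilon_1 \le \varepsilon_2 \Rightarrow D^{\varepsilon_1} \le D^{\varepsilon_2}$ (hence $\ka_{\varepsilon_1} \le \ka_{\varepsilon_2}$), combined with \Cref{334} in its equivalent form $\ka_a/\ka_b \ge \kk(a/b)$ for $0 < a < b$, gives
\begin{equation*}
    \varepsilon^2 \le \kc_\varepsilon^{(\delta)} \le \kk^{-1}\varepsilon, \quad \forall \varepsilon \in (0, 1), \ \forall \delta > 0,
\end{equation*}
and more generally, by a direct computation using $\kc_{\varepsilon r}^{(\delta)}/\kc_r^{(\delta)} = \varepsilon^2 \ka_{\delta/(\varepsilon r)}/\ka_{\delta/r}$, the same bound $\varepsilon^2 \le \kc_{\varepsilon r}^{(\delta)}/\kc_r^{(\delta)} \le \kk^{-1}\varepsilon$ for all $r > 0$, $\varepsilon \in (0, 1)$, $\delta > 0$.

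Finally I pass to the limit. By \Cref{275} applied at $\SCL_r$, along $\SE$ we have $\ka_\varepsilon^{-1} D_{X(\SCL_r)}^\varepsilon \to D_{X(\SCL_r)}$ in law with respect to $\mathscr{HausUni}_4$; since the $\sup$ functional is continuous on the uniformly bounded class of $X(\SCL_r) \times X(\SCL_r)$, this passes to $\ka_\varepsilon^{-1}\sup D_{X(\SCL_r)}^\varepsilon \to \sup D_{X(\SCL_r)}$ in law, and any subsequential limit of the bounded sequence $\{\kc_r^{(\varepsilon)}\}_{\varepsilon \in \SE}$ is a median of $\sup D_{X(\SCL_r)}$. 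A diagonal extraction lets us pass to a further subsequence of $\SE$ --- still denoted $\SE$, which is permissible because the subsequential convergences asserted in \Cref{275,277} are stable under refinement --- along which $\kc_r^{(\varepsilon)}$ converges to some $\kc_r$ for every $r > 0$. Passing the prelimit bound from the previous paragraph to the limit then yields \eqref{eq:340} with $\kK \defeq \kk^{-1}$.

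The main obstacle is precisely this last step: convergence in law does not automatically imply convergence of medians unless the limit distribution has a continuous CDF at its median. The resolution above is to pass to a further subsequence of $\SE$, which is harmless for the remainder of the paper because downstream arguments depend only on the existence of a family of scaling constants satisfying \eqref{eq:340}, not on its precise identification along the originally chosen $\SE$.
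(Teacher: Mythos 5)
Your proof is correct and follows the same overall route as the paper's: identify $r^2\ka_{\varepsilon/r}/\ka_\varepsilon$ as the median of $\ka_\varepsilon^{-1}\sup_{u,v\in X(\SCL_r)} D_{X(\SCL_r)}^\varepsilon(u,v)$ via the scaling identity $D_{rY}^\varepsilon(r\bullet,r\bullet) = r^2 D_Y^{\varepsilon/r}(\bullet,\bullet)$, obtain \eqref{eq:340} from the monotonicity of $\varepsilon\mapsto\ka_\varepsilon$ together with \Cref{334}, and pass to the limit along $\SE$. The paper's own proof carries out essentially the same algebra before and after the limit.

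Where you genuinely add something is the convergence-of-medians step, and you have correctly put your finger on a point the paper's proof does not address. Immediately after \eqref{eq:265} the paper asserts that $r^2\ka_{\varepsilon/r}/\ka_\varepsilon\to\kc_r$ along $\SE$, where $\kc_r$ is the median of $\sup_{u,v}D_{X(\SCL_r)}(u,v)$ (the $\varepsilon/r$ superscripts in \eqref{eq:265} and on the line defining $\kc_r$ appear to be typos for $\varepsilon$ and for no superscript, respectively). But convergence in law of $\ka_\varepsilon^{-1}\sup D_{X(\SCL_r)}^\varepsilon$ towards $\sup D_{X(\SCL_r)}$ only forces the medians to converge if the limiting law has a unique median, a fact that is not established at this point of the paper; unlike the LQG setting, there is no field-perturbation argument available here to rule out a flat stretch of the CDF at level $1/2$. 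Your resolution by a diagonal extraction of a further subsequence of $\SE$ is valid: weak convergence is stable under passing to subsequences, and \Cref{327} and all downstream arguments only ever use that \eqref{eq:051} holds along \emph{some} sequence. The sole trade-off is that you technically prove convergence along a subsequence of $\SE$ rather than along $\SE$ itself, so you verify a nominally weaker statement; as you observe, given how \Cref{326} is consumed this is immaterial.
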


\begin{proof}
    Fix $r > 0$. We observe that 
    \begin{align*}
        &\left\{\left(X(\SCL) \times X(\SCL), \ka_{\varepsilon/r}^{-1}D_{X(\SCL)}^{\varepsilon/r}(\bullet, \bullet)\right) : \SCL \in \Gamma\right\} \\
        &= \left\{\left(X(\SCL) \times X(\SCL), \frac{\ka_\varepsilon}{r^2\ka_{\varepsilon/r}} \ka_\varepsilon^{-1} D_{rX(\SCL)}^\varepsilon(r\bullet, r\bullet)\right) : \SCL \in \Gamma\right\} \\
        &\overset d= \left\{\left((X(\SCL)/r) \times (X(\SCL)/r), \frac{\ka_\varepsilon}{r^2\ka_{\varepsilon/r}} \ka_\varepsilon^{-1} D_{X(\SCL)}^\varepsilon(r\bullet, r\bullet)\right) : \SCL \in \Gamma\right\}, \quad \forall \varepsilon > 0.
    \end{align*}
    In particular, by the definition of $\ka_{\varepsilon/r}$ (cf.~\eqref{eq:263}), 
    \begin{equation}\label{eq:265}
        1 = \ka_{\varepsilon/r}^{-1} \mathop{\mathrm{Med}}\left(\sup_{x, y \in X(\SCL_1)} D_{X(\SCL_1)}^{\varepsilon/r}(x, y)\right) = \frac{\ka_\varepsilon}{r^2\ka_{\varepsilon/r}} \ka_\varepsilon^{-1} \mathop{\mathrm{Med}}\left(\sup_{x, y \in X(\SCL_r)} D_{X(\SCL_r)}^{\varepsilon/r}(x, y)\right), 
    \end{equation}
    where $\SCL_r$ denotes the largest loop of $\Gamma$ contained in $B_r(0)$ and surrounding the origin. Write $\kc_r$ for the median of the random variable
    \begin{equation*}
        \sup_{x, y \in X(\SCL_r)} D_{X(\SCL_r)}^{\varepsilon/r}(x, y). 
    \end{equation*}
    Then it follows from \eqref{eq:265} that $r^2\ka_{\varepsilon/r}/\ka_\varepsilon \to \kc_r$ as $\SE \ni \varepsilon \to 0$. This completes the proof of the first assertion.  
    
    Let $\delta \in (0, 1)$. Then 
    \begin{equation*}
        \kc_{\delta r} = \lim_{\SE \ni \varepsilon \to 0} \frac{(\delta r)^2\ka_{\varepsilon/(\delta r)}}{\ka_\varepsilon} \ge \lim_{\SE \ni \varepsilon \to 0} \frac{(\delta r)^2\ka_{\varepsilon/r}}{\ka_\varepsilon} = \delta^2\kc_r
    \end{equation*}
    and
    \begin{equation*}
        \kc_{\delta r} = \lim_{\SE \ni \varepsilon \to 0} \frac{(\delta r)^2\ka_{\varepsilon/(\delta r)}}{\ka_\varepsilon} \le \lim_{\SE \ni \varepsilon \to 0} \frac{\kk^{-1}\delta r^2\ka_{\varepsilon/r}}{\ka_\varepsilon} = \kk^{-1}\delta\kc_r. 
    \end{equation*}
    This completes the proof of the second assertion. 
\end{proof}

\begin{proposition}\label{327}
    Let $\SE$ be a sequence of positive real numbers $\varepsilon$'s tending to zero along which \eqref{eq:051} holds. Let $\{\kc_r\}_{r > 0}$ be as in \Cref{326}. Then for each $r > 0$, the law of the collection
    \begin{equation}\label{eq:056}
        \left\{\left(X(\SCL) \times X(\SCL), \kc_r^{-1}D_{rX(\SCL)}(r\bullet, r\bullet)\right) : \SCL \in \Gamma\right\}
    \end{equation}
    is contained in the closure of the family of the collections
    \begin{equation}\label{eq:053}
        \left\{\left\{\left(X(\SCL) \times X(\SCL), \ka_\varepsilon^{-1} D_{X(\SCL)}^\varepsilon\right) : \SCL \in \Gamma\right\} : \varepsilon > 0\right\}
    \end{equation}
    with respect to the Prokhorov topology for probability measures on $\mathscr{HausUniEns}_4$. 
\end{proposition}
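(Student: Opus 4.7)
The plan is to exhibit an explicit sequence of positive reals tending to zero along which the rescaled MFPP collections converge in law to the target object; the natural candidate is $\{\varepsilon/r : \varepsilon \in \SE\}$. The starting point is the whole-plane scale invariance of $\Gamma$, namely $r\Gamma \overset{d}{=} \Gamma$, which implies that the joint law of $\{rX(\SCL) : \SCL \in \Gamma\}$ coincides with that of $\{X(\SCL) : \SCL \in \Gamma\}$. Applying \eqref{eq:051} to this rescaled copy then gives, along $\SE$,
\begin{equation*}
    \left\{\left(rX(\SCL) \times rX(\SCL), \ka_\varepsilon^{-1} D_{rX(\SCL)}^\varepsilon\right) : \SCL \in \Gamma\right\} \to \left\{\left(rX(\SCL) \times rX(\SCL), D_{rX(\SCL)}\right) : \SCL \in \Gamma\right\}
\end{equation*}
in law in $\mathscr{HausUniEns}_4$; this is what defines $D_{rX(\SCL)}$ as a joint subsequential limit (rather than merely pocket by pocket).

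The second ingredient is the MFPP scaling: the bijection $P \leftrightarrow rP$ between paths in $X(\SCL)$ and $rX(\SCL)$ together with $\mathop{\mathrm{Leb}}(B_\varepsilon(rP)) = r^2 \mathop{\mathrm{Leb}}(B_{\varepsilon/r}(P))$ yields
\begin{equation*}
    D_{rX(\SCL)}^\varepsilon(rx, ry) = r^2 D_{X(\SCL)}^{\varepsilon/r}(x, y), \quad \forall x, y \in X(\SCL),
\end{equation*}
which I rearrange as
\begin{equation*}
    \ka_{\varepsilon/r}^{-1} D_{X(\SCL)}^{\varepsilon/r}(x, y) = \frac{\ka_\varepsilon}{r^2\ka_{\varepsilon/r}} \cdot \ka_\varepsilon^{-1} D_{rX(\SCL)}^\varepsilon(rx, ry).
\end{equation*}
By \Cref{326}, the deterministic prefactor $\ka_\varepsilon/(r^2\ka_{\varepsilon/r})$ converges to $\kc_r^{-1}$ as $\SE \ni \varepsilon \to 0$. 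The operation $(Y, f) \mapsto (Y/r,\, c_\varepsilon f(r\bullet, r\bullet))$ is continuous on $\mathscr{HausUni}_4$ (it is a bi-Lipschitz change of coordinates on the base set combined with a linear rescaling of the function), and depends continuously on $c_\varepsilon$, so a routine variant of the continuous mapping theorem combined with the convergence above yields, along $\SE$,
\begin{equation*}
    \left\{\left(X(\SCL) \times X(\SCL), \ka_{\varepsilon/r}^{-1} D_{X(\SCL)}^{\varepsilon/r}\right) : \SCL \in \Gamma\right\} \to \left\{\left(X(\SCL) \times X(\SCL), \kc_r^{-1} D_{rX(\SCL)}(r\bullet, r\bullet)\right) : \SCL \in \Gamma\right\}
\end{equation*}
in law. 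Since $\{\varepsilon/r : \varepsilon \in \SE\}$ is a sequence of positive reals tending to zero, this places the target law in the Prokhorov closure of the family $\{\text{collection}_{\varepsilon} : \varepsilon > 0\}$.

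There is no serious obstacle; the argument is essentially a bookkeeping exercise combining (i) the whole-plane scale invariance of $\Gamma$, (ii) the MFPP scaling, and (iii) the definition of $\kc_r$ from \Cref{326}. The only mild subtlety is to insist on the joint (rather than marginal) convergence in \eqref{eq:051}, so that the scaled limit $\{D_{rX(\SCL)} : \SCL \in \Gamma\}$ is produced simultaneously for all pockets and can be freely manipulated alongside the deterministic prefactor.
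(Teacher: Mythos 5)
Your proposal is correct and follows essentially the same route as the paper's proof: both rest on the MFPP scaling identity $D_{rX}^\varepsilon(r\bullet,r\bullet)=r^2D_X^{\varepsilon/r}$, the scale invariance $r\Gamma\overset{d}{=}\Gamma$, the convergence $\ka_\varepsilon/(r^2\ka_{\varepsilon/r})\to\kc_r^{-1}$ from \Cref{326}, and the observation that the $\varepsilon/r$-MFPP collection is a member of the family \eqref{eq:053}. The only difference is cosmetic — you apply the equality in distribution at the start rather than at the end — so no further comment is needed.
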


\begin{proof}
    To lighten notation, write $\SF$ for the family \eqref{eq:053} and $\overline\SF$ for the closure of $\SF$ with respect to the Prokhorov topology for probability measures on $\mathscr{HausUniEns}_4$. Note that 
    \begin{align*}
        &\left\{\left(X(\SCL) \times X(\SCL), \ka_{\varepsilon/r}^{-1}D_{X(\SCL)}^{\varepsilon/r}(\bullet, \bullet)\right) : \SCL \in \Gamma\right\} \\
        &= \left\{\left(X(\SCL) \times X(\SCL), \frac{\ka_\varepsilon}{r^2\ka_{\varepsilon/r}} \ka_\varepsilon^{-1} D_{rX(\SCL)}^\varepsilon(r\bullet, r\bullet)\right) : \SCL \in \Gamma\right\} \\
        &\overset d= \left\{\left((X(\SCL)/r) \times (X(\SCL)/r), \frac{\ka_\varepsilon}{r^2\ka_{\varepsilon/r}} \ka_\varepsilon^{-1} D_{X(\SCL)}^\varepsilon(r\bullet, r\bullet)\right) : \SCL \in \Gamma\right\}, \quad \forall r > 0, \ \forall \varepsilon > 0.
    \end{align*}
    This implies that for each $r > 0$ and $\varepsilon > 0$, the law of the collection
    \begin{equation}\label{eq:054}
        \left\{\left((X(\SCL)/r) \times (X(\SCL)/r), \frac{\ka_\varepsilon}{r^2\ka_{\varepsilon/r}} \ka_\varepsilon^{-1} D_{X(\SCL)}^\varepsilon(r\bullet, r\bullet)\right) : \SCL \in \Gamma\right\}
    \end{equation}
    is contained in $\SF$. Fix $r > 0$. It follows from \eqref{eq:051} and the definition of $\kc_r$ that \eqref{eq:054} converges in law to 
    \begin{equation}\label{eq:055}
        \left\{\left((X(\SCL)/r) \times (X(\SCL)/r), \kc_r^{-1} D_{X(\SCL)}(r\bullet, r\bullet)\right) : \SCL \in \Gamma\right\}
    \end{equation}
    as $\varepsilon \to 0$.
    Thus, we conclude that the law of \eqref{eq:055} is contained in $\overline\SF$. Since \eqref{eq:056} and \eqref{eq:055} have the same law, we complete the proof. 
\end{proof}

\subsection{Locality}\label{ss:04}

In the present subsection, we prove that any subsequential limit satisfies Axiom~\eqref{048B} of \Cref{048}, and then conclude the proof of \Cref{321} using \Cref{006}. 

\begin{proposition}\label{274}
    Let $\SE$ be a sequence of positive real numbers $\varepsilon$'s tending to zero along which \eqref{eq:051} holds. Then $D_\Upsilon$ satisfies Axiom~\eqref{048B} of \Cref{048}. (Here, $D_\Upsilon$ is defined as in \eqref{eq:057}.)
\end{proposition}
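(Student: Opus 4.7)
The key discrete-level observation is that for any deterministic open $V \subset \CC$, the rescaled internal MFPP metric $\ka_\varepsilon^{-1} D_\Upsilon^\varepsilon(\bullet,\bullet; V \cap \Upsilon)$ is a deterministic measurable function of the set $V \cap \Upsilon$, since the infimum ranges over paths $P \subset V \cap \Upsilon$ whose weights $\mathop{\mathrm{Leb}}(B_\varepsilon(P))$ depend only on $P$. Hence at each $\varepsilon > 0$ this random variable is $\sigma(V \cap \Upsilon)$-measurable and therefore trivially conditionally independent of any functional of $\Upsilon$ outside $V$ given $V \cap \Upsilon$. The content of \Cref{274} is to propagate this discrete-level locality through the weak limit.

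The plan is as follows. First I would enrich the joint convergence of \Cref{277} along $\SE$ by a diagonal extraction, passing to a further subsequence if necessary to include also the rescaled complement internal metrics $\ka_\varepsilon^{-1} D_\Upsilon^\varepsilon(\bullet,\bullet;(\CC \setminus \overline{W_2}) \cap \Upsilon)$ restricted to compact subsets of $\CC \setminus \overline{W_2}$ for each dyadic $W_2$; tightness of these restrictions follows from \Cref{229} together with the arguments in \Cref{ss:00}. Next, for each dyadic pair $(W_1, W_2) \in \mathscr{DyDo}_2$, I would establish that the limit $D_{\Upsilon, W_1, W_2}$ is conditionally independent of the complement data given $W_2 \cap \Upsilon$ via a test-function argument. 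Writing $\mathrm{comp}_\varepsilon$ for the pair $((\CC \setminus \overline{W_2}) \cap \Upsilon, \ka_\varepsilon^{-1} D_\Upsilon^\varepsilon(\bullet,\bullet;(\CC \setminus \overline{W_2}) \cap \Upsilon))$, for bounded continuous $f, g, h$ the identity
\begin{equation*}
\BE\bigl[f\bigl(\ka_\varepsilon^{-1} D_\Upsilon^\varepsilon|_{\overline{W_1}}\bigr) g(W_2 \cap \Upsilon) h(\mathrm{comp}_\varepsilon)\bigr] = \BE\bigl[f\bigl(\ka_\varepsilon^{-1} D_\Upsilon^\varepsilon|_{\overline{W_1}}\bigr) g(W_2 \cap \Upsilon) \BE[h(\mathrm{comp}_\varepsilon) \mid W_2 \cap \Upsilon]\bigr]
\end{equation*}
holds at the discrete level because the first factor is $\sigma(W_2 \cap \Upsilon)$-measurable. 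Passing both sides to the limit using the enriched joint convergence (further enriched to ensure that the inner conditional expectations $\BE[h(\mathrm{comp}_\varepsilon) \mid W_2 \cap \Upsilon]$ converge jointly with everything else) yields the analogous identity for the limits, which is equivalent to the desired conditional independence. Finally, an arbitrary open $U$ is handled by approximating $U$ from inside by a dyadic exhaustion $W_n \nearrow U$ with $W_{n-1} \Subset W_n$, using monotonicity of internal distances and \Cref{229,019} to transfer the conditional independences for the dyadic pairs $(W_{n-1}, W_n)$ to $U$ via a martingale convergence argument.

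The main obstacle is the passage to the limit in the second step, since conditional independence is not automatically preserved under weak limits of joint laws. The resolution rests on the specific asymmetric structure at the discrete level --- one factor is a deterministic function of $W_2 \cap \Upsilon$ --- together with the flexibility to further enrich the joint convergence with functionals of the inner conditional laws, guaranteeing the needed convergence of conditional expectations. Once \Cref{274} is in hand, \Cref{321} follows by combining \Cref{275,327,274} to see that any subsequential limit is a local metric in the sense of \Cref{048}, and then invoking \Cref{006} to upgrade it to a weak geodesic $\CLE_\kappa$ carpet metric determined by $\Upsilon$.
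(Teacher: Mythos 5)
Your discrete-level observation is correct and matches the paper's starting point: at each $\varepsilon$ the internal MFPP metric on $W_2 \cap \Upsilon$ is a deterministic function of $W_2 \cap \Upsilon$, so the conditional independence is trivial before the limit, and the entire content of the proposition is the passage to the limit. However, your resolution of that passage has a genuine gap. In the test-function identity, the right-hand side involves $Y_\varepsilon \defeq \BE\lbrack h(\mathrm{comp}_\varepsilon) \mid W_2 \cap \Upsilon\rbrack$, and "further enriching the joint convergence so that the $Y_\varepsilon$ converge jointly with everything else" only produces \emph{some} limit random variable $Y_0$; it does not identify $Y_0$ as $\BE\lbrack h(\mathrm{comp}_0) \mid W_2 \cap \Upsilon\rbrack$, nor even guarantee that $Y_0$ is $\sigma(W_2 \cap \Upsilon)$-measurable. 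Conditional expectation does not commute with weak limits: if $Y_\varepsilon = \phi_\varepsilon(B)$ with $\phi_\varepsilon$ oscillating at scale $\varepsilon$, the joint law of $(B, Y_\varepsilon)$ can converge to one in which $Y_0$ is independent of $B$. Without identifying $Y_0$, the limiting identity $\BE\lbrack f(A_0) g(B) h(C_0)\rbrack = \BE\lbrack f(A_0) g(B) Y_0\rbrack$ does not yield conditional independence. Note also that the limiting internal metric is \emph{not} known a priori to be determined by $W_2 \cap \Upsilon$ (that is \Cref{006}, which is proved only \emph{after} the locality axiom is in hand), so the triviality you exploit at the discrete level genuinely disappears in the limit.

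The missing ingredient is a uniform-in-$\varepsilon$ equicontinuity of the conditional laws in total variation, which is exactly condition~\eqref{278C} of \Cref{278} and is what rules out the oscillation scenario. The paper obtains this by \emph{not} conditioning on the full carpet $W_2 \cap \Upsilon$ but instead on the partial exploration data $(U^\star; X, \alpha)$ and $(V^\star; Y, \beta)$ produced by \Cref{252}, for which \Cref{260} gives continuity of the conditional law of the relevant CLE excursion collections in total variation with respect to the Carath\'eodory topology, uniformly in $\varepsilon$ (the $\varepsilon$-dependence factors through a deterministic function of the CLE data, so the modulus of continuity is $\varepsilon$-independent). This is also why the paper works with a quadruple of nested dyadic domains $W_1 \Subset W_2 \Subset W_3 \Subset W_4$ rather than a single pair: the buffer regions are needed so that the conditional laws depend on the exploration data in a total-variation-continuous way. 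Your argument, which conditions directly on $W_2 \cap \Upsilon$ in the Hausdorff topology, would require an analogous equicontinuity statement for a much more singular conditioning, and no such statement is available. To repair the proof you would need to restructure it around the partial-exploration conditioning and invoke \Cref{252,260,278} (or prove equivalents), at which point it becomes the paper's argument.
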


The main difficulty in the proof of \Cref{274} is that, in general, conditional independence is not preserved under taking weak limits. This difficulty will be resolved using \Cref{260}, together with the following lemma. 

\begin{lemma}\label{278}
    Let $\{X_n\}_{n \in \NN}$, $X$, $\{Y_n\}_{n \in \NN}$, $Y$, $Z$ be random variables taking values in complete and separable metric spaces $(\SX, d_\SX)$, $(\SY, d_\SY)$, $(\SZ, d_\SZ)$, respectively. Suppose that the following conditions are satisfied:
    \begin{enumerate}
        \item\label{278A} For each $n \in \NN$, $X_n$ and $Y_n$ are conditionally independent given $Z$. 
        \item\label{278B} $(X_n, Y_n, Z)$ converges in law to $(X, Y, Z)$ as $n \to \infty$. 
        \item\label{278C} For each $\varepsilon > 0$, there exists $\delta > 0$ such that the following is true: For each $n \in \NN$ and $z_1, z_2 \in \SZ$ with $d_\SZ(z_1, z_2) \le \delta$, the total variation distance between the conditional law of $X_n$ given $Z = z_1$ and the conditional law of $X_n$ given $Z = z_2$ is at most $\varepsilon$, and the same is true with $Y_n$ in place of $X_n$. 
    \end{enumerate}
    Then $X$ and $Y$ are conditionally independent given $Z$. 
\end{lemma}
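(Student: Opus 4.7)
The plan is to reduce conditional independence to a pointwise factorization of conditional expectations, and then pass to the limit using the equicontinuity supplied by condition~\eqref{278C}. Since $(\SX,d_\SX)$, $(\SY,d_\SY)$, $(\SZ,d_\SZ)$ are Polish, regular conditional distributions exist: I will fix kernels $\mu_n(z,\cdot) = \CL(X_n\mid Z=z)$, $\nu_n(z,\cdot)=\CL(Y_n\mid Z=z)$ for each $n$, and similarly $\mu(z,\cdot)=\CL(X\mid Z=z)$, $\nu(z,\cdot)=\CL(Y\mid Z=z)$. Conditional independence of $X$ and $Y$ given $Z$ is equivalent to the identity
\begin{equation*}
    \BE\lbrack f(X) g(Y) h(Z)\rbrack = \BE\lbrack F(Z)G(Z) h(Z)\rbrack, \quad F(z)\defeq\textstyle\int f\,d\mu(z,\cdot), \ G(z)\defeq\int g\,d\nu(z,\cdot),
\end{equation*}
for all $f\in C_b(\SX)$, $g\in C_b(\SY)$, $h\in C_b(\SZ)$; it then suffices to verify this identity.

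First, I would exploit condition~\eqref{278C} to show that the family $\{F_n\}_{n\in\NN}$, where $F_n(z)\defeq\int f\,d\mu_n(z,\cdot)$, is uniformly bounded by $\lVert f\rVert_\infty$ and uniformly equicontinuous on $\SZ$: indeed, for any $f\in C_b(\SX)$ with $\lVert f\rVert_\infty\le 1$,
\begin{equation*}
    \left\lvert F_n(z_1)-F_n(z_2)\right\rvert \le d_{\mathrm{TV}}\!\left(\mu_n(z_1,\cdot),\mu_n(z_2,\cdot)\right) \le \varepsilon
\end{equation*}
whenever $d_\SZ(z_1,z_2)\le\delta(\varepsilon)$. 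By Arzel\`a-Ascoli, $\{F_n\}$ is relatively compact in the topology of uniform convergence on compact subsets of $\SZ$, and every subsequential limit is a bounded continuous function on $\SZ$ with the same modulus of continuity. Analogously for $G_n(z)\defeq\int g\,d\nu_n(z,\cdot)$.

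Next, I would identify these subsequential limits. Let $F_\infty$ be any subsequential limit of $F_n$ along some subsequence $\SE^\prime\subset\NN$. For any $h\in C_b(\SZ)$ of compact support,
\begin{equation*}
    \BE\lbrack F_n(Z)h(Z)\rbrack = \BE\lbrack f(X_n) h(Z)\rbrack \xrightarrow[n\to\infty]{} \BE\lbrack f(X) h(Z)\rbrack = \BE\lbrack F(Z) h(Z)\rbrack
\end{equation*}
by condition~\eqref{278B}, while tightness of $\CL(Z)$ together with the uniform convergence of $F_n\to F_\infty$ on the compact support of $h$ gives $\BE\lbrack F_n(Z) h(Z)\rbrack\to\BE\lbrack F_\infty(Z) h(Z)\rbrack$ along $\SE^\prime$. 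Thus $F_\infty=F$ as $\CL(Z)$-integrable functions, so all subsequential limits agree $\CL(Z)$-a.e.; by equicontinuity the whole sequence $F_n\to F$ uniformly on compact subsets of $\mathop{\mathrm{supp}}\CL(Z)$. The same holds with $G$ in place of $F$.

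Finally I would take the limit in the product formula. Conditional independence~\eqref{278A} gives $\BE\lbrack f(X_n) g(Y_n) h(Z)\rbrack = \BE\lbrack F_n(Z) G_n(Z) h(Z)\rbrack$ for all $n$. The left-hand side converges to $\BE\lbrack f(X) g(Y) h(Z)\rbrack$ by~\eqref{278B}, while for $h\in C_b(\SZ)$ of compact support the right-hand side converges to $\BE\lbrack F(Z)G(Z) h(Z)\rbrack$ by the uniform-on-compacts convergence above together with the uniform bounds $\lVert F_n\rVert_\infty\le\lVert f\rVert_\infty$, $\lVert G_n\rVert_\infty\le\lVert g\rVert_\infty$ and tightness of $\CL(Z)$. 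A monotone class / Stone-Weierstrass argument removes the compact-support assumption on $h$. Since $C_b(\SX)$ and $C_b(\SY)$ determine laws on the Polish spaces $\SX$ and $\SY$, this factorization upgrades to $\CL(X,Y\mid Z) = \mu(Z,\cdot)\otimes\nu(Z,\cdot)$ almost surely, establishing conditional independence.

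The main obstacle is the third step: weak convergence of joint laws does not, in general, imply convergence of conditional distributions, so one cannot directly push the conditional independence through the limit. Condition~\eqref{278C} is precisely what is needed to force equicontinuity (hence uniform-on-compacts convergence) of the conditional expectations, and this is the only place where a quantitative hypothesis beyond weak convergence is used.
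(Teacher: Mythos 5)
The paper does not actually prove \Cref{278}: it is imported wholesale from \cite{TightNonsimCLE} (Corollary~B.5 and Lemma~B.7), so there is no internal argument to compare yours against. Your proof is the natural self-contained one and its structure is sound: condition~(1) gives $\BE\lbrack f(X_n)g(Y_n)h(Z)\rbrack=\BE\lbrack F_n(Z)G_n(Z)h(Z)\rbrack$; condition~(3) makes $\{F_n\}$ and $\{G_n\}$ uniformly bounded and uniformly equicontinuous, hence relatively compact for uniform-on-compacts convergence by Arzel\`a--Ascoli on a separable metric space; condition~(2) identifies every subsequential limit of $F_n$ with a continuous version of $\BE\lbrack f(X)\mid Z=\cdot\rbrack$ on $\mathop{\mathrm{supp}}(\text{law of }Z)$ (two continuous functions agreeing a.e.\ with respect to the law of $Z$ agree on its support, so the full sequence converges there); and the closing monotone class step is routine. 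This is almost certainly the same argument as in the cited appendix.

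One step should be repackaged. You restrict the test functions to $h\in C_b(\SZ)$ with compact support in order to convert uniform-on-compacts convergence of $F_n$ into convergence of $\BE\lbrack F_n(Z)h(Z)\rbrack$, and then propose to remove that restriction afterwards. In a general Polish space $\SZ$ every compact set may have empty interior (e.g.\ an infinite-dimensional Banach space), in which case the only continuous compactly supported function is identically zero, the test-function class is vacuous, and the identification $F_\infty=F$ a.e.\ does not follow as written. The repair is the one you already gesture at: keep $h\in C_b(\SZ)$ arbitrary and use tightness of the law of $Z$ (Ulam's theorem) directly --- for a compact $K$ with $\BP\lbrack Z\in K\rbrack\ge1-\varepsilon$, split $\BE\lbrack(F_n-F_\infty)(Z)h(Z)\rbrack$ over $\{Z\in K\}$ and its complement, controlling the first piece by uniform convergence on $K$ and the second by the uniform bound $\lVert F_n\rVert_\infty\le\lVert f\rVert_\infty$. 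The same splitting handles the final limit $\BE\lbrack F_n(Z)G_n(Z)h(Z)\rbrack\to\BE\lbrack F(Z)G(Z)h(Z)\rbrack$, and the separate ``remove the compact-support assumption'' step disappears. With that substitution the proof is complete.
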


\begin{proof}
    See \cite[Corollary~B.5 and Lemma~B.7]{TightNonsimCLE}. 
\end{proof}

\begin{proof}[Proof of \Cref{274}]
    By \Cref{277} and possibly passing to a subsequence, we may assume without loss of generality that along $\SE$ we have the convergence of the joint laws
    \begin{multline}\label{eq:267}
        \Biggl(\biggl\{\Bigl(X(\SCL) \times X(\SCL), \ka_\varepsilon^{-1} D_{X(\SCL)}^\varepsilon\Bigr) : \SCL \in \Gamma\biggr\}, \biggl\{\Bigl((\overline{W_1} \cap X(\SCL)) \times (\overline{W_1} \cap X(\SCL)), \\
        \ka_\varepsilon^{-1}D_{X(\SCL)}^\varepsilon(\bullet, \bullet; W_2 \cap X(\SCL))|_{(\overline{W_1} \cap X(\SCL)) \times (\overline{W_1} \cap X(\SCL))}\Bigr) : \SCL \in \Gamma, \ (W_1, W_2) \in \mathscr{DyDo}_2\biggr\}\Biggr) \\
        \to \biggl(\Bigl\{\bigl(X(\SCL) \times X(\SCL), D_{X(\SCL)}\bigr) : \SCL \in \Gamma\Bigr\}, \Bigl\{\bigl((\overline{W_1} \cap X(\SCL)) \times (\overline{W_1} \cap X(\SCL)), \\
        D_{X(\SCL)}(\bullet, \bullet; W_2 \cap X(\SCL))|_{(\overline{W_1} \cap X(\SCL)) \times (\overline{W_1} \cap X(\SCL))}\bigr) : \SCL \in \Gamma, \ (W_1, W_2) \in \mathscr{DyDo}_2\Bigr\}\biggr). 
    \end{multline}
    Let $U \subset \CC$ be a deterministic open subset. Fix 
    \begin{equation*}
        \left(W_1, W_2, W_3, W_4\right) \in \mathscr{DyDo}_4(U) \quad \text{and} \quad \left(W_1^\prime, W_2^\prime, W_3^\prime, W_4^\prime\right) \in \mathscr{DyDo}_4(\CC \setminus \overline U).
    \end{equation*}
    Write $U^\star$ for the connected component containing $W_4$ of the open subset obtained by removing from $U$ the closure of the union of the $(\CC \setminus \overline{W_4}, \CC \setminus U)$-excursions of $\Gamma$ and all the other loops of $\Gamma$ that intersect with $\CC \setminus U$. Write $X \defeq \partial U^\star \cap \partial W_{4}$ for the collection of the endpoints of the $(\CC \setminus \overline{W_4}, \CC \setminus U)$-excursions of $\Gamma$. Write $\alpha$ for the link pattern on $(U^\star; X)$ induced by the $(\CC \setminus \overline{W_4}, \CC \setminus U)$-excursions of $\Gamma$. Let $(V^\star; Y, \beta)$ be defined in the same manner as $(U^\star; X, \alpha)$ but with $(W_4^\prime, \CC \setminus \overline U)$ in place of $(W_4, U)$. Then it follows from \Cref{252} that, given $(U^\star; X, \alpha)$ and $(V^\star; Y, \beta)$, the following two collections are conditionally independent:
    \begin{enumerate}
        \item\label{it:207} The collection of the complementary $(\CC \setminus \overline{W_4}, \CC \setminus U)$-excursions of $\Gamma$, together with the collection of the loops of $\Gamma$ that are contained in $U^\star$. 
        \item\label{it:208} The collection of the complementary $(\CC \setminus \overline{W_4^\prime}, \overline U)$-excursions of $\Gamma$, together with the collection of the loops of $\Gamma$ that are contained in $V^\star$. 
    \end{enumerate}
    Write $\Gamma(W_3, W_2)$ for the union of the collection of the $(W_3, W_2)$-excursions of $\Gamma$ and the collection of the loops of $\Gamma$ that are contained in $W_3$ and intersect with $W_2$. Let $\Gamma(W_3^\prime, W_2^\prime)$ be defined in the same manner. Since $\Gamma(W_3, W_2)$ (resp.~$\Gamma(W_3^\prime, W_2^\prime)$) is contained in the collection of \eqref{it:207} (resp.~\eqref{it:208}), it follows that
    \begin{equation}\label{eq:268}
        \parbox{.85\linewidth}{$\Gamma(W_3, W_2)$ and $\Gamma(W_3^\prime, W_2^\prime)$ are conditionally independent given $(U^\star; X, \alpha)$ and $(V^\star; Y, \beta)$.}
    \end{equation}
    On the other hand, it follows from \Cref{260} that the mappings
    \begin{multline}\label{eq:266}
        (U^\star; X, \alpha) \mapsto \left(\text{the conditional law of } \Gamma(W_3, W_2) \text{ given } (U^\star; X, \alpha)\right) \\
        \text{and} \quad (V^\star; Y, \beta) \mapsto \left(\text{the conditional law of } \Gamma(W_3^\prime, W_2^\prime) \text{ given } (V^\star; Y, \beta)\right)
    \end{multline}
    are continuous with respect to the Carath\'eodory topology for simply connected domains, together with the uniform topology for boundary points, and the total variation distance for conditional laws. Now, we observe that for each $\varepsilon > 0$, the metrics 
    \begin{equation*}
        \left\{D_{X(\SCL)}^\varepsilon(\bullet, \bullet; W_2 \cap X(\SCL)) : \SCL \in \Gamma\right\} \quad \text{and} \quad \left\{D_{X(\SCL)}^\varepsilon(\bullet, \bullet; W_2^\prime \cap X(\SCL)) : \SCL \in \Gamma\right\}
    \end{equation*}
    are almost surely determined by $\Gamma(W_3, W_2)$ and $\Gamma(W_3^\prime, W_2^\prime)$, respectively. Since the mappings of \eqref{eq:266} do not depend on $\varepsilon$, it follows that the mappings 
    \begin{multline*}
        (U^\star; X, \alpha) \mapsto \biggl(\text{the conditional law of } \\
        \left\{\ka_\varepsilon^{-1}D_{X(\SCL)}^\varepsilon(\bullet, \bullet; W_2 \cap X(\SCL))|_{(\overline{W_1} \cap X(\SCL)) \times (\overline{W_1} \cap X(\SCL))} : \SCL \in \Gamma\right\} \text{ given } (U^\star; X, \alpha)\biggr)
    \end{multline*}
    and
    \begin{multline*}
        (V^\star; Y, \beta) \mapsto \biggl(\text{the conditional law of } \\
        \left\{\ka_\varepsilon^{-1}D_{X(\SCL)}^\varepsilon(\bullet, \bullet; W_2^\prime \cap X(\SCL))|_{(\overline{W_1^\prime} \cap X(\SCL)) \times (\overline{W_1^\prime} \cap X(\SCL))} : \SCL \in \Gamma\right\} \text{ given } (V^\star; Y, \beta)\biggr)
    \end{multline*}
    for $\varepsilon > 0$ are uniformly equicontinuous (i.e., satisfy condition~\eqref{278C} of \Cref{278}). Combining this with \eqref{eq:267}, \eqref{eq:268}, and \Cref{278}, we conclude that 
    \begin{multline*}
        \left(\Gamma(W_3, W_2), \left\{D_{X(\SCL)}(\bullet, \bullet; W_2 \cap X(\SCL))|_{(\overline{W_1} \cap X(\SCL)) \times (\overline{W_1} \cap X(\SCL))} : \SCL \in \Gamma\right\}\right) \\
        \text{and} \quad \left(\Gamma(W_3^\prime, W_2^\prime), \left\{D_{X(\SCL)}(\bullet, \bullet; W_2^\prime \cap X(\SCL))|_{(\overline{W_1^\prime} \cap X(\SCL)) \times (\overline{W_1^\prime} \cap X(\SCL))} : \SCL \in \Gamma\right\}\right)
    \end{multline*}
    are conditionally independent given $(U^\star; X, \alpha)$ and $(V^\star; Y, \beta)$. This implies that, given the $(\CC \setminus \overline{W_4^\prime}, \overline U)$-excursions of $\Gamma$ and all the other loops of $\Gamma$ that intersect with $\overline U$, 
    \begin{multline*}
        \left\{D_{X(\SCL)}(\bullet, \bullet; W_2 \cap X(\SCL))|_{(\overline{W_1} \cap X(\SCL)) \times (\overline{W_1} \cap X(\SCL))} : \SCL \in \Gamma\right\} \\
        \text{and} \quad \left(\Gamma(W_3^\prime, W_2^\prime), \left\{D_{X(\SCL)}(\bullet, \bullet; W_2^\prime \cap X(\SCL))|_{(\overline{W_1^\prime} \cap X(\SCL)) \times (\overline{W_1^\prime} \cap X(\SCL))} : \SCL \in \Gamma\right\}\right)
    \end{multline*}
    are conditionally independent. By increasing $W_1$, $W_2$, $W_3$, $W_4$ to $U$ and $W_1^\prime$, $W_2^\prime$, $W_3^\prime$, $W_4^\prime$ to $\CC \setminus \overline U$, we complete the proof. 
\end{proof}

\begin{proof}[Proof of \Cref{321}]
    By \Cref{275,327}, for every sequence of positive real numbers $\varepsilon$'s tending to zero, there exists $\{D_{X(\SCL)} : \SCL \in \Gamma\}$ and a subsequence $\SE$ along which we have the convergence of the joint laws
    \begin{equation*}
        \left\{\left(X(\SCL) \times X(\SCL), \ka_\varepsilon^{-1} D_{X(\SCL)}^\varepsilon\right) : \SCL \in \Gamma\right\} \to \left\{\left(X(\SCL) \times X(\SCL), D_{X(\SCL)}\right) : \SCL \in \Gamma\right\}, 
    \end{equation*}
    and Axioms~\eqref{010A} and \eqref{010D} of \Cref{010} are satisfied (with $D_\Upsilon$ defined as in \eqref{eq:057}). By \Cref{274}, the coupling $(\Upsilon, D_\Upsilon)$ satisfies Axiom~\eqref{048B} of \Cref{048}. The translation invariance and the monotonicity (cf.~Axioms~\eqref{010C} and \eqref{010E} of \Cref{010}) are immediate. Thus, we conclude from \Cref{006} that $D$ determines a weak geodesic $\CLE_\kappa$ carpet metric. This completes the proof. 
\end{proof}

\section{Distance exponent and Hausdorff dimension}\label{s:06}

In the present section, we prove \Cref{319,302}.

\subsection{Proof of \Cref{319} (uniqueness of the distance exponent)}

Suppose by way of contradiction that there exist $\theta < \widetilde\theta$ and strong geodesic $\CLE_\kappa$ carpet metrics $D$ and $\widetilde D$ with distance exponent $\theta$ and $\widetilde\theta$, respectively. By \Cref{321,312}, there exists a weak geodesic $\CLE_\kappa$ carpet metric which is also strong. Moreover, by \eqref{eq:340}, the distance exponent of this metric $> 0$. Thus, we may assume without loss of generality that $\widetilde\theta > 0$. By \Cref{307}, Axiom~\eqref{307C} (translation invariance and scale covariance), for each $p \in (0, 1)$, there exists $a = a(p) > 0$ such that
\begin{equation*}
    \BP\!\left\lbrack D_\Upsilon(\partial B_{r/2}(z) \cap \Upsilon, \partial B_r(z) \cap \Upsilon) \ge ar^\theta\right\rbrack \ge p, \quad \forall z \in \CC, \ \forall r > 0. 
\end{equation*}
Thus, by applying \Cref{263,214} with $\{r^{\widetilde\theta}\}_{r > 0}$ in place of $\{\kc_r\}_{r > 0}$ and a union bound, there exists a sufficiently small $a > 0$ and a sufficiently large $A > 0$ such that it holds with polynomially high probability as $\varepsilon \to 0$ that for each $z \in \left(\frac1{100}\varepsilon^2\ZZ\right)^2 \cap B_{1/\varepsilon}(0)$, there exists $r \in [\varepsilon^2, \varepsilon]$ such that the following are true:
\begin{enumerate}
    \item\label{it:303} $D_\Upsilon(\partial B_{r/2}(z) \cap \Upsilon, \partial B_r(z) \cap \Upsilon) \ge ar^\theta$. 
    \item\label{it:304} We have
    \begin{equation*}
        \widetilde D_\Upsilon(x, y) \le Ar^{\widetilde\theta}, \quad \forall x, y \in B_r(z) \cap \Upsilon \text{ with } x \xleftrightarrow{B_r(z)} y. 
    \end{equation*}
\end{enumerate}
Thus, by the Borel-Cantelli lemma, almost surely, there exists $\varepsilon_\ast \in (0, 1)$ such that the above event occurs for all $\varepsilon \in (0, \varepsilon_\ast] \cap \{2^k\}_{k \in \ZZ}$. 

Let $x, y \in \Upsilon$ with $x \leftrightarrow y$. Let $P \colon [0, 1] \to \Upsilon$ be a $D_\Upsilon$-geodesic from $x$ to $y$. Choose a sufficiently small $\varepsilon \in (0, \varepsilon_\ast] \cap \{2^k\}_{k \in \ZZ}$ so that $P \subset B_{1/\varepsilon}(0)$. Set $t_0 \defeq 0$ and choose $z_1 \in \left(\frac1{100}\varepsilon^2\ZZ\right)^2 \cap B_{1/\varepsilon}(0)$ and $r_1 \in [\varepsilon^2, \varepsilon]$ such that $x \in B_{\varepsilon^2/2}(z_1)$ and conditions~\eqref{it:303} and \eqref{it:304} (with $(z_1, r_1)$ in place of $(z, r)$) hold. Then, inductively, for each $j \in \NN$, if $y \notin B_{r_j}(z_j)$, then set $t_j$ to be the first time $t \ge t_{j - 1}$ at which $P$ hits $\partial B_{r_j}(z_j)$, and choose $z_{j + 1} \in \left(\frac1{100}\varepsilon^2\ZZ\right)^2 \cap B_{1/\varepsilon}(0)$ and $r_{j + 1} \in [\varepsilon^2, \varepsilon]$ such that $P(t_j) \in B_{\varepsilon^2/2}(z_{j + 1})$ and conditions~\eqref{it:303} and \eqref{it:304} (with $(z_{j + 1}, r_{j + 1})$ in place of $(z, r)$) hold; otherwise set $t_j \defeq 1$. Write $n \defeq \inf\{j \in \NN : t_j = 1\}$. Since $P(t_{j - 1}) \in B_{r_j/2}(z_j)$ and $P(t_j) \in \partial B_{r_j}(z_j)$ for all $j \in [1, n - 1]_\ZZ$, it follows from condition~\eqref{it:303} that
\begin{equation}\label{eq:341}
    D_\Upsilon(P(t_{j - 1}), P(t_j)) \ge ar_j^\theta, \quad \forall j \in [1, n - 1]_\ZZ. 
\end{equation}
Since $P([t_{j - 1}, t_j]) \subset B_{r_j}(z_j)$ for all $j \in [1, n]_\ZZ$, it follows from condition~\eqref{it:304} that
\begin{equation}\label{eq:342}
    \widetilde D_\Upsilon(P(t_{j - 1}), P(t_j)) \le Ar_j^{\widetilde\theta}, \quad \forall j \in [1, n]_\ZZ. 
\end{equation}
Combining \eqref{eq:341} and \eqref{eq:342}, we obtain that
\begin{multline*}
    \widetilde D_\Upsilon(x, y) \le \sum_{j = 1}^n \widetilde D_\Upsilon(P(t_{j - 1}), P(t_j)) \le \sum_{j = 1}^n Ar_j^{\widetilde\theta} \le \frac Aa \sum_{j = 1}^{n - 1} r_j^{\widetilde\theta - \theta} D_\Upsilon(P(t_{j - 1}), P(t_j)) + Ar_n^{\widetilde\theta} \\
    \le \frac Aa\varepsilon^{\widetilde\theta - \theta} D_\Upsilon(x, y) + A\varepsilon^{\widetilde\theta}. 
\end{multline*}
Since $\widetilde\theta > \theta \vee 0$ and $\varepsilon$ may be chosen to be arbitrarily small, we must have $\widetilde D_\Upsilon(x, y) = 0$, a contradiction. This completes the proof. \qed

\subsection{Hausdorff dimension of CLE carpet}

In the present subsection, we prove \Cref{302}, \eqref{302A}. We will respectively prove the upper and lower bounds in \Cref{315,316}.  Let $D$ be a strong geodesic metric with distance exponent $\theta$. Let $X$ be as in \eqref{eq:004}.

\begin{lemma}\label{315}
    We have $\theta\dim(X; D_X) \le \dim(X)$ almost surely. 
\end{lemma}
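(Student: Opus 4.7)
We carry out a multiscale covering argument to convert a fine Euclidean Hausdorff cover of $X$ into a $D_X$-Hausdorff cover. Fix $\alpha > \dim(X)/\theta$ and choose $d \in (\dim(X), \alpha\theta)$; it suffices, by the definition of Hausdorff dimension, to produce for any $\eta > 0$ a cover $\{V_j\}$ of $X$ with $\sum_j \diam(V_j; D_X)^\alpha < \eta$.

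Since $d > \dim(X)$, for any $\eta' > 0$ and any $\varepsilon > 0$ there is a Euclidean cover $\{U_i\}$ of $X$ with $\diam(U_i) \le \varepsilon$ for all $i$, $\sum_i \diam(U_i)^d < \eta'$, and only polynomially many sets at each dyadic Euclidean scale. I would then apply \Cref{214} with scaling constants $\kc_r = r^\theta$ (valid for a strong metric by Axiom~\eqref{307C}) and translation invariance: fix $\delta > 0$ small enough that $(1-\delta)\alpha\theta > d$, and for each $U_i$ pick $x_i \in U_i \cap X$ and locate a ``good'' scale $s_i$ in the geometric chain between $\diam(U_i)$ and $\diam(U_i)^{1-\delta}$ at which every connected component of $B_{s_i}(x_i) \cap \Upsilon$ has $D_\Upsilon$-diameter at most $A s_i^\theta$ for a deterministic constant $A > 0$. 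The superpolynomial concentration in \Cref{214}, combined with a union bound over the polynomially many $U_i$, makes this succeed for every $i$ simultaneously with probability tending to $1$ as $\varepsilon \to 0$.

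Each $U_i \cap X$ is then covered by the connected components of $B_{s_i}(x_i) \cap \Upsilon$ that meet it, whose $D_X$-diameters are each at most $A s_i^\theta$; the number of such macroscopic components is bounded by the number of connected arcs of $\Gamma$ crossing the annulus between $U_i$ and $\partial B_{s_i}(x_i)$, controlled by \Cref{264}. Summing,
\begin{equation*}
    \sum_j \diam(V_j; D_X)^\alpha \le C A^\alpha \sum_i s_i^{\alpha\theta} \le C A^\alpha \varepsilon^{(1-\delta)\alpha\theta - d} \sum_i \diam(U_i)^d \le C A^\alpha \eta' \varepsilon^{(1-\delta)\alpha\theta - d},
\end{equation*}
which can be made smaller than any prescribed $\eta$ by choice of $\eta'$ and $\varepsilon$, yielding $\dim(X; D_X) \le \alpha$.

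The main obstacle is that \Cref{214} provides $D_X$-diameter control only at a positive density of scales rather than at every scale; this forces the covering radius $s_i$ to exceed $\diam(U_i)$ by a factor as large as $\diam(U_i)^{-\delta}$, and the margin $d < \alpha\theta$ is precisely what absorbs this loss. A secondary technical point is bounding the number of macroscopic connected components of $B_{s_i}(x_i) \cap \Upsilon$ that meet $U_i$, which is handled by the four-arm estimate in \Cref{264}.
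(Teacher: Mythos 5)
Your covering scheme is genuinely different from the paper's argument, and as written it has a quantitative gap in the component-counting step. The problem is the combination of \Cref{264} with the union bound that your argument requires. Your efficient cover $\{U_i\}$ is random (it depends on $X$), and the arm event in the annulus between $U_i$ and $\partial B_{s_i}(x_i)$ is not independent of $X$, so you cannot condition on the cover; you must verify the "at most two crossing arcs" event simultaneously for all $\asymp 4^n$ lattice points at each Euclidean scale $2^{-n}$. \Cref{264}, \eqref{264A} gives failure probability $C(s/t)^{\alpha}$ only for $\alpha < \alpha_{\mathrm{4A}}$, and with $s/t \ge \diam(U_i)^{\delta} = 2^{-n\delta}$ the union bound forces $\delta\alpha_{\mathrm{4A}} > 2$, i.e.\ $\delta > 2/\alpha_{\mathrm{4A}}$. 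Since $\alpha_{\mathrm{4A}}$ is finite (e.g.\ $\approx 2.6$ at $\kappa = 3$), $\delta$ is bounded away from $0$, and your final estimate only yields $\dim(X; D_X) \le \dim(X)/\bigl((1 - 2/\alpha_{\mathrm{4A}})\theta\bigr)$, which is strictly weaker than the lemma. (The union bound for the \Cref{214} good-scale event is fine, but only because the exponent $\alpha$ there is a free parameter that can be taken large relative to $2/\delta$; the concentration is exponential in the \emph{number of scales}, not automatically superpolynomial in $\diam(U_i)$, so this should be said explicitly.) The gap is repairable — e.g.\ allow up to $\diam(U_i)^{-\epsilon'}$ crossing components using a many-arm bound in the spirit of \Cref{328} (whose exponent grows without bound in the number of arms), and absorb the extra factor into the margin $(1-\delta)\alpha\theta - \epsilon' > d$ — but the four-arm estimate alone does not suffice.

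For comparison, the paper avoids union bounds and arm events entirely: it covers the whole disk by a \emph{deterministic} family $\{B_{r_j}(z_j)\}$ with $\sum_j r_j^d \le \varepsilon$ for $d$ slightly above $2$, and runs a pure first-moment computation on $\sum_j D_j^\ell\,\mathbf 1(B_{r_j}(z_j)\cap X \ne \emptyset)$, where $D_j$ is the $D_X$-diameter of $B_{r_j}(z_j)\cap X$. The fact that $\dim(X) < 2$ enters through the one-arm probability $\BP[B_{r_j}(z_j)\cap X \ne \emptyset] \le r_j^{c}$ with $c$ close to $2 - \dim(X)$, and the metric enters through finiteness of all positive moments of $r^{-\theta}\sup_{x,y\in B_r(z)\cap X} D_X(x,y)$ (Axioms~\eqref{307C} and \eqref{307D}), followed by Markov and Borel--Cantelli. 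Your approach instead exploits $\dim(X) < 2$ via the a.s.\ existence of an efficient random cover of $X$; this is legitimate in principle but is precisely what creates the need for lattice union bounds and hence the arm-exponent obstruction above. If you want to salvage your route, either upgrade the crossing-count input as indicated, or switch to the paper's expectation-based bookkeeping, which sidesteps the issue.
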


\begin{lemma}\label{308}
    Let $Y$ be a non-negative random variable. Then 
    \begin{equation*}
        \BE\lbrack Y\mathbf 1(Y \ge a)\rbrack \le a^{1 - p} \BE\lbrack Y^p\rbrack, \quad \forall a > 0, \ \forall p \ge 1. 
    \end{equation*}
\end{lemma}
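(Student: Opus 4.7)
The plan is to observe that on the event $\{Y \ge a\}$ one has $Y/a \ge 1$, and then use $p \ge 1$ to conclude $(Y/a)^{p-1} \ge 1$. This is essentially the standard Chebyshev/Markov-style trick of comparing $Y$ to $Y^p/a^{p-1}$ on the tail event.

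First, I would handle the trivial case $p = 1$ separately (the inequality becomes $\BE\lbrack Y \mathbf 1(Y \ge a)\rbrack \le \BE\lbrack Y\rbrack$, which is immediate). For $p > 1$, on $\{Y \ge a\}$ I have the pointwise bound
\begin{equation*}
    Y \;=\; a \cdot \frac{Y}{a} \;\le\; a \cdot \left(\frac{Y}{a}\right)^{p} \;=\; a^{1-p}\, Y^{p},
\end{equation*}
since raising a number $\ge 1$ to the power $p - 1 \ge 0$ only makes it larger. Multiplying by the indicator gives
\begin{equation*}
    Y\, \mathbf 1(Y \ge a) \;\le\; a^{1-p}\, Y^{p}\, \mathbf 1(Y \ge a) \;\le\; a^{1-p}\, Y^{p},
\end{equation*}
where in the last step I dropped the indicator using $Y \ge 0$ and $a^{1-p} > 0$.

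Taking expectations on both sides yields the desired inequality. There is no real obstacle here; the only thing to be slightly careful about is the possibility that $\BE\lbrack Y^p\rbrack = \infty$, in which case the inequality holds trivially, so no integrability hypothesis is required.
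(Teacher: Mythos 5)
Your proposal is correct and follows essentially the same argument as the paper: the pointwise bound $Y\mathbf 1(Y \ge a) \le a^{1-p}Y^p\mathbf 1(Y \ge a) \le a^{1-p}Y^p$ obtained from $(Y/a)^{p-1} \ge 1$ on $\{Y \ge a\}$, followed by taking expectations. The separate treatment of $p=1$ is harmless but unnecessary, since the general argument already covers it.
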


\begin{proof}
    This follows immediately from the inequality
    \begin{equation*}
        Y\mathbf 1(Y \ge a) \le Y \left(\frac Ya\right)^{p - 1} \mathbf 1(Y \ge a) = a^{1 - p} Y^p \mathbf 1(Y \ge a) \le a^{1 - p} Y^p. 
    \end{equation*}
\end{proof}

\begin{proof}[Proof of \Cref{315}]
    Write $\SCL_1$ for the largest loop of $\Gamma$ contained in $B_1(0)$ and surrounding the origin. We may assume without loss of generality that $X = X(\SCL_1)$. (Recall from \Cref{ss:18} the definition of $X(\SCL_1)$.) Fix $\ell > \theta^{-1}\dim(X)$. It suffices to show that
    \begin{equation}\label{eq:310}
        \dim(X; D_{X}) \le \ell. 
    \end{equation}
    Choose $d > 2$ to be sufficiently close to $2$, $c \in (0, 2 - \dim(X))$ to be sufficiently close to $2 - \dim(X)$, and $\vartheta \in (0, \theta)$ to be sufficiently close to $\theta$ so that 
    \begin{equation}\label{eq:271}
        \vartheta\ell + c > d. 
    \end{equation}
    For each $p > 0$, write 
    \begin{equation}\label{eqn:cp_def}
        C_p \defeq \sup_{z \in \CC} \sup_{r > 0} \BE\!\left\lbrack\left(r^{-\theta}\sup\left\{D_{X}(x, y) : x, y \in B_r(z) \cap X\right\}\right)^p\right\rbrack. 
    \end{equation}
    By \Cref{307}, Axioms~\eqref{307C} and \eqref{307D}, we have $C_p < \infty$ for all $p > 0$. Choose a sufficiently large $p > 0$ so that 
    \begin{equation}\label{eq:272}
        (\theta - \vartheta)\ell p + \vartheta\ell > d. 
    \end{equation}
    Fix a sufficiently small $\varepsilon \in (0, 1)$. Since $d > 2$, there is a (deterministic) covering $\DD \subset \bigcup_{j = 1}^\infty B_{r_j}(z_j)$ such that 
    \begin{equation}\label{eq:306}
        \sum_{j = 1}^\infty r_j^d \le \varepsilon. 
    \end{equation}
    Note that $r_j \le \varepsilon^{1/d}$ for all $j \in \NN$. Since $c < 2 - \dim(X)$, if $\varepsilon$ is sufficiently small, then 
    \begin{equation}\label{eq:307}
        \BP\lbrack B_{r_j}(z_j) \cap X \neq \emptyset\rbrack \le r_j^c, \quad \forall j \in \NN
    \end{equation}    
    (cf.~\cite{ConfRadCLE}). Write $J \defeq \{j \in \NN : B_{r_j}(z_j) \cap X \neq \emptyset\}$. Then $X \subset \bigcup_{j \in J} B_{r_j}(z_j)$. For each $j \in \NN$, write $D_j \defeq \sup\left\{D_{X}(x, y) : x, y \in B_{r_j}(z_j) \cap X\right\}$. We have
    \begin{align}\label{eq:309}
        \BE\!\left\lbrack\sum_{j \in J} D_j^\ell\right\rbrack 
        &= \sum_{j = 1}^\infty \BE\!\left\lbrack D_j^\ell \mathbf 1(B_{r_j}(z_j) \cap X \neq \emptyset)\right\rbrack \\
        &\le \sum_{j = 1}^\infty \left(r_j^{\vartheta\ell + c} + \BE\!\left\lbrack D_j^\ell \mathbf 1(D_j \ge r_j^\vartheta)\right\rbrack\right) \quad \text{(by~\eqref{eq:307})} \notag \\
        &\le \varepsilon + \sum_{j = 1}^\infty \BE\!\left\lbrack D_j^\ell \mathbf 1(D_j \ge r_j^\vartheta)\right\rbrack \quad \text{(by \eqref{eq:271}, \eqref{eq:306})}. \notag
    \end{align}
    On the other hand, we have
    \begin{align}\label{eq:308}
        \sum_{j = 1}^\infty \BE\!\left\lbrack D_j^\ell \mathbf 1(D_j \ge r_j^\vartheta)\right\rbrack &\le \sum_{j = 1}^\infty r_j^{\vartheta\ell(1 - p)} \BE\!\left\lbrack D_j^{\ell p}\right\rbrack \quad \text{(by \Cref{308})} \\
        &\le C_{\ell p} \sum_{j = 1}^\infty r_j^{(\theta - \vartheta)\ell p + \vartheta\ell} \quad \text{(by~\eqref{eqn:cp_def})} \notag \\
        &\le C_{\ell p}\varepsilon \quad\text{(by \eqref{eq:272}, \eqref{eq:306})}. \notag 
    \end{align}
    Combining \eqref{eq:309} and \eqref{eq:308}, we obtain that
    \begin{equation*}
        \BE\!\left\lbrack\sum_{j \in J} D_j^\ell \right\rbrack \le (1 + C_{\ell p}) \varepsilon. 
    \end{equation*}
    This implies that
    \begin{equation*}
        \BP\!\left\lbrack\sum_{j \in J} D_j^\ell \ge \varepsilon^{1/2}\right\rbrack \le (1 + C_{\ell p}) \varepsilon^{1/2}. 
    \end{equation*}
    By setting $\varepsilon = 2^{-k}$ for all sufficiently large $k \in \NN$ and applying the Borel-Cantelli lemma, we have completed the proof of \eqref{eq:310}. 
\end{proof}

\begin{lemma}\label{316}
    We have $\theta\dim(X; D_X) \ge \dim(X)$ almost surely. 
\end{lemma}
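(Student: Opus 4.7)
The plan is to prove, for each $\chi > \theta$, that $\dim(X; D_X) \ge \dim(X)/\chi$ almost surely, and then let $\chi \downarrow \theta$. As in the proof of \Cref{315} I would reduce to the case $X = X(\SCL_1)$, where $\SCL_1$ is the largest loop of $\Gamma$ in $B_1(0)$ surrounding the origin; then $X$ is a.s.\ a compact subset of $K \defeq \overline{B_1(0)}$ and $D_X$ coincides with $D_\Upsilon$ on $X \times X$. The central ingredient is the inverse-H\"older bound of \Cref{268}, which I would first upgrade to an a.s.\ eventual statement and then use to transfer Hausdorff dimension from the Euclidean metric to $D_X$.

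First, fix $\chi > \theta$ and consider the event
\begin{equation*}
    A_\varepsilon^\chi \defeq \left\{D_\Upsilon(x, y) \ge \lvert x - y\rvert^\chi, \ \forall x, y \in K \cap \Upsilon \text{ with } \lvert x - y\rvert \le \varepsilon\right\}.
\end{equation*}
The family $\{A_\varepsilon^\chi\}_{\varepsilon > 0}$ is monotone in the sense that $A_\varepsilon^\chi \subset A_{\varepsilon'}^\chi$ whenever $\varepsilon' < \varepsilon$ (the smaller $\varepsilon$ imposes the constraint on fewer pairs), so together with \Cref{268} this yields $\BP\left[\bigcup_{\varepsilon > 0} A_\varepsilon^\chi\right] = \lim_{\varepsilon \to 0} \BP[A_\varepsilon^\chi] = 1$. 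Hence almost surely there exists a random $\varepsilon_0 > 0$ such that $D_X(x, y) \ge \lvert x - y\rvert^\chi$ for all $x, y \in X$ with $\lvert x - y\rvert \le \varepsilon_0$.

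Second, I would upgrade this \emph{local} bound using compactness. Since $D_X$ is a continuous metric on $X \times X$ inducing the Euclidean topology (Axiom~\eqref{307A}), it attains a positive random minimum $c > 0$ on the compact set $\{(x, y) \in X \times X : \lvert x - y\rvert \ge \varepsilon_0\}$. Therefore for every $x, y \in X$ with $D_X(x, y) < c$ we automatically have $\lvert x - y\rvert < \varepsilon_0$, and so $\lvert x - y\rvert \le D_X(x, y)^{1/\chi}$.

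Finally the dimension transfer is routine. Given $\ell > \dim(X; D_X)$ and arbitrarily small $\eta > 0$, one may choose a cover $X \subset \bigcup_j U_j$ with $\sum_j \diam(U_j; D_X)^\ell < \eta$; for $\eta$ sufficiently small, every $\diam(U_j; D_X) < c$. Replacing $U_j$ by $U_j \cap X$ and applying the inverse H\"older bound gives $\diam(U_j \cap X) \le \diam(U_j; D_X)^{1/\chi}$, so
\begin{equation*}
    \sum_j \diam(U_j \cap X)^{\chi \ell} \le \sum_j \diam(U_j; D_X)^\ell < \eta.
\end{equation*}
Letting $\eta \to 0$ yields $\dim(X) \le \chi \ell$, and letting $\ell \downarrow \dim(X; D_X)$ and $\chi \downarrow \theta$ gives $\theta \dim(X; D_X) \ge \dim(X)$. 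The only delicate point is the monotonicity argument upgrading the ``probability tending to one'' in \Cref{268} to an a.s.\ eventual statement; everything downstream is standard compactness and bookkeeping of Hausdorff covers.
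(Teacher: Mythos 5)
Your argument is correct and follows essentially the same route as the paper: its proof also rests on an almost-sure reverse-H\"older lower bound at small scales (there obtained from \Cref{310}, the annulus-crossing form of the estimate, rather than the two-point bound of \Cref{268}), followed by the same transfer of Hausdorff covers, using the inclusion $B_{t}(z; D_X) \subset B_{t^{1/\vartheta}}(z)$ where you instead use a compactness argument to handle pairs at macroscopic Euclidean separation. Your upgrade of the ``probability tending to one'' statement to an almost-sure eventual one via monotonicity of the events $A_\varepsilon^\chi$ is precisely the (implicit) step the paper takes when it asserts that a random $r_\ast > 0$ exists almost surely, so the point you flag as delicate is sound.
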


\begin{lemma}\label{310}
    Let $\vartheta > \theta$. Let $K \subset \CC$ be a deterministic compact subset. Then it holds with probability tending to one as $r_\ast \to 0$ that
    \begin{equation*}
        D_\Upsilon(\partial B_{r/2}(z) \cap \Upsilon, \partial B_r(z) \cap \Upsilon) \ge r^{\vartheta}, \quad \forall z \in K, \ \forall r \in (0, r_\ast]. 
    \end{equation*}
\end{lemma}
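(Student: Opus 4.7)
The plan is to apply \Cref{309} to obtain a single-scale tail bound, then union bound over a discrete grid of centers and scales, and finally use a monotonicity argument to extend to arbitrary $(z, r)$. Since $D$ is a strong geodesic $\CLE_\kappa$ carpet metric with distance exponent $\theta$, scale covariance (Axiom~\eqref{307C}) allows me to apply \Cref{309} with the identification $\kc_r = r^\theta$.

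First I would fix $\rho \in (1, 2)$ (say $\rho = 3/2$) and constants $a, b$ satisfying $\rho/2 + 1/100 < a < b < 99/100$; such $a, b$ exist exactly because $\rho < 2$. Applying \Cref{309} with $K_1 \defeq \partial B_a(0)$ and $K_2 \defeq \partial B_b(0)$ and using translation invariance, I obtain, for each $\alpha > 0$, a constant $C_\alpha > 0$ such that for all $\rr > 0$ and $z_0 \in \CC$,
\begin{equation*}
  \BP\!\left\lbrack D_\Upsilon\!\left(\partial B_{a\rr}(z_0) \cap \Upsilon,\,\partial B_{b\rr}(z_0) \cap \Upsilon\right) < \rho^\vartheta \rr^\vartheta\right\rbrack \le C_\alpha \rr^{\alpha(\vartheta - \theta)},
\end{equation*}
after substituting $\varepsilon = \rho^\vartheta \rr^{\vartheta - \theta}$ into the conclusion of \Cref{309}.

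Next I would set $r_n \defeq \rho^{-n}$ and, for each $n$ with $r_n \le r_\ast$, fix an $r_n/100$-net $\mathcal{N}_n \subset K$ of size $O(r_n^{-2})$. A union bound over such $n$ and $z_0 \in \mathcal{N}_n$ of the failure events above gives a total failure probability of order $\sum_n r_n^{\alpha(\vartheta - \theta) - 2}$, which is a geometric series tending to zero as $r_\ast \to 0$ once $\alpha$ is chosen so that $\alpha(\vartheta - \theta) > 2$.

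Finally, on the intersection of the ``good'' events, given $z \in K$ and $r \in (0, r_\ast]$, I would pick $n$ with $r_n \le r < \rho r_n$ and $z_0 \in \mathcal{N}_n$ with $\lvert z - z_0\rvert \le r_n/100$. For any continuous path $P$ in $\Upsilon$ from $\partial B_{r/2}(z) \cap \Upsilon$ to $\partial B_r(z) \cap \Upsilon$, the triangle inequality gives $\lvert P(0) - z_0\rvert \le r/2 + r_n/100 < a r_n$ and $\lvert P(1) - z_0\rvert \ge r - r_n/100 > b r_n$ by the choice of $a, b, \rho$; the intermediate value theorem, applied to $t \mapsto \lvert P(t) - z_0\rvert$, then produces $t_a < t_b$ with $P(t_a) \in \partial B_{a r_n}(z_0) \cap \Upsilon$ and $P(t_b) \in \partial B_{b r_n}(z_0) \cap \Upsilon$, so the sub-path $P|_{[t_a, t_b]}$ connects these two sets within $\Upsilon$. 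Monotonicity of path length then yields $\len(P; D_\Upsilon) \ge D_\Upsilon(\partial B_{a r_n}(z_0) \cap \Upsilon,\, \partial B_{b r_n}(z_0) \cap \Upsilon) \ge \rho^\vartheta r_n^\vartheta \ge r^\vartheta$, and taking the infimum over $P$ completes the argument. The main difficulty lies in this geometric comparison: a single annulus at the grid point $z_0$ must be traversed by every such path $P$ uniformly over $r \in [r_n, \rho r_n)$ and $z$ within an $r_n/100$-neighborhood of $z_0$, and this forces $\rho < 2$ rather than the natural dyadic choice $\rho = 2$.
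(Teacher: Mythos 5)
Your proof is correct and follows essentially the same route as the paper: the paper deduces \Cref{310} in one line from \Cref{313} (taking $\kc_r = r^\theta$), and \Cref{313} is itself proved exactly by your scheme — the single-scale tail bound from \Cref{309}, translation invariance, a union bound over a grid of centers and geometrically spaced scales, and a geometric comparison forcing every crossing of $A_{r/2,r}(z)$ to cross a nearby grid annulus. The only cosmetic difference is that the paper uses scale ratio $99/100$ with a ball-containment argument $B_{\varepsilon\rr/2}(z) \subset B_{5\varepsilon^\prime\rr/8}(z^\prime) \subset B_{7\varepsilon^\prime\rr/8}(z^\prime) \subset B_{\varepsilon\rr}(z)$, whereas you use ratio $\rho = 3/2$ with an annulus-crossing argument; both are equivalent in substance.
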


\begin{proof}
    This follows immediately from \Cref{313}, together with the fact that we may take $\kc_r = r^\theta$ for all $r > 0$. 
\end{proof}

\begin{proof}[Proof of \Cref{316}]
    We may assume without loss of generality that $X = X(\SCL_1)$ is as in the proof of \Cref{315}. Fix $\ell > \dim(X; D_{X})$ and $\vartheta > \theta$. Then it suffices to show that
    \begin{equation}\label{eq:323}
        \dim(X) \le \vartheta\ell. 
    \end{equation}
    By \Cref{310}, almost surely, there exists $r_\ast > 0$ such that 
    \begin{equation}\label{eq:324}
        D_{X}(\partial B_{r/2}(z) \cap X, \partial B_r(z) \cap X) \ge r^{\vartheta}, \quad \forall z \in \DD, \ \forall r \in (0, r_\ast]. 
    \end{equation}
    Henceforth assume that \eqref{eq:324} holds. Since $\ell > \dim(X; D_{X})$, for each $\varepsilon > 0$, there exists a covering of $X$ by $D_{X}$-balls $\{B_{t_j}(z_j; D_{X})\}_{j \in \NN}$ such that $\sum_{j = 1}^\infty t_j^\ell \le \varepsilon$. If $\varepsilon$ is sufficiently small, then it follows from \eqref{eq:324} that
    \begin{equation*}
        B_{t_j}(z_j; D_{X}) \subset B_{t_j^{1/\vartheta}}(z_j), \quad \forall j \in \NN. 
    \end{equation*}
    Thus, there exists a covering $X \subset \bigcup_{j = 1}^\infty B_{t_j^{1/\vartheta}}(z_j)$ such that $\sum_{j = 1}^\infty t_j^\ell \le \varepsilon$. By sending $\varepsilon \to 0$, we obtain \eqref{eq:323}. This completes the proof. 
\end{proof} 

\begin{proof}[Proof of \Cref{302}, \eqref{302A}]
    This follows immediately from \Cref{315,316}. 
\end{proof}

\subsection{Hausdorff dimension of geodesics}

In the present subsection, we prove \Cref{302}, \eqref{302B}. We will break the proof into \Cref{269,270}, which respectively give the lower and upper bounds. Let $D$ be a strong geodesic $\CLE_\kappa$ carpet metric with distance exponent $\theta$. Let $X$ be as in \eqref{eq:004}.

\begin{lemma}\label{269}
    Almost surely, $\dim(P) \ge \theta$ for all $D_X$-geodesics $P$. 
\end{lemma}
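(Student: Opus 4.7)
The plan is to establish $\dim(P) \ge \theta$ via the mass-distribution (Frostman) principle, using the arclength parametrization of $P$ and a scale-covariance estimate for the internal $D_X$-diameter of small Euclidean balls. Parametrize $P \colon [0, L] \to X$ by $D_X$-arclength, so that $D_X(P(s), P(t)) = |t - s|$ for all $s, t$, and set $\mu \defeq P_\ast(\mathop{\mathrm{Leb}}|_{[0, L]})$. Then $\mu$ is a Borel measure of mass $L$ supported on the image of $P$, and it suffices to show that for each $\delta > 0$, one almost surely has $\mu(B_r(x)) \le r^{\theta - \delta}$ for all $x$ in a fixed compact set and all sufficiently small $r > 0$.

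The key observation is that, because $P$ is parametrized by $D_X$-arclength,
\begin{equation*}
    \mu(B_r(x)) = \mathop{\mathrm{Leb}}\{t \in [0, L] : P(t) \in B_r(x)\} \le t_\sharp - t_\flat = D_X(P(t_\flat), P(t_\sharp)),
\end{equation*}
where $t_\flat$ and $t_\sharp$ denote the first entry and last exit of $P$ into/out of $B_r(x)$. Thus it is enough to bound $D_X(P(t_\flat), P(t_\sharp))$ by $r^{\theta - \delta}$. Provided $P(t_\flat)$ and $P(t_\sharp)$ lie in the same connected component of $B_{2r}(x) \cap X$, \Cref{229} (transferred to $D_X$ and combined with the strong scale covariance $\kc_r = r^\theta$ of Axiom~\eqref{307C}) yields with superpolynomially high probability as $A \to \infty$ that
\begin{equation*}
    D_X\bigl(P(t_\flat), P(t_\sharp)\bigr) \le D_X\bigl(P(t_\flat), P(t_\sharp); B_{2r}(x) \cap X\bigr) \le A r^\theta.
\end{equation*}
A union bound over $x$ in a $r$-dense subset of a fixed compact $K \subset \CC$, combined with a Borel-Cantelli argument along the dyadic scales $r = 2^{-k}$ (exactly as in the proofs of \Cref{314,019}), then promotes this pointwise estimate to a uniform one: almost surely, $\mu(B_r(x)) \le r^{\theta - \delta}$ for all $x \in K$ and all $r \in (0, r_\ast]$, where $r_\ast > 0$ is random. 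Frostman's lemma gives $\dim(P \cap K) \ge \theta - \delta$; sending $\delta \downarrow 0$ and exhausting $\CC$ by compacta $K$ yields $\dim(P) \ge \theta$.

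The main obstacle is the connectivity hypothesis required by \Cref{229}: a priori, $P(t_\flat)$ and $P(t_\sharp)$ need not lie in the same connected component of $B_{2r}(x) \cap X$, as the geodesic may exit and re-enter $B_r(x)$ through different ``islands'' separated by a loop of $\Gamma$. To handle this, I would argue that whenever such a separation occurs for a ball $B_r(x)$ that the geodesic revisits, there must exist a loop of $\Gamma$ disconnecting $B_r(x)$ inside $B_{2r}(x)$ of diameter comparable to $r$; the probability that such a loop exists near a given point $x$, for a given dyadic scale $r$, decays polynomially in a way controlled by the four-arm exponent $\alpha_{\mathrm{4A}} > 2$ (cf.~\Cref{264}\eqref{264A} and the discussion around \Cref{019}). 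Summing over a dyadic grid of points at scale $r$, the expected number of ``bad'' balls at scale $r$ decays polynomially, so that Borel-Cantelli along $r = 2^{-k}$ excludes them almost surely. An alternative, and perhaps cleaner, resolution is to replace $B_{2r}(x)$ by a slightly enlarged open neighborhood engineered to ensure that $P(t_\flat)$ and $P(t_\sharp)$ always lie in a common component; the strong scale covariance is what allows this enlargement to preserve the exponent $\theta$ without loss.
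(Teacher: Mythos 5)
Your approach is the same in spirit as the paper's (mass-distribution/Frostman via arclength parametrization and the internal diameter bound), and you correctly identify the connectivity issue as the main obstacle. However, your proposed resolutions of that obstacle both have genuine gaps.

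\textbf{The four-arm bound is misapplied.} You claim that the probability that $B_r(x) \cap X$ is disconnected inside $B_{2r}(x)$ decays polynomially in $r$, controlled by $\alpha_{\mathrm{4A}}$. This is false: \Cref{264}\eqref{264A} gives a bound of the form $1 - C(s/t)^\alpha$, which decays in the \emph{ratio} of the inner to outer radius, not in the absolute scale. For the fixed-modulus annulus $A_{r,2r}(x)$ the bound is $1 - C \cdot 2^{-\alpha}$, a constant strictly less than $1$, uniformly in $r$. So disconnection events have constant probability at each scale and cannot be excluded by Borel--Cantelli alone. To see polynomial decay you must either widen the annulus (e.g.\ $A_{r,r^{1-\delta'}}(x)$, which costs a factor $(1-\delta')$ on the exponent, then send $\delta' \to 0$), or — as the paper does in the proof of \Cref{269} — use a stack of concentric annuli over radii $r \in [\varepsilon^{1+\nu},\varepsilon]$ together with the independence from \Cref{263,214} to find a ``good'' scale. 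Your alternative fix (enlarge $B_{2r}(x)$ deterministically so that the two geodesic hitting points are always connected) cannot work either: no deterministic enlargement guarantees connectivity, since a macroscopic loop of $\Gamma$ can separate them.

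\textbf{The single-interval bound is too coarse even at a good scale.} Even after picking a scale at which at most two arcs of $\Gamma$ cross the annulus, $P(t_\flat)$ and $P(t_\sharp)$ can still lie in \emph{different} connected components of $B_r(z) \cap X$: two arms from a single loop can cut $B_{r/2}(z) \cap X$ into two pieces. In that case $D_X(P(t_\flat), P(t_\sharp))$ is not controlled by the internal diameter of either piece, and $\mu(B_r(x)) \le t_\sharp - t_\flat$ is not useful. The paper's fix is the decomposition: it bounds the number of connected components $\SCC_1,\dots,\SCC_n$ of $B_r(z)\cap X$ intersecting $B_{r/2}(z)$ by a constant $A$ (via arm-count events, possibly larger than $2$), picks first-entry/last-exit times $s_j < t_j$ for each $\SCC_j$, bounds each $t_j - s_j = D_X(P(s_j),P(t_j))/\len(P;D_X) \lesssim r^\theta / \len(P;D_X)$ because $P(s_j), P(t_j)$ lie in the same component, and then sums over the at most $A$ components. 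This component-by-component bookkeeping is an essential step your argument omits.
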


\begin{proof}
    Let $X = X(\SCL_1)$ be as in the proof of \Cref{315}. Fix $\vartheta \in (0, \theta)$. Then it suffices to show that, almost surely, $\dim(P) \ge \vartheta$ for all $D_{X}$-geodesics $P \subset X$. By the mass distribution principle (cf.~\cite[Theorem~4.19]{BM}), it suffices to show that, almost surely, for each $D_{X}$-geodesic $P \colon [0, 1] \to X$ parameterized with constant speed, there exists $C > 0$ and $\delta > 0$ such that 
    \begin{equation*}
        \mathop{\mathrm{Leb}}(V) \le C\diam(P(V))^\vartheta, \quad \forall \text{ closed subsets } V \subset [0, 1] \text{ with } \diam(P(V)) \le \delta. 
    \end{equation*}
    Choose a sufficiently small $\nu > 0$ so that $\vartheta(1 + \nu) < \theta$. By the translation and scale invariance of the law of $\Gamma$, for each $p \in (0, 1)$, there exists $A = A(p) > 0$ such that for each $z \in \CC$ and $r > 0$, it holds with probability at least $p$ that there are at most $2A$ connected arcs of $\Gamma$ in $A_{r/2,r}(z)$ connecting its inner and outer boundaries (in which case there are at most $A$ connected components of $A_{r/2,r}(z) \cap \Upsilon$ connecting $\partial B_{r/2}(z)$ and $\partial B_r(z)$). Thus, by \Cref{263,214} and a union bound, there exists a sufficiently large $A = A(\nu) > 0$ such that it holds with polynomially high probability as $\varepsilon \to 0$ that for each $z \in \left(\frac1{100}\varepsilon^{1 + \nu}\ZZ\right)^2 \cap \DD$, there exists $r \in [\varepsilon^{1 + \nu}, \varepsilon]$ such that the following are true:
    \begin{enumerate}
        \item\label{it:205} There are at most $A$ connected components of $A_{r/2,r}(z) \cap X$ connecting $\partial B_{r/2}(z)$ and $\partial B_r(z)$ (in which case there are at most $A$ connected components of $B_r(z) \cap X$ intersecting with $B_{r/2}(z)$). 
        \item\label{it:206} We have
        \begin{equation*}
            D_{X}(x, y) \le Ar^\theta, \quad \forall x, y \in B_r(z) \cap X \text{ with } x \xleftrightarrow{B_r(z) \cap X} y. 
        \end{equation*}
    \end{enumerate}
    Thus, by the Borel-Cantelli lemma, almost surely, there exists $\varepsilon_\ast \in (0, 1)$ such that the above event occurs for all $\varepsilon \in (0, \varepsilon_\ast] \cap \{2^k\}_{k \in \ZZ}$. By possibly decreasing $\varepsilon_\ast$, we may assume without loss of generality that $\diam(X) \ge 2\varepsilon_\ast$. 

    Fix a $D_{X}$-geodesic $P \colon [0, 1] \to X$ parameterized with constant speed. Let $V \subset [0, 1]$ be a closed subset such that $\diam(P(V)) \le \varepsilon_\ast^{1 + \nu}/100$. Then there exists $\varepsilon \in (0, \varepsilon_\ast] \cap \{2^k\}_{k \in \ZZ}$ such that 
    \begin{equation}\label{eq:259}
        (\varepsilon/2)^{1 + \nu}/4 < \diam(P(V)) \le \varepsilon^{1 + \nu}/4. 
    \end{equation}
    This implies that there exists $z \in \left(\frac1{100}\varepsilon^{1 + \nu}\ZZ\right)^2 \cap \DD$ and $r \in [\varepsilon^{1 + \nu}, \varepsilon]$ such that $P(V) \subset B_{r/2}(z)$ and conditions~\eqref{it:205} and \eqref{it:206} hold. Write $\SCC_1, \ldots, \SCC_n$ for the connected components of $B_r(z) \cap X$ that intersect with $B_{r/2}(z)$. Then $n \le A$ by condition~\eqref{it:205} and $P(V) \subset \bigcup_{j = 1}^n \SCC_j$. By condition~\eqref{it:206}, we have 
    \begin{equation}\label{eq:258}
        D_{X}(x, y) \le Ar^\theta, \quad \forall j \in [1, n]_\ZZ, \ \forall x, y \in \SCC_j. 
    \end{equation}
    For each $j \in [1, n]_\ZZ$, write $s_j$ for the first time at which $P$ enters $\SCC_j$ and $t_j$ for the last time at which $P$ exits $\SCC_j$. Then it follows from \eqref{eq:258} that 
    \begin{equation}\label{eq:260}
        t_j - s_j \le \frac{Ar^\theta}{\len(P; D_{X})}, \quad \forall j \in [1, n]_\ZZ. 
    \end{equation}
    Combining \eqref{eq:259} and \eqref{eq:260}, we obtain that
    \begin{multline*}
        \mathop{\mathrm{Leb}}(V) \le \sum_{j = 1}^n (t_j - s_j) \le \frac{A^2r^\theta}{\len(P; D_{X})} \le \frac{A^2\varepsilon^\theta}{\len(P; D_{X})} \le \frac{2^{100}A^2\diam(P(V))^{\theta/(1 + \nu)}}{\len(P; D_{X})} \\
        \le \frac{2^{100}A^2\diam(P(V))^\vartheta}{\len(P; D_{X})}. 
    \end{multline*}
    This completes the proof. 
\end{proof}

\begin{lemma}\label{270}
    Almost surely, $\dim(P) \le \theta$ for all $D_X$-geodesics $P$. 
\end{lemma}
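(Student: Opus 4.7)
The plan is to convert a $D_X$-geodesic's $D_X$-length parameterization into an efficient Euclidean covering using the lower Hölder estimate from \Cref{268}. Fix $\chi > \theta$; since $\chi$ can be taken arbitrarily close to $\theta$ (apply the argument along a sequence $\chi_n \downarrow \theta$ and intersect the resulting full-probability events), it suffices to prove $\dim(P) \le \chi$ almost surely for every nontrivial $D_X$-geodesic $P$. As in the proof of \Cref{315}, I reduce to the case $X = X(\SCL_1) \subset \overline{\DD}$, so that \Cref{268} yields an almost-sure random $\varepsilon_\ast \in (0,1)$ with
\begin{equation*}
D_X(x,y) \ge |x-y|^\chi \quad \text{for all } x, y \in X \text{ with } |x-y| \le \varepsilon_\ast.
\end{equation*}

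On this event, fix a nontrivial $D_X$-geodesic $P \colon [0,L] \to X$ parameterized by $D_X$-length, where $L = \len(P; D_X) \in (0, \infty)$. By \Cref{307}, Axiom~\eqref{307A}, the metric $D_X$ induces the Euclidean topology, so $P$ is continuous, and hence uniformly continuous, as a map into $(X, |\cdot|)$. Thus there exists $\delta_0 = \delta_0(P) > 0$ with $|P(t) - P(s)| \le \varepsilon_\ast$ whenever $|t-s| \le \delta_0$. For such $t, s$, combining the Hölder bound with the geodesic identity $D_X(P(t), P(s)) = |t-s|$ gives
\begin{equation*}
|P(t) - P(s)|^\chi \le D_X(P(t), P(s)) = |t-s|,
\end{equation*}
so $|P(t) - P(s)| \le |t-s|^{1/\chi}$.

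For each $\delta \in (0, \delta_0)$, I partition $[0,L]$ into $N \le \lceil L/\delta\rceil$ subintervals of length at most $\delta$; the previous inequality then implies that each image $P([t_{j-1},t_j])$ has Euclidean diameter at most $\delta^{1/\chi}$. Hence $P$ is covered by $N$ Euclidean sets of diameter $\delta^{1/\chi}$, which shows that the $\chi$-dimensional Hausdorff premeasure at scale $\delta^{1/\chi}$ is bounded by $N\delta \le L + \delta$. Sending $\delta \downarrow 0$ gives $\mathcal{H}^\chi(P) \le L < \infty$, and therefore $\dim(P) \le \chi$. Combined with \Cref{269}, this yields $\dim(P) = \theta$ for all nontrivial $D_X$-geodesics.

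The only subtlety is that $\delta_0$ depends on the individual geodesic $P$ via its modulus of continuity rather than being uniform. This is harmless, however, since I work on the \emph{single} almost-sure event on which the Hölder estimate of \Cref{268} holds, and only require $\delta$ to eventually drop below the geodesic-dependent threshold $\delta_0(P)$—which it does as $\delta \to 0$. Consequently no uniform-in-geodesic modulus is needed, and the covering bound, and hence the Hausdorff dimension bound, applies to each nontrivial geodesic separately.
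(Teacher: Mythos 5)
Your proof is correct and follows essentially the same route as the paper: the paper's proof is precisely the observation that, by \Cref{268}, every $D_X$-geodesic is almost surely H\"older continuous of order $1/\chi$ for each $\chi > \theta$, which immediately bounds the Hausdorff dimension by $\chi$. Your write-up just makes explicit the standard steps (length parameterization, uniform continuity, covering bound, and the intersection over $\chi_n \downarrow \theta$) that the paper leaves implicit.
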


\begin{proof}
    This follows immediately from the fact that for each $\chi > \theta$, almost surely, all $D_X$-geodesics are H\"older continuous of order $1/\chi$ (cf.~\Cref{268}). 
\end{proof}

\begin{proof}[Proof of \Cref{302}, \eqref{302B}]
    This follows immediately from \Cref{269,270}. (Since every nontrivial path in $X$ has Hausdorff dimension $> 1$ (cf., e.g., \cite{DimFracPerc} and the proof of \Cref{324}), it follows that $\theta > 1$.)
\end{proof}

\section{Optimal bi-Lipschitz constants}\label{s:03}

\subsection{Setup and statements}

Fix two weak geodesic $\CLE_\kappa$ carpet metrics $D$ and $\widetilde D$ with the same scaling constants $\{\kc_r\}_{r > 0}$. 

We shall write 
\begin{align}
    M_\ast &\defeq \sup\left\{M > 0 : \BP\!\left\lbrack\exists x, y \in \Upsilon \text{ such that } x \leftrightarrow y \text{ and } \frac{\widetilde D_\Upsilon(x, y)}{D_\Upsilon(x, y)} \le M\right\rbrack = 0\right\}; \label{eq:321} \\
    M^\ast &\defeq \inf\left\{M > 0 : \BP\!\left\lbrack\exists x, y \in \Upsilon \text{ such that } x \leftrightarrow y \text{ and } \frac{\widetilde D_\Upsilon(x, y)}{D_\Upsilon(x, y)} \ge M\right\rbrack = 0\right\}. \label{eq:322}
\end{align}
It follows from \Cref{045} that $M_\ast$ and $M^\ast$ are positive and finite. By definition, for each $M \in (0, M^\ast)$ (resp.~$M > M_\ast$), 
\begin{multline}\label{eq:018}
    \BP\!\left\lbrack\text{there are distinct } x, y \in \Upsilon \text{ such that } x \leftrightarrow y \text{ and } \frac{\widetilde D_\Upsilon(x, y)}{D_\Upsilon(x, y)} \ge M\right\rbrack > 0 \\
    \text{(resp.~} \BP\!\left\lbrack\text{there are distinct } x, y \in \Upsilon \text{ such that } x \leftrightarrow y \text{ and } \frac{\widetilde D_\Upsilon(x, y)}{D_\Upsilon(x, y)} \le M\right\rbrack > 0\text{)}. 
\end{multline}
The primary objective of this section is to establish several quantitative refinements of \eqref{eq:018} --- specifically, to demonstrate that there exist many values of $\rr > 0$ for which
\begin{equation}\label{eq:017}
    \parbox{.85\linewidth}{it holds with uniformly positive probability that there are points $x, y \in \Upsilon$ with $x \leftrightarrow y$ such that $\lvert x\rvert$, $\lvert y\rvert$, and $\lvert x - y\rvert$ are all of order $\rr$ and $\widetilde D_\Upsilon(x, y) / D_\Upsilon(x, y)$ is close to $M^\ast$ (resp.~$M_\ast$).}
\end{equation}
Such pairs of points will be called ``shortcuts''. Our results will be formulated in terms of two types of events, defined in \Cref{025,003} below. In \Cref{s:00}, we will use the results of the present section to show that, with high probability, an annulus of suitable radius contains many ``shortcuts''. In \Cref{s:02}, under the assumption that $M_\ast < M^\ast$, we will find many values of $\rr > 0$ satisfying a condition that contradicts \eqref{eq:017}, leading to the conclusion that $M_\ast = M^\ast$. The argument in the present section follows the same general structure as in \cite[Section~3]{ExUniLQG} and \cite[Section~3]{UniCriSupercriLQGMet}, though the definitions of the relevant events are necessarily different.

\begin{definition}\label{025}
    Let $M, \rr > 0$ and $b \in (0, 1)$. Then:
    \begin{enumerate}
        \item We shall write $\overline G_{\rr}(M, b)$ for the event that there exist $x, y \in \overline{B_{\rr}(0)} \cap \Upsilon$ such that 
        \begin{equation*}
            \lvert x - y\rvert \ge b\rr, \quad x \leftrightarrow y, \quad \text{and} \quad \widetilde D_\Upsilon(x, y) \ge MD_\Upsilon(x, y). 
        \end{equation*}
        \item We shall write $\underline G_{\rr}(M, b)$ for the event that there exist $x, y \in \overline{B_{\rr}(0)} \cap \Upsilon$ such that 
        \begin{equation*}
            \lvert x - y\rvert \ge b\rr, \quad x \leftrightarrow y, \quad \text{and} \quad \widetilde D_\Upsilon(x, y) \le MD_\Upsilon(x, y). 
        \end{equation*}
    \end{enumerate}
\end{definition}

The other version of events has a more complicated definition, and includes several regularity conditions on the ``shortcut''. See \Cref{fig:shortcut} for an illustration. 

\begin{definition}\label{003}
    Let $M, r > 0$ and $0 < \alpha < a < 1$. Then:
    \begin{enumerate}
        \item We shall write $\overline H_r(M, a, \alpha)$ for the event that there exists a $D_\Upsilon$-geodesic 
        \begin{equation*}
            P \colon [0, 1] \to \overline{A_{(1 - a)r,(1 + a)r}(0)} \cap \Upsilon
        \end{equation*}
        from $\partial B_{(1 - a)r}(0) \cap \Upsilon$ to $\partial B_{(1 + a)r}(0) \cap \Upsilon$ and times $0 < s < t < 1$ satisfying the following conditions:
        \begin{enumerate}
            \item $\len(P; D_\Upsilon) \le a^{-1}\kc_r$. 
            \item The Euclidean diameter of $P$ is at most $r/100$. 
            \item $P(s) \in \partial B_{(1 - \alpha)r}(0) \cap \Upsilon$, $P(t) \in \partial B_r(0) \cap \Upsilon$, and $P|_{[s, t]} \subset \overline{A_{(1 - \alpha)r,r}(0)} \cap \Upsilon$. 
            \item $D_\Upsilon(P(s), P(t)) \ge \alpha^3\kc_r$ and $\widetilde D_\Upsilon(P(s), P(t)) \ge M D_\Upsilon(P(s), P(t))$. 
        \end{enumerate}
        \item We shall write $\underline H_r(M, a, \alpha)$ for the event that there exists a $\widetilde D_\Upsilon$-geodesic
        \begin{equation*}
            \widetilde P \colon [0, 1] \to \overline{A_{(1 - a)r,(1 + a)r}(0)} \cap \Upsilon
        \end{equation*}
        from $\partial B_{(1 - a)r}(0) \cap \Upsilon$ to $\partial B_{(1 + a)r}(0) \cap \Upsilon$ and times $0 < s < t < 1$ satisfying the following conditions:
        \begin{enumerate} 
            \item\label{003BA} $\len(\widetilde P; \widetilde D_\Upsilon) \le a^{-1}\kc_r$. 
            \item\label{003BB} The Euclidean diameter of $\widetilde P$ is at most $r/100$. 
            \item\label{003BC} $\widetilde P(s) \in \partial B_{(1 - \alpha)r}(0) \cap \Upsilon$, $\widetilde P(t) \in \partial B_r(0) \cap \Upsilon$, and $\widetilde P|_{[s, t]} \subset \overline{A_{(1 - \alpha)r,r}(0)} \cap \Upsilon$.  
            \item\label{003BD} $\widetilde D_\Upsilon(\widetilde P(s), \widetilde P(t)) \ge \alpha^3\kc_r$ and $\widetilde D_\Upsilon(\widetilde P(s), \widetilde P(t)) \le M D_\Upsilon(\widetilde P(s), \widetilde P(t))$. 
        \end{enumerate}
    \end{enumerate}
\end{definition}

\begin{figure}[ht!]
    \centering
    \includegraphics[width=0.6\linewidth]{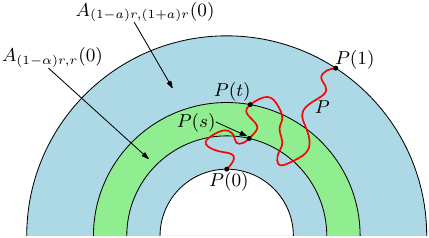}
    \caption{Illustration of the event $\overline H_r(M, a, \alpha)$ of \Cref{003}. $P$ is a $D_\Upsilon$-geodesic in $\overline{A_{(1 - a)r,(1 + a)r}(0)}$ from $\partial B_{(1 - a)r}(0) \cap \Upsilon$ to $\partial B_{(1 + a)r}(0) \cap \Upsilon$. There are times $0 < s < t < 1$ such that $P(s) \in \partial B_{(1 - \alpha)r}(0) \cap \Upsilon$, $P(t) \in \partial B_r(0) \cap \Upsilon$, and $P|_{[s, t]} \subset \overline{A_{(1 - \alpha)r,r}(0)} \cap \Upsilon$. Also, $P|_{[s, t]}$ is a ``shortcut'', in the sense that $\widetilde D_\Upsilon(P(s), P(t)) \ge M D_\Upsilon(P(s), P(t))$.}
    \label{fig:shortcut}
\end{figure}

\begin{lemma}\label{007}
    \begin{enumerate}
        \item\label{007A} For each $M \in (0, M^\ast)$ and $\rr > 0$, there exists $b = b(M, \rr) \in (0, 1)$ such that $\BP\lbrack\overline G_{\rr}(M, b)\rbrack \ge b$. 
        \item\label{007B} For each $M > M_\ast$ and $\rr > 0$, there exists $b = b(M, \rr) \in (0, 1)$ such that $\BP\lbrack\underline G_{\rr}(M, b)\rbrack \ge b$. 
    \end{enumerate}
\end{lemma}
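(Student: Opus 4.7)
The plan is to prove both assertions by a chaining argument based on translation invariance (\Cref{010}, Axiom~\eqref{010C}) and the existence of geodesics (\Cref{010}, Axiom~\eqref{010A}). I focus on (A); the proof of (B) is symmetric. Fix $M \in (0, M^\ast)$, $\rr > 0$, and choose $M' \in (M, M^\ast)$. It suffices to show $\BP[\overline G_\rr(M', b_0)] > 0$ for some $b_0 > 0$, since $\overline G_\rr(M, \bullet)$ is monotone non-increasing in its second argument and $\overline G_\rr(M, \bullet) \supseteq \overline G_\rr(M', \bullet)$; the choice $b := \min(b_0, \BP[\overline G_\rr(M', b_0)], 1)$ will then yield $\BP[\overline G_\rr(M, b)] \ge b$.

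Suppose for contradiction that $\BP[\overline G_\rr(M', 1/n)] = 0$ for every $n \in \NN$. Since these events increase in $n$ to
\begin{equation*}
\bigl\{\exists \text{ distinct } x, y \in \overline{B_\rr(0)} \cap \Upsilon : x \leftrightarrow y, \ \widetilde D_\Upsilon(x,y) \ge M' D_\Upsilon(x,y)\bigr\},
\end{equation*}
a countable union shows this event has probability zero. Because $D_\Upsilon$ and $\widetilde D_\Upsilon$ are both deterministic measurable functions of $\Upsilon$ (by \Cref{010}) and the law of $\Upsilon$ is translation invariant, a further countable union over $z \in \QQ^2$ yields that almost surely, for every $z \in \QQ^2$ and every pair of distinct points $x, y \in \overline{B_\rr(z)} \cap \Upsilon$ with $x \leftrightarrow y$, one has $\widetilde D_\Upsilon(x,y) < M' D_\Upsilon(x,y)$.

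The next step is to chain these local strict inequalities into a global one. Given arbitrary $x, y \in \Upsilon$ with $x \neq y$ and $x \leftrightarrow y$, take a $D_\Upsilon$-geodesic $P \colon [0,1] \to \Upsilon$ from $x$ to $y$ and choose a partition $0 = t_0 < t_1 < \cdots < t_n = 1$ such that every piece $P([t_{j-1}, t_j])$ has Euclidean diameter less than $\rr/2$. For each $j$, pick $z_j \in \QQ^2$ with $P([t_{j-1}, t_j]) \subset B_\rr(z_j)$, and set $p_j := P(t_j)$. Each pair $(p_{j-1}, p_j)$ lies in $\overline{B_\rr(z_j)} \cap \Upsilon$ and is connected in $\Upsilon$; since any sub-path of a $D_\Upsilon$-geodesic is itself a $D_\Upsilon$-geodesic, we have $\sum_j D_\Upsilon(p_{j-1}, p_j) = D_\Upsilon(x, y)$. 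Applying the local strict inequality in each ball together with the triangle inequality for $\widetilde D_\Upsilon$ gives
\begin{equation*}
\widetilde D_\Upsilon(x, y) \le \sum_j \widetilde D_\Upsilon(p_{j-1}, p_j) < M' \sum_j D_\Upsilon(p_{j-1}, p_j) = M' D_\Upsilon(x, y),
\end{equation*}
where strictness persists because $x \neq y$ forces at least one pair $p_{j-1}, p_j$ to be distinct. This contradicts the definition of $M^\ast$ in \eqref{eq:322}, completing the proof of (A). Assertion (B) follows by the same argument with $M' \in (M_\ast, M)$, chaining instead along a $\widetilde D_\Upsilon$-geodesic and rewriting the local inequality as $D_\Upsilon < (M')^{-1} \widetilde D_\Upsilon$ on each piece, then contradicting \eqref{eq:321}. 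The main step whose care is needed is the chaining calculation itself; the preservation of strict inequality across the sum is what uses that $x, y$ are distinct.
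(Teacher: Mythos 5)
Your proposal is correct and follows essentially the same strategy as the paper: argue by contradiction, conclude from translation invariance (\Cref{010}, Axiom~\eqref{010C}) that ``$\widetilde D_\Upsilon \le M D_\Upsilon$'' holds almost surely on all small Euclidean balls, and then chain along a $D_\Upsilon$-geodesic to contradict the definition of $M^\ast$. The introduction of the intermediate $M' \in (M, M^\ast)$ is harmless but unnecessary: the paper runs the argument directly with $M$, using the non-strict inequality $\widetilde D_\Upsilon \le M D_\Upsilon$ and noting that this forces $M^\ast \le M$, which already contradicts $M < M^\ast$, so no strictness buffer is needed.
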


\begin{proof}
    First, we consider assertion~\eqref{007A}. Suppose by way of contradiction that $\BP\lbrack\overline G_{\rr}(M, b)\rbrack < b$ for all $b \in (0, 1)$. Then, almost surely, 
    \begin{equation}\label{eq:000}
        \widetilde D_\Upsilon(x, y) \le MD_\Upsilon(x, y), \quad \forall x, y \in \overline{B_{\rr}(0)} \cap \Upsilon \text{ with } x \leftrightarrow y. 
    \end{equation}
    By \Cref{010}, Axiom~\eqref{010C} (translation invariance), \eqref{eq:000} implies that, almost surely, 
    \begin{equation*}
        \widetilde D_\Upsilon(x, y) \le MD_\Upsilon(x, y), \quad \forall z \in \QQ^2, \ \forall x, y \in \overline{B_{\rr}(z)} \cap \Upsilon \text{ with } x \leftrightarrow y. 
    \end{equation*}
    hence that, almost surely, 
    \begin{equation}\label{eq:320}
        \widetilde D_\Upsilon(x, y) \le MD_\Upsilon(x, y), \quad \forall x, y \in \Upsilon \text{ with } \lvert x - y\rvert \le \rr/2 \text{ and } x \leftrightarrow y. 
    \end{equation}
    Since for each $x, y \in \Upsilon$ with $x \leftrightarrow y$ and each $D_\Upsilon$-geodesic $P \colon [0, 1] \to \Upsilon$ from $x$ to $y$, there exist times $0 = t_0 < t_1 < \cdots < t_n = 1$ such that $\lvert P(t_{j - 1}) - P(t_j)\rvert \le \rr/2$ for all $j \in [1, n]_\ZZ$, it follows from \eqref{eq:320} that, almost surely, 
    \begin{multline*}
        \widetilde D_\Upsilon(x, y) \le \sum_{j = 1}^n \widetilde D_\Upsilon(P(t_{j - 1}), P(t_j)) \le M\sum_{j = 1}^n D_\Upsilon(P(t_{j - 1}), P(t_j)) = MD_\Upsilon(x, y), \\
        \forall x, y \in \Upsilon \text{ with } x \leftrightarrow y. 
    \end{multline*}
    This contradicts the assumption that $M < M^\ast$ and the definition of $M^\ast$ (cf.~\eqref{eq:322}). This completes the proof of assertion~\eqref{007A}. Assertion~\eqref{007B} from an argument which is analogous to that used to prove assertion~\eqref{007A}.
\end{proof}

\begin{proposition}\label{004}
    For each $0 < \mu < \nu$ and $\Lambda > 1$, there exists $p = p(\mu, \nu, \Lambda) \in (0, 1)$ and $a = a(\mu, \nu, \Lambda) \in (0, 1)$ such that for each $\alpha \in (0, a)$ and $M \in (0, M^\ast)$, there exists $M^\prime = M^\prime(\mu, \nu, \Lambda, \alpha, M) \in (M, M^\ast)$ such that for each $b \in (0, 1)$, there exists $\varepsilon_\ast = \varepsilon_\ast(\mu, \nu, \Lambda, \alpha, M, b) \in (0, 1)$ such that for each $\rr > 0$ for which $\BP\lbrack\overline G_{\rr}(M^\prime, b)\rbrack \ge b$ and each $\varepsilon \in (0, \varepsilon_\ast]$, the following is true:
    \begin{equation}\label{eq:002}
        \parbox{.80\linewidth}{There are at least $\mu\log_\Lambda(1/\varepsilon)$ values of $r \in [\varepsilon^{1 + \nu}\rr, \varepsilon\rr] \cap \{\Lambda^k\}_{k \in \ZZ}$ for which $\BP\lbrack\overline H_r(M, a, \alpha)\rbrack \ge p$.}
    \end{equation}
\end{proposition}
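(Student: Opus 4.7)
The plan is a pigeonhole/averaging argument over scales. We will take $M^\prime \defeq (M + M^\ast)/2$ and work on the event $\overline G_\rr(M^\prime, b)$, whose witness pair $x, y$ we connect by a $D_\Upsilon$-geodesic $P \colon [0, L] \to \Upsilon$ parameterized by $D_\Upsilon$-length, so that $L = D_\Upsilon(x, y)$. Since $P$ is a $D_\Upsilon$-geodesic, for every partition $0 = t_0 < \cdots < t_n = L$ the triangle inequality yields
\begin{equation*}
M^\prime \;\le\; \frac{\widetilde D_\Upsilon(x, y)}{D_\Upsilon(x, y)} \;\le\; \frac{\sum_j \widetilde D_\Upsilon(P(t_{j-1}), P(t_j))}{\sum_j D_\Upsilon(P(t_{j-1}), P(t_j))},
\end{equation*}
which is a $D_\Upsilon$-weighted average of pointwise ratios. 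By the definition \eqref{eq:322} of $M^\ast$, each such pointwise ratio is $\le M^\ast$ almost surely, so an elementary calculation gives that a $D_\Upsilon$-weighted fraction at least $(M^\prime - M)/(M^\ast - M) = 1/2$ of any partition consists of pairs with pointwise ratio $\ge M$.

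At each scale $r = \Lambda^k \in [\varepsilon^{1 + \nu}\rr, \varepsilon\rr] \cap \{\Lambda^k\}_{k \in \ZZ}$, I would partition $[0, L]$ into subintervals of $D_\Upsilon$-length $\alpha^3 \kc_r$. By \Cref{309} together with Axiom~\eqref{010D}, we have $L \asymp \kc_\rr$ with high probability, and by \eqref{eq:340} the number of subintervals at scale $r$ is of order $\kc_\rr/(\alpha^3 \kc_r)$, which is large. The averaging step above yields an absolute positive fraction of these subintervals as ``good'', meaning $\widetilde D_\Upsilon(P(s), P(t)) \ge M D_\Upsilon(P(s), P(t))$. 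Each good subinterval $[s, t]$ is then a candidate witness for $\overline H_r(M, a, \alpha; z)$, the translate of the event \Cref{003}, centered at a canonically chosen $z \in \CC$ with $|z - P(s)| = (1 - \alpha)r$ and $|z - P(t)| = r$: the sub-path $P|_{[s,t]}$ is automatically a $D_\Upsilon$-geodesic, and by \Cref{229} it extends to a $D_\Upsilon$-geodesic across the wider annulus $A_{(1-a)r, (1+a)r}(z)$ of total length at most $a^{-1} \kc_r$. The remaining geometric regularity required by \Cref{003} --- the annular hitting and the Euclidean diameter bound $\le r/100$ --- would be enforced by combining the H\"older-type bounds \Cref{019,314} with the arm-event estimate \Cref{264} and the ball-diameter estimate \Cref{214}, giving a uniformly positive conditional probability for the upgrade when $a$ is small and $\alpha \ll a$.

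The conclusion would follow from a double-counting argument. Let $N$ denote the total number of pairs $(k, z)$ with $z$ on an appropriate $\Lambda^k$-grid such that $\overline H_{\Lambda^k}(M, a, \alpha; z)$ holds and is witnessed by some good subinterval of $P$. The construction above yields a lower bound
\begin{equation*}
\BE\!\left\lbrack N \cdot \mathbf{1}_{\overline G_\rr(M^\prime, b)}\right\rbrack \;\gtrsim\; b \sum_{k \,:\, \Lambda^k \in [\varepsilon^{1+\nu}\rr, \varepsilon\rr]} \frac{\kc_\rr}{\alpha^3 \kc_{\Lambda^k}}.
\end{equation*}
On the other hand, by translation invariance (Axiom~\eqref{010C}) we have $\BP[\overline H_{\Lambda^k}(M, a, \alpha; z)] = \BP[\overline H_{\Lambda^k}(M, a, \alpha)]$ for every $z$, so a union bound over grid centers gives an upper bound for $\BE[N]$ in terms of $\sum_k \BP[\overline H_{\Lambda^k}(M, a, \alpha)]$ weighted by geometric factors of matching order. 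If fewer than $\mu \log_\Lambda(1/\varepsilon)$ scales satisfied $\BP[\overline H_{\Lambda^k}(M, a, \alpha)] \ge p$, choosing $p$ sufficiently small in terms of $b, \mu, \nu$ (and using $\mu < \nu$) would force the upper bound below the lower bound, a contradiction.

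The main obstacle will be the geometric upgrading step: verifying that a uniformly positive fraction of good subintervals $[s, t]$ can indeed be enhanced to witnesses of $\overline H_r(M, a, \alpha; z)$ simultaneously satisfying thinness, annular hitting, and the controlled extension across the wider annulus. These conditions must hold with uniformly positive conditional probability, independent of $\rr, \varepsilon, r$, which is delicate because the subintervals depend on both $\Upsilon$ and the two metrics at once. The required estimates will be provided by the combination of \Cref{019,314,229,214,264} together with \Cref{309}, and overall the argument follows the template of Section~3 of \cite{ExUniLQG,UniCriSupercriLQGMet}, with the H\"older and arm-event bounds of the preceding subsection playing the role of the Liouville quantum gravity estimates there.
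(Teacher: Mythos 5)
Your proposal attacks the statement directly, whereas the paper proves the contrapositive (\Cref{029}): assuming $\BP\lbrack\overline H_r(M,a,\alpha)\rbrack < p$ at many scales, it uses the independence-across-scales input (\Cref{263}) together with \Cref{214} to cover the plane by balls $B_{r/2}(z)$ on which the \emph{complement} of the localized event holds, and then a deterministic summation over the annulus crossings of an arbitrary $D_\Upsilon$-geodesic forces $\widetilde D_\Upsilon(x,y) \le (M^\ast - \alpha^4(M^\ast - M) + o(1))D_\Upsilon(x,y)$ for all pairs, so that $\overline G_{\rr}(M',b)$ fails almost surely. This structural choice is not cosmetic; it is what makes the two quantitative problems below disappear.

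First, your upgrading step has a gap that is more serious than you indicate. Your "good subintervals" are defined by $D_\Upsilon$-length $\alpha^3\kc_r$, but the event $\overline H_r(M,a,\alpha)$ requires the shortcut to be a crossing of the annulus $A_{(1-\alpha)r,r}(z)$ with endpoints on the two boundary circles and the whole subpath confined to $\overline{A_{(1-\alpha)r,r}(z)}$. A segment of $D_\Upsilon$-length $\alpha^3\kc_r$ has Euclidean displacement controlled only through the two-sided H\"older bounds (\Cref{314} and Axiom~\eqref{010D}), which place it anywhere in a range that need not contain $\alpha r$, and nothing forces the segment to be positioned or confined as an annulus crossing. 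In the paper's contrapositive argument this problem does not arise: the times $\sigma_j,\tau_j$ are \emph{defined} as hitting times of $\partial B_{(1-\alpha)r_j}(z_j)$ and $\partial B_{r_j}(z_j)$ along the geodesic, so the geometric conditions of \Cref{003} hold by construction, and the lower bound $D_\Upsilon(P(\sigma_j),P(\tau_j)) \ge \alpha^3\kc_{r_j}$ is supplied by condition~\eqref{it:002} of the covering event rather than by fiat.

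Second, and independently fatal, your double counting does not close. The union bound in your upper estimate runs over all $\asymp(\rr/r)^2$ grid centers at scale $r$, while your lower bound counts at most $\asymp \alpha^{-3}\kc_{\rr}/\kc_r$ good subintervals. By \eqref{eq:340} one only has $(\kK\varepsilon)^{-1} \le \kc_{\rr}/\kc_{\varepsilon\rr} \le \varepsilon^{-2}$, and since the distance exponent satisfies $\theta<2$ the crossing count is genuinely of smaller order than the grid count; the "geometric factors" do not match, and the inequality you extract degenerates to $\BP\lbrack\overline H_r\rbrack \gtrsim b\,\alpha^{-3}(r/\rr)$ rather than a uniform $p$. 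To get a scale-uniform lower bound one would have to restrict the union bound to grid points near the (random) geodesic, which is circular. The contrapositive formulation avoids this entirely because the covering statement "at every grid point some scale works" is obtained from \Cref{263} with an exponential concentration that beats the polynomial number of grid points, after which no counting of witnesses is needed.
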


We will give the proof of \Cref{004} in \Cref{subsec:proofof64} just below.

\begin{proposition}\label{005}
    For each $0 < \mu < \nu$ and $\Lambda > 1$, there exists $p = p(\mu, \nu, \Lambda) \in (0, 1)$ and $a = a(\mu, \nu, \Lambda) \in (0, 1)$ such that for each $\alpha \in (0, a)$, $M \in (0, M^\ast)$, and $\rr > 0$, there exists $\varepsilon_\ast = \varepsilon_\ast(\mu, \nu, \Lambda, \alpha, M, \rr) \in (0, 1)$ such that \eqref{eq:002} holds for all $\varepsilon \in (0, \varepsilon_\ast]$. 
\end{proposition}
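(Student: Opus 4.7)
The plan is to derive \Cref{005} as a direct corollary of \Cref{004} combined with \Cref{007}, \eqref{007A}. The only structural difference between the two statements is that \Cref{004} imposes the hypothesis $\BP\lbrack\overline G_{\rr}(M^\prime, b)\rbrack \ge b$ on the particular $\rr$ under consideration, and produces a constant $\varepsilon_\ast$ that works uniformly over all such $\rr$, whereas \Cref{005} drops the $\overline G_\rr$ hypothesis at the cost of allowing $\varepsilon_\ast$ to depend on $\rr$. This extra dependence is precisely what is needed to absorb the $\rr$-dependence of the lower bound on $\BP\lbrack\overline G_{\rr}(M^\prime, b)\rbrack$ supplied by \Cref{007}, \eqref{007A}.

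Concretely, given $0 < \mu < \nu$ and $\Lambda > 1$, I would take $p = p(\mu, \nu, \Lambda) \in (0, 1)$ and $a = a(\mu, \nu, \Lambda) \in (0, 1)$ to be the constants furnished by \Cref{004}. Given $\alpha \in (0, a)$ and $M \in (0, M^\ast)$, I would let $M^\prime = M^\prime(\mu, \nu, \Lambda, \alpha, M) \in (M, M^\ast)$ be the intermediate constant supplied by \Cref{004}. Now given $\rr > 0$, since $M^\prime < M^\ast$, applying \Cref{007}, \eqref{007A} to $M^\prime$ and $\rr$ yields a constant $b = b(M^\prime, \rr) \in (0, 1)$ such that $\BP\lbrack\overline G_{\rr}(M^\prime, b)\rbrack \ge b$; since $M^\prime$ depends only on $\mu, \nu, \Lambda, \alpha, M$, one may regard $b$ as a function of $\mu, \nu, \Lambda, \alpha, M, \rr$. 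Feeding this choice of $b$ back into \Cref{004} then produces $\varepsilon_\ast = \varepsilon_\ast(\mu, \nu, \Lambda, \alpha, M, b) = \varepsilon_\ast(\mu, \nu, \Lambda, \alpha, M, \rr) \in (0, 1)$ with the property that, for every $\rr^\prime > 0$ satisfying $\BP\lbrack\overline G_{\rr^\prime}(M^\prime, b)\rbrack \ge b$ and every $\varepsilon \in (0, \varepsilon_\ast]$, the conclusion \eqref{eq:002} holds. Since the specific $\rr$ we started with satisfies the $\overline G$ hypothesis by construction, \eqref{eq:002} follows for all $\varepsilon \in (0, \varepsilon_\ast]$, as desired.

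I do not expect any substantive obstacle at this stage: the entire burden of the multiscale argument producing many scales $r$ at which $\overline H_r$ has uniformly positive probability lies in the proof of \Cref{004}. \Cref{005} merely repackages that statement using the observation that \Cref{007}, \eqref{007A} automatically furnishes an admissible constant $b$ at every individual scale $\rr$.
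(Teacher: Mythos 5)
Your proposal is correct and is exactly the argument the paper intends: the paper's proof of \Cref{005} is simply ``this follows immediately from \Cref{007,004},'' and your write-up spells out precisely how \Cref{007}, \eqref{007A} supplies the scale-dependent constant $b$ that is fed into \Cref{004}, with the $\rr$-dependence of $\varepsilon_\ast$ absorbed through $b$. No gaps.
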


\begin{proof}
    This follows immediately from \Cref{007,004}. 
\end{proof}

\begin{proposition}\label{008}
    For each $0 < \mu < \nu$ and $\Lambda > 1$, there exists $b = b(\mu, \nu, \Lambda) \in (0, 1)$ such that for each $M \in (0, M^\ast)$ and $\rr > 0$, there exists $\varepsilon_\ast = \varepsilon_\ast(\mu, \nu, \Lambda, M, \rr) \in (0, 1)$ such that for each $\varepsilon \in (0, \varepsilon_\ast]$, the following is true:
    \begin{equation*}
        \parbox{.80\linewidth}{There are at least $\mu\log_\Lambda(1/\varepsilon)$ values of $r \in [\varepsilon^{1 + \nu}\rr, \varepsilon\rr] \cap \{\Lambda^k\}_{k \in \ZZ}$ for which $\BP\lbrack\overline G_r(M, b)\rbrack \ge b$.}
    \end{equation*}
\end{proposition}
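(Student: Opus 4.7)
The plan is to deduce \Cref{008} directly from \Cref{005} via a simple set-theoretic inclusion between the two families of ``shortcut'' events: the more elaborate event $\overline H_r(M, a, \alpha)$ of \Cref{003} is contained in the simpler event $\overline G_r(M, \alpha)$ of \Cref{025}, so \Cref{008} follows by discarding the extra structure present in $\overline H_r$.

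More precisely, I would first verify that for any $M, r > 0$ and $0 < \alpha < a < 1$, we have $\overline H_r(M, a, \alpha) \subseteq \overline G_r(M, \alpha)$. Indeed, suppose $\overline H_r(M, a, \alpha)$ occurs, and let $P$ together with $0 < s < t < 1$ be as in \Cref{003}. Setting $x \defeq P(s)$ and $y \defeq P(t)$, one has $x \in \partial B_{(1 - \alpha)r}(0) \cap \Upsilon$ and $y \in \partial B_r(0) \cap \Upsilon$, hence $x, y \in \overline{B_r(0)} \cap \Upsilon$ and $\lvert x - y\rvert \ge \lvert y\rvert - \lvert x\rvert = \alpha r$. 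Moreover $x \leftrightarrow y$ since $P|_{[s, t]}$ is a continuous path in $\Upsilon$ joining them, and $\widetilde D_\Upsilon(x, y) \ge M D_\Upsilon(x, y)$ by hypothesis; these are precisely the defining conditions of $\overline G_r(M, \alpha)$. Since $\overline G_r(M, b')$ is antitone in $b'$, we also have $\overline G_r(M, \alpha) \subseteq \overline G_r(M, b)$ whenever $b \le \alpha$.

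With this inclusion in hand, the proposition is immediate. Let $p = p(\mu, \nu, \Lambda) \in (0, 1)$ and $a = a(\mu, \nu, \Lambda) \in (0, 1)$ be the constants produced by \Cref{005}, fix $\alpha \defeq a/2 \in (0, a)$, and set $b \defeq \min(\alpha, p) \in (0, 1)$; these all depend only on $(\mu, \nu, \Lambda)$. Given $M \in (0, M^\ast)$ and $\rr > 0$, applying \Cref{005} with this choice of $\alpha$, $M$, and $\rr$ yields $\varepsilon_\ast = \varepsilon_\ast(\mu, \nu, \Lambda, M, \rr) \in (0, 1)$ such that for every $\varepsilon \in (0, \varepsilon_\ast]$ there are at least $\mu \log_\Lambda(1/\varepsilon)$ values of $r \in [\varepsilon^{1 + \nu}\rr, \varepsilon\rr] \cap \{\Lambda^k\}_{k \in \ZZ}$ with $\BP\lbrack\overline H_r(M, a, \alpha)\rbrack \ge p$; for each such $r$, the inclusion above gives $\BP\lbrack\overline G_r(M, b)\rbrack \ge \BP\lbrack\overline H_r(M, a, \alpha)\rbrack \ge p \ge b$.

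There is essentially no obstacle here beyond checking that the parameter dependencies match: the additional requirements encoded in $\overline H_r$ (the $D_\Upsilon$-geodesic structure, the bounds $\len(P; D_\Upsilon) \le a^{-1}\kc_r$, $\diam(P) \le r/100$, and $D_\Upsilon(P(s), P(t)) \ge \alpha^3\kc_r$) play no role in defining $\overline G_r$ and are simply dropped, while $a$, $p$, and hence $b$ are all furnished by \Cref{005} as functions of $(\mu, \nu, \Lambda)$ alone. The substantive work is of course contained in \Cref{005}, which we are permitted to invoke as a hypothesis.
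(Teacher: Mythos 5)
Your proof is correct and is essentially identical to the paper's: the paper likewise sets $b \defeq (a/2) \wedge p$ and deduces the claim from \Cref{005} via the inclusion $\overline H_r(M, a, b) \subset \overline G_r(M, b)$. The only cosmetic difference is that you apply \Cref{005} with $\alpha = a/2$ and then use monotonicity of $\overline G_r(M, \cdot)$ in the second parameter, whereas the paper applies it with $\alpha = b$ directly; both are valid.
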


\begin{proof}
    Let $p$ and $a$ be as in \Cref{005}. Set $b \defeq (a/2) \wedge p$. Then \Cref{008} follows immediately from \Cref{005}, together with the fact that $\overline H_r(M, a, b) \subset \overline G_r(M, b)$. 
\end{proof}

By interchanging the roles of $D$ and $\widetilde D$, we obtain from \Cref{004,008} the following statements. 

\begin{proposition}\label{030}
    For each $0 < \mu < \nu$ and $\Lambda > 1$, there exists $p = 
    p(\mu, \nu, \Lambda) \in (0, 1)$ and $a = a(\mu, \nu, \Lambda) \in (0, 1)$ such that for each $\alpha \in (0, a)$ and $M > M_\ast$, there exists $M^\prime = M^\prime(\mu, \nu, \Lambda, \alpha, M) \in (M_\ast, M)$ such that for each $b \in (0, 1)$, there exists $\varepsilon_\ast = \varepsilon_\ast(\mu, \nu, \Lambda, \alpha, M, b) \in (0, 1)$ such that for each $\rr > 0$ for which $\BP\lbrack\underline G_{\rr}(M^\prime, b)\rbrack \ge b$ and each $\varepsilon \in (0, \varepsilon_\ast]$, the following is true:
    \begin{equation*}
        \parbox{.80\linewidth}{There are at least $\mu\log_{\Lambda}(1/\varepsilon)$ values of $r \in [\varepsilon^{1 + \nu}\rr, \varepsilon\rr] \cap \{\Lambda^k\}_{k \in \ZZ}$ for which $\BP\lbrack\underline H_r(M, a, \alpha)\rbrack \ge p$.}
    \end{equation*}
\end{proposition}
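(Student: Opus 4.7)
The plan is to derive \Cref{030} directly from \Cref{004} by exploiting the symmetry between $D$ and $\widetilde D$. All hypotheses imposed on the pair $(D, \widetilde D)$ at the start of this section (both are weak geodesic $\CLE_\kappa$ carpet metrics with the same scaling constants $\{\kc_r\}_{r > 0}$) are manifestly symmetric in the two metrics, so \Cref{004} applies verbatim to the pair $(\widetilde D, D)$.

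The first step is to work out how the key objects transform under the swap $D \leftrightarrow \widetilde D$. Writing a prime for the quantities defined relative to $(\widetilde D, D)$, one checks directly from \eqref{eq:321} and \eqref{eq:322} that $M_\ast' = 1/M^\ast$ and $(M^\ast)' = 1/M_\ast$, since the essential supremum of $D_\Upsilon / \widetilde D_\Upsilon$ is the reciprocal of the essential infimum of $\widetilde D_\Upsilon / D_\Upsilon$ and vice versa. Then, inspecting \Cref{025,003}, one verifies that $\overline G_\rr'(M, b) = \underline G_\rr(M^{-1}, b)$ and $\overline H_r'(M, a, \alpha) = \underline H_r(M^{-1}, a, \alpha)$: the $\widetilde D$-geodesic with $\widetilde D$-length bound $a^{-1} \kc_r$ and ratio condition $D_\Upsilon(P(s),P(t)) \ge M\, \widetilde D_\Upsilon(P(s),P(t))$ featured in $\overline H_r'(M, a, \alpha)$ matches precisely the $\widetilde D$-geodesic appearing in \Cref{003}, \eqref{003BA}--\eqref{003BD} for $\underline H_r(M^{-1}, a, \alpha)$, and similarly for the endpoint events $\overline G_\rr'$ and $\underline G_\rr$.

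The second step is to invoke \Cref{004} with $(D, \widetilde D)$ replaced by $(\widetilde D, D)$ and with the parameter $M$ there replaced by $M^{-1}$. The hypothesis $M^{-1} \in (0, (M^\ast)') = (0, 1/M_\ast)$ translates to $M > M_\ast$; the output constant $(M')^{-1} \in (M^{-1}, (M^\ast)')$ translates to $M' \in (M_\ast, M)$; the hypothesis $\BP\lbrack \overline G_\rr'((M')^{-1}, b)\rbrack \ge b$ becomes $\BP\lbrack\underline G_\rr(M', b)\rbrack \ge b$; and the conclusion $\BP\lbrack\overline H_r'(M^{-1}, a, \alpha)\rbrack \ge p$ becomes $\BP\lbrack\underline H_r(M, a, \alpha)\rbrack \ge p$. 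After this substitution, the conclusion is exactly the statement of \Cref{030}.

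There is essentially no obstacle beyond the reciprocal bookkeeping; the main point to be careful about is that the containment $M' \in (M, M^\ast)$ in \Cref{004} reverses to $M' \in (M_\ast, M)$ in \Cref{030} under $x \mapsto 1/x$, and that the ``upper'' events $\overline G, \overline H$ (witnessing ratios $\ge M$) become ``lower'' events $\underline G, \underline H$ (witnessing ratios $\le M^{-1}$) after swapping the two metrics. Since the argument requires no new probabilistic input, no independence computation, and no geometric estimate beyond what is already encoded in \Cref{004}, the only risk is an orientation error in the dictionary; once that dictionary is written out, the result follows immediately.
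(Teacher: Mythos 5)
Your proof is correct and takes exactly the approach the paper itself uses: the paper states Proposition~\ref{030} immediately after remarking ``By interchanging the roles of $D$ and $\widetilde D$, we obtain from \Cref{004,008} the following statements,'' and your dictionary (the swap sends $M_\ast \mapsto 1/M^\ast$, $M^\ast \mapsto 1/M_\ast$, $\overline G_\rr(M, b) \mapsto \underline G_\rr(M^{-1}, b)$, and $\overline H_r(M, a, \alpha) \mapsto \underline H_r(M^{-1}, a, \alpha)$) is precisely what makes that remark a proof. The only thing you've added is writing out the bookkeeping explicitly, which is a helpful expansion rather than a departure.
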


\begin{proposition}\label{031}
    For each $0 < \mu < \nu$ and $\Lambda > 1$, there exists $b = b(\mu, \nu, \Lambda) \in (0, 1)$ such that for each $M > M_\ast$ and $\rr > 0$, there exists $\varepsilon_\ast = \varepsilon_\ast(\mu, \nu, \Lambda, M, \rr) \in (0, 1)$ such that for each $\varepsilon \in (0, \varepsilon_\ast]$, the following is true:
    \begin{equation*}
        \parbox{.80\linewidth}{There are at least $\mu\log_\Lambda(1/\varepsilon)$ values of $r \in [\varepsilon^{1 + \nu}\rr, \varepsilon\rr] \cap \{\Lambda^k\}_{k \in \ZZ}$ for which $\BP\lbrack\underline G_r(M, b)\rbrack \ge b$.}
    \end{equation*}
\end{proposition}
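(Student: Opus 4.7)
The plan is to obtain \Cref{031} from \Cref{030} by exactly the same mechanism that derives \Cref{008} from \Cref{005}, but with the roles of $D$ and $\widetilde D$ interchanged throughout. First, given $0 < \mu < \nu$ and $\Lambda > 1$, let $p = p(\mu, \nu, \Lambda) \in (0, 1)$ and $a = a(\mu, \nu, \Lambda) \in (0, 1)$ be the constants produced by \Cref{030}, and declare the constant in the present statement to be $b \defeq (a/2) \wedge p$, which depends only on $\mu, \nu, \Lambda$, as required. This $b$ will play two simultaneous roles: it will serve both as the $\alpha$-parameter in the event $\underline H_r(M, a, \alpha)$ supplied by \Cref{030} and as the lower Euclidean separation in the event $\underline G_r(M, b)$ appearing in the conclusion.

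Next, fix $M > M_\ast$ and $\rr > 0$. Apply \Cref{030} with $\alpha \defeq b$ (legal since $b \le a/2 < a$) to obtain $M^\prime = M^\prime(\mu, \nu, \Lambda, b, M) \in (M_\ast, M)$. To verify the hypothesis ``$\BP\lbrack\underline G_\rr(M^\prime, c)\rbrack \ge c$'' that \Cref{030} imposes on $\rr$, invoke \Cref{007}, \eqref{007B} applied to $M^\prime$ in place of $M$; this produces some $c = c(M^\prime, \rr) \in (0, 1)$ for which the inequality holds. With this $c$ playing the role of the free parameter in \Cref{030}, that proposition then supplies $\varepsilon_\ast = \varepsilon_\ast(\mu, \nu, \Lambda, M, \rr) \in (0, 1)$ such that for every $\varepsilon \in (0, \varepsilon_\ast]$ there are at least $\mu\log_\Lambda(1/\varepsilon)$ values of $r \in [\varepsilon^{1 + \nu}\rr, \varepsilon\rr] \cap \{\Lambda^k\}_{k \in \ZZ}$ with $\BP\lbrack\underline H_r(M, a, b)\rbrack \ge p$.

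The final ingredient is the elementary inclusion $\underline H_r(M, a, b) \subset \underline G_r(M, b)$, the exact counterpart of the observation used in the proof of \Cref{008}. Indeed, if $\widetilde P$ together with times $0 < s < t < 1$ witnesses $\underline H_r(M, a, b)$, then \Cref{003}, \eqref{003BC} gives $\widetilde P(s), \widetilde P(t) \in \overline{B_r(0)} \cap \Upsilon$ and
\begin{equation*}
    \left\lvert\widetilde P(s) - \widetilde P(t)\right\rvert \ge \lvert\widetilde P(t)\rvert - \lvert\widetilde P(s)\rvert = br,
\end{equation*}
together with $\widetilde P(s) \leftrightarrow \widetilde P(t)$ (witnessed by $\widetilde P$ itself, which lies in $\Upsilon$) and, by \Cref{003}, \eqref{003BD}, the inequality $\widetilde D_\Upsilon(\widetilde P(s), \widetilde P(t)) \le M D_\Upsilon(\widetilde P(s), \widetilde P(t))$; these are precisely the requirements defining $\underline G_r(M, b)$. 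Combining this inclusion with $p \ge b$ upgrades the bound $\BP\lbrack\underline H_r(M, a, b)\rbrack \ge p$ to $\BP\lbrack\underline G_r(M, b)\rbrack \ge b$, which is the desired conclusion. No step in this plan presents a serious obstacle: all of the substantive work has been absorbed into \Cref{030} and \Cref{007}, \eqref{007B}, and the present argument is pure bookkeeping.
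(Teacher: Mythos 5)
Your proposal is correct and follows essentially the same route as the paper: Proposition~\ref{031} is obtained by mirroring the derivation of \Cref{008} from \Cref{005} (itself a combination of \Cref{007} and \Cref{004}), i.e., setting $b = (a/2) \wedge p$, feeding \Cref{007}, \eqref{007B} into \Cref{030} to discharge its hypothesis, and using the inclusion $\underline H_r(M, a, b) \subset \underline G_r(M, b)$. Your verification of that inclusion via the reverse triangle inequality is exactly the observation the paper relies on.
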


\subsection{Proof of \Cref{004}}\label{subsec:proofof64}

In order to prove \Cref{004}, it suffices to prove the following contrapositive. 

\begin{lemma}\label{029}
    For each $0 < \mu < \nu$ and $\Lambda > 1$, there exists $p = p(\mu, \nu, \Lambda) \in (0, 1)$ and $a = a(\mu, \nu, \Lambda) \in (0, 1)$ such that for each $\alpha \in (0, a)$ and $M \in (0, M^\ast)$, there exists $M^\prime = M^\prime(\mu, \nu, \Lambda, \alpha, M) \in (M, M^\ast)$ such that for each $b \in (0, 1)$, there exists $\varepsilon_\ast = \varepsilon_\ast(\mu, \nu, \Lambda, \alpha, M, b) \in (0, 1)$ such that for each $\rr > 0$, if there exists $\varepsilon \in (0, \varepsilon_\ast]$ satisfying \eqref{eq:001} just below, then $\BP\lbrack\overline G_{\rr}(M^\prime, b)\rbrack < b$.
    \begin{equation}\label{eq:001}
        \parbox{.85\linewidth}{There are at least $(\nu - \mu)\log_\Lambda(1/\varepsilon)$ values of $r \in [\varepsilon^{1 + \nu}\rr, \varepsilon\rr] \cap \{\Lambda^k\}_{k \in \ZZ}$ for which we have $\BP\lbrack\overline H_r(M, a, \alpha)\rbrack < p$.}
    \end{equation}
\end{lemma}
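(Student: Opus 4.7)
The plan is to prove the contrapositive by a multi-scale averaging argument, in the spirit of the LQG uniqueness proof of \cite{ExUniLQG}. Fix $0 < \mu < \nu$, $\Lambda > 1$, assume that for some $\varepsilon \le \varepsilon_\ast$ the bound $\BP[\overline H_r(M, a, \alpha)] < p$ holds at $\ge (\nu - \mu)\log_\Lambda(1/\varepsilon)$ scales $r \in [\varepsilon^{1+\nu}\rr, \varepsilon\rr] \cap \{\Lambda^k\}_{k \in \ZZ}$, and suppose toward a contradiction that $\BP[\overline G_\rr(M', b)] \ge b$ for the $M' \in (M, M^\ast)$ to be chosen at the end. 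On $\overline G_\rr(M', b)$, measurably select $x, y \in \overline{B_\rr(0)} \cap \Upsilon$ with $|x - y| \ge b\rr$, $x \leftrightarrow y$, and $\widetilde D_\Upsilon(x, y) \ge M' D_\Upsilon(x, y)$, and let $P$ be a $D_\Upsilon$-geodesic from $x$ to $y$. By the upper bound in Axiom~\eqref{010D} together with \Cref{229}, and by \Cref{314}, we may further restrict to an event of probability close to one on which $\len(P; D_\Upsilon) \le A\kc_\rr$ and $D_\Upsilon(x, y) \ge c_b \kc_\rr$, where $A$ is universal and $c_b = c_b(b) > 0$.

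The key step is to translate the macroscopic shortcut on $P$ into mesoscopic shortcut events at many scales. For each hypothesized scale $r$ and each grid point $z \in (\tfrac{1}{100} a r\ZZ)^2 \cap \overline{B_\rr(0)}$, let $\overline H_r^z$ denote the translate of $\overline H_r(M, a, \alpha)$ centered at $z$. By the translation invariance Axiom~\eqref{010C} of $\Gamma$, $D$, and $\widetilde D$, we have $\BP[\overline H_r^z] = \BP[\overline H_r(M, a, \alpha)] < p$, and $\overline H_r^z$ is measurable with respect to $(\Upsilon, D_\Upsilon, \widetilde D_\Upsilon)$ restricted to $\overline{A_{(1-a)r, (1+a)r}(z)}$. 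Applying \Cref{263} together with \Cref{266}, with $\lambda \in (0, 1)$ depending only on $\Lambda$ and $\nu$, we fix $p = p(\mu, \nu, \Lambda) \in (0, 1)$ and $a = a(\mu, \nu, \Lambda) \in (0, 1)$ so that on an event of probability tending to one as $\varepsilon \to 0$, $\overline H_r^z$ fails simultaneously for every $z$ in the grid at each of the hypothesized scales $r$. On this good event, whenever $P$ makes a crossing $P|_{[s, t]}$ of $\overline{A_{(1-\alpha)r, r}(z)} \cap \Upsilon$ with $P(s) \in \partial B_{(1-\alpha)r}(z)$, $P(t) \in \partial B_r(z)$, and $D_\Upsilon(P(s), P(t)) \ge \alpha^3 \kc_r$, the failure of $\overline H_r^z$ --- together with the length and Euclidean-diameter bounds on $P$, which allow this crossing to be enlarged into the wider annulus $\overline{A_{(1-a)r, (1+a)r}(z)}$ --- forces $\widetilde D_\Upsilon(P(s), P(t)) \le M \cdot D_\Upsilon(P(s), P(t))$.

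A multi-scale covering of $P$ by annular sub-crossings at scales in $[\varepsilon^{1+\nu}\rr, \varepsilon\rr] \cap \{\Lambda^k\}_{k \in \ZZ}$ --- using \Cref{214} to bound the $D_\Upsilon$-diameter of connected components of $\Upsilon$ within balls at each scale, and \Cref{264} to bound the number of ``short'' crossings (those with $D_\Upsilon$-length $< \alpha^3 \kc_r$) per annulus --- then yields
\begin{equation*}
    \widetilde D_\Upsilon(x, y) \le M \cdot D_\Upsilon(x, y) + \mathrm{err}(\alpha, \varepsilon) \kc_\rr,
\end{equation*}
where $\mathrm{err}(\alpha, \varepsilon) \to 0$ as $\varepsilon \to 0$ for each fixed $\alpha > 0$. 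Setting $M' \defeq M + \alpha$ (shrinking $a$ if necessary so that $M + a < M^\ast$) and choosing $\varepsilon_\ast = \varepsilon_\ast(\alpha, M, \mu, \nu, \Lambda, b) > 0$ small enough that $\mathrm{err}(\alpha, \varepsilon_\ast) < \alpha c_b$ then contradicts $\widetilde D_\Upsilon(x, y) \ge M' D_\Upsilon(x, y) \ge M' c_b \kc_\rr$.

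The main obstacle is the length-accounting step: showing that the aggregated short-leftover contributions to $P$ sum to $\mathrm{err}(\alpha, \varepsilon) \kc_\rr$ with $\mathrm{err}(\alpha, \varepsilon) \to 0$. This combines \Cref{214}, \Cref{264}, and the scale bound $\kc_{\varepsilon r}/\kc_r \le \kK \varepsilon$ from Axiom~\eqref{010D}, together with the upper bound $\len(P; D_\Upsilon) \le A \kc_\rr$, to force the contributions from the mesoscopic scales in $[\varepsilon^{1+\nu}\rr, \varepsilon\rr]$ to sum geometrically. A secondary subtlety is that $M'$ cannot depend on $b$; this is resolved by absorbing the $b$-dependence into the choice of $\varepsilon_\ast$ rather than into $M'$.
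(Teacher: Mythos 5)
Your overall strategy --- prove the contrapositive by tracking a $D_\Upsilon$-geodesic across mesoscopic annuli at the hypothesized scales and using the failure of $\overline H_r(M,a,\alpha)$ to produce shortcuts --- is the same as the paper's, but there are two genuine gaps in the execution. First, the probabilistic input is not established. The event $\overline H_r^z$ is \emph{not} measurable with respect to the configuration in $\overline{A_{(1-a)r,(1+a)r}(z)}$: it refers to geodesics and distances of the global metrics $D_\Upsilon$ and $\widetilde D_\Upsilon$, which a priori depend on $\Upsilon$ far from $z$. The paper has to introduce an auxiliary event $E_r(z)$ containing an extra condition (distances inside the annulus are dominated by the cost of crossing $A_{r/2,3r/4}(z)$ or $A_{3r/2,2r}(z)$) precisely so that relevant geodesics cannot exit $A_{r/2,2r}(z)$, making the event local (\Cref{009}); without this, \Cref{263} cannot be invoked. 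Moreover, even granting locality, your claim that ``$\overline H_r^z$ fails simultaneously for every $z$ in the grid at each of the hypothesized scales'' with probability tending to one is false for a fixed constant $p$: there are $\gtrsim \varepsilon^{-2}$ grid points, each failure has probability only $\ge 1-2p$, and the events at a fixed scale are positively correlated rather than independent, so no union bound or concentration gives simultaneous failure. What \Cref{263} actually yields (and what suffices) is the weaker statement that for each grid point $z$ there \emph{exists at least one} hypothesized scale $r$ (depending on $z$) at which the good local event occurs; the geodesic-tracking step must then use the scale adapted to each location it visits.

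Second, your concluding inequality $\widetilde D_\Upsilon(x,y) \le M\, D_\Upsilon(x,y) + \mathrm{err}(\alpha,\varepsilon)\kc_{\rr}$, with $M' \defeq M + \alpha$, overclaims what the shortcut events deliver. The failure of $\overline H_r(M,a,\alpha)$ only controls the ratio $\widetilde D_\Upsilon/D_\Upsilon$ on the sub-crossing $P|_{[\sigma,\tau]}$ of the thin annulus $\overline{A_{(1-\alpha)r,r}(z)}$, which accounts for only a fraction of the geodesic's passage through $B_{(1+a)r}(z)$; the remaining portions (inside $B_{(1-\alpha)r}(z)$ and in $A_{r,(1+a)r}(z)$) can only be bounded by $M^\ast$ times their $D_\Upsilon$-length, by the definition of $M^\ast$. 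Using the lower bound $D_\Upsilon(P(\sigma),P(\tau)) \ge \alpha^3\kc_r$ together with the upper bound $D_\Upsilon(P(t_{j-1}),P(t_j)) \le \alpha^{-1}\kc_r$ on each step, the shortcut segments cover only an $\alpha^4$-fraction of the total $D_\Upsilon$-length, so the correct conclusion is the convex-combination bound $\widetilde D_\Upsilon(x,y) \le \bigl(M^\ast - \alpha^4(M^\ast - M)\bigr)D_\Upsilon(x,y) + \mathrm{err}$, forcing $M'$ to be taken in $\bigl(M^\ast - \alpha^4(M^\ast - M), M^\ast\bigr)$, i.e., close to $M^\ast$ rather than close to $M$. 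This still proves the lemma (any $M' \in (M, M^\ast)$ is acceptable), but your accounting of the ``leftover'' as merely the short crossings with $D_\Upsilon$-length below $\alpha^3\kc_r$ misses the dominant contribution and would not close as written.
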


The main idea behind the proof of \Cref{029} is as follows. Assume that \eqref{eq:001} holds for a sufficiently small universal constant $p \in (0,1)$. Then, by applying \Cref{263,214} together with a union bound, we can cover the plane with balls of the form $B_{r/2}(z)$ such that the complement of the event $\overline H_r(M, a, \alpha)$ occurs and the $D_\Upsilon$-diameter of $B_{2r}(z) \cap \Upsilon$ remains controlled. In particular, for every pair of points $u \in \partial B_{(1 - \alpha)r}(z) \cap \Upsilon$ and $v \in \partial B_r(z) \cap \Upsilon$ that are connected within $A_{(1 - \alpha)r,r}(z)$ and satisfy certain regularity properties, we have $\widetilde D_\Upsilon(P(s), P(t)) \le M D_\Upsilon(P(s), P(t))$. Consequently, for any $D_\Upsilon$-geodesic $P$ whose $D_\Upsilon$-length is not too small, by examining the moments when it crosses such annuli $A_{(1 - \alpha)r,r}(z)$, we can deduce that $\widetilde D_\Upsilon(P(0), P(1)) \le M^\prime D_\Upsilon(P(0), P(1))$ for some suitable constant $M^\prime \in (M, M^\ast)$.

In the remainder of the present subsection, fix $M \in (0, M^\ast)$. 

Let us now define the event to which we will apply \Cref{263}. For $z \in \CC$, $r > 0$, and $0 < \alpha < a < 1$, write $E_r(z) = E_r(z; M, a, \alpha)$ for the event that the following are true:
\begin{enumerate}
    \item\label{it:000} We have
    \begin{multline*}
        D_\Upsilon(x, y) \le \frac{M_\ast}{M^\ast} \left(D_\Upsilon(\partial B_{r/2}(z) \cap \Upsilon, \partial B_{3r/4}(z) \cap \Upsilon) \wedge D_\Upsilon(\partial B_{3r/2}(z) \cap \Upsilon, \partial B_{2r}(z) \cap \Upsilon)\right), \\
        \forall x, y \in A_{(1 - a)r,(1 + a)r}(z) \cap \Upsilon \text{ with } x \xleftrightarrow{A_{(1 - a)r,(1 + a)r}(z)} y. 
    \end{multline*}
    \item\label{it:001} Let $P \colon [0, 1] \to \overline{A_{(1 - a)r,(1 + a)r}(z)} \cap \Upsilon$ be a $D_\Upsilon$-geodesic connecting $\partial B_{(1 - a)r}(z) \cap \Upsilon$ and $\partial B_{(1 + a)r}(z) \cap \Upsilon$. Suppose that there are times $0 < s < t < 1$ such that the following conditions are satisfied:
    \begin{enumerate}
        \item\label{it:0010} $\len(P; D_\Upsilon) \le a^{-1}\kc_r$. 
        \item\label{it:0011} The Euclidean diameter of $P$ is at most $r/100$. 
        \item\label{it:0012} $P(s) \in \partial B_{(1 - \alpha)r}(z) \cap \Upsilon$, $P(t) \in \partial B_r(z) \cap \Upsilon$, and $P|_{[s, t]} \subset \overline{A_{(1 - \alpha)r,r}(z)} \cap \Upsilon$. 
        \item\label{it:0013} $D_\Upsilon(P(s), P(t)) \ge \alpha^3\kc_r$.
    \end{enumerate}
    Then $\widetilde D_\Upsilon(P(s), P(t)) \le M D_\Upsilon(P(s), P(t))$. 
    \item\label{it:002} $D_\Upsilon(\partial B_{(1 - \alpha)r}(z) \cap \Upsilon, \partial B_r(z) \cap \Upsilon) \ge \alpha^3\kc_r$. 
    \item\label{it:300} Every connected component of $A_{(1 - a)r,(1 + a)r}(z) \cap \Upsilon$ has Euclidean diameter at most $r/100$. 
\end{enumerate}

The key requirement in the definition of $E_r(z)$ is condition~\eqref{it:001}. Note that this condition is precisely the complement of the event $\overline H_r(M, a, \alpha)$. Condition~\eqref{it:000} is included to guarantee that $E_r(z)$ is almost surely determined by the configuration of $A_{r/2,2r}(z) \cap \Upsilon$. Finally, conditions~\eqref{it:002} and \eqref{it:300} are introduced so that conditions~\eqref{it:0013} and \eqref{it:0011}, respectively, can be verified later.

\begin{lemma}\label{009}
    Let $z \in \CC$ and $r > 0$. Then $E_r(z)$ is almost surely determined by $A_{r/2,2r}(z) \cap \Upsilon$. 
\end{lemma}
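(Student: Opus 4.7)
The strategy is to define a $\sigma(A_{r/2, 2r}(z) \cap \Upsilon)$-measurable event $\widetilde E_r(z)$ by replacing every occurrence of the ambient metrics $D_\Upsilon$ and $\widetilde D_\Upsilon$ in the definition of $E_r(z)$ with their internal-metric counterparts on $A_{r/2, 2r}(z) \cap \Upsilon$, and then to show that $E_r(z) = \widetilde E_r(z)$ almost surely. The event $\widetilde E_r(z)$ will be $\sigma(A_{r/2, 2r}(z) \cap \Upsilon)$-measurable because, by Axiom~\eqref{010B} (Locality) of \Cref{010} applied to both metrics, the internal metrics $D_\Upsilon(\cdot, \cdot;\, A_{r/2, 2r}(z) \cap \Upsilon)$ and $\widetilde D_\Upsilon(\cdot, \cdot;\, A_{r/2, 2r}(z) \cap \Upsilon)$ are almost surely determined by $A_{r/2, 2r}(z) \cap \Upsilon$.

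The basic tool is a truncation argument for boundary-to-boundary distances: given any $r/2 \le r_1 < r_2 \le 2r$ and any path $P$ in $\Upsilon$ from $\partial B_{r_1}(z) \cap \Upsilon$ to $\partial B_{r_2}(z) \cap \Upsilon$, restricting $P$ to the portion between its last visit to $\partial B_{r_1}(z)$ and the subsequent first visit to $\partial B_{r_2}(z)$ gives a sub-path lying in $\overline{A_{r_1, r_2}(z)}$ of $D_\Upsilon$-length at most that of $P$. Hence
\begin{equation*}
D_\Upsilon\bigl(\partial B_{r_1}(z) \cap \Upsilon, \partial B_{r_2}(z) \cap \Upsilon\bigr) = D_\Upsilon\bigl(\partial B_{r_1}(z) \cap \Upsilon, \partial B_{r_2}(z) \cap \Upsilon;\, \overline{A_{r_1, r_2}(z)} \cap \Upsilon\bigr),
\end{equation*}
and the analogous identity holds for $\widetilde D_\Upsilon$. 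This immediately makes conditions~\eqref{it:002} and \eqref{it:300} internal, and it rewrites the right-hand side of condition~\eqref{it:000} as an internal-metric quantity.

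For the left-hand side of condition~\eqref{it:000}, let $m$ denote the minimum of the two crossing distances on its right-hand side, so the bound reads $D_\Upsilon(x, y) \le (M_\ast/M^\ast)\, m \le m$. Any path from $x, y \in A_{(1-a)r, (1+a)r}(z) \cap \Upsilon$ that exits $A_{r/2, 2r}(z)$ must traverse either $\overline{A_{r/2, 3r/4}(z)}$ or $\overline{A_{3r/2, 2r}(z)}$ at least twice, contributing $D_\Upsilon$-length at least $2m > m$; thus any path realizing $D_\Upsilon(x, y) \le m$ stays in $A_{r/2, 2r}(z) \cap \Upsilon$, forcing $D_\Upsilon(x, y) = D_\Upsilon(x, y;\, A_{r/2, 2r}(z) \cap \Upsilon)$ whenever condition~\eqref{it:000} holds. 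Combining this with the bi-Lipschitz bound $\widetilde D_\Upsilon \le M^\ast D_\Upsilon$ (immediate from the definition~\eqref{eq:322} of $M^\ast$) and the analogous crossing argument for $\widetilde D_\Upsilon$ (whose crossing distances are at least $M_\ast$ times the $D_\Upsilon$ ones), the same conclusion holds for $\widetilde D_\Upsilon(x, y)$ on the event where condition~\eqref{it:000} is satisfied.

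Finally, for condition~\eqref{it:001}, applying the equalities from the previous paragraph to $P(s), P(t)$ expresses $D_\Upsilon(P(s), P(t))$ and $\widetilde D_\Upsilon(P(s), P(t))$ in internal-metric form on the event where \eqref{it:000} holds. The $D_\Upsilon$-length of $P$ can be computed via the internal metric because $D_\Upsilon$ induces the Euclidean topology (Axiom~\eqref{010A}), so by uniform continuity of the identity map on the compact set $\overline{A_{(1-a)r, (1+a)r}(z)} \cap \Upsilon$, for sufficiently fine partitions of $P$ the $D_\Upsilon$-geodesics between consecutive points stay inside $A_{r/2, 2r}(z)$ (using that $P$ is at Euclidean distance at least $(1/2 - a) r$ from $\partial A_{r/2, 2r}(z)$), yielding $\len(P; D_\Upsilon) = \len(P; D_\Upsilon(\cdot, \cdot;\, A_{r/2, 2r}(z) \cap \Upsilon))$. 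Thus the status of $P$ as a $D_\Upsilon$-geodesic and each of properties (a)--(d) admits an internal-metric reformulation, establishing $E_r(z) = \widetilde E_r(z)$ almost surely. The main technical hurdle will be the interaction of the bi-Lipschitz bound with the crossing argument, which is what allows us to express $\widetilde D_\Upsilon(x, y)$ (appearing in property (d) of condition~\eqref{it:001}) via the internal $\widetilde D_\Upsilon$-metric.
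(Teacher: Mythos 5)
Your proposal is correct and follows essentially the same route as the paper's (much terser) proof: the crucial point in both is that condition~\eqref{it:000} forces any (near-)optimal $D_\Upsilon$- or $\widetilde D_\Upsilon$-path between points of $A_{(1-a)r,(1+a)r}(z)\cap\Upsilon$ to stay inside $A_{r/2,2r}(z)$, so that all quantities in the definition of $E_r(z)$ reduce to internal-metric quantities determined by $A_{r/2,2r}(z)\cap\Upsilon$ via locality. Your write-up simply fills in the details (truncation of crossings, the bi-Lipschitz comparison for $\widetilde D_\Upsilon$, and the internal computation of $\len(P;D_\Upsilon)$) that the paper dismisses as clear.
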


\begin{proof}
    It is clear that conditions~\eqref{it:000}, \eqref{it:002}, \eqref{it:300} are almost surely determined by $A_{r/2,2r}(z) \cap \Upsilon$. If condition~\eqref{it:000} holds, then for each $x, y \in A_{(1 - a)r,(1 + a)r}(z) \cap \Upsilon$ with $x \xleftrightarrow{A_{(1 - a)r,(1 + a)r}(z)} y$, any path in $\Upsilon$ between $x$ and $y$ that exits $A_{r/2,2r}(z)$ is neither a $D_\Upsilon$-geodesic nor a $\widetilde D_\Upsilon$-geodesic. This implies that, on the event that condition~\eqref{it:000} holds, condition~\eqref{it:001} is almost surely determined by $A_{r/2,2r}(z) \cap \Upsilon$. This completes the proof. 
\end{proof}

\begin{lemma}\label{013}
    For each $p \in (0, 1)$, there exists $a = a(p) \in (0, 1)$ such that for each $\alpha \in (0, a)$ and each $r > 0$ for which $\BP\lbrack\overline H_r(M, a, \alpha)\rbrack < p$, we have
    \begin{equation*}
        \BP\lbrack E_r(z)\rbrack \ge 1 - 2p, \quad \forall z \in \CC. 
    \end{equation*} 
\end{lemma}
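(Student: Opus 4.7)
\textbf{Plan for the proof of \Cref{013}.} By Axiom~\eqref{010C} (translation invariance) we may assume $z=0$. The key observation is that the second condition in the definition of $E_r(0)$ is essentially the complement of $\overline H_r(M,a,\alpha)$: both are statements about $D_\Upsilon$-geodesics $P$ in $\overline{A_{(1-a)r,(1+a)r}(0)} \cap \Upsilon$ from $\partial B_{(1-a)r}(0) \cap \Upsilon$ to $\partial B_{(1+a)r}(0) \cap \Upsilon$ and times $0<s<t<1$ satisfying the regularity conditions (a)--(c) of \Cref{003} together with $D_\Upsilon(P(s),P(t)) \ge \alpha^3\kc_r$. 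Condition~\eqref{it:001} of $E_r(0)$ asserts that $\widetilde D_\Upsilon(P(s),P(t)) \le M D_\Upsilon(P(s),P(t))$ for every such $(P,s,t)$, whereas $\overline H_r(M,a,\alpha)$ asserts the existence of one with $\widetilde D_\Upsilon(P(s),P(t)) \ge M D_\Upsilon(P(s),P(t))$. Hence $\overline H_r(M,a,\alpha)^c$ implies condition~\eqref{it:001}, and by hypothesis this condition fails with probability less than $p$. It therefore remains to show that conditions~\eqref{it:000}, \eqref{it:002}, and~\eqref{it:300} hold simultaneously with probability at least $1-p$, uniformly in $r>0$ and $\alpha\in(0,a)$, provided $a = a(p)$ is chosen small enough.

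For condition~\eqref{it:300} (small Euclidean diameter of components of $A_{(1-a)r,(1+a)r}(0) \cap \Upsilon$), I would place a large fixed number $N$ of evenly spaced points $w_j$ on $\partial B_r(0)$ and, around each $w_j$, seek a loop of $\Gamma$ contained in a ball $B_s(w_j) \subset A_{(1-a)r,(1+a)r}(0)$ (requiring $s \le ar$) that surrounds $w_j$. By scale and translation invariance of $\Gamma$, the probability of finding such a loop at a given scale is bounded below uniformly in $r$; running \Cref{263} across the logarithmically many scales $s_k = 2^{-k}ar$ and then \Cref{265} across the $w_j$'s yields, with probability tending to one as $a\to 0$ uniformly in $r$, simultaneous existence of such separating loops around every $w_j$. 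These loops partition the thin annulus into angular sectors of arc length $2\pi r/N$, inside which each connected component of $A_{(1-a)r,(1+a)r}(0) \cap \Upsilon$ must lie, giving the Euclidean diameter bound. For condition~\eqref{it:002} (the radial lower bound), I would apply \Cref{313} with a fixed exponent $\alpha_0 \in (0,1)$ and $K = \overline{B_1(0)} \setminus B_{1/2}(0)$: any path in $\Upsilon$ from $\partial B_{(1-\alpha)r}(0)$ to $\partial B_r(0)$ must traverse Euclidean distance at least $\alpha r$, so by covering $A_{(1-\alpha)r,r}(0)$ by balls of radius $(\alpha/200)r$ it must cross $A_{(\alpha/400)r,(\alpha/200)r}(z)$ for some $z$ in an $O(1/\alpha)$-point lattice. \Cref{313} together with~\eqref{eq:340} then gives a lower bound of order $\alpha^{\alpha_0+2}\kc_r$, which exceeds $\alpha^3\kc_r$ for $\alpha$ small since $\alpha_0+2<3$.

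For condition~\eqref{it:000}, I would strengthen the loop argument above by letting $N=N(a) \to \infty$ as $a \to 0$, say $N(a) \sim a^{-\nu'}$ for small $\nu' > 0$, so that every connected component of $A_{(1-a)r,(1+a)r}(0) \cap \Upsilon$ has Euclidean diameter at most $Ca^{\nu'}r$. Then \Cref{019}, applied with $U$ a small open neighborhood of the thin annulus and $\chi \in (0, 1 - 2/\alpha_{\mathrm{4A}})$, yields $D_\Upsilon(x,y) \le Ca^{\nu'\chi}\kc_r$ for $x,y$ in the same $\Upsilon$-component of the thin annulus (the alternative in \Cref{019} involving a disconnecting loop does not occur, since $x$ and $y$ lie in a common $X(\SCL)$). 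Meanwhile \Cref{309} provides, with high probability, the matching lower bound $c_0\kc_r$ on both reference distances appearing in condition~\eqref{it:000}, with $c_0=c_0(p)>0$ independent of $r$. Taking $a$ small enough that $Ca^{\nu'\chi} \le (M_\ast/M^\ast)c_0$ then gives condition~\eqref{it:000}, and a final union bound yields $\BP\lbrack E_r(z)\rbrack \ge 1 - 2p$. The main technical obstacle is the bookkeeping needed to ensure every estimate is uniform in $r$ (inherited from the uniform-in-$r$ versions of \Cref{263,265,309,313,019}) and to arrange the various exponents so that both the threshold $\alpha^3$ in condition~\eqref{it:002} and the factor $(M_\ast/M^\ast)$ in condition~\eqref{it:000} close the argument; the critical slack is that $\alpha_0$ in \Cref{313} may be taken strictly less than $1$ and that $a^{\nu'\chi}$ can be made arbitrarily small.
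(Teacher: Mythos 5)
Your high-level structure matches the paper exactly: reduce to $z=0$ by translation invariance, observe that $\overline H_r(M,a,\alpha)^c$ implies condition~\eqref{it:001}, and then establish the three regularity conditions~\eqref{it:000}, \eqref{it:002}, \eqref{it:300} separately with probability at least $1-p$ uniformly in $r$ and $\alpha\in(0,a)$. Your treatment of condition~\eqref{it:002} via \Cref{313} essentially reproves the paper's one-line invocation of \Cref{314}, and your pairing of \Cref{019} with \Cref{309} for condition~\eqref{it:000} is the same as the paper's invocation of Axiom~\eqref{010D} (upper and lower bound parts), so those are fine.

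However, your construction of the separating loops for condition~\eqref{it:300} (and the strengthened version used for condition~\eqref{it:000}) has a genuine geometric error. You require the loop to be contained in a ball $B_s(w_j)\subset A_{(1-a)r,(1+a)r}(0)$ with $s\le ar$; since $w_j\in\partial B_r(0)$, such a loop lies \emph{strictly inside} the thin annulus. A loop strictly interior to the annulus merely carves a hole in it --- it does not cut $A_{(1-a)r,(1+a)r}(0)$ into angular sectors, and a path in $A_{(1-a)r,(1+a)r}(0)\cap\Upsilon$ passing ``left'' of $w_j$ to ``right'' of $w_j$ is in no way obstructed by it. So the asserted conclusion ``these loops partition the thin annulus into angular sectors'' does not follow from the event as stated, and the diameter bound would fail. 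The correct construction (cf.\ the proof of \Cref{322}, which handles the closely analogous situation of a thin tubular neighborhood) instead seeks, near each $w_j$, a loop lying in an annulus $A_{s,2s}(w_j)$ with $s$ of order at least $ar$ --- so that it surrounds the ball $B_s(w_j)$, which \emph{spans} the thin annulus radially --- and disconnecting the inner and outer boundaries of $A_{s,2s}(w_j)$. The argument is then a trapping argument: since a connected component $\gamma\subset A_{(1-a)r,(1+a)r}(0)\cap\Upsilon$ cannot cross the loop, either $\gamma$ lies entirely inside the loop (hence has diameter $\lesssim s$) or $\gamma$ lies entirely outside, in which case $\gamma$ avoids the spanning ball $B_s(w_j)$. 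Since removing each such spanning ball disconnects the annulus, if enough $w_j$'s have loops then $\gamma$ is confined to a single short arc. This is not quite the ``individual loops cut the annulus into sectors'' logic you describe; a single loop around $w_j$ does not disconnect the two sides of $w_j$, because a path can circumvent it the long way around. You need the full collection of traps, together with the spanning balls, to do the job. Once the event is adjusted so that the loop scale satisfies $s\gtrsim ar$ rather than $s\le ar$, your use of \Cref{263} across scales and \Cref{265} across locations does give the claim, and the rest of your argument goes through.
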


\begin{proof}
    By \Cref{010}, Axiom~\eqref{010C} (translation invariance), it suffices to consider the case $z = 0$. It follows immediately from the definition of $\overline H_r(M, a, \alpha)$ that, if $\BP\lbrack\overline H_r(M, a, \alpha)\rbrack < p$, then condition~\eqref{it:001} holds with probability at least $1 - p$. Thus, it suffices to show that there exists $a = a(p) \in (0, 1)$ such that for each $\alpha \in (0, a)$ and $r > 0$, conditions~\eqref{it:000}, \eqref{it:002}, \eqref{it:300} hold with probability at least $1 - p$. First, we consider condition~\eqref{it:000}. By \Cref{010}, Axiom~\eqref{010D} (tightness across scales), we may choose a sufficiently small $\delta > 0$ such that for each $r > 0$, it holds with probability at least $1 - p/4$ that 
    \begin{multline}\label{eq:330}
        D_\Upsilon(x, y) \le \frac{M_\ast}{M^\ast}\left(D_\Upsilon(\partial B_{r/2}(0) \cap \Upsilon, \partial B_{3r/4}(0) \cap \Upsilon) \wedge D_\Upsilon(\partial B_{3r/2}(0) \cap \Upsilon, \partial B_{2r}(0) \cap \Upsilon)\right), \\
        \forall x, y \in A_{3r/4,3r/2}(0) \cap \Upsilon \text{ with } x \leftrightarrow y \text{ and } \lvert x - y\rvert \le \delta r. 
    \end{multline}
    We may choose $a \in (0, 1)$ to be sufficiently small so that for each $r > 0$, it holds with probability at least $1 - p/4$ that
    \begin{equation}\label{eq:331}
        \parbox{.85\linewidth}{the Euclidean diameter of each connected component of $A_{(1 - a)r,(1 + a)r}(z) \cap \Upsilon$ is at most $\delta r$.}
    \end{equation}
    Combining \eqref{eq:330} and \eqref{eq:331}, we obtain that for each $r > 0$, condition~\eqref{it:000} holds with probability at least $1 - p/2$. Next, it follows immediately from \Cref{314} that there exists $a = a(p) \in (0, 1)$ such that for each $\alpha \in (0, a)$ and $r > 0$, condition~\eqref{it:002} holds with probability at least $1 - p/4$. Finally, by \eqref{eq:331}, for each $r > 0$, condition~\eqref{it:300} holds with probability at least $1 - p/4$. This completes the proof. 
\end{proof}

\begin{lemma}\label{028}
    For each $0 < \mu < \nu$ and $\Lambda > 1$, there exists $p = p(\mu, \nu, \Lambda) \in (0, 1)$ and $a = a(\mu, \nu, \Lambda) \in (0, 1)$ such that for each $\alpha \in (0, a)$ and each bounded open subset $U \subset \CC$, the following is true: For each $\varepsilon \in (0, 1)$ and $\rr > 0$ for which \eqref{eq:001} holds, it holds with probability tending to one as $\varepsilon \to 0$, at a rate which is uniform in $\rr$, that for each $z \in \left(\frac1{100}\varepsilon^{1 + \nu}\rr\ZZ\right)^2 \cap (\rr U)$, there exists $r \in [\varepsilon^{1 + \nu}\rr, \varepsilon\rr] \cap \{\Lambda^k\}_{k \in \ZZ}$ such that the following are true:
    \begin{enumerate}
        \item\label{028A} $E_r(z)$ occurs. 
        \item\label{028B} We have
        \begin{equation*}
            D_\Upsilon(x, y) \le a^{-1}\kc_r, \quad \forall x, y \in B_{2r}(z) \cap \Upsilon \text{ with } x \xleftrightarrow{B_{2r}(z)} y. 
        \end{equation*}
    \end{enumerate}
\end{lemma}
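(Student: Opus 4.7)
The plan is to combine the single-scale estimate of \Cref{013} (which converts the hypothesis \eqref{eq:001} into $\BP[E_r(z)] \ge 1 - 2p$ at every ``good'' scale), the multi-scale near-independence across disjoint annuli from \Cref{263}, and the multi-scale diameter bound of \Cref{214}, then take a union bound over the grid of centers. The main structure mirrors that of \cite[Proposition~3.12]{ExUniLQG}, but the parameter cascade is intricate.

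First I would fix $\lambda \in (0, 1/10)$, so that the annuli $\{A_{r_j/2,2r_j}(z)\}$ are pairwise disjoint whenever consecutive radii have ratio at most $\lambda$, and fix $b \in (2/3, 1)$, so that two subsets of $\{1, \ldots, n\}$ of size at least $bn$ must intersect. Next I would fix a decay rate $\alpha_\star > 0$, to be taken large enough later to beat the grid union bound; this determines $p_\star \defeq p(\lambda, \alpha_\star, b) \in (0, 1)$ via \Cref{263} and $A_\star \defeq A(\lambda, \alpha_\star, b) > 0$ via \Cref{214}. Now I would choose $p = p(\mu, \nu, \Lambda) \in (0, 1)$ small enough that both $1 - 2p \ge p_\star$ and the constant $a_{\mathrm{loc}}(p)$ produced by \Cref{013} satisfies $a_{\mathrm{loc}}(p) \le 1/(4A_\star)$. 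The second inequality is achievable because the proof of \Cref{013} yields $a_{\mathrm{loc}}(p) \to 0$ as $p \to 0$: the various sub-parameters there (such as $\delta$ in \eqref{eq:330}) must be chosen arbitrarily small as $p$ shrinks. Set $a \defeq a_{\mathrm{loc}}(p)$; these are the constants claimed in \Cref{028}.

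Now fix $\alpha \in (0, a)$, a bounded $U \subset \CC$, $\rr > 0$, and $\varepsilon \in (0, 1)$ for which \eqref{eq:001} holds. Let $\mathcal{R}$ denote the set of good scales $r \in [\varepsilon^{1+\nu}\rr, \varepsilon\rr] \cap \{\Lambda^k\}_{k\in\ZZ}$, i.e., those with $\BP[\overline H_r(M, a, \alpha)] < p$; by hypothesis $|\mathcal{R}| \ge (\nu - \mu)\log_\Lambda(1/\varepsilon)$. Setting $K \defeq \lceil \log_\Lambda(1/\lambda) \rceil$, extract a subsequence $r_1 > \cdots > r_n$ from $\mathcal{R}$ with $r_{j+1}/r_j \le \lambda$ by taking every $K$-th element, so that $n \ge c_{\mu,\nu,\lambda}\log_\Lambda(1/\varepsilon)$ for some positive constant $c_{\mu,\nu,\lambda}$. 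Fix any $z \in \CC$. For each $j$, the event $E_{r_j}(z)$ is measurable with respect to $A_{r_j/2, 2r_j}(z) \cap \Upsilon$ by \Cref{009}, these annuli are pairwise disjoint, and $\BP[E_{r_j}(z)] \ge 1 - 2p \ge p_\star$ by \Cref{013}. Hence \Cref{263} together with \Cref{266} yields that, outside an event of probability $\le Ce^{-\alpha_\star n}$, at least $bn$ indices $j$ satisfy $E_{r_j}(z)$. Independently, \Cref{214} applied to the sequence $(2r_j)_{j=1}^n$ gives, outside an event of probability $\le Ce^{-\alpha_\star n}$, that at least $bn$ indices $j$ satisfy $D_\Upsilon(x, y; B_{4r_j}(z) \cap \Upsilon) \le A_\star \kc_{2r_j}$ for all $x, y \in B_{2r_j}(z) \cap \Upsilon$ with $x \xleftrightarrow{B_{2r_j}(z)} y$; by \eqref{eq:340} applied with $r = 2r_j$ and $\varepsilon = 1/2$ we have $\kc_{2r_j} \le 4\kc_{r_j}$, and using $4A_\star \le a^{-1}$ this implies condition~\eqref{028B} at $(z, r_j)$. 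Since $2b > 1$, pigeonhole produces some $j \in [1, n]_\ZZ$ for which both $E_{r_j}(z)$ and condition~\eqref{028B} hold, with total failure probability at most $2Ce^{-\alpha_\star n}$.

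Finally, I would take a union bound over $z \in (\tfrac{1}{100}\varepsilon^{1+\nu}\rr\ZZ)^2 \cap \rr U$, a set of cardinality at most $C_U\varepsilon^{-2(1+\nu)}$, giving overall failure probability $O(\varepsilon^{-2(1+\nu)}e^{-\alpha_\star n})$. Since $n \ge c_{\mu,\nu,\lambda}\log_\Lambda(1/\varepsilon)$, choosing $\alpha_\star$ at the outset so that $\alpha_\star c_{\mu,\nu,\lambda}/\log\Lambda > 2(1+\nu) + 1$ makes this polynomial in $\varepsilon$, tending to zero as $\varepsilon \to 0$, at a rate which is uniform in $\rr$ (because none of the single-scale probability bounds depend on $\rr$). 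The main obstacle is the parameter cascade: one must fix $\alpha_\star$ first (so that the exponential decay beats the polynomial union bound), then read off $p_\star$ and $A_\star$, and only then find a $p$ satisfying the two competing smallness conditions $1 - 2p \ge p_\star$ and $a_{\mathrm{loc}}(p) \le 1/(4A_\star)$ simultaneously. The latter requires knowing that $a_{\mathrm{loc}}(p) \to 0$ as $p \to 0$, which is essentially visible from the proof of \Cref{013} but needs to be asserted explicitly. Once this cascade is in place, the rest is a standard combination of \Cref{013}, \Cref{263}, and \Cref{214}.
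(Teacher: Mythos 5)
Your proposal is correct and follows exactly the route the paper takes (which is stated in one line: ``This follows immediately from \Cref{263,214,009,013}, together with a union bound''). Your fleshing-out of the parameter cascade is the right order: fix $\lambda$ and $b$, fix the decay exponent $\alpha_\star$ large enough to beat the $\varepsilon^{-2(1+\nu)}$ grid union bound (since the number of $\lambda$-separated scales in $\cR$ grows linearly in $\log_\Lambda(1/\varepsilon)$ with a constant depending only on $\mu,\nu,\lambda,\Lambda$), read off $p_\star$ and $A_\star$, then pick $p$ small and $a = a_{\mathrm{loc}}(p)$ from \Cref{013} small enough that $4A_\star \le a^{-1}$; one small simplification is that you do not really need $a_{\mathrm{loc}}(p)\to 0$ as $p\to 0$, since the constraints on $a$ in the proof of \Cref{013} are all one-sided upper bounds and $a$ can be shrunk at fixed $p$, and you should also allow for a small loss factor in the $\Upsilon$-versus-$\Gamma$ passage via \Cref{266} (so that the correct requirement is closer to $1 - 4p \ge p_\star$ than $1 - 2p \ge p_\star$), but both of these are harmless.
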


\begin{proof}
    This follows immediately from \Cref{263,214,009,013}, together with a union bound. 
\end{proof}

\begin{figure}[ht!]
    \centering
    \includegraphics[width=0.8\linewidth]{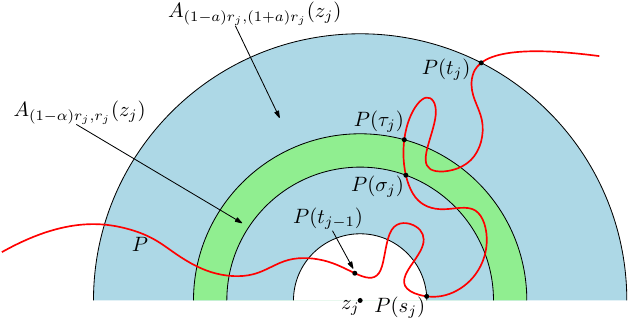}
    \caption{Illustration of the definition of the times $s_j$, $\sigma_j$, $\tau_j$, $t_j$ in the proof of \Cref{029}. $t_j$ is the first time after $t_{j - 1}$ at which $P$ hits $\partial B_{(1 + a)r_j}(z_j)$; $s_j$ is the last time before $t_j$ at which $P$ hits $\partial B_{(1 - a)r_j}(z_j)$; $\sigma_j$ is the last time before $t_j$ at which $P$ hits $\partial B_{(1 - \alpha)r_j}(z_j)$; $\tau_j$ is the first time after $\sigma_j$ at which $P$ hits $\partial B_{r_j}(z_j)$.}
    \label{fig:times}
\end{figure}

\begin{proof}[Proof of \Cref{029}]
    Let $p$ and $a$ be as in \Cref{028}. Fix $\alpha \in (0, a)$. Choose
    \begin{equation*}
        M^\prime \in \left(M^\ast - \alpha^4(M^\ast - M), M^\ast\right). 
    \end{equation*}
    It suffices to show that for each $b \in (0, 1)$, there exists $\varepsilon_\ast = \varepsilon_\ast(\mu, \nu, \Lambda, \alpha, M, b) \in (0, 1)$ such that for each $\varepsilon \in (0, \varepsilon_\ast]$ and $\rr > 0$ for which \eqref{eq:001} holds, it holds with probability greater than $1 - b$ that 
    \begin{equation}\label{eq:008}
        \widetilde D_\Upsilon(x, y)_\Upsilon < M^\prime D_\Upsilon(x, y), \quad \forall x, y \in \overline{B_{\rr}(0)} \cap \Upsilon \text{ with } \lvert x - y\vert \ge b\rr \text{ and } x \leftrightarrow y. 
    \end{equation}

    For $\varepsilon \in (0, 1)$, $\rr > 0$, and $R > 1$, write $F_\varepsilon^{\rr} = F_\varepsilon^{\rr}(R)$ for the event that the following are true:
    \begin{enumerate}
        \item\label{it:003} There exists a loop of $\Gamma$ in $A_{\rr,R\rr}(0)$ disconnecting the inner and outer boundaries of $A_{\rr,R\rr}(0)$. In particular, every $D_\Upsilon$-geodesic between two points of $\overline{B_{\rr}(0)} \cap \Upsilon$ is contained in $B_{R\rr}(0)$. 
        \item\label{it:004} The event of \Cref{028}, with $B_R(0)$ in place of $U$, occurs. 
        \item\label{it:034} We have
        \begin{equation*}
            D_\Upsilon(x, y) \ge \varepsilon^{1/2}\kc_{\rr}, \quad \forall x, y \in \overline{B_{\rr}(0)} \cap \Upsilon \text{ with } \lvert x - y\vert \ge b\rr \text{ and } x \leftrightarrow y.
        \end{equation*}
    \end{enumerate}
    We \emph{claim} that there exists $R = R(b) > 1$ and $\varepsilon_\ast = \varepsilon_\ast(\mu, \nu, \Lambda, \alpha, M, b, R) \in (0, 1)$ such that for each $\varepsilon \in (0, \varepsilon_\ast]$ and $\rr > 0$, we have $\BP\lbrack F_\varepsilon^{\rr}\rbrack > 1 - b$. Since the law of $\Upsilon$ is scaling invariant, it follows that there exists $R = R(b) > 1$ such that for each $\rr > 0$, condition~\eqref{it:003} holds with probability greater than $1 - b/3$. By \Cref{028}, there exists $\varepsilon_0 \in (0, 1)$ such that for each $\varepsilon \in (0, \varepsilon_0]$ and $\rr > 0$, condition~\eqref{it:004} holds with probability greater than $1 - b/3$. By \Cref{314}, there exists $\varepsilon_1 \in (0, 1)$ such that for each $\rr > 0$, condition~\eqref{it:034} holds with probability greater than $1 - b/3$. By setting $\varepsilon_\ast \defeq \varepsilon_0 \wedge \varepsilon_1$, we complete the proof of the \emph{claim}. 

    Henceforth assume that $F_\varepsilon^{\rr}$ occurs. Let $x, y \in \overline{B_{\rr}(0)} \cap \Upsilon$ with $\lvert x - y\vert \ge b\rr$ and $x \leftrightarrow y$. Let $P \colon [0, 1] \to \Upsilon$ be a $D_\Upsilon$-geodesic from $x$ to $y$. It follows from condition~\eqref{it:003} that $P$ is contained in $B_{R\rr}(0)$. Set $t_0 \defeq 0$ and choose $z_1 \in \left(\frac1{100}\varepsilon^{1 + \nu}\rr\ZZ\right)^2 \cap B_{R\rr}(0)$ and $r_1 \in [\varepsilon^{1 + \nu}\rr, \varepsilon\rr] \cap \{\Lambda^k\}_{k \in \ZZ}$ such that $x \in B_{r_1/2}(z_1)$ and conditions~\eqref{028A} and \eqref{028B} of \Cref{028} (with $(z_1, r_1)$ in place of $(z, r)$) hold. Then, inductively, for each $j \in \NN$, if $y \notin B_{(1 + a)r_j}(z_j)$, then set $t_j$ to be the first time after $t_{j - 1}$ at which $P$ hits $\partial B_{(1 + a)r_j}(z_j)$, and choose $z_{j + 1} \in \left(\frac1{100}\varepsilon^{1 + \nu}\rr\ZZ\right)^2 \cap B_{R\rr}(0)$ and $r_{j + 1} \in [\varepsilon^{1 + \nu}\rr, \varepsilon\rr] \cap \{\Lambda^k\}_{k \in \ZZ}$ such that $P(t_j) \in B_{r_{j + 1}/2}(z_{j + 1})$ and conditions~\eqref{028A} and \eqref{028B} in \Cref{028} (with $(z_{j + 1}, r_{j + 1})$ in place of $(z, r)$) hold; otherwise set $t_j \defeq 1$. Write $n \defeq \inf\{j \in \NN : t_j = 1\}$. For each $j = [1, n - 1]_\ZZ$, write
    \begin{itemize}
        \item $s_j$ for the last time before $t_j$ at which $P$ hits $\partial B_{(1 - a)r_j}(z_j)$;
        \item $\sigma_j$ for the last time before $t_j$ at which $P$ hits $\partial B_{(1 - \alpha)r_j}(z_j)$; 
        \item $\tau_j$ for the first time after $\sigma_j$ at which $P$ hits $\partial B_{r_j}(z_j)$. 
    \end{itemize}
    See \Cref{fig:times} for an illustration. Thus $t_{j - 1} < s_j < \sigma_j < \tau_j < t_j$ for all $j \in [1, n - 1]_\ZZ$. Fix $j \in [1, n - 1]_\ZZ$. It follows immediately from the above definitions that condition~\eqref{it:0012} is satisfied with $z_j$, $r_j$, $P|_{[s_j,t_j]}$, $\sigma_j$, $\tau_j$ in place of $z$, $r$, $P$, $s$, $t$, respectively. It follows immediately from condition~\eqref{it:002} (resp.~\eqref{it:300}) that condition~\eqref{it:0013} (resp.~\eqref{it:0011}) is satisfied with $z_j$, $r_j$, $P|_{[s_j,t_j]}$, $\sigma_j$, $\tau_j$ in place of $z$, $r$, $P$, $s$, $t$, respectively. It follows immediately from condition~\eqref{028B} in \Cref{028} that condition~\eqref{it:0010} is satisfied with $z_j$, $r_j$, $P|_{[s_j,t_j]}$, $\sigma_j$, $\tau_j$ in place of $z$, $r$, $P$, $s$, $t$, respectively. Thus, we conclude from condition~\eqref{it:001} that
    \begin{equation}\label{eq:009}
        \widetilde D_\Upsilon(P(\sigma_j), P(\tau_j)) \le M D_\Upsilon(P(\sigma_j), P(\tau_j)), \quad \forall j \in [1, n - 1]_\ZZ. 
    \end{equation}
    By condition~\eqref{it:002}, we have
    \begin{equation}\label{eq:332}
        D_\Upsilon(P(\sigma_j), P(\tau_j)) \ge \alpha^3\kc_{r_j}, \quad \forall j \in [1, n - 1]_\ZZ. 
    \end{equation}
    By condition~\eqref{028B} of \Cref{028}, we have
    \begin{equation}\label{eq:010}
        D_\Upsilon(P(t_{j - 1}), P(t_j)) \le \alpha^{-1}\kc_{r_j}, \quad \forall j \in [1, n]_\ZZ.
    \end{equation}
    Combining \eqref{eq:332} and \eqref{eq:010}, we obtain that
    \begin{equation}\label{eq:333}
        D_\Upsilon(P(\sigma_j), P(\tau_j)) \ge \alpha^4 D_\Upsilon(P(t_{j - 1}), P(t_j)), \quad \forall j \in [1, n - 1]_\ZZ. 
    \end{equation}
    By \eqref{eq:340}, condition~\eqref{it:034}, and \eqref{eq:010}, we have
    \begin{equation}\label{eq:334}
        D_\Upsilon(P(t_{n - 1}), P(t_n)) \le \alpha^{-1}\kc_{r_n} \le \alpha^{-1}\kK\varepsilon\kc_{\rr} \le \alpha^{-1}\kK\varepsilon^{1/2} D_\Upsilon(x, y). 
    \end{equation}
    Combining \eqref{eq:009}, \eqref{eq:333}, and \eqref{eq:334}, we obtain that
    \begin{align*}
        \widetilde D_\Upsilon(x, y) &\le \sum_{j = 1}^{n - 1} \left(\widetilde D_\Upsilon(P(t_{j - 1}), P(\sigma_j)) + \widetilde D_\Upsilon(P(\sigma_j), P(\tau_j)) + \widetilde D_\Upsilon(P(\tau_j), P(t_j))\right) \\
        &+ \widetilde D_\Upsilon(P(t_{n - 1}), P(t_n)) \\
        &\le \sum_{j = 1}^{n - 1} \left(M^\ast D_\Upsilon(P(t_{j - 1}), P(\sigma_j)) + M D_\Upsilon(P(\sigma_j), P(\tau_j)) + M^\ast D_\Upsilon(P(\tau_j), P(t_j))\right) \\
        &+ M^\ast D_\Upsilon(P(t_{n - 1}), P(t_n)) \\
        &= \sum_{j = 1}^{n - 1} \left(M^\ast D_\Upsilon(P(t_{j - 1}), P(t_j)) - (M^\ast - M) D_\Upsilon(P(\sigma_j), P(\tau_j))\right) + M^\ast D_\Upsilon(P(t_{n - 1}), P(t_n)) \\
        &\le \left(M^\ast - \alpha^4(M^\ast - M)\right)\sum_{j = 1}^{n - 1} D_\Upsilon(P(t_{j - 1}), P(t_j)) + M^\ast D_\Upsilon(P(t_{n - 1}), P(t_n)) \\
        &\le \left(M^\ast - \alpha^4(M^\ast - M)\right)D_\Upsilon(x, y) + M^\ast D_\Upsilon(P(t_{n - 1}), P(t_n)) \\
        &\le \left(M^\ast - \alpha^4(M^\ast - M) + \alpha^{-1}\kK\varepsilon^{1/2}\right)D_\Upsilon(x, y). 
    \end{align*}
    Recall that $M^\prime > M^\ast - \alpha^4(M^\ast - M)$. Thus, we conclude that, if $F_\varepsilon^{\rr}$ occurs and $\varepsilon$ is sufficiently small so that $M^\ast - \alpha^4(M^\ast - M) + \alpha^{-1}\kK\varepsilon^{1/2} \le M^\prime$, then \eqref{eq:008} holds. Combining this with the \emph{claim} of the preceding paragraph, we complete the proof.  
\end{proof}

\section{Preliminary constructions}\label{s:00}

\subsection{Setup and outline}\label{ss:14}

The goal of this section is to construct events that satisfy certain conditions. Then, in \Cref{s:02}, we will use these objects to prove \Cref{000}. Fix two weak geodesic $\CLE_\kappa$ carpet metrics $D$ and $\widetilde D$ with the same scaling constants $\{\kc_r\}_{r > 0}$. 

Let $M_\ast$ and $M^\ast$ be as in \eqref{eq:321} and \eqref{eq:322}, respectively. Our ultimate goal is to prove that $M_\ast = M^\ast$. Suppose by way of eventual contradiction that $M_\ast < M^\ast$. To state the conditions which our events need to satisfy, we shall write 
\begin{equation*}
    M_1 \defeq \left(2M_\ast + M^\ast\right)/3 \quad \text{and} \quad M_2 \defeq \left(M_\ast + 2M^\ast\right)/3. 
\end{equation*}
Fix a sufficiently small deterministic constant $\lambda > 0$, in a manner depending only on $M_\ast$ and $M^\ast$, whose particular value is unimportant. Fix parameters 
\begin{equation}\label{eq:327}
    \pp \in (0, 1), \quad \fA > 0, \quad 0 < \mu < \nu, \quad \Lambda \in \NN
\end{equation}
to be chosen later (cf.~\Cref{223}), in a manner depending only on $D$ and $\widetilde D$. (In particular, $\pp$ will be chosen to be close to one, $\fA$, $\nu$, and $\Lambda$ will be chosen to be large, and $\mu$ will be chosen to be close to $\nu$.) Let $\underline p = p(\mu, \nu, \Lambda)$ and $a = a(\mu, \nu, \Lambda)$ be as in \Cref{030}. 

In the present section, we construct a set of admissible radii $\SR \subset \{\Lambda^k\}_{k \in \ZZ}$ and, for each $r \in \rho^{-1}\SR$ (where $\rho \in (0, 1)$ is a deterministic constant), 
\begin{itemize}
    \item events $\fE_r$ and $\fF_r$, 
    \item random open subsets $V_r \subset U_r \subset A_{2r,4r}(0)$, and
    \item a resampling $\Upsilon_r$ of $\Upsilon$
\end{itemize} 
that roughly speaking satisfy the following conditions:
\begin{itemize}
    \item The event $\fE_r$ is almost surely determined by $A_{r,5r}(0) \cap \Upsilon$, and $\BP\lbrack\fE_r\rbrack \ge \pp$. 
    \item The number of possibilities for $(U_r, V_r)$ is finite.
    \item Let $(U_r^\prime, V_r^\prime)$ be sampled uniformly from all the possibilities for $(U_r, V_r)$, independently of everything else. Write $\Gamma(\CC \setminus \overline{V_r^\prime}, \CC \setminus U_r^\prime)$ for the union of the collection of the $(\CC \setminus \overline{V_r^\prime}, \CC \setminus U_r^\prime)$-excursions of $\Gamma$ and the collection of the loops of $\Gamma$ that are contained in $\CC \setminus \overline{V_r^\prime}$ and intersect with $\CC \setminus U_r^\prime$. Then $\Upsilon_r$ is a conditionally independent copy of $\Upsilon$ given $\Gamma(\CC \setminus \overline{V_r^\prime}, \CC \setminus U_r^\prime)$ and $\wp$. The purpose of the use of $(U_r^\prime, V_r^\prime)$ is to ensure that $\Upsilon_r$ also has the law of a whole-plane nested $\CLE_\kappa$. We will ultimately only be interested in the case where $(U_r, V_r) = (U_r^\prime, V_r^\prime)$.
    \item There is a deterministic constant $\fa_9 \in (0, 1)$ such that $\BP\lbrack\fF_r \mid \fE_r, \ \Upsilon\rbrack \ge \fa_9$ almost surely. 
    \item Write $\Gamma_r$ for the whole-plane nested $\CLE_\kappa$ corresponding to $\Upsilon_r$. Then, on the event $\fE_r \cap \fF_r$, there is a long narrow ``tube'' between loops of $\Gamma_r$ such that if $P_r$ is a $D_{\Upsilon_r}$-geodesic that passes though the ``tube'', then there are times $\sigma < \tau$ such that 
    \begin{equation}\label{eq:093}
        P_r([\sigma, \tau]) \subset B_{4r}(0) \cap \Upsilon_r \quad \text{and} \quad \widetilde D_{\Upsilon_r}(P_r(\sigma), P_r(\tau)) \le M_2 D_{\Upsilon_r}(P_r(\sigma), P_r(\tau)). 
    \end{equation}
\end{itemize}

The existence of these objects will be established through a detailed and explicit construction. To provide some intuition, we now give a rough overview of how this construction works; see \Cref{fig:section7} for an illustration. We begin by considering a large collection of ``test balls'' within $A_{2r,4r}(0)$. Inside each of these test balls, there is a uniformly positive probability that one can find a ``good'' pair of points $u, v \in \Upsilon$ with $u \leftrightarrow v$ that satisfy the following conditions:
\begin{itemize}
    \item $\widetilde D_\Upsilon(u, v) \le M_1 D_\Upsilon(u, v)$. 
    \item $\lvert u - v\rvert$ is bounded below by a constant times $r$. 
    \item There is a $\widetilde D_\Upsilon$-geodesic from $u$ to $v$ that is contained in this ``test ball'', and there are two connected arcs of $\Gamma$ --- an ``inner'' one and an ``outer'' one --- such that this $\widetilde D_\Upsilon$-geodesic lies between them. 
\end{itemize}
The event $\fE_r$ will correspond, roughly speaking, to the event that many of the ``test balls'' succeed. The random open subset $U_r$ will then be the union of long narrow ``tubes'' that link the ``test balls'' that succeed, and $V_r$ will be a slightly smaller open subset of $U_r$. (Thus, the number of possibilities for $(U_r, V_r)$ is finite.) Moreover, the ``good'' pairs of points described above, together with the $\widetilde D_\Upsilon$-geodesics connecting them, will not intersect $U_r$. The event $\fF_r$ will correspond, roughly speaking, to the event that the following are true:
\begin{itemize}
    \item We have $(U_r, V_r) = (U_r^\prime, V_r^\prime)$. In this case, $\Upsilon_r$ is a resampling of $\Upsilon$ within $U_r$, i.e., $(\CC \setminus \overline{U_r}) \cap \Upsilon_r = (\CC \setminus \overline{U_r}) \cap \Upsilon$. 
    \item The ``inner'' connected arcs of $\Gamma$ mentioned above are linked together to form a single loop $\SCL_r$ of $\Gamma_r$. The ``outer'' connected arcs of $\Gamma$ mentioned above are linked together to form two connected arcs $\gamma_r$ and $\gamma_r^\prime$ of $\Gamma_r$. Thus, the ``good'' pairs of points described above lie in the long narrow ``tube'' between $\SCL_r$, $\gamma_r$, and $\gamma_r^\prime$. 
    \item The $D_{\Upsilon_r}$-distances inside the intersection of the ``tube'' between $\SCL_r$, $\gamma_r$, and $\gamma_r^\prime$ with each connected component of $U_r$ will be very small.  
\end{itemize}
The existence of the times $\sigma$ and $\tau$ of \eqref{eq:093} will be an immediate consequence of the definitions of $\fE_r$ and $\fF_r$.

\begin{figure}[ht!]
    \centering
    \includegraphics[width=0.8\linewidth]{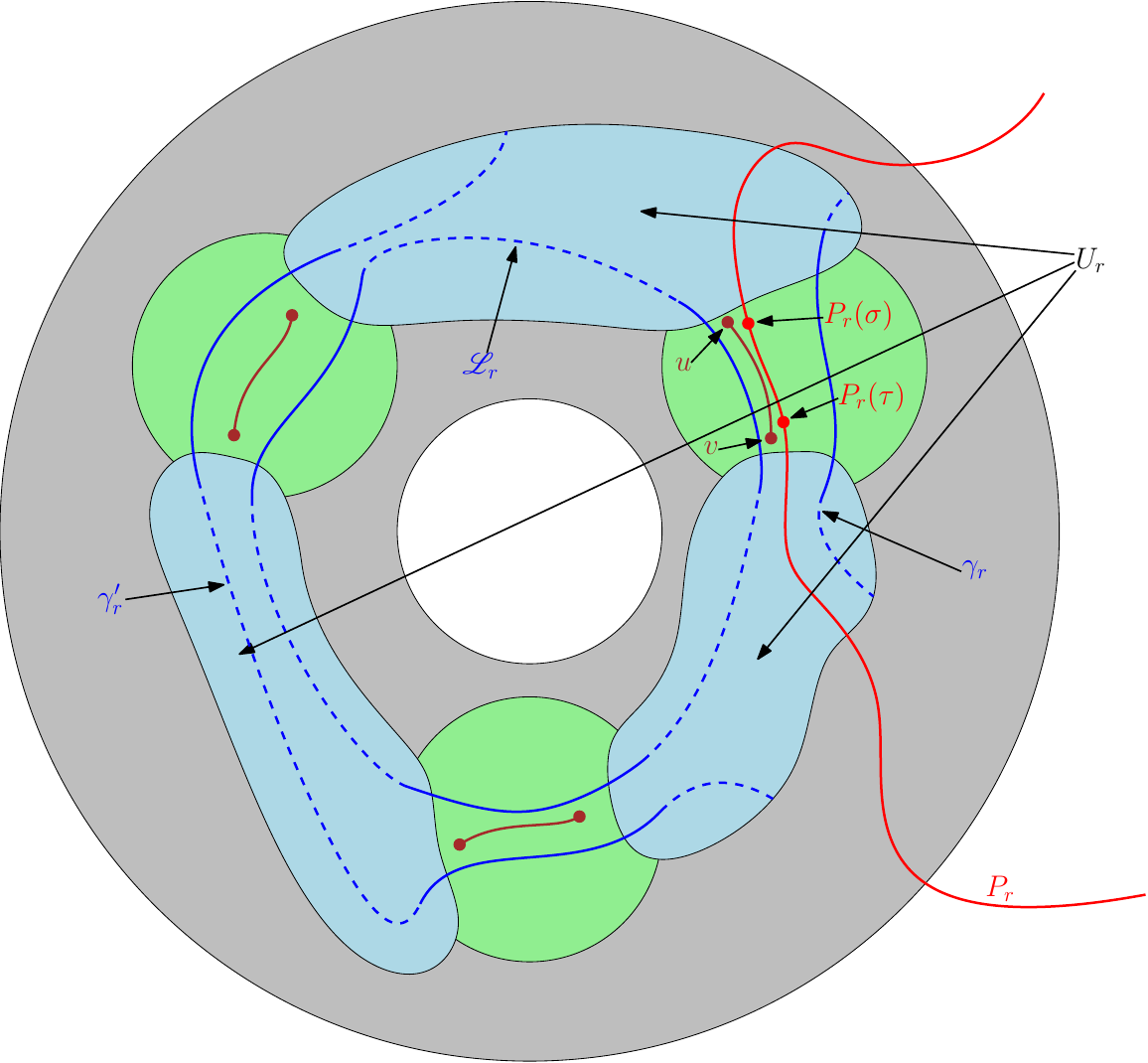}
    \caption{Illustration of the objects constructed in this section. The light green balls are the ``test balls'' that succeed, i.e., each of the light green balls contains a ``good'' pair of points $u$ and $v$, together with the $\widetilde D_\Upsilon$-geodesic connecting them (brown), and two connected arcs of $\Gamma$ (blue and solid). These connected arcs of $\Gamma$ are ``rewired'' in $\Gamma_r$ (blue and dashed) to form $\SCL_r$, $\gamma_r$, and $\gamma_r^\prime$. The red path $P_r$ is a $D_{\Upsilon_r}$-geodesic that passes though the ``tube'' between $\SCL_r$, $\gamma_r$, and $\gamma_r^\prime$. The times $\sigma$ and $\tau$ of \eqref{eq:093} are so that $P_r(\sigma)$ is close to $u$ and $P_r(\tau)$ is close to $v$.}
    \label{fig:section7}
\end{figure}

The main ideas in this section are broadly similar to those in \cite[Section~5]{ExUniLQG} and \cite[Section~5]{UniCriSupercriLQGMet}, which carry out an analogous construction for the Liouville quantum gravity (LQG) metric. However, there are two key differences. First, since a $D_{\Upsilon_r}$-geodesic cannot cross the loops of $\Gamma_r$, once it enters the ``tube'' bounded by $\SCL_r$, $\gamma_r$, and $\gamma_r'$, it must remain within this tube. In the LQG setting, by contrast, additional work is needed to show that a geodesic entering the tube will spend a long time there. Second, the LQG metric enjoys the Weyl scaling property, which precisely describes how the metric changes when a smooth function is added to the underlying field. This allows one to subtract a large bump function supported on the tube to ``attract'' the geodesic into it. In our case, the absence of the Weyl scaling property introduces significant new challenges. Roughly speaking, our method of attracting the geodesic relies instead on removing Brownian loops from the underlying Brownian loop-soup (cf.~\Cref{010}, Axiom~\eqref{010E} (monotonicity)).

We now provide a more detailed overview of our construction.

In \Cref{ss:05}, we will introduce an event involving a single ``good'' pair of points $u$ and $v$. A precise definition is given in \Cref{099}, and an illustration is provided in \Cref{fig:G}. Specifically, we require that $u \in \partial B_{(1 - \alpha)r}(0) \cap \Upsilon$, $v \in \partial B_r(0) \cap \Upsilon$, $u \leftrightarrow v$, $\widetilde D_\Upsilon(u, v) \le M_1 D_\Upsilon(u, v)$, and that the $\widetilde D_\Upsilon$-geodesic from $u$ to $v$ is contained in $\fH_r \cap \overline{A_{(1 - \alpha)r,r}(0)} \cap \Upsilon$, where $\fH_r \subset A_{(1 - a)r,(1 + a)r}(0)$ is a \emph{deterministic} half-annulus. We also require that there exist two connected arcs of $\Gamma$ in $\fH_r$ joining $\partial B_{\alpha r}(\fx_r) \cap \partial B_{(1 - a)r}(0)$ to $\partial B_{\alpha r}(\fy_r) \cap \partial B_{(1 + a)r}(0)$ such that the $\widetilde D_\Upsilon$-geodesic from $u$ to $v$ lies between them, where $\fx_r \in \partial B_{(1 - a)r}(0)$ and $\fy_r \in \partial B_{(1 + a)r}(0)$ are \emph{deterministic}. This deterministic condition is imposed so that, later, when we define the long narrow ``tube'' $U_{\rho^{-1}r}$ with $B_{\alpha r}(\fx_r)$ and $B_{\alpha r}(\fy_r)$ serving as the ``entry'' and ``exit'' of $U_{\rho^{-1}r}$, respectively, the number of possible choices for $U_{\rho^{-1}r}$ remains finite. These two connected arcs of $\Gamma$ will be obtained using the coupling between $\CLE_\kappa$ and the Brownian loop-soup by adding Brownian loops. Furthermore, we require that the ``tube'' between these arcs has ``bottlenecks'' at $u$ and $v$, which will allow us to upper-bound the $\widetilde D_\Upsilon$-distance from a $D_\Upsilon$-geodesic to $u$ (resp.~$v$), once we have forced the geodesic to pass through this ``tube''. 

This event is closely related to the event $\underline H_r(M_1, a, \alpha)$ defined in \Cref{003}. Using the results from \Cref{s:03}, we will show that for each
\begin{equation*}
    r \in \SR \defeq \left\{r \in \{\Lambda^k\}_{k \in \ZZ} : \BP\lbrack\underline H_r(M_1, a, \alpha)\rbrack \ge p\right\},
\end{equation*}
the probability of this event is bounded below by a deterministic constant $\fp \in (0, 1)$. We then define $\fG_{z,r}$, for $z \in \CC$ and $r \in \SR$, to be the translated version of the event in \Cref{099}, so that we work with annuli centered at $z$ rather than at the origin (cf.~\Cref{097}).

In \Cref{ss:15}, we will give a careful definition of the tubes that we will force the geodesic to pass through.

In \Cref{ss:06}, we will introduce several positive parameters 
\begin{equation*}
    1 \gg \fa_1 \gg \fa_2 \gg \fa_3 \gg \fa_4 \gg \fa_5 \gg \fa_6 \gg \fa_7 \gg \fa_8 \gg \fa_9,
\end{equation*}
to be chosen later, including the parameter $\rho \defeq \fa_6 \in (0, 1)$ mentioned above. We then define the random open subsets $U_r$ and $V_r$, the resampling $\Upsilon_r$ of $\Upsilon$, and the events $\fE_r$ and $\fF_r$ for $r \in \rho^{-1}\SR$ in terms of these parameters. More precisely:
\begin{itemize}
    \item The set $\fZ_r \subset \partial B_{3r}(0)$ (the collection of ``test points'') will be a deterministic finite subset, and
    \begin{equation*}
        Z_r \defeq \{z \in \fZ_r : \fG_{z,\rho r} \text{ occurs}\}. 
    \end{equation*}
    \item The open subset $V_r$ will be defined as the union of finitely many long, narrow ``tubes'' linking the half-annuli $\fH_{z,\rho r}$ for $z \in Z_r$ into an annular region, in a way that is almost surely determined by $Z_r$. The set $U_r$ will be a small Euclidean neighborhood of $V_r$. The sets $U_r$ and $V_r$ are chosen so that, for each $z \in Z_r$, the deterministic balls $B_{\alpha\rho r}(\fx_{z,\rho r})$ and $B_{\alpha\rho r}(\fy_{z,\rho r})$ introduced earlier are contained in $V_r$, while the points $u, v \in \fH_{z,\rho r}$ and the $\widetilde D_\Upsilon$-geodesic connecting them remain disjoint from $U_r$.
    \item Let $(U_r^\prime, V_r^\prime)$ be sampled uniformly from all possible pairs $(U_r, V_r)$, independently of everything else. Write $\Gamma(\CC \setminus \overline{V_r^\prime}, \CC \setminus U_r^\prime)$ for the union of the $(\CC \setminus \overline{V_r^\prime}, \CC \setminus U_r^\prime)$-excursions of $\Gamma$ and the loops of $\Gamma$ contained in $\CC \setminus \overline{V_r^\prime}$ that intersect $\CC \setminus U_r^\prime$. Then $\Upsilon_r$ is defined as a conditionally independent copy of $\Upsilon$ given $\Gamma(\CC \setminus \overline{V_r^\prime}, \CC \setminus U_r^\prime)$ and $\wp$. The use of $(U_r^\prime, V_r^\prime)$ ensures that $\Upsilon_r$ also has the law of a whole-plane nested $\CLE_\kappa$. Ultimately, we will focus on the case $(U_r, V_r) = (U_r^\prime, V_r^\prime)$.
    \item The event $\fE_r$ will include the condition that $\fG_{z,\rho r}$ occurs for many points $z \in \fZ_r$, along with several additional regularity requirements. The event $\fF_r$ will include the condition $(U_r, V_r) = (U_r^\prime, V_r^\prime)$, a condition on the link pattern induced by the complementary $(\CC \setminus \overline{V_r}, \CC \setminus U_r)$-excursions of $\Gamma_r$ (see the blue and dashed arcs in \Cref{fig:section7}), and a condition roughly stating that the $D_{\Upsilon_r}$-distances inside the ``tubes'' between these blue and dashed arcs are very small. The event $\fE_r$ will also require that $\BP[\fF_r \mid \fE_r,\, \Upsilon] \ge \fa_9$. Note that $\fE_r$ depends only on $\Upsilon$, while $\fF_r$ depends on $\Upsilon$, $\Upsilon_r$, and $(U_r^\prime, V_r^\prime)$. 
\end{itemize}

In \Cref{ss:08}, we will show that we can choose the parameters of \Cref{ss:06} in such a way that $\BP\lbrack\fE_r\rbrack \ge \pp$. In \Cref{ss:07}, we will check that our objects satisfy \eqref{eq:093}. 

\subsection{Existence of a shortcut with positive probability}\label{ss:05}

In this subsection, we will prove that for each $r \in \SR$ (cf.~\eqref{eq:003}), there is a uniformly positive probability that a ``good'' pair of points $u, v \in \overline{B_r(0)} \cap \Upsilon$ exists with $u \leftrightarrow v$, such that $\widetilde D_\Upsilon(u, v) \le M_1 D_\Upsilon(u, v)$ and certain additional regularity conditions are satisfied. In later subsections, we will use the long-range independence property of CLE (cf.~\Cref{265}) to show that, with high probability, many such pairs of points occur within an annulus at a larger scale.

We shall refer to as a \emph{half-annulus} an open subset of the form
\begin{multline*}
    \left\{x \in A_{s,t}(z) : \Re(x) > \Re(z)\right\}, \quad \left\{x \in A_{s,t}(z) : \Im(x) > \Im(z)\right\}, \quad \left\{x \in A_{s,t}(z) : \Re(x) < \Re(z)\right\}, \\
    \text{or} \quad \left\{x \in A_{s,t}(z) : \Im(x) < \Im(z)\right\}, 
\end{multline*}
where $z \in \CC$ and $0 < s < t$.

Recall that we let $\underline p = p(\mu, \nu, \Lambda)$ and $a = a(\mu, \nu, \Lambda)$ be as in \Cref{030}.

\begin{lemma}\label{099}
    There exists $\alpha \in (0, \lambda a]$ and $\fp \in (0, 1)$ such that for each 
    \begin{equation}\label{eq:003}
        r \in \SR \defeq \left\{r \in \{\Lambda^k\}_{k \in \ZZ} : \BP\lbrack\underline H_r(M_1, a, \alpha)\rbrack \ge p\right\},
    \end{equation}
    there are
    \begin{itemize}
        \item a deterministic half-annulus $\fH_r$ of inner (resp.~outer) radius $(1 - a)r$ (resp.~$(1 + a)r$) centered at $0$, and
        \item deterministic points 
        \begin{equation}\label{eq:273}
            \fx_r \in \left\{(1 - a)r\re^{\ri k} : k \in \lambda\alpha\ZZ\right\} \cap \partial\fH_r \quad \text{and} \quad \fy_r \in \left\{(1 + a)r\re^{\ri k} : k \in \lambda\alpha\ZZ\right\} \cap \partial\fH_r
        \end{equation}
    \end{itemize}
    such that it holds with probability at least $\fp$ that there exists $u \in \fH_r \cap \partial B_{(1 - \alpha)r}(0) \cap \Upsilon$ and $v \in \fH_r \cap \partial B_r(0) \cap \Upsilon$ satisfying the following conditions:
    \begin{enumerate}
        \item\label{099C} There is a $\widetilde D_\Upsilon$-geodesic $\widetilde P$ from $u$ to $v$ that is contained in $\fH_r \cap \overline{A_{(1 - \alpha)r,r}(0)} \cap \Upsilon$.
        \item\label{099E} $\widetilde D_\Upsilon(u, v) \ge \alpha^3\kc_r$ and $\widetilde D_\Upsilon(u, v) \le M_1 D_\Upsilon(u, v)$.
        \item\label{099A} There are two connected arcs $\eta_\rL$ and $\eta_\rR$ of $\Gamma$ in $\fH_r$ from $B_{\alpha r}(\fx_r) \cap \partial B_{(1 - a)r}(0)$ to $B_{\alpha r}(\fy_r) \cap \partial B_{(1 + a)r}(0)$ such that
        \begin{itemize}
            \item $\eta_\rL$ lies to the left of $\eta_\rR$, 
            \item there is no other connected arc of $\Gamma$ in $\fH_r$ from $B_{\alpha r}(\fx_r) \cap \partial B_{(1 - a)r}(0)$ to $B_{\alpha r}(\fy_r) \cap \partial B_{(1 + a)r}(0)$ that lies between $\eta_\rL$ and $\eta_\rR$, 
            \item the right side of $\eta_\rL$ and the left side of $\eta_\rR$ are contained in $\Upsilon$, and
            \item $\widetilde P$ lies between $\eta_\rL$ and $\eta_\rR$.
        \end{itemize}
        \item\label{099B} Write $O_r$ for the connected component of $\fH_r \setminus (\eta_\rL \cup \eta_\rR)$ containing $\widetilde P$. Then 
        \begin{equation*}
            \widetilde D_\Upsilon(\partial B_{(1 - a)r}(0) \cap \partial O_r \cap \Upsilon, \partial B_{(1 + a)r}(0) \cap \partial O_r \cap \Upsilon; O_r \cap \Upsilon) \le 2a^{-1}\kc_r. 
        \end{equation*}
        \item\label{099G} There are paths $P_s$ and $P_t$ in $\overline{\fH_r} \cap \Upsilon$ from $\eta_\rL$ to $\eta_\rR$ that have $\widetilde D_\Upsilon$-lengths at most $\lambda\alpha^3\kc_r$, and there are times $0 < s < t < 1$ such that $\widetilde P(s) \in P_s$, $\widetilde P(t) \in P_t$, and $\widetilde D_\Upsilon(u, \widetilde P(s)) \vee \widetilde D_\Upsilon(\widetilde P(t), v) \le \lambda\alpha^3\kc_r$.
        \item\label{099H} We have $D_\Upsilon(u, v) \le \lambda D_\Upsilon(u, \partial A_{(1 - a/2)r,(1 + a/2)r}(0) \cap \Upsilon)$.
    \end{enumerate}
\end{lemma}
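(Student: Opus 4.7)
The plan is to start from the event $\underline H_r(M_1, a, \alpha)$, which holds with probability at least $p$ by the definition of $\SR$, and to enhance it through a succession of positive-probability conditionings on the $\CLE_\kappa$. By \Cref{003}, on this event there is a $\widetilde D_\Upsilon$-geodesic $\widetilde P \colon [0,1] \to \overline{A_{(1-a)r,(1+a)r}(0)} \cap \Upsilon$ of $\widetilde D_\Upsilon$-length at most $a^{-1}\kc_r$ and Euclidean diameter at most $r/100$, together with times $0 < s < t < 1$ such that $u \defeq \widetilde P(s) \in \partial B_{(1-\alpha)r}(0) \cap \Upsilon$, $v \defeq \widetilde P(t) \in \partial B_r(0) \cap \Upsilon$, $\widetilde P|_{[s,t]} \subset \overline{A_{(1-\alpha)r,r}(0)} \cap \Upsilon$, and $\alpha^3\kc_r \le \widetilde D_\Upsilon(u,v) \le M_1 D_\Upsilon(u,v)$. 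By continuity of $\widetilde D_\Upsilon$-length along $\widetilde P$ I may shrink $[s,t]$ to further arrange $\widetilde D_\Upsilon(u,v) \le 2\alpha^3\kc_r$. Since $\widetilde P$ has Euclidean diameter at most $r/100$, a pigeonhole argument over the four orientations of the half-annulus and the $\lambda\alpha$-spaced boundary grids yields a deterministic choice of $(\fH_r, \fx_r, \fy_r)$ for which, with probability at least $p/C_1$ (where $C_1 = C_1(\lambda, a, \alpha)$), we have $\widetilde P \subset \fH_r$, $\widetilde P(0) \in B_{\alpha r}(\fx_r)$, and $\widetilde P(1) \in B_{\alpha r}(\fy_r)$. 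Conditions~\eqref{099C} and~\eqref{099E} then hold with $\widetilde P|_{[s,t]}$ serving as the required geodesic from $u$ to $v$.

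To produce the bracketing arcs of Condition~\eqref{099A} I plan to condition on the $\CLE_\kappa$ configuration inside a thin Euclidean neighborhood $N$ of $\widetilde P$; by Axiom~\eqref{010B} this information determines $\widetilde P$ and both metrics along it and hence the event of the previous paragraph. Applying \Cref{252} with $V = \fH_r$ and $K$ a connected compact subset containing $\overline N$ together with $B_{\alpha r}(\fx_r) \cap \partial B_{(1-a)r}(0)$ and $B_{\alpha r}(\fy_r) \cap \partial B_{(1+a)r}(0)$, the complementary excursions of $\Gamma$ in $\fH_r \setminus K$ form a multichordal $\CLE_\kappa$ for which every admissible link pattern occurs with positive probability by \Cref{012}. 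I condition on the pattern having two excursions that cross $\fH_r$ from one boundary ball to the other with $\widetilde P$ lying between them and no further such excursion in between; concatenation with pieces of $\Gamma$ inside $K$ produces the required arcs $\eta_\rL, \eta_\rR$. Since $\wp$ is an independent uniform $\ZZ/2\ZZ$-label, a further factor of $1/2$ arranges the right side of $\eta_\rL$ and the left side of $\eta_\rR$ to lie in $\Upsilon$. Condition~\eqref{099B} is then immediate since $\widetilde P \subset O_r \cap \Upsilon$ crosses $O_r$ with $\widetilde D_\Upsilon$-length at most $a^{-1}\kc_r$.

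For the bottlenecks of Condition~\eqref{099G} I condition on further local $\CLE_\kappa$ structure inside two small Euclidean balls around $u$ and $v$ of radius proportional to $\lambda\alpha^3 r$; with uniformly positive probability (via \Cref{252} and the Brownian loop-soup construction of $\CLE_\kappa$) the configuration contains, inside each such ball, a chain of small loops whose cluster within $\Upsilon$ provides a connecting path $P_s$ (resp.~$P_t$) from $\eta_\rL$ to $\eta_\rR$. The $\widetilde D_\Upsilon$-length of such a path is controlled by \Cref{229} applied at this smaller scale together with the scaling bound \eqref{eq:340}, yielding a length at most $\lambda\alpha^3\kc_r$. For Condition~\eqref{099H}, note that $u$ lies at Euclidean distance at least $(a/2-\alpha)r \gtrsim ar$ from $\partial A_{(1-a/2)r,(1+a/2)r}(0)$, so \Cref{309} gives $D_\Upsilon(u, \partial A_{(1-a/2)r,(1+a/2)r}(0) \cap \Upsilon) \ge c_0\kc_r$ with uniformly positive probability; combined with $D_\Upsilon(u,v) \le M_\ast^{-1}\widetilde D_\Upsilon(u,v) \le 2M_\ast^{-1}\alpha^3\kc_r$, this gives the desired inequality once $\alpha$ is chosen small enough that $2M_\ast^{-1}\alpha^3 \le \lambda c_0$. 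The main obstacle will be to arrange these four conditionings so that they act on essentially independent portions of $\Gamma$, via \Cref{252,263,265}, so that the positive-probability estimates multiply to give a uniform lower bound $\fp > 0$ across $r \in \SR$.
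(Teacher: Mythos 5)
Your setup (starting from $\underline H_r(M_1,a,\alpha)$ and pigeonholing over finitely many choices of $(\fH_r,\fx_r,\fy_r)$) matches the paper, but the core mechanism you propose for producing $\eta_\rL$ and $\eta_\rR$ does not work as stated, and it is here that the paper does something genuinely different. You want to condition on the configuration in a thin neighborhood $N$ of $\widetilde P$ and then apply \Cref{252} and \Cref{012} with a compact set $K\supset \overline N$; but \Cref{252} requires $K$ to be \emph{deterministic}, whereas your $K$ is built around the random path $\widetilde P$, and after conditioning on the metric data near $\widetilde P$ the remaining randomness is no longer a multichordal CLE to which \Cref{012} applies. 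Moreover, conditioning on a link pattern only rearranges the existing excursion endpoints — it does not create arcs crossing $\fH_r$ between $B_{\alpha r}(\fx_r)$ and $B_{\alpha r}(\fy_r)$ if none are suitably positioned — and, crucially, resampling $\Gamma$ outside $N$ can \emph{shorten} $D_\Upsilon(u,v)$ or create a better $\widetilde D_\Upsilon$-competitor, destroying both the geodesic property of $\widetilde P|_{[s,t]}$ and the ratio bound $\widetilde D_\Upsilon(u,v)\le M_1 D_\Upsilon(u,v)$. The paper instead \emph{adds} two Brownian loops $\ell_\rL,\ell_\rR$, sampled from the sets $A_\rL, A_\rR$ of loops that lie on the prescribed sides of (a deformation of) $\widetilde P$, avoid $B_{\varepsilon r}(\widetilde P)$, and hit all four of $\partial B_{\alpha r}(\fx_r)$, $\partial B_{\alpha r}(\fy_r)$, $\partial B_{\delta r}(u)$, $\partial B_{\delta r}(v)$. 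By Mecke's formula the augmented loop soup is absolutely continuous with bounded density with respect to the original on the event $\mu^{\mathrm{loop}}(A_\rL)\wedge\mu^{\mathrm{loop}}(A_\rR)>\varepsilon$, and — this is the point your plan lacks — adding loops only shrinks the carpet, so by Axiom~\eqref{010E} (monotonicity) $D_\Upsilon(u,v)$ can only increase while $\widetilde D_\Upsilon(u,v)$ stays pinned at $\len(\widetilde P|_{[s,t]};\widetilde D_\Upsilon)$; hence conditions~\eqref{099C} and \eqref{099E} survive the surgery. Requiring the added loops to hit $\partial B_{\delta r}(u)$ and $\partial B_{\delta r}(v)$ also fixes the coordination problem in your treatment of condition~\eqref{099G}: your arcs $\eta_\rL,\eta_\rR$ are not guaranteed to come near $u$ and $v$, so a path built inside a small ball around $u$ cannot be guaranteed to join them. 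In the paper, $P_s$ and $P_t$ are not freshly constructed but are the pieces of pre-existing separating paths $P_u\subset A_{\delta r,\delta^\omega r}(u)$, $P_v\subset A_{\delta r,\delta^\omega r}(v)$ (supplied by a high-probability arm/separation event) cut by $\ell_\rL\cup\ell_\rR$.

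Two further steps are incorrect or unjustified. First, you cannot ``shrink $[s,t]$'' to arrange $\widetilde D_\Upsilon(u,v)\le 2\alpha^3\kc_r$: the constraints $u\in\partial B_{(1-\alpha)r}(0)$, $v\in\partial B_r(0)$, $\widetilde P|_{[s,t]}\subset\overline{A_{(1-\alpha)r,r}(0)}$ force any admissible subinterval to still cross the annulus, and the minimal crossing length may exceed $2\alpha^3\kc_r$; your derivation of condition~\eqref{099H} therefore collapses. The paper obtains \eqref{099H} by an entirely different route: a high-probability event forces every connected component of $A_{(1-\alpha)r,r}(0)\cap\Upsilon$ to have Euclidean diameter at most $\delta r/2$, so $\lvert u-v\rvert\le\delta r/2$ automatically, and then the local estimate $D_\Upsilon(x,y)\le\lambda D_\Upsilon(y,\partial B_{ar/2}(y)\cap\Upsilon)$ for Euclidean-close connected points gives \eqref{099H} directly, with no need to make $\widetilde D_\Upsilon(u,v)$ small. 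Second, the statement requires $u$ and $v$ themselves to lie on $\partial B_{(1-\alpha)r}(0)$ and $\partial B_r(0)$, and condition~\eqref{099G} requires $\widetilde D_\Upsilon(u,\widetilde P(s))\vee\widetilde D_\Upsilon(\widetilde P(t),v)\le\lambda\alpha^3\kc_r$ for interior times $s<t$ of the geodesic; in the paper these come from choosing $u,v$ in $\lambda\delta$-nets on the two circles, taking $s'$ (resp.\ $t'$) to be the first (resp.\ last) hitting time of $P_u$ (resp.\ $P_v$) and using that $\widetilde P$, being a geodesic that meets $P_u$ both before $s$ and at $s'$, satisfies $\widetilde D_\Upsilon(\widetilde P(s),\widetilde P(s'))\le\len(P_u;\widetilde D_\Upsilon)\le\lambda\alpha^3\kc_r$. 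Your proposal does not address how these bottleneck distance bounds are obtained.
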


\begin{figure}[ht!]
    \centering
    \includegraphics[width=0.8\linewidth]{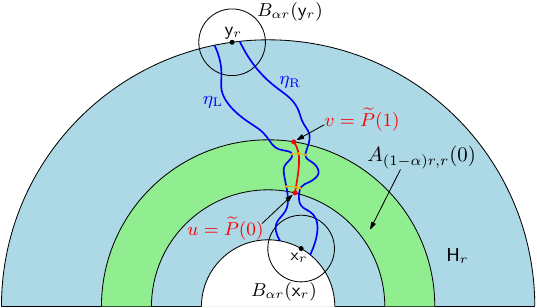}
    \caption{Illustration of the objects involved in \Cref{099}. $\fH_r$ is a deterministic half-annulus.  Both $\fx_r \in \partial B_{(1 - a)r}(0) \cap \partial\fH_r$ and $\fy_r \in \partial B_{(1 + a)r}(0) \cap \partial\fH_r$ are deterministic points. The blue paths $\eta_\rL$ and $\eta_\rR$ are connected arcs of $\Gamma$ in $\fH_r$ from $B_{\alpha r}(\fx_r) \cap \partial B_{(1 - a)r}(0)$ to $B_{\alpha r}(\fy_r) \cap \partial B_{(1 + a)r}(0)$. The red path $\widetilde P$ is a $\widetilde D_\Upsilon$-geodesic in $\fH_r \cap \overline{A_{(1 - \alpha)r,r}(0)} \cap \Upsilon$ from $u \in \partial B_{(1 - \alpha)r}(0) \cap \Upsilon$ to $v \in \partial B_r(0) \cap \Upsilon$ that lies between $\eta_\rL$ and $\eta_\rR$. The pair $(u, v)$ is a ``shortcut'' in the sense that $\widetilde D_\Upsilon(u, v) \le M_1 D_\Upsilon(u, v)$. There are times $0 < s < t < 1$ and an orange path $P_s$ (resp.~$P_t$) in $\Upsilon$ connecting $\eta_\rL$, $\widetilde P(s)$, and $\eta_\rR$ (resp.~$\eta_\rL$, $\widetilde P(t)$, and $\eta_\rR$). The $\widetilde D_\Upsilon$-lengths of $P_s$, $P_t$, $\widetilde P|_{[0, s]}$, and $\widetilde P|_{[t, 1]}$ are much smaller than the $\widetilde D_\Upsilon$-length of $\widetilde P$. This implies that every path in $\Upsilon$ that passes between $\eta_\rL$ and $\eta_\rR$ will have to get close to the ``shortcut''.}
    \label{fig:G}
\end{figure}

An illustration of the objects appearing in \Cref{099} can be found in \Cref{fig:G}. The rest of this subsection is primarily concerned with establishing \Cref{099}. Before presenting the proof itself, we begin by explaining the motivation behind the formulation of the statement.

In \Cref{ss:06}, we will ``link up'' the half-annuli $\fH_{\rho r} + z$ for varying choices of $z \in \partial B_{3r}(0)$ with long narrow ``tubes''. If we label these $z$'s in counterclockwise order as $z_1, z_2, \ldots$, then the long narrow ``tube'' linking $\fH_{\rho r} + z_j$ and $\fH_{\rho r} + z_{j + 1}$ is given by the Euclidean neighborhood of a deterministic smooth simple path connecting $\fy_{\rho r} + z_j$ and $\fx_{\rho r} + z_{j + 1}$. We want there to be only finitely many possibilities for the set $r^{-1}U_r$, which allows us to get certain estimates trivially by taking a maximum over the possibilities. This is why we require that $\fH_r$ is either vertical or horizontal and the points $\fx_r$ and $\fy_r$ belong to the finite sets in \eqref{eq:273}. We will then resample the configurations inside $U_r$. The requirement that $\widetilde P \subset \fH_r \cap \overline{A_{(1 - \alpha)r,r}(0)} \cap \Upsilon$ in condition~\eqref{099C}, together with condition~\eqref{099H}, ensures that the condition that $\widetilde P$ is a $\widetilde D_\Upsilon$-geodesic and $\widetilde D_\Upsilon(u, v) \le M_1 D_\Upsilon(u, v)$ is preserved after resampling. Condition~\eqref{099H} also ensures that the event in the lemma statement depends locally on $\Upsilon$ (cf.~\Cref{100}). Condition~\eqref{099B} is needed to upper-bound the amount of time a $D_\Upsilon$-geodesic needs to spend to pass through the the ``tube'' between $\eta_\rL$ and $\eta_\rR$. The purpose of condition~\eqref{099G} is to ensure the existence of ``bottlenecks'' at $u$ and $v$, so that any path in $\Upsilon$ that passes through the ``tube'' between $\eta_\rL$ and $\eta_\rR$ will have to get close to the ``shortcut''. 

We now proceed to the proof of \Cref{099}.

\begin{figure}[ht!]
    \centering
    \includegraphics[width=0.8\linewidth]{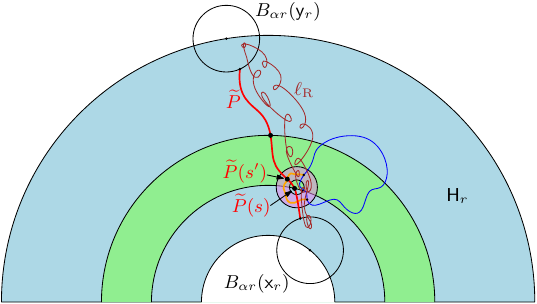}
    \caption{Illustration of the proof of \Cref{099}. The red path $\widetilde P$ is obtained by deforming the $\widetilde D_\Upsilon$-geodesic described in the definition of $\underline H_r(M_1, a, \alpha)$ (cf.~\Cref{003}) relative to $\widetilde P|_{[s, t]}$. There exists a violet annulus $A_{\delta r,\delta^\omega r}(u)$ and an orange path $P_u$ such that $\widetilde P(s) \in B_{\delta r}(u)$ and every path in $\Upsilon$ that crosses between the inner and outer boundaries of $A_{\delta r,\delta^\omega r}(u)$ intersects $P_u$. The time $s^\prime$ denotes the first time after $s$ at which $\widetilde P$ hits $P_u$. Similarly, we have $A_{\delta r,\delta^\omega r}(v)$ and $P_v$ (not shown). The brown loop $\ell_\rR$ is a Brownian loop in $\fH_r$ that intersects $\partial B_{\alpha r}(\fx_r)$, $\partial B_{\alpha r}(\fy_r)$, $\partial B_{\delta r}(u)$, and $\partial B_{\delta r}(v)$, and lies to the right of $\widetilde P$. Similarly, we have $\ell_\rL$ (not shown). After adding these two Brownian loops, the existence of $\eta_\rL$ and $\eta_\rR$ follows from the coupling between $\CLE_\kappa$ and Brownian loop-soup (cf.~\cite{CLE}). We remark that $\widetilde P|_{[s, t]}$, $s^\prime$, and $t^\prime$ in the proof will play the role of $\widetilde P$, $s$, and $t$ in the lemma statement, respectively.}
    \label{fig:Gproof}
\end{figure}

\begin{proof}[Proof of \Cref{099}]
    See \Cref{fig:Gproof} for an illustration. By \Cref{264}, \eqref{264A} and a union bound, we may choose $\delta \in (0, \lambda a]$ to be sufficiently small so that for each $r > 0$, it holds with probability at least $1 - \underline p/2$ that the following is true. 
    \begin{equation}\label{eq:315}
        \parbox{.85\linewidth}{For each $z \in \{r\re^{\ri k} : k \in \lambda\delta\ZZ \cap [0, 2\pi)\}$, there are at most two connected arcs of $\Gamma$ in $A_{\delta r,a r/2}(z)$ connecting its inner and outer boundaries.}
    \end{equation}
    Moreover, by \Cref{010}, Axiom~\eqref{010D} (tightness across scales) and possibly decreasing $\delta$, we may arrange that for each $r > 0$, it holds with probability at least $1 - \underline p/4$ that the following is true. 
    \begin{equation}\label{eq:247}
        D_\Upsilon(x, y) \le \lambda D_\Upsilon(y, \partial B_{a r/2}(y) \cap \Upsilon), \quad \forall x, y \in \overline{B_r(0)} \cap \Upsilon \text{ with } x \leftrightarrow y \text{ and } \lvert x - y\rvert \le \frac12\delta r. 
    \end{equation}
    By the scale invariance of the law of $\Upsilon$, we may choose $\alpha \in (0, \lambda\delta]$ to be sufficiently small so that for each $r > 0$, it holds with probability at least $1 - \underline p/8$ that the following is true. 
    \begin{equation}\label{eq:246}
        \parbox{.90\linewidth}{The Euclidean diameter of each connected component of $A_{(1 - \alpha)r,r}(0) \cap \Upsilon$ is at most $\delta r/2$.}
    \end{equation}
    Let $\alpha_{\mathrm{4A}}$ be as in \eqref{eq:016}. Fix $\omega \in (0, 1)$ such that $\alpha_{\mathrm{4A}}(1 - \omega) > 1$. Then, by \Cref{264}, \eqref{264B}, \Cref{263}, \eqref{eq:340}, and a union bound, we may choose a sufficiently small $\delta \in (0, \lambda\alpha]$ and a sufficiently large $A > 0$ such that for each $r > 0$, it holds with probability at least $1 - \underline p/16$ that the following is true. 
    \begin{equation}\label{eq:316}
        \parbox{.85\linewidth}{For each $z \in \{(1 - \alpha)r\re^{\ri k} : k \in \lambda\delta\ZZ \cap [0, 2\pi)\} \cup \{r\re^{\ri k} : k \in \lambda\delta\ZZ \cap [0, 2\pi)\}$, there are at most two connected arcs of $\Gamma$ in $A_{\delta r,\delta^\omega r}(z)$ connecting its inner and outer boundaries, and there is a path in $A_{\delta r,\delta^\omega r}(z) \cap \Upsilon$ that has $\widetilde D_\Upsilon$-length at most $A\delta^\omega\kc_r$ such that every path in $\Upsilon$ that crosses between the inner and outer boundaries of $A_{\delta r,\delta^\omega r}(z)$ intersects $P$.}
    \end{equation}
    Moreover, by possibly decreasing $\delta$, we may arrange so that $A\delta^\omega \le \lambda\alpha^3$. 

    By the definition of the event $\underline H_r(M_1, a, \alpha)$ (cf.~\Cref{003}) and the definition of $\SR$ (cf.~\eqref{eq:003}), for each $r \in \SR$, it holds with probability at least $\underline p$ that the following is true. 
    \begin{equation}\label{eq:274}
        \parbox{.85\linewidth}{There exists a $\widetilde D_\Upsilon$-geodesic $\widetilde P \colon [0, 1] \to \overline{A_{(1 - a)r,(1 + a)r}(0)} \cap \Upsilon$ from $\partial B_{(1 - a)r}(0) \cap \Upsilon$ to $\partial B_{(1 + a)r}(0) \cap \Upsilon$ and times $0 < s < t < 1$ satisfying conditions~\eqref{003BA}, \eqref{003BB}, \eqref{003BC}, and \eqref{003BD} of \Cref{003}.}
    \end{equation}
    Thus, we conclude that for each $r \in \SR$, it holds with probability at least $\underline p/16$ that \eqref{eq:315}, \eqref{eq:247}, \eqref{eq:246}, \eqref{eq:316}, and \eqref{eq:274} are true. Since the Euclidean diameter of $\widetilde P$ is at most $r/100$ (cf.~condition~\eqref{003BB}), there is a half-annulus $H_r$ of inner (resp.~outer) radius $(1 - a)r$ (resp.~$(1 + a)r$) centered at $0$, and points 
    \begin{equation*}
        x_r \in \left\{(1 - a)r\re^{\ri k} : k \in \lambda\alpha\ZZ\right\} \cap \partial H_r \quad \text{and} \quad y_r \in \left\{(1 + a)r\re^{\ri k} : k \in \lambda\alpha\ZZ\right\} \cap \partial H_r
    \end{equation*}
    such that $\widetilde P \subset \overline{H_r} \cap \Upsilon$, $\widetilde P(0) \in B_{\alpha r}(x_r) \cap \partial B_{(1 - a)r}(0) \cap \Upsilon$, and $\widetilde P(1) \in B_{\alpha r}(y_r) \cap \partial B_{(1 + a)r}(0) \cap \Upsilon$. Since the number of possible choices for such $H_r$, $x_r$, $y_r$ is at most a deterministic constant not depending on $r$, it follows that there exists $p^\prime \in (0, 1)$ such that for each $r \in \SR$, there exists a deterministic choice $\fH_r$, $\fx_r$, $\fy_r$ of $H_r$, $x_r$, $y_r$, resp.~such that it holds with probability at least $p^\prime$ that \eqref{eq:315}, \eqref{eq:247}, \eqref{eq:246}, \eqref{eq:316}, and \eqref{eq:274} are true, $\widetilde P \subset \overline{\fH_r} \cap \Upsilon$, $\widetilde P(0) \in B_{\alpha r}(\fx_r) \cap \partial B_{(1 - a)r}(0) \cap \Upsilon$, and $\widetilde P(1) \in B_{\alpha r}(\fy_r) \cap \partial B_{(1 + a)r}(0) \cap \Upsilon$. 
    
    By \eqref{eq:316}, there exists $u \in \{(1 - \alpha)r\re^{\ri k} : k \in \lambda\delta\ZZ \cap [0, 2\pi)\}$ and $v \in \{r\re^{\ri k} : k \in \lambda\delta\ZZ \cap [0, 2\pi)\}$ such that $\widetilde P(s) \in B_{\delta r}(u)$ and $\widetilde P(t) \in B_{\delta r}(v)$, and there is a path $P_u$ in $A_{\delta r,\delta^\omega r}(u) \cap \Upsilon$ (resp.~$P_v$ in $A_{\delta r,\delta^\omega r}(v) \cap \Upsilon$) that has $\widetilde D_\Upsilon$-length at most $A\delta^\omega\kc_r \le \lambda\alpha^3\kc_r$ such that every path in $\Upsilon$ that crosses between the inner and outer boundaries of $A_{\delta r,\delta^\omega r}(u)$ (resp.~$A_{\delta r,\delta^\omega r}(v)$) intersects $P_u$ (resp.~$P_v$). Write $s^\prime$ for the first time after $s$ at which $\widetilde P$ hits $P_u$, and $t^\prime$ for the last time before $t$ at which $\widetilde P$ hits $P_v$. Since $\widetilde P$ is a $\widetilde D_\Upsilon$-geodesic and also hits $P_u$ before $s$ (resp.~$P_v$ after $t$), it follows that 
    \begin{align*}
        \widetilde D_\Upsilon(\widetilde P(s), \widetilde P(s^\prime)) &\le \len(P_u, \widetilde D_\Upsilon) \le A\delta^\omega\kc_r \le \lambda\alpha^3\kc_r \quad \text{and}  \\
        \widetilde D_\Upsilon(\widetilde P(t^\prime), \widetilde P(t)) &\le \len(P_v, \widetilde D_\Upsilon) \le A\delta^\omega\kc_r \le \lambda\alpha^3\kc_r. 
    \end{align*}
    Moreover, there are at most two connected arcs of $\Gamma$ in $A_{\delta r,\delta^\omega r}(u)$ (resp.~$A_{\delta r,\delta^\omega r}(v)$) connecting its inner and outer boundaries. If there are exactly two such arcs, write $\SCL_s$ (resp.~$\SCL_t$) for the loop of $\Gamma$ containing them; otherwise write $\SCL_s \defeq \emptyset$ (resp.~$\SCL_t \defeq \emptyset$). (At this point, we remark that, eventually, $\widetilde P|_{[s, t]}$, $s^\prime$, and $t^\prime$ will play the role of $\widetilde P$, $s$, and $t$ in the lemma statement, respectively.)
    
    We observe that for each $\varepsilon > 0$, it is always possible to deform $\widetilde P$ relative to $\widetilde P|_{[s, t]}$ so that we obtain a path (not necessarily a $\widetilde D_\Upsilon$-geodesic) $\widetilde P^\prime \colon [0, 1] \to \overline{\fH_r} \cap \Upsilon$ from $B_{\alpha r}(\fx_r) \cap \partial B_{(1 - a)r}(0) \cap \Upsilon$ to $B_{\alpha r}(\fx_r) \cap \partial B_{(1 + a)r}(0) \cap \Upsilon$ satisfying the following conditions:
    \begin{itemize}
        \item $\widetilde P^\prime|_{[s, t]} \equiv \widetilde P|_{[s, t]}$. 
        \item $\lvert\widetilde P^\prime(\tau) - \widetilde P(\tau)\rvert \le \varepsilon r$ for all $\tau \in [0, 1]$. 
        \item $\len(\widetilde P^\prime; \widetilde D_\Upsilon) \le (1 + \varepsilon)\len(\widetilde P; \widetilde D_\Upsilon)$. 
        \item Every loop of $\Gamma$ that touches $\widetilde P^\prime|_{[0, s]}$ or $\widetilde P^\prime|_{[t, 1]}$ has Euclidean diameter at most $\varepsilon r$. 
    \end{itemize}
    We note that conditions~\eqref{003BC} and \eqref{003BD} of \Cref{003} are preserved when replacing $\widetilde P$ with $\widetilde P^\prime$, and it follows from condition~\eqref{003BA} that $\len(\widetilde P^\prime, \widetilde D_\Upsilon) \le 2a^{-1}\kc_r$. Write $\SCL$ for the smallest loop surrounding $\widetilde P$ (hence also $\widetilde P^\prime$). Write $\mu^{\mathrm{loop}}$ for the Brownian loop measure (cf.~\cite{BLS}). Write $A_\rL$ (resp.~$A_\rR$) for the collection of Brownian loops $\ell$ satisfying the following conditions:
    \begin{enumerate}[label=(\alph*), ref=\alph*]
        \item\label{it:213} $\ell$ is surrounded by $\SCL$ and contained in $\fH_r$. 
        \item\label{it:214} $\ell$ intersects $\partial B_{\alpha r}(\fx_r)$, $\partial B_{\alpha r}(\fy_r)$, $\partial B_{\delta r}(u)$, and $\partial B_{\delta r}(v)$. 
        \item\label{it:210} $\ell$ lies to the left (resp.~right) of $\widetilde P^\prime$. 
        \item\label{it:211} $\ell \cap B_{\varepsilon r}(\widetilde P^\prime) = \emptyset$. 
        \item\label{it:209} $\ell \cap \SCL_s \subset B_{\delta^\omega r}(u)$, and $\ell \cap \SCL_s \neq \emptyset$ if and only if $\SCL_s \neq \emptyset$ and crosses between the inner and outer boundaries of $A_{\delta r,\delta^\omega r}(u)$ on the left (resp.~right) side of $\widetilde P$.
        \item\label{it:215} $\ell \cap \SCL_t \subset B_{\delta^\omega r}(v)$, and $\ell \cap \SCL_t \neq \emptyset$ if and only if $\SCL_t \neq \emptyset$ and crosses between the inner and outer boundaries of $A_{\delta r,\delta^\omega r}(v)$ on the left (resp.~right) side of $\widetilde P$. 
        \item\label{it:212} Every loop of $\Gamma$ that intersects $\ell$ except possibly $\SCL_s$ and $\SCL_t$ has Euclidean diameter at most $\varepsilon r/100$. 
    \end{enumerate}
    We observe that, almost surely, $\mu^{\mathrm{loop}}(A_\rL) \wedge \mu^{\mathrm{loop}}(A_\rR) > 0$ whenever $\varepsilon$ is sufficiently small. Thus, by the scale invariance of the law of $\Upsilon$ and replacing $\widetilde P$ with $\widetilde P^\prime$, we conclude that there exists a sufficiently small $\varepsilon > 0$ such that for each $r \in \SR$, it holds with probability at least $p^\prime/2$ that \eqref{eq:315}, \eqref{eq:247}, \eqref{eq:246}, \eqref{eq:316}, together with the following \eqref{eq:276}, are true. 
    \begin{equation}\label{eq:276}
        \parbox{.85\linewidth}{There is a path (not necessarily a $\widetilde D_\Upsilon$-geodesic) $\widetilde P \colon [0, 1] \to \overline{\fH_r} \cap \Upsilon$ from $B_{\alpha r}(\fx_r) \cap \partial B_{(1 - a)r}(0) \cap \Upsilon$ to $B_{\alpha r}(\fx_r) \cap \partial B_{(1 + a)r}(0) \cap \Upsilon$ with $\len(\widetilde P, \widetilde D_\Upsilon) \le 2a^{-1}\kc_r$ and times $0 < s < t < 1$ satisfying conditions~\eqref{003BC} and \eqref{003BD} of \Cref{003}. Moreover, every loop of $\Gamma$ that touches $\widetilde P|_{[0, s]}$ or $\widetilde P|_{[t, 1]}$ has Euclidean diameter at most $\varepsilon r$, and if we define $A_\rL$ and $A_\rR$ in the same manner as above but with $\widetilde P$ in place of $\widetilde P^\prime$, then $\mu^{\mathrm{loop}}(A_\rL) \wedge \mu^{\mathrm{loop}}(A_\rR) > \varepsilon$.}
    \end{equation}
    
    We \emph{claim} that for each $\ell_\rL \in A_\rL$ and $\ell_\rR \in A_\rR$, there is no loop of $\Gamma$ that intersects both $\ell_\rL$ and $\ell_\rR$. First, we observe that $\SCL_s$ (resp.~$\SCL_t$) cannot intersect both $\ell_\rL$ and $\ell_\rR$. Indeed, this follows immediately from conditions~\eqref{it:209}, \eqref{it:215}, together with the fact that $\SCL_s$ (resp.~$\SCL_t$) can only cross between the inner and outer boundaries of $A_{\delta r,\delta^\omega r}(u)$ on either the left or the right side of $\widetilde P$, but not both (otherwise there would be at least four connected arcs of $\Gamma$ in $A_{\delta r,\delta^\omega r}(u)$ (resp.~$A_{\delta r,\delta^\omega r}(v)$) connecting its inner and outer boundaries). Now, since $\ell_\rL \cap B_{\varepsilon r}(\widetilde P) = \emptyset$ and $\ell_\rL \cap B_{\varepsilon r}(\widetilde P) = \emptyset$ (cf.~condition~\eqref{it:211}) and $\ell_\rL$ and $\ell_\rR$ lie to the different sides of $\widetilde P$, it is easy to see that $\dist(\ell_\rL, \ell_\rR) > \varepsilon r/100$, which implies that every loop of $\Gamma$ with Euclidean diameter at most $\varepsilon r/100$ cannot intersect both $\ell_\rL$ and $\ell_\rR$. Thus, by condition~\eqref{it:212}, we complete the proof of the \emph{claim}. 

    Let $\Xi$ be a Brownian loop-soup of intensity $c(\kappa)$ inside $\SCL$. Suppose that $\Gamma$ and $\Xi$ are coupled so that the outermost loops of $\Gamma$ inside $\SCL$ are given by the outer boundaries of the outermost clusters of the Brownian loops of $\Xi$ \cite{CLE}. Given $A_\rL$ (resp.~$A_\rR$), let $\ell_\rL$ (resp.~$\ell_\rR$) be sampled uniformly from $\mu^{\mathrm{loop}}|_{A_\rL}$ (resp.~$\mu^{\mathrm{loop}}|_{A_\rR}$), independently of everything else. Write $\Xi^\prime \defeq \Xi \cup \{\ell_\rL, \ell_\rR\}$. Then, (by, e.g., Mecke's formula) on the event that $\mu^{\mathrm{loop}}(A_\rL) \wedge \mu^{\mathrm{loop}}(A_\rR) > \varepsilon$, the law of $\Xi^\prime$ is absolutely continuous with respect to the law of $\Xi$ and the Radon-Nikodym derivative is bounded above by a deterministic constant.  
    
    Since for each $r \in \SR$, it holds with probability at least $p^\prime/2$ that \eqref{eq:315}, \eqref{eq:247}, \eqref{eq:246}, \eqref{eq:316}, and \eqref{eq:276} are true, it suffices to show that, on this event, conditions~\eqref{099C}, \eqref{099E}, \eqref{099A}, \eqref{099B}, \eqref{099G}, and \eqref{099H} are satisfied for $\Xi^\prime$. By the above \emph{claim} and the fact that $\ell_\rL \cap \widetilde P = \emptyset$ and $\ell_\rR \cap \widetilde P = \emptyset$ (cf.~condition~\eqref{it:211}), after adding $\ell_\rL$ and $\ell_\rR$, the path $\widetilde P$ is still contained in the carpet of the non-nested $\CLE_\kappa$ inside $\SCL$. One verifies immediately that conditions~\eqref{099C} and \eqref{099E} are satisfied with $\widetilde P|_{[s, t]}$, $\widetilde P(s)$, and $\widetilde P(t)$ in place of $\widetilde P$, $u$, and $v$, respectively. Condition~\eqref{099B} follows immediately from the fact that $\len(\widetilde P, \widetilde D_\Upsilon) \le 2a^{-1}\kc_r$. Condition~\eqref{099H} follows immediately from \eqref{eq:247}. By conditions~\eqref{it:213}, \eqref{it:214}, and \eqref{it:210}, it is easy to see that condition~\eqref{099A} is satisfied after adding $\ell_\rL$ and $\ell_\rR$. Thus, it suffices to verify that condition~\eqref{099G} is satisfied for $\Xi^\prime$. Indeed, write $P_s$ (resp.~$P_t$) for the connected component of $P_u \setminus (\ell_\rL \cup \ell_\rR)$ (resp.~$P_v \setminus (\ell_\rL \cup \ell_\rR)$) that contains $\widetilde P(s)$ (resp.~$\widetilde P(t)$). Then 
    \begin{equation*}
        \len(P_s; \widetilde D_\Upsilon) \le \len(P_u; \widetilde D_\Upsilon) \le \lambda\alpha^3\kc_r \quad \text{and} \quad \len(P_t; \widetilde D_\Upsilon) \le \len(P_v; \widetilde D_\Upsilon) \le \lambda\alpha^3\kc_r. 
    \end{equation*}
    Moreover, by conditions~\eqref{it:214}, \eqref{it:209}, and \eqref{it:215}, one verifies immediately that $P_s$ and $P_t$ must connect the arcs $\eta_\rL$ and $\eta_\rR$ of condition~\eqref{099A}. This completes the proof. 
\end{proof}

The event of \Cref{099} will be the ``building block'' for the event $\fE_r$. In the remainder of the present section, let $\alpha$, $\fp$, $\SR$, and $\{(\fH_r, \fx_r, \fy_r) : r \in \SR\}$ be as in \Cref{099}. For $z \in \CC$ and $r \in \SR$, we shall write 
\begin{equation*}
    \fH_{z,r} \defeq \fH_r + z; \quad \fx_{z,r} \defeq \fx_r + z; \quad \fy_{z,r} \defeq \fy_r + z. 
\end{equation*}

\begin{definition}\label{097}
    Let $z \in \CC$ and $r \in \SR$. Then we shall write $\fG_{z,r}$ for the event of \Cref{099} with $\Upsilon - z$ in place of $\Upsilon$, i.e., the event that there exists $u \in \fH_{z,r} \cap \partial B_{(1 - \alpha)r}(z) \cap \Upsilon$ and $v \in \fH_{z,r} \cap \partial B_r(z) \cap \Upsilon$ satisfying the following conditions:
    \begin{enumerate}
        \item\label{097C} There is a $\widetilde D_\Upsilon$-geodesic $\widetilde P$ from $u$ to $v$ that is contained in $\fH_{z,r} \cap \overline{A_{(1 - \alpha)r,r}(z)} \cap \Upsilon$.
        \item\label{097E} $\widetilde D_\Upsilon(u, v) \ge \alpha^3\kc_r$ and $\widetilde D_\Upsilon(u, v) \le M_1 D_\Upsilon(u, v)$.
        \item\label{097A} There are two connected arcs $\eta_\rL$ and $\eta_\rR$ of $\Gamma$ in $\fH_{z,r}$ from $B_{\alpha r}(\fx_{z,r}) \cap \partial B_{(1 - a)r}(z)$ to $B_{\alpha r}(\fy_{z,r}) \cap \partial B_{(1 + a)r}(z)$ such that
        \begin{itemize}
            \item $\eta_\rL$ lies to the left of $\eta_\rR$, 
            \item there is no other connected arc of $\Gamma$ in $\fH_{z,r}$ from $B_{\alpha r}(\fx_{z,r}) \cap \partial B_{(1 - a)r}(z)$ to $B_{\alpha r}(\fy_{z,r}) \cap \partial B_{(1 + a)r}(z)$ that lies between $\eta_\rL$ and $\eta_\rR$, 
            \item the right side of $\eta_\rL$ and the left side of $\eta_\rR$ are contained in $\Upsilon$, and
            \item $\widetilde P$ lies between $\eta_\rL$ and $\eta_\rR$.
        \end{itemize}
        \item\label{097B} Write $O_{z,r}$ for the connected component of $\fH_{z,r} \setminus (\eta_\rL \cup \eta_\rR)$ containing $\widetilde P$. Then 
        \begin{equation*}
            \widetilde D_\Upsilon(\partial B_{(1 - a)r}(z) \cap \partial O_{z,r} \cap \Upsilon, \partial B_{(1 + a)r}(z) \cap \partial O_{z,r} \cap \Upsilon; O_{z,r} \cap \Upsilon) \le 2a^{-1}\kc_r. 
        \end{equation*}
        \item\label{097G} There are paths $P_s$ and $P_t$ in $\overline{\fH_{z,r}} \cap \Upsilon$ from $\eta_\rL$ to $\eta_\rR$ that have $\widetilde D_\Upsilon$-lengths at most $\lambda\alpha^3\kc_r$, and there are times $0 < s < t < 1$ such that $\widetilde P(s) \in P_s$, $\widetilde P(t) \in P_t$, and $\widetilde D_\Upsilon(u, \widetilde P(s)) \vee \widetilde D_\Upsilon(\widetilde P(t), v) \le \lambda\alpha^3\kc_r$.
        \item\label{097H} We have $D_\Upsilon(u, v) \le \lambda D_\Upsilon(u, \partial A_{(1 - a/2)r,(1 + a/2)r}(z) \cap \Upsilon)$.
    \end{enumerate}
\end{definition}

By \Cref{010}, Axiom~\eqref{010C} (translation invariance) and \Cref{099},
\begin{equation}\label{eq:101}
    \BP\lbrack\fG_{z,r}\rbrack \ge \fp, \quad \forall z \in \CC, \ \forall r \in \SR.
\end{equation}

\begin{lemma}\label{100}
    Let $z \in \CC$ and $r \in \SR$. Then the event is almost surely determined by $B_{(1 + a)r}(z) \cap \Upsilon$.
\end{lemma}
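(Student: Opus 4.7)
The proof plan is to show that $\fG_{z,r}$ agrees almost surely with the event $\fG_{z,r}^B$ obtained by replacing every occurrence of the global metrics $D_\Upsilon$, $\widetilde D_\Upsilon$ in Conditions~\eqref{097C}--\eqref{097H} of \Cref{097} with their internal metrics $D_\Upsilon(\bullet,\bullet; B \cap \Upsilon)$, $\widetilde D_\Upsilon(\bullet,\bullet; B \cap \Upsilon)$, where I write $B \defeq B_{(1+a)r}(z)$ and $A \defeq A_{(1-a/2)r,(1+a/2)r}(z)$ for brevity. All the candidate objects $u, v, \widetilde P, \eta_\rL, \eta_\rR, P_s, P_t, O_{z,r}$ lie in $\overline B$, and $\overline A \subset B$. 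Since $\Gamma$ is almost surely determined by $\Upsilon$, \Cref{010}, Axiom~\eqref{010B} (Locality) implies that $\fG_{z,r}^B$ is determined by $B \cap \Upsilon$, so it suffices to show $\fG_{z,r} = \fG_{z,r}^B$ almost surely.

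For the forward inclusion, take a witness for $\fG_{z,r}$. Condition~\eqref{097C} and the geodesic property of $\widetilde P$ give $\widetilde D_\Upsilon(\widetilde P(s), \widetilde P(t)) = \len(\widetilde P|_{[s,t]}; \widetilde D_\Upsilon)$ for all $s \le t$; combined with $\widetilde P|_{[s,t]} \subset B$ and the trivial inequality $\widetilde D_\Upsilon \le \widetilde D_\Upsilon(\bullet,\bullet; B \cap \Upsilon)$, this forces $\widetilde D_\Upsilon(\widetilde P(s), \widetilde P(t)) = \widetilde D_\Upsilon(\widetilde P(s), \widetilde P(t); B \cap \Upsilon)$ at any two points on $\widetilde P$, and in particular $\widetilde D_\Upsilon(u,v) = \widetilde D_\Upsilon(u,v; B \cap \Upsilon)$. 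Similarly, Condition~\eqref{097H} together with $\lambda < 1$ (arranged in \Cref{ss:14}) forces every $D_\Upsilon$-geodesic from $u$ to $v$ to remain in $\overline A$, since any path exiting $A$ has $D_\Upsilon$-length at least $D_\Upsilon(u, \partial A \cap \Upsilon) \ge D_\Upsilon(u,v)/\lambda > D_\Upsilon(u,v)$; hence $D_\Upsilon(u,v) = D_\Upsilon(u,v; B \cap \Upsilon)$. A standard truncation at the first hit of $\partial A$ also gives $D_\Upsilon(u, \partial A \cap \Upsilon) = D_\Upsilon(u, \partial A \cap \Upsilon; \overline A \cap \Upsilon)$. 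These identities translate each global condition into its internal counterpart, yielding $\fG_{z,r}^B$.

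The reverse inclusion is the main obstacle; I handle it by arranging $\lambda < M_\ast/M_1$ in the initial choice of $\lambda$ in \Cref{ss:14} (possible since this depends only on $M_\ast, M^\ast$). Given a witness for $\fG_{z,r}^B$, the same path-exit argument applied with the ball-version of Condition~\eqref{097H} yields $D_\Upsilon(u,v) = D_\Upsilon(u,v; B \cap \Upsilon)$. For $\widetilde D_\Upsilon$, the definition of $M_\ast$ in \eqref{eq:321} gives $\widetilde D_\Upsilon(x,y) \ge M_\ast D_\Upsilon(x,y)$ almost surely for all $x \leftrightarrow y$, which extends to the $\widetilde D_\Upsilon$-length of any path. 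Any path $P^*$ from $u$ to $v$ that exits $B$ must first exit $A$, so
\begin{align*}
\len(P^*; \widetilde D_\Upsilon) &\ge M_\ast \len(P^*; D_\Upsilon) \ge (M_\ast/\lambda) D_\Upsilon(u,v; B \cap \Upsilon) \\
&> M_1 D_\Upsilon(u,v; B \cap \Upsilon) \ge \widetilde D_\Upsilon(u,v; B \cap \Upsilon),
\end{align*}
using the ball-versions of Conditions~\eqref{097H} and \eqref{097E}. Hence $\widetilde D_\Upsilon(u,v) \ge \widetilde D_\Upsilon(u,v; B \cap \Upsilon)$, and the reverse inequality being trivial gives equality. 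With both equalities in hand each internal condition of $\fG_{z,r}^B$ upgrades to its global counterpart; in particular $\widetilde P$ becomes a global $\widetilde D_\Upsilon$-geodesic since $\len(\widetilde P; \widetilde D_\Upsilon) \le \len(\widetilde P; \widetilde D_\Upsilon(\bullet,\bullet; B \cap \Upsilon)) = \widetilde D_\Upsilon(u,v; B \cap \Upsilon) = \widetilde D_\Upsilon(u,v)$, completing the proof.
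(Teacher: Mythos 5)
Your proof is correct and follows essentially the same route as the paper's: condition~\eqref{097H} forces any $D_\Upsilon$- or $\widetilde D_\Upsilon$-geodesic from $u$ to $v$ to stay inside $A_{(1 - a/2)r,(1 + a/2)r}(z)$, so all metric quantities in the event reduce to internal metrics on $B_{(1 + a)r}(z) \cap \Upsilon$ and locality applies. The only difference is that you spell out the requirement $\lambda < M_\ast/M_1$ needed for the $\widetilde D_\Upsilon$-geodesic part, which the paper leaves implicit in its choice of $\lambda$ "sufficiently small depending only on $M_\ast$ and $M^\ast$."
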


\begin{proof}
    It is clear that conditions~\eqref{097A}, \eqref{097B}, \eqref{097G}, and \eqref{097H} are almost surely determined by $B_{(1 + a)r}(z) \cap \Upsilon$. Note that if condition~\eqref{097H} holds, then any path from $u$ to $v$ that exits $A_{(1 - a/2)r,(1 + a/2)r}(z)$ is neither a $D_\Upsilon$-geodesic nor a $\widetilde D_\Upsilon$-geodesic. This implies that, on the event that condition~\eqref{097H} holds, conditions~\eqref{097C} and \eqref{097E} are almost surely determined by $B_{(1 + a)r}(z) \cap \Upsilon$. This completes the proof. 
\end{proof}

\subsection{Definition of the tubes}\label{ss:15}

Recall the parameters $\lambda$, $\pp$, $\fA$, and $a$ from \Cref{ss:14}, and $\alpha$ and $\fp$ from \Cref{ss:05}. The definitions of $U_r$, $V_r$, $\Upsilon_r$, $\fE_r$, and $\fF_r$ will depend on sufficiently small positive parameters
\begin{equation}\label{eq:094}
    1 \gg \fa_1 \gg \fa_2 \gg \fa_3 \gg \fa_4 \gg \fa_5 \gg \fa_6 \gg \fa_7 \gg \fa_8 \gg \fa_9
\end{equation}
which will be chosen later in \Cref{ss:08}. These parameters will be chosen in a manner so that each $\fa_j$ depends only on $\lambda$, $\pp$, $\fA$, $a$, $\alpha$, $\fp$, and $\fa_1, \ldots, \fa_{j - 1}$. To lighten notation, we shall write $\rho \defeq \fa_6$. 

In the present subsection, we define the ``tubes'' $U_r$ and $V_r$. We will then define the events $\fE_r$ and $\fF_r$ in the next subsection.

Fix $r \in \rho^{-1}\SR$. We shall write 
\begin{equation}\label{eq:042}
    \fZ_r \defeq \left\{3r\re^{\ri k} : k \in (\fa_5\ZZ) \cap [0, 2\pi)\right\}. 
\end{equation}
for the set of ``test points''. The event $\fE_r$ will include the condition that the event $\fG_{z,\rho r}$ occurs for ``many'' of the points $z \in \fZ_r$. To quantify this, we shall write
\begin{equation}\label{eq:279}
    Z_r \defeq \left\{z \in \fZ_r : \fG_{z,\rho r} \text{ occurs}\right\}.
\end{equation}

Recall from \Cref{ss:05} the half-annuli $\fH_{z,\rho r}$, and the points $\fx_{z,\rho r}$ and $\fy_{z,\rho r}$. We will now construct the ``tubes'' which link up the half-annuli $\fH_{z,\rho r}$ for $z \in \fZ_r$. Let 
\begin{equation*}
    \left\{\fP_{w_1,w_2} : w_1, w_2 \in \fZ_r, \ w_1 \neq w_2\right\}
\end{equation*}
be a family of deterministic smooth simple paths satisfying the following conditions:
\begin{enumerate}[label=(\Alph*), ref=\Alph*]
    \item\label{it:102} Let $w_1, w_2 \in \fZ_r$ be distinct. Then $\fP_{w_1,w_2}$ is a smooth simple path from $\fy_{w_1,\rho r}$ to $\fx_{w_2,\rho r}$ that does not intersect $\fH_{z,\rho r}$ (except at the endpoints). Moreover, if we write $I_{w_1,w_2}$ for the counterclockwise connected arc of $\partial B_{3r}(0)$ from $w_1$ to $w_2$, then $\fP_{w_1,w_2} \subset B_{100\rho r}(I_{w_1,w_2})$. 
    \item Let $w_1, w_2, w_3 \in \fZ_r$ be distinct points in counterclockwise order. Then the Euclidean distance between $\fP_{w_1,w_2}$ and $\fP_{w_2,w_3}$ is at least $\alpha\rho r$. The Euclidean distance between $\fP_{w_1,w_2}$ and $\fH_{w_3,\rho r}$ is at least $\fa_5r$. 
    \item Let $w_1, w_2, w_3, w_4 \in \fZ_r$ be distinct points in counterclockwise order. Then the Euclidean distance between $\fP_{w_1,w_2}$ and $\fP_{w_3,w_4}$ is at least $\fa_5r$. 
    \item\label{it:103} The set 
    \begin{equation*}
        \left\{r^{-1}\fP_{w_1,w_2} : w_1, w_2 \in \fZ_r, \ w_1 \neq w_2\right\}
    \end{equation*}
    depends only on $\fa_5$, $\rho$, $(\rho r)^{-1}\fH_{\rho r}$, $(\rho r)^{-1}\fx_{\rho r}$, $(\rho r)^{-1}\fy_{\rho r}$ (but not on $r$). Thus, the number of possibilities for this set is at most a constant depending only on $\lambda$, $a$, $\alpha$, $\fa_5$, $\rho$. 
\end{enumerate}
We shall write 
\begin{equation}\label{eq:041}
    \fU_{w_1,w_2} \defeq B_{(1 + \fa_7)\alpha\rho r}(\fP_{w_1,w_2}) \quad \text{and} \quad \fV_{w_1,w_2} \defeq B_{\alpha\rho r}(\fP_{w_1,w_2}), \quad \forall \text{ distinct } w_1, w_2 \in \fZ_r. 
\end{equation}

Let $z_1, \ldots, z_{\#Z_r} \in \partial B_{3r}(0)$ be the points of $Z_r$ in counterclockwise order. Let $j \in [1, \#Z_r]_\ZZ$. Then we shall write 
\begin{equation}\label{eq:044}
    P_j \defeq \fP_{z_j,z_{j + 1}}; \quad U_j \defeq \fU_{z_j,z_{j + 1}}; \quad V_j \defeq \fV_{z_j,z_{j + 1}}, 
\end{equation}
with the convention that $z_{\#Z_r + 1} \defeq z_1$. We shall write (by abuse of notation)
\begin{equation*}
    U_r \defeq \bigcup_{j = 1}^{\#Z_r} U_j \quad \text{and} \quad V_r \defeq \bigcup_{j = 1}^{\#Z_r} V_j. 
\end{equation*}

See \Cref{fig:UVW} for an illustration of $P_j$, $U_j$, and $V_j$. 

\begin{figure}[ht!]
    \centering
    \includegraphics[width=0.8\linewidth]{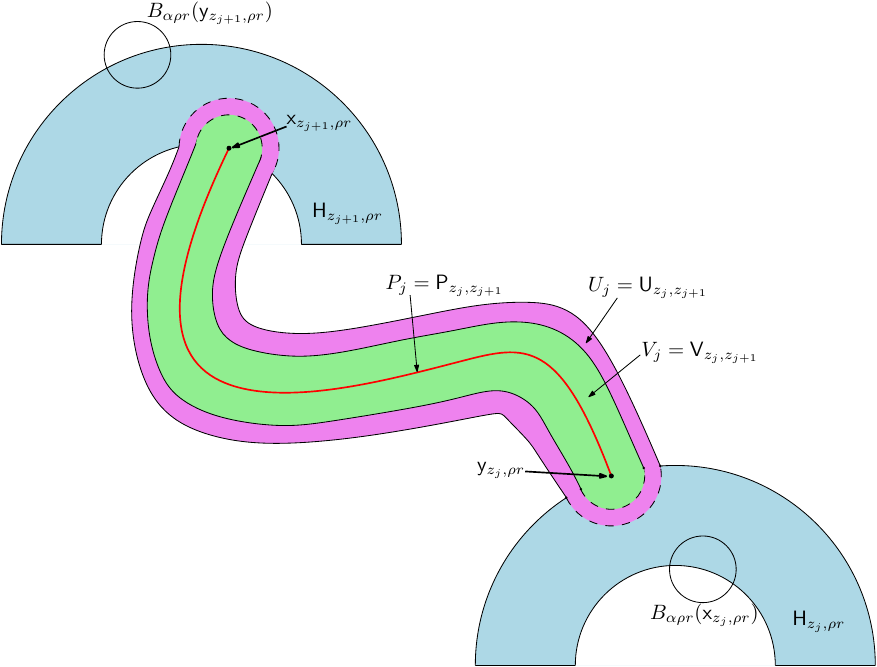}
    \caption{Illustration of $P_j = \fP_{z_{j},z_{j + 1}}$, $U_j = \fU_{z_{j},z_{j + 1}}$, and $V_j = \fV_{z_{j},z_{j + 1}}$, where $z_{j}$ and $z_{j + 1}$ are neighboring points in $Z_r$. The path $P_j$ is a smooth simple path from $\fy_{z_{j},\rho r}$ to $\fx_{z_{j + 1},\rho r}$ that are almost surely determined by $z_j$ and $z_{j + 1}$. The open subset $U_j$ (resp.~$V_j$) is the Euclidean $(1 + \fa_7)\alpha\rho r$- (resp.~$\alpha\rho r$-) neighborhood of $P_j$.}
    \label{fig:UVW}
\end{figure}

\subsection{A prerequisite event and a postrequisite event}\label{ss:06}

In the present subsection, we define the events $\fE_r$ and $\fF_r$. Recall the parameters of \eqref{eq:094}.

Before proceeding, we introduce some notation. See \Cref{fig:linkpattern} for an illustration. Let $r \in \rho^{-1}\SR$ and $j \in [1, \#Z_r]_\ZZ$. Recall the point $z_j$ and the open subsets $U_j$ and $V_j$ from \Cref{ss:15}. We shall write $U_j^\star$ for the connected component containing $V_j$ of the open subset obtained by removing from $U_j$ the closure of the union of all the $(\CC \setminus \overline{V_j}, \CC \setminus U_j)$-excursions of $\Gamma$ and the loops of $\Gamma$ that are contained in $\CC \setminus \overline{V_j}$ and intersect $\CC \setminus U_j$. We shall write $X_j \defeq \partial U_j^\star \cap \partial V_j$ for the collection of the endpoints of the $(\CC \setminus \overline{V_j}, \CC \setminus U_j)$-excursions of $\Gamma$. We shall write
\begin{equation}\label{eq:040}
    U_r^\star \defeq \bigcup_{j = 1}^{\#Z_r} U_j^\star.
\end{equation}
Let $\eta_{j,\rL}$ and $\eta_{j,\rR}$ be as in the definition of $\fG_{z_j,\rho r}$ (cf.~\Cref{097}). We shall write $x_{j,\rL} \in X_{j - 1}$ and $y_{j,\rL} \in X_j$ (resp.~$x_{j,\rR} \in X_{j - 1}$ and $y_{j,\rR} \in X_j$) for the endpoints of $\eta_{j,\rL}$ (resp.~$\eta_{j,\rR}$). Since the right side of $\eta_{j,\rL}$ and the left side of $\eta_{j,\rR}$ are contained in $\Upsilon$, it follows that
\begin{align*}
    &\#\left(X_j \cap [y_{j,\rL}, y_{j,\rR}]_{\partial U_j^\star}^\circlearrowleft\right), \quad \#\left(X_j \cap [y_{j,\rR}, x_{j + 1,\rR}]_{\partial U_j^\star}^\circlearrowleft\right),\\&\#\left(X_j \cap [x_{j + 1,\rR}, x_{j + 1,\rL}]_{\partial U_j^\star}^\circlearrowleft\right), \quad
    \#\left(X_j \cap [x_{j + 1,\rL}, y_{j,\rL}]_{\partial U_j^\star}^\circlearrowleft\right)
\end{align*}
(i.e., the number of boundary marked points of $(U_j^\star; X_j)$ that lie on the counterclockwise arc of $\partial U_j^\star$ from $y_{j,\rL}$ to $y_{j,\rR}$ (resp.~from $y_{j,\rR}$ to $x_{j + 1,\rR}$; from $x_{j + 1,\rR}$ to $x_{j + 1,\rL}$; from $x_{j + 1,\rL}$ to $y_{j,\rL}$)) are all even. Thus, we may label the points of $X_j$ in counterclockwise order as
\begin{multline}\label{eq:347}
    y_{j,\rL}, \ y_{j,1}, \ldots, \ y_{j,2n_j}, \ y_{j,\rR}, \ w_{j,1}, \ \ldots, \ w_{j,2r_j}, \ x_{j + 1,\rR}, \ x_{j + 1,1}, \ \ldots, \ x_{j + 1,2m_j}, \ x_{j + 1,\rL}, \\
    z_{j,1}, \ \ldots, \ z_{j,2l_j}. 
\end{multline}
In this situation, we shall introduce the link patterns $\alpha_j$ and $\beta_j$ (see \Cref{fig:linkpattern} for an illustration).  The link pattern $\alpha_j$ serves to create an opening in the chain of tubes which the geodesic is able to enter and the link pattern $\beta_j$ serves to create a tube which the geodesic cannot leave.  When we perform the resampling procedure later on, we aim to create a chain of tubes where two of them end up with link pattern $\alpha_j$ (so the geodesic has a place to enter and also a place to leave) and the rest end up with link pattern $\beta_j$ (so the geodesic has no option but to traverse the tube once it has entered).  Concretely, we let
\begin{multline}\label{eq:043}
    \alpha_j \defeq \left\{(x_{j + 1,\rL}, y_{j,\rL})\right\} \cup \left\{(z_{j,2k - 1}, z_{j,2k}) : k \in [1, l_j]_\ZZ\right\} \\
    \cup \left\{(y_{j,\rR}, w_{j,1}), (w_{j,2r_j}, x_{j + 1,\rR})\right\} \cup \left\{(w_{j,2k}, w_{j,2k + 1}) : k \in [1, r_j - 1]_\ZZ\right\} \\
    \cup \left\{(y_{j,2k - 1}, y_{j,2k}) : k \in [1, n_j]_\ZZ\right\} \cup \left\{(x_{j + 1,2k - 1}, x_{j + 1,2k}) : k \in [1, m_j]_\ZZ\right\}
\end{multline}
(i.e., $\alpha_j$ is the link pattern on $(U_j^\star; X_j)$ that ``links up'' the endpoints of $\eta_{j,\rL}$ and $\eta_{j + 1,\rL}$, and all other boundary marked points of $(U_j^\star; X_j)$ to their nearest neighbors);
\begin{multline*}
    \beta_j \defeq \left\{(x_{j + 1,\rL}, y_{j,\rL}), (y_{j,\rR}, x_{j + 1,\rR})\right\} \cup \left\{(z_{j,2k - 1}, z_{j,2k}) : k \in [1, l_j]_\ZZ\right\} \cup \left\{(w_{j,2k - 1}, w_{j,2k}) : k \in [1, r_j]_\ZZ\right\} \\
    \cup \left\{(y_{j,2k - 1}, y_{j,2k}) : k \in [1, n_j]_\ZZ\right\} \cup \left\{(x_{j + 1,2k - 1}, x_{j + 1,2k}) : k \in [1, m_j]_\ZZ\right\}
\end{multline*}
(i.e., $\beta_j$ is the link pattern on $(U_j^\star; X_j)$ that ``links up'' the endpoints of $\eta_{j,\rL}$ and $\eta_{j + 1,\rL}$ and the endpoints of $\eta_{j,\rR}$ and $\eta_{j + 1,\rR}$, resp.~and all other boundary marked points of $(U_j^\star; X_j)$ to their nearest neighbors). We shall also introduce collections of counterclockwise arcs of $\partial U_j^\star$ with endpoints contained in $X_j$
\begin{multline*}
    \SI_j \defeq \left\{[y_{j,\rL}, y_{j,1}]_{\partial U_j^\star}^\circlearrowleft, [y_{j,2n_j}, y_{j,\rR}]_{\partial U_j^\star}^\circlearrowleft\right\} \cup \left\{[y_{j,2k}, y_{j,2k + 1}]_{\partial U_j^\star}^\circlearrowleft : k \in [1, n_j - 1]_\ZZ\right\} \\
    \cup \left\{[x_{j + 1,\rR}, x_{j + 1,1}]_{\partial U_j^\star}^\circlearrowleft, [x_{j + 1,2m_j}, x_{j + 1,\rL}]_{\partial U_j^\star}^\circlearrowleft\right\} \cup \left\{[x_{j + 1,2k}, x_{j + 1,2k + 1}]_{\partial U_j^\star}^\circlearrowleft : k \in [1, m_j - 1]_\ZZ\right\} \\
    \cup \left\{[z_{j,2k - 1}, z_{j,2k}]_{\partial U_j^\star}^\circlearrowleft : k \in [1, l_j]_\ZZ\right\} \cup \left\{[w_{j,2k - 1}, w_{j,2k}]_{\partial U_j^\star}^\circlearrowleft : k \in [1, r_j]_\ZZ\right\};
\end{multline*}
(i.e., $\SI_j$ is the collection of all counterclockwise arcs of $\partial U_j^\star$ such that their endpoints are nearest neighbors of $X^j$ and they are contained in $\Upsilon$ when viewed from the inside of $U_j^\star$, this implies that if $P$ is a path in $\Upsilon$ that enters $U_j^\star$ from outside of $U_j^\star$, then the first point at which $P$ enters $U_j^\star$ must belong to $I \cap \partial U_j$ for some $I \in \SI_j$);
\begin{multline*}
    \SI_j^\alpha \defeq \left\{[y_{j,\rL}, y_{j,1}]_{\partial U_j^\star}^\circlearrowleft, [y_{j,2n_j}, y_{j,\rR}]_{\partial U_j^\star}^\circlearrowleft\right\} \cup \left\{[y_{j,2k}, y_{j,2k + 1}]_{\partial U_j^\star}^\circlearrowleft : k \in [1, n_j - 1]_\ZZ\right\} \\
    \cup \left\{[x_{j + 1,\rR}, x_{j + 1,1}]_{\partial U_j^\star}^\circlearrowleft, [x_{j + 1,2m_j}, x_{j + 1,\rL}]_{\partial U_j^\star}^\circlearrowleft\right\} \cup \left\{[x_{j + 1,2k}, x_{j + 1,2k + 1}]_{\partial U_j^\star}^\circlearrowleft : k \in [1, m_j - 1]_\ZZ\right\} \\
    \cup \left\{[w_{j,2k - 1}, w_{j,2k}]_{\partial U_j^\star}^\circlearrowleft : k \in [1, r_j]_\ZZ\right\} \subset \SI_j;
\end{multline*}
(i.e., $\SI_j^\alpha$ is the subcollection of $\SI_j$ consisting of the arcs that are not disconnected from the ``center'' of $U_j^\star$ by the link pattern $\alpha_j$);
\begin{multline}\label{eq:319}
    \SI_j^\beta \defeq \left\{[y_{j,\rL}, y_{j,1}]_{\partial U_j^\star}^\circlearrowleft, [y_{j,2n_j}, y_{j,\rR}]_{\partial U_j^\star}^\circlearrowleft\right\} \cup \left\{[y_{j,2k}, y_{j,2k + 1}]_{\partial U_j^\star}^\circlearrowleft : k \in [1, n_j - 1]_\ZZ\right\} \\
    \cup \left\{[x_{j + 1,\rR}, x_{j + 1,1}]_{\partial U_j^\star}^\circlearrowleft, [x_{j + 1,2m_j}, x_{j + 1,\rL}]_{\partial U_j^\star}^\circlearrowleft\right\} \cup \left\{[x_{j + 1,2k}, x_{j + 1,2k + 1}]_{\partial U_j^\star}^\circlearrowleft : k \in [1, m_j - 1]_\ZZ\right\} \subset \SI_j^\alpha;
\end{multline}
(i.e., $\SI_j^\beta$ is the subcollection of $\SI_j$ consisting of the arcs that are not disconnected from the ``center'' of $U_j^\star$ by the link pattern $\beta_j$);
\begin{equation}\label{eq:102}
    \SI_j^{\alpha \setminus \beta} \defeq \left\{[w_{j,2k - 1}, w_{j,2k}]_{\partial U_j^\star}^\circlearrowleft : k \in [1, r_j]_\ZZ\right\} = \SI_j^\alpha \setminus \SI_j^\beta. 
\end{equation}

\begin{figure}[ht!]
    \centering
    \includegraphics[width=.8\linewidth]{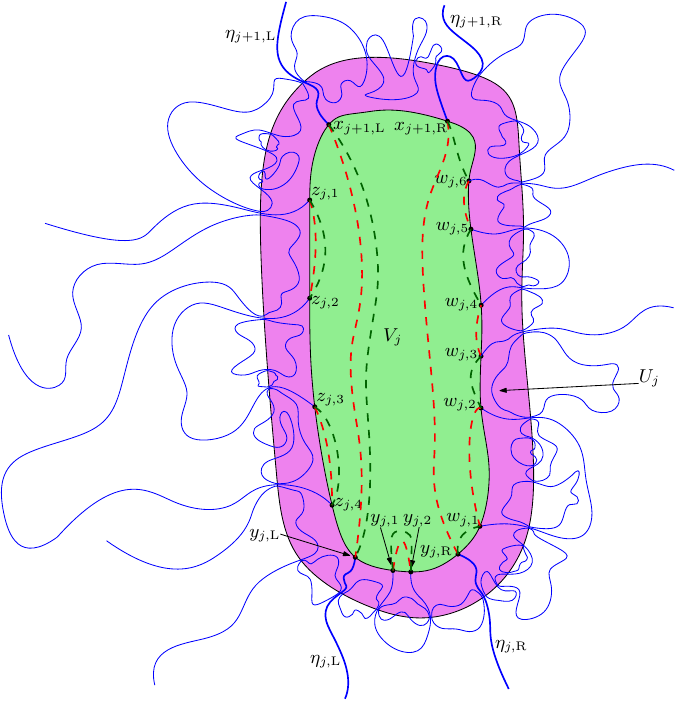}
    \caption{Illustration of $U_j$, $V_j$, $X_j$, and the labeling of $X_j$ as in \eqref{eq:347} for $j \in [1, \#Z_r]_\ZZ$. The blue paths are (connected arcs of) loops of $\Gamma$. The dark green dashed paths induce the link pattern $\alpha_j$, and the red dashed paths induce the link pattern $\beta_j$.  The purpose of the link pattern $\alpha_j$ is to create an opening that the geodesic can either enter or leave (and we aim to have two of these in the chain of tubes) and the purpose of the link pattern $\beta_j$ is to make it impossible for the geodesic to leave.}
    \label{fig:linkpattern}
\end{figure}

We will now perform a resampling operation. Let $(U_r^\prime, V_r^\prime)$ be sampled uniformly from all the possibilities for $(U_r, V_r)$, independently of everything else. We shall write $\Gamma(\CC \setminus \overline{V_r^\prime}, \CC \setminus U_r^\prime)$ for the union of the collection of the $(\CC \setminus \overline{V_r^\prime}, \CC \setminus U_r^\prime)$-excursions of $\Gamma$ and the collection of the loops of $\Gamma$ that are contained in $\CC \setminus \overline{V_r^\prime}$ and intersect $\CC \setminus U_r^\prime$. Let $\Upsilon_r$ be a conditionally independent copy of $\Upsilon$ given $\Gamma(\CC \setminus \overline{V_r^\prime}, \CC \setminus U_r^\prime)$ and $\wp$. (In particular, we have $(\CC \setminus \overline{U_r^\prime}) \cap \Upsilon_r = (\CC \setminus \overline{U_r^\prime}) \cap \Upsilon$ almost surely.) The purpose of the use of $(U_r^\prime, V_r^\prime)$ is to ensure that $\Upsilon_r$ also has the law of a whole-plane nested $\CLE_\kappa$. We will ultimately only be interested in the case where $(U_r, V_r) = (U_r^\prime, V_r^\prime)$. It follows from \eqref{it:103} that the number of possibilities for $r^{-1}(U_r, V_r)$ is at most a deterministic constant not depending on $r$. Thus, the probability that $(U_r, V_r) = (U_r^\prime, V_r^\prime)$ is uniformly positive. 

For a simply connected domain $U \subset \CC$, we shall write
\begin{equation*}
    d_U(x, y) \defeq \inf\left\{\diam(X) : X \text{ is a connected subset of } \overline U \text{ with } x, y \in X\right\}, \quad \forall x, y \in \overline U. 
\end{equation*}

We will now define the events $\fE_r$ and $\fF_r$. 

\begin{definition}\label{098}
    Let $r \in \rho^{-1}\SR$. Then we shall write $\fE_r$ for the event that the following conditions are satisfied:
    \begin{enumerate}
        \item\label{098A} We have
        \begin{equation*}
            \min\bigl(D_\Upsilon(\partial B_r(0) \cap \Upsilon, \partial B_{2r}(0) \cap \Upsilon), D_\Upsilon(\partial B_{4r}(0) \cap \Upsilon, \partial B_{5r}(0) \cap \Upsilon)\bigr) \ge \fa_1\kc_r. 
        \end{equation*}
        \item\label{098B} Let $x, y \in A_{2r,4r}(0) \cap \Upsilon$ with $\lvert x - y\rvert \le \fa_3r$. Then either $x \xleftrightarrow{A_{r,5r}(0)} y$ or there exists a loop of $\Gamma$ contained in $A_{r,5r}(0)$ and disconnecting $x$ and $y$. Moreover, in the former case, we have
        \begin{equation*}
            D_\Upsilon(x, y; A_{r,5r}(0) \cap \Upsilon) \le \fa_2\kc_r. 
        \end{equation*}
        \item\label{098C} Let $x, y \in A_{2r,4r}(0) \cap \Upsilon$ with $\lvert x - y\rvert \le 100\fa_4r$. Then either $x \xleftrightarrow{A_{r,5r}(0)} y$ or there exists a loop of $\Gamma$ with Euclidean diameter at most $\fa_3r/2$, contained in $A_{r,5r}(0)$, and disconnecting $x$ and $y$. 
        \item\label{098D} For each connected arc $I \subset \partial B_{3r}(0)$ of Euclidean length at least $\fa_4r$, we have $I \cap Z_r \neq \emptyset$. 
        \item\label{098E} There exists a finite subset $\CS \subset A_{2r,4r}(0) \times (0, \lambda r]$ such that the following are true:
        \begin{enumerate}
            \item For each connected component $\Upsilon_\bullet \subset \Upsilon$ with $(\CC \setminus B_{4r}(0)) \cap \Upsilon_\bullet \neq \emptyset$ and each connected component $\SCC \subset (U_r \setminus V_r) \cap \Upsilon_\bullet$, there exists $(z, t) \in \CS$ such that $\SCC \subset B_t(z)$. 
            \item There exists $N \in \NN$ such that for each $(z, t) \in \CS$, there are at most $2N$ connected arcs of $\Gamma$ in $A_{t,2t}(z)$ connecting its inner and outer boundaries. 
            \item We have
            \begin{equation*}
                N\sum_{(z, t) \in \CS} \sup\left\{D_\Upsilon(x, y; A_{r,5r}(0) \cap \Upsilon) : x, y \in B_{2t}(z), \ x \xleftrightarrow{B_{2t}(z)} y\right\} \le \fa_2\kc_r. 
            \end{equation*}
        \end{enumerate}
        \item\label{098F} For each $j \in [1, \#Z_r]_\ZZ$, the following are true:
        \begin{enumerate}
            \item There are at most $1/\fa_8$ connected arcs of $\Gamma$ in $U_j \setminus V_j$ connecting $\partial U_j$ and $\partial V_j$. 
            \item Each pair of distinct points of $X_j$ has Euclidean distance at least $\fa_8r$.  
            \item For each $I \in \SI_j$, $I \cap \partial U_j$ and $P_j$ are contained in the same connected component of $U_j^\star \setminus \SCB_{100\fa_8r}(\partial U_j^\star \setminus I; d_{U_j^\star})$, where $\SCB_{100\fa_8r}(\partial U_j^\star \setminus I; d_{U_j^\star}) \subset U_j^\star$ denotes the $100\fa_8r$-neighborhood of $\partial U_j^\star \setminus I$ with respect to the metric $d_{U_j^\star}$. (Here, we recall the definition of $P_j$ from \eqref{eq:044}.)
        \end{enumerate}
        \item\label{098G} For each subset $J_r \subset [1, \#Z_r]_\ZZ$ with $\#J_r = 2$, the following is true: Given $\Upsilon$ and the event that condition~\eqref{098F} holds, the following conditions~\eqref{035H}, \eqref{035I}, and \eqref{035K} of \Cref{035} hold with conditional probability at least $\fa_9$. (Here, we observe that condition~\eqref{098F} depends only on $\Gamma(\CC \setminus \overline{V_r}, \CC \setminus U_r)$ and $\wp$. Thus, if condition~\eqref{098F} holds for $\Upsilon$, then it also holds for $\Upsilon_r$.)
    \end{enumerate}
\end{definition}

\begin{definition}\label{035}
    Let $J_r \subset [1, \#Z_r]_\ZZ$ be a subset with $\#J_r = 2$ (chosen in a manner that is almost surely determined by $\Upsilon$). Then we shall write $\fF_r(J_r)$ for the event that the following conditions are satisfied:
    \begin{enumerate}
        \setcounter{enumi}7
        \item\label{035J} We have $(U_r, V_r) = (U_r^\prime, V_r^\prime)$.
        \item\label{035H} For each $j \in J_r$ (resp.~$j \in [1, \#Z_r]_\ZZ \setminus J_r$), the link pattern induced by the complementary $(\CC \setminus \overline{V_j}, \CC \setminus U_j)$-excursions of $\Gamma_r$ is given by $\alpha_j$ (resp.~$\beta_j$). 
        \item\label{035I} For each $j \in J_r$ and $(x, y) \in \alpha_j$ (resp.~$j \in [1, \#Z_r]_\ZZ \setminus J_r$ and $(x, y) \in \beta_j$), the complementary $(\CC \setminus \overline{V_j}, \CC \setminus U_j)$-excursion of $\Gamma_r$ connecting $x$ and $y$ is contained in $\SCB_{\fa_8r}([x, y]_{\partial U_j^\star}^\circlearrowleft; d_{U_j^\star})$.  
        \item\label{035K} We have
        \begin{equation*}
            D_{\Upsilon_r}(x_1, x_2; A_{r,5r}(0) \cap \Upsilon_r) \le \rho\kc_r, \quad \forall j \in J_r, \ \forall I_1, I_2 \in \SI_j^\alpha, \ \forall x_1 \in I_1 \cap \partial U_j, \ \forall x_2 \in I_2 \cap \partial U_j. 
        \end{equation*}
        and
        \begin{multline*}
            D_{\Upsilon_r}(x_1, x_2; A_{r,5r}(0) \cap \Upsilon_r) \le \rho\kc_r, \\
            \forall j \in [1, \#Z_r]_\ZZ \setminus J_r, \ \forall I_1, I_2 \in \SI_j^\beta, \ \forall x_1 \in I_1 \cap \partial U_j, \ \forall x_2 \in I_2 \cap \partial U_j. 
        \end{multline*}
    \end{enumerate}
\end{definition}

We note that, by condition~\eqref{098G}, for each $r \in \rho^{-1}\SR$ and each $J_r \subset [1, \#Z_r]_\ZZ$ with $\#J_r = 2$ chosen in a manner that is almost surely determined by $\Upsilon$, we have
\begin{equation}\label{eq:286}
    \BP\!\left\lbrack\fF_r(J_r) \ \middle\vert \ \fE_r, \ \Upsilon\right\rbrack \ge \fa_9 \quad \text{almost surely}. 
\end{equation}

We next explain the roles of the different conditions appearing in \Cref{098,035}. Condition~\eqref{098A} occurs with high probability due to \Cref{010}, Axiom~\eqref{010D} (tightness across scales). This condition can be used to lower-bound the amount of time a $D_\Upsilon$-geodesic between points outside of $B_{5r}(0)$ has to spend in $A_{r,5r}(0)$ if it enters $B_r(0)$. Condition~\eqref{098B} occurs with high probability due to \Cref{019}. Condition~\eqref{098B} together with condition~\eqref{098A} implies that if a $D_\Upsilon$-geodesic between points outside of $B_{5r}(0)$ enters $B_r(0)$, then the points at which it enters and exits $B_{3r}(0)$ cannot be too close to each other. Condition~\eqref{098C} occurs with high probability due to a similar reason to the proof of \Cref{019}. Condition~\eqref{098E} occurs with high probability due to \Cref{323}. Conditions~\eqref{098C} and \eqref{098E} will be used to show that a $D_\Upsilon$-geodesic cannot spend a long time in $U_r \setminus V_r$ (cf.~\Cref{318}). Condition~\eqref{098D} is in some sense the most important condition in the definition of $\fE_r$. Due to the definition of $\fG_{z,\rho r}$ from \Cref{ss:05}, this condition provides a large collection of ``good'' pairs of points $u, v \in A_{2r,4r}(0)$ such that $\widetilde D_\Upsilon(u, v) \le M_1 D_\Upsilon(u, v)$. The fact that we consider the event $\fG_{z,\rho r}$ in this condition is the reason why we need to require that $r \in \rho^{-1}\SR$. We will apply \Cref{265} to say that condition~\eqref{098D} occurs with high probability. Condition~\eqref{098F} occurs with high probability due the scale invariance of the law of $\Upsilon$. The purpose of condition~\eqref{098F} is to ensure that conditions~\eqref{035H}, \eqref{035I}, \eqref{035K} occur with uniformly positive conditional probability (i.e., condition~\eqref{098G} occurs). The purpose of conditions~\eqref{035H}, \eqref{035I}, \eqref{035K} is to ensure that the ``tubes'' $U_j$ for $j \in [1, \#Z_r]_\ZZ$ are ``penetrated'', and it takes a $D_{\Upsilon_r}$-geodesic a small amount of time to run though them. The ``tubes'' $U_j$ for $j \in J_r$ (i.e., the ``tubes'' $U_j$ for which the corresponding link pattern is given by $\alpha_j$ (cf.~condition~\eqref{035H})) will be the place where the $D_{\Upsilon_r}$-geodesic enters and exits. Note that in the definition of $\alpha_j$ (cf.~\eqref{eq:043}), the endpoints of $\eta_{j,\rR}$ and $\eta_{j + 1,\rR}$ are not ``linked up''. This will allow the $D_{\Upsilon_r}$-geodesic to enter and exit the ``tube''. While in the ``tubes'' $U_j$ for $j \in [1, \#Z_r]_\ZZ \setminus J_r$ (i.e., the ``tubes'' $U_j$ for which the corresponding link pattern is given by $\beta_j$), the the complementary $(\CC \setminus \overline{V_j}, \CC \setminus U_j)$-excursions of $\Gamma_r$ that ``link up'' the endpoints of $\eta_{j,\rL}$ and $\eta_{j + 1,\rL}$ and the endpoints of $\eta_{j,\rR}$ and $\eta_{j + 1,\rR}$, resp.~will be the ``wall'' that prevents the $D_{\Upsilon_r}$-geodesic from exiting the ``tube'' (see \Cref{ss:07} for more details).

\subsection{Properties of the events}\label{ss:08}

We begin by verifying that $\fE_r$ satisfies the required measurability property.

\begin{lemma}
    For each $r \in \rho^{-1}\SR$, the event $\fE_r$ is almost surely determined by $A_{r,5r}(0) \cap \Upsilon$. 
\end{lemma}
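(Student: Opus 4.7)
The plan is to verify that each of the seven conditions in \Cref{098} is almost surely determined by $A_{r,5r}(0) \cap \Upsilon$; since $\fE_r$ is their conjunction, this suffices. Two underlying tools are used throughout: by locality (\Cref{010}, Axiom~\eqref{010B}), the internal metric $D_\Upsilon(\bullet,\bullet; A_{r,5r}(0) \cap \Upsilon)$ is almost surely determined by $A_{r,5r}(0) \cap \Upsilon$; and since $\Gamma$ (and $\wp$) is almost surely determined by $\Upsilon$, the collection of connected arcs $\{A_{r,5r}(0) \cap \SCL : \SCL \in \Gamma\}$ is almost surely determined by $A_{r,5r}(0) \cap \Upsilon$.

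Condition~\eqref{098A} follows because any path in $\Upsilon$ from $\partial B_r(0) \cap \Upsilon$ to $\partial B_{2r}(0) \cap \Upsilon$ admits a sub-path in $\overline{A_{r,2r}(0)} \cap \Upsilon$ (truncating between the last visit to $\partial B_r(0)$ and the subsequent first visit to $\partial B_{2r}(0)$), so the length-metric property gives $D_\Upsilon(\partial B_r(0) \cap \Upsilon, \partial B_{2r}(0) \cap \Upsilon) = D_\Upsilon(\partial B_r(0) \cap \Upsilon, \partial B_{2r}(0) \cap \Upsilon; A_{r,5r}(0) \cap \Upsilon)$; the analogous identity holds for the second distance. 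Conditions~\eqref{098B} and \eqref{098C} are manifestly local: the connectivity relation $x \xleftrightarrow{A_{r,5r}(0)} y$, the existence of separating loops of $\Gamma$ contained in $A_{r,5r}(0)$, and the internal $D_\Upsilon$-distance are all read off from $A_{r,5r}(0) \cap \Upsilon$. For condition~\eqref{098D}, the deterministic set $\fZ_r$ and the constraint $(1+a)\rho r < r$ (achievable since $\rho = \fa_6$ is chosen small) ensure $B_{(1+a)\rho r}(z) \subset A_{r,5r}(0)$ for $z \in \fZ_r$, so \Cref{100} makes each $\fG_{z,\rho r}$, and hence $Z_r$, a function of $A_{r,5r}(0) \cap \Upsilon$. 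Condition~\eqref{098F} is determined by $\Gamma \cap U_r$ through the local data defining $X_j$, the arcs $\SI_j, \SI_j^\alpha, \SI_j^\beta$, and the $d_{U_j^\star}$-neighborhoods, all inside $A_{r,5r}(0)$.

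The subtle case is condition~\eqref{098E}, which refers to global connected components of $\Upsilon$. The key observation is that every connected component of $\Upsilon$ is of the form $X(\SCL)$ for some loop $\SCL \in \Gamma$ with $\wp(\SCL) = 0$; since $X(\SCL)$ is dense in $\overline{\mathrm{int}(\SCL)}$, we have $X(\SCL) \cap (\CC \setminus B_{4r}(0)) \neq \emptyset$ if and only if $\mathrm{int}(\SCL) \not\subset B_{4r}(0)$. The latter can be tested from $A_{r,5r}(0) \cap \Gamma$ alone by a case distinction on $\SCL$: if $\SCL$ has an arc in $A_{r,5r}(0)$ reaching $A_{4r,5r}(0)$, then $\SCL$ exits $B_{4r}(0)$; if $\SCL \cap A_{r,5r}(0) \neq \emptyset$ but $\SCL \cap A_{4r,5r}(0) = \emptyset$, then $\SCL$ is connected and disjoint from $\partial B_{4r}(0) \subset A_{r,5r}(0)$, forcing $\SCL \subset B_{4r}(0)$; and if $\SCL \cap A_{r,5r}(0) = \emptyset$ while $X(\SCL) \cap A_{2r,4r}(0) \neq \emptyset$, then connectedness of $\SCL$ and the constraint $A_{2r,4r}(0) \cap B_r(0) = \emptyset$ force $\SCL \subset \CC \setminus B_{5r}(0)$ enclosing $B_{5r}(0)$, so $\Upsilon_\bullet$ is relevant. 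Thus, for each connected component $\SCC$ of $(U_r \setminus V_r) \cap \Upsilon$, the question of whether the corresponding $\Upsilon_\bullet$ reaches $\CC \setminus B_{4r}(0)$ is determined by $A_{r,5r}(0) \cap \Upsilon$; conditions (b) and (c) of \eqref{098E} involve only local arm counts and the internal metric, so \eqref{098E} is fully determined.

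Finally, for condition~\eqref{098G}, the pair $(U_r, V_r)$ is determined by $A_{r,5r}(0) \cap \Upsilon$, and $(U_r', V_r')$ is independent of $\Upsilon$ with uniform law over a finite deterministic set of possibilities. The resampling $\Upsilon_r$ is conditionally independent of $\Upsilon$ given $\Gamma(\CC \setminus \overline{V_r'}, \CC \setminus U_r')$ and $\wp$, both determined by $A_{r,5r}(0) \cap \Upsilon$. Hence $\BP[\fF_r(J_r) \mid \Upsilon]$ is, for each $J_r \subset [1, \#Z_r]_\ZZ$ of cardinality two, a measurable function of $A_{r,5r}(0) \cap \Upsilon$; minimizing over the finite (and $\Upsilon$-measurable) family of $J_r$'s preserves this property. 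The main obstacle is condition~\eqref{098E}, since it literally refers to the global connected component of $\Upsilon$; the resolution uses the CLE-specific description of these components as nested carpets $X(\SCL)$ together with the connectedness of the enclosing loop to reduce the global criterion to a local one.
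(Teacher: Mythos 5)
Your proof follows the same condition-by-condition route as the paper's, which treats conditions~\eqref{098A}, \eqref{098B}, \eqref{098C}, \eqref{098E}, \eqref{098F}, \eqref{098G} as immediate from the definitions and invokes \Cref{100} for condition~\eqref{098D}; your handling of \eqref{098D} via the inclusion $B_{(1 + a)\rho r}(z) \subset A_{r,5r}(0)$ for $z \in \fZ_r$ is exactly the intended one, and elsewhere you simply supply more detail than the paper does. One intermediate claim is false as stated: $X(\SCL)$ is a closed \emph{proper} subset of $\overline{\mathop{\mathrm{int}}(\SCL)}$ (it omits the nonempty open interiors of the next-level loops), so it is nowhere dense there rather than dense. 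The equivalence you extract from it --- $X(\SCL) \cap (\CC \setminus B_{4r}(0)) \neq \emptyset$ if and only if $\mathop{\mathrm{int}}(\SCL) \not\subset B_{4r}(0)$ --- nevertheless holds by a different argument: a point of $\mathop{\mathrm{int}}(\SCL)$ outside $\overline{B_{4r}(0)}$ either already lies in $X(\SCL)$ or lies in the interior of some next-level loop $\SCL^\prime$; in the latter case $\SCL^\prime \subset X(\SCL)$, and since any path from that point to $\infty$ inside $\{\lvert w\rvert > 4r\}$ must cross $\SCL^\prime$, the loop $\SCL^\prime$ itself exits $\overline{B_{4r}(0)}$. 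With that repair your treatment of condition~\eqref{098E} --- which is indeed the only condition where the global connectivity of $\Upsilon$ threatens locality, and which the paper passes over in silence --- goes through, and the remainder of the argument is correct.
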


\begin{proof}
    It follows immediately from the definition that conditions~\eqref{098A}, \eqref{098B}, \eqref{098C}, \eqref{098E}, \eqref{098F}, \eqref{098G} are almost surely determined by $A_{r,5r}(0) \cap \Upsilon$. It follows from \Cref{100} that condition~\eqref{098D} is almost surely determined by $A_{r,5r}(0) \cap \Upsilon$. 
\end{proof}

The remainder of the present subsection is devoted to proving the following.

\begin{lemma}\label{104}
    One may choose the parameters of \eqref{eq:094} in such a way that
    \begin{equation*}
        \BP\lbrack\fE_r\rbrack \ge \pp, \quad \forall r \in \rho^{-1}\SR. 
    \end{equation*}
\end{lemma}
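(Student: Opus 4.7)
The plan is to choose the positive constants $\fa_1 \gg \fa_2 \gg \cdots \gg \fa_9$ successively in that order, so that each of the seven conditions defining $\fE_r$ in \Cref{098} fails with probability at most $(1-\pp)/7$, uniformly over $r \in \rho^{-1}\SR$; a union bound then gives $\BP\lbrack\fE_r\rbrack \ge \pp$. Each $\fa_j$ will depend only on the previously chosen constants together with the universal quantities $\lambda, \pp, \fA, a, \alpha, \fp$, consistent with the hypothesis of \Cref{ss:14}.

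Conditions \eqref{098A}--\eqref{098D} follow from material already established in the paper. Condition \eqref{098A} is a direct consequence of \Cref{010}, Axioms \eqref{010C} and \eqref{010D}, and determines $\fa_1$. For \eqref{098B} I apply \Cref{019} with some $\chi \in (0, 1 - 2/\alpha_{\mathrm{4A}})$: on its good event, any pair $x, y \in A_{2r,4r}(0) \cap \Upsilon$ with $\lvert x - y\rvert \le \fa_3 r$ that is connected in $A_{r,5r}(0)$ satisfies $D_\Upsilon(x, y; A_{r,5r}(0) \cap \Upsilon) \le C\fa_3^\chi \kc_r$, so I choose $\fa_3$ (given $\fa_2$) so that $C\fa_3^\chi \le \fa_2$. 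Condition \eqref{098C} is a slight refinement using the four-arm estimate \Cref{264}, \eqref{264A}, which fixes $\fa_4$ given $\fa_3$. Condition \eqref{098D} rests on the uniform lower bound \eqref{eq:101}, the locality \Cref{100} (so that $\fG_{z,\rho r}$ depends only on $B_{(1+a)\rho r}(z) \cap \Upsilon$), and the near-independence estimate \Cref{265} combined with \Cref{266}: since $\fZ_r$ contains on the order of $\fa_4/\fa_5$ well-separated test points in any arc of length $\fa_4 r$, a union bound shows that at least one of the events $\fG_{z,\rho r}$ occurs in each such arc with probability tending to one as $\fa_4/\fa_5 \to \infty$. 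Choosing $\fa_4$ and then $\fa_5$ small enough handles \eqref{098D}; the needed relation $\rho = \fa_6 \ll \fa_5$ will be secured once $\fa_6$ is chosen.

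For \eqref{098E} I cover each connected component of $(U_r \setminus V_r) \cap \Upsilon$ meeting a carpet component extending outside $B_{4r}(0)$ using the balls produced by \Cref{323} (applied to the smooth curves $P_j$), bound the number of arcs in each covering ball via \Cref{328}, and bound the internal $D_\Upsilon$-diameter through the moment estimates underlying \Cref{214} together with the scaling inequality \eqref{eq:340}. Since $U_r \setminus V_r$ is a Euclidean neighborhood of $\bigcup_j P_j$ of width $\fa_7 \alpha\rho r$, the resulting sum is dominated by a small power of $\fa_7$ times $\kc_r$, so it suffices to choose $\fa_7$ small enough given $\fa_2,\ldots,\fa_6$. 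Condition \eqref{098F} is a geometric statement about $\Gamma$ inside the finitely many tubes $U_j$ of deterministic shape; after a union bound over the at most $2\pi/\fa_5$ tubes, scale invariance of the law of $\Gamma$ reduces it to three statements at unit scale that hold with high probability as $\fa_8 \to 0$, each following from \Cref{328} and standard $\CLE_\kappa$ boundary regularity. This fixes $\fa_8$ given $\fa_1, \ldots, \fa_7$.

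The main obstacle is condition \eqref{098G}. The key tool is \Cref{252}: conditionally on the exterior data $\Gamma(\CC \setminus \overline{V_j}, \CC \setminus U_j)$ and the coloring $\wp$ (which are preserved by the resampling on the event $(U_r, V_r) = (U_r^\prime, V_r^\prime)$), the complementary excursions of $\Upsilon_r$ inside $U_j^\star$ together with the loops of $\Gamma_r$ contained in $U_j^\star$ form a multichordal $\CLE_\kappa$ in $(U_j^\star; X_j)$ with a specific exterior link pattern. On the event \eqref{098F}, the domains $(U_j^\star; X_j)$ satisfy a quantitative regularity bound depending only on $\fa_1, \ldots, \fa_8$, so \Cref{012} produces a uniform positive lower bound for the conditional probability that the induced interior link pattern equals $\alpha_j$ for $j \in J_r$ and $\beta_j$ for $j \in [1, \#Z_r]_\ZZ \setminus J_r$; likewise \Cref{271}, combined with the continuity statement of \Cref{260}, yields a uniform positive lower bound for the regularity requirement \eqref{035I}. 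Finally, the distance bound \eqref{035K} comes from applying \Cref{229} to $\Upsilon_r$ restricted to each resampled tube (which sits inside a Euclidean ball of radius $C\rho r$) together with the scaling bound $\kc_{\rho r} \le \kK\rho \kc_r$ from \eqref{eq:340}, which, once $\fa_6 = \rho$ is sufficiently small, forces the internal diameters to be at most $\rho\kc_r$ with conditional probability arbitrarily close to one. A union bound over the at most $2\pi/\fa_5$ indices $j$ then produces a uniform positive lower bound $\fa_9^\ast > 0$ for the joint conditional probability of \eqref{035H}, \eqref{035I}, \eqref{035K}, depending only on $\fa_1, \ldots, \fa_8$; one then sets $\fa_9 \le \fa_9^\ast$.
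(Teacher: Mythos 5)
Your treatment of conditions~\eqref{098A}--\eqref{098F} matches the paper's (\Cref{132,133,134}) in both structure and inputs, and your use of \Cref{252}, \Cref{012}, and \Cref{271} to get a uniform positive lower bound for the link-pattern requirements \eqref{035H} and \eqref{035I} is also what the paper does. The gap is in your argument for the distance requirement \eqref{035K}.

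First, the premise is wrong: the resampled tube $U_j$ is \emph{not} contained in a Euclidean ball of radius $C\rho r$. By construction $U_j$ is a neighborhood of the path $P_j = \fP_{z_j,z_{j+1}}$ joining two consecutive points of $Z_r$ on $\partial B_{3r}(0)$, and condition~\eqref{098D} only guarantees that consecutive points of $Z_r$ are within distance of order $\fa_4 r$; the paper explicitly records (in the proof of \Cref{329}) that each $U_j$ has Euclidean diameter at most $100\fa_4 r$, and $\fa_4 \gg \fa_6 = \rho$. Second, even after correcting the scale, \Cref{229} cannot deliver \eqref{035K}: it bounds the internal diameter of a region of scale $s$ by $A\kc_s$ with high probability as $A \to \infty$, so at scale $s \sim \fa_4 r$ the best one gets is a bound of order $\kc_{\fa_4 r} \approx \fa_4\kc_r$, which is far larger than the required $\rho\kc_r = \fa_6\kc_r$. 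The point is that \eqref{035K} demands the $D_{\Upsilon_r}$-distance across the whole tube be \emph{atypically} small, so one must show a rare event still has uniformly positive conditional probability; no high-probability diameter estimate can do this. The paper's \Cref{135} instead re-exposes the interior of each tube as a Brownian loop-soup, uses Axiom~\eqref{010E} (monotonicity) to see that the relevant smallness events are decreasing in the loop soup, applies the FKG inequality (\Cref{020}) to plant a chain of order $1/\varepsilon$ small loops of $D_{\Upsilon_r}$-length $O(\kc_{\varepsilon r})$ along a smooth path $L_j$ threading the tube, and then invokes \Cref{280} (that $\kc_{\varepsilon r}/(\varepsilon\kc_r) \to 0$) to make the total chained cost $\ll \rho\kc_r$. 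None of these ingredients appears in your proposal, and without them \eqref{035K} is not established.

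A secondary issue: to make all $\#Z_r \le 2\pi/\fa_5$ tubes succeed \emph{simultaneously}, a union bound is the wrong tool for events that only hold with probability bounded below by some $\fa \in (0,1)$ (rather than close to one); you need the conditional independence of the multichordal $\CLE_\kappa$'s in the disjoint domains $U_j^\star$ (via \Cref{252}) to multiply the per-tube lower bounds, which is harmless only because the number of tubes is a constant depending on $\fa_5$.
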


To establish \Cref{104}, we will address the conditions of \Cref{098} sequentially.

\begin{lemma}\label{132}
    One may choose the parameters $\fa_1$, $\fa_2$, $\fa_3$, and $\fa_4$ of \eqref{eq:094} in such a way that for each $r > 0$, the probability of each of conditions~\eqref{098A}, \eqref{098B}, and \eqref{098C} of \Cref{098} is at least $1 - (1 - \pp)/100$. 
\end{lemma}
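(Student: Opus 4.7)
The plan is to handle the three conditions separately, in the natural order $\fa_1 \gg \fa_2 \gg \fa_3 \gg \fa_4$, fixing each parameter once all the events it appears in have been analyzed. Before starting, fix any $\chi \in (0, 1 - 2/\alpha_{\mathrm{4A}})$; this $\chi$ will govern the H\"older exponent used in conditions~\eqref{098B} and \eqref{098C}. All three conditions are statements about $D_\Upsilon$ (or about $\Gamma$ alone) on the annulus $A_{r,5r}(0)$, rescaled by $\kc_r$, so by Axiom~\eqref{010C} and the scale invariance of $\Gamma$ it suffices in each case to produce a probability bound with rate uniform in $r > 0$.

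For condition~\eqref{098A}, I would apply the lower bound of Axiom~\eqref{010D}\,(ii) of \Cref{010} to the two pairs of disjoint compact subsets $(K_1, K_2) = (\partial B_1(0), \partial B_2(0))$ and $(K_1, K_2) = (\partial B_4(0), \partial B_5(0))$. The resulting tightness of the laws of $\kc_r/D_\Upsilon(\partial B_r(0) \cap \Upsilon, \partial B_{2r}(0) \cap \Upsilon)$ and $\kc_r/D_\Upsilon(\partial B_{4r}(0) \cap \Upsilon, \partial B_{5r}(0) \cap \Upsilon)$ in $r$ allows me to pick $\fa_1 \in (0, 1)$ small enough that each of these random variables is $\le \fa_1^{-1}$ with probability at least $1 - (1 - \pp)/200$ uniformly in $r$; a union bound gives condition~\eqref{098A} with probability at least $1 - (1 - \pp)/100$.

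For condition~\eqref{098B}, I would invoke \Cref{019} with $U = A_{1,5}(0)$, $K = \overline{A_{2,4}(0)}$, $\rr = r$, and the chosen $\chi$. The conclusion gives, uniformly in $r$, that with probability tending to one as $\varepsilon \to 0$, for all $x, y \in \overline{A_{2r,4r}(0)} \cap \Upsilon$ with $\lvert x - y\rvert \le \varepsilon r$, either a loop of $\Gamma$ in $A_{r,5r}(0)$ disconnects $x$ and $y$, or $\kc_r^{-1} D_\Upsilon(x, y; A_{r,5r}(0) \cap \Upsilon) \le \varepsilon^\chi$. With $\fa_2$ already chosen, I fix $\fa_3 \in (0, \fa_2)$ small enough that simultaneously $\fa_3^\chi \le \fa_2$ and the event in \Cref{019} holds with probability at least $1 - (1 - \pp)/100$ uniformly in $r$.

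Condition~\eqref{098C} is the place that requires a bit more than a direct quotation and is the main obstacle, since \Cref{019} produces a disconnecting loop of $\Gamma$ but does not control its Euclidean diameter. I would redo the first part of the proof of \Cref{019}: fix $\nu \in (0, 1)$ with $\alpha_{\mathrm{4A}}(1 - \nu) > 2$, and apply \Cref{264}\,\eqref{264A} together with a union bound and the Borel--Cantelli lemma to show that, uniformly in $r$ and with probability tending to one as $\varepsilon \to 0$, every $z \in \left(\tfrac{1}{100}\varepsilon r \ZZ\right)^2 \cap A_{r,5r}(0)$ admits at most two connected arcs of $\Gamma$ in $A_{\varepsilon r, \varepsilon^\nu r}(z)$ joining its inner and outer boundaries. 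On this event, the same four-arm argument used for the ``either/or'' dichotomy in \Cref{019} forces that for $x, y \in A_{2r,4r}(0) \cap \Upsilon$ with $\lvert x - y\rvert \le 100 \fa_4 r$ (and $\varepsilon$ comparable to $\fa_4$ up to constants, with a nearby $z$), if $x$ and $y$ are not connected in $A_{r,5r}(0)$ then a disconnecting loop of $\Gamma$ is contained in $B_{\varepsilon^\nu r}(z)$, hence has Euclidean diameter at most $2\varepsilon^\nu r$. Choosing $\fa_4 \in (0, \fa_3)$ small enough that $200 \fa_4^\nu \le \fa_3/2$ and so that this event also has probability at least $1 - (1 - \pp)/100$ uniformly in $r$ completes the proof. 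In this last step one has to be slightly careful about how the grid spacing and the dichotomy radii interact (as in the proof of \Cref{019}), but the argument is quantitative and causes no new difficulties beyond tracking the diameter.
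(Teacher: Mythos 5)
Your proposal is correct and follows essentially the same route as the paper: Axiom~\eqref{010D} for condition~\eqref{098A}, a direct application of \Cref{019} for condition~\eqref{098B}, and a four-arm/union-bound argument via \Cref{264}, \eqref{264A} for condition~\eqref{098C} (the paper uses annuli $A_{200\fa_4 r,\fa_3 r/4}(z)$ with outer radius tied to the already-fixed $\fa_3$ rather than your $A_{\varepsilon r,\varepsilon^\nu r}(z)$, but both choices close the same way, and your Borel--Cantelli step is unnecessary since only a single scale $\varepsilon \asymp \fa_4$ is needed). The only cosmetic difference is that the paper pins down $\fa_2 \defeq \lambda\fA^{-1}\fa_1^2$ for use elsewhere, which is irrelevant to the probability bounds proved here.
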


\begin{proof}
    By \Cref{010}, Axiom~\eqref{010D} (tightness across scales), if $\fa_1 \in (0, 1)$ is chosen to be sufficiently small, then for each $r > 0$, condition~\eqref{098A} holds with probability at least $1 - (1 - \pp)/100$. Choose $\fa_2 \in (0, \fa_1)$ to be sufficiently small ($\fa_2 \defeq \lambda\fA^{-1}\fa_1^2$ would be sufficient for later use). By \Cref{019}, if $\fa_3 \in (0, \fa_2)$ is chosen to be sufficiently small, then for each $r > 0$, condition~\eqref{098B} holds with probability at least $1 - (1 - \pp)/100$. Next, we consider condition~\eqref{098C}. By \Cref{264}, \eqref{264A} and a union bound, if $\fa_4 \in (0, \fa_3)$ is chosen to be sufficiently small, then for each $r > 0$, it holds with probability at least $1 - (1 - \pp)/100$ for each $z \in (\fa_4r\ZZ)^2 \cap B_{4r}(0)$, there are at most two connected arcs of $\Gamma$ in $A_{200\fa_4r,\fa_3r/4}(z)$ connecting its inner and outer boundaries. Suppose that this event occurs. Let $x, y \in A_{2r,4r}(0) \cap \Upsilon$ with $\lvert x - y\rvert \le 100\fa_4r$. Choose $z \in (\fa_4r\ZZ)^2 \cap B_{4r}(0)$ such that $x, y \in B_{200\fa_4r}(z)$. Suppose by way of contradiction that $x$ and $y$ do not lie in the same connected component of $B_{\fa_3r/4}(z) \cap \Upsilon$. Then either there is a loop of $\Gamma$ contained in $B_{\fa_3r/4}(z)$ and disconnecting $x$ and $y$, or there are at least two $B_{\fa_3r/4}(z)$-excursions of $\Gamma$ disconnecting $x$ and $y$ in $B_{\fa_3r/4}(z)$. However, if the latter is true, since $x, y \in B_{200\fa_4r}(z)$, there would be at least four connected arcs of $\Gamma$ in $A_{200\fa_4r,\fa_3r/4}(z)$ connecting its inner and outer boundaries, a contradiction. Thus, we conclude that the former is true. This completes the proof. 
\end{proof}

Henceforth fix $\fa_1$, $\fa_2$, $\fa_3$, and $\fa_4$ as in \Cref{132}. We next consider condition~\eqref{098D}. 

\begin{lemma}\label{133}
    One may choose the parameters $\fa_5$ and $\rho$ of \eqref{eq:094} in such a way that for each $r \in \rho^{-1}\SR$, the probability of condition~\eqref{098D} of \Cref{098} is at least $1 - (1 - \pp)/100$. 
\end{lemma}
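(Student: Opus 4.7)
The plan is to exploit the approximate spatial independence of the events $\fG_{z,\rho r}$ via \Cref{265} and \Cref{266}, followed by a union bound over a finite covering of $\partial B_{3r}(0)$ by short arcs. Recall from \Cref{100} that $\fG_{z,\rho r}$ is almost surely determined by $B_{(1+a)\rho r}(z) \cap \Upsilon$, and from \eqref{eq:101} that $\BP\lbrack\fG_{z,\rho r}\rbrack \ge \fp$, uniformly in $z \in \CC$ and $r \in \SR$.

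First I would fix $\rho \in (0, 1)$ small enough in terms of $\fa_5$ (it suffices to take $(1+a)\rho < \fa_5/4$) so that for any two distinct points $z_1, z_2 \in \fZ_r$ --- which have mutual Euclidean distance at least a constant multiple of $\fa_5 r$ --- the balls $B_{(1+a)\rho r}(z_i)$ are separated by a comfortable margin, fulfilling the hypothesis of \Cref{265} for some fixed $t > 0$ depending only on $a$. By \Cref{266}, that lemma applies to the $\Upsilon$-measurable events $\fG_{z,\rho r}$ and provides constants $\alpha, C > 0$ that are independent of $r$ and depend only on $\fp, t$.

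Next, I would cover $\partial B_{3r}(0)$ by $N = O(1/\fa_4)$ sub-arcs $\{I_k\}$ each of Euclidean length $\fa_4 r/4$, chosen so that any sub-arc of $\partial B_{3r}(0)$ of Euclidean length at least $\fa_4 r$ contains some $I_k$ in its entirety. Each $I_k$ contains at least $n_\ast = \lfloor c\fa_4/\fa_5\rfloor$ points of $\fZ_r$ for some universal constant $c > 0$, so \Cref{265} together with \Cref{266} yields
\[
\BP\lbrack\fZ_r \cap I_k \cap Z_r = \emptyset\rbrack \le C\exp(-\alpha n_\ast),
\]
and a union bound over the $N$ arcs produces
\[
\BP\lbrack\text{condition~\eqref{098D} fails}\rbrack \le \frac{C^\prime}{\fa_4}\exp\!\left(-\alpha c\fa_4/\fa_5\right).
\]

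Since $\fa_4$ has already been fixed in \Cref{132}, this bound can be made at most $(1-\pp)/100$ by choosing $\fa_5 \in (0, \fa_4)$ sufficiently small and then taking $\rho \in (0, \fa_5)$ with $(1+a)\rho < \fa_5/4$; uniformity in $r \in \rho^{-1}\SR$ is automatic since all of $\alpha, C, C^\prime, c$ depend only on $\fp$, $a$, $\fa_4$, $\fa_5$, and $\rho$. The only structural point to verify is the $\Upsilon$-measurability hypothesis needed for \Cref{266}, which is supplied by \Cref{100}. I do not anticipate any serious obstacle here: once the two scale-comparisons $\rho \ll \fa_5 \ll \fa_4$ are set in the correct order, the estimate is essentially a standard application of the long-range near-independence for CLE.
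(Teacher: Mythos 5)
Your proposal is correct and follows essentially the same route as the paper: cover $\partial B_{3r}(0)$ by a deterministic finite collection of short arcs, note that each contains many points of $\fZ_r$, apply the long-range near-independence of \Cref{265} (via \Cref{266} and the locality from \Cref{100}) together with the uniform lower bound \eqref{eq:101}, and conclude by a union bound after choosing $\fa_5$ and then $\rho$ sufficiently small. The extra explicit bookkeeping of the separation constants is fine but not a substantive difference.
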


\begin{proof}
    For each $r \in \rho^{-1}\SR$, we may choose a deterministic collection $\SJ_r$ of connected arcs of $\partial B_{3r}(0)$ satisfying the following conditions:
    \begin{enumerate}
        \item $\#\SJ_r$ is a constant depending only on $\fa_4$. 
        \item Each arc $J \in \SJ_r$ has Euclidean length $\fa_4 r/2$. 
        \item Each connected arc $I \subset \partial B_{3r}(0)$ of Euclidean length at least $\fa_4 r$ contains come $J \in \SJ_r$. 
    \end{enumerate}
    Then, by \Cref{265}, \eqref{eq:101}, and a union bound, if $\fa_5 \in (0, \fa_4)$ and $\rho \in (0, \fa_5)$ are chosen to be sufficiently small, then for each $r \in \rho^{-1}\SR$, it holds with probability at least $1 - (1 - \pp)/100$ that $J \cap Z_r \neq \emptyset$ for all $J \in \SJ_r$, in which case condition~\eqref{098D} holds. 
\end{proof}

Henceforth fix $\fa_5$ and $\rho$ as in \Cref{133}. We next deal with conditions~\eqref{098E} and \eqref{098F}. 

\begin{lemma}\label{134}
    One may choose the parameters $\fa_7$ and $\fa_8$ of \eqref{eq:094} in such a way that for each $r \in \rho^{-1}\SR$, the probability of each of conditions~\eqref{098E} and \eqref{098F} of \Cref{098} is at least $1 - (1 - \pp)/100$. 
\end{lemma}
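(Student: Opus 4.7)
The plan is to verify conditions~\eqref{098E} and~\eqref{098F} of \Cref{098} separately. The scale invariance of $\Upsilon$ reduces everything to estimates at scale $r$, and by property~\eqref{it:103} the rescaled shapes $r^{-1}(U_r, V_r)$ take values in a deterministic finite collection, so a final union bound over this family closes the argument. I will fix $\fa_7$ first, then $\fa_8 \ll \fa_7$.

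For condition~\eqref{098E} the main ingredient is \Cref{323}. Since $V_j = B_{\alpha\rho r}(P_j)$ and $U_j \setminus V_j = B_{\fa_7\alpha\rho r}(V_j) \setminus V_j$, applying \Cref{323} with $\varepsilon = \fa_7\alpha\rho r$, some $\nu \in (0, 1)$ close to $1$, and $\delta = r/2$ (valid because any component $\Upsilon_\bullet$ intersecting both $\CC \setminus B_{4r}(0)$ and $U_r \setminus V_r \subset A_{2r,4r}(0)$ has Euclidean diameter $\gtrsim r$ once $\rho$ is small) produces, on an event of probability tending to one as $\fa_7 \to 0$ uniformly in $r$, a covering of every relevant component of $(U_j \setminus V_j) \cap \Upsilon$ by $\le (\fa_7\alpha\rho)^{-\zeta\nu}$ balls of radius $t = (\fa_7\alpha\rho)^\nu r$; concatenating over $j$ and the finitely many shapes gives the desired $\CS$ for part~(a), with $\#\CS = O(\fa_7^{-\zeta\nu})$ and $t \le \lambda r$. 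Part~(b) follows from \Cref{328} together with a union bound over $\CS$: for any fixed $q > 0$ this yields a uniform bound $2N$ with $N = O(\fa_7^{-\zeta\nu/q})$ with high probability. For part~(c), \Cref{229} combined with a union bound over $\CS$ shows that with high probability the supremum of $D_\Upsilon(\cdot, \cdot; A_{r,5r}(0) \cap \Upsilon)$ over each $B_{2t}(z)$ is at most $A\kc_t$ for $A = O(\fa_7^{-\zeta\nu/p})$; the sum in (c) is then at most
\begin{equation*}
    O\bigl(\#\CS \cdot N \cdot A \cdot \kc_t\bigr) \le O\bigl(\fa_7^{\nu(1 - \zeta) - \zeta\nu(1/p + 1/q)}\bigr) \cdot \kK \kc_r,
\end{equation*}
where I used $\kc_t \le \kK(\fa_7\alpha\rho)^\nu \kc_r$ from~\eqref{eq:340}. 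Choosing $p, q$ large so that the exponent on $\fa_7$ is positive and then $\fa_7$ small, this is at most $\fa_2 \kc_r$.

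For condition~\eqref{098F}(a), I cover $P_j$ by $O(1/(\fa_7\alpha\rho))$ balls of radius of order $\fa_7\alpha\rho r$; \Cref{328} and a union bound bound the number of arcs of $\Gamma$ crossing $U_j \setminus V_j$ by a deterministic constant $N(\fa_7)$, and I set $\fa_8 \le 1/N(\fa_7)$. For part~(b), two distinct endpoints of $(\CC \setminus \overline{V_j}, \CC \setminus U_j)$-excursions in $X_j$ lying within Euclidean distance $\fa_8 r$ force each of the two corresponding excursions to contribute at least two crossings of the annulus $A_{\fa_8 r/2,\, \fa_7\alpha\rho r/4}(z)$ (centered at the midpoint $z$), totalling at least four arcs; by \Cref{264}, \eqref{264A} applied with any $\alpha^\prime \in (2, \alpha_{\mathrm{4A}})$ this has probability $O((\fa_8/\fa_7)^{\alpha^\prime})$, and a union bound over the $O(N(\fa_7)^2)$ candidate pairs makes the failure arbitrarily small for $\fa_8$ small in terms of $\fa_7$. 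Part~(c) is similar: a failure forces two components of $\partial U_j^\star \setminus I$ to be $d_{U_j^\star}$-close inside $U_j^\star$, which again forces four arcs of $\Gamma$ across a small annulus of aspect ratio $O(\fa_8/\fa_7)$, controlled once more by \Cref{264}, \eqref{264A} and a union bound over the $O(N(\fa_7))$ arcs $I \in \SI_j$.

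The hard step is the parameter-balance in the analysis of condition~\eqref{098E}(c): the polynomial losses $\fa_7^{-\zeta\nu}$ from $\#\CS$, $\fa_7^{-\zeta\nu/q}$ from $N$, and $\fa_7^{-\zeta\nu/p}$ from $A$ must together be beaten by the geometric saving $\kc_t/\kc_r \le \kK(\fa_7\alpha\rho)^\nu$ in~\eqref{eq:340}. This works only because the exponent $\zeta$ of \Cref{323} is strictly less than $1$ and $\nu$ can be chosen arbitrarily close to $1$, yielding $\nu(1 - \zeta) > 0$ with room to absorb the losses $\zeta\nu(1/p + 1/q)$ on top. Once $\fa_7$ has been fixed, $\fa_8$ is chosen sufficiently small in terms of $\fa_7$ to control the $4$-arm probabilities in condition~\eqref{098F}, and the finitely-many-shapes union bound delivers the estimate uniformly in $r \in \rho^{-1}\SR$.
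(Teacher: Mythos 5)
Your treatment of condition~\eqref{098E} is correct and is essentially the paper's argument: apply \Cref{323} to the rescaled tubes to get a covering of the relevant components of $(U_r \setminus V_r) \cap \Upsilon$ by $\fa_7^{-\zeta \cdot O(1)}$ balls, bound the arc count and the internal diameters polynomially, and beat all of these losses with the geometric saving $\kc_t/\kc_r$ from \eqref{eq:340}, which works precisely because $\zeta < 1$. The paper parameterizes slightly differently (it takes covering balls of radius $\fa_7^{1/2}\alpha\rho r$, uses \Cref{263,214} to find a good random radius $t_z \in [\fa_7^{1/2}\alpha\rho r, \fa_7^{\xi/2}\alpha\rho r]$ with $\xi \in (\zeta, 1)$, and concludes from $\xi > \zeta$), but this is the same balance you describe.

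For condition~\eqref{098F} there is a genuine gap in your quantitative argument for parts (b) and (c). You claim that two distinct points of $X_j$ within distance $\fa_8 r$ force \emph{four} arcs of $\Gamma$ across the annulus $A_{\fa_8 r/2,\,\fa_7\alpha\rho r/4}(z)$, so that \Cref{264}, \eqref{264A} applies. This count fails when the two close points are the two endpoints of a \emph{single} $(\CC \setminus \overline{V_j}, \CC \setminus U_j)$-excursion: that excursion contributes exactly two crossings (out to $\partial U_j$ and back), and the remainder of the loop joining the two endpoints may stay entirely inside $B_{\fa_8 r}(z) \cup \overline{V_j}$, so only two crossing arcs are forced. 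Since \Cref{264} only penalizes the event of three or more crossing arcs, it says nothing about this configuration (a loop with a thin ``tentacle'' reaching $\partial U_j$ from a base of width $\fa_8 r$); controlling it would require a near-tangency/two-arm type estimate that the paper does not provide. (Your count is fine when the two points belong to different excursions, whether on the same loop or not, since then each of the two outward arms forces a matching return crossing.) A similar issue affects your reduction of part (c) to a four-arm event. The lemma itself is unaffected because only a qualitative statement is needed: the paper disposes of condition~\eqref{098F} by observing that $X_j$ is almost surely finite with pairwise distinct points, that the law of $r^{-1}(X_j, U_j, V_j)$ does not depend on $r$ by scale invariance, and that there are only finitely many shapes, so taking $\fa_8$ small enough works without any rate. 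Either adopt that soft argument or treat the same-excursion pairs separately.
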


\begin{proof}
    Let $\zeta$ be as in \Cref{323}. Then, by \Cref{323}, for each $r > 0$, it holds with probability tending to one as $\fa_7 \to 0$, at a rate which is uniform in $r$, that there exists a subset $Z \subset \left(\frac1{100}\fa_7^{1/2}\alpha\rho r\ZZ\right)^2 \cap A_{2r,4r}(0)$ with $\#Z \le \fa_7^{-\zeta/2}$ such that for each connected component $\Upsilon_\bullet \subset \Upsilon$ with $(\CC \setminus A_{2r,4r}(0)) \cap \Upsilon_\bullet \neq \emptyset$ and each connected component $\SCC \subset (U_r \setminus V_r) \cap \Upsilon_\bullet$, there exists $z \in Z$ such that $\SCC \subset B_{\fa_7^{1/2}\alpha\rho r}(z)$. Fix $\xi \in (\zeta, 1)$. By \Cref{263,214} and a union bound, we may choose a sufficiently large $A > 0$ such that for each $r > 0$, it holds with probability tending to one as $\fa_7 \to 0$, at a rate which is uniform in $r$, that for each $z \in \left(\frac1{100}\fa_7^{1/2}\alpha\rho r\ZZ\right)^2 \cap B_{4r}(0)$, there exists $t_z \in [\fa_7^{1/2}\alpha\rho r, \fa_7^{\xi/2}\alpha\rho r] \cap \{2^k\}_{k \in \ZZ}$ such that there are at most $2A$ connected arcs of $\Gamma$ in $A_{t_z,2t_z}(z)$ connecting its inner and outer boundaries, and
    \begin{equation*}
        D_\Upsilon(x, y; A_{r,5r}(0) \cap \Upsilon) \le A\kc_{t_z}, \quad \forall x, y \in B_{2t_z}(z) \text{ with } x \xleftrightarrow{B_{2t_z}(z)} y.
    \end{equation*}
    Assume that the above two events occur. Write $\CS \defeq \{(z, t_z) : z \in Z\}$. Then, by \eqref{eq:340}, we have
    \begin{equation*}
        A\sum_{(z, t) \in \CS} \sup\left\{D_\Upsilon(x, y; A_{r,5r}(0) \cap \Upsilon) : x, y \in B_{2t}(z), \ x \xleftrightarrow{B_{2t}(z)} y\right\} \le A\fa_7^{-\zeta/2}A\kK\fa_7^{\xi/2}\alpha\rho\kc_r
    \end{equation*}
    Since $\zeta < \xi$, we conclude that, if $\fa_7 \in (0, \rho)$ is chosen to be sufficiently small, then for each $r > 0$, condition~\eqref{098E} holds with probability at least $1 - (1 - \pp)/100$. By the scale invariance of the law of $\Gamma$, together with the fact that the collection of all possibilities for $r^{-1}(U_r, V_r)$ is a finite set not depending on $r$, if $\fa_8 \in (0, \fa_7)$ is chosen to be sufficiently small, then condition~\eqref{098F} holds with probability at least $1 - (1 - \pp)/100$. This completes the proof.  
\end{proof}

Henceforth fix $\fa_7$ and $\fa_8$ as in \Cref{133}. It remains to deal with condition~\eqref{098G}. Let us first recall the well-known FKG inequality for Poisson point processes. 

\begin{lemma}\label{020}
    Let $X$ be a locally compact Hausdorff space. Write $\SM(X)$ for the Prokhorov space of Radon measures on $X$. Let $F_1, F_2 \colon \SM(X) \to \RR$ be increasing measurable mappings, i.e., for any $\mu, \nu \in \SM(X)$ such that $\mu(A) \le \nu(A)$ for all Borel subsets $A \subset X$, we have $F_1(\mu) \le F_1(\nu)$ and $F_2(\mu) \le F_2(\nu)$. Let $\Xi$ be a Poisson point process on $X$. Then 
    \begin{equation*}
        \BE\lbrack F_1(\Xi)F_2(\Xi)\rbrack \ge \BE\lbrack F_1(\Xi)\rbrack\BE\lbrack F_2(\Xi)\rbrack. 
    \end{equation*}
\end{lemma}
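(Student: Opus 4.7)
The plan is to reduce to the classical Harris--FKG inequality for product probability measures via a discretization of $X$. By truncating $F_i$ to $(-K) \vee (F_i \wedge K)$ and sending $K \to \infty$ via monotone/dominated convergence, I may assume $F_1, F_2$ are bounded. I then choose an increasing sequence of finite measurable partitions $\{A_{n,1}, \ldots, A_{n,k_n}\}_{n \in \NN}$ of $X$ whose generated $\sigma$-algebras increase to the full Borel $\sigma$-algebra, and set
\begin{equation*}
    F_i^{(n)} \defeq \BE[F_i(\Xi) \mid \Xi(A_{n,1}), \ldots, \Xi(A_{n,k_n})].
\end{equation*}
By the martingale convergence theorem, $F_i^{(n)}(\Xi) \to F_i(\Xi)$ in $L^2$ as $n \to \infty$, so it suffices to establish the inequality for each pair $F_1^{(n)}, F_2^{(n)}$.

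Writing $F_i^{(n)} = g_i(\Xi(A_{n,1}), \ldots, \Xi(A_{n,k_n}))$, the counts $Y_j \defeq \Xi(A_{n,j})$ are independent Poisson random variables, and the key structural point is that each $g_i \colon \NN^{k_n} \to \RR$ is coordinatewise nondecreasing. This follows from a coupling argument: given cell counts $(m_1, \ldots, m_{k_n}) \le (m_1', \ldots, m_{k_n}')$, I can realize the conditional laws of $\Xi$ given the two count vectors on a common probability space so that the first is pointwise dominated by the second as counting measures, simply by sampling $m_j$ i.i.d.\ points uniformly on $A_{n,j}$ (with respect to the normalized restricted intensity) and then adding $m_j' - m_j$ further independent uniform points. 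Since $F_i$ is increasing on $\SM(X)$, taking expectations yields $g_i(m) \le g_i(m')$.

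The problem thus reduces to the Harris--FKG inequality on a product space: for independent real-valued random variables $Y_1, \ldots, Y_k$ and bounded coordinatewise-nondecreasing $g_1, g_2 \colon \RR^k \to \RR$, the inequality $\BE[g_1(Y) g_2(Y)] \ge \BE[g_1(Y)] \BE[g_2(Y)]$ holds. This is standard and proved by induction on $k$: the base case $k=1$ is Chebyshev's correlation inequality $\BE[(g_1(Y_1) - g_1(Y_1'))(g_2(Y_1) - g_2(Y_1'))] \ge 0$ for an independent copy $Y_1'$ of $Y_1$; the inductive step conditions on $Y_k$, applies the inductive hypothesis inside the conditional expectation to the increasing functions $(y_1,\ldots,y_{k-1}) \mapsto g_i(y_1,\ldots,y_{k-1},Y_k)$, and then applies the $k=1$ case to the map $y \mapsto \BE[g_i(Y_1, \ldots, Y_{k-1}, y)]$, which is still nondecreasing in $y$. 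The main technical point to keep an eye on is precisely the monotonicity of $g_i$, since that is where the Poissonian structure of $\Xi$ enters (via the nested coupling of conditional laws above); everything else in the argument is general.
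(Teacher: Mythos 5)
Your argument is correct in substance and is the standard textbook proof of association for Poisson point processes (discretize, reduce to Harris--FKG for independent coordinates, and obtain coordinatewise monotonicity of the conditional expectations from the nested coupling of the conditional laws given the cell counts); the paper itself offers no proof and simply cites \cite{MonoPP}, so there is no competing approach to compare against. Two small technical points deserve attention if you were to write this out in full. First, when the intensity measure of $\Xi$ is infinite, any \emph{finite} partition of $X$ must contain a cell of infinite intensity, on which $\Xi(A_{n,j}) = \infty$ almost surely and your coupling description of the conditional law breaks down; the standard repair is to exhaust $X$ by sets of finite intensity, partition only those, and leave the configuration outside untouched (it is independent of the conditioned counts, so using the same realization of it in both halves of the coupling preserves the domination $\Xi \le \Xi^\prime$), with martingale convergence along the resulting filtration still giving $F_i^{(n)}(\Xi) \to F_i(\Xi)$ in $L^2$. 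Second, the requirement that the partition $\sigma$-algebras increase to $\sigma(\Xi)$ needs the Borel $\sigma$-algebra of (the support of the intensity on) $X$ to be countably generated; this is automatic in the paper's application, where $X$ is the Polish space of Brownian loops, but is not literally guaranteed by ``locally compact Hausdorff.'' Likewise the initial truncation step should be justified by noting that $(-K) \vee (F_i \wedge K)$ remains increasing and converges to $F_i$ in $L^2$ under the implicit square-integrability hypothesis. None of these affects the validity of the strategy.
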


\begin{proof}
    See \cite{MonoPP}. 
\end{proof}

\begin{lemma}\label{135}
    One may choose the parameter $\fa_9$ of \eqref{eq:094} in such a way that for each $r \in \rho^{-1}\SR$, the probability of condition~\eqref{098G} of \Cref{098} is at least $1 - (1 - \pp)/10$. 
\end{lemma}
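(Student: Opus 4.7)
The plan is to show that, deterministically on the event that condition~\eqref{098F} holds, the conditional probability given $\Upsilon$ of the event $\fF_r(J_r)$ is bounded below by a constant $c > 0$ depending only on the already-chosen parameters $\alpha, \fa_1, \ldots, \fa_8$ and uniformly over $J_r \subset [1, \#Z_r]_\ZZ$ with $\#J_r = 2$. Setting $\fa_9 \defeq c$, condition~\eqref{098G} then holds whenever~\eqref{098F} does; since condition~\eqref{098F} holds with probability at least $1 - (1-\pp)/100$ by \Cref{134}, this yields the desired bound $1 - (1-\pp)/10$.

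First I would condition on the event $(U_r, V_r) = (U_r^\prime, V_r^\prime)$ (i.e., condition~\eqref{035J}), which happens with probability bounded below by the reciprocal of the deterministically finite number of possible values of $r^{-1}(U_r, V_r)$. On this event, \Cref{252} applied within each $U_j^\star$ shows that, conditional on $\Gamma(\CC \setminus \overline{V_r}, \CC \setminus U_r)$ and $\wp$, the complementary $(\CC \setminus \overline{V_j}, \CC \setminus U_j)$-excursions of $\Gamma_r$ together with the loops of $\Gamma_r$ contained in $U_j^\star$ form independent multichordal $\CLE_\kappa$'s in $(U_j^\star; X_j)$. Under condition~\eqref{098F}, the data $\{(U_j^\star, X_j)\}_j$ have uniformly bounded complexity: $\#X_j$ is bounded in terms of $\fa_8$, the marked points are pairwise separated by at least $\fa_8 r$, and each arc $I \in \SI_j$ is well-connected to $P_j$ in $d_{U_j^\star}$. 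By \Cref{012} combined with compactness of this data in the Carath\'eodory-uniform topology, the probability that the induced link pattern in each $U_j^\star$ equals the target --- $\alpha_j$ for $j \in J_r$ and $\beta_j$ otherwise --- is bounded below by a uniform positive constant, giving condition~\eqref{035H}. Likewise, \Cref{271} combined with condition~\eqref{098F}(c) allows one to force each resampled excursion into the $\fa_8 r$-neighborhood in $d_{U_j^\star}$ of its intended counterclockwise arc, giving condition~\eqref{035I}. The independence across the $\#Z_r = O(1/\fa_5)$ tubes from \Cref{252}, together with the uniformly bounded tube count, preserves a deterministic positive lower bound for the joint probability.

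The main obstacle will be condition~\eqref{035K}. After conditions~\eqref{035H} and~\eqref{035I} are realized, the arcs of $\SI_j^\alpha$ (resp.~$\SI_j^\beta$) all lie on the boundary of the simply connected ``central region'' of $U_j^\star$ that remains after removing the excursions, and $\Upsilon_r$ inside this region is a nested $\CLE_\kappa$. I would upper bound the $D_{\Upsilon_r}$-distance between any two such arcs via \Cref{229} applied at the tube scale, combined with the scaling estimate~\eqref{eq:340}. The difficulty is that the central region can have Euclidean diameter of order $\fa_5 r$ while the target is $\rho\kc_r = \fa_6\kc_r$, with $\fa_6 \ll \fa_5$. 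To bridge this gap, I would invoke \Cref{020} (FKG on the Brownian loop-soup underlying $\Upsilon_r$ via the coupling of \Cref{ss:01}) together with monotonicity (\Cref{010}, Axiom~\eqref{010E}) to insert additional Brownian loops whose outer boundaries subdivide the central region into subregions of Euclidean diameter $O(\alpha\rho r)$, inside each of which the internal $D_{\Upsilon_r}$-distance is of order $\kc_{\alpha\rho r} \le \kK\alpha\rho\kc_r$ by~\eqref{eq:340}. Choosing $\alpha$ small (possibly revisiting its choice in \Cref{099}) and $\fa_8$ accordingly so the loop-insertion step succeeds with uniformly positive probability then delivers condition~\eqref{035K}. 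Granting this, all the conditional probabilities are uniformly positive and independent across tubes by \Cref{252}, so the joint conditional probability is bounded below by a deterministic $c > 0$; taking $\fa_9 \defeq c$ completes the proof.
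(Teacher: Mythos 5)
The broad architecture of your proposal matches the paper's: condition on $(U_r, V_r) = (U_r', V_r')$, use \Cref{252} to identify the resampled configurations in each $U_j^\star$ as conditionally independent multichordal $\CLE_\kappa$'s, invoke \Cref{271,012} together with the compactness guaranteed by condition~\eqref{098F} to get a uniform positive lower bound for conditions~\eqref{035H} and~\eqref{035I}, and then attack condition~\eqref{035K} by inserting Brownian loops and applying FKG (\Cref{020}) with the monotonicity Axiom~\eqref{010E} in hand. These all align with the paper's proof.

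However, there is a genuine quantitative gap in your treatment of condition~\eqref{035K}. You propose to subdivide the central region at Euclidean scale $\alpha\rho r$ and bound each piece's internal $D_{\Upsilon_r}$-distance by $\kc_{\alpha\rho r} \le \kK\alpha\rho\kc_r$ using~\eqref{eq:340}. But the number of pieces needed to cover the entire tube scales as the tube length divided by the piece size, i.e.\ as $\fa_5/(\alpha\rho)$ (the tube is on the order of $\fa_5 r$ long). Multiplying, the total contribution is of order $\fa_5\kK\kc_r$, which is \emph{not} below the target $\rho\kc_r = \fa_6\kc_r$, since $\fa_6 \ll \fa_5$. Shrinking the subdivision scale further does not help under the crude bound~\eqref{eq:340}, because the $1/\varepsilon$ from the count exactly cancels the $\varepsilon$ from $\kc_{\varepsilon r}/\kc_r \le \kK\varepsilon$. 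The paper's proof crucially invokes \Cref{280}, which asserts $\kc_{\varepsilon r}/(\varepsilon\kc_r) \to 0$ as $\varepsilon \to 0$ uniformly in $r$: subdividing at a small free scale $\varepsilon r$, the total contribution $O(\fA\fA'\kc_{\varepsilon r}/\varepsilon)$ is then $o(\kc_r)$, and one can pick $\varepsilon$ small enough (after all the other parameters are fixed) so that this falls below $\rho\kc_r/2$. Your proposal never brings in this superlinear improvement, so the final estimate does not close. Note also that you cannot ``revisit the choice of $\alpha$ in \Cref{099}'': $\alpha$ is fixed long before $\fa_1, \ldots, \fa_9$, and in any case shrinking $\alpha$ does not help the sum because of the cancellation just described. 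The fix is to introduce a fresh small scale parameter $\varepsilon$ at this stage and invoke \Cref{280}.
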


\begin{proof}
    Since the number of possibilities for $(U_r, V_r)$ is at most a deterministic constant not depending on $r$, it follows from \Cref{271,012} that there exists $\fa \in (0, 1)$ such that conditions~\eqref{035J}, \eqref{035H}, and \eqref{035I} hold with conditional probability at least $\fa$ given $\Upsilon$ and condition~\eqref{098F}. Given conditions~\eqref{098F}, \eqref{035J}, \eqref{035H}, and \eqref{035I}, write $U_j^\diamond$ for the connected component containing $P_j$ of the open subset obtained by removing from $U_j^\star$ the complementary $(\CC \setminus \overline{V_j}, \CC \setminus U_j)$-excursions of $\Gamma_r$. By the Markov property of CLE, the loops of $\Gamma_r$ contained in $U_j^\diamond$ is conditionally a nested $\CLE_\kappa$ in $U_j^\diamond$. Write $\Gamma_j \subset \Gamma_r$ for the non-nested $\CLE_\kappa$ in $U_j^\diamond$ induced by $\Gamma_r$ and $\Upsilon_j$ for the carpet of $\Gamma_j$. We observe that $\Upsilon_j$ is always contained in $\Upsilon_r$ ($\Upsilon_j$ is a proper subset of a connected component of $\Upsilon_r$). Given $U_j^\diamond$, let $\Xi_j$ be a Brownian loop-soup of intensity $c(\kappa)$ in $U_j^\diamond$. Suppose that $\Gamma_j$ and $\Xi_j$ are coupled so that $\Gamma_j$ is given by the outer boundaries of the outermost clusters of the Brownian loops of $\Xi_j$ (cf.~\cite{CLE}). 

    Fix $\delta > 0$ to be chosen in the next paragraph. We observe that there is almost surely a smooth path $L_j$ of finite Euclidean length and $\varepsilon_\ast > 0$ such that $B_{\varepsilon_\ast r}(L_j) \subset U_j^\diamond$ and $B_{\delta r}(x) \cap L_j \neq \emptyset$ for all $x \in \partial U_j \cap \Upsilon_j$. Thus, we may choose $A = A(\delta) > 0$ to be sufficiently large and $\varepsilon_\ast = \varepsilon_\ast(\delta) > 0$ to be sufficiently small so that it holds with probability at least $1 - (1 - \pp)/99$ that condition~\eqref{098F} occurs, and given $\Upsilon$ and condition~\eqref{098F}, it holds with conditional probability at least $\fa/2$ that conditions~\eqref{035J}, \eqref{035H}, and \eqref{035I} occur and 
    \begin{equation}\label{eq:290}
        \parbox{.85\linewidth}{there exists a smooth path $L_j$ of Euclidean length at most $Ar$ such that $B_{\varepsilon_\ast r}(L_j) \subset U_j^\diamond$ and $B_{\delta r}(x) \cap L_j \neq \emptyset$ for all $x \in \partial U_j \cap \Upsilon_j$.}
    \end{equation}
    
    By \Cref{019}, for each $p \in (0, 1)$, there exists a sufficiently small $\delta = \delta(p) > 0$ such that it holds with probability at least $1 - (1 - \pp)/100$ that it holds with conditional probability at least $p$ given $\Upsilon$ that
    \begin{multline}\label{eq:283}
        D_{\Upsilon_r}(x, y; A_{r,5r}(0) \cap \Upsilon_r) \le \rho\kc_r/100, \\
        \forall x, y \in A_{2r,4r}(0) \cap \Upsilon_r \text{ with } x \xleftrightarrow{A_{2r,4r}(0) \cap \Upsilon_r} y \text{ and } \lvert x - y\rvert \le \delta r. 
    \end{multline}
    We observe that, if \eqref{eq:283} holds, then
    \begin{equation}\label{eq:288}
        D_{\Upsilon_r}(x, y; A_{r,5r}(0) \cap \Upsilon_r) \le \rho\kc_r/100, \quad \forall x \in \partial U_j \cap \Upsilon_j, \ \forall y \in \Upsilon_j \text{ with } \lvert x - y\rvert \le \delta r.
    \end{equation}
    Thus, by choosing $p$ to be sufficiently close to one, we conclude that there exists $\fa^\prime \in (0, 1)$ and $\delta > 0$ such that given $\Upsilon$, the complementary $(\CC \setminus \overline{V_j}, \CC \setminus U_j)$-excursions of $\Gamma_r$, and conditions~\eqref{098F}, \eqref{035J}, \eqref{035H}, \eqref{035I}, and \eqref{eq:290}, it holds with conditional probability at least $\fa^\prime$ that condition~\eqref{eq:288} occurs. (Here, we note that, on the event that condition~\eqref{035J} holds, conditions~\eqref{098F}, \eqref{035H}, \eqref{035I}, and \eqref{eq:290} are almost surely determined by $\Upsilon$ and the complementary $(\CC \setminus \overline{V_j}, \CC \setminus U_j)$-excursions of $\Gamma_r$.) Moreover, we observe that, by \Cref{010}, Axiom~\eqref{010E} (monotonicity), condition~\eqref{eq:288} is decreasing with respect to $\Xi_j$. 

    Next, we observe that for each $\varepsilon > 0$, there exists $\fa^\pprime(\varepsilon) \in (0, 1)$ such that, given $U_j^\diamond$, for each $z$ with $B_{\varepsilon r}(z) \subset U_j^\diamond$, it holds with conditional probability at least $\fa^\pprime(\varepsilon)$ that $\Upsilon_j \cap B_{\varepsilon r/2}(z) \neq \emptyset$ and there is no loop of $\Gamma_j$ that crosses between the inner and outer boundaries of $A_{\varepsilon r/2,\varepsilon r}(z)$. Then, by choosing $A^\prime = A^\prime(\varepsilon) > 0$ to be sufficiently large, we may arrange so that, given $U_j^\diamond$, for each $z$ with $B_{\varepsilon r}(z) \subset U_j^\diamond$, it holds with conditional probability at least $\fa^\pprime(\varepsilon)$ that 
    \begin{equation}\label{eq:289}
        \parbox{.87\linewidth}{$\Upsilon_j \cap B_{\varepsilon r/2}(z) \neq \emptyset$, there is no loop of $\Gamma_j$ that crosses between the inner and outer boundaries of $A_{\varepsilon r/2,\varepsilon r}(z)$, and there is a loop in $A_{\varepsilon/2,\varepsilon}(z) \cap \Upsilon_j$ of $D_{\Upsilon_r}$-length at most $A^\prime\kc_{\varepsilon r}$ disconnecting the inner and outer boundaries of $A_{\varepsilon r/2,\varepsilon r}(z)$.}
    \end{equation}
    Moreover, we observe that, by \Cref{010}, Axiom~\eqref{010E} (monotonicity), condition~\eqref{eq:289} is decreasing with respect to $\Xi_j$. 

    One verifies immediately that for each smooth path $L \subset \CC$ of Euclidean length $\ell$, and each $\varepsilon > 0$, we have
    \begin{equation*}
        \#\left\{z \in \left(\frac1{100}\varepsilon\ZZ\right)^2 : \dist(z, L) \le \varepsilon/2\right\} \le 10^{100}\ell/\varepsilon. 
    \end{equation*}
    Thus, by applying \Cref{020} to $\Xi_j$ and the preceding discussion, it follows that for each $\varepsilon \in (0, \varepsilon_\ast]$, given $\Upsilon$, the complementary $(\CC \setminus \overline{V_j}, \CC \setminus U_j)$-excursions of $\Gamma_r$, and conditions~\eqref{098F}, \eqref{035J}, \eqref{035H}, \eqref{035I}, and \eqref{eq:290}, it holds with conditional probability at least 
    \begin{equation*}
        \fa^\prime\fa^\pprime(\varepsilon)^{10^{100}A/\varepsilon}
    \end{equation*}
    that condition~\eqref{eq:288} occurs, and condition~\eqref{eq:289} occurs for all $z \in \left(\frac1{100}\varepsilon r\ZZ\right)^2$ with $\dist(z, L_j) \le \varepsilon r/2$. Henceforth assume that this event occurs. We note that, for each $x_1, x_2 \in \partial U_j \cap \Upsilon_j$, it is possible to connect $B_{\delta r}(x_1)$ and $B_{\delta r}(x_2)$ by the union of the loops described in condition~\eqref{eq:289} over all $z \in \left(\frac1{100}\varepsilon r\ZZ\right)^2$ with $\dist(z, L_j) \le \varepsilon r/2$. Thus, we conclude that
    \begin{equation*}
        D_{\Upsilon_r}(x_1, x_2; A_{r,5r}(0) \cap \Upsilon_r) \le \rho\kc_r/100 + 10^{100} A A^\prime \kc_{\varepsilon r}/\varepsilon + \rho\kc_r/100, \quad \forall x_1, x_2 \in \partial U_j \cap \Upsilon_j. 
    \end{equation*}
    Finally, by \Cref{280}, we may choose $\varepsilon$ to be sufficiently small so that $10^{100} A A^\prime \kc_{\varepsilon r}/\varepsilon \le \rho\kc_r/2$. This completes the proof. 
\end{proof}

\begin{proof}[Proof of \Cref{104}]
    Combine \Cref{132,133,134,135}. 
\end{proof}

\subsection{Forcing a geodesic to take a shortcut}\label{ss:07}

In the present subsection, fix $r \in \rho^{-1}\SR$ and $J_r \subset [1, \#Z_r]_\ZZ$ with $\#J_r = 2$. To lighten notation, we may assume without loss of generality that $J_r = \{1, j\}$. Suppose that the event $\fE_r \cap \fF_r(J_r)$ occurs. For $k \in [1, \#Z_r]_\ZZ$, let the points of $X_k$ be labeled as in \eqref{eq:347}. We observe that the concatenation of 
\begin{itemize}
    \item $\eta_{2,\rR}$, 
    \item the complementary $(\CC \setminus V_2, \CC \setminus U_2)$-excursion of $\Gamma_r$ from $y_{2,\rR}$ to $x_{3,\rR}$, 
    \item $\eta_{3,\rR}$, 
    \item[$\vdots$]
    \item the complementary $(\CC \setminus V_{j - 1}, \CC \setminus U_{j - 1})$-excursion of $\Gamma_r$ from $y_{j - 1,\rR}$ to $x_{j,\rR}$, and
    \item $\eta_{j,\rR}$
\end{itemize}
is a connected arc of a loop of $\Gamma_r$. We shall write $\gamma_r$ for this connected arc. Similarly, the concatenation of 
\begin{itemize}
    \item $\eta_{j + 1,\rR}$, 
    \item the complementary $(\CC \setminus V_{j + 1}, \CC \setminus U_{j + 1})$-excursion of $\Gamma_r$ from $y_{j + 1,\rR}$ to $x_{j + 2,\rR}$, 
    \item $\eta_{j + 2,\rR}$, 
    \item[$\vdots$]
    \item the complementary $(\CC \setminus V_{\#Z_r}, \CC \setminus U_{\#Z_r})$-excursion of $\Gamma_r$ from $y_{\#Z_r,\rR}$ to $x_{1,\rR}$, and
    \item $\eta_{1,\rR}$
\end{itemize}
is a connected arc of a loop of $\Gamma_r$. We shall write $\gamma_r^\prime$ for this connected arc. Moreover, we observe that the concatenation of 
\begin{itemize}
    \item $\eta_{1,\rL}$, 
    \item the complementary $(\CC \setminus V_1, \CC \setminus U_1)$-excursion of $\Gamma_r$ from $y_{1,\rL}$ to $x_{2,\rL}$,
    \item[$\vdots$]
    \item $\eta_{\#Z_r,\rL}$, and
    \item the complementary $(\CC \setminus V_{\#Z_r}, \CC \setminus U_{\#Z_r})$-excursion of $\Gamma_r$ from $y_{\#Z_r,\rL}$ to $x_{1,\rL}$
\end{itemize}
is a loop of $\Gamma_r$. We shall write $\SCL_r$ for this loop. See \Cref{fig:gamma} for an illustration. 

\begin{lemma}\label{096}
    There exists a deterministic constant $\fb > 0$ such that the following is true: Suppose that the event $\fE_r \cup \fF_r(J_r)$ occurs. Let $P_r \colon [0, 1] \to \Upsilon_r$ be a $D_{\Upsilon_r}$-geodesic between two points outside of $B_{4r}(0)$. Suppose that $P_r$ passes through either the tube between $\SCL_r$ and $\gamma_r$ or the tube between $\SCL_r$ and $\gamma_r^\prime$. (By passing through either the tube between $\SCL_r$ and $\gamma_r$ or the tube between $\SCL_r$ and $\gamma_r^\prime$, we mean that $P_r$ (regarded as a path in the prime end closure of the unbounded connected component of $\CC \setminus (\SCL_r \cup \gamma_r \cup \gamma_r^\prime)$) is not homotopic (relative to its endpoints) to a path outside of $B_{4r}(0)$.) Then there are times $0 < \sigma < \tau < 1$ such that 
    \begin{align*}
        P_r([\sigma, \tau]) &\subset B_{4r}(0) \cap \Upsilon_r, \quad D_{\Upsilon_r}(P_r(\sigma), P_r(\tau)) \ge \fb\kc_r,  \quad \text{and}  \\
        \widetilde D_{\Upsilon_r}(P_r(\sigma), P_r(\tau)) &\le M_2 D_{\Upsilon_r}(P_r(\sigma), P_r(\tau)). 
    \end{align*}
\end{lemma}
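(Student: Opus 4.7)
The plan. On $\fE_r \cap \fF_r(J_r)$, condition~\eqref{035H} forces the right arcs $\eta_{k,\rR}$ to be linked into the two open arcs $\gamma_r, \gamma_r'$ and the left arcs $\eta_{k,\rL}$ into the single loop $\SCL_r$, so each of the two tubes bounded by $\SCL_r \cup \gamma_r$ and by $\SCL_r \cup \gamma_r'$ is impassable for $\Upsilon_r$-paths except at the two openings at $j \in J_r$. Choose $J_r$ so that both cyclic arcs of $[1, \#Z_r]_\ZZ \setminus J_r$ are non-empty; this is possible because condition~\eqref{098D} forces $\#Z_r \ge 4$. Whichever tube $P_r$ traverses, it must cross at least one interior half-annulus $\fH_{z_k, \rho r}$ with $k \in [1, \#Z_r]_\ZZ \setminus J_r$. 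Since $z_k \in Z_r$, the event $\fG_{z_k, \rho r}$ holds, supplying $u, v, \widetilde P, \eta_{k,\rL}, \eta_{k,\rR}, O_{z_k, \rho r}, P_s, P_t$ and $0 < s < t < 1$ as in \Cref{097}. Set
\begin{equation*}
    \sigma \defeq \inf\{\theta \in [0, 1] : P_r(\theta) \in P_s\}, \qquad \tau \defeq \sup\{\theta \in [0, 1] : P_r(\theta) \in P_t\}.
\end{equation*}
Because $\Upsilon_r$-paths cannot cross loops of $\Gamma_r$, the portion of $P_r$ inside $\fH_{z_k, \rho r}$ is confined to $O_{z_k, \rho r}$, and $P_s, P_t$ are topological cross-cuts of $O_{z_k, \rho r}$ separating it into three sub-regions containing $u$, $\widetilde P|_{[s, t]}$, and $v$; hence $\sigma < \tau$ are well-defined. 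The inclusion $P_r([\sigma, \tau]) \subset B_{4r}(0) \cap \Upsilon_r$ follows from condition~\eqref{098A}, since any excursion of $P_r|_{[\sigma, \tau]}$ out of and back into $B_{4r}(0)$ would cost at least $\fa_1 \kc_r$ of $D_{\Upsilon_r}$-length, exceeding the upper bound derived below.

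By the construction of $\fP_{z_j, z_{j+1}}$ in \Cref{ss:15} combined with $\alpha \le \lambda a$ and $\fa_7$ small, the half-annulus $\fH_{z_k, \rho r}$ and the sub-annulus $A_{(1-a/2)\rho r, (1+a/2)\rho r}(z_k)$ appearing in condition~\eqref{097H} are disjoint from $\overline{U_r}$, so $\Upsilon$ and $\Upsilon_r$ coincide on them and internal distances agree by Axiom~\eqref{010B}. Concatenating $P_s$ with $\widetilde P|_{[0, s]}$ gives a path in $\Upsilon_r \cap \overline{\fH_{z_k, \rho r}}$ from $P_r(\sigma)$ to $u$ of $\widetilde D$-length at most $2\lambda \alpha^3 \kc_{\rho r}$ by condition~\eqref{097G}; symmetrically $\widetilde D_{\Upsilon_r}(v, P_r(\tau)) \le 2\lambda \alpha^3 \kc_{\rho r}$. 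Since $\widetilde P$ likewise lies in this region, $\widetilde D_{\Upsilon_r}(u, v) \le \widetilde D_\Upsilon(u, v) \le M_1 D_\Upsilon(u, v)$ by condition~\eqref{097E}, so the triangle inequality yields
\begin{equation*}
    \widetilde D_{\Upsilon_r}(P_r(\sigma), P_r(\tau)) \le M_1 D_\Upsilon(u, v) + 4\lambda \alpha^3 \kc_{\rho r}.
\end{equation*}
For the lower bound on $D_{\Upsilon_r}(P_r(\sigma), P_r(\tau))$ I first argue $D_{\Upsilon_r}(u, v) = D_\Upsilon(u, v)$: a $D_\Upsilon$- or $D_{\Upsilon_r}$-geodesic from $u$ to $v$ cannot exit $A_{(1-a/2)\rho r, (1+a/2)\rho r}(z_k)$, because by condition~\eqref{097H} its initial segment up to $\partial A$ would have length at least $\lambda^{-1} D_\Upsilon(u, v) > D_\Upsilon(u, v) \ge D_{\Upsilon_r}(u, v)$, a contradiction; being confined to the annulus, both geodesics have equal length by locality. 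Repeating the concatenation argument with $D$ in place of $\widetilde D$ and using $\widetilde D \ge M_\ast D$ to bound $\len(\bullet; D_\Upsilon) \le \len(\bullet; \widetilde D_\Upsilon)/M_\ast$ on $P_s, P_t, \widetilde P|_{[0, s]}, \widetilde P|_{[t, 1]}$ gives $D_{\Upsilon_r}(u, P_r(\sigma)) \vee D_{\Upsilon_r}(v, P_r(\tau)) \le 2\lambda \alpha^3 \kc_{\rho r}/M_\ast$, so
\begin{equation*}
    D_{\Upsilon_r}(P_r(\sigma), P_r(\tau)) \ge D_\Upsilon(u, v) - 4\lambda \alpha^3 \kc_{\rho r}/M_\ast.
\end{equation*}

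Finally, $\widetilde D_\Upsilon(u, v) \ge \alpha^3 \kc_{\rho r}$ together with $\widetilde D_\Upsilon \le M^\ast D_\Upsilon$ gives $\alpha^3 \kc_{\rho r} \le M^\ast D_\Upsilon(u, v)$, so the two displays combine to yield
\begin{equation*}
    \frac{\widetilde D_{\Upsilon_r}(P_r(\sigma), P_r(\tau))}{D_{\Upsilon_r}(P_r(\sigma), P_r(\tau))} \le \frac{M_1 + 4\lambda M^\ast}{1 - 4\lambda M^\ast/M_\ast},
\end{equation*}
and since $M_1 < M_2$, the right-hand side is at most $M_2$ once $\lambda$ is chosen small enough in terms of $M_\ast$ and $M^\ast$ alone, which is compatible with the freedom in \Cref{ss:14}. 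The lower bound $D_{\Upsilon_r}(P_r(\sigma), P_r(\tau)) \ge \fb \kc_r$ then follows from the same inequalities and $\kc_{\rho r} \ge \rho^2 \kc_r$ from \eqref{eq:340}, with $\fb \defeq \alpha^3 \rho^2/(2 M^\ast)$. The main obstacle is the topological argument in the first paragraph: confirming that $P_r$ must genuinely thread the sub-tube $O_{z_k, \rho r}$ and cross both $P_s$ and $P_t$ relies on the loops of $\Gamma_r$ being loops (so $\Upsilon_r$-paths cannot cross them) and on the cross-cut structure supplied by condition~\eqref{097G}; everything after this is essentially careful bookkeeping of the length budgets arranged in \Cref{ss:06}.
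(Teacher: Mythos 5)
Your proof is correct and follows essentially the same route as the paper's: locate a half-annulus $\fH_{z_k,\rho r}$ with $z_k \in Z_r$ that $P_r$ is forced to traverse, use the cross-cuts $P_s$ and $P_t$ from condition~\eqref{097G} to pin down times $\sigma < \tau$ near the shortcut endpoints $u$ and $v$, and then run the triangle inequality against the length budgets of \Cref{097}, shrinking $\lambda$ at the end so the ratio lands below $M_2$ and $\eqref{eq:340}$ gives the $\fb\kc_r$ lower bound. The extra details you supply (the locality argument identifying $D_{\Upsilon_r}$ with $D_\Upsilon$ near $z_k$ via condition~\eqref{097H}, and the containment of $P_r([\sigma,\tau])$ in $B_{4r}(0)$ via condition~\eqref{098A}) are points the paper leaves implicit, and your constants, while organized slightly differently, are equivalent.
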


\begin{proof}
    We may assume without loss of generality that $P_r$ passes through the tube between $\SCL_r$ and $\gamma_r$. In particular, $P_r$ passes through the tube between $\eta_{2,\rL}$ and $\eta_{2,\rR}$. Let $u$, $v$, $\widetilde P$, $s$, $t$, $P_s$, $P_t$ be as in the definition of $\fG_{z_2,\rho r}$ (cf.~\Cref{097}). Then we must have $P_r \cap P_s \neq \emptyset$ and $P_r \cap P_t \neq \emptyset$. Choose times $0 < \sigma < \tau < 1$ such that either $P_r(\sigma) \in P_s$ and $P_r(\tau) \in P_t$ or $P_r(\sigma) \in P_t$ and $P_r(\tau) \in P_s$. We may assume without loss of generality that the former is true. By \Cref{097}, \eqref{097E}, \eqref{097G} and the triangle inequality, we have
    \begin{align}
        \widetilde D_{\Upsilon_r}(P_r(\sigma), P_r(\tau)) &\le \widetilde D_{\Upsilon_r}(u, v) + \widetilde D_{\Upsilon_r}(u, \widetilde P(s)) + \widetilde D_{\Upsilon_r}(\widetilde P(t), v) + \len(P_s; \widetilde D_{\Upsilon_r}) + \len(P_t; \widetilde D_{\Upsilon_r}) \notag \\
        &= \widetilde D_\Upsilon(u, v) + \widetilde D_\Upsilon(u, \widetilde P(s)) + \widetilde D_\Upsilon(\widetilde P(t), v) + \len(P_s; \widetilde D_\Upsilon) + \len(P_t; \widetilde D_\Upsilon) \notag \\
        &\le (1 + 4\lambda)\widetilde D_\Upsilon(u, v). \label{eq:106}
    \end{align}
    Again, by \Cref{097}, \eqref{097E}, \eqref{097G} and the triangle inequality, we have
    \begin{align}
        D_{\Upsilon_r}(P_r(\sigma), P_r(\tau)) &\ge D_{\Upsilon_r}(u, v) - D_{\Upsilon_r}(u, \widetilde P(s)) - D_{\Upsilon_r}(\widetilde P(t), v) - \len(P_s; D_{\Upsilon_r}) - \len(P_t; D_{\Upsilon_r}) \notag \\
        &= D_\Upsilon(u, v) - D_\Upsilon(u, \widetilde P(s)) - D_\Upsilon(\widetilde P(t), v) - \len(P_s; D_\Upsilon) - \len(P_t; D_\Upsilon) \notag \\
        &\ge \frac1{M_1}\widetilde D_\Upsilon(u, v) - \frac1{M_\ast}\left(\widetilde D_\Upsilon(u, \widetilde P(s)) + \widetilde D_\Upsilon(\widetilde P(t), v) + \len(P_s; \widetilde D_\Upsilon) + \len(P_t; \widetilde D_\Upsilon)\right) \notag \\
        &\ge \left(\frac1{M_1} - \frac{4\lambda}{M_\ast}\right)\widetilde D_\Upsilon(u, v). \label{eq:107}
    \end{align}
    Since $\lambda$ may be chosen to be sufficiently small so that $M_2(1/M_1 - 4\lambda/M_\ast) \ge (1 + 4\lambda)$, we conclude from \eqref{eq:106} and \eqref{eq:107} that
    \begin{equation*}
        \widetilde D_{\Upsilon_r}(P_r(\sigma), P_r(\tau)) \le M_2 D_{\Upsilon_r}(P_r(\sigma), P_r(\tau)).
    \end{equation*}
    Finally, by \eqref{eq:340}, \Cref{097}, \eqref{097E}, and \eqref{eq:107}, we have
    \begin{equation*}
        D_{\Upsilon_r}(P_r(\sigma), P_r(\tau)) \ge \left(\frac1{M_1} - \frac{4\lambda}{M_\ast}\right)\widetilde D_\Upsilon(u, v) \ge \left(\frac1{M_1} - \frac{4\lambda}{M_\ast}\right)\alpha^3\kc_{\rho r} \ge \left(\frac1{M_1} - \frac{4\lambda}{M_\ast}\right)\alpha^3\rho^2\kc_r. 
    \end{equation*}
    This completes the proof. 
\end{proof}

\begin{figure}[ht!]
    \centering
    \includegraphics[width=.9\linewidth]{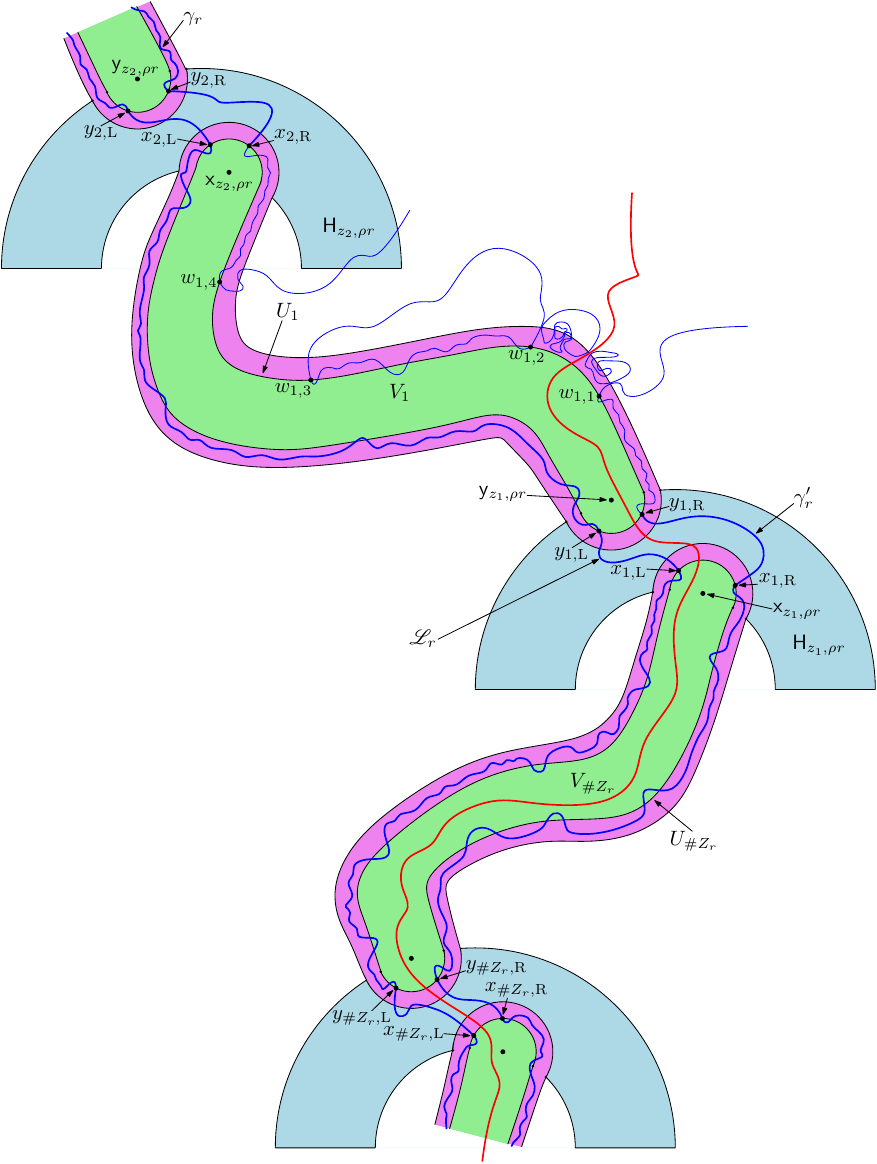}
    \caption{Illustration of $\SCL_r$, $\gamma_r$, and $\gamma_r^\prime$. The blue paths are (connected arcs of) loops of $\Gamma_r$. The red path is a $D_{\Upsilon_r}$-geodesic that passes through the tube between $\SCL_r$ and $\gamma_r^\prime$.}
    \label{fig:gamma}
\end{figure}

\section{Uniqueness}\label{s:02}

In the present section, we use the objects of \Cref{s:00} to prove \Cref{000}. Fix two weak geodesic $\CLE_\kappa$ carpet metrics $D$ and $\widetilde D$ with the same scaling constants $\{\kc_r\}_{r > 0}$. Let $M_\ast$ and $M^\ast$ be as in \eqref{eq:321} and \eqref{eq:322}, respectively. Suppose by way of contradiction that $M_\ast < M^\ast$. Recall that $M_1 = (2M_\ast + M^\ast)/3$, $M_2 = (M_\ast + 2M^\ast)/3$, and $\pp \in (0, 1)$, $\fA > 0$, $0 < \mu < \nu$, $\Lambda \in \NN$ are fixed parameters to be chosen later (cf.~\Cref{223}), in a manner depending only on $D$ and $\widetilde D$. 

Let $z \in \CC$ and $r \in \rho^{-1}\SR$. Recall from \Cref{ss:05} that $\fG_{z,\rho r}$ is the event that roughly speaking says that $B_{\rho r}(z)$ contains a ``shortcut'', and that $\fG_{z,\rho r}$ holds with uniformly positive probability. Recall from \eqref{eq:042} and \eqref{eq:279} that $\fZ_r \subset \partial B_{3r}(0)$ is the deterministic finite subset of ``test points'' and that $Z_r \subset \fZ_r$ is the random subset consisting of points $z$ for which the event $\fG_{z,\rho r}$ occurs. Recall from \eqref{eq:041} and \eqref{eq:044} that $U_r$ and $V_r$ are long narrow ``tubes'' that ``link up'' the half-annuli $\fH_{z,\rho r} \subset A_{(1 - a)r,(1 + a)r}(z)$ for $z \in Z_r$. The random open subsets $U_r$ and $V_r$ are almost surely determined by $Z_r$. Recall from \eqref{eq:040} that $U_r^\star$ is the open subset obtained from the \emph{partial exploration} of $\Gamma$ in $\CC \setminus U_r$ until hitting $V_r$. Recall from \Cref{ss:06} that $\fE_r$ is the event which roughly speaking says that $Z_r$ is large and that $\Upsilon_r$ is a resampling of $\Upsilon$. Recall from \Cref{ss:06,ss:07} that $\fF_r$ is the event which roughly speaking says that after resampling, the ``shortcuts'' of $\fG_{z,\rho r}$ for $z \in Z_r$ are preserved, and there are (connected arcs of) loops $\SCL_r$, $\gamma_r$, $\gamma_r^\prime$ of $\Gamma_r$ that form a ``tube'' that forces the $D_{\Upsilon_r}$-geodesics to take the ``shortcuts''. Recall from \Cref{ss:08} that $\fE_r$ holds with high probability, and that $\fF_r$ almost surely holds with uniformly positive conditional probability given $\fE_r$ and $\Upsilon$.

We shall write
\begin{equation*}
    Z_{z,r} \defeq Z_r(\Upsilon - z) + z; \quad U_{z,r} \defeq U_r(\Upsilon - z) + z; \quad V_{z,r} \defeq V_r(\Upsilon - z) + z; \quad U_{z,r}^\star \defeq U_r^\star(\Upsilon - z) + z, 
\end{equation*}
where $Z_r(\Upsilon - z)$ (resp.~$U_r(\Upsilon - z)$; $V_r(\Upsilon - z) $; $U_r^\star(\Upsilon - z)$) is defined in the same manner as $Z_r$ (resp.~$U_r$; $V_r$; $U_r^\star$) but with $\Upsilon - z$ in place of $\Upsilon$. Let $J_{z,r} \subset [1, \#Z_{z,r}]_\ZZ$ be a subset with $\#J_{z,r} = 2$ (chosen in a manner that is almost surely determined by $\Upsilon$; these will be the indices of the two places where the geodesic will enter and leave the tube). Similarly, let $U_{z,r}^\prime$, $V_{z,r}^\prime$, and $\Upsilon_{z,r}$ be defined in the same manner as $U_r^\prime$, $V_r^\prime$, and $\Upsilon_r$, i.e., $(U_{z,r}^\prime, V_{z,r}^\prime)$ is sampled uniformly from all the possibilities for $(U_{z,r}, V_{z,r})$, independently of everything else, and $\Upsilon_{z,r}$ is a conditionally independent copy of $\Upsilon$ given $\Gamma(\CC \setminus \overline{V_r^\prime}, \CC \setminus U_r^\prime)$ and $\wp$. Again, we will only be interested in the case where $(U_{z,r}, V_{z,r}) = (U_{z,r}^\prime, V_{z,r}^\prime)$. Moreover, we shall assume that $\Upsilon_{z,r}$ for different $z$'s are conditionally independent given $\Upsilon$ (we will only be considering countably many $z$'s simultaneously). We shall write $\fE_{z,r}$ (resp.~$\fF_{z,r}(J_{z,r})$) for the event defined in the same manner as $\fE_r$ (resp.~$\fF_r(J_r)$) but with $\Upsilon - z$ (resp.\ $\Upsilon_{z,r} - z$) in place of $\Upsilon$ (resp.\ $\Upsilon_r$). On the event $\fE_{z,r} \cap \fF_{z,r}(J_{z,r})$, we shall write 
\begin{equation*}
    \SCL_{z,r} \defeq \SCL_r(\Upsilon - z) + z; \quad \gamma_{z,r} \defeq \gamma_r(\Upsilon - z) + z; \quad \gamma_{z,r}^\prime \defeq \gamma_r^\prime(\Upsilon - z) + z,
\end{equation*}
where $\SCL_r(\Upsilon - z)$ (resp.~$\gamma_r(\Upsilon - z)$; $\gamma_r^\prime(\Upsilon - z)$) is defined in the same manner as $\SCL_r$ (resp.~$\gamma_r$; $\gamma_r^\prime$) of \Cref{ss:07} but with $\Upsilon - z$ and $\Upsilon_{z,r} - z$ in place of $\Upsilon$ and $\Upsilon_r$, respectively. 

We observe that whenever $Z$ is a collection of points such that the Euclidean distance between distinct points are large enough, then we may construct a resampling $\Upsilon_Z$ of $\Upsilon$ out of $\{\Upsilon_{z,r}\}_{z \in Z}$. More precisely: Let $\Be \in (\ZZ/(300\Lambda\ZZ))^2$. Then we shall write 
\begin{equation*}
    \ZZ_{\Be,r} \defeq \left\{z \in \CC : z - \frac1{100}\Be r \in (3\Lambda r\ZZ)^2\right\}.
\end{equation*}
Thus, $\ZZ_{\Be,r}$ for $\Be \in (\ZZ/(300\Lambda\ZZ))^2$ form a partition of $(\ZZ/(r/100)\ZZ)^2$ such that each pair of distinct points of $\ZZ_{\Be,r}$ has Euclidean distance at least $3\Lambda r$. For each deterministic subset $Z \subset \ZZ_{\Be,r}$, let $\Upsilon_Z$ be such that
\begin{gather*}
    \left(\CC \setminus \bigcup_{z \in Z} B_{5r}(z)\right) \cap \Upsilon_Z = \left(\CC \setminus \bigcup_{z \in Z} B_{5r}(z)\right) \cap \Upsilon; \\
    B_{5r}(z) \cap \Upsilon_Z = B_{5r}(z) \cap \Upsilon_{z,r} \quad \forall z \in Z. 
\end{gather*}
(Here, we recall that $(\CC \setminus B_{5r}(z)) \cap \Upsilon_{z,r} = (\CC \setminus B_{5r}(z)) \cap \Upsilon$.) Note that $(\Upsilon, \Upsilon_Z)$ and $(\Upsilon_Z, \Upsilon)$ have the same law. (This follows from the fact that $(U_{z,r}, V_{z,r})$'s are independent and independent of everything else.)

The idea of the proof of \Cref{000} is to show that 
\begin{equation}\label{eq:039}
    \parbox{.85\linewidth}{the probability of the event $\overline G_\rr(M^\ast - \delta, b)$ of \Cref{s:03} goes to $0$ as $\delta$ goes to $0$, at a rate which is uniform in $\rr$}
\end{equation}
(cf.~\Cref{225}). This, together with \Cref{031}, will lead to a contradiction of \Cref{008}.

We will call a ball $B_{5r}(z)$ \emph{good} if the event $\fE_{z,r} \cap \fF_{z,r}(J_{z,r})$ occurs, and \emph{very good} if the event $(\fE_{z,r} \cap \fF_{z,r}(J_{z,r}))^\vee$ occurs, which is defined in the same manner as $\fE_{z,r} \cap \fF_{z,r}(J_{z,r})$ but with the roles of $\Upsilon$ and $\Upsilon_{z,r}$ interchanged, occurs. 

To prove \eqref{eq:039}, it suffices to show that with high probability, the $D_\Upsilon$-geodesic $P$ connecting two given compact subsets $K_1$ and $K_2$ has to hit a lot of ``very good'' balls. \Cref{096} will then give us a lot of pairs of times $\sigma < \tau$ such that $\widetilde D_\Upsilon(P(\sigma), P(\tau)) \le M_2 D_\Upsilon(P(\sigma), P(\tau))$, which in turn will show that $\widetilde D_\Upsilon(P(0), P(1))$ is bounded away of $M^\ast D_\Upsilon(P(0), P(1))$. We will show this using an argument based on counting the number of occurrences of certain events (cf.~\Cref{210,330}). More precisely, for $r \in \rho^{-1}\SR$ and a finite collection of points $Z$, we will consider the event $\kF_Z$ which roughly speaking says that the following are true (cf.~\Cref{210}):
\begin{itemize}
    \item Each ball $B_{5r}(z)$ for $z \in Z$ is ``good''. 
    \item The $D_\Upsilon$-geodesic $P$ from $K_1$ to $K_2$ enters $B_r(z)$ for each $z \in Z$. 
    \item The $D_{\Upsilon_Z}$-geodesic $P_Z$ from $K_1$ to $K_2$ passes through the tube between $\SCL_{z,r}$, $\gamma_{z,r}$, and $\gamma_{z,r}^\prime$ (as described in \Cref{ss:07}) for each $z \in Z$. 
\end{itemize}
We will also consider the event $\kF_Z^\vee$ which is defined in the same manner as $\kF_Z$ but with the roles of $\Upsilon$ and $\Upsilon_Z$ interchanged (cf.~\Cref{330}):
\begin{itemize}
    \item Each ball $B_{5r}(z)$ for $z \in Z$ is ``very good''. 
    \item The $D_{\Upsilon_Z}$-geodesic $P_Z$ from $K_1$ to $K_2$ enters $B_r(z)$ for each $z \in Z$. 
    \item The $D_\Upsilon$-geodesic $P$ from $K_1$ to $K_2$ passes through the tube between $\SCL_{z,r}$, $\gamma_{z,r}$, and $\gamma_{z,r}^\prime$ (as described in \Cref{ss:07}) for each $z \in Z$. 
\end{itemize}
Since $(\Upsilon, \Upsilon_Z)$ and $(\Upsilon_Z, \Upsilon)$ have the same law, it follows that $\BP\lbrack\kF_Z\rbrack = \BP\lbrack\kF_Z^\vee\rbrack$. This suggests that the number of $Z$ for which $\kF_Z$ holds should be roughly comparable to the number of $Z$ for which $\kF_Z^\vee$ holds.

Furthermore, we will show that for each $N \in \NN$ and $r \in [\varepsilon^{1 + \nu}, \varepsilon]$, the number of $Z$ with $\#Z \le N$ for which $\kF_Z$ holds increases proportionally to a positive power of $\varepsilon^{-N}$ (cf.~\Cref{201}). Indeed, since the event $\fE_{z,r}$ occurs with high probability, it follows from independence across scales (cf.~\Cref{263}) that with high probability, the $D_\Upsilon$-geodesic $P$ can be covered by balls $B_r(z)$ for which the event $\fE_{z,r}$ occurs. Combining with the fact that $\fF_{z,r}(J_{z,r})$ occurs with uniformly positive conditional probability given $\fE_{z,r}$ and $\Upsilon$ (cf.~\eqref{eq:286}), we have found many $Z_0$'s for which the first two conditions of the definition of $\kF_{Z_0}$ described above are satisfied. In order to produce sets $Z$ for which the third condition described above is also satisfied (i.e., the $D_{\Upsilon_Z}$-geodesic $P_Z$ from $K_1$ to $K_2$ passes through the tube between $\SCL_{z,r}$, $\gamma_{z,r}$, and $\gamma_{z,r}^\prime$ for each $z \in Z$), we start with a set $Z_0$ as above and iteratively construct $Z_j$ by removing the ``bad'' points $z \in Z_{j - 1}$ such that the $D_{\Upsilon_{Z_{j - 1}}}$-geodesic $P_{Z_{j - 1}}$ from $K_1$ to $K_2$ does not pass through the tube between $\SCL_{z,r}$, $\gamma_{z,r}$, and $\gamma_{z,r}^\prime$. We will show that this iterative procedure has to terminate before $Z_j$ becomes too small. More precisely, we will show that $Z_j = Z_{j + 1} = \cdots$ for some $j$ (in which case the event $\kF_{Z_j}$ occurs) such that $\#Z_j$ is at least a constant times $\#Z_0$. See \Cref{ss:16} for details.

By combining the previous two paragraphs with a deterministic observation (cf.~\Cref{204}) and a straightforward calculation (cf.~the end of \Cref{ss:17}), we conclude that, with high probability, many sets $Z$ satisfy the event $\kF_Z^\vee$. In particular, there are numerous ``very good'' balls $B_{5r}(z)$ through which $P$ passes via the tube bounded by $\SCL_{z,r}$, $\gamma_{z,r}$, and $\gamma_{z,r}^\prime$. As discussed earlier, this establishes~\eqref{eq:039}.

\subsection{The core argument}\label{ss:17}

In the present subsection, we state the main estimates that we will prove. Fix $\rr > 0$ and disjoint compact subsets $K_1, K_2 \subset B_{2\rr}(0)$. 

\begin{definition}\label{222}
    Let $\varepsilon \in (0, 1)$. Then we shall write $\kG_\varepsilon^{\rr}(K_1, K_2)$ for the event that the following are true:
    \begin{enumerate}
        \item\label{222A} $K_1 \xleftrightarrow\Upsilon K_2$ and $\widetilde D_\Upsilon(K_1, K_2) \ge M^\ast D_\Upsilon(K_1, K_2) - \varepsilon^{2(1 + \nu)}\kc_{\rr}$. 
        \item\label{222B} For each $x \in B_{2\rr}(0)$, there exists $r \in \rho^{-1}\SR \cap [\varepsilon^{1 + \nu}\rr, \varepsilon\rr]$ and $z \in \left(\frac1{100}r\ZZ\right)^2 \cap B_{2\rr}(0)$ such that $\lvert x - z\rvert \le r$ and the following are true:
        \begin{enumerate}
            \item $\fE_{z,r}(\Upsilon)$ occurs.
            \item There are at most two connected arcs of $\Gamma$ in $A_{5r,\Lambda r}(z)$ connecting its inner and outer boundaries.  
            \item There exists a path $P_z$ in $A_{5r,\Lambda r}(z) \cap \Upsilon$ of $D_\Upsilon$-length at most $\fA\kc_r$ such that every path in $\Upsilon$ that crosses between the inner and outer boundaries of $A_{5r,\Lambda r}(z)$ intersects~$P_z$. 
        \end{enumerate}
    \end{enumerate}
\end{definition}

The key requirement in \Cref{222} is condition~\eqref{222A}. Our goal is to show that if $M_\ast < M^\ast$, then condition~\eqref{222A} is highly unlikely to occur. The motivation for this lies in its later use in \Cref{ss:10}, where it will help produce a contradiction to \Cref{008}. Indeed, \Cref{008} provides a lower bound on the probability that there exist points $x, y \in \overline{B_{\rr}(0)} \cap \Upsilon$ satisfying certain properties such that $\widetilde D_\Upsilon(x, y)$ is approximately $M^\ast D_\Upsilon(x, y)$. We will prove that this lower bound conflicts with our upper bound on the probability of condition~\eqref{222A}. Condition~\eqref{222B} serves as a global regularity assumption; we will establish in \Cref{223} that it holds with high probability. Consequently, an upper bound on $\BP[\kG_\varepsilon^{\rr}(K_1, K_2)]$ will directly yield an upper bound for the probability of condition~\eqref{222A}.

\begin{proposition}\label{200}
    Let $\eta \in (0, 1)$. Suppose that $\dist(K_1, K_2) \ge \eta\rr$ and $\dist(K_1, \partial B_{2\rr}(0)) \ge \eta\rr$. Then it holds with superpolynomially high probability as $\varepsilon \to 0$, at a rate depending only on $\eta$ (but not on $\rr$, $K_1$, $K_2$), that $\kG_\varepsilon^{\rr}(K_1, K_2)$ does not occur. 
\end{proposition}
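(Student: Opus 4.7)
The plan is to derive a contradiction by showing that on $\kG_\varepsilon^\rr(K_1, K_2)$ the $D_\Upsilon$-geodesic between $K_1$ and $K_2$ is forced to traverse many ``very good'' balls at scale $r \in \rho^{-1}\SR \cap [\varepsilon^{1+\nu}\rr,\varepsilon\rr]$, each of which yields a shortcut at ratio at most $M_2 < M^\ast$ via \Cref{096}. I would fix a scale $r$ as above and a shift $\Be \in (\ZZ/(300\Lambda\ZZ))^2$, and for each finite $Z \subset \ZZ_{\Be,r} \cap B_{2\rr}(0)$ introduce the event $\kF_Z$ that $\fE_{z,r}\cap \fF_{z,r}(J_{z,r})$ holds for every $z \in Z$, that the $D_\Upsilon$-geodesic $P$ from $K_1$ to $K_2$ enters $B_r(z)$ for every $z \in Z$, and that the $D_{\Upsilon_Z}$-geodesic $P_Z$ from $K_1$ to $K_2$ passes through the tube between $\SCL_{z,r}$, $\gamma_{z,r}$, $\gamma_{z,r}^\prime$ for every $z \in Z$. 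Let $\kF_Z^\vee$ be the analogous event with the roles of $\Upsilon$ and $\Upsilon_Z$ interchanged. Because $(\Upsilon,\Upsilon_Z)$ and $(\Upsilon_Z,\Upsilon)$ have the same joint law (the resampling is symmetric by construction), $\BP[\kF_Z] = \BP[\kF_Z^\vee]$, so any lower bound on the expected number of $Z$ with $\kF_Z$ transfers to $\kF_Z^\vee$.

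Second, I would prove (this is the content of \Cref{201}) that on $\kG_\varepsilon^\rr(K_1, K_2)$, for any fixed $N$, the expected number of $Z$ with $\#Z = N$ and $\kF_Z$ holding is bounded below by $c(\eta, N) \varepsilon^{-2N(1+\nu)}$. The strategy is two-step: first, using condition~\eqref{222B} of \Cref{222}, the independence-across-scales estimates \Cref{263,273} in the form of \Cref{266}, and the lower bound $\BP[\fE_{z,r}] \ge \pp$ from \Cref{104} combined with the conditional bound \eqref{eq:286}, produce (with polynomially high probability) many candidate collections $Z_0$ satisfying the first two conditions of $\kF_{Z_0}$ along with independently resampled $\Upsilon_{z,r}$'s and the $\fF_{z,r}(J_{z,r})$ events. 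Second, iteratively remove from $Z_j$ the ``bad'' points $z$ for which $P_{Z_j}$ fails to enter the tube at $z$, producing $Z_0 \supset Z_1 \supset \cdots$ which stabilizes at some $Z_\infty$ with $\kF_{Z_\infty}$ holding. A deterministic counting argument (the content of \Cref{204}) shows that removing the point $z$ can only affect the geodesic $P_{Z_{j+1}}$ inside $B_{5r}(z)$, so only a bounded number of previously-good neighbors are destroyed at each step; hence $\#Z_\infty \ge c\,\#Z_0$. By the symmetry $\BP[\kF_Z] = \BP[\kF_Z^\vee]$, on $\kG_\varepsilon^\rr(K_1, K_2)$ there are likewise many $Z$ with $\kF_Z^\vee$ holding, i.e.\ many very good balls traversed by the actual $D_\Upsilon$-geodesic through the associated tubes.

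Third, for each such very good ball, \Cref{096} produces disjoint intervals $[\sigma_k, \tau_k]$ along $P$ with $P([\sigma_k, \tau_k]) \subset B_{4r}(z_k)$, $D_\Upsilon(P(\sigma_k), P(\tau_k)) \ge \fb\kc_r$, and $\widetilde D_\Upsilon(P(\sigma_k), P(\tau_k)) \le M_2 D_\Upsilon(P(\sigma_k), P(\tau_k))$. The intervals at scale $r$ are disjoint because the $z_k$'s lie in $\ZZ_{\Be,r}$. Outside these intervals the bound $\widetilde D_\Upsilon \le M^\ast D_\Upsilon$ applies, so summing gives
\begin{equation*}
\widetilde D_\Upsilon(K_1, K_2) \le M^\ast D_\Upsilon(K_1, K_2) - (M^\ast - M_2)\sum_k D_\Upsilon(P(\sigma_k), P(\tau_k)).
\end{equation*}
By the second step the number of such intervals at a single scale $r$ is, with polynomially high probability, at least $c\varepsilon^{-2(1+\nu)}$, and each contributes $\fb\kc_r \ge \fb\varepsilon^{2(1+\nu)}\kc_\rr$ by \eqref{eq:340}. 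Hence the saving $(M^\ast - M_2)\sum_k D_\Upsilon(P(\sigma_k), P(\tau_k))$ is at least a uniform constant times $\kc_\rr$, which is much larger than $\varepsilon^{2(1+\nu)}\kc_\rr$, contradicting condition~\eqref{222A} of \Cref{222}. Summing the failure probability over the $O(\Lambda^2)$ shifts $\Be$ and the $O(\log(1/\varepsilon))$ admissible scales $r$ preserves superpolynomial decay in $\varepsilon$. The main obstacle is the iterative removal step: one must show that the tubes constructed in \Cref{s:00} are rigid enough that removing a single bad point (which rewires $\Upsilon$ only inside $B_{5r}(z)$) does not convert many previously-good neighbors into bad ones. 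This relies crucially on the fact that the resampling at $z$ is localized (so by Axiom~\eqref{010B} and the conditional independence structure in \Cref{048B} the goodness of other balls is unaffected), and on the deterministic input that only finitely many collections of dyadic ``good'' balls can cluster around a removed point --- the content of \Cref{204}.
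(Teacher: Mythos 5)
Your definitions of $\kF_Z$ and $\kF_Z^\vee$ and your use of the symmetry $\BP[\kF_Z] = \BP[\kF_Z^\vee]$ match the paper's framework (\Cref{210}, \Cref{330}, \eqref{eq:214}), and your iterative-removal construction of $Z_\infty$ to establish the lower bound on the count of $Z$ with $\kF_Z$ is indeed the idea behind \Cref{205} inside the proof of \Cref{201}. But there is a genuine gap at the hinge of your argument: you claim that ``by the symmetry $\BP[\kF_Z] = \BP[\kF_Z^\vee]$, on $\kG_\varepsilon^{\rr}(K_1,K_2)$ there are likewise many $Z$ with $\kF_Z^\vee$ holding.'' This does not follow. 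The identity $\BP[\kF_Z] = \BP[\kF_Z^\vee]$ is an equality of \emph{unconditional} probabilities; it says nothing about how these events distribute relative to the $\Upsilon$-measurable event $\kG_\varepsilon^{\rr}(K_1,K_2)$. In fact the paper's \Cref{204} asserts exactly the opposite of what you claim: the number of $Z\subset\ZZ_{\Be,r}^{\rr}(K_1,K_2)$ with $\#Z\le N$ for which $\kF_Z^\vee$ occurs is \emph{deterministically} bounded by $\kC^N$, so in particular it is bounded on $\kG_\varepsilon^{\rr}(K_1,K_2)$. You have also misattributed the role of \Cref{204}: the statement that removing a bad point destroys only a bounded number of previously-good neighbors is \Cref{205} (an ingredient in the proof of \Cref{201}), whereas \Cref{204} is the deterministic upper bound on $\#\{Z : \kF_Z^\vee \text{ occurs}\}$, proved via \Cref{209} (which is the rigorous version of your third-step savings heuristic).

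The correct way to finish is not a pointwise transfer but a double-counting computation. From \Cref{201}, summing over $\Be\in(\ZZ/(300\Lambda\ZZ))^2$ and $r\in\rho^{-1}\SR\cap[\varepsilon^{1+\nu}\rr,\varepsilon\rr]$ and taking expectations,
\begin{equation*}
    \varepsilon^{-\kc N}\,\BP\bigl[\kG_\varepsilon^{\rr}(K_1,K_2)\bigr] \;\le\; \sum_{\Be}\sum_{r}\sum_{\#Z\le N}\BP\bigl[\kF_Z^{\rr}(K_1,K_2)\bigr] \;=\; \sum_{\Be}\sum_{r}\sum_{\#Z\le N}\BP\bigl[\kF_Z^{\rr}(K_1,K_2)^\vee\bigr],
\end{equation*}
and then the \emph{deterministic} bound from \Cref{204} gives that the right-hand side is at most $\kC^N \cdot O(\Lambda^2\log_\Lambda(1/\varepsilon))$. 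Rearranging yields $\BP[\kG_\varepsilon^{\rr}(K_1,K_2)]\le O(\kC^N\log(1/\varepsilon)\,\varepsilon^{\kc N})$, and superpolynomial decay follows by letting $N\to\infty$. Your third step's estimate (the savings of $(M^\ast-M_2)\sum_k D_\Upsilon(P(\sigma_k),P(\tau_k))$ contradicting condition~\eqref{222A}) is morally the content of \Cref{209}, but in the paper it is used to derive the deterministic bound on $\#\{Z:\kF_Z^\vee\}$, not applied directly to the geodesic on the event $\kG_\varepsilon^{\rr}(K_1,K_2)$.
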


It is essential for our purposes that the bound in \Cref{200} be uniform in $\rr$, $K_1$, and $K_2$. Indeed, we will eventually choose $K_1$ and $K_2$ to be Euclidean balls whose radii are $\rr$ times a power of $\varepsilon$ (cf.~\Cref{224}). In proving \Cref{200}, we do not need to control the size of $\SR$; it suffices that, for every $r \in \rho^{-1}\SR$, the objects $U_{z,r}$, $V_{z,r}$, $\Upsilon_{z,r}$, $\fE_{z,r}$, and $\fF_{z,r}$ are defined and satisfy the required properties. In fact, decreasing $\SR$ only makes condition~\eqref{222B} in the definition of $\kG_\varepsilon^{\rr}(K_1,K_2)$ less likely to occur, while increasing $\SR$ makes condition~\eqref{222A} less likely, as will become clear in the proof of \Cref{200}. Either way, we will show that the event $\kG_\varepsilon^{\rr}(K_1,K_2)$ is extremely unlikely. The size of $\SR$ (cf.~\Cref{105}) will be used only later, to verify that condition~\eqref{222B} holds with high probability; this, in turn, will force condition~\eqref{222A} to be extremely unlikely.  The proof of \Cref{000} will be given in \Cref{ss:10} using \Cref{200} and may be read without first reading the details of the proof of \Cref{200}.

The proof of \Cref{200} will rely on counting how often certain events occur. 
We now proceed to define these events.

For $\Be \in (\ZZ/(300\Lambda\ZZ))^2$ and $r \in \rho^{-1}\SR$, we shall write 
\begin{equation*}
    \ZZ_{\Be,r}^{\rr}(K_1, K_2) \defeq \bigl\{z \in \ZZ_{\Be,r} \cap B_{2\rr}(0) : B_{\Lambda r}(z) \cap (K_1 \cup K_2) = \emptyset\bigr\}. 
\end{equation*}

\begin{definition}\label{210}
    Let $\Be \in (\ZZ/(300\Lambda\ZZ))^2$, $r \in \rho^{-1}\SR$, and $Z \subset \ZZ_{\Be,r}^{\rr}(K_1, K_2)$. Then we shall write $\kF_Z^{\rr}(K_1, K_2)$ for the event that the following are true:
    \begin{enumerate}
        \item\label{210A} $K_1 \xleftrightarrow\Upsilon K_2$ and $\widetilde D_\Upsilon(K_1, K_2) \ge M^\ast D_\Upsilon(K_1, K_2) - \kc_r$. 
        \item\label{210B} For each $z \in Z$, the event $\fE_{z,r}(\Upsilon)$ occurs.
        \item\label{210C} For each $z \in Z$, there are at most two connected arcs of $\Gamma$ in $A_{5r,\Lambda r}(z)$ connecting its inner and outer boundaries. 
        \item\label{210D} For each $z \in Z$, there exists a path $P_z$ in $A_{5r,\Lambda r}(z) \cap \Upsilon$ of $D_\Upsilon$-length at most $\fA\kc_r$ such that every path in $\Upsilon$ that crosses between the inner and outer boundaries of $A_{5r,\Lambda r}(z)$ intersects $P_z$. 
        \item\label{210E} Write $P \colon [0, 1] \to \Upsilon$ for the $D_\Upsilon$-geodesic from $K_1$ to $K_2$. For each $z \in Z$, the $D_\Upsilon$-geodesic $P$ enters $B_r(z)$. 
    \end{enumerate}
    Before proceeding, for each $z \in Z$, we shall write $s_z$ (resp.~$t_z$) for the first (resp.~last) time at which $P$ hits $U_{z,r}^\star$. Write $i_z, j_z \in [1, \#Z_{z,r}]_\ZZ$ such that $P(s_z) \in \partial U_{i_z}^\star$ and $P(t_z) \in \partial U_{j_z}^\star$. (Note that necessarily $P(s_z) \in \partial U_{i_z}$ and $P(t_z) \in \partial U_{j_z}$.  We also recall that $U_1, \ldots, U_{\#Z_{z,r}}$ are the connected components of $U_{z,r}$, and $U_1^\star, \ldots, U_{\#Z_{z,r}}^\star$ are the connected components of $U_{z,r}^\star$.)  
    \begin{enumerate}
        \setcounter{enumi}5
        \item\label{210F} For each $z \in Z$, the event $\fF_{z,r}(\Upsilon_Z; \{i_z, j_z\})$ occurs. 
        \item\label{210G} Write $P_Z \colon [0, 1] \to \Upsilon_Z$ for the $D_{\Upsilon_Z}$-geodesic from $K_1$ to $K_2$. For each $z \in Z$, the $D_{\Upsilon_Z}$-geodesic $P_Z$ passes through the tube between either $\SCL_{z,r}$ and $\gamma_{z,r}$ or $\SCL_{z,r}$ and $\gamma_{z,r}^\prime$. 
    \end{enumerate}
\end{definition}

An illustration of \Cref{210} is provided in \Cref{fig:kF}. Condition~\eqref{210A} is closely connected to condition~\eqref{222A} from \Cref{222}, while conditions~\eqref{210B}, \eqref{210C}, and \eqref{210D} correspond closely to condition~\eqref{222B} in the same reference. Also, we note that the event $\kG_\varepsilon^{\rr}(K_1, K_2)$ depends only on $\Upsilon$, while the event $\kF_Z^{\rr}(K_1, K_2)$ depends on both $\Upsilon$ and $\Upsilon_Z$. 

\begin{figure}[ht!]
    \centering
    \includegraphics[width=0.8\linewidth]{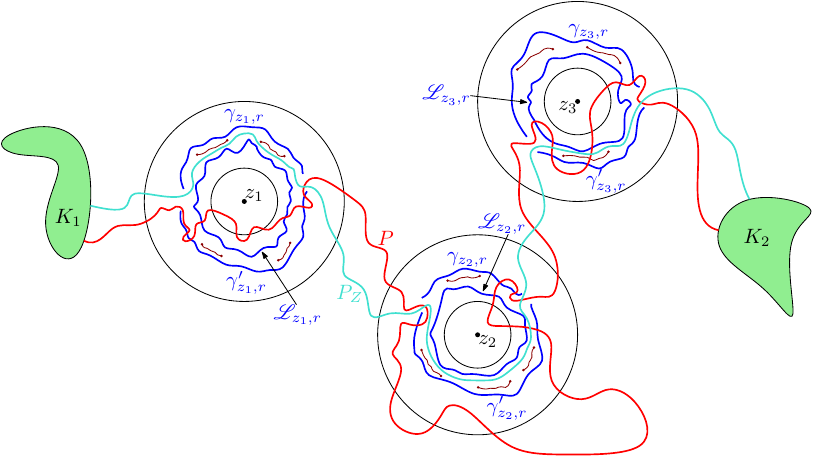}
    \caption{Illustration of the event $\kF_Z^{\rr}(K_1, K_2)$ of \Cref{210} for $Z = \{z_1, z_2, z_3\}$. The red path $P$ is a $D_\Upsilon$-geodesic from $K_1$ to $K_2$. The turquoise path $P_Z$ is a $D_{\Upsilon_Z}$-geodesic from $K_1$ to $K_2$. For each $z \in \{z_1, z_2, z_3\}$, the blue paths $\SCL_{z,r}$, $\gamma_{z,r}$, $\gamma_{z,r}^\prime$ are (connected arcs of) loops of $\Gamma_Z$; $P_Z$ passes through the tube between either $\SCL_{z,r}$ and $\gamma_{z,r}$ or $\SCL_{z,r}$ and $\gamma_{z,r}^\prime$. The short dark red paths are ``shortcuts'' (corresponding to $\widetilde P$ described in \Cref{097}), in the sense that each of them is a $\widetilde D_\Upsilon$-geodesic whose $\widetilde D_\Upsilon$-length is at most $M_1$ times the $D_\Upsilon$-distance between its endpoints. For each $z \in \{z_1, z_2, z_3\}$, since $P_Z$ passes through the tube between either $\SCL_{z,r}$ and $\gamma_{z,r}$ or $\SCL_{z,r}$ and $\gamma_{z,r}^\prime$, it must get close to at least one ``shortcut'' in that tube.}
    \label{fig:kF}
\end{figure}

\begin{definition}\label{330}
    Let $\Be \in (\ZZ/(300\Lambda\ZZ))^2$, $r \in \rho^{-1}\SR$, and $Z \subset \ZZ_{\Be,r}^{\rr}(K_1, K_2)$. Then we shall write $\kF_Z^{\rr}(K_1, K_2)^\vee$ for the event defined in the same manner as $\kF_Z^{\rr}(K_1, K_2)$ but with the roles of $\Upsilon$ and $\Upsilon_Z$ interchanged, i.e., the event that the following are true:
    \begin{enumerate}
        \item\label{330A} $K_1 \xleftrightarrow{\Upsilon_Z} K_2$ and $\widetilde D_{\Upsilon_Z}(K_1, K_2) \ge M^\ast D_{\Upsilon_Z}(K_1, K_2) - \kc_r$. 
        \item\label{330B} For each $z \in Z$, the event $\fE_{z,r}(\Upsilon_Z)$ occurs. 
        \item\label{330C} For each $z \in Z$, there are at most two connected arcs of $\Gamma_Z$ in $A_{5r,\Lambda r}(z)$ connecting $\partial B_{5r}(z)$ and $\partial B_{\Lambda r}(z)$. 
        \item\label{330D} For each $z \in Z$, there exists a path $P_z$ in $A_{5r,\Lambda r}(z) \cap \Upsilon_Z$ of $D_{\Upsilon_Z}$-length at most $\fA\kc_r$ such that every path in $\Upsilon_Z$ that crosses between the inner and outer boundaries of $A_{5r,\Lambda r}(z)$ intersects $P_z$. 
        \item\label{330E} Write $P_Z \colon [0, 1] \to \Upsilon_Z$ for the $D_{\Upsilon_Z}$-geodesic from $K_1$ to $K_2$. For each $z \in Z$, the $D_{\Upsilon_Z}$-geodesic $P_Z$ hits $B_r(z)$. 
    \end{enumerate}
    Before proceeding, for each $z \in Z$, we shall write $s_z$ (resp.~$t_z$) for the first (resp.~last) time at which $P_Z$ hits $U_{z,r}^\star$. Write $i_z, j_z \in [1, \#Z_{z,r}]_\ZZ$ such that $P_Z(s_z) \in \partial U_{i_z}^\star$ and $P_Z(t_z) \in \partial U_{j_z}^\star$.
    \begin{enumerate}
        \setcounter{enumi}5
        \item\label{330F} For each $z \in Z$, the event $\fF_{z,r}(\Upsilon; \{i_z, j_z\})$ occurs.
        \item\label{330G} Write $P \colon [0, 1] \to \Upsilon$ for the $D_\Upsilon$-geodesic from $K_1$ to $K_2$. For each $z \in Z$, the $D_\Upsilon$-geodesic $P$ passes through the tube between either $\SCL_{z,r}$ and $\gamma_{z,r}$ or $\SCL_{z,r}$ and $\gamma_{z,r}^\prime$. (In this situation, $\SCL_{z,r}$, $\gamma_{z,r}$, $\gamma_{z,r}^\prime$ are (connected arcs of) loops of $\Gamma$ instead of $\Gamma_{z,r}$.)
    \end{enumerate}
\end{definition}

Since $(\Upsilon, \Upsilon_Z)$ and $(\Upsilon_Z, \Upsilon)$ have the same law, we have
\begin{equation}\label{eq:214}
    \BP\lbrack\kF_Z^{\rr}(K_1, K_2)\rbrack = \BP\lbrack\kF_Z^{\rr}(K_1, K_2)^\vee\rbrack, \quad \forall \Be \in (\ZZ/(300\Lambda\ZZ))^2, \ \forall r \in \rho^{-1}\SR, \ \forall Z \subset \ZZ_{\Be,r}^{\rr}(K_1, K_2). 
\end{equation}

The following estimate tells us that, given $\Upsilon$ and the event $\kG_\varepsilon^{\rr}(K_1, K_2)$, with high probability, there are many $Z \subset \ZZ_{\Be,r}^{\rr}(K_1, K_2)$ for which $\kF_Z^{\rr}(K_1, K_2)$ occurs. 

\begin{proposition}\label{201}
    For each $\eta \in (0, 1)$, there exists $\kc = \kc(\eta) > 0$ such that for each $N \in \NN$, there exists $\varepsilon_\ast(\eta, N) \in (0, 1)$ such that for each $\varepsilon \in (0, \varepsilon_\ast]$, the following is true: Suppose that $\dist(K_1, K_2) \ge \eta\rr$ and $\dist(K_1, \partial B_{2\rr}(0)) \ge \eta\rr$. Then, on the event $\kG_\varepsilon^{\rr}(K_1, K_2)$, there exists $\Be \in (\ZZ/(300\Lambda\ZZ))^2$ and $r \in \rho^{-1}\SR \cap [\varepsilon^{1 + \nu}\rr, \varepsilon\rr]$ --- which are almost surely determined by $\Upsilon$ --- such that 
    \begin{equation*}
        \BE\!\left\lbrack\#\left\{Z \subset \ZZ_{\Be,r}^{\rr}(K_1, K_2) : \#Z \le N \text{ and } \kF_Z^{\rr}(K_1, K_2) \text{ occurs}\right\} \ \middle\vert \ \Upsilon\right\rbrack \ge \varepsilon^{-\kc N}. 
    \end{equation*}
\end{proposition}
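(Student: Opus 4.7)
The plan is to reduce to a single scale and offset by pigeonhole, then combine a combinatorial count of candidate subsets with an iterative deletion procedure that forces the resampled geodesic to traverse the required tubes.

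\emph{Step 1 --- reduction to a single scale.} On the event $\kG_\varepsilon^{\rr}(K_1,K_2)$, let $P \colon [0,1] \to \Upsilon$ denote the $D_\Upsilon$-geodesic from $K_1$ to $K_2$. By \Cref{309} and \Cref{229} applied with $K_1,K_2,B_{2\rr}(0)$ and \eqref{eq:340}, we have $\len(P;D_\Upsilon) \asymp \kc_{\rr}$, and $P$ must cross at least $\gtrsim \kc_\rr/\kc_r \ge c(\eta)\rr/r$ disjoint Euclidean balls of any given radius $r \in [\varepsilon^{1+\nu}\rr, \varepsilon\rr]$. Evaluating condition~\eqref{222B} of \Cref{222} at one point per such ball and pigeonholing over the $O(\log_\Lambda \varepsilon^{-1})$ admissible scales in $\rho^{-1}\SR \cap [\varepsilon^{1+\nu}\rr, \varepsilon\rr]$ and the $(300\Lambda)^2$ offsets $\Be$ produces an $\Upsilon$-measurable pair $(\Be,r)$ for which
\[
Z_0 \defeq \bigl\{ z \in \ZZ_{\Be,r}^{\rr}(K_1,K_2) : P \cap B_r(z) \ne \emptyset \text{ and conditions \eqref{210B}, \eqref{210C}, \eqref{210D} of \Cref{210} hold at } (z,r) \bigr\}
\]
satisfies $\#Z_0 \ge \varepsilon^{-c_0}$ for some $c_0 = c_0(\eta) > 0$. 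Any $Z \subset Z_0$ then automatically fulfils conditions~\eqref{210A}--\eqref{210E}.

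\emph{Step 2 --- conditional count.} For any $Z \subset Z_0$ with $\#Z = N$, the $3\Lambda r$-separation built into $\ZZ_{\Be,r}$ makes the families $\{(U_{z,r}^\prime,V_{z,r}^\prime,\Upsilon_{z,r})\}_{z \in Z}$ conditionally independent given $\Upsilon$, so the uniform estimate \eqref{eq:286} gives $\BP[\text{condition \eqref{210F}} \mid \Upsilon] \ge \fa_9^N$ for any $\Upsilon$-measurable choice of $\{i_z,j_z\}_{z \in Z}$. Since $\binom{\#Z_0}{N} \ge (\#Z_0/N)^N \ge \varepsilon^{-c_0 N/2}$ once $\varepsilon_\ast$ is chosen small enough in $N$, and $\fa_9^N$ depends only on $N$, the proposition will follow with $\kc \defeq c_0/4$ provided we can show that condition~\eqref{210G}---namely that the $D_{\Upsilon_Z}$-geodesic $P_Z$ traverses the tube between $\SCL_{z,r},\gamma_{z,r},\gamma_{z,r}^\prime$ at every $z \in Z$---holds, on average, for a $\varepsilon^{o(N)}$-fraction of the $N$-subsets $Z \subset Z_0$.

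\emph{Step 3 --- forcing tube traversal by iterative deletion.} Since $\Upsilon_Z$ coincides with $\Upsilon$ outside $\bigcup_{z \in Z} U_{z,r}$ and $U_{z,r} \subset A_{2r,4r}(z)$, condition~\eqref{098A} holds for $\Upsilon_Z$ in each annulus $A_{r,2r}(z) \cup A_{4r,5r}(z)$, so any $D_{\Upsilon_Z}$-path entering $B_r(z)$ from outside $B_{5r}(z)$ has length at least $2\fa_1\kc_r$. Under condition~\eqref{210F}, the loops $\SCL_{z,r},\gamma_{z,r},\gamma_{z,r}^\prime$ of $\Gamma_Z$ together with the $\fa_8 r$-confinement of the complementary excursions enforced by \eqref{035I} topologically disconnect $B_r(z)$ from $\CC \setminus B_{4r}(z)$ in $\Upsilon_Z$ except through the two designated tubes, and condition~\eqref{035K} bounds the cost of traversing the ``caps'' of each tube by $\rho\kc_r$. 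Because $P$ remains a valid path in $\Upsilon_Z$ outside $\bigcup U_{z,r}$ and the internal $D_{\Upsilon_Z}$-distances across $U_{z,r}$ are controlled by \eqref{035K} plus \Cref{229}, we get $\len(P_Z;D_{\Upsilon_Z}) \lesssim \kc_\rr$; any ``bad'' $z \in Z$ (at which $P_Z$ enters $B_r(z)$ without traversing a tube) then contributes an unavoidable excess of at least $\fa_1\kc_r$, so the number of bad centers is at most $O(\kc_\rr/(\fa_1\kc_r)) \lesssim \varepsilon^{-1}/\fa_1$. Adapting the iterative deletion of \cite[Section~5]{UniCriSupercriLQGMet}, one removes these bad centers in succession to produce, for each $Z \subset Z_0$ of size $N$, a sub-collection $Z^\ast \subset Z$ of size $N - O(\varepsilon^{-1})$ for which the full event $\kF_{Z^\ast}^{\rr}(K_1,K_2)$ holds. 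A short combinatorial bookkeeping (each $Z^\ast$ of size $k$ is obtained from at most $\binom{\#Z_0}{N-k}$ supersets) converts the lower bound $\binom{\#Z_0}{N}\fa_9^N$ from Step~2 into the desired $\BE[\#\{Z : \kF_Z^\rr(K_1,K_2)\} \mid \Upsilon] \ge \varepsilon^{-\kc N}$.

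The main obstacle is the quantitative disconnection/detour assertion underlying Step~3: we must show rigorously that, under $\fE_{z,r}$ and $\fF_{z,r}$, the triple $(\SCL_{z,r},\gamma_{z,r},\gamma_{z,r}^\prime)$ actually seals $B_r(z)$ off from $\CC \setminus B_{4r}(z)$ in the metric $D_{\Upsilon_Z}$ except through the two tubes, and that each missed traversal costs at least $\fa_1\kc_r$. This will require a delicate interplay between the tight control on complementary excursions from~\eqref{035I}, the cover from~\eqref{098E}, the lower bound~\eqref{098A}, and the near-independence of CLE configurations across disjoint balls (Lemmas~\ref{263} and~\ref{265}), combined with the local regularity of $D_{\Upsilon_Z}$ provided by \Cref{019}.
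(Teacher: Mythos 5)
Your overall architecture (pigeonhole to a single scale and offset, use \eqref{eq:286} to get the tube-resampling events, then an iterative deletion to force tube traversal, then a combinatorial count) matches the paper's proof, and Steps 1--2 are essentially correct, modulo the fact that the paper extracts $\#\SZ_1 \gtrsim \varepsilon^{-1+o(1)}$ directly from the Euclidean diameter of the geodesic rather than from its $D_\Upsilon$-length. The genuine gap is in the quantitative conclusion of Step 3. You bound the number of ``bad'' centers (those $z \in Z$ at which $P_Z$ fails to traverse a tube) by $O(\kc_{\rr}/(\fa_1\kc_r)) \lesssim \varepsilon^{-1}$, by charging each one $\fa_1\kc_r$ against the total length budget $\len(P_Z;D_{\Upsilon_Z}) \lesssim \kc_{\rr}$. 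For a set $Z$ with $\#Z \le N$ and $N$ fixed while $\varepsilon \to 0$, this bound is vacuous: it permits every point of $Z$ to be bad, and your surviving set of size ``$N - O(\varepsilon^{-1})$'' is then empty (or of negative cardinality), which destroys the combinatorial bookkeeping in the last sentence. What the argument actually requires --- and what the paper proves in \Cref{205} --- is that the surviving fraction is a \emph{deterministic constant} $c$ times $\#Z$, uniformly in $\varepsilon$ and $N$.

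The correct accounting is not against the global budget $\kc_{\rr}$ but against the \emph{gain from resampling}, measured per point of $Z$. Concretely, the paper shows: (i) on $F_Z$ the resampling saves at least $\fa_1\kc_r$ per point, i.e.\ $D_{\Upsilon_Z}(K_1,K_2) \le D_\Upsilon(K_1,K_2) - \fa_1\kc_r\#Z$ (\Cref{317}); (ii) if an excursion of $P_{Z_{n-1}}$ into $B_{4r}(z)$ avoids the tubes, then the resampling at $z$ saves at most $100\fa_2\kc_r$ on that excursion (\Cref{318}), and there are at most $\fA/\fa_1$ such excursions per center (\Cref{220}), so deleting the bad centers at each stage increases $D_{\Upsilon_{Z_n}}(K_1,K_2)$ by only $(100\fA\fa_2/\fa_1)\kc_r$ per deleted point --- which telescopes over all iterations to a total of $(100\fA\fa_2/\fa_1)\kc_r\#Z \ll \fa_1\kc_r\#Z$; and (iii) the total possible saving is bounded by $(\fA^2/\fa_1)\kc_r\#Z_n$ in terms of the surviving set (\Cref{221}). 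Comparing (i)$-$(ii) with (iii) forces $\#Z_n \ge c\#Z$ with $c = (\fa_1/\fA^2)(\fa_1 - 100\fA\fa_2/\fa_1)$. Note also that your framing of the obstacle --- proving that the loops ``seal off'' $B_r(z)$ so that $P_Z$ \emph{must} traverse a tube --- is not what is needed or proved; \Cref{318} only shows that a non-traversing excursion gains essentially nothing, which is weaker but suffices once the per-point accounting above is in place. With \Cref{205} in hand, your Step 2 count goes through essentially as in the paper.
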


Both the proofs of \Cref{201,204} (stated just below) will be proved via elementary deterministic considerations based on the definitions of the events $\fE_{z,r}$, $\fF_{z,r}$, $\kF_Z^{\rr}(K_1,K_2)$, and $\kF_Z^{\rr}(K_1, K_2)^\vee$. The proof of \Cref{201} is postponed until \Cref{ss:16}. The guiding idea is as follows. We have already observed that condition~\eqref{210A} of the event $\kF_Z^{\rr}(K_1,K_2)$ is closely related to condition~\eqref{222A} of $\kG_\varepsilon^{\rr}(K_1,K_2)$, while conditions~\eqref{210B}--\eqref{210D} of $\kF_Z^{\rr}(K_1,K_2)$ correspond to condition~\eqref{222B} of $\kG_\varepsilon^{\rr}(K_1,K_2)$. Hence, by the pigeonhole principle, on the event $\kG_\varepsilon^{\rr}(K_1,K_2)$ there exist $\Be \in (\ZZ/(300\Lambda\ZZ))^2$, $r \in \rho^{-1}\SR \cap [\varepsilon^{1+\nu}\rr,\varepsilon\rr]$, and a set $\SZ_1 \subset \ZZ_{\Be,r}^{\rr}(K_1,K_2)$ with $\#\SZ_1 = \varepsilon^{-1+o(1)}$ such that conditions~\eqref{210A}--\eqref{210E} of $\kF_Z^{\rr}(K_1,K_2)$ hold for every subset $Z \subset \SZ_1$. Invoking \eqref{eq:286}, we further obtain a subset $\SZ_2 \subset \SZ_1$ with $\BE\!\left\lbrack\#\SZ_2 \mid \Upsilon\right\rbrack = \varepsilon^{-1+o(1)}$ almost surely, and for which conditions~\eqref{210A}--\eqref{210F} of $\kF_Z^{\rr}(K_1,K_2)$ hold for all $Z \subset \SZ_2$. Thus it remains to check~\eqref{210G}. We will show that for each $Z \subset \SZ_2$ there is a subcollection $Z' \subset Z$ with $\#Z' \ge c\,\#Z$ for some deterministic $c>0$ such that condition~\eqref{210G} holds for $\kF_{Z'}^{\rr}(K_1,K_2)$ (hence $\kF_{Z'}^{\rr}(K_1,K_2)$ occurs) (cf.~\Cref{205} and see also the beginning of \Cref{ss:16} for the idea of the construction of $Z'$). Consequently,
\begin{multline*}
    \BE\!\left\lbrack\#\left\{ Z \subset \ZZ_{\Be,r}^{\rr}(K_1,K_2) : \#Z \le N \ \text{and}\ \kF_Z^{\rr}(K_1,K_2)\ \text{occurs} \right\} \ \middle| \ \Upsilon \right\rbrack \\
    \ge \BE\!\left\lbrack\binom{\#\SZ_2}{N} \binom{\#\SZ_2}{(1-c)N}^{-1} \ \middle| \ \Upsilon \right\rbrack \succeq \BE\!\left\lbrack (\#\SZ_2)^{cN} \ \middle| \ \Upsilon \right\rbrack = \varepsilon^{-(c+o(1))N} \quad \text{almost surely}.
\end{multline*}

Our next result provides an unconditional upper bound on the number of sets $Z$ for which the event $\kF_Z^{\rr}(K_1, K_2)^\vee$ occurs.

\begin{proposition}\label{204}
    There exists a deterministic constant $\kC > 0$ such that for each $\Be \in (\ZZ/(300\Lambda\ZZ))^2$, $r \in \rho^{-1}\SR$, and $N \in \NN$, we have
    \begin{equation}\label{eq:237}
        \#\left\{Z \subset \ZZ_{\Be,r}^{\rr}(K_1, K_2) : \#Z \le N \text{ and } \kF_Z^{\rr}(K_1, K_2)^\vee \text{ occurs}\right\} \le \kC^N \quad \text{almost surely}. 
    \end{equation}
\end{proposition}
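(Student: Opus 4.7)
My strategy is to identify a ``candidate set'' $\hat Z \subset \ZZ_{\Be,r}^{\rr}(K_1, K_2)$ such that any $Z$ with $\kF_Z^{\rr}(K_1, K_2)^\vee$ occurring satisfies $Z \subset \hat Z$, and then to bound $\#\hat Z$ by a deterministic constant $\kC_0$. Once this is done, the count of subsets $Z \subset \hat Z$ with $\#Z \le N$ is at most $\binom{\kC_0}{\le N} \le 2^{\kC_0}$, and choosing $\kC := 2^{\kC_0}$ (say) will give the bound $\le \kC^N$ for every $N \ge 1$ (with the $N = 0$ case being trivial since only the empty set contributes). Concretely, letting $P$ denote the $D_\Upsilon$-geodesic from $K_1$ to $K_2$, I will take
\[
    \hat Z := \bigl\{z \in \ZZ_{\Be,r}^{\rr}(K_1, K_2) : P \text{ passes through the tube between } \SCL_{z,r} \text{ and } \gamma_{z,r} \text{ or } \gamma_{z,r}^\prime\bigr\},
\]
where the tubes are as in condition~\eqref{330G} of \Cref{330} (so $\SCL_{z,r}$, $\gamma_{z,r}$, $\gamma_{z,r}^\prime$ are loops/arcs of $\Gamma$ produced by the ``swapped'' resampling at $z$). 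Condition \eqref{330G} then forces $Z \subset \hat Z$.

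To bound $\#\hat Z$, I would invoke the version of \Cref{096} with the roles of $\Upsilon$ and $\Upsilon_r$ interchanged, whose proof is identical to that of \Cref{096} after the symmetric relabelling. This gives, for every $z \in \hat Z$, times $\sigma_z < \tau_z$ such that $P([\sigma_z, \tau_z]) \subset B_{4r}(z) \cap \Upsilon$, $D_\Upsilon(P(\sigma_z), P(\tau_z)) \ge \fb \kc_r$, and $\widetilde D_\Upsilon(P(\sigma_z), P(\tau_z)) \le M_2 D_\Upsilon(P(\sigma_z), P(\tau_z))$. Since the balls $B_{4r}(z)$ for $z \in \ZZ_{\Be,r}$ are pairwise disjoint (their centers are separated by $3\Lambda r \gg 8r$), the intervals $[\sigma_z, \tau_z]$ are pairwise disjoint along $P$. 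Applying the universal bi-Lipschitz bound $\widetilde D_\Upsilon \le M^\ast D_\Upsilon$ on the complementary pieces of $P$ and summing yields
\[
    \widetilde D_\Upsilon(K_1, K_2) \le M^\ast D_\Upsilon(K_1, K_2) - (M^\ast - M_2) \fb \kc_r \, \#\hat Z.
\]
Combining this with the definition of $M_\ast$, which gives $\widetilde D_\Upsilon(K_1, K_2) \ge M_\ast D_\Upsilon(K_1, K_2)$ almost surely, we arrive at the estimate $\#\hat Z \le \tfrac{M^\ast - M_\ast}{(M^\ast - M_2) \fb} \cdot \tfrac{D_\Upsilon(K_1, K_2)}{\kc_r}$.

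The main obstacle is to convert this a priori random bound into a deterministic one. To do this I expect to use the structural constraints packaged into the events $\fE_{z,r}$ and $\fF_{z,r}(\Upsilon;\{i_z,j_z\})$ at each $z \in \hat Z$: condition \eqref{330D} produces a loop in $A_{5r,\Lambda r}(z) \cap \Upsilon$ of $D_\Upsilon$-length $\le \fA\kc_r$ that surrounds $B_{5r}(z)$, while condition \eqref{035K} (after swapping $\Upsilon$ and $\Upsilon_r$) provides $D_\Upsilon$-paths of length $\le \rho\kc_r$ linking the entry and exit arcs of the tube at $z$. Chaining these ``bypasses'' allows one to build a competitor path from $K_1$ to $K_2$ whose $D_\Upsilon$-length is bounded by a constant multiple of $\kc_r$ plus costs supported outside the $B_{5r}(z)$'s, and combining this competitor bound with the estimate from the previous paragraph produces a self-consistent inequality that forces $\#\hat Z \le \kC_0$ for some deterministic $\kC_0$ depending only on $M_\ast$, $M^\ast$, $\fA$, $\fb$, $\rho$. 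With this in hand, the final subset-counting step yields $\kC^N$ with $\kC = 2^{\kC_0}$, and the proof is complete. The delicate part is making the ``bypass'' construction compatible with the geodesic structure of $P$ so that the competitor bound on $D_\Upsilon(K_1, K_2)/\kc_r$ is genuinely deterministic rather than depending on $D_\Upsilon(K_1, K_2)$ itself.
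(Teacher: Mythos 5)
Your first step is the right idea, and it mirrors what the paper does: \Cref{207} shows that any $z$ appearing in some $Z$ with $\kF_Z^{\rr}(K_1, K_2)^\vee$ occurring must belong to a ``candidate set'' of $(\Be, r)$-good points. The gap is in how you try to bound the candidate set. You aim for a \emph{deterministic constant} bound $\#\hat Z \le \kC_0$ that is independent of $N$, which is simply not true. Your own inequality
\begin{equation*}
    \#\hat Z \le \frac{M^\ast - M_\ast}{(M^\ast - M_2)\fb} \cdot \frac{D_\Upsilon(K_1, K_2)}{\kc_r}
\end{equation*}
makes this clear: you are combining the shortcut estimate with the weak a priori bound $\widetilde D_\Upsilon \ge M_\ast D_\Upsilon$, whose slack is $(M^\ast - M_\ast) D_\Upsilon(K_1, K_2)$ — a quantity of order $\kc_\rr$, not $\kc_r$ — so the resulting bound on $\#\hat Z$ blows up like $\kc_\rr/\kc_r$ as $r \to 0$. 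This growth is genuine: on most realizations, $P$ will run through tubes at on the order of $D_\Upsilon(K_1,K_2)/\kc_r$ sites. No ``bypass competitor'' can rescue this, because any competitor path from $K_1$ to $K_2$ has $D_\Upsilon$-length at least $D_\Upsilon(K_1, K_2)$, so the ``cost outside the balls'' in your envisioned self-consistent inequality is itself comparable to $D_\Upsilon(K_1, K_2)$, and the inequality closes to nothing. Note also that the target bound is genuinely $\kC^N$, not a constant — in the proof of \Cref{200} this factor is cancelled against the $\varepsilon^{-\kc N}$ from \Cref{201}.

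What the paper does instead (\Cref{209}): anchor to a specific $Z_0$ with $\#Z_0 \le N$ and $\kF_{Z_0}^{\rr}(K_1,K_2)^\vee$ occurring, and use condition \eqref{330A} \emph{for $Z_0$}, namely $\widetilde D_{\Upsilon_{Z_0}}(K_1, K_2) \ge M^\ast D_{\Upsilon_{Z_0}}(K_1, K_2) - \kc_r$, whose slack is a single $\kc_r$. After building the annular-loop bypass to pass from $\Upsilon$ to $\Upsilon_{Z_0}$ (picking up an error of order $\kc_r \#Z_0$), this near-extremality is incompatible with having many extra $(\Be,r)$-good sites $z' \notin Z_0$, since each one forces $\widetilde D$ to fall short of $M^\ast D$ by $\ge (M^\ast - M_2)\fb\kc_r$. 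The result is $\#Z_0' \le C \#Z_0 \le CN$, hence the candidate set has size $\le (1+C)N$ and the count of subsets is $\le 2^{(1+C)N}$. The essential ingredient you are missing is condition \eqref{330A} for the resampled configuration $\Upsilon_{Z_0}$ — without it, you have only the trivial $M_\ast$ bound and no deterministic control.
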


The proof of \Cref{204} is postponed until \Cref{ss:19}. The intuition behind \Cref{204} is as follows. Fix $Z_0 \subset \ZZ_{\Be,r}^{\rr}(K_1,K_2)$ with $\#Z_0 \le N$ such that $\kF_{Z_0}^{\rr}(K_1,K_2)^\vee$ occurs (if no such $Z_0$ exists, then \eqref{eq:237} is immediate). Let $Z_0'$ be the set of points $z \in \ZZ_{\Be,r}^{\rr}(K_1,K_2)\setminus Z_0$ for which there exists $Z \subset \ZZ_{\Be,r}^{\rr}(K_1,K_2)$ with $\kF_Z^{\rr}(K_1,K_2)^\vee$ occurring. We \emph{claim} that $\#Z_0' \le C\,\#Z_0$ for some deterministic constant $C$ (cf.~\Cref{209}). Indeed, each $z \in Z_0'$ must satisfy the $z$-localized parts of conditions~\eqref{330B}, \eqref{330C}, \eqref{330D}, \eqref{330F}, \eqref{330G} of \Cref{330}, namely:
\begin{itemize}
    \item $\fE_{z,r}(\Upsilon_{z,r})$ occurs.
    \item At most two connected arcs of $\Gamma_{z,r}$ in $A_{5r,\Lambda r}(z)$ connect the inner and outer boundaries.
    \item There exists a path $P_z \subset A_{5r,\Lambda r}(z) \cap \Upsilon_{z,r}$ of $D_{\Upsilon_{z,r}}$-length at most $\fA\kc_r$ such that every path in $\Upsilon_{z,r}$ that crosses between the inner and outer boundaries of $A_{5r,\Lambda r}(z)$ intersects $P_z$.
    \item $\fF_{z,r}(\Upsilon;\{i_z,j_z\})$ occurs for some distinct $i_z,j_z \in [1,\#Z_{z,r}]_\ZZ$.
    \item The $D_\Upsilon$-geodesic $P$ passes through the tube between either $\SCL_{z,r}$ and $\gamma_{z,r}$ or $\SCL_{z,r}$ and $\gamma_{z,r}'$.
\end{itemize}
The last item, together with \Cref{096} (applied with the roles of $\Upsilon$ and $\Upsilon_{z,r}$ interchanged), implies that 
\begin{equation}\label{eq:045}
    \parbox{.87\linewidth}{$\widetilde D_\Upsilon(K_1,K_2)$ is at most $M^\ast D_\Upsilon(K_1,K_2)$ minus a deterministic positive constant multiple of $\#Z_0'$.}
\end{equation}
On the other hand, since $\kF_{Z_0}^{\rr}(K_1,K_2)^\vee$ occurs, condition~\eqref{330A} of $\kF_{Z_0}^{\rr}(K_1,K_2)^\vee$ yields
\begin{equation}\label{eq:046}
    \widetilde D_{\Upsilon_{Z_0}}(K_1,K_2) \,\ge\, M^\ast D_{\Upsilon_{Z_0}}(K_1,K_2) - \kc_r .
\end{equation}
Since $\Upsilon$ and $\Upsilon_{Z_0}$ differ only inside the Euclidean balls centered at $z \in Z_0$, the differences between $\widetilde D_\Upsilon(K_1,K_2)$ and $\widetilde D_{\Upsilon_{Z_0}}(K_1,K_2)$ (resp.~$D_\Upsilon(K_1,K_2)$ and $D_{\Upsilon_{Z_0}}(K_1,K_2)$) are controlled by a deterministic positive constant multiple of $\#Z_0$. Hence, if $\#Z_0'$ were much larger than $\#Z_0$, \eqref{eq:045} and \eqref{eq:046} would be incompatible, proving the \emph{claim} (see also the paragraph after \Cref{209}). Finally, we conclude \Cref{204} by noting that
\begin{equation*}
    \#\left\{ Z \subset \ZZ_{\Be,r}^{\rr}(K_1,K_2) : \#Z \le N \text{ and } \kF_Z^{\rr}(K_1,K_2)^\vee \text{ occurs} \right\} \le 2^{\#Z_0 + \#Z_0'} \le 2^{(1 + C)N}.
\end{equation*}

We now show how \Cref{200} can be proved under the assumptions of \Cref{201,204}. The idea is that, on the event $\kG_\varepsilon^{\rr}(K_1,K_2)$, \Cref{204} yields a deterministic conclusion that contradicts the requirement of \Cref{201} as $N\to\infty$. Consequently, $\kG_\varepsilon^{\rr}(K_1,K_2)$ cannot occur with high probability.

\begin{proof}[Proof of \Cref{200} assuming \Cref{201,204}]
    Since $\SR \subset \{\Lambda^k\}_{k \in \ZZ}$, we have 
    \begin{align}\label{eq:200}
        \#\left(\rho^{-1}\SR \cap [\varepsilon^{1 + \nu}\rr, \varepsilon\rr]\right) &= \#\left(\SR \cap [\varepsilon^{1 + \nu}\rho\rr, \varepsilon\rho\rr]\right) \le \#\left(\SR \cap [(\varepsilon\rho)^{1 + \nu}\rr, \varepsilon\rho\rr]\right) \\
        &\le \nu\log_{\Lambda}(1/(\varepsilon\rho)) \le 2\nu\log_{\Lambda}(1/\varepsilon), \quad \forall \varepsilon \in (0, \rho]. \notag
    \end{align}
    To lighten notation, write $\SR_\varepsilon \defeq \rho^{-1}\SR \cap [\varepsilon^{1 + \nu}\rr, \varepsilon\rr]$. Thus, by \eqref{eq:200} and \Cref{204}, for each $N \in \NN$ and $\varepsilon \in (0, \varepsilon_\ast(\eta, N) \wedge \rho]$, we have
    \begin{align}\label{eq:038}
        &\kC^N \cdot 9 \cdot 10^4 \cdot \Lambda^2 \cdot 2\nu\log_{\Lambda}(1/\varepsilon) \\
        &\ge \kC^N \sum_{\Be \in (\ZZ/(300\Lambda\ZZ))^2} \sum_{r \in \SR_\varepsilon} 1 \notag \\
        &\ge \sum_{\Be \in (\ZZ/(300\Lambda\ZZ))^2} \sum_{r \in \SR_\varepsilon} \BE\!\left\lbrack\#\left\{Z \subset \ZZ_{\Be,r}^{\rr}(K_1, K_2) : \#Z \le N, \ \kF_Z^{\rr}(K_1, K_2)^\vee \text{ occurs}\right\}\right\rbrack \notag \\
        &= \sum_{\Be \in (\ZZ/(300\Lambda\ZZ))^2} \sum_{r \in \SR_\varepsilon} \sum_{\substack{Z \subset \ZZ_{\Be,r}^{\rr}(K_1, K_2) \\ \#Z \le N}} \BP\!\left\lbrack\kF_Z^{\rr}(K_1, K_2)^\vee\right\rbrack. \notag
    \end{align}
    By \eqref{eq:214} and \Cref{201}, we have
    \begin{align}\label{eq:037}
        &\sum_{\Be \in (\ZZ/(300\Lambda\ZZ))^2} \sum_{r \in \SR_\varepsilon} \sum_{\substack{Z \subset \ZZ_{\Be,r}^{\rr}(K_1, K_2) \\ \#Z \le N}} \BP\!\left\lbrack\kF_Z^{\rr}(K_1, K_2)^\vee\right\rbrack \\
        &= \sum_{\Be \in (\ZZ/(300\Lambda\ZZ))^2} \sum_{r \in \SR_\varepsilon} \sum_{\substack{Z \subset \ZZ_{\Be,r}^{\rr}(K_1, K_2) \\ \#Z \le N}} \BP\lbrack\kF_Z^{\rr}(K_1, K_2)\rbrack \quad \notag \\
        &= \sum_{\Be \in (\ZZ/(300\Lambda\ZZ))^2} \sum_{r \in \SR_\varepsilon} \BE\!\left\lbrack\#\left\{Z \subset \ZZ_{\Be,r}^{\rr}(K_1, K_2) : \#Z \le N, \ \kF_Z^{\rr}(K_1, K_2) \text{ occurs}\right\}\right\rbrack \notag \\
        &\ge \varepsilon^{-\kc N} \BP\lbrack\kG_\varepsilon^{\rr}(K_1, K_2)\rbrack. \notag 
    \end{align}
    Combining \eqref{eq:038} and \eqref{eq:037}, for each $N \in \NN$, there exists $\varepsilon_\ast = \varepsilon_\ast(\eta, N) \in (0, 1)$ such that 
    \begin{equation*}
        \BP\lbrack\kG_\varepsilon^{\rr}(K_1, K_2)\rbrack \le 9 \cdot 10^4 \cdot \Lambda^2 \cdot \kC^N \cdot 2\nu\log_{\Lambda}(1/\varepsilon) \cdot \varepsilon^{\kc N}, \quad \forall \varepsilon \in (0, \varepsilon_\ast \wedge \rho].
    \end{equation*}
    This completes the proof. 
\end{proof}

\subsection{Proof of \Cref{201}}\label{ss:16}

Fix $\eta \in (0, 1)$ such that $\dist(K_1, K_2) \wedge \dist(K_1, \partial B_{2\rr}(0)) \ge \eta\rr$. 

For $\Be \in (\ZZ/(300\Lambda\ZZ))^2$, $r \in \rho^{-1}\SR$, and $Z \subset \ZZ_{\Be,r}^{\rr}(K_1, K_2)$, we shall write $E_Z$ (resp.~$F_Z$) for the event that conditions~\eqref{210A}, \eqref{210B}, \eqref{210C}, \eqref{210D}, \eqref{210E} (resp.~\eqref{210A}, \eqref{210B}, \eqref{210C}, \eqref{210D}, \eqref{210E}, \eqref{210F}) of \Cref{210} hold.

It follows directly from the definition of $\kG_\varepsilon^{\rr}(K_1, K_2)$ that, whenever this event occurs, there exists some $\Be \in (\ZZ / (300\Lambda\ZZ))^2$ and $r \in \rho^{-1}\SR$ such that (with high conditional probability) many sets $Z \subset \ZZ_{\Be,r}^{\rr}(K_1, K_2)$ satisfy the event $F_Z$. The main challenge in verifying condition~\eqref{210G} lies in the fact that the $D_{\Upsilon_Z}$-geodesic between $K_1$ and $K_2$ might entirely avoid some of the balls $B_{5r}(z)$ for $z \in Z$. To handle this issue, we will prove that if $Z \subset \ZZ_{\Be,r}^{\rr}(K_1, K_2)$ and $F_Z$ holds, then there exists a subset $Z^\prime \subset Z$ such that $\#Z^\prime$ is at least a fixed positive fraction of $\#Z$, and $\kF_{Z^\prime}^{\rr}(K_1, K_2)$ occurs (cf.~\Cref{205}).

The construction of $Z^\prime$ proceeds as follows. In \Cref{317}, we establish that $D_{\Upsilon_Z}(K_1, K_2)$ is significantly smaller than $D_\Upsilon(K_1, K_2)$. This means that the $D_{\Upsilon_Z}$-geodesic $P_Z$ from $K_1$ to $K_2$ must spend a considerable amount of time in regions where $\Upsilon$ and $\Upsilon_Z$ differ. For each $z \in Z$, there are several types of such regions inside $B_{5r}(z)$ where $P_Z$ could potentially enter; one important example is the tube bounded by $\SCL_{z,r}$, $\gamma_{z,r}$, and $\gamma_{z,r}^\prime$ (see \Cref{fig:bad} for an illustration). In \Cref{318}, we show, roughly speaking, that if $P_Z$ enters $B_{4r}(z)$ but does not travel through the tube between $\SCL_{z,r}$, $\gamma_{z,r}$, and $\gamma_{z,r}^\prime$, then the time it spends inside $B_{\Lambda r}(z)$ cannot be much shorter than the time the $D_\Upsilon$-geodesic $P$ spends there. We then iteratively remove from $Z$ the ``bad’’ points $z$ for which $P_Z$ avoids the tube between $\SCL_{z,r}$, $\gamma_{z,r}$, and $\gamma_{z,r}^\prime$. The key observation is that eliminating these ``bad’’ points does not significantly increase $D_{\Upsilon_Z}(K_1, K_2)$. Hence, throughout the iteration, $D_{\Upsilon_Z}(K_1, K_2)$ remains much smaller than $D_\Upsilon(K_1, K_2)$. This fact ensures that the iterative process must stop before too many points are removed from $Z$. 

We start by proving an upper bound showing that, on the event $F_Z$ (cf.~\Cref{094,317}), the quantity $D_{\Upsilon_Z}(K_1, K_2)$ is at most $D_\Upsilon(K_1, K_2)$ minus a deterministic positive constant multiple of $\kc_r\#Z$. This bound arises because the $D_\Upsilon$-geodesic $P$ must cross each annulus $A_{r,2r}(z)$ for $z \in Z$, which incurs a total cost proportional to $\kc_r\#Z$ (cf.~\Cref{098}, \eqref{098A}). In contrast, the $D_{\Upsilon_Z}$-geodesic $P_Z$ can take advantage of the tube bounded by $\SCL_{z,r}$, $\gamma_{z,r}$, and $\gamma_{z,r}^\prime$ to bypass this cost. Indeed, by \Cref{097},~\eqref{097B} and \Cref{098},~\eqref{035K}, traveling through this tube requires less time than crossing the corresponding annulus $A_{r,2r}(z)$.

\begin{lemma}\label{094}
    Let $\Be \in (\ZZ/(300\Lambda\ZZ))^2$, $r \in \rho^{-1}\SR$, and $Z \subset \ZZ_{\Be,r}^{\rr}(K_1, K_2)$. Suppose that the event $F_Z$ occurs. Let $\{(s_z, t_z)\}_{z \in Z}$ be as in \Cref{210}. Then
    \begin{equation*}
        D_{\Upsilon_Z}(P(s_z), P(t_z)) \le \fa_5\kc_r, \quad \forall z \in Z. 
    \end{equation*}
\end{lemma}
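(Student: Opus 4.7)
The plan is to build a path in $\Upsilon_Z$ from $P(s_z)$ to $P(t_z)$ whose $D_{\Upsilon_Z}$-length is at most $\fa_5\kc_r$ by concatenating short hops, each controlled by one of the structural bounds guaranteed by $\fE_{z,r}$ and $\fF_{z,r}(\Upsilon_Z;\{i_z,j_z\})$. After translating so that $z=0$ and relabeling the tube indices so that $J_{z,r}=\{1,j\}$ with $P(s_z)\in\partial U_1^\star$ and $P(t_z)\in\partial U_j^\star$, we traverse the cyclic chain of tubes from $V_1$ to $V_j$ in the shorter of the two directions; by \eqref{eq:042} this chain contains at most $\tfrac12\#Z_{z,r}=O(\fa_5^{-1})$ tubes.

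The construction has three ingredients. First, for each intermediate tube $V_k$ with $k\notin J_{z,r}$, condition~\eqref{035K} with link pattern $\beta_k$ produces a path in $A_{r,5r}(0)\cap\Upsilon_Z$ of $D_{\Upsilon_Z}$-length at most $\rho\kc_r$ between any pair of points lying on arcs in $\SI_k^\beta$; since $\SI_k^\beta$ contains the arcs at the $z_k$-end of $V_k$ (the $y$-arcs) and at the $z_{k+1}$-end (the $x$-arcs), this yields a short crossing of $V_k$. Second, the same bound applies to the endpoint tubes $V_1$ and $V_j$ with link pattern $\alpha_k$ and arcs in $\SI_k^\alpha$. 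Third, between consecutive tubes $V_k$ and $V_{k+1}$, condition~\eqref{097B} of the event $\fG_{z_{k+1},\rho r}$ (which holds because $z_{k+1}\in Z_{z,r}$) supplies a path in $O_{z_{k+1},\rho r}\cap\Upsilon$ from $\partial V_k$ to $\partial V_{k+1}$ of $\widetilde D_\Upsilon$-length at most $2a^{-1}\kc_{\rho r}$; because $\fH_{z_{k+1},\rho r}$ lies (modulo small neighborhoods of $\fx_{z_{k+1},\rho r}$ and $\fy_{z_{k+1},\rho r}$) outside $V_{z,r}$, where $\Upsilon$ and $\Upsilon_Z$ coincide, this converts---via the bi-Lipschitz comparison $M_\ast D\le\widetilde D\le M^\ast D$ from \eqref{eq:321} and \eqref{eq:322} together with the second inequality in \eqref{eq:340}---into a $D_{\Upsilon_Z}$-bound of $O(\kc_{\rho r})\le O(\rho\kc_r)$.

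Concatenating these $O(\fa_5^{-1})$ hops, each of $D_{\Upsilon_Z}$-length $O(\rho\kc_r)$, produces a path of total $D_{\Upsilon_Z}$-length $O(\rho/\fa_5)\kc_r$. Since $\rho=\fa_6$ is chosen after $\fa_5$ in the hierarchy~\eqref{eq:094}, we may arrange $\fa_6\le\fa_5^2$, so that the total is at most $\fa_5\kc_r$.

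The main obstacle is attaching the two endpoints to the chain. The arc $I_{s_z}\in\SI_1$ containing $P(s_z)$ need not lie in $\SI_1^\alpha$---it could be one of the arcs cut off from the center of $V_1$ by the big $\alpha_1$-chord $(y_{1,\rL},x_{2,\rL})$, in which case condition~\eqref{035K} does not directly apply starting at $P(s_z)$. My plan to resolve this is to exploit that $\Upsilon_Z$ agrees with $\Upsilon$ outside $V_{z,r}$, so that from $P(s_z)\in\partial U_1$ one may detour briefly through the thin ring $U_1\setminus V_1$ or just outside $U_1$ in $\Upsilon_Z=\Upsilon$ to reach a point on an arc in $\SI_1^\alpha$; condition~\eqref{098F} together with the smallness of the $\alpha_1$-excursions from condition~\eqref{035I} (each confined to an $\fa_8 r$-neighborhood in $d_{U_1^\star}$ of the corresponding arc) should control the $D_{\Upsilon_Z}$-cost of this detour by $O(\rho\kc_r)$, and the symmetric argument handles $P(t_z)$. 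Verifying this endpoint detour cleanly is the most delicate step.
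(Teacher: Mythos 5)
Your main chain is exactly the paper's argument: traverse the cyclic chain of tubes and half-annuli, paying at most $\rho\kc_r$ per tube by condition~\eqref{035K}, at most $2M_\ast^{-1}a^{-1}\kc_{\rho r}\le 2M_\ast^{-1}a^{-1}\kK\rho\kc_r$ per half-annulus by condition~\eqref{097B} combined with $\widetilde D_\Upsilon\ge M_\ast D_\Upsilon$ and \eqref{eq:340} (the half-annuli lying outside the resampled region, so $D_\Upsilon$ and $D_{\Upsilon_Z}$ agree there), and then sum the $\#Z_{z,r}\le 2\pi\fa_5^{-1}$ contributions and absorb the total into $\fa_5\kc_r$ by taking $\rho=\fa_6$ sufficiently small relative to $\fa_5$. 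The paper does not even use the ``shorter direction'' refinement; it simply bounds the sum over all $\#Z_{z,r}$ tubes.

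The one place you diverge is the endpoint attachment, which is also the one step you leave unverified. The paper needs no detour through $U_1\setminus V_1$: its proof opens with the observation that $P(s_z)\in I\cap\partial U_{z,r}$ for some $I\in\SI_{i_z}^{\alpha\setminus\beta}$ and $P(t_z)\in J\cap\partial U_{z,r}$ for some $J\in\SI_{j_z}^{\alpha\setminus\beta}$. Since $\SI^{\alpha\setminus\beta}\subset\SI^\alpha$, condition~\eqref{035K} then applies directly at both endpoints and the chain closes with no extra work. The underlying point is that $s_z$ is the \emph{first} time the geodesic --- which starts outside $B_{\Lambda r}(z)$ --- touches $U_{z,r}^\star$, so by the remark following the definition of $\SI_j$ the entry point lies on some $I\in\SI_{i_z}$; among these, the $y$- and $x$-arcs sit at the tube ends inside the half-annulus caps and are reachable only through the region $O_{z_k,\rho r}$, whose access points already lie in $V_{z,r}$, and the arcs of $\SI_{i_z}\setminus\SI_{i_z}^\alpha$ (the $z$-arcs) lie on the lateral side of the tube not facing an approach from outside; what remains are the $w$-arcs, i.e.\ $\SI_{i_z}^{\alpha\setminus\beta}$. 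The same structural fact is reused for general $U_{z,r}^\star$-excursions in the proof of \Cref{318}. So your ``most delicate step'' is precisely the step the paper's formulation avoids: the thing to verify is the location of the first and last hitting points, not a $d_{U_1^\star}$-neighborhood detour estimate via \eqref{098F} and \eqref{035I}. If you insist on the detour you would have to carry that estimate out in full, and it is strictly harder than the observation that makes it unnecessary.
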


\begin{proof}
    Fix $z \in Z$. Let $i_z$ and $j_z$ be as in \Cref{210}. We observe that there exists $I \in \SI_{i_z}^{\alpha\setminus\beta}$ and $J \in \SI_{j_z}^{\alpha\setminus\beta}$ (cf.~\eqref{eq:102}) such that $P(s_z) \in I \cap \partial U_{z,r}$ and $P(t_z) \in J \cap \partial U_{z,r}$. Note that $P(s_z)$ and $P(t_z)$ are connected inside the tube between $\SCL_{z,r}$, $\gamma_{z,r}$, and $\gamma_{z,r}^\prime$ (cf., e.g., \Cref{fig:gamma}).
    By \eqref{eq:340}, \Cref{097}, \eqref{097B}, and \Cref{098}, \eqref{035K}
    \begin{equation*}
        D_{\Upsilon_Z}(P(s_z), P(t_z)) \le (\#Z_{z,r}) \left(2M_\ast^{-1} a^{-1}\kc_{\rho r} + \rho\kc_r\right) \le 2\pi\fa_5^{-1}\left(2M_\ast^{-1}a^{-1}\kK + 1\right)\rho\kc_r. 
    \end{equation*}
    Since we may choose $\rho = \fa_6$ to be sufficiently small so that $2\pi\fa_5^{-1}(2M_\ast^{-1}a^{-1}\kK + 1)\rho \le \fa_5$, we complete the proof. 
\end{proof}

\begin{lemma}\label{317}
    Let $\Be \in (\ZZ/(300\Lambda\ZZ))^2$, $r \in \rho^{-1}\SR$, and $Z \subset \ZZ_{\Be,r}^{\rr}(K_1, K_2)$. Suppose that the event $F_Z$ occurs. Then
    \begin{equation*}
        D_{\Upsilon_Z}(K_1, K_2) \le D_\Upsilon(K_1, K_2) - \fa_1\kc_r\#Z.
    \end{equation*}
\end{lemma}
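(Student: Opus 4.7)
The plan is to construct a path from $K_1$ to $K_2$ in $\Upsilon_Z$ of $D_{\Upsilon_Z}$-length at most $D_\Upsilon(K_1, K_2) - \fa_1 \kc_r \#Z$, obtained by modifying the $D_\Upsilon$-geodesic $P$ from $K_1$ to $K_2$ by splicing in, near each $z \in Z$, the shortcut furnished by \Cref{094}.

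For each $z \in Z$, let $s_z, t_z$ be as in \Cref{210}. The key lower bound will be $\len(P|_{[s_z, t_z]}; D_\Upsilon) \ge 2\fa_1 \kc_r$: since $P(s_z), P(t_z) \in \partial U_{z,r}^\star \subset A_{2r, 4r}(z)$ and by condition~\eqref{210E} the geodesic enters $B_r(z)$ at some time in $[s_z, t_z]$, the segment $P|_{[s_z, t_z]}$ must cross the annulus $A_{r, 2r}(z)$ at least twice, and by condition~\eqref{098A} of $\fE_{z,r}$ each such crossing has $D_\Upsilon$-length at least $\fa_1 \kc_r$. On the other hand, \Cref{094} supplies a path in $\Upsilon_Z$ from $P(s_z)$ to $P(t_z)$ of $D_{\Upsilon_Z}$-length at most $\fa_5 \kc_r$.

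Assuming the intervals $\{[s_z, t_z]\}_{z \in Z}$ are pairwise disjoint, the modified path $Q$ will be defined by concatenating $P|_{[0,1] \setminus \bigcup_z [s_z, t_z]}$ with the shortcuts inside each $[s_z, t_z]$. This will be a valid path in $\Upsilon_Z$ because, on the event $F_Z$, condition~\eqref{035J} forces $(U_{z,r}, V_{z,r}) = (U_{z,r}^\prime, V_{z,r}^\prime)$, so $\Upsilon$ and $\Upsilon_Z$ agree on $\CC \setminus \bigcup_z U_{z,r}^\star$, and the locality axiom~\eqref{010B} of \Cref{010} implies that the internal metrics $D_\Upsilon(\cdot, \cdot; A \cap \Upsilon)$ and $D_{\Upsilon_Z}(\cdot, \cdot; A \cap \Upsilon_Z)$ coincide on any open set $A$ on which the carpets agree. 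The length computation will then yield
\[
\len(Q; D_{\Upsilon_Z}) \le \len(P; D_\Upsilon) - \sum_{z \in Z} \len(P|_{[s_z, t_z]}; D_\Upsilon) + \fa_5 \kc_r \#Z \le D_\Upsilon(K_1, K_2) - (2\fa_1 - \fa_5) \kc_r \#Z,
\]
which is at most $D_\Upsilon(K_1, K_2) - \fa_1 \kc_r \#Z$ since $\fa_5 \ll \fa_1$.

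The main obstacle will be establishing the disjointness of $\{[s_z, t_z]\}_{z \in Z}$. Since $U_{z,r}^\star \subset B_{4r}(z)$ and the balls $B_{4r}(z)$ are pairwise disjoint (as $|z - z'| \ge 3\Lambda r$), a failure of disjointness would require $P$ to oscillate between two Euclidean-far balls $B_{4r}(z)$ and $B_{4r}(z')$, which is in tension with $P$ being a $D_\Upsilon$-geodesic and with the lower bounds on crossing the intermediate annuli that can be iterated from condition~\eqref{098A}. Should this direct argument prove delicate, an alternative will be to extract a disjoint sub-family of $\{[s_z, t_z]\}_{z \in Z}$ of size proportional to $\#Z$ via a greedy interval-scheduling argument and apply the construction to that sub-family; the slack $\fa_5 \ll \fa_1$ absorbs any resulting constant loss.
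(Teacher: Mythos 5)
Your proposal follows the same strategy as the paper's proof: splice the $D_{\Upsilon_Z}$-shortcuts of \Cref{094} into the $D_\Upsilon$-geodesic $P$, use the agreement of $\Upsilon$ and $\Upsilon_Z$ outside $\bigcup_{z \in Z} U_{z,r}^\star$ to control the untouched portions, and lower-bound the excised $D_\Upsilon$-length by $2\fa_1\kc_r$ per point of $Z$ via the double crossing of $A_{r,2r}(z)$ together with \Cref{098}, \eqref{098A}; the final arithmetic $2\fa_1 - \fa_5 \ge \fa_1$ is also identical.

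The one place where you diverge is the disjointness of the intervals $[s_z, t_z]$, which you correctly identify as the delicate point but resolve less cleanly than the paper. The paper never needs the full intervals $[s_z, t_z]$ to be pairwise disjoint: it bounds $D_{\Upsilon_Z}(K_1, K_2)$ by the $D_\Upsilon$-length of $P$ restricted to $[0,1] \setminus \bigcup_{z \in Z}[s_z, t_z]$ plus $\fa_5\kc_r\#Z$, and then lower-bounds $\len(P|_{\bigcup_z [s_z,t_z]}; D_\Upsilon)$ by restricting attention to the $(B_{2r}(z), B_r(z))$-excursions of $P$, i.e., sub-intervals $[s_z', t_z'] \subset [s_z, t_z]$ during which $P$ stays in $\overline{B_{2r}(z)}$. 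These are automatically pairwise disjoint because the balls $B_{2r}(z)$ for $z \in Z$ are disjoint, and each contributes $2\fa_1\kc_r$ by \eqref{098A}, so there is no loss at all. By contrast, your second fallback (greedy extraction of a disjoint subfamily of the $[s_z,t_z]$) does not work as stated: a general family of intervals need not contain a disjoint subfamily whose size is a fixed fraction of the whole, and even granting a fraction $c$, the conclusion requires $(2\fa_1 - \fa_5)c \ge \fa_1$, i.e., $c > 1/2$, so the slack $\fa_5 \ll \fa_1$ does \emph{not} absorb an arbitrary constant loss. Replacing that step by the restriction to the ball-excursions closes the gap.
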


\begin{proof}
    Let $\{(s_z, t_z)\}_{z \in Z}$ be as in \Cref{210}. By \Cref{094}, we have
    \begin{equation}\label{eq:230}
        D_{\Upsilon_Z}(P(s_z), P(t_z)) \le \fa_5\kc_r, \quad \forall z \in Z. 
    \end{equation}
    Since $\Upsilon_Z$ coincides with $\Upsilon$ outside of $\bigcup_{z \in Z} U_{z,r}^\star$, it follows that 
    \begin{equation}\label{eq:231}
        \len(P|_{[0, 1] \setminus \bigcup_{z \in Z} [s_z, t_z]}; D_{\Upsilon_Z}) = \len(P|_{[0, 1] \setminus \bigcup_{z \in Z} [s_z, t_z]}; D_\Upsilon). 
    \end{equation}
    Combining \eqref{eq:230} and \eqref{eq:231}, we obtain that 
    \begin{align*}
        D_{\Upsilon_Z}(K_1, K_2) &\le \len(P|_{[0, 1] \setminus \bigcup_{z \in Z} [s_z, t_z]}; D_\Upsilon) + \fa_5\kc_r\#Z \\
        &= D_\Upsilon(K_1, K_2) - \len(P|_{\bigcup_{z \in Z} [s_z, t_z]}; D_\Upsilon) + \fa_5\kc_r\#Z. 
    \end{align*}
    Thus, it suffices to show that
    \begin{equation*}
        \len(P|_{\bigcup_{z \in Z} [s_z, t_z]}; D_\Upsilon) \ge 2\fa_1\kc_r\#Z. 
    \end{equation*}
    Indeed, since $P$ enters $B_r(z)$ for all $z \in Z$. We conclude that there are times $s_z^\prime < t_z^\prime$ such that $P|_{[s_z^\prime, t_z^\prime]}$ forms a $(B_{2r}(z), B_r(z))$-excursion. By \Cref{098}, \eqref{098A}, we have
    \begin{equation*}
        \len(P|_{[s_z^\prime, t_z^\prime]}; D_\Upsilon) \ge 2\fa_1\kc_r, \quad \forall z \in Z. 
    \end{equation*}
    It is clear that the intervals $[s_z^\prime, t_z^\prime]$ for $z \in Z$ are pairwise disjoint and $\bigcup_{z \in Z} [s_z^\prime, t_z^\prime] \subset \bigcup_{z \in Z} [s_z, t_z]$. Thus, 
    \begin{equation*}
        \len(P|_{\bigcup_{z \in Z} [s_z, t_z]}; D_\Upsilon) \ge \sum_{z \in Z} \len(P|_{[s_z^\prime, t_z^\prime]}; D_\Upsilon) \ge 2\fa_1\kc_r\#Z. 
    \end{equation*}
    This completes the proof. 
\end{proof}

We now prove an inequality in the reverse direction of that in \Cref{317}, namely, an upper bound on $D_\Upsilon(K_1, K_2)$ expressed in terms of $D_{\Upsilon_Z}(K_1, K_2)$ (cf.~\Cref{221}). To carry out this proof, we will first need an upper bound on the number of $(B_{\Lambda r}(z), B_{4r}(z))$-excursions that the path $P_Z$ can make.

\begin{lemma}\label{220}
    Let $\Be \in (\ZZ/(300\Lambda\ZZ))^2$, $r \in \rho^{-1}\SR$, and $Z \subset \ZZ_{\Be,r}^{\rr}(K_1, K_2)$. Suppose that the event $F_Z$ occurs. Write $P_Z \colon [0, 1] \to \Upsilon_Z$ for the $D_{\Upsilon_Z}$-geodesic from $K_1$ to $K_2$. For each $z \in Z$, write $\SE_{z,r}(P_Z)$ for the collection of $(B_{\Lambda r}(z), B_{4r}(z))$-excursions of $P_Z$. Then
    \begin{equation*}
        \#\SE_{z,r}(P_Z) \le \fA/\fa_1. 
    \end{equation*}
\end{lemma}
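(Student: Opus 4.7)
The plan is to show that each $(B_{\Lambda r}(z), B_{4r}(z))$-excursion of $P_Z$ contributes at least $\fa_1 \kc_r$ to the $D_{\Upsilon_Z}$-length of a sub-arc of $P_Z$ whose total $D_{\Upsilon_Z}$-length is at most $\fA \kc_r$; this forces $N \le \fA / \fa_1$. The two ingredients correspond exactly to the two ``$\kc_r$-scale'' quantities appearing in the definition of $F_Z$, namely $\fa_1$ from Condition~\eqref{098A} of $\fE_{z,r}(\Upsilon)$, and $\fA$ from Condition~\eqref{210D}.

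The key geometric fact I will use is that $\Upsilon$ and $\Upsilon_Z$ agree on $A_{4r,\Lambda r}(z)$. Indeed the resampling alters $\Upsilon$ only inside $\overline{U_{z',r}} \subset \overline{A_{2r,4r}(z')}$ for $z' \in Z$; for $z' = z$ this is disjoint from the open annulus $A_{4r,\Lambda r}(z)$, and for $z' \neq z$ in $Z$ the separation $|z - z'| \ge 3\Lambda r$ (from the construction of $\ZZ_{\Be, r}$) puts $\overline{U_{z',r}} \subset B_{4r}(z')$ outside of $B_{\Lambda r}(z)$. Using this, together with Axiom~\eqref{010B} (locality) on slightly thickened open annular neighborhoods lying in the region of agreement, and the fact that $D_\Upsilon$ and $D_{\Upsilon_Z}$ are length metrics inducing the Euclidean topology (so path lengths are intrinsic and can be computed via internal metrics on any open set containing the path), I will deduce:
\begin{itemize}
    \item[\textbf{(a)}] The same path $P_z$ from Condition~\eqref{210D} lies in $A_{5r,\Lambda r}(z) \cap \Upsilon_Z$, satisfies $\len(P_z; D_{\Upsilon_Z}) \le \fA \kc_r$, and is hit by every path in $\Upsilon_Z$ that crosses between the inner and outer boundaries of $A_{5r,\Lambda r}(z)$.
    \item[\textbf{(b)}] Every path in $\Upsilon_Z$ from $\partial B_{4r}(z)$ to $\partial B_{5r}(z)$ (equivalently, every crossing of $\overline{A_{4r,5r}(z)}$ in $\Upsilon_Z$) has $D_{\Upsilon_Z}$-length at least $\fa_1 \kc_r$.
\end{itemize}

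Granted (a) and (b), I will enumerate the excursions as $P_Z|_{[s_j, t_j]}$ for $j = 1, \ldots, N$ and let $\sigma_j$ be the first time after $s_j$ at which $P_Z$ hits $\overline{B_{4r}(z)}$. Since $P_Z|_{[s_j, \sigma_j]}$ starts at $\partial B_{\Lambda r}(z)$ and reaches $\overline{B_{4r}(z)}$, it crosses $A_{5r,\Lambda r}(z)$ inward at least once and so hits $P_z$ at some time $\theta_j^{(1)} \in [s_j, \sigma_j]$ by (a); symmetrically, after the last visit to $\overline{B_{4r}(z)}$ in the excursion, $P_Z$ hits $P_z$ again at some time $\theta_j^{(2)} \in [s_j, t_j]$ with $\theta_j^{(2)} > \sigma_j$. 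The resulting times interleave as $\theta_1^{(1)} \le \theta_1^{(2)} < \theta_2^{(1)} \le \cdots \le \theta_N^{(2)}$. Since $P_Z$ is a $D_{\Upsilon_Z}$-geodesic and the sub-arc of $P_z$ between $P_Z(\theta_1^{(1)})$ and $P_Z(\theta_N^{(2)})$ is a path in $\Upsilon_Z$ of $D_{\Upsilon_Z}$-length at most $\len(P_z; D_{\Upsilon_Z}) \le \fA \kc_r$,
\begin{equation*}
    \len\bigl(P_Z|_{[\theta_1^{(1)},\theta_N^{(2)}]}; D_{\Upsilon_Z}\bigr) = D_{\Upsilon_Z}\bigl(P_Z(\theta_1^{(1)}), P_Z(\theta_N^{(2)})\bigr) \le \fA \kc_r.
\end{equation*}
On the other hand, for each $j$ let $\beta_j \in [\theta_j^{(1)}, \sigma_j]$ be the last time before $\sigma_j$ at which $P_Z$ is on $\partial B_{5r}(z)$; then $P_Z|_{[\beta_j, \sigma_j]}$ is a crossing of $\overline{A_{4r,5r}(z)}$, so by (b) its $D_{\Upsilon_Z}$-length is at least $\fa_1 \kc_r$. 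The $N$ sub-intervals $[\beta_j, \sigma_j]$ lie in disjoint pieces of $[\theta_1^{(1)}, \theta_N^{(2)}]$, so summing yields $\fA \kc_r \ge N \fa_1 \kc_r$, whence $N \le \fA / \fa_1$. The only real obstacle is the rigorous derivation of (a) and (b); both amount to transferring $D_\Upsilon$-estimates over a region where $\Upsilon = \Upsilon_Z$ into $D_{\Upsilon_Z}$-estimates, and the main care required is to check that the open neighborhoods used for locality indeed lie inside the region of agreement (which follows from the aforementioned separation bound).
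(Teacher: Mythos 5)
Your proposal is correct and follows essentially the same route as the paper: use condition~\eqref{210D} (the path $P_z$) and the geodesic property of $P_Z$ for the upper bound $\fA\kc_r$, and condition~\eqref{098A} together with the fact that $\Upsilon$ and $\Upsilon_Z$ agree near $\overline{A_{4r,5r}(z)}$ for the crossing lower bound $\fa_1\kc_r$ per excursion. The only cosmetic difference is that you extract the $n$ disjoint $A_{4r,5r}(z)$-crossings from the inward legs $[\beta_j,\sigma_j]$, whereas the paper uses the outward legs $[\tau_k,\tau_k']$ for $k\le n-1$ plus the final inward leg; this changes nothing.
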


\begin{proof}
    Fix $z \in Z$. Write $n \defeq \#\SE_{z,r}(P_Z)$. Write $\SE_{z,r}(P_Z) = \left\{(\sigma_k^\prime, \sigma_k, \tau_k, \tau_k^\prime) : k \in [1, n]_\ZZ\right\}$ in chronological order, i.e., 
    \begin{equation*}
        \sigma_1^\prime < \tau_1^\prime < \sigma_2^\prime < \tau_2^\prime < \cdots < \sigma_n^\prime < \tau_n^\prime. 
    \end{equation*}
    Since $P_Z|_{[\sigma_1^\prime, \sigma_1]}$ and $P_Z|_{[\tau_n, \tau_n^\prime]}$ intersect the path $P_z$ described in \Cref{210}, \eqref{210D}, it follows that 
    \begin{equation}\label{eq:220}
        \len(P_Z|_{[\sigma_1, \tau_n]}; D_{\Upsilon_Z}) \le \len(P_z; D_{\Upsilon_Z}) = \len(P_z; D_\Upsilon) \le \fA\kc_r. 
    \end{equation}
    On the other hand, by \Cref{098}, \eqref{098A}, 
    \begin{align}\label{eq:221}
        \len(P_Z|_{[\sigma_1, \tau_n]}; D_{\Upsilon_Z}) &\ge \sum_{k = 1}^{n - 1} \len(P_Z|_{[\tau_k, \tau_k^\prime]}; D_{\Upsilon_Z}) + \len(P_Z|_{[\sigma_n^\prime, \sigma_n]}; D_{\Upsilon_Z}) \\
        &= \sum_{k = 1}^{n - 1} \len(P_Z|_{[\tau_k, \tau_k^\prime]}; D_\Upsilon) + \len(P_Z|_{[\sigma_n^\prime, \sigma_n]}; D_\Upsilon) \ge n\fa_1\kc_r. \notag
    \end{align}
    Combining \eqref{eq:220} and \eqref{eq:221}, we complete the proof. 
\end{proof}

\begin{lemma}\label{221}
    Let $\Be \in (\ZZ/(300\Lambda\ZZ))^2$, $r \in \rho^{-1}\SR$, and $Z \subset \ZZ_{\Be,r}^{\rr}(K_1, K_2)$. Suppose that the event $F_Z$ occurs. Then 
    \begin{equation*}
        D_\Upsilon(K_1, K_2) \le D_{\Upsilon_Z}(K_1, K_2) + (\fA^2/\fa_1)\kc_r\#Z.
    \end{equation*}
\end{lemma}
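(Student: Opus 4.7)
The plan is to modify the $D_{\Upsilon_Z}$-geodesic $P_Z$ from $K_1$ to $K_2$ so as to produce a candidate path in $\Upsilon$ whose $D_\Upsilon$-length exceeds $D_{\Upsilon_Z}(K_1, K_2)$ by at most $(\fA^2/\fa_1)\kc_r\#Z$. The key observation is that, outside the balls $\bigcup_{z \in Z} B_{4r}(z)$, the carpets $\Upsilon$ and $\Upsilon_Z$ coincide: indeed, $U_{z,r} \subset A_{2r,4r}(z) \subset B_{4r}(z)$, and the tubes $U_{z,r}$ for distinct $z \in Z$ are disjoint thanks to the bound $\lvert z - z^\prime\rvert \ge 3\Lambda r$ on $\ZZ_{\Be,r}$. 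Only inside these balls does $P_Z$ need to be rerouted, and this will be done using the ``shield'' path $P_z$ from condition \eqref{210D} of \Cref{210}.

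More concretely, for each $z \in Z$ I would apply \Cref{220} to bound the number of $(B_{\Lambda r}(z), B_{4r}(z))$-excursions of $P_Z$ by $\fA/\fa_1$; label them $(\sigma^\prime_{z,k}, \sigma_{z,k}, \tau_{z,k}, \tau^\prime_{z,k})$. The entry and exit segments $P_Z|_{[\sigma^\prime_{z,k}, \sigma_{z,k}]}$ and $P_Z|_{[\tau_{z,k}, \tau^\prime_{z,k}]}$ lie in $A_{4r,\Lambda r}(z)$, where $\Upsilon$ and $\Upsilon_Z$ agree, and both cross between the two boundaries of $A_{5r,\Lambda r}(z)$. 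By \eqref{210D}, such a crossing must intersect the path $P_z \subset A_{5r,\Lambda r}(z) \cap \Upsilon$ of $D_\Upsilon$-length at most $\fA\kc_r$; let $\alpha_{z,k}$ (resp.~$\beta_{z,k}$) be the last (resp.~first) intersection time on the entry (resp.~exit) segment. I would then replace $P_Z|_{[\alpha_{z,k}, \beta_{z,k}]}$ with a subarc $\pi_{z,k}$ of $P_z$ from $P_Z(\alpha_{z,k})$ to $P_Z(\beta_{z,k})$, which lies in $\Upsilon$ and has $D_\Upsilon$-length at most $\fA\kc_r$. Concatenating the unmodified portions of $P_Z$ with the $\pi_{z,k}$'s in chronological order (which is well-defined by the disjointness of the $B_{\Lambda r}(z)$'s) yields a path $Q \subset \Upsilon$ from $K_1$ to $K_2$.

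To conclude, the total $D_\Upsilon$-length of the replacement subarcs is at most $\#Z \cdot (\fA/\fa_1) \cdot \fA\kc_r = (\fA^2/\fa_1)\kc_r\#Z$, while for each unmodified portion, lying in the open set $W \defeq \CC \setminus \bigcup_{z \in Z} \overline{U_{z,r}}$ on which $\Upsilon = \Upsilon_Z$, applying locality along a partition yields
\begin{equation*}
    D_\Upsilon(P_Z(s), P_Z(t)) \le D_\Upsilon(P_Z(s), P_Z(t); W \cap \Upsilon) = D_{\Upsilon_Z}(P_Z(s), P_Z(t); W \cap \Upsilon_Z) \le \len(P_Z|_{[s,t]}; D_{\Upsilon_Z}),
\end{equation*}
so summing gives that the total $D_\Upsilon$-length of the unmodified portions is at most $\len(P_Z; D_{\Upsilon_Z}) = D_{\Upsilon_Z}(K_1, K_2)$. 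Combining the two bounds gives $D_\Upsilon(K_1, K_2) \le \len(Q; D_\Upsilon) \le D_{\Upsilon_Z}(K_1, K_2) + (\fA^2/\fa_1)\kc_r\#Z$. The main subtlety is that $W$ is random, so the locality axiom \eqref{010B} does not apply directly; however, on the event $F_Z$ one has $(U_{z,r}, V_{z,r}) = (U^\prime_{z,r}, V^\prime_{z,r})$ by condition \eqref{035J} of \Cref{035}, and the primed pairs are independent of $(\Upsilon, \Upsilon_Z)$, so conditioning on them renders $W = \CC \setminus \bigcup_{z \in Z} \overline{U^\prime_{z,r}}$ deterministic and allows the locality step to proceed.
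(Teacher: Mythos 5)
Your proposal is correct and follows essentially the same route as the paper's proof: both decompose $P_Z$ into its $(B_{\Lambda r}(z), B_{4r}(z))$-excursions, bound the number per $z$ via \Cref{220}, and reroute the core of each excursion through the shielding path $P_z$ of condition~\eqref{210D}, while using that $\Upsilon=\Upsilon_Z$ away from $\bigcup_{z\in Z}U_{z,r}$ to identify the lengths of the unmodified portions. The main differences are cosmetic: you splice in explicit subarcs of $P_z$ (removing $[\alpha_{z,k},\beta_{z,k}]\supset[\sigma_{z,k},\tau_{z,k}]$ rather than $[\sigma_{z,k},\tau_{z,k}]$) and you spell out the conditioning on $(U^\prime_{z,r},V^\prime_{z,r})$ to make the locality step rigorous, a point the paper leaves implicit.
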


\begin{proof}
    For each $z \in Z$, let $\SE_{z,r}(P_Z)$ be as in \Cref{220}. Since $\Upsilon_Z$ coincides with $\Upsilon$ outside of $\bigcup_{z \in Z} U_{z,r}^\star$, it follows that the $D_{\Upsilon_Z}$-length and the $D_\Upsilon$-length of the restriction of $P_Z$ to 
    \begin{equation*}
        [0, 1] \setminus \bigcup_{z \in Z} \bigcup_{(\sigma^\prime, \sigma, \tau, \tau^\prime) \in \SE_{z,r}(P_Z)} [\sigma, \tau]
    \end{equation*}
    are equal. On the other hand, for each $z \in Z$ and $(\sigma^\prime, \sigma, \tau, \tau^\prime) \in \SE_{z,r}(P_Z)$, the paths $P_Z|_{[\sigma^\prime, \sigma]}$ and $P_Z|_{[\tau, \tau^\prime]}$ intersect the path $P_z$ described in \Cref{210}, \eqref{210D}. Thus, we conclude from \Cref{220} that
    \begin{equation*}
        D_\Upsilon(K_1, K_2) \le D_{\Upsilon_Z}(K_1, K_2) + \sum_{z \in Z} \sum_{(\sigma^\prime, \sigma, \tau, \tau^\prime) \in \SE_{z,r}(P_Z)} \len(P_z; D_\Upsilon) \le D_{\Upsilon_Z}(K_1, K_2) + (\fA^2/\fa_1)\kc_r\#Z.
    \end{equation*}
    This completes the proof. 
\end{proof}

The following estimate tells us the points at which $P$ enters and exits $U_{z,r}^\star$ cannot be too close to each other. 

\begin{lemma}\label{329}
    Let $\Be \in (\ZZ/(300\Lambda\ZZ))^2$, $r \in \rho^{-1}\SR$, and $Z \subset \ZZ_{\Be,r}^{\rr}(K_1, K_2)$. Suppose that the event $E_Z$ occurs. For each $z \in Z$, let $i_z$ and $j_z$ be as in \Cref{210}. Then for each $z \in Z$, the Euclidean distance between $U_{i_z}$ and $U_{j_z}$ is greater than $\fa_3r/2$. 
\end{lemma}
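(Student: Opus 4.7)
The plan is to argue by contradiction: suppose $\dist(U_{i_z}, U_{j_z}) \le \fa_3 r/2$, and construct a strict shortcut for the $D_\Upsilon$-geodesic $P$ between $P(s_z)$ and $P(t_z)$, contradicting the geodesic property.

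First I would establish that $P(s_z)$ and $P(t_z)$ are close in Euclidean distance. Each tube $U_k = \fU_{z_k, z_{k+1}}$ is, by the construction in \Cref{ss:15}, the $(1+\fa_7)\alpha\rho r$-neighborhood of a smooth path $\fP_{z_k, z_{k+1}}$ which itself lies in a $100\rho r$-neighborhood of the counterclockwise arc $I_{z_k, z_{k+1}}$ of $\partial B_{3r}(z)$ from $z_k$ to $z_{k+1}$. Since $z_k$ and $z_{k+1}$ are consecutive points of $Z_{z,r}$, condition~\eqref{098D} of $\fE_{z,r}$ forces $|I_{z_k, z_{k+1}}| \le \fa_4 r$, and then the parameter hierarchy~\eqref{eq:094} (with $\rho, \alpha\rho, \fa_7 \ll \fa_4$) gives $\diam(U_k) \le 2\fa_4 r$ and $U_k \subset A_{2r,4r}(z)$ for every $k$. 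Since $P(s_z) \in \overline{U_{i_z}}$ and $P(t_z) \in \overline{U_{j_z}}$, the triangle inequality then yields
\begin{equation*}
    \lvert P(s_z) - P(t_z)\rvert \le \diam(U_{i_z}) + \dist(U_{i_z}, U_{j_z}) + \diam(U_{j_z}) \le 4\fa_4 r + \fa_3 r/2 \le \fa_3 r,
\end{equation*}
using $\fa_4 \ll \fa_3$.

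Next I would apply the translated version of condition~\eqref{098B} of $\fE_{z,r}$ to $(P(s_z), P(t_z)) \in A_{2r,4r}(z) \cap \Upsilon$, which yields the dichotomy: either (a) $P(s_z) \xleftrightarrow{A_{r,5r}(z)} P(t_z)$ with $D_\Upsilon(P(s_z), P(t_z); A_{r,5r}(z) \cap \Upsilon) \le \fa_2 \kc_r$, or (b) some loop of $\Gamma$ contained in $A_{r,5r}(z)$ separates $P(s_z)$ and $P(t_z)$ in $\CC$. Case~(b) must be ruled out: both points lie on the connected path $P \subset \Upsilon$ and hence in a common connected component $X(\SCL)$ of $\Upsilon$ for some $\SCL \in \Gamma$ with $\wp(\SCL) = 0$; a quick case check shows that no loop of $\Gamma$ can separate two points of $X(\SCL)$ in $\CC$ (loops surrounding $\SCL$, together with $\SCL$ itself, have $X(\SCL)$ entirely on one side; loops strictly inside $\SCL$ have $X(\SCL)$ entirely in their exterior, by the very definition of $X(\SCL)$). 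So case~(a) holds, and $D_\Upsilon(P(s_z), P(t_z)) \le \fa_2 \kc_r$.

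Finally, since $P$ is a $D_\Upsilon$-geodesic from $K_1$ to $K_2$, its subpath $P|_{[s_z, t_z]}$ realizes the $D_\Upsilon$-distance between its endpoints. By condition~\eqref{210E} this subpath enters $B_r(z)$, and since both $P(s_z)$ and $P(t_z)$ lie in $A_{2r,4r}(z)$ it must cross the annulus $A_{r,2r}(z)$ at least twice. Condition~\eqref{098A} then gives $\len(P|_{[s_z, t_z]}; D_\Upsilon) \ge 2\fa_1 \kc_r$, and hence $D_\Upsilon(P(s_z), P(t_z)) \ge 2\fa_1 \kc_r$. Combined with the upper bound of the preceding paragraph this yields $2\fa_1 \le \fa_2$, contradicting the hierarchy $\fa_2 \ll \fa_1$ fixed in \Cref{132} (where one may take $\fa_2 = \lambda\fA^{-1}\fa_1^2$). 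The main obstacle I expect is really just the geometric bookkeeping in the first paragraph --- in particular verifying the uniform diameter bound $\diam(U_k) \le 2\fa_4 r$ and the inclusion $U_k \subset A_{2r,4r}(z)$ --- which is what ultimately forces the ordering $\fa_5, \fa_6, \fa_7 \ll \fa_4$ in \eqref{eq:094}; the shortcut-versus-crossing-cost contradiction itself is clean.
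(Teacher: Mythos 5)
Your proposal is correct and follows essentially the same approach as the paper's proof: argue by contradiction, use condition~\eqref{098D} and the tube construction to bound $\diam(U_{i_z})$ and $\diam(U_{j_z})$ so that $\lvert P(s_z) - P(t_z)\rvert \le \fa_3 r$, apply condition~\eqref{098B} (ruling out the disconnecting-loop alternative because $P(s_z)$ and $P(t_z)$ lie on the connected path $P \subset \Upsilon$) to get $D_\Upsilon(P(s_z), P(t_z)) \le \fa_2\kc_r$, and compare with the lower bound from condition~\eqref{098A} to contradict $\fa_2 \ll \fa_1$. The only stylistic difference is that you spell out why case~(b) of~\eqref{098B} is excluded and give a slightly tighter diameter and crossing count, whereas the paper asserts these directly; the core argument is identical.
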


\begin{proof}
    Fix $z \in Z$. Let $s_z$ and $t_z$ be as in \Cref{210}. Since $P|_{[s_z, t_z]}$ enters $B_r(z)$ by \Cref{210}, \eqref{210E}, it follows from \Cref{098}, \eqref{098A} that 
    \begin{equation}\label{eq:249}
        D_\Upsilon(P(s_z), P(t_z)) \ge D_\Upsilon(\partial B_r(z) \cap \Upsilon, \partial B_{2r}(z) \cap \Upsilon) \ge \fa_1\kc_r. 
    \end{equation}
    By \Cref{098}, \eqref{098D} and the definition of $\{U_j\}_{j \in [1, \#Z_{z,r}]_\ZZ}$, each of them has Euclidean diameter at most $100\fa_4r$. Thus, if $\dist(U_{i_z}, U_{j_z}) \le \fa_3r/2$, then $\lvert P(s_z) - P(t_z)\rvert \le \fa_3r$. In this case, it follows from \Cref{098}, \eqref{098B} that $D_\Upsilon(P(s_z), P(t_z)) \le \fa_2\kc_r$ (since $P(s_z) \xleftrightarrow\Upsilon P(t_z)$, no loop of $\Gamma$ can disconnect $P(s_z)$ and $P(t_z)$), in contradiction to \eqref{eq:249}. This completes the proof. 
\end{proof}

The following estimate tells us if a $(B_{\Lambda r}(z), B_{4r}(z))$-excursion of $P_Z$ does not pass through the tube between either $\SCL_{z,r}$ and $\gamma_{z,r}$ or $\SCL_{z,r}$ and $\gamma_{z,r}^\prime$, then the $D_{\Upsilon_Z}$-distance between its endpoints cannot be much shorter than the $D_\Upsilon$-distance. 

\begin{lemma}\label{318}
    Let $\Be \in (\ZZ/(300\Lambda\ZZ))^2$, $r \in \rho^{-1}\SR$, and $Z \subset \ZZ_{\Be,r}^{\rr}(K_1, K_2)$. Suppose that the event $F_Z$ occurs. Write $P_Z \colon [0, 1] \to \Upsilon_Z$ for the $D_{\Upsilon_Z}$-geodesic from $K_1$ to $K_2$. Let $z \in Z$. Let $(\sigma^\prime, \sigma, \tau, \tau^\prime)$ be a $(B_{\Lambda r}(z), B_{4r}(z))$-excursion of $P_Z$. Suppose that $P_Z|_{[\sigma,\tau]}$ does not pass through the tube between either $\SCL_{z,r}$ and $\gamma_{z,r}$ or $\SCL_{z,r}$ and $\gamma_{z,r}^\prime$. Then 
    \begin{equation*}
        D_\Upsilon(P_Z(\sigma), P_Z(\tau)) - D_{\Upsilon_Z}(P_Z(\sigma), P_Z(\tau)) \le 100\fa_2\kc_r. 
    \end{equation*}
\end{lemma}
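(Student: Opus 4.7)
Plan:

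We construct a path $P'$ in $\overline{B_{\Lambda r}(z)} \cap \Upsilon$ from $P_Z(\sigma)$ to $P_Z(\tau)$ whose $D_\Upsilon$-length is at most $D_{\Upsilon_Z}(P_Z(\sigma), P_Z(\tau)) + 100 \fa_2 \kc_r$. By the $3\Lambda r$-separation of $\ZZ_{\Be,r}$ and the inclusion $U_{z',r}^\star \subset B_{4r}(z')$, within $\overline{B_{\Lambda r}(z)}$ the carpets $\Upsilon$ and $\Upsilon_Z$ coincide outside $U_{z,r}^\star$ (the other $U_{z',r}^\star$ are far away). Hence portions of $P_Z|_{[\sigma,\tau]}$ lying in $\overline{B_{\Lambda r}(z)} \setminus U_{z,r}^\star$ already lie in $\Upsilon$ and contribute identically under both metrics; they may be reused verbatim in $P'$. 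It remains only to replace each maximal excursion of $P_Z|_{[\sigma,\tau]}$ into $U_{z,r}^\star$, say entering at $x \in \partial U_{z,r}^\star \cap \Upsilon$ and exiting at $y \in \partial U_{z,r}^\star \cap \Upsilon$, by a short path in $\Upsilon$ from $x$ to $y$.

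The topological heart of the proof is the claim that, because $P_Z|_{[\sigma,\tau]}$ does not traverse either main tube between $\SCL_{z,r}$, $\gamma_{z,r}$, $\gamma_{z,r}'$, every such pair $(x,y)$ satisfies $|x-y| \le O(\fa_8 r)$. Each excursion enters a single connected component $U_j^\star$ of $U_{z,r}^\star$ at an arc $I \in \SI_j$ and exits at an arc $I' \in \SI_j$ (same $j$, since distinct $U_{j}^\star$ are disjoint). Conditions~\eqref{035H} and \eqref{035I} pin down the rewired link pattern ($\alpha_j$ for $j \in \{i_z, j_z\}$, $\beta_j$ otherwise) and guarantee that each complementary $(\CC\setminus\overline{V_j}, \CC\setminus U_j)$-excursion of $\Gamma_Z$ lies in a $d_{U_j^\star}$-neighborhood of width $\fa_8 r$ of the arc it links. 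Combined with the bottleneck structure of condition~\eqref{098F}(b)(c) (distinct points of $X_j$ are at Euclidean distance $\ge \fa_8 r$ and each arc $I \in \SI_j$ communicates with $P_j$ only through a $d_{U_j^\star}$-bottleneck of width $100 \fa_8 r$), the non-traversal assumption forces $I = I'$ or $I, I'$ to be adjacent in the cyclic labeling \eqref{eq:347}, whence $|x-y| \le O(\fa_8 r) \ll \fa_3 r$. A topological accounting then also bounds the number of excursions that contribute nontrivially by a universal constant.

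Given this Euclidean proximity, apply condition~\eqref{098B} to each pair $(x,y)$: either $x \xleftrightarrow{A_{r,5r}(z) \cap \Upsilon} y$ with $D_\Upsilon(x, y; A_{r,5r}(z) \cap \Upsilon) \le \fa_2 \kc_r$, or there exists a loop of $\Gamma$ contained in $A_{r,5r}(z)$ and separating $x$ from $y$. The second alternative is excluded by condition~\eqref{098C}: any such separating loop would have Euclidean diameter at most $\fa_3 r/2$, which far exceeds the width $O(\alpha\rho r)$ of $U_{z,r}$, so the loop is not entirely destroyed by the resampling inside $U_{z,r}$ and persists in $\Gamma_Z$ as a loop still separating $x$ from $y$ in $\Upsilon_Z$, contradicting the existence of the subpath $P_Z|_{[s,t]}$. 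Summing over the bounded number of contributing excursions gives the claimed bound $100 \fa_2 \kc_r$.

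The principal obstacle will be making the topological argument of the second paragraph precise: both the adjacency constraint on entry-exit arcs under non-traversal of the main tubes, and the uniform bound on the number of excursions that fail to cancel. These rest on a careful case analysis of the rewiring rules $\alpha_j, \beta_j$ from \eqref{eq:043} and the cyclic labeling \eqref{eq:347} of $X_j$, distinguishing the behavior at $j \in \{i_z, j_z\}$ (where $\alpha_j$ creates the openings at $y_{j,\rR}$ and $x_{j+1,\rR}$ that define the tube ends) from $j \notin \{i_z, j_z\}$ (where $\beta_j$ seals off the main tube by directly linking $y_{j,\rR}$ to $x_{j+1,\rR}$).
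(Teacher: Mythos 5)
Your overall strategy --- reuse the portions of $P_Z|_{[\sigma,\tau]}$ outside $U_{z,r}^\star$, where $\Upsilon$ and $\Upsilon_Z$ agree, and replace each excursion into $U_{z,r}^\star$ by a short path in $\Upsilon$ controlled via condition~\eqref{098B} --- matches the paper's. But there is a genuine gap in the accounting. You bound each excursion's replacement cost by $\fa_2\kc_r$ and then assert that a topological accounting bounds the number of contributing excursions by a universal constant. This is false, and it is precisely the difficulty the paper's proof is organized around: $P_Z$ can enter and leave $U_{z,r}^\star$ through the many small openings $[w_{n,2k-1},w_{n,2k}]_{\partial U_n^\star}^\circlearrowleft \in \SI_n^{\alpha\setminus\beta}$ for $n \notin \{i_z,j_z\}$, and the number of such openings is only controlled by $\#Z_{z,r} \le 2\pi/\fa_5$ times the bound $1/\fa_8$ from condition~\eqref{098F}, i.e.\ by roughly $1/(\fa_5\fa_8)$, which dwarfs $100$ since $\fa_5,\fa_8 \ll \fa_2$. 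A per-excursion cost of $\fa_2\kc_r$ therefore does not sum to $100\fa_2\kc_r$. The paper never bounds the number of excursions; instead it replaces whole intervals from first entry to last exit --- $[S_{i_z},T_{i_z}]$, $[S_{j_z},T_{j_z}]$, and, for the small excursions, the intervals $[S_{w,t,j},T_{w,t,j}]$ obtained by grouping according to the balls $B_t(w)$, $(w,t)\in\CS$, supplied by condition~\eqref{098E} --- so that the first two replacements each cost at most $\fa_2\kc_r$ by condition~\eqref{098B}, while the \emph{total} over all the small groups is at most $\fa_2\kc_r$ by the third part of condition~\eqref{098E}. Your proposal never invokes condition~\eqref{098E}, which is the hypothesis built exactly for this step; without it (or a substitute) the bound fails.

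Two smaller issues. First, your claim that non-traversal forces the entry and exit arcs to coincide or be adjacent, hence $\lvert x-y\rvert \le O(\fa_8 r)$, is not correct: in the tubes $U_{i_z}$ and $U_{j_z}$ carrying the link pattern $\alpha$, the opening between $y_{j,\rR}$ and $x_{j+1,\rR}$ connects \emph{all} arcs of $\SI_j^{\alpha\setminus\beta}$, so a path may enter and exit through non-adjacent arcs. What is true, and suffices for condition~\eqref{098B}, is only that both endpoints lie on $\partial U_{i_z}$, whose Euclidean diameter is at most $100\fa_4 r \le \fa_3 r$. Second, your argument for excluding the separating-loop alternative in condition~\eqref{098B} --- that a loop of diameter at most $\fa_3 r/2$ ``persists'' under the resampling and still separates in $\Upsilon_Z$ --- is unjustified: loops meeting $U_{z,r}$ are rewired inside $V_{z,r}$, and their separation properties can change. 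The paper instead rules out such a loop by showing that the relevant endpoints are $\Upsilon$-connected to points far away (via the path $P_z$ of condition~\eqref{210D} together with \Cref{329}), so that neither endpoint can be surrounded by a loop of diameter $\fa_3 r/2$.
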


See \Cref{fig:bad} for an illustration of the proof of \Cref{318}. The idea is as follows.  Write $S_{i_z}$ (resp.~$S_{j_z}$) for the first time at which $P_Z|_{[\sigma, \tau]}$ hits $I$ for some $I \in \SI_{i_z}^{\alpha\setminus\beta}$ (resp.~$I \in \SI_{j_z}^{\alpha\setminus\beta}$). Write $T_{i_z}$ (resp.~$T_{j_z}$) for the last time at which $P_Z|_{[\sigma, \tau]}$ hits $J$ for some $J \in \SI_{i_z}^{\alpha\setminus\beta}$ (resp.~$I \in \SI_{j_z}^{\alpha\setminus\beta}$).  We first show that $P_Z(S_{i_z})$, $P_Z(T_{i_z})$, $P_Z(S_{j_z})$, and $P_Z(T_{j_z})$ are contained in the same connected component of $\Upsilon$ (cf.~\eqref{eq:248}). Since it is clear that the endpoints of each small dashed subpath as illustrated in \Cref{fig:bad} are contained in the same connected component of $\Upsilon$, it follows that all the solid subpaths of $P_Z$ are contained in the same connected component of $\Upsilon$. Thus, roughly speaking, $D_\Upsilon(P_Z(\sigma), P_Z(\tau))$ is bounded above by the $D_\Upsilon$-length (or, equivalently, the $D_{\Upsilon_Z}$-length) of the solid subpaths of $P_Z$ plus the $D_\Upsilon$-distance between the endpoints of the dashed subpaths. The $D_\Upsilon$-distance between $P_Z(S_{i_z})$ and $P_Z(T_{i_z})$ (resp.~$P_Z(S_{j_z})$ and $P_Z(T_{j_z})$) is upper-bounded using \Cref{098}, \eqref{098B}. The $D_\Upsilon$-distances between the endpoints of the small dashed subpaths are upper-bounded using \Cref{098}, \eqref{098C}, \eqref{098E}.

\begin{figure}[ht!]
    \centering
    \includegraphics[width=\linewidth]{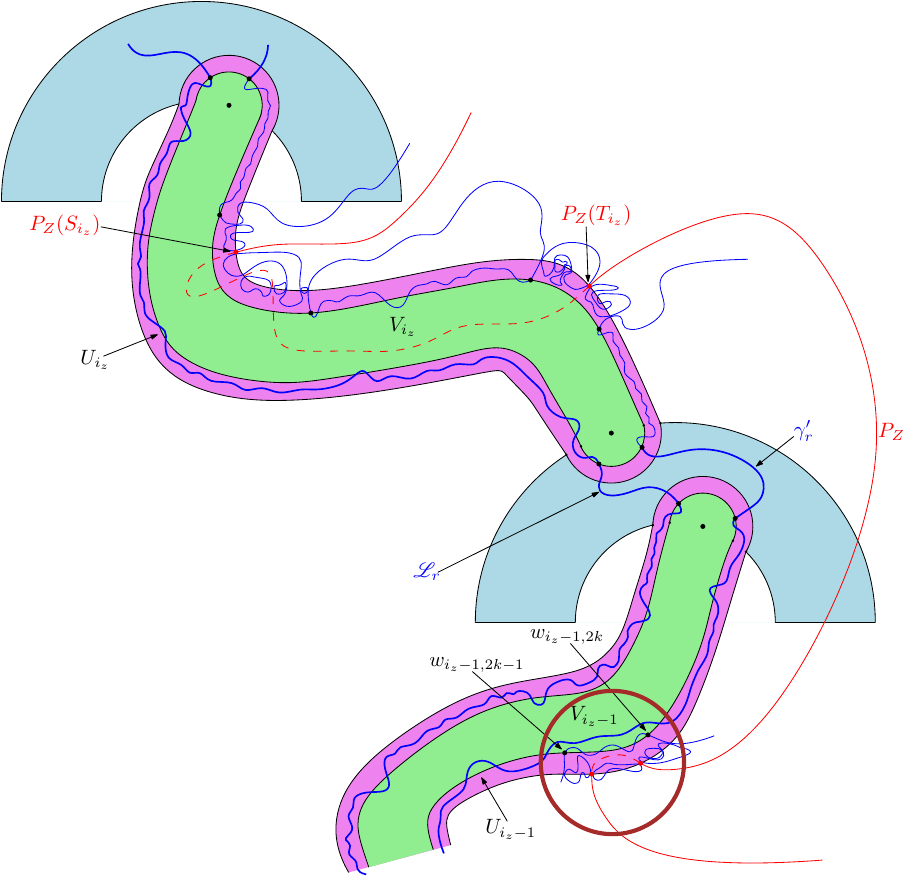}
    \caption{Illustration of the proof of \Cref{318}. The red path $P_Z$ is a $D_{\Upsilon_Z}$-geodesic that enters the ``places'' in $B_{5r}(z)$ where $\Upsilon$ and $\Upsilon_Z$ differ (the dashed subpaths of $P_Z$) but does not passes through the tube between $\SCL_{z,r}$, $\gamma_{z,r}$, and $\gamma_{z,r}^\prime$. In this situation, there are three different types of such ``bad'' subpaths: $P_Z|_{[S_{i_z}, T_{i_z}]}$, $P_Z|_{[S_{j_z}, T_{j_z}]}$ (not shown), and the small dashed subpaths that enters the small ``bad places'' corresponding to $w_{n,2k - 1}$ and $w_{n,2k}$ for some $n \in [1, \#Z_{z,r}]_\ZZ \setminus \{i_z, j_z\}$ and $k \in \NN$. (Recall from \Cref{fig:linkpattern} the definition of the points $w_{n,2k - 1}$ and $w_{n,2k}$.)}
    \label{fig:bad}
\end{figure}

\begin{proof}[Proof of \Cref{318}]
    Let $i_z$ and $j_z$ be as in \Cref{210}. Since $P_Z$ does not pass through the tube between either $\SCL_{z,r}$ and $\gamma_{z,r}$ or $\SCL_{z,r}$ and $\gamma_{z,r}^\prime$, we observe that for each $U_{z,r}^\star$-excursion $(S, T)$ of $P_Z|_{[\sigma, \tau]}$, there are three possibilities:
    \begin{enumerate}
        \item There exist $I, J \in \SI_{i_z}^{\alpha\setminus\beta}$ such that $P_Z(S) \in I \cap \partial U_{z,r}$ and $P_Z(T) \in J \cap \partial U_{z,r}$. 
        \item There exist $I, J \in \SI_{j_z}^{\alpha\setminus\beta}$ such that $P_Z(S) \in I \cap \partial U_{z,r}$ and $P_Z(T) \in J \cap \partial U_{z,r}$. 
        \item\label{it:307} There exists $j \in [1, \#Z_r]_\ZZ \setminus \{i_z, j_z\}$ and $I \in \SI_j^{\alpha\setminus\beta}$ such that $P_Z(S), P_Z(T) \in I \cap \partial U_{z,r}$. 
    \end{enumerate}
As was introduced just after the statement of the lemma, write $S_{i_z}$ (resp.~$S_{j_z}$) for the first time at which $P_Z|_{[\sigma, \tau]}$ hits $I$ for some $I \in \SI_{i_z}^{\alpha\setminus\beta}$ (resp.~$I \in \SI_{j_z}^{\alpha\setminus\beta}$). Write $T_{i_z}$ (resp.~$T_{j_z}$) for the last time at which $P_Z|_{[\sigma, \tau]}$ hits $J$ for some $J \in \SI_{i_z}^{\alpha\setminus\beta}$ (resp.~$I \in \SI_{j_z}^{\alpha\setminus\beta}$). We \emph{claim} that
    \begin{equation}\label{eq:248}
        P_Z(S_{i_z}) \xleftrightarrow\Upsilon P_Z(T_{i_z}) \xleftrightarrow\Upsilon P_Z(S_{j_z}) \xleftrightarrow\Upsilon P_Z(T_{j_z}). 
    \end{equation}
    Note that either $S_{i_z} < T_{i_z} < S_{j_z} < T_{j_z}$ or $S_{i_z} < S_{j_z} < T_{i_z} < T_{j_z}$. We only consider the first case, the second case follows from a similar argument. First, we note that the restriction of $P_Z|_{[\sigma^\prime, \tau^\prime]}$ to the complement of the union of the intervals $[S, T]$ for all $U_{z,r}^\star$-excursions $(S, T)$ of $P_Z|_{[\sigma, \tau]}$ is contained in $\Upsilon$. Moreover, for any times $S$ and $T$ such that condition~\eqref{it:307} holds, it is clear that $P_Z(S) \xleftrightarrow\Upsilon P_Z(T)$. Thus, we conclude that 
    \begin{equation*}
        P_Z|_{[\sigma^\prime, \sigma]} \xleftrightarrow\Upsilon P_Z(S_{i_z}), \quad P_Z(T_{i_z}) \xleftrightarrow\Upsilon P_Z(S_{j_z}), \quad \text{and} \quad P_Z(T_{j_z}) \xleftrightarrow\Upsilon P_Z|_{[\tau, \tau^\prime]}.  
    \end{equation*}
    Since both $P_Z|_{[\sigma^\prime, \sigma]}$ and $P_Z|_{[\tau, \tau^\prime]}$ intersect the path $P_z$ described in \Cref{210}, \eqref{210D}, it follows that $P_Z(S_{i_z}) \xleftrightarrow\Upsilon P_Z(T_{j_z}) \xleftrightarrow\Upsilon P_z$. Thus, it suffices to show that $P_Z(S_{i_z}) \xleftrightarrow\Upsilon P_Z(T_{i_z})$. Suppose by way of contradiction that this is not true. Since the Euclidean diameter of $U_{i_z}$ is at most $100\fa_4r$ (cf.~\Cref{098}, \eqref{098D}), it follows that $\lvert P_Z(S_{i_z}) - P_Z(T_{i_z})\rvert \le 100\fa_4r$. Then, by \Cref{098}, \eqref{098C}, there exists a loop $\SCL \in \Gamma$ with Euclidean diameter at most $\fa_3r/2$, contained in $A_{r,5r}(z)$, and disconnecting $P_Z(S_{i_z})$ and $P_Z(T_{i_z})$. Since $P_Z(S_{i_z})$ is connected to $P_Z|_{[\sigma^\prime, \sigma]}$ in $\Upsilon$, the loop $\SCL$ cannot surround $P_Z(S_{i_z})$. Thus, $\SCL$ must surround $P_Z(T_{i_z})$. However, since $P_Z(T_{i_z}) \xleftrightarrow\Upsilon P_Z(S_{j_z})$, and, by \Cref{329}, the Euclidean distance between $P_Z(T_{i_z})$ and $P_Z(S_{j_z})$ is greater than $\fa_3r/2$, in contradiction to the fact that $P_Z(T_{i_z})$ is surrounded by a loop of $\Gamma$ with Euclidean diameter at most $\fa_3r/2$. This completes the proof of \eqref{eq:248}. 
    
    Let $\CS \subset A_{2r,4r}(0) \times (0, \lambda r]$ be as in \Cref{098}, \eqref{098E}. For each $(w, t) \in \CS$, since there are at most $2N$ connected arcs of $\Gamma$ in $A_{t,2t}(w)$ connecting its inner and outer boundaries, there are at most $N$ $(B_{2t}(w), B_t(w))$-excursions of $\Gamma$. These $(B_{2t}(w), B_t(w))$-excursions divide $B_{2t}(w)$ into at most $N + 1$ connected components. Among these $N + 1$ connected components, at most $N$ of them, which we shall denote by $A_{w,t,1}, \ldots, A_{w,t,N}$, correspond to $\Upsilon$, i.e., $A_{w,t,j} \cap B_t(w) \cap \Upsilon \xleftrightarrow\Upsilon (\CC \setminus B_{2t}(w)) \cap \Upsilon$ for all $j$. Suppose that $S$ and $T$ are times such that condition~\eqref{it:307} holds. It is clear that $P_Z(S)$ and $P_Z(T)$ are contained in the same connected component of $(U_{z,r} \setminus V_{z,r}) \cap \Upsilon$. Denote this connected component by $\SCC$. Since $P_Z$ started from outside of $B_{4r}(z)$, it follows that $\SCC \xleftrightarrow\Upsilon (\CC \setminus B_{4r}(z)) \cap \Upsilon$. Thus, it follows from \Cref{098}, \eqref{098E} that there exists $(w, t)$ such that $\SCC \subset B_t(w)$, and it follows from the above discussion that there exists $j \in [1, N]_\ZZ$ such that $\SCC \subset A_{w,t,j}$. We \emph{observe} that for each pair of points $x, y \in A_{w,t,j} \cap \Upsilon$ with $x \xleftrightarrow\Upsilon (\CC \setminus B_{2t}(w)) \cap \Upsilon$ and $y \xleftrightarrow\Upsilon (\CC \setminus B_{2t}(w)) \cap \Upsilon$, we have $x \xleftrightarrow{B_{2t}(w) \cap \Upsilon} y$. For each $(w, t) \in \CS$ and $j \in [1, N]_\ZZ$, write $S_{w,t,j}$ (resp.~$T_{w,t,j}$) for the first (resp.~last) time at which $P_Z$ hits a connected component $\SCC$ of $(U_{z,r} \setminus V_{z,r}) \cap \Upsilon$ with $\SCC \subset B_t(w)$. It follows from the above \emph{observation} that 
    \begin{equation}\label{eq:250}
        P_Z(S_{w,t,j}) \xleftrightarrow{B_{2t}(w) \cap \Upsilon} P_Z(T_{w,t,j}), \quad \forall (w, t) \in \CS, \ \forall j \in [1, N]_\ZZ. 
    \end{equation}

    Now, we conclude from the discussion of the preceding paragraphs that for each $U_{z,r}^\star$-excursion $(S, T)$ of $P_Z|_{[\sigma, \tau]}$, we have
    \begin{equation*}
        [S, T] \subset [S_{i_z}, T_{i_z}] \cup [S_{j_z}, T_{j_z}] \cup \bigcup_{(w, t) \in \CS} \bigcup_{j = 1}^N [S_{w,t,j}, T_{w,t,j}]. 
    \end{equation*}
    This implies that
    \begin{multline}\label{eq:348}
        D_\Upsilon(P_Z(\sigma), P_Z(\tau)) \le D_{\Upsilon_Z}(P_Z(\sigma), P_Z(\tau)) \\
        + D_\Upsilon(P_Z(S_{i_z}), P_Z(T_{i_z})) + D_\Upsilon(P_Z(S_{j_z}), P_Z(T_{j_z})) + \sum_{(w, t) \in \CS} \sum_{j = 1}^N D_\Upsilon(P_Z(S_{w,t,j}), P_Z(T_{w,t,j})). 
    \end{multline}
    Since $P_Z(S_{i_z}), P_Z(T_{i_z}) \in \partial U_{i_z}$, we have $\lvert P_Z(S_{i_z}) - P_Z(T_{i_z})\rvert \le 100\fa_4r$ (cf.~\Cref{098}, \eqref{098D}). Since $P_Z(S_{i_z}) \xleftrightarrow\Upsilon P_Z(T_{i_z})$ by \eqref{eq:248}, it follows from \Cref{098}, \eqref{098B} that 
    \begin{equation}\label{eq:251}
        D_\Upsilon(P_Z(S_{i_z}), P_Z(T_{i_z})) \le \fa_2\kc_r. 
    \end{equation}
    Similarly, we have
    \begin{equation}\label{eq:252}
        D_\Upsilon(P_Z(S_{j_z}), P_Z(T_{j_z})) \le \fa_2\kc_r. 
    \end{equation}
    By \Cref{098}, \eqref{098E} and \eqref{eq:250}, we have 
    \begin{multline}\label{eq:253}
        \sum_{(w, t) \in \CS} \sum_{j = 1}^N D_\Upsilon(P_Z(S_{w,t,j}), P_Z(T_{w,t,j})) \\
        \le N\sum_{(w, t) \in \CS} \sup\left\{D_\Upsilon(x, y) : x, y \in B_t(w), \ x \xleftrightarrow{B_{2t}(w) \cap \Upsilon} y\right\} \le \fa_2\kc_r. 
    \end{multline}
    Combining \eqref{eq:348}, \eqref{eq:251}, \eqref{eq:252}, and \eqref{eq:253}, we complete the proof. 
\end{proof}

The following lemma is the main input in the proof of \Cref{201}. 

\begin{lemma}\label{205}
    There is a deterministic constant $c \in (0, 1)$ such that for each $\Be \in (\ZZ/(300\Lambda\ZZ))^2$, $r \in \rho^{-1}\SR$, and $Z \subset \ZZ_{\Be,r}^{\rr}(K_1, K_2)$, the following is true: Suppose that the event $F_Z$ occurs. Then there is a subset $Z^\prime \subset Z$ with $\#Z^\prime \ge c\#Z$ such that the event $\kF_{Z^\prime}^{\rr}(K_1, K_2)$ occurs.
\end{lemma}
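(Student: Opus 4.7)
The plan is to iteratively remove ``bad'' points from $Z$ until none remain. Set $Z_0 \defeq Z$, and having defined $Z_j$, write $P_{Z_j} \colon [0, 1] \to \Upsilon_{Z_j}$ for the $D_{\Upsilon_{Z_j}}$-geodesic from $K_1$ to $K_2$ and let
\begin{equation*}
B_j \defeq \bigl\{z \in Z_j : P_{Z_j} \text{ does not pass through the tube between } \SCL_{z,r} \text{ and } \gamma_{z,r} \text{ or between } \SCL_{z,r} \text{ and } \gamma_{z,r}^\prime\bigr\}.
\end{equation*}
If $B_j \ne \emptyset$, set $Z_{j+1} \defeq Z_j \setminus B_j$; otherwise stop and set $Z^\prime \defeq Z_j$. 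Since $\#Z_j$ strictly decreases when $B_j \ne \emptyset$, the iteration terminates in at most $\#Z$ steps, and condition~\eqref{210G} of $\kF_{Z^\prime}^{\rr}(K_1, K_2)$ holds at termination by construction.

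I will first verify that $F_{Z_j}$ holds throughout. Conditions~\eqref{210A}--\eqref{210E} of \Cref{210} are manifestly monotone in $Z$: each either does not depend on $Z$ or asserts a property for each $z \in Z$. For condition~\eqref{210F}, the event $\fF_{z,r}(\Upsilon_{Z'}; \{i_z, j_z\})$ depends, through the locality axiom and the definition of $\fF_{z,r}$, only on $\Upsilon_{Z'} \cap B_{5r}(z) = \Upsilon_{z,r} \cap B_{5r}(z)$, which is independent of the other elements of $Z'$. Hence $F_Z \Rightarrow F_{Z'}$ for all $Z' \subset Z$, and so Lemmas~\ref{317}, \ref{220}, \ref{221}, and \ref{318} apply at every step of the iteration.

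The core technical step is the distance estimate
\begin{equation}\label{eq:plan-key}
D_{\Upsilon_{Z_{j+1}}}(K_1, K_2) \le D_{\Upsilon_{Z_j}}(K_1, K_2) + C_1 \kc_r \#B_j, \quad \text{where } C_1 \defeq 100 \fA \fa_2/\fa_1.
\end{equation}
To prove \eqref{eq:plan-key} I will construct a path in $\Upsilon_{Z_{j+1}}$ from $K_1$ to $K_2$ by modifying $P_{Z_j}$. Since the balls $\{B_{\Lambda r}(z)\}_{z \in B_j}$ are pairwise disjoint (as $Z \subset \ZZ_{\Be, r}$ has pairwise separation $\ge 3\Lambda r$) and $U_{z', r}^\prime \subset B_{4r}(z')$ for $z' \in Z_{j+1}$, modifications inside $B_{\Lambda r}(z)$ for different $z \in B_j$ do not interact and lie in regions where $\Upsilon_{Z_{j+1}}$ agrees with $\Upsilon$. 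For each $z \in B_j$ and each $(B_{\Lambda r}(z), B_{4r}(z))$-excursion $(\sigma^\prime, \sigma, \tau, \tau^\prime)$ of $P_{Z_j}$, \Cref{318} (in the refined form extractable from its proof, which bounds the excess by an internal-metric path in $A_{r, 5r}(z) \cap \Upsilon$) yields a replacement path of $D_\Upsilon$-length at most $D_{\Upsilon_{Z_j}}(P_{Z_j}(\sigma), P_{Z_j}(\tau)) + 100 \fa_2 \kc_r + o(1)$. By \Cref{220}, there are at most $\fA/\fa_1$ such excursions per $z \in B_j$, contributing at most $C_1 \kc_r$ in aggregate per bad point. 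The unmodified portion of $P_{Z_j}$ lies outside $\bigcup_{z \in B_j} U_{z,r}^\prime$, where $\Upsilon_{Z_j}$ and $\Upsilon_{Z_{j+1}}$ agree, so the locality axiom identifies its $D_{\Upsilon_{Z_{j+1}}}$-contribution with its $D_{\Upsilon_{Z_j}}$-contribution.

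Iterating \eqref{eq:plan-key} yields $D_{\Upsilon_{Z^\prime}}(K_1, K_2) \le D_{\Upsilon_Z}(K_1, K_2) + C_1 \kc_r(\#Z - \#Z^\prime)$. Combined with \Cref{317} applied to $Z$ and \Cref{221} applied to $Z^\prime$, this gives
\begin{equation*}
D_\Upsilon(K_1, K_2) - (\fA^2/\fa_1) \kc_r \#Z^\prime \le D_\Upsilon(K_1, K_2) - \fa_1 \kc_r \#Z + C_1 \kc_r (\#Z - \#Z^\prime),
\end{equation*}
which rearranges to $(\fA^2/\fa_1 - C_1) \#Z^\prime \ge (\fa_1 - C_1) \#Z$. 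Provided $\fa_2$ is sufficiently small compared to $\fa_1^2/\fA$ (which is consistent with the prior choice $\fa_2 = \lambda\fA^{-1}\fa_1^2$ in \Cref{132} for $\lambda$ small), we have $0 < C_1 < \fa_1 < \fA^2/\fa_1$, and hence $\#Z^\prime \ge c \#Z$ with $c \defeq (\fa_1 - C_1)/(\fA^2/\fa_1 - C_1) > 0$. The main obstacle is the distance estimate \eqref{eq:plan-key} itself: verifying that the modified path genuinely lies in $\Upsilon_{Z_{j+1}}$ and that its $D_{\Upsilon_{Z_{j+1}}}$-length admits the asserted bound requires careful bookkeeping using the geometric disjointness of the $B_{\Lambda r}(z)$'s, the locality axiom to translate between the metrics $D_{\Upsilon_{Z_j}}$ and $D_{\Upsilon_{Z_{j+1}}}$ on the unmodified region, and a slightly refined reading of the proof of \Cref{318} to localize the replacement paths inside $A_{r, 5r}(z)$.
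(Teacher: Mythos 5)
Your proposal is correct and follows essentially the same route as the paper's proof: the same iterative removal of bad points, the same key estimate $D_{\Upsilon_{Z_{j+1}}}(K_1,K_2) \le D_{\Upsilon_{Z_j}}(K_1,K_2) + (100\fA\fa_2/\fa_1)\kc_r(\#Z_j - \#Z_{j+1})$ obtained from \Cref{318} and \Cref{220}, and the same sandwich between \Cref{317} and \Cref{221} to conclude $\#Z' \ge c\#Z$ under the constraint $\fa_1 > 100\fA\fa_2/\fa_1$. The only differences are cosmetic (your rearrangement keeps an extra $-C_1$ in the denominator of $c$, giving a marginally different but equally valid constant).
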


\begin{proof}
    Set $Z_0 \defeq Z$. Inductively, for each $n \in \NN$, set
    \begin{equation*}
        Z_n \defeq \left\{z \in Z_{n - 1} : P_{Z_{n - 1}} \text{ passes through the tube between either } \SCL_{z,r} \text{ and } \gamma_{z,r} \text{ or } \SCL_{z,r} \text{ and } \gamma_{z,r}^\prime\right\},
    \end{equation*}
    where $P_{Z_{n - 1}}$ denotes the $D_{\Upsilon_{Z_{n - 1}}}$-geodesic from $K_1$ to $K_2$. It suffices to show that there exists a deterministic constant $c \in (0, 1)$ such that $\#Z_n \ge c\#Z$ for all $n \in \NN$. Since $F_Z$ occurs, it follows immediately from the definitions that $F_{Z^\prime}$ occurs for all subsets $Z^\prime \subset Z$. Since $\Upsilon_{Z_n}$ and $\Upsilon_{Z_{n - 1}}$ coincide outside of $\bigcup_{z \in Z_{n - 1} \setminus Z_n} U_{z,r}^\star$, it follows that
    \begin{multline}\label{eq:223}
        D_{\Upsilon_{Z_n}}(K_1, K_2) \le D_{\Upsilon_{Z_{n - 1}}}(K_1, K_2) \\
        + \sum_{z \in Z_{n - 1} \setminus Z_n} \sum_{(\sigma^\prime, \sigma, \tau, \tau^\prime) \in \SE_{z,r}(P_{Z_{n - 1}})} \left(D_{\Upsilon_{Z_n}}(P_{Z_{n - 1}}(\sigma), P_{Z_{n - 1}}(\tau)) - D_{\Upsilon_{Z_{n - 1}}}(P_{Z_{n - 1}}(\sigma), P_{Z_{n - 1}}(\tau))\right)
    \end{multline}
    for all $n \in \NN$, where $\SE_{z,r}(P_{Z_{n - 1}})$ denotes the collection of $(B_{\Lambda r}(z), B_{4r}(z))$-excursions of $P_{Z_{n - 1}}$. By \Cref{318}, we have
    \begin{multline}\label{eq:224}
        D_{\Upsilon_{Z_n}}(P_{Z_{n - 1}}(\sigma), P_{Z_{n - 1}}(\tau)) - D_{\Upsilon_{Z_{n - 1}}}(P_{Z_{n - 1}}(\sigma), P_{Z_{n - 1}}(\tau)) \le 100\fa_2\kc_r, \\
        \forall n \in \NN, \ \forall z \in Z_{n - 1} \setminus Z_n, \ \forall (\sigma^\prime, \sigma, \tau, \tau^\prime) \in \SE_{z,r}(P_{Z_{n - 1}}).
    \end{multline}
    By \Cref{220}, we have
    \begin{equation}\label{eq:225}
        \#\SE_{z,r}(P_{Z_{n - 1}}) \le \fA/\fa_1, \quad \forall n \in \NN, \ \forall z \in Z_{n - 1}. 
    \end{equation}
    Combining \eqref{eq:223}, \eqref{eq:224}, and \eqref{eq:225}, we obtain
    \begin{equation*}
        D_{\Upsilon_{Z_n}}(K_1, K_2) \le D_{\Upsilon_{Z_{n - 1}}}(K_1, K_2) + \frac{100\fA\fa_2}{\fa_1}\kc_r(\#Z_{n - 1} - \#Z_n), \quad \forall n \in \NN. 
    \end{equation*}
    Thus, 
    \begin{align}\label{eq:226}
        D_{\Upsilon_{Z_n}}(K_1, K_2) - D_{\Upsilon_Z}(K_1, K_2) &= \sum_{k = 1}^n \left(D_{\Upsilon_{Z_k}}(K_1, K_2) - D_{\Upsilon_{Z_{k - 1}}}(K_1, K_2)\right) \\
        &\le \sum_{k = 1}^n \frac{100\fA\fa_2}{\fa_1}\kc_r(\#Z_{k - 1} - \#Z_k) \notag \\
        &\le \frac{100\fA\fa_2}{\fa_1}\kc_r\#Z, \quad \forall n \in \NN. \notag 
    \end{align}
    Combining \Cref{317} and \eqref{eq:226}, we obtain
    \begin{equation}\label{eq:227}
        D_{\Upsilon_{Z_n}}(K_1, K_2) \le D_\Upsilon(K_1, K_2) - \left(\fa_1 - \frac{100\fA\fa_2}{\fa_1}\right)\kc_r\#Z, \quad \forall n \in \NN. 
    \end{equation}
    On the other hand, by \Cref{221}, we have
    \begin{equation}\label{eq:228}
        D_\Upsilon(K_1, K_2) \le D_{\Upsilon_{Z_n}}(K_1, K_2) + (\fA^2/\fa_1)\kc_r\#Z_n, \quad \forall n \in \NN. 
    \end{equation}
    Combining \eqref{eq:227} and \eqref{eq:228}, we obtain
    \begin{equation*}
        \#Z_n \ge \frac{\fa_1}{\fA^2}\left(\fa_1 - \frac{100\fA\fa_2}{\fa_1}\right)\#Z, \quad \forall n \in \NN. 
    \end{equation*}
    (Here, we recall from the proof of \Cref{132} that we may have $\fa_1 > 100\fA\fa_2/\fa_1$.) This completes the proof. 
\end{proof}

\begin{proof}[Proof of \Cref{201}]
    Suppose that the event $\kG_\varepsilon^{\rr}(K_1, K_2)$ occurs. It follows immediately from \eqref{eq:340} and \Cref{222}, \eqref{222A} that condition~\eqref{210A} of \Cref{210} holds for all $r \in \rho^{-1}\SR \cap [\varepsilon^{1 + \nu}\rr, \varepsilon\rr]$. 
    
    Since $\dist(K_1, K_2) \ge \eta\rr$ and $\dist(K_1, \partial B_{2\rr}(0)) \ge \eta\rr$, there exists a subpath of $P$ with Euclidean diameter at least $\eta\rr$ that is contained in $B_{2\rr}(0)$. Thus, by \Cref{222}, \eqref{222B}, there exists a deterministic constant $\delta > 0$, depending only on $\eta$, such that there exists a set $\CS$ of pairs $(z, r)$ satisfying the following conditions:
    \begin{enumerate}
        \item $\#\CS \ge \delta/\varepsilon$. 
        \item For each $(z, r) \in \CS$, we have $r \in \rho^{-1}\SR \cap [\varepsilon^{1 + \nu}\rr, \varepsilon\rr]$ and $z \in \left(\frac1{100}r\ZZ\right)^2 \cap B_{2\rr}(0)$. 
        \item For each $(z, r) \in \CS$, conditions~\eqref{210B}, \eqref{210C}, \eqref{210D}, \eqref{210E} of \Cref{210} hold. 
    \end{enumerate}
    By the pigeonhole principle and \eqref{eq:200}, if $\varepsilon \le \rho$, then there exists $\Be \in (\ZZ/(300\Lambda\ZZ))^2$ and $r \in \rho^{-1}\SR \cap [\varepsilon^{1 + \nu}\rr, \varepsilon\rr]$ such that 
    \begin{equation}\label{eq:232}
        \#\left\{z \in \ZZ_{\Be,r}^{\rr}(K_1, K_2) : (z, r) \in \CS\right\} \ge \frac\delta\varepsilon \cdot \frac1{(300\Lambda)^2} \cdot \frac1{2\nu\log_\Lambda(1/\varepsilon)}
    \end{equation}
    Henceforth fix $\Be \in (\ZZ/(300\Lambda\ZZ))^2$ and $r \in \rho^{-1}\SR \cap [\varepsilon^{1 + \nu}\rr, \varepsilon\rr]$ so that \eqref{eq:232} holds. Write 
    \begin{equation*}
        \SZ_1 \defeq \{z \in \ZZ_{\Be,r}^{\rr}(K_1, K_2) : (z, r) \in \CS\}. 
    \end{equation*}
    By the above discussion, the event $E_Z$ occurs for all subsets $Z \subset \SZ_1$ (i.e., conditions~\eqref{210A}, \eqref{210B}, \eqref{210C}, \eqref{210D}, \eqref{210E} of \Cref{210} hold for all subsets $Z \subset \SZ_1$). Write 
    \begin{equation*}
        \SZ_2 \defeq \left\{z \in \SZ_1 : \fF_{z,r}(\Upsilon_Z; \{i_z, j_z\}) \text{ occurs}\right\}. 
    \end{equation*}
    Since the set $\SZ_1$ is almost surely determined by $\Upsilon$, it follows from \eqref{eq:286} and \eqref{eq:232} that
    \begin{equation}\label{eq:233}
        \BE\lbrack\#\SZ_2 \mid \Upsilon\rbrack \ge \fa_9\#\SZ_1 \ge \fa_9 \cdot \frac\delta\varepsilon \cdot \frac1{(300\Lambda)^2} \cdot \frac1{2\nu\log_\Lambda(1/\varepsilon)} \quad \text{almost surely}. 
    \end{equation}
    By definition, the event $F_Z$ occurs for all subsets $Z \subset \SZ_2$. Thus, by \Cref{205}, there is a mapping $Z \mapsto Z^\prime$ from the collection of subsets of $\SZ_2$ to itself such that $\#Z^\prime \ge c\#Z$ and $\kF_{Z^\prime}^{\rr}(K_1, K_2)$ occurs for all subsets $Z \subset \SZ_2$. Thus, 
    \begin{align}\label{eq:234}
        &\#\left\{Z \subset \SZ_2 : \#Z \le N, \ \kF_Z^{\rr}(K_1, K_2) \text{ occurs}\right\} \\
        &\ge \#\left\{Z \subset \SZ_2 : cN \le \#Z \le N, \ \kF_Z^{\rr}(K_1, K_2) \text{ occurs}\right\} \notag \\
        &\ge \#\{Z \subset \SZ_2 : \#Z = N\}\left(\max_{\substack{Z_0 \subset \SZ_2 \\ cN \le \#Z_0 \le N}} \#\{Z \subset \SZ_2 : \#Z = N, \ Z^\prime = Z_0\}\right)^{-1} \notag \\
        &\ge \binom{\#\SZ_2}N \binom{\#\SZ_2}{(1 - c)N}^{-1} \notag \\
        &\succeq (\#\SZ_2)^{cN}. \notag 
    \end{align}
    Combining \eqref{eq:233} and \eqref{eq:234}, we complete the proof. 
\end{proof}

\subsection{Proof of \Cref{204}}\label{ss:19}

The proof of \Cref{204} relies on estimating how many points $z \in \ZZ_{\Be,r}^{\rr}(K_1, K_2)$ could belong to some set $Z \subset \ZZ_{\Be,r}^{\rr}(K_1, K_2)$ for which the event $\kF_Z^{\rr}(K_1, K_2)^\vee$ occurs. For this purpose, we introduce the following definition.

Let $\Be \in (\ZZ/(300\Lambda\ZZ))^2$ and $r \in \rho^{-1}\SR$. Then we shall refer to a point $z \in \ZZ_{\Be,r}^{\rr}(K_1, K_2)$ as $(\Be, r)$-\emph{good} if the following are true:
\begin{enumerate}[label=(\Alph*), ref=\Alph*]
    \item\label{it:200} The event $\fE_{z,r}(\Upsilon_{z,r})$ occurs.
    \item\label{it:201} There are at most two connected arcs of $\Gamma_{z,r}$ in $A_{5r,\Lambda r}(z)$ connecting its inner and outer boundaries. 
    \item\label{it:202} There exists a path $P_z$ in $A_{5r,\Lambda r}(z) \cap \Upsilon_{z,r}$ of $D_{\Upsilon_{z,r}}$-length at most $\fA\kc_r$ such that every path in $\Upsilon_{z,r}$ that crosses between the inner and outer boundaries of $A_{5r,\Lambda r}(z)$ intersects $P_z$. 
    \item\label{it:203} The event $\fF_{z,r}(\Upsilon; \{i_z, j_z\})$ occurs for some distinct $i_z, j_z \in [1, \#Z_{z,r}]_\ZZ$.
    \item\label{it:204} The $D_\Upsilon$-geodesic $P$ passes through the tube between either $\SCL_{z,r}$ and $\gamma_{z,r}$ or $\SCL_{z,r}$ and $\gamma_{z,r}^\prime$. 
\end{enumerate}

\begin{lemma}\label{207}
    Let $\Be \in (\ZZ/(300\Lambda\ZZ))^2$, $r \in \rho^{-1}\SR$, and $Z \subset \ZZ_{\Be,r}^{\rr}(K_1, K_2)$. Suppose that $\kF_Z^{\rr}(K_1, K_2)^\vee$ occurs. Then each $z \in Z$ is $(\Be, r)$-good. 
\end{lemma}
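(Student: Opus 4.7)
The plan is to match up the five conditions \eqref{it:200}--\eqref{it:204} defining $(\Be,r)$-goodness with the seven conditions \eqref{330A}--\eqref{330G} of the event $\kF_Z^{\rr}(K_1, K_2)^\vee$ by means of a single locality comparison. Since \eqref{it:203} and \eqref{it:204} coincide verbatim with \eqref{330F} and \eqref{330G} (with $i_z, j_z$ inherited from \eqref{330F}), only \eqref{it:200}, \eqref{it:201}, and \eqref{it:202} require work.

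The first step I would take is to record the key deterministic fact that for each $z \in Z$,
\begin{equation*}
    \Upsilon_Z \cap B_{\Lambda r}(z) \;=\; \Upsilon_{z,r} \cap B_{\Lambda r}(z).
\end{equation*}
Indeed, by construction $\Upsilon_Z$ coincides with $\Upsilon_{z,r}$ on $B_{5r}(z)$; the separation $\dist(z, z^\prime) \ge 3\Lambda r$ between distinct points of $\ZZ_{\Be,r}$ forces $B_{\Lambda r}(z)$ to be disjoint from $B_{5r}(z^\prime)$ for every $z^\prime \in Z \setminus \{z\}$, and the resampling region defining $\Upsilon_{z,r}$ is contained in $B_{5r}(z)$, so both $\Upsilon_Z$ and $\Upsilon_{z,r}$ agree with $\Upsilon$ on $B_{\Lambda r}(z) \setminus B_{5r}(z)$.

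With this identity in hand, conditions \eqref{it:200} and \eqref{it:201} drop out immediately. Since $\fE_{z,r}$ is almost surely determined by $A_{r,5r}(z)$ intersected with the relevant carpet and $A_{r,5r}(z) \subset B_{\Lambda r}(z)$, condition \eqref{330B} gives \eqref{it:200}. Similarly, the connected arcs of loops in $A_{5r,\Lambda r}(z)$ are a function of the carpet restricted to $B_{\Lambda r}(z)$ (loops being boundaries of complementary components), so \eqref{330C} yields \eqref{it:201}. For condition \eqref{it:202}, I would take the same path $P_z$ supplied by \eqref{330D}, which automatically lies in $A_{5r,\Lambda r}(z) \cap \Upsilon_{z,r}$ by the locality identity; its blocking property for crossings of the annulus in $\Upsilon_{z,r}$ follows by truncating any such crossing to a subpath contained in $\overline{A_{5r,\Lambda r}(z)}$, which then lies in $\Upsilon_Z$ as well. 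To transfer the length bound, I would invoke \Cref{010}, Axiom~\eqref{010B} (locality), together with the elementary observation that for a length metric $d$ on $X$ and an open set $W$, the $d$-length of any path contained in $W$ agrees with its $d(\bullet,\bullet; W \cap X)$-length; applied with $W = B_{\Lambda r}(z)$, this gives $\len(P_z; D_{\Upsilon_{z,r}}) = \len(P_z; D_{\Upsilon_Z}) \le \fA\kc_r$.

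The hard part will be largely bookkeeping: carefully tracking which field (among $\Upsilon$, $\Upsilon_Z$, $\Upsilon_{z,r}$) parameterizes each object in the definition of $\kF_Z^{\rr}(K_1, K_2)^\vee$ once the roles of $\Upsilon$ and $\Upsilon_Z$ have been swapped, and making sure that the $U_{z,r}^\star$, $\SCL_{z,r}$, $\gamma_{z,r}$, $\gamma_{z,r}^\prime$ appearing in \eqref{330F} and \eqref{330G} are exactly the ones featuring in \eqref{it:203} and \eqref{it:204}. Given the locality identity above, this reduces to observing that each such object is determined by the appropriate carpet intersected with $B_{\Lambda r}(z)$; but some care is needed because $(U_{z,r}, V_{z,r})$ is itself random and enters both into the definition of $\Upsilon_Z$ (through the conditioning used to build the resampling) and into the definition of the tubes.
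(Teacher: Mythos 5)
Your proposal is correct and takes essentially the same route as the paper: the paper's proof consists precisely of observing that conditions \eqref{it:200}--\eqref{it:204} correspond to conditions \eqref{330B}, \eqref{330C}, \eqref{330D}, \eqref{330F}, \eqref{330G} of \Cref{330}, that each is almost surely determined by $\Upsilon$ together with the relevant carpet restricted to $B_{\Lambda r}(z)$, and that $B_{\Lambda r}(z) \cap \Upsilon_{z,r} = B_{\Lambda r}(z) \cap \Upsilon_Z$ almost surely. Your additional details (the separation bound $3\Lambda r$ justifying the locality identity, and the locality-axiom argument for transferring the $D$-length of $P_z$) simply make explicit what the paper leaves implicit.
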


\begin{proof}
    We observe that conditions~\eqref{it:200}, \eqref{it:201}, \eqref{it:202}, \eqref{it:203}, \eqref{it:204} correspond to conditions~\eqref{330B}, \eqref{330C}, \eqref{330D}, \eqref{330F}, \eqref{330G} of \Cref{330}, respectively. Moreover, conditions~\eqref{it:200}, \eqref{it:201}, \eqref{it:202}, \eqref{it:203}, \eqref{it:204} are almost surely determined by $\Upsilon$ and $B_{\Lambda r}(z) \cap \Upsilon_{z,r}$; conditions~\eqref{330B}, \eqref{330C}, \eqref{330D}, \eqref{330F}, \eqref{330G} in \Cref{330} are almost surely determined by $\Upsilon$ and $B_{\Lambda r}(z) \cap \Upsilon_Z$. Thus, \Cref{207} follows immediately from the fact that $B_{\Lambda r}(z) \cap \Upsilon_{z,r} = B_{\Lambda r}(z) \cap \Upsilon_Z$ almost surely.
\end{proof}

The next lemma provides the key ingredient used in the proof of \Cref{204}.

\begin{lemma}\label{209}
    There exists a deterministic constant $C > 0$ such that for each $\Be \in (\ZZ/(300\Lambda\ZZ))^2$ and $r \in \rho^{-1}\SR$, the following is true: Suppose that there are disjoint subsets $Z, Z^\prime \subset \ZZ_{\Be,r}^{\rr}(K_1, K_2)$ such that $\kF_Z^{\rr}(K_1, K_2)^\vee$ occurs and each $z^\prime \in Z^\prime$ is $(\Be, r)$-good. Then $\#Z^\prime \le C\#Z$. 
\end{lemma}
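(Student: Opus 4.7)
The plan is to convert each $(\Be,r)$-good point $z^\prime\in Z^\prime$ into a quantitative shortcut along the $D_\Upsilon$-geodesic $P$ from $K_1$ to $K_2$, and then deduce $\#Z^\prime\le C\#Z$ by comparing the resulting upper bound on $\widetilde D_\Upsilon(K_1,K_2)$ with the lower bound on $\widetilde D_{\Upsilon_Z}(K_1,K_2)$ provided by condition~\eqref{330A} of $\kF_Z^{\rr}(K_1,K_2)^\vee$.

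For each $z^\prime\in Z^\prime$, conditions~\eqref{it:200}--\eqref{it:204} of $(\Be,r)$-goodness are exactly the hypotheses of \Cref{096} with the roles of $\Upsilon$ and $\Upsilon_{z^\prime,r}$ interchanged and the origin translated to $z^\prime$ (so that $\SCL_{z^\prime,r},\gamma_{z^\prime,r},\gamma_{z^\prime,r}^\prime$ become loops of $\Gamma$ and the geodesic in question is $P$). Since $B_{\Lambda r}(z^\prime)\cap(K_1\cup K_2)=\emptyset$, the endpoints of $P$ lie outside $B_{4r}(z^\prime)$, and so \Cref{096} produces times $\sigma_{z^\prime}<\tau_{z^\prime}$ satisfying $P([\sigma_{z^\prime},\tau_{z^\prime}])\subset B_{4r}(z^\prime)\cap\Upsilon$, $D_\Upsilon(P(\sigma_{z^\prime}),P(\tau_{z^\prime}))\ge\fb\kc_r$, and $\widetilde D_\Upsilon(P(\sigma_{z^\prime}),P(\tau_{z^\prime}))\le M_2D_\Upsilon(P(\sigma_{z^\prime}),P(\tau_{z^\prime}))$. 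Because $Z^\prime\subset\ZZ_{\Be,r}$ has pairwise distances at least $3\Lambda r>8r$, the balls $B_{4r}(z^\prime)$ are pairwise disjoint, so the intervals $[\sigma_{z^\prime},\tau_{z^\prime}]$ are pairwise disjoint subintervals of $[0,1]$. Using that $P$ is a $D_\Upsilon$-geodesic (hence $D_\Upsilon(P(\sigma_{z^\prime}),P(\tau_{z^\prime}))=\len(P|_{[\sigma_{z^\prime},\tau_{z^\prime}]};D_\Upsilon)$), the triangle inequality along a partition refining all the times $\sigma_{z^\prime},\tau_{z^\prime}$, and $\widetilde D_\Upsilon\le M^\ast D_\Upsilon$ on the ``gap'' subintervals, I obtain the key inequality
\begin{equation*}
\widetilde D_\Upsilon(K_1,K_2)\;\le\;M^\ast D_\Upsilon(K_1,K_2)-(M^\ast-M_2)\fb\kc_r\#Z^\prime.
\end{equation*}

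Conditions~\eqref{330A}--\eqref{330F} of $\kF_Z^{\rr}(K_1,K_2)^\vee$ coincide with the hypotheses of \Cref{317,221} upon interchanging the roles of $\Upsilon$ and $\Upsilon_Z$, which gives $D_{\Upsilon_Z}(K_1,K_2)\ge D_\Upsilon(K_1,K_2)+\fa_1\kc_r\#Z$. The analogous statement for $\widetilde D$---obtained by running the proofs of \Cref{220,221} with a $\widetilde D_{\Upsilon_Z}$-geodesic in place of the $D_{\Upsilon_Z}$-geodesic, using the bi-Lipschitz equivalence to bound the $\widetilde D_{\Upsilon_Z}$-length of the path $P_z$ of condition~\eqref{330D} by $M^\ast\fA\kc_r$ and to lower-bound the $\widetilde D_{\Upsilon_Z}$-length of each of its anti-excursions by $M_\ast\fa_1\kc_r$ (via condition~\eqref{098A} for $\Upsilon_Z$)---yields a deterministic $C^\prime>0$ with $\widetilde D_{\Upsilon_Z}(K_1,K_2)\le\widetilde D_\Upsilon(K_1,K_2)+C^\prime\kc_r\#Z$. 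Combining this with condition~\eqref{330A} and the interchanged \Cref{317}, together with the key inequality, I obtain
\begin{align*}
\widetilde D_\Upsilon(K_1,K_2)+(M^\ast-M_2)\fb\kc_r\#Z^\prime
&\le M^\ast D_\Upsilon(K_1,K_2) \\
&\le\widetilde D_\Upsilon(K_1,K_2)+(C^\prime-M^\ast\fa_1)\kc_r\#Z+\kc_r.
\end{align*}
Cancelling $\widetilde D_\Upsilon(K_1,K_2)$ and dividing by $\kc_r$ yields $(M^\ast-M_2)\fb\#Z^\prime\le(C^\prime-M^\ast\fa_1)\#Z+1$, hence $\#Z^\prime\le C\#Z$ for a suitable deterministic $C$ (the case $\#Z=0$ is handled by noting that condition~\eqref{330A} then reduces to $\widetilde D_\Upsilon\ge M^\ast D_\Upsilon-\kc_r$, which combined with the key inequality forces $\#Z^\prime\le 1/((M^\ast-M_2)\fb)$, a universal bound absorbed into $C$).

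The main obstacle is the $\widetilde D$-analog of \Cref{221} invoked above, since the paper only establishes distance comparisons between $D_\Upsilon$ and $D_{\Upsilon_Z}$. Its proof requires running the argument of \Cref{220,221} in the $\widetilde D$-metric in the interchanged setting, converting every $D$-bound to a $\widetilde D$-bound via the bi-Lipschitz equivalence $M_\ast\le\widetilde D/D\le M^\ast$ from \Cref{045}, so that the resulting constant $C^\prime$ is deterministic and depends only on $\fa_1$, $\fA$, $M_\ast$, and $M^\ast$.
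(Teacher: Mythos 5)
Your proposal is correct and reaches the same final inequality, but by a genuinely different route. The paper's proof performs a single cut-and-paste of the $D_\Upsilon$-geodesic $P$ directly in the $\widetilde D_{\Upsilon_Z}$-metric: it splits $P$ into excursion pieces near each $z\in Z$, shortcut pieces near each $z'\in Z'$, and the remainder, and bounds each type in one pass --- using $\widetilde D_{\Upsilon_Z}\le M^\ast D_{\Upsilon_Z}$ combined with the excursion estimate from the proof of \Cref{221} applied to $P$ (this is \eqref{eq:210}) on the $Z$-pieces, the shortcut inequality $\widetilde D_\Upsilon\le M_2 D_\Upsilon$ on the $Z'$-pieces, and the global bound $\widetilde D_\Upsilon\le M^\ast D_\Upsilon$ on the remainder --- then closes with $D_\Upsilon(K_1,K_2)\le D_{\Upsilon_Z}(K_1,K_2)$ and condition~\eqref{330A}. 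You instead separate the argument into a purely $\Upsilon$-based ``key inequality'' (cut-and-paste of $P$ using only the $Z'$-shortcuts) followed by a transfer from $\Upsilon$ to $\Upsilon_Z$ requiring the interchanged \Cref{317} plus a new ``$\widetilde D$-analog of interchanged \Cref{221}''; you honestly identify this last statement as the extra ingredient, and your sketch of its proof (run the cut-and-paste on a $\widetilde D_\Upsilon$-geodesic, bound $\widetilde D$-lengths of the loops $P_z$ by $M^\ast\fA\kc_r$, lower-bound anti-excursions by $M_\ast\fa_1\kc_r$ via condition~\eqref{098A} and the coincidence of $\Upsilon$ and $\Upsilon_Z$ on $A_{4r,5r}(z)$) is sound and would yield a deterministic $C'$. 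The net effect is that the paper's one-pass computation never needs a $\widetilde D$-excursion-counting lemma --- only the $D$-version applied to $P$ --- whereas your modularization yields a cleaner logical separation of the roles played by $Z$ and $Z'$ at the cost of one extra lemma. One small caveat shared by both arguments: the final absorption step ($(C'-M^\ast\fa_1)\#Z+1\le C\#Z$ in your version, $M^\ast(\fA^2/\fa_1)\kc_r\#Z+\kc_r\le(M^\ast\fA^2/\fa_1+1)\kc_r\#Z$ in the paper's) tacitly assumes $\#Z\ge1$; your treatment of the $\#Z=0$ case yields $\#Z'\le 1/((M^\ast-M_2)\fb)$ rather than $\#Z'=0$, so the stated $\#Z'\le C\#Z$ is not literally recovered there, but this is harmless because the proof of \Cref{204} absorbs the $Z_0=\emptyset$ case directly without invoking \Cref{209}.
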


The idea of the proof of \Cref{209} is as follows. Let $P$ be a $D_\Upsilon$-geodesic from $K_1$ to $K_2$. By \Cref{096} (with the roles of $\Upsilon$ and $\Upsilon_{z,r}$ interchanged) together with condition~\eqref{it:204}, the portion of $P$ spent inside the Euclidean balls centered at points $z' \in Z'$ is forced to run near certain ``shortcuts'' so that the $\widetilde D_\Upsilon$-length of this portion falls short of $M^\ast$ times its $D_\Upsilon$-length by at least a deterministic positive constant times $\#Z'$. Since $Z$ and $Z'$ are disjoint, $\Upsilon$ and $\Upsilon_Z$ agree on the balls centered at $z' \in Z'$. Hence the same conclusion applies to the segments of $P$ that lie in $\Upsilon_Z$ (i.e., outside the balls centered at $z \in Z$). We now replace the segments of $P$ lying inside the balls centered at $z \in Z$ by the loops provided by condition~\eqref{330D} of the event $\kF_Z^{\rr}(K_1,K_2)^\vee$, producing a path entirely contained in $\Upsilon_Z$. A parallel estimate shows that the $\widetilde D_{\Upsilon_Z}$-length of this modified path exceeds the $\widetilde D_\Upsilon$-length of $P$ by at most a deterministic positive constant multiple of $\#Z$. Moreover, $D_\Upsilon(K_1,K_2) \le D_{\Upsilon_Z}(K_1,K_2)$ (indeed, by the same reasoning as in \Cref{317}, $D_\Upsilon(K_1,K_2)$ is at most $D_{\Upsilon_Z}(K_1,K_2)$ minus a deterministic positive constant times $\#Z$). Combining these observations yields
\begin{equation*}
    \widetilde D_{\Upsilon_Z}(K_1,K_2) \le M^\ast D_{\Upsilon_Z}(K_1,K_2) + \text{const}\cdot \#Z - \text{const}\cdot \#Z'. 
\end{equation*}
Together with condition~\eqref{330A} of $\kF_{Z_0}^{\rr}(K_1,K_2)^\vee$, this implies that $\#Z'$ is at most a constant multiple of $\#Z$.

\begin{proof}[Proof of \Cref{209}]
    By \Cref{096} (applied with the roles of $\Upsilon$ and $\Upsilon_{z,r}$ interchanged) and condition~\eqref{it:204}, for each $z^\prime \in Z^\prime$, there are times $\sigma_{z^\prime} < \tau_{z^\prime}$ such that 
    \begin{multline}\label{eq:209}
        P([\sigma_{z^\prime}, \tau_{z^\prime}]) \subset B_{4r}(z) \cap \Upsilon, \quad D_\Upsilon(P(\sigma_{z^\prime}), P(\tau_{z^\prime})) \ge \fb\kc_r, \\
        \text{and} \quad \widetilde D_\Upsilon(P(\sigma_{z^\prime}), P(\tau_{z^\prime})) \le M_2 D_\Upsilon(P(\sigma_{z^\prime}), P(\tau_{z^\prime})). 
    \end{multline}
    On the other hand, by a similar argument to the argument applied in the proof of \Cref{221}, for each $z \in Z$ and $(\sigma^\prime, \sigma, \tau, \tau^\prime) \in \SE_{z,r}(P)$, there are times $\sigma^\pprime \in (\sigma^\prime, \sigma)$ and $\tau^\pprime \in (\tau, \tau^\prime)$ such that
    \begin{equation}\label{eq:210}
        \sum_{z \in Z} \sum_{(\sigma^\prime, \sigma, \tau, \tau^\prime) \in \SE_{z,r}(P)} D_{\Upsilon_Z}(P(\sigma^\pprime), P(\tau^\pprime)) \le (\fA^2/\fa_1)\kc_r\#Z,
    \end{equation}
    where $\SE_{z,r}(P)$ denotes the collection of $(B_{\Lambda r}(z), B_{4r}(z))$-excursions of $P$. Since $Z$ and $Z^\prime$ are disjoint, it follows that 
    \begin{equation*}
        [\sigma^\pprime, \tau^\pprime] \cap [\sigma_{z^\prime}, \tau_{z^\prime}] = \emptyset, \quad \forall z \in Z, \ \forall (\sigma^\prime, \sigma, \tau, \tau^\prime) \in \SE_Z, \ \forall z^\prime \in Z^\prime. 
    \end{equation*}
    Write
    \begin{equation*}
        I \defeq [0, 1] \setminus \left(\bigcup_{z \in Z} \bigcup_{(\sigma^\prime, \sigma, \tau, \tau^\prime) \in \SE_Z} [\sigma^\pprime, \tau^\pprime] \cup \bigcup_{z^\prime \in Z^\prime} [\sigma_{z^\prime}, \tau_{z^\prime}]\right). 
    \end{equation*}
    Combining \eqref{eq:209} and \eqref{eq:210}, we obtain that
    \begin{align}\label{eq:211}
        &\widetilde D_{\Upsilon_Z}(K_1, K_2) \\
        &\le \sum_{z \in Z} \sum_{(\sigma^\prime, \sigma, \tau, \tau^\prime) \in \SE_{z,r}(P)} \widetilde D_{\Upsilon_Z}(P(\sigma^\pprime), P(\tau^\pprime)) + \sum_{z^\prime \in Z^\prime} \widetilde D_{\Upsilon_Z}(P(\sigma_{z^\prime}), P(\tau_{z^\prime})) + \len(P|_I, \widetilde D_{\Upsilon_Z}) \notag \\
        &= \sum_{z \in Z} \sum_{(\sigma^\prime, \sigma, \tau, \tau^\prime) \in \SE_{z,r}(P)} \widetilde D_{\Upsilon_Z}(P(\sigma^\pprime), P(\tau^\pprime)) + \sum_{z^\prime \in Z^\prime} \widetilde D_\Upsilon(P(\sigma_{z^\prime}), P(\tau_{z^\prime})) + \len(P|_I, \widetilde D_\Upsilon) \notag \\
        &\le M^\ast \sum_{z \in Z} \sum_{(\sigma^\prime, \sigma, \tau, \tau^\prime) \in \SE_{z,r}(P)} D_{\Upsilon_Z}(P(\sigma^\pprime), P(\tau^\pprime)) + M_2 \sum_{z^\prime \in Z^\prime} D_\Upsilon(P(\sigma_{z^\prime}), P(\tau_{z^\prime})) \notag \\
        &+ M^\ast \len(P|_I, D_\Upsilon) \notag \\
        &\le M^\ast(\fA^2/\fa_1)\kc_r\#Z + M_2 \sum_{z^\prime \in Z^\prime} D_\Upsilon(P(\sigma_{z^\prime}), P(\tau_{z^\prime})) + M^\ast \len(P|_I, D_\Upsilon) \notag \\
        &\le M^\ast(\fA^2/\fa_1)\kc_r\#Z + M^\ast D_\Upsilon(K_1, K_2) - (M^\ast - M_2) \sum_{z^\prime \in Z^\prime} D_\Upsilon(P(\sigma_{z^\prime}), P(\tau_{z^\prime})) \notag \\
        &\le M^\ast(\fA^2/\fa_1)\kc_r\#Z + M^\ast D_\Upsilon(K_1, K_2) - (M^\ast - M_2)\fb\kc_r\#Z^\prime. \notag
    \end{align}
    Since $D_\Upsilon(K_1, K_2) \le D_{\Upsilon_Z}(K_1, K_2)$ on the event $\kF_Z^{\rr}(K_1, K_2)^\vee$ (cf., e.g., \Cref{317} (applied with the roles of $\Upsilon$ and $\Upsilon_Z$ interchanged)), \eqref{eq:211} implies that
    \begin{equation}\label{eq:236}
        \widetilde D_{\Upsilon_Z}(K_1, K_2) \le M^\ast(\fA^2/\fa_1)\kc_r\#Z + M^\ast D_{\Upsilon_Z}(K_1, K_2) - (M^\ast - M_2)\fb\kc_r\#Z^\prime. 
    \end{equation}
    Combining \Cref{330}, \eqref{330A} and \eqref{eq:236}, we obtain
    \begin{equation*}
        (M^\ast - M_2)\fb\kc_r\#Z^\prime \le M^\ast(\fA^2/\fa_1)\kc_r\#Z + \kc_r \le \left(M^\ast\fA^2/\fa_1 + 1\right)\kc_r\#Z. 
    \end{equation*}
    This completes the proof. 
\end{proof}

\begin{proof}[Proof of \Cref{204}]
    Fix $\Be \in (\ZZ/(300\Lambda\ZZ))^2$, $r \in \rho^{-1}\SR$, and $N \in \NN$. Choose $Z_0 \subset \ZZ_{\Be,r}^{\rr}(K_1, K_2)$ with $\#Z_0 \le N$ such that $\kF_{Z_0}^{\rr}(K_1, K_2)^\vee$ occurs. (If such $Z_0$ does not exist, then \eqref{eq:237} holds trivially.) Write 
    \begin{equation*}
        Z_0^\prime \defeq \left\{z \in \ZZ_{\Be,r}^{\rr}(K_1, K_2) \setminus Z_0 : z \text{ is } (\Be, r)\text{-good}\right\}. 
    \end{equation*}
    Then, by \Cref{209}, we have $\#Z_0^\prime \le C\#Z_0$. Let $Z \subset \ZZ_{\Be,r}^{\rr}(K_1, K_2)$ such that $\kF_Z^{\rr}(K_1, K_2)^\vee$ occurs. Then, by \Cref{207}, each $z \in Z$ is $(\Be, r)$-good. Thus, by definition, we must have $Z \subset Z_0 \cup Z_0^\prime$. Thus, 
    \begin{equation*}
        \#\left\{Z \subset \ZZ_{\Be,r}^{\rr}(K_1, K_2) : \#Z \le N \text{ and } \kF_Z^{\rr}(K_1, K_2)^\vee \text{ occurs}\right\} \le 2^{\#Z_0 + \#Z_0^\prime} \le 2^{(1 + C)N}. 
    \end{equation*}
    This completes the proof. 
\end{proof}

\subsection{Proof of \Cref{000}}\label{ss:10}

In the present subsection, we establish \Cref{000}. It is enough to prove that $M_\ast = M^\ast$. To this end, we argue by contradiction and assume that $M_\ast < M^\ast$. Using \Cref{200}, we will then derive a contradiction with \Cref{008,031}. As a first step, we obtain a lower bound for $\#(\rho^{-1}\SR \cap [\varepsilon^{1 + \nu}\rr, \varepsilon\rr])$ under the assumption that $\BP[\underline G_{\rr}(M_1^\prime, b)] \ge b$.

\begin{lemma}\label{105}
    There exists $M_1^\prime = M_1^\prime(\mu, \nu, \Lambda, \alpha) \in (M_\ast, M_1)$ such that for each $b \in (0, 1)$, there exists $\varepsilon_\ast = \varepsilon_\ast(\mu, \nu, \Lambda, \alpha, b, \rho) \in (0, 1)$ such that for each $\rr > 0$ for which $\BP\lbrack\underline G_{\rr}(M_1^\prime, b)\rbrack \ge b$, we have
    \begin{equation*}
        \#\left(\rho^{-1}\SR \cap [\varepsilon^{1 + \nu}\rr, \varepsilon\rr]\right) \ge \mu\log_{\Lambda}(1/\varepsilon), \quad \forall \varepsilon \in (0, \varepsilon_\ast]. 
    \end{equation*}
\end{lemma}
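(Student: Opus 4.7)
The plan is to deduce \Cref{105} from \Cref{030} applied at scale $\rho\rr$ in place of $\rr$, using the bijection $r \leftrightarrow \rho^{-1}r$ which gives
    \begin{equation*}
        \#(\rho^{-1}\SR \cap [\varepsilon^{1 + \nu}\rr, \varepsilon\rr]) = \#(\SR \cap [\varepsilon^{1 + \nu}\rho\rr, \varepsilon\rho\rr]).
    \end{equation*}
    Let $M_1^{(0)} \defeq M^\prime(\mu, \nu, \Lambda, \alpha, M_1) \in (M_\ast, M_1)$ be the output of \Cref{030} applied with $M = M_1$, and choose $M_1^\prime \in (M_\ast, M_1^{(0)})$ strictly smaller (in a manner depending only on $\mu, \nu, \Lambda, \alpha$). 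Given the hypothesis $\BP\lbrack\underline G_\rr(M_1^\prime, b)\rbrack \ge b$, the task will reduce to establishing that the hypothesis of \Cref{030} at scale $\rho\rr$ holds with some $b^{\prime\prime} = b^{\prime\prime}(b, \rho) > 0$ (crucially, not depending on $\rr$); granting this, the conclusion follows by setting $\varepsilon_\ast$ equal to the $\varepsilon_\ast$-constant produced by \Cref{030} with $b = b^{\prime\prime}$, which has the required dependence on $\mu, \nu, \Lambda, \alpha, b, \rho$.

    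To prove the sub-claim, on the event $\underline G_\rr(M_1^\prime, b)$, intersected with several high-probability regularity events (the upper H\"older bound from \Cref{010}, Axiom~\eqref{010D}, the lower bound $D_\Upsilon(x, y) \ge c^\prime b^2 \kc_\rr$ obtained from \Cref{314} combined with \eqref{eq:340} applied to $\kc_{b\rr/2}/\kc_\rr$, and the diameter control from \Cref{229}), I will take a $\widetilde D_\Upsilon$-geodesic $P \colon [0, T] \to \Upsilon$ between the witnesses $x, y$ (parameterized by $\widetilde D$-length, so $T = \widetilde D_\Upsilon(x, y)$), and equipartition it by $\tau_j \defeq jT/n$ with $n \asymp \rho^{-\alpha_{\mathrm{KC}}}$ chosen so that the upper H\"older bound guarantees Euclidean endpoint separation $\lvert P(\tau_{j - 1}) - P(\tau_j)\rvert \ge \rho\rr/4$ for every $j$. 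Since $P$ is a $\widetilde D_\Upsilon$-geodesic, $\widetilde D_\Upsilon(P(\tau_{j - 1}), P(\tau_j)) = T/n$ for each $j$, and since
    \begin{equation*}
        \sum_{j = 1}^n \widetilde D_\Upsilon(P(\tau_{j - 1}), P(\tau_j)) = T \le M_1^\prime D_\Upsilon(x, y) \le M_1^\prime \sum_{j = 1}^n D_\Upsilon(P(\tau_{j - 1}), P(\tau_j)),
    \end{equation*}
    a pigeonhole argument produces an index $j$ with $\widetilde D_\Upsilon(P(\tau_{j - 1}), P(\tau_j)) \le M_1^\prime D_\Upsilon(P(\tau_{j - 1}), P(\tau_j)) \le M_1^{(0)} D_\Upsilon(P(\tau_{j - 1}), P(\tau_j))$.

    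For such $j$, the endpoints $u \defeq P(\tau_{j - 1})$ and $v \defeq P(\tau_j)$ yield a shortcut at scale $\rho\rr$: $u, v \in \Upsilon$ with $u \leftrightarrow v$, $\lvert u - v\rvert \ge \rho\rr/4$, and $\widetilde D_\Upsilon(u, v) \le M_1^{(0)} D_\Upsilon(u, v)$. Covering a ball $\overline{B_{C\rr}(0)}$ containing all possible such endpoints (with $C$ controlling the extent of $P$ via \Cref{229}) by $O(\rho^{-2})$ Euclidean balls of radius $\rho\rr$ and invoking translation invariance (\Cref{010}, Axiom~\eqref{010C}) together with a union bound then gives $\BP\lbrack\underline G_{\rho\rr}(M_1^{(0)}, 1/4)\rbrack \ge c b \rho^2$ for an absolute $c > 0$; setting $b^{\prime\prime} \defeq \min(1/4, c b \rho^2)$ completes the sub-claim. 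The main obstacle will be to ensure that the requirement $n \ge 1$ (equivalently, $T \ge A \kc_\rr (\rho/4)^{\alpha_{\mathrm{KC}}}$) is compatible with the H\"older-driven choice $n \asymp \rho^{-\alpha_{\mathrm{KC}}}$ and the lower bound $T \ge M_\ast c^\prime b^2 \kc_\rr$: this forces $\rho \le \rho_0(b, M_\ast, M_1^\prime, M_1^{(0)})$, which is harmless since $\rho = \fa_6$ can be chosen arbitrarily small in the setup of \Cref{ss:14}, with the resulting $\rho$-dependence absorbed into $\varepsilon_\ast$.
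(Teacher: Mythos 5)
Your proposal takes a genuinely different, and substantially more involved, route than the paper's proof — and the extra route has a real gap. The paper's argument is a short deterministic scale shift: apply \Cref{030} \emph{at scale $\rr$}, where the hypothesis $\BP\lbrack\underline G_\rr(M_1',b)\rbrack \ge b$ is available verbatim, with $\mu' \defeq (\mu+\nu)/2$ in place of $\mu$ and $M = M_1$, obtaining $\#(\SR \cap [\varepsilon^{1+\nu}\rr, \varepsilon\rr]) \ge \mu'\log_\Lambda(1/\varepsilon)$ for $\varepsilon \le \varepsilon_0$; then use the tautology $\rho^{-1}\SR \cap [\varepsilon^{1+\nu}\rr, \varepsilon\rr] = \SR \cap [\varepsilon^{1+\nu}\rho\rr, \varepsilon\rho\rr]$ and observe that this last set differs from $\SR \cap [(\varepsilon\rho)^{1+\nu}\rr, \varepsilon\rho\rr]$ (to which the previous display applies with $\varepsilon\rho$ in place of $\varepsilon$) by at most $\nu\log_\Lambda(1/\rho)$ scales of $\{\Lambda^k\}_{k\in\ZZ}$. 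This fixed loss is absorbed by the slack $\mu' - \mu > 0$ once $\varepsilon$ is small. No transfer of the $\underline G$-event to scale $\rho\rr$, and no new probabilistic input, is needed.

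The transfer you attempt instead — producing $\BP\lbrack\underline G_{\rho\rr}(M_1^{(0)},1/4)\rbrack \ge cb\rho^2$ from the scale-$\rr$ hypothesis — does not close as written. You equipartition the $\widetilde D_\Upsilon$-geodesic by $\widetilde D_\Upsilon$-length with $n \asymp \rho^{-\alpha_{\mathrm{KC}}}$ so that the H\"older upper bound (\Cref{010}, Axiom~\eqref{010D}) forces $\lvert P(\tau_{j-1}) - P(\tau_j)\rvert \ge \rho\rr/4$. But the event $\underline G_{\rho\rr}(\cdot,1/4)$ at a translated center requires both endpoints to lie in a single ball $\overline{B_{\rho\rr}(z)}$, i.e.\ $\lvert P(\tau_{j-1}) - P(\tau_j)\rvert \le 2\rho\rr$, and you establish no upper bound on the increment. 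The only available upper control comes from the lower H\"older exponent $\chi > 2$ of \Cref{314}: from $\widetilde D_\Upsilon(P(\tau_{j-1}),P(\tau_j)) = T/n \asymp \kc_\rr\rho^{\alpha_{\mathrm{KC}}}$ and $\widetilde D_\Upsilon \ge M_\ast D_\Upsilon \gtrsim \kc_\rr \lvert\cdot\rvert^\chi$ one only gets $\lvert P(\tau_{j-1}) - P(\tau_j)\rvert \lesssim \rr\,\rho^{\alpha_{\mathrm{KC}}/\chi}$, and since $\alpha_{\mathrm{KC}} < \chi$ (indeed $\alpha_{\mathrm{KC}} < 2 < \chi$), the exponent $\alpha_{\mathrm{KC}}/\chi$ is strictly below $1$, so for small $\rho$ the increments can be orders of magnitude larger than $\rho\rr$. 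Such a pair does not fit in any $\rho\rr$-ball, and the covering/union-bound step cannot produce $\underline G_{\rho\rr}(\cdot,1/4)$. You cannot fix this by enlarging $n$ either, since that kills the $\ge \rho\rr/4$ lower separation: with a single $n$ you can pin one side of the Euclidean separation but not both, because the upper and lower H\"older exponents $\alpha_{\mathrm{KC}}$ and $\chi$ are different. (The $n \ge 1$ tension you flag is real but secondary; the missing upper-diameter control is the show-stopper.) The remedy is to leave the $\underline G$-event at scale $\rr$ and shift only the interval of radii, as the paper does.
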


\begin{proof}
    Write $\mu^\prime \defeq (\mu + \nu)/2$. By \Cref{030}, there exists $M_1^\prime = M_1^\prime(\mu, \nu, \Lambda, \alpha) \in (M_\ast, M_1)$ such that for each $b \in (0, 1)$, there exists $\varepsilon_0 = \varepsilon_0(\mu, \nu, \Lambda, \alpha, b) \in (0, 1)$ such that for each $\rr > 0$ for which $\BP\lbrack\underline G_{\rr}(M_1^\prime, b)\rbrack \ge b$, we have
    \begin{equation}\label{eq:326}
        \#\left(\SR \cap [\varepsilon^{1 + \nu}\rr, \varepsilon\rr]\right) \ge \mu^\prime\log_{\Lambda}(1/\varepsilon), \quad \forall \varepsilon \in (0, \varepsilon_0]. 
    \end{equation}
    Since $\SR \subset \{\Lambda^k\}_{k \in \ZZ}$, we conclude from \eqref{eq:326} that
    \begin{align*}
        \#\left(\rho^{-1}\SR \cap [\varepsilon^{1 + \nu}\rr, \varepsilon\rr]\right) &= \#\left(\SR \cap [\varepsilon^{1 + \nu}\rho\rr, \varepsilon\rho\rr]\right) \\
        &= \#\left(\SR \cap [(\varepsilon\rho)^{1 + \nu}\rr, \varepsilon\rho\rr]\right) - \#\left(\SR \cap [(\varepsilon\rho)^{1 + \nu}\rr, \varepsilon^{1 + \nu}\rho\rr]\right) \\
        &\ge \mu^\prime\log_{\Lambda}(1/(\varepsilon\rho)) - \nu\log_{\Lambda}(1/\rho) \\
        &\ge \mu\log_{\Lambda}(1/\varepsilon), \quad \forall \varepsilon \in (0, \varepsilon_0 \wedge \rho]. 
    \end{align*}
    This completes the proof. 
\end{proof}

In the remainder of the present subsection, let $M_1^\prime$ be as in \Cref{105}. We now confirm that the auxiliary condition~\eqref{222B} in the definition of the event 
$\kG_\varepsilon^{\rr}(K_1, K_2)$ holds with high probability for sufficiently small $\varepsilon$.

\begin{lemma}\label{223}
    Let $\alpha_{\mathrm{4A}}$ be as in \eqref{eq:016}. Then we may choose the parameters of \eqref{eq:327} in such a way that 
    \begin{equation}\label{eq:328}
        \alpha_{\mathrm{4A}}\mu > 2(1 + \nu), 
    \end{equation}
    and the following is true: Let $b \in (0, 1)$ and $\rr > 0$. Suppose that $\BP\lbrack\underline G_{\rr}(M_1^\prime, b)\rbrack \ge b$. Then it holds with probability tending to one as $\varepsilon \to 0$ (at a rate which is uniform in $\rr$) that condition~\eqref{222B} in \Cref{222} is true, i.e., for each $x \in B_{2\rr}(0)$, there exists $r \in \rho^{-1}\SR \cap [\varepsilon^{1 + \nu}\rr, \varepsilon\rr]$ and $z \in \left(\frac1{100}r\ZZ\right)^2 \cap B_{2\rr}(0)$ such that $\lvert x - z\rvert \le r$ and the following are true:
    \begin{enumerate}
        \item\label{223A} $\fE_{z,r}(\Upsilon)$ occurs.
        \item\label{223B} There are at most two connected arcs of $\Gamma$ in $A_{5r,\Lambda r}(z)$ connecting its inner and outer boundaries. 
        \item\label{223C} There exists a path $P_z$ in $A_{5r,\Lambda r}(z) \cap \Upsilon$ of $D_\Upsilon$-length at most $\fA\kc_r$ such that every path in $\Upsilon$ that crosses between the inner and outer boundaries of $A_{5r,\Lambda r}(z)$ intersects $P_z$. 
    \end{enumerate}
\end{lemma}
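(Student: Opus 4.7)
The plan is to first fix $\nu > 0$ small and $\mu \in (0, \nu)$ close enough to $\nu$ so that $\alpha_{\mathrm{4A}}\mu > 2(1+\nu)$; this is possible since $\alpha_{\mathrm{4A}} > 2$ by \eqref{eq:016}. Then I would take $\Lambda \in \NN$ large, $\pp \in (0, 1)$ close to $1$ (so \Cref{104} yields $\BP[\fE_{z,r}(\Upsilon)] \ge \pp$ uniformly in $z, r$), and $\fA > 0$ large, in that order. For fixed $r \in \rho^{-1}\SR$ and $z \in \CC$, let $E_{z,r}$ denote the joint event that conditions~\eqref{223A}, \eqref{223B}, and \eqref{223C} all hold. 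The core of the argument is to show $\BP[E_{z,r}] \ge q$ for some $q \in (0,1)$ that can be made arbitrarily close to $1$, and then to combine this with independence across scales via \Cref{263}, together with the lower bound $\#(\rho^{-1}\SR \cap [\varepsilon^{1+\nu}\rr, \varepsilon\rr]) \ge \mu\log_\Lambda(1/\varepsilon)$ supplied by \Cref{105}.

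Each of the three conditions can be made to hold with probability close to $1$: \eqref{223A} has probability $\ge \pp$ by \Cref{104}; \eqref{223B} has failure probability at most $C(5/\Lambda)^\alpha$ for any $\alpha \in (0, \alpha_{\mathrm{4A}})$ by \Cref{264}, \eqref{264A}, hence can be made arbitrarily small by choosing $\Lambda$ large. For \eqref{223C}, with probability at least $1 - o_\Lambda(1)$ there is a loop of $\Gamma$ contained in a sub-annulus of $A_{5r,\Lambda r}(z)$ disconnecting its inner and outer boundaries (by \Cref{264}, \eqref{264A} and scale/translation invariance of $\Gamma$); applying \Cref{229} to the Euclidean neighborhood of this loop intersected with $\Upsilon$ then produces a blocking path in $A_{5r,\Lambda r}(z) \cap \Upsilon$ of $D_\Upsilon$-length at most $\fA\kc_r$, with failure probability superpolynomially small in $\fA$. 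This construction parallels that of the blocking path in \eqref{eq:316} within the proof of \Cref{099}.

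For fixed $x \in B_{2\rr}(0)$, let $z(x,r)$ denote a nearest point to $x$ in $(\tfrac{1}{100}r\ZZ)^2 \cap B_{2\rr}(0)$, so that $|x - z(x,r)| \le r$ and $E_{z(x,r),r}$ is almost surely determined by the restriction of $\Upsilon$ and $\Gamma$ to $A_{r/2, (\Lambda+1)r}(x)$. Enumerating the scales in $\rho^{-1}\SR \cap [\varepsilon^{1+\nu}\rr, \varepsilon\rr]$ and thinning to every other scale yields at least $(\mu/2)\log_\Lambda(1/\varepsilon)$ scales for which the corresponding annuli around $x$ are pairwise disjoint. \Cref{263} (applied with $\lambda = 1/\Lambda^2$) then supplies a constant $\alpha = \alpha(q) > 0$ --- which can be made arbitrarily large by taking $q$, hence $\pp$, close to $1$ --- such that the probability that $E_{z(x,r),r}$ fails for every thinned scale is at most $C\exp(-\alpha(\mu/2)\log_\Lambda(1/\varepsilon)) = C\varepsilon^{\alpha\mu/(2\ln\Lambda)}$, uniformly in $\rr$.

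Taking a union bound over a $\tfrac{1}{100}\varepsilon^{1+\nu}\rr$-net of $B_{2\rr}(0)$ of cardinality $O(\varepsilon^{-2(1+\nu)})$, and choosing $\pp$ close enough to $1$ so that $\alpha\mu/(2\ln\Lambda) > 2(1+\nu) + 1$, then completes the proof; uniformity in $\rr$ is inherited from the corresponding uniformity in \Cref{104}, \Cref{264}, \Cref{229}, and \Cref{105}. The main obstacle I anticipate is condition~\eqref{223C}: one must construct a path in $\Upsilon$ (not $\Upsilon^\dag$ or merely a loop of $\Gamma$) whose $D_\Upsilon$-length is bounded uniformly by $\fA\kc_r$ and which every crossing of the annulus in $\Upsilon$ must intersect. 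This requires combining \Cref{229} with the symmetry between $\Upsilon$ and $\Upsilon^\dag$ from \Cref{266}, together with the fact --- implicit in the four-arm bound \Cref{264}, \eqref{264A} --- that, conditional on having at most two crossing arcs of $\Gamma$, the connected components of $\Upsilon$ inside $A_{5r, \Lambda r}(z)$ admit precisely the topological structure needed for such a blocking path to exist.
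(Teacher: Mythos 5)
Your proof follows the same blueprint as the paper's one-sentence argument: \Cref{104} for condition~\eqref{223A}, the four-arm estimate \Cref{264} for~\eqref{223B}, a blocking-loop construction for~\eqref{223C}, the scale count from \Cref{105}, independence across scales via \Cref{263}, and a union bound over a net. The structure is correctly identified, but there are two real problems in closing the argument.

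The opening claim ``fix $\nu > 0$ small and $\mu\in(0,\nu)$ close enough to $\nu$ so that $\alpha_{\mathrm{4A}}\mu > 2(1+\nu)$'' cannot be realized. Since $\mu < \nu$ and $\alpha_{\mathrm{4A}} = 6/\kappa + 1 - \kappa/8 \le 35/12 < 3$ throughout $(8/3, 4)$, one has $\alpha_{\mathrm{4A}}\mu < 3\nu$, while $2(1+\nu) > 2$; so for small $\nu$ the inequality fails badly. Writing $\mu = c\nu$ with $\alpha_{\mathrm{4A}}c > 2$, \eqref{eq:328} forces $\nu > 2/(\alpha_{\mathrm{4A}}c - 2)$, which is \emph{large} (and diverges as $\kappa\to4^-$, where $\alpha_{\mathrm{4A}}\to 2^+$). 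The paper explicitly says $\nu$ is to be chosen large.

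More substantively, your union bound does not close as written, and your route to closing it is circular. You claim that the rate $\alpha$ from \Cref{263} ``can be made arbitrarily large by taking $q$, hence $\pp$, close to $1$,'' so that $\alpha\mu/(2\ln\Lambda) > 2(1+\nu)+1$. But $q$ is the probability of the \emph{joint} event, and its deficit from condition~\eqref{223B} is governed only by $\Lambda$, not by $\pp$ or $\fA$. With $\Lambda$ fixed --- as in your stated parameter order --- $q$ is bounded away from $1$, and hence so are $\alpha$ and the exponent $\alpha\mu/(2\ln\Lambda)$, independently of $\pp$. Increasing $\Lambda$ helps $q$ but simultaneously shrinks $\mu\log_\Lambda(1/\varepsilon)$: heuristically the per-point failure is $(1-q)^n \approx \bigl(\Lambda^{-\alpha_{\mathrm{4A}}}\bigr)^{\mu\log_\Lambda(1/\varepsilon)} = \varepsilon^{\alpha_{\mathrm{4A}}\mu}$, so $\Lambda$ cancels and the union bound requires exactly $\alpha_{\mathrm{4A}}\mu > 2(1+\nu)$. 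In other words, \eqref{eq:328} is not an incidental side condition on the parameters; it is the quantitative input that makes the union bound viable, and your argument does not isolate it as such.

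Finally, a smaller point: ``applying \Cref{229} to the Euclidean neighborhood of this loop intersected with $\Upsilon$'' is not a direct application of \Cref{229}, which is stated for \emph{deterministic} $V \Subset U$ and controls a diameter, not the $D_\Upsilon$-length of a given random curve. The repair is to fix a deterministic sub-annulus in which, with probability close to $1$, a separating loop of the correct $\wp$-colour lies (using the $\Upsilon/\Upsilon^\dag$ symmetry from \Cref{266}), and then use \Cref{229} on that deterministic annulus to bound the relevant crossing distances. You mention the right ingredients at the end but do not assemble them.
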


\begin{proof}
    This follows immediately from \Cref{264,263,105}, \Cref{105}, and a union bound. 
\end{proof}

In the remainder of the present subsection, let the parameters of \eqref{eq:327} be as in \Cref{223}. By combining \Cref{200,223}, we obtain the following. 

\begin{lemma}\label{224}
    Let $b, \eta \in (0, 1)$ and $\omega, \rr > 0$. Suppose that $\BP\lbrack\underline G_{\rr}(M_1^\prime, b)\rbrack \ge b$. Then it holds with probability tending to one as $\varepsilon \to 0$ (at a rate which is uniform in $\rr$) that 
    \begin{multline*}
        \widetilde D_\Upsilon(B_{\varepsilon^\omega\rr}(\xx) \cap \Upsilon, B_{\varepsilon^\omega\rr}(\yy) \cap \Upsilon) \le M^\ast D_\Upsilon(B_{\varepsilon^\omega\rr}(\xx) \cap \Upsilon, B_{\varepsilon^\omega\rr}(\yy) \cap \Upsilon) - \varepsilon^{2(1 + \nu)}\kc_{\rr}, \\
        \forall \xx, \yy \in \left(\frac1{100}\varepsilon^\omega\rr\ZZ\right)^2 \cap B_{2\rr}(0) \text{ such that } \lvert\xx - \yy\rvert \ge \eta\rr \text{ and } B_{\varepsilon^\omega\rr}(\xx) \xleftrightarrow\Upsilon B_{\varepsilon^\omega\rr}(\xx). 
    \end{multline*}
\end{lemma}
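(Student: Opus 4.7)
The plan is to deduce \Cref{224} from \Cref{200} and \Cref{223} via a union bound over lattice pairs $(\xx, \yy)$. For each such pair satisfying the hypotheses, I would introduce the \emph{deterministic} compact subsets $K_1 \defeq \overline{B_{\varepsilon^\omega\rr}(\xx)}$ and $K_2 \defeq \overline{B_{\varepsilon^\omega\rr}(\yy)}$. The core observation is that if the desired inequality fails for $(\xx, \yy)$ while $B_{\varepsilon^\omega\rr}(\xx) \xleftrightarrow\Upsilon B_{\varepsilon^\omega\rr}(\yy)$, then condition~\eqref{222A} of the event $\kG_{\varepsilon^\ast}^{\rr^\ast}(K_1, K_2)$ (cf.~\Cref{222}) holds for appropriate auxiliary choices of $\rr^\ast$ and $\varepsilon^\ast$; combined with condition~\eqref{222B}, supplied by \Cref{223}, this forces $\kG_{\varepsilon^\ast}^{\rr^\ast}(K_1, K_2)$ to occur, which by \Cref{200} is superpolynomially unlikely in $\varepsilon^\ast$, uniformly in $\rr$ and in the deterministic pair.

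Since $\xx$ may sit arbitrarily close to $\partial B_{2\rr}(0)$, I would take the enlarged scale $\rr^\ast \defeq 2\rr$, so that $\dist(K_1, \partial B_{2\rr^\ast}(0)) \ge \rr^\ast/2$ uniformly in $\xx \in B_{2\rr}(0)$, and $\dist(K_1, K_2) \ge \eta\rr^\ast/4$ for sufficiently small $\varepsilon$. The hypothesis $\BP\lbrack\underline G_\rr(M_1^\prime, b)\rbrack \ge b$ transfers to $\BP\lbrack\underline G_{\rr^\ast}(M_1^\prime, b/2)\rbrack \ge b/2$ via the trivial inclusion $\underline G_\rr(M_1^\prime, b) \subset \underline G_{2\rr}(M_1^\prime, b/2)$ --- any witness pair in $\overline{B_\rr(0)} \cap \Upsilon$ remains a witness pair in $\overline{B_{2\rr}(0)} \cap \Upsilon$ after halving the separation parameter --- which is exactly what is needed to apply \Cref{223} at scale $\rr^\ast$ with $b/2$ in place of $b$, producing condition~\eqref{222B} of $\kG_{\varepsilon^\ast}^{\rr^\ast}$ with probability tending to one uniformly in $\rr$.

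To align $(\varepsilon^\ast)^{2(1+\nu)}\kc_{\rr^\ast}$ with $\varepsilon^{2(1+\nu)}\kc_\rr$, I would rescale $\varepsilon^\ast \defeq C\varepsilon$ with $C \defeq \max\bigl(1, (\kK/2)^{1/(2(1+\nu))}\bigr)$. The lower bound in~\eqref{eq:340} applied at $r = \rr^\ast$ and $\varepsilon = 1/2$ yields $\kc_{\rr^\ast} \ge (2/\kK)\kc_\rr$, and the choice of $C$ guarantees $(\varepsilon^\ast)^{2(1+\nu)}\kc_{\rr^\ast} \ge \varepsilon^{2(1+\nu)}\kc_\rr$. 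Consequently, the failure of condition~\eqref{222A} of $\kG_{\varepsilon^\ast}^{\rr^\ast}(K_1, K_2)$ is at least as strong as the desired inequality in \Cref{224} (where the open-ball versus closed-ball distinction in the conclusion is immaterial by continuity of $D_\Upsilon$ and $\widetilde D_\Upsilon$ with respect to the Euclidean topology, via Axiom~\eqref{010A}).

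Finally, the number of candidate pairs $(\xx, \yy) \in (\frac{1}{100}\varepsilon^\omega\rr\ZZ)^2 \cap B_{2\rr}(0)$ is $O(\varepsilon^{-4\omega})$, so a union bound against the superpolynomial bound from \Cref{200} (applied with $\eta^\ast \defeq \eta/4$, uniformly in $\rr$ and in the deterministic pair $(K_1, K_2)$) easily absorbs the polynomial factor and yields the desired probability bound uniformly in $\rr$. There is no serious obstacle here; the only mild subtleties are the scale bump $\rr \mapsto 2\rr$, needed because $\xx$ can be near $\partial B_{2\rr}(0)$, and the constant rescaling $\varepsilon \mapsto C\varepsilon$ absorbing the factor $\kK/2$ coming from the lower bound on $\kc_{\rr^\ast}/\kc_\rr$ --- both handled trivially.
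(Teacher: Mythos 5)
Your proposal is correct and follows the same overall template as the paper: apply \Cref{200} to the pair of compact balls $K_1 = \overline{B_{\varepsilon^\omega\rr}(\xx)}$, $K_2 = \overline{B_{\varepsilon^\omega\rr}(\yy)}$, combine the failure of $\kG_\bullet^\bullet(K_1,K_2)$ with \Cref{223} (which supplies condition~\eqref{222B}), deduce failure of condition~\eqref{222A}, and union-bound the superpolynomial estimate over the $O(\varepsilon^{-4\omega})$ lattice pairs. The one genuine addition you make is the scale bump $\rr \mapsto \rr^\ast = 2\rr$ (with the accompanying $\varepsilon \mapsto C\varepsilon$ rescaling via~\eqref{eq:340} and the inclusion $\underline G_\rr(M_1',b) \subset \underline G_{2\rr}(M_1',b/2)$); this is in fact a careful point that the paper glosses over. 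The paper's proof applies \Cref{200} directly at scale $\rr$, but \Cref{200}'s hypothesis requires $\dist(K_1,\partial B_{2\rr}(0)) \ge \eta\rr$, which fails for lattice points $\xx$ near $\partial B_{2\rr}(0)$ as allowed by the statement of \Cref{224}; your enlarged ambient ball $B_{2\rr^\ast}(0)=B_{4\rr}(0)$ provides the uniform margin. (In the paper's intended application in \Cref{225}, $\xx,\yy$ lie within $\varepsilon^\omega\rr$ of $\overline{B_\rr(0)}$, so the boundary issue happens to not arise there, which is presumably why the authors didn't flag it.) Beyond that your constant-tracking is correct and your closing remark about the open/closed-ball distinction is the appropriate level of care.
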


\begin{proof}
    By applying a union bound, it follows from \Cref{200} that it holds with probability tending to one as $\varepsilon \to 0$ that the event $\kG_\varepsilon^{\rr}\left(B_{\varepsilon^\omega\rr}(\xx), B_{\varepsilon^\omega\rr}(\yy)\right)$ does not occur for all $\xx, \yy \in \left(\frac1{100}\varepsilon^\omega\rr\ZZ\right)^2 \cap B_{2\rr}(0)$ with $\lvert\xx - \yy\rvert \ge \eta\rr$. Combining this with \Cref{223}, we obtain that it holds with probability tending to one as $\varepsilon \to 0$ that condition~\eqref{222A} of the definition of $\kG_\varepsilon^{\rr}\left(B_{\varepsilon^\omega\rr}(\xx), B_{\varepsilon^\omega\rr}(\yy)\right)$ does not occur for all $\xx, \yy \in \left(\frac1{100}\varepsilon^\omega\rr\ZZ\right)^2 \cap B_{2\rr}(0)$ with $\lvert\xx - \yy\rvert \ge \eta\rr$. This completes the proof. 
\end{proof}

Recall the events $\overline G_{\rr}(M, b)$ and $\underline G_{\rr}(M, b)$ from \Cref{025}. Combining \Cref{224} with a few geometric considerations, we derive the following result. 

\begin{lemma}\label{225}
    Let $b, b^\prime \in (0, 1)$ and $\rr > 0$. Suppose that $\BP\lbrack\underline G_{\rr}(M_1^\prime, b)\rbrack \ge b$. Then 
    \begin{equation*}
        \lim_{\delta \to 0} \BP\!\left\lbrack\overline G_{\rr}(M^\ast - \delta, b^\prime)\right\rbrack = 0
    \end{equation*}
    at a rate which is uniform in $\rr$. 
\end{lemma}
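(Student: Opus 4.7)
The plan is to combine \Cref{224} with Hölder-type estimates from \Cref{019} and the triangle inequality to derive a contradiction for sufficiently small $\delta$. Suppose that the event $\overline G_\rr(M^\ast - \delta, b^\prime)$ occurs, and fix $x, y \in \overline{B_\rr(0)} \cap \Upsilon$ with $\lvert x - y\rvert \ge b^\prime \rr$, $x \leftrightarrow y$, and $\widetilde D_\Upsilon(x, y) \ge (M^\ast - \delta) D_\Upsilon(x, y)$. Let $X^\ast$ denote the connected component of $\Upsilon$ containing $x$ and $y$. Fix exponents $\chi \in (0, 1 - 2/\alpha_{\mathrm{4A}})$ and $\omega > 0$ with $\omega \chi > 2(1 + \nu)$. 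First I would select grid points $\xx, \yy \in \left(\frac1{100}\varepsilon^\omega\rr\ZZ\right)^2 \cap B_{2\rr}(0)$ at Euclidean distance at most $\varepsilon^\omega \rr / 10$ from $x$ and $y$, so that $\lvert\xx - \yy\rvert \ge b^\prime \rr / 2$ and $B_{\varepsilon^\omega\rr}(\xx) \leftrightarrow B_{\varepsilon^\omega\rr}(\yy)$ in $\Upsilon$. On the high-probability event $E_1$ from \Cref{224} (with $\eta = b^\prime / 2$), combined with the trivial inequality $D_\Upsilon(B_{\varepsilon^\omega\rr}(\xx) \cap \Upsilon, B_{\varepsilon^\omega\rr}(\yy) \cap \Upsilon) \le D_\Upsilon(x, y)$, there exists a pair $u^\ast \in B_{\varepsilon^\omega\rr}(\xx) \cap \Upsilon$, $v^\ast \in B_{\varepsilon^\omega\rr}(\yy) \cap \Upsilon$ with $u^\ast \leftrightarrow v^\ast$ and
$$\widetilde D_\Upsilon(u^\ast, v^\ast) \le M^\ast D_\Upsilon(x, y) - \varepsilon^{2(1 + \nu)} \kc_\rr / 2.$$

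The principal obstacle is that a priori the pair $(u^\ast, v^\ast)$ need not lie in $X^\ast$, since the infimum defining the set-to-set distance ranges over connected pairs in arbitrary components of $\Upsilon$. To handle this, I would argue that, on a further high-probability event $E_2$, the grid points $(\xx, \yy)$ can be chosen near $(x, y)$ with the additional property that $X^\ast$ is the unique component of $\Upsilon$ of Euclidean diameter at least $b^\prime \rr / 3$ meeting both balls $B_{\varepsilon^\omega\rr}(\xx)$ and $B_{\varepsilon^\omega\rr}(\yy)$. Since any connecting component between these balls has Euclidean diameter at least $\lvert\xx - \yy\rvert - 2\varepsilon^\omega\rr \ge b^\prime \rr / 3$, this forces $u^\ast, v^\ast \in X^\ast$. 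The existence of such a choice follows from a union bound over the finitely many macroscopic loops of $\Gamma$ (controlled via \Cref{328} and \Cref{264}): for each macroscopic loop $\SCL \neq \SCL^\ast$, the set of grid points $\xx$ for which $B_{\varepsilon^\omega\rr}(\xx)$ meets $X(\SCL)$ is confined to the Euclidean $\varepsilon^\omega\rr$-neighborhood of $\SCL$, whose Lebesgue density tends to zero as $\varepsilon \to 0$, leaving ample room to shift $(\xx, \yy)$ inside a small neighborhood of $(x, y)$. With $u^\ast, v^\ast \in X^\ast$ secured, \Cref{019} applied to $\widetilde D_\Upsilon$ provides a high-probability event $E_3$ on which
$$\widetilde D_\Upsilon(x, u^\ast) \vee \widetilde D_\Upsilon(v^\ast, y) \le (2\varepsilon^\omega)^\chi \kc_\rr,$$
the separating-loop alternative being ruled out because $x \leftrightarrow u^\ast$ and $v^\ast \leftrightarrow y$ inside $X^\ast$.

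The triangle inequality for $\widetilde D_\Upsilon$ then yields
$$\widetilde D_\Upsilon(x, y) \le 2(2\varepsilon^\omega)^\chi \kc_\rr + M^\ast D_\Upsilon(x, y) - \varepsilon^{2(1 + \nu)} \kc_\rr / 2,$$
which combined with the hypothesis $\widetilde D_\Upsilon(x, y) \ge (M^\ast - \delta) D_\Upsilon(x, y)$ forces $\delta D_\Upsilon(x, y) \ge \varepsilon^{2(1 + \nu)} \kc_\rr / 4$ once $\varepsilon$ is small enough (using $\omega \chi > 2(1 + \nu)$). Applying \Cref{229} with a sufficiently large deterministic pair $V \Subset U$, together with a high-probability event $E_4$ that the macroscopic component $X^\ast$ is contained in $\rr V$ (arranged via scale invariance and a polynomial tail bound on the diameters of macroscopic loops of $\Gamma$), we obtain $D_\Upsilon(x, y) \le A \kc_\rr$ for a deterministic constant $A$, whence $\delta \ge \varepsilon^{2(1 + \nu)} / (4A)$. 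Given $\xi > 0$, I would first pick $\varepsilon$ small enough, uniformly in $\rr$, so that $\BP\lbrack E_1 \cap E_2 \cap E_3 \cap E_4\rbrack \ge 1 - \xi$, and then take $\delta < \varepsilon^{2(1 + \nu)} / (4A)$; on $E_1 \cap E_2 \cap E_3 \cap E_4$ no admissible pair $(x, y)$ exists, so $\BP\lbrack\overline G_\rr(M^\ast - \delta, b^\prime)\rbrack \le \xi$, with the rate uniform in $\rr$ as required.
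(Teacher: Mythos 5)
Your overall skeleton matches the paper's: pass to grid points, invoke \Cref{224} for the ball-to-ball shortcut, transfer it to the original pair $(x,y)$ via a H\"older estimate, and close with a global upper bound on $D_\Upsilon(x,y)$ to convert the additive defect $\varepsilon^{2(1+\nu)}\kc_{\rr}$ into a multiplicative one. However, the step where you force the pair $(u^\ast, v^\ast)$ realizing the ball-to-ball distance to lie in $X^\ast$ contains a genuine gap. Your event $E_2$ is justified by the claim that, for a macroscopic loop $\SCL \neq \SCL^\ast$, the set of grid points $\xx$ for which $B_{\varepsilon^\omega\rr}(\xx)$ meets $X(\SCL)$ is confined to the Euclidean $\varepsilon^\omega\rr$-neighborhood of $\SCL$. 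This is false: $X(\SCL)$ is the carpet piece, a fractal set whose closed $\varepsilon$-neighborhood essentially fills the region surrounded by $\SCL$ minus its large complementary holes, not a thin tube around the loop. Worse, components of $\Upsilon$ other than $X^\ast$ accumulate at \emph{every} point of $X^\ast$ (loops of $\Gamma$ accumulate at every carpet point, and the grandchild carpet pieces carry $\wp = 0$), so $B_{\varepsilon^\omega\rr}(\xx)$ always meets infinitely many other components no matter how you shift $\xx$; shifting the grid points buys you nothing. What you actually need to exclude is that a \emph{second} component of $\Upsilon$ crosses from $B_{\varepsilon^\omega\rr}(\xx)$ out to macroscopic distance, and the correct mechanism for this is the four-arm estimate: two distinct components of $\Upsilon$ crossing the annulus $A_{\varepsilon^\omega\rr,\varepsilon\rr}(\xx)$ force at least four connected arcs of $\Gamma$ crossing it, an event of probability $O(\varepsilon^{(\omega - 1)\alpha})$ for $\alpha < \alpha_{\mathrm{4A}}$ by \Cref{264}, \eqref{264A}, which is summable over the $\asymp \varepsilon^{-2\omega}$ grid points precisely because $\alpha_{\mathrm{4A}} > 2$ and $\omega$ is chosen appropriately. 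This is how the paper proceeds (and it simultaneously yields that $x \leftrightarrow x'$ and $y \leftrightarrow y'$ for \emph{any} connected pair $x' \in B_{\varepsilon^\omega\rr}(\xx)$, $y' \in B_{\varepsilon^\omega\rr}(\yy)$, which is the form in which the identification is actually used).

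The remaining ingredients of your argument are sound and close to the paper's: the triangle-inequality transfer using the H\"older bound from \Cref{019} (the paper applies it to $D_\Upsilon$ and you to $\widetilde D_\Upsilon$, which is legitimate since both are weak geodesic $\CLE_\kappa$ carpet metrics, and your version is arguably cleaner since it avoids an implicit symmetrization), and the final conversion $\delta \gtrsim \varepsilon^{2(1+\nu)}$ via a global bound $D_\Upsilon(x,y) \lesssim \kc_{\rr}$. For the latter the paper simply invokes Axiom~\eqref{010D} on the compact set $\overline{B_2(0)}$ to get $D_\Upsilon(x,y) \le \varepsilon^{-1}\kc_{\rr}$ with probability tending to one, which avoids your auxiliary containment event $E_4$; your route through \Cref{229} works but requires the extra (true, though unproven in your sketch) tail bound on the diameter of the component $X^\ast$.
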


\begin{proof}
    Fix $\chi \in (0, 1 - 2/\alpha_{\mathrm{4A}})$ and $\omega > 2(1 + \nu)/\chi$. By \Cref{224}, it holds with probability tending to one as $\varepsilon \to 0$ (at a rate which is uniform in $\rr$) that 
    \begin{multline}\label{eq:238}
        \widetilde D_\Upsilon(B_{\varepsilon^\omega\rr}(\xx) \cap \Upsilon, B_{\varepsilon^\omega\rr}(\yy) \cap \Upsilon) \le M^\ast D_\Upsilon(B_{\varepsilon^\omega\rr}(\xx) \cap \Upsilon, B_{\varepsilon^\omega\rr}(\yy) \cap \Upsilon) - \varepsilon^{2(1 + \nu)}\kc_{\rr}, \\
        \forall \xx, \yy \in \left(\frac1{100}\varepsilon^\omega\rr\ZZ\right)^2 \cap B_{2\rr}(0) \text{ such that } \lvert\xx - \yy\rvert \ge \frac12b^\prime\rr \text{ and } B_{\varepsilon^\omega\rr}(\xx) \xleftrightarrow\Upsilon B_{\varepsilon^\omega\rr}(\xx). 
    \end{multline}
    By \Cref{019}, it holds with probability tending to one as $\varepsilon \to 0$ (at a rate which is uniform in $\rr$) that 
    \begin{equation}\label{eq:239}
        \kc_{\rr}^{-1}D_\Upsilon(x, y) \le \left\lvert\frac{x - y}{\rr}\right\rvert^{\chi}, \quad \forall x, y \in B_{2\rr}(0) \cap \Upsilon \text{ with } x \xleftrightarrow\Upsilon y \text{ and } \lvert x - y\rvert \le \varepsilon\rr. 
    \end{equation}
    Since $\omega > 1 + \nu$ (hence $\alpha_{\mathrm{4A}}(\omega - 1) > 2\omega$ (cf.~\eqref{eq:328})), by \Cref{264}, \eqref{264A} and a union bound, it holds with probability tending to one as $\varepsilon \to 0$ (at a rate which is uniform in $\rr$) that 
    \begin{equation}\label{eq:240}
        \parbox{.85\linewidth}{for each $\xx \in \left(\frac1{100}\varepsilon^\omega\rr\ZZ\right)^2 \cap B_{2\rr}(0)$, there are at most two connected arcs of $\Gamma$ in $A_{\varepsilon^\omega\rr,\varepsilon\rr}(\xx)$ connecting $\partial B_{\varepsilon^\omega\rr}(\xx)$ and $\partial B_{\varepsilon\rr}(\xx)$.}
    \end{equation}
    By \Cref{010}, Axiom~\eqref{010D} (tightness across scales), it holds with probability tending to one as $\varepsilon \to 0$ (at a rate which is uniform in $\rr$) that 
    \begin{equation}\label{eq:241}
        D_\Upsilon(x, y) \le \varepsilon^{-1}\kc_{\rr}, \quad \forall x, y \in B_{2\rr}(0) \cap \Upsilon \text{ with } x \xleftrightarrow\Upsilon y. 
    \end{equation}
    Henceforth assume that $\varepsilon$ is sufficiently small and \eqref{eq:238}, \eqref{eq:239}, \eqref{eq:240}, \eqref{eq:241} hold. Let $x, y \in \overline{B_{\rr}(0)} \cap \Upsilon$ with $x \xleftrightarrow\Upsilon y$ and $\lvert x - y\rvert \ge b^\prime\rr$. Choose $\xx, \yy \in \left(\frac1{100}\varepsilon^\omega\rr\ZZ\right)^2 \cap B_{2\rr}(0)$ such that $x \in B_{\varepsilon^\omega\rr}(\xx)$ and $y \in B_{\varepsilon^\omega\rr}(\yy)$. 

    We \emph{claim} that for any $x^\prime \in B_{\varepsilon^\omega\rr}(\xx)$ and $y^\prime \in B_{\varepsilon^\omega\rr}(\yy)$ such that $x^\prime \xleftrightarrow\Upsilon y^\prime$, we have $x \xleftrightarrow\Upsilon x^\prime$ and $y \xleftrightarrow\Upsilon y^\prime$. Indeed, if the connected component of $\Upsilon$ containing $x$ and $y$ is not the same as the connected component of $\Upsilon$ containing $x^\prime$ and $y^\prime$, then both of these two connected components intersect $\partial B_{\varepsilon^\omega\rr}(\xx)$ and $\partial B_{\varepsilon\rr}(\xx)$. This implies that there are at least four connected arcs of $\Gamma$ in $A_{\varepsilon^\omega\rr,\varepsilon\rr}(\xx)$ connecting $\partial B_{\varepsilon^\omega\rr}(\xx)$ and $\partial B_{\varepsilon\rr}(\xx)$, in contradiction to \eqref{eq:240}. This completes the proof of the \emph{claim}. 

    By \eqref{eq:239} and the above \emph{claim}, we conclude that
    \begin{equation}\label{eq:242}
        \left\lvert D_\Upsilon(B_{\varepsilon^\omega\rr}(\xx) \cap \Upsilon, B_{\varepsilon^\omega\rr}(\yy) \cap \Upsilon) - D_\Upsilon(x, y)\right\rvert \le 2\varepsilon^{\chi\omega}\kc_{\rr}. 
    \end{equation}
    Combining \eqref{eq:238} and \eqref{eq:242}, we obtain that
    \begin{equation*}
        \widetilde D_\Upsilon(x, y) \le M^\ast D_\Upsilon(x, y) - \varepsilon^{2(1 + \nu)}\kc_{\rr} - 2\varepsilon^{\chi\omega}\kc_{\rr}. 
    \end{equation*}
    Since $\chi\omega > 2(1 + \nu)$, it follows that 
    \begin{equation}\label{eq:243}
        \widetilde D_\Upsilon(x, y) \le M^\ast D_\Upsilon(x, y) - 2\varepsilon^{2(1 + \nu)}\kc_{\rr}
    \end{equation}
    whenever $\varepsilon$ is sufficiently small. Combining \eqref{eq:241} and \eqref{eq:243}, we obtain that
    \begin{equation*}
        \widetilde D_\Upsilon(x, y) \le \left(M^\ast - 2\varepsilon^{2(1 + \nu) + 1}\right)D_\Upsilon(x, y). 
    \end{equation*}
    Thus, we conclude that, if $\varepsilon$ is sufficiently small and \eqref{eq:238}, \eqref{eq:239}, \eqref{eq:240}, \eqref{eq:241} are true, then the event $\overline G_{\rr}(M^\ast - 2\varepsilon^{2(1 + \nu) + 1}, b^\prime)$ does not occur. This completes the proof. 
\end{proof}

\begin{proof}[Proof of \Cref{000}]
    It suffices to show that $M_\ast = M^\ast$. Suppose by way of contradiction that $M_\ast < M^\ast$. By applying \Cref{031} with $M_1^\prime$ in place of $M$ and $1$ in place of $\rr$, there exists $\underline{b} \in (0, 1)$ and $\varepsilon_0 \in (0, 1)$ such that for each $\varepsilon \in (0, \varepsilon_0]$, 
    \begin{equation}\label{eq:329}
        \parbox{.80\linewidth}{there are at least $\mu\log_\Lambda(1/\varepsilon)$ values of $\rr \in [\varepsilon^{1 + \nu}, \varepsilon] \cap \{\Lambda^k\}_{k \in \ZZ}$ for which $\BP\lbrack\underline G_{\rr}(M, \underline{b})\rbrack \ge \underline{b}$.}
    \end{equation}
    Let $\overline{b} = b(\mu, \nu, \Lambda)$ be as in \Cref{008}. By applying \Cref{225} with $\underline{b}$ in place of $b$ and $\overline{b}$ in place of $b^\prime$, we conclude from \eqref{eq:329} that there exists $\delta \in (0, M^\ast)$ such that for each $\varepsilon \in (0, \varepsilon_0]$,
    \begin{equation}\label{eq:244}
        \parbox{.80\linewidth}{there are at least $\mu\log_\Lambda(1/\varepsilon)$ values of $\rr \in [\varepsilon^{1 + \nu}, \varepsilon] \cap \{\Lambda^k\}_{k \in \ZZ}$ for which $\BP\lbrack\overline G_{\rr}(M^\ast - \delta, \overline{b})\rbrack < \overline{b}$.}
    \end{equation}
    On the other hand, by applying \Cref{008} with $M^\ast - \delta$ in place of $M$ and $1$ in place of $\rr$, there exists $\varepsilon_1 \in (0, 1)$ such that for each $\varepsilon \in (0, \varepsilon_1]$,
    \begin{equation}\label{eq:245}
        \parbox{.80\linewidth}{there are at least $\mu\log_\Lambda(1/\varepsilon)$ values of $\rr \in [\varepsilon^{1 + \nu}, \varepsilon] \cap \{\Lambda^k\}_{k \in \ZZ}$ for which $\BP\lbrack\overline G_{\rr}(M^\ast - \delta, \overline{b})\rbrack \ge \overline{b}$.}
    \end{equation}
    Since $\mu > \nu/2$ (cf.~\eqref{eq:328}), \eqref{eq:244} and \eqref{eq:245} contradict each other. This completes the proof. 
\end{proof}

\bibliographystyle{alpha}
\bibliography{references}

\end{document}